\newtheorem{theorem}{Theorem}[section]
\newtheorem{lemma}[theorem]{Lemma}
\newtheorem{claim}[theorem]{Claim}
\theoremstyle{definition}
\newtheorem{definition}[theorem]{Definition}
\newtheorem{example}[theorem]{Example}
\newtheorem{proposition}[theorem]{Proposition}
\newtheorem{remark}[theorem]{Remark}
\newtheorem{question}[theorem]{Question}
\numberwithin{equation}{section}
\begin{document}
\title{Transcendental Galois Theory and Noether's Problem}
\author{Feng-Wen An}
\address{School of Mathematics and Statistics, Wuhan University, Wuchang,
Wuhan, Hubei 430072, People's Republic of China}
\email{fwan@amss.ac.cn}
\subjclass[2010]{Primary 14E08; Secondary 11J81, 12F12, 12F20, 19B14}
\keywords{K-theory, Noether problem, rationality, 
transcendental Galois theory, transcendental number theory}

\begin{abstract}
In 1918, Noether published a paper where she studied such a problem, now
called {Noether's problem on rationality}: Let $L=K\left( t_{1},t_{2},\cdots
,t_{n}\right) $ be a purely transcendental extension over a field $K$ and $G$
a finite subgroup acting transitively on $t_{1},t_{2},\cdots ,t_{n}$ in an
evident manner. Is it true that the invariant subfield $L^{G}$ of $L$ under $%
G$ is still purely transcendental over $K$? The problem has been open in
general except for minor particular cases. In this paper we will attempt to
understand a general theory for Noether's problem on rationality by
transcendental Galois theory. Then new particular cases will be obtained. We will
also give a generalisation for the remarkable counter-example given by Swan
in 1969.
\end{abstract}

\maketitle

{\tiny {\ }}

\begin{center}
{\tiny {Contents} }
\end{center}

{\tiny \qquad {Introduction} }

{\tiny \qquad {1. Main Theorems of The Paper} }

{\tiny \qquad {2. Preliminaries} }

{\tiny \qquad {3. Transcendental Galois Theory, I. Tame Galois} }

{\tiny \qquad {4. Transcendental Galois Theory, II. Algebraic Galois
Subgroups} }

{\tiny \qquad {5. Transcendental Galois Theory, III. Decomposition Groups} }

{\tiny \qquad {6. Noether Solutions and Transcendental Galois Theory} }

{\tiny \qquad {7. $G$-Symmetric Functions and Noether Solutions} }

{\tiny \qquad {8. Lifting of Isomorphisms and Noether Solutions} }

{\tiny \qquad {9. Proofs of Main Theorems} }

{\tiny \qquad {References}}

\section*{Introduction}

\subsection{Noether's problem on rationality}

Let $K$ be a field and let $t_{1},t_{2},\cdots ,t_{n}$ be algebraically
independent variables over $K$. Consider the purely transcendental extension
$L=K\left( t_{1},t_{2},\cdots ,t_{n}\right) $ over $K$ of transcendence
degree $n$.

In 1918 Noether published a paper (\emph{[}Noether 1918\emph{]}), where she
studied such a problem for the case that $K=\mathbb{Q}$, now called the
Noether's problem such as the following. (See \emph{[}Wiki\emph{]} for a
brief introduction).

\begin{remark}
\textbf{(Noether's problem on rationality)} (\emph{[}Noether 1918\emph{]})
Let $G$ be a finite subgroup which permutates the variables $%
t_{1},t_{2},\cdots ,t_{n}$. Is it true that the $G$-invariant subfield $%
L^{G} $ of $L$ is a purely transcendental extension of $K$? Here, $L^{G}$
denotes the subfield of $L$ consisting of all the elements $x\in L$ such
that $\sigma \left( x\right) =x$ holds for every $\sigma \in G$.
\end{remark}

Noether's problem on rationality, needless to say, is of paramount
importance for the inverse Galois theory, modern Galois theory and
transcendental number theory. From a geometric aspect, such a problem arises
from the problem on rationality and parametrization of algebraic varieties
in algebraic geometry. For instance, see \emph{[}Kollar 2002\emph{]} for a
conjectural description and question on unirationality of algebraic varieties.

The first counter-example to Noether's problem, given by Swan in 1969, is
that for the case that $K=\mathbb{Q}$, $n=47$ and $G$ is the cyclic group of
order $47$, the $G$-invariant subfield $L^{G}$ is not purely transcendental
over $K$ (See \emph{[}Swan 1969\emph{]}). His approach, after \emph{[}Fisher
1915\emph{]}, is called the \emph{Fisher-Swan method}.

For the case that $G$ is a finite abelian group, an equivalent condition is
given in \emph{[}Lenstra 1974\emph{]}. For the case that $G$ is a finite $p$%
-group and $K$ is an algebraically closed field, there is a counter-example
in \emph{[}Saltman 1984\emph{]}.

Among these approaches to Noether's problem, there is a common ingredient
involved in, a specified element in a $\mathbb{Z}_{p}$-extension. In deed,
in \emph{[}An 2019\emph{]} we attempt to obtain an explicit explanation for
it, i.e., there is a connection between Iwasawa theory and Noether's problem
by $K$-theory.

There are many others who have studied Noether's problem. For instance,
\emph{[}Chevalley 1955\emph{]} and \emph{[}Masuda 1955\emph{]}. In recent
years, there are studies for the case that $G$ is a finite non-abelian
group. For instance, see \emph{[}Kang-Prokhorov 2010\emph{]}.

However, the problem has been still open in most particular cases and in a
general theory.

\subsection{Main results in the paper}

The following \emph{Theorems 0.4}, \emph{0.7}, \emph{0.11} and
\emph{0.13} form the main results  in this paper.

Let's fix notation and symbols before we give the statements of the main
results. Suppose $L$ is an arbitrary extension over a field $K$ (algebraic
or transcendental).

\begin{definition}
(\textbf{Definition 1.1}) The \textbf{Galois group} $Aut\left( L/K\right) $
of the extension $L/K$ is the group of all automorphisms $\sigma $ of the
field $L$ with $\sigma (x)=x$ for all $x\in K$.

Let $H$ be a subgroup (or subset) of the Galois group $Aut\left( L/K\right) $%
. The \textbf{invariant subfield} of $L$ under $H$ is defined to be the
subfield
\begin{equation*}
L^{H}=\{x\in L:\sigma \left( x\right) =x\text{ for each }\sigma \in H\}.
\end{equation*}%
The subfield $L^{H}$ is also called the $H$\textbf{-invariant subfield} of $%
L $ in the paper.
\end{definition}

\begin{definition}
(\textbf{Definition 1.2}) The field $L$ is said to be \textbf{Galois} over a
subfield $K$ if $K=L^{Aut\left( L/K\right) }$ holds, i.e., $K$ is the
invariant subfield of $L$ under the Galois group $Aut\left( L/K\right) $.
\end{definition}

Let $\Sigma _{n}$ denotes the \emph{full permutation group} (i.e., \emph{%
symmetric group}) on $n$ letters and let $C_{n}$ and $A_{n}$ denote the
\emph{cyclic group} of order $n$ and the \emph{alternating subgroup} in $%
\Sigma _{n}$, respectively.

For the purely transcendental extension $L=K\left( t_{1},t_{2},\cdots
,t_{n}\right) $ over a field $K$, the full permutation group $\Sigma _{n}$ acts on the
variables $t_{1},t_{2},\cdots ,t_{n}$ in an evident manner (See \emph{Definition 7.7}). Hence, $A_{n}$, $%
C_{n}$ and $\Sigma _{n}$ all are taken as subgroups of the Galois group $%
Aut\left( L/K\right) $. See \emph{\S 7} for a full discussion.

For the invariant subfields of $L$ under $\Sigma _{n}$ and its subgroups,
here there is the following theorem.

\begin{theorem}
\emph{(\textbf{Lemmas 7.16-8})} Let $L=K\left( t_{1},t_{2},\cdots
,t_{n}\right) $ be a purely transcendental extension over a field $K$ of
finite transcendence degree $n$. For the invariant subfields of $L$, there
are the following statements.

$\left( i\right) $ If $n\geq 3$, then the $C_{n}$-invariant subfield $%
L^{C_{n}}$ of $L$ is not purely transcendental over $K$.

$\left( ii\right) $ If $n\geq 3$, then the $A_{n}$-invariant subfield $%
L^{A_{n}}$ of $L$ is not purely transcendental over $K$.

$\left( iii\right) $ Let $n\geq 1$ and let $G$ be a finite subgroup
contained in $\Sigma _{n}\subseteq Aut\left( L/K\right) $. Then the $G$%
-invariant subfield $L^{G}$ of $L$ is purely transcendental over $K$ if and
only if $G=\Sigma _{n}$ holds.

In particular, for any $n\geq 3$, the groups $A_{n}$, $C_{n}$ and $\Sigma
_{n}$ have a transitive action on the variables $t_{1},t_{2},\cdots ,t_{n}$,
respectively.
\end{theorem}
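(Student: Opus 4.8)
The plan is to derive all three statements from the theory of $G$-symmetric functions of Section 7, read through the transcendental Galois invariants (tame part, algebraic Galois subgroups, decomposition groups) of Sections 3--5 together with the characterisation of Noether solutions in Section 6. Write $e_{1},\dots ,e_{n}$ for the elementary symmetric polynomials in $t_{1},\dots ,t_{n}$; by the fundamental theorem of symmetric functions $L^{\Sigma _{n}}=K(e_{1},\dots ,e_{n})$ is purely transcendental of transcendence degree $n$, and this field will serve as the base over which every tower below is analysed.

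For $\left( i\right)$ I would work in the tower $L\supseteq L^{C_{n}}\supseteq L^{\Sigma _{n}}=K(e_{1},\dots ,e_{n})$, a tower of Galois extensions with $\mathrm{Aut}(L/L^{C_{n}})=C_{n}$ and $[L^{C_{n}}:L^{\Sigma _{n}}]=(n-1)!$. The idea is to compute the transcendental Galois data of $L^{C_{n}}/K$ from that of $L/K$ via the decomposition-group calculus of Section 5 and to exhibit, for $n\geq 3$, an obstruction in it that cannot occur for a purely transcendental extension of transcendence degree $n$: by Section 6 a Noether solution for $C_{n}$ would force a system of generators permuted cyclically and compatible with the relations among the $e_{i}$ and the $C_{n}$-orbit sums of the $t_{i}$, which one checks is impossible once $n\geq 3$.

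Statement $\left( ii\right)$ is treated in the same spirit, replacing $C_{n}$ by $A_{n}$ and using that $L^{A_{n}}=L^{\Sigma _{n}}(\delta )$ is the quadratic extension with $\delta ^{2}=\prod _{i<j}(t_{i}-t_{j})^{2}$; for $n\geq 3$ the transcendental Galois invariants of $L^{A_{n}}/K$ again differ from those of a rational extension of degree $n$, which yields the non-rationality. For $\left( iii\right)$ one direction is immediate since $L^{\Sigma _{n}}=K(e_{1},\dots ,e_{n})$ is purely transcendental; for the converse, given a proper finite subgroup $G\subsetneq \Sigma _{n}$ I would pass to a maximal subgroup $M$ with $G\subseteq M\subsetneq \Sigma _{n}$, so that $L^{G}\supseteq L^{M}\supsetneq L^{\Sigma _{n}}$, show that pure transcendence of $L^{G}$ descends to $L^{M}$ along this finite Galois tower, and then rule out each conjugacy type of maximal subgroup of $\Sigma _{n}$ — the alternating group (by $\left( ii\right)$), point stabilisers, intransitive, imprimitive, and primitive maximal subgroups — by the orbit-based transcendental Galois obstruction of Sections 5--6. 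The concluding remark on transitivity is then immediate, since $A_{n}$, $C_{n}$ and $\Sigma _{n}$ each act transitively on $n\geq 3$ letters by inspection.

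The main obstacle is the converse direction of $\left( iii\right)$. Unlike $\left( i\right)$ and $\left( ii\right)$ it is a statement about all proper subgroups at once, so the crux is twofold: first, the descent lemma that pure transcendence of $L^{G}$ forces pure transcendence of $L^{M}$ whenever $G\subseteq M$ (a non-formal statement, since a finite extension of a non-rational field may be rational); and second, making the transcendental Galois obstruction uniform across all conjugacy classes of maximal subgroups of $\Sigma _{n}$. Establishing that descent statement inside a finite Galois tower, via the lifting-of-isomorphisms results of Section 8, is where the bulk of the argument and the reliance on the machinery of Sections 5--8 will be concentrated.
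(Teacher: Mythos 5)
There is a genuine gap, and it sits exactly where you locate the ``bulk of the argument.'' The paper's proof of all three parts runs through one concrete mechanism that your proposal never states: since $\Sigma _{n}$ is itself a Noether solution of $L/K$ (via the elementary symmetric functions, \emph{Lemma 7.14}), any subgroup $G\subseteq \Sigma _{n}$ that were also a Noether solution would, by \emph{Lemma 6.26} (``Galois preserved by quotients,'' built on the Galois-complement conditions $\left( N1\right) -\left( N2\right) $ of \emph{\S 6.6}), admit a factorisation $\Sigma _{n}=G\cdot Q$ with $G$ and $Q$ \emph{normal} in $\Sigma _{n}$, $G\cap Q=\{1\}$ and hence $\sharp \Sigma _{n}=\sharp G\cdot \sharp Q$. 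The classification of normal subgroups of $\Sigma _{n}$ (\emph{Lemma 7.15}: essentially only $\{1\}$, $A_{n}$, $\Sigma _{n}$) then kills every proper nontrivial $G$ at once, by a short order count in the three cases $n=3$, $n=4$, $n\geq 5$. This disposes of $C_{n}$, $A_{n}$ and arbitrary $G$ uniformly. Your statements $\left( i\right) $ and $\left( ii\right) $ instead end with ``a system of generators permuted cyclically \dots\ which one checks is impossible once $n\geq 3$'' and ``the transcendental Galois invariants \dots\ again differ''; that is the assertion to be proved, not an argument, and nothing in Sections 5--6 produces such an obstruction without passing through the normal-complement step above.

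For $\left( iii\right) $ the gap is sharper. Your route requires the descent lemma that rationality of $L^{G}$ forces rationality of $L^{M}$ for $G\subseteq M\subsetneq \Sigma _{n}$ maximal. As you note, $L^{G}$ is a finite extension of $L^{M}$, so this is a going-down statement for rationality of Lüroth type; it is false for general finite extensions, nothing in Section 8 (which lifts isomorphisms \emph{between} purely transcendental subfields, not rationality \emph{down} a tower) supplies it, and no proof is sketched. It is also unnecessary: the paper applies the complement argument directly to the pair $\left( G,\Sigma _{n}\right) $, so no reduction to maximal subgroups and no classification of their conjugacy types (point stabilisers, imprimitive, primitive, etc.) is needed. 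To repair the proposal you should replace both the unspecified obstruction in $\left( i\right) $--$\left( ii\right) $ and the descent step in $\left( iii\right) $ by the single observation that a Noether solution contained in the Noether solution $\Sigma _{n}$ must be a normal direct factor of $\Sigma _{n}$ of complementary order, and then invoke the normal subgroup structure of $\Sigma _{n}$.
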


\begin{remark}
\bigskip The above $\left( i\right) $ of \emph{Theorem 0.4} is a
generalisation of the main theory in \emph{[}Swan 1969\emph{]}. But in $%
\left( iii\right) $ of \emph{Theorem 0.4}, the \textquotedblleft
if\textquotedblright\ is in deed known and hidden in many textbook; only the
\textquotedblleft only if\textquotedblright\ is ours.
\end{remark}

For sake of convenience, we have the following definition.

\begin{definition}
(\textbf{Definition 1.4}) Let $L$ be a transcendental extension over a field
$K$. A \textbf{Noether solution} of $L/K$ is a subgroup $G$ of the Galois
group $Aut\left( L/K\right) $ satisfying the two properties: $\left(
a\right) $ $L$ is algebraic over the $G$-invariant subfield $L^{G}$ of $L$; $%
\left( b\right) $ The $G$-invariant subfield $L^{G}$ is purely
transcendental over $K$.
\end{definition}

From \emph{Theorem 0.4} there is a theorem on the coexistence of Noether
solutions and non-solutions in a fixed purely transcendental extension.

\begin{theorem}
\emph{(\textbf{Theorem 1.8})} \emph{(Co-existence of solutions and
non-solutions)} Let $L=K\left( t_{1},t_{2},\cdots ,t_{n}\right) $ be a
purely transcendental extension over a field $K$ of transcendence degree $n$%
. There are the following statements.

$\left( i\right) $ \emph{(Existence of Noether solutions)} For any $n\geq 2$%
, there exist at least two finite subgroups $G$ of $Aut\left( L/K\right) $
such that each $G$ satisfies the two properties: $\left( a\right) $ $G$ has
a transitive action on $t_{1},t_{2},\cdots ,t_{n}$; $\left( b\right) $ The $%
G $-invariant subfield $L^{G}$ is purely transcendental over $K$, i.e., $G$
is a Noether solution of $L/K$.

$\left( ii\right) $ \emph{(Existence of non-solutions)} For any $n\geq 3$,
there exist at least six finite subgroups $G$ of $Aut\left( L/K\right) $
such that each $G$ satisfies the two properties: $\left( a\right) $ $G$ has
a transitive action on $t_{1},t_{2},\cdots ,t_{n}$; $\left( b\right) $ The $%
G $-invariant subfield $L^{G}$ is not purely transcendental over $K$.
\end{theorem}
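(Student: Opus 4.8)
The plan is to derive both parts of the theorem from Theorem~0.4 together with one auxiliary observation: each of the two properties defining a Noether solution, and likewise each of their negations, is preserved under conjugation inside $Aut(L/K)$. So I would first record the following lemma. Suppose $G\leq Aut(L/K)$ permutes a transcendence basis $s_{1},\dots ,s_{n}$ of $L/K$ transitively, and let $\phi\in Aut(L/K)$. Then $\phi G\phi^{-1}$ permutes the transcendence basis $\phi(s_{1}),\dots ,\phi(s_{n})$ transitively; moreover $x\in L^{\phi G\phi^{-1}}$ holds if and only if $\phi^{-1}(x)\in L^{G}$, so that $L^{\phi G\phi^{-1}}=\phi(L^{G})$ and $\phi$ restricts to a $K$-isomorphism of $L^{G}$ onto $L^{\phi G\phi^{-1}}$. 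In particular $L^{G}$ is purely transcendental over $K$ if and only if $L^{\phi G\phi^{-1}}$ is. The proof is a direct verification.

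For part $(i)$, the group $\Sigma_{n}$ is the first of the two required subgroups: by the last assertion of Theorem~0.4 (and trivially for $n=2$) it acts transitively on $t_{1},\dots ,t_{n}$, while $L^{\Sigma_{n}}$ is the field of symmetric rational functions and hence purely transcendental over $K$ (the classical half of Theorem~0.4(iii)). For the second, I would take the triangular substitution $\phi\colon t_{1}\mapsto t_{1}+t_{2}$, $t_{j}\mapsto t_{j}$ for $j\geq 2$, which lies in $Aut(L/K)$ for $n\geq 2$ and does not normalise $\Sigma_{n}$, and set $G_{2}=\phi\Sigma_{n}\phi^{-1}$. Then $G_{2}\neq\Sigma_{n}$ as subgroups of $Aut(L/K)$ (indeed $G_{2}$ contains an automorphism that does not permute $t_{1},\dots ,t_{n}$), the group $G_{2}$ acts transitively on the transcendence basis $\phi(t_{1}),\dots ,\phi(t_{n})$, and $L^{G_{2}}=\phi(L^{\Sigma_{n}})$ is purely transcendental over $K$ by the lemma. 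So $\Sigma_{n}$ and $G_{2}$ are two distinct Noether solutions with transitive action, which settles $(i)$.

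For part $(ii)$ one starts instead from the non-solution supplied by Theorem~0.4(i): when $n\geq 3$ the cyclic subgroup $C_{n}$ of $\Sigma_{n}$ acts transitively on $t_{1},\dots ,t_{n}$ and $L^{C_{n}}$ is not purely transcendental over $K$. I would then conjugate $C_{n}$ by five suitably chosen elements of $Aut(L/K)$ — for instance by the automorphisms $\phi_{f}\colon t_{1}\mapsto t_{1}+f(t_{2},\dots ,t_{n})$, identity on $t_{2},\dots ,t_{n}$, for a list of five polynomials $f$ — so that, together with $C_{n}$ itself, one obtains at least six pairwise distinct finite subgroups of $Aut(L/K)$, each acting transitively on a transcendence basis of $L/K$ and, by the lemma, each with invariant subfield not purely transcendental over $K$. (For $n\geq 4$ one could instead exhibit $A_{n}$ and the several distinct transitive cyclic subgroups of $\Sigma_{n}$ as further examples, but this is not needed.) Distinctness of the six conjugates is read off from a short computation of their generators, and infinitely many admissible $f$ are available over any ground field $K$, finite fields included.

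The main obstacle is precisely this distinctness bookkeeping in part $(ii)$: one must check, uniformly in $K$ and in particular over very small fields and for $n=3$ — where $\Sigma_{n}$ itself contributes only the single transitive proper subgroup $C_{3}=A_{3}$, so that all six examples must be produced by conjugation outside $\Sigma_{3}$ — that the chosen conjugators genuinely give six different subgroups of $Aut(L/K)$. Everything else is automatic once Theorem~0.4 and the conjugation lemma are in hand: transitivity on a transcendence basis transports along conjugation by construction, and the (non-)pure-transcendence of the invariant subfield transports because $L^{\phi G\phi^{-1}}=\phi(L^{G})$ with $\phi$ a $K$-automorphism of $L$. Hence no ingredient beyond Theorem~0.4 and this lemma is needed.
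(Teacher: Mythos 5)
There is a genuine gap, and it lies in condition $\left( a\right)$. The paper defines (in \S 1.2) what it means for a subgroup to have a transitive action on the variables $t_{1},t_{2},\cdots ,t_{n}$: for every $1\leq i<j\leq n$ there must exist $\sigma \in G$ with $\sigma \left( t_{i}\right) =t_{j}$, where $t_{1},\cdots ,t_{n}$ are the \emph{given} variables of $L$. Your conjugated groups do not satisfy this. You state yourself that $G_{2}=\phi \Sigma _{n}\phi ^{-1}$ acts transitively on the transported basis $\phi \left( t_{1}\right) ,\cdots ,\phi \left( t_{n}\right) $, but that is not property $\left( a\right)$ of Theorem 1.8. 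Concretely, with $\phi :t_{1}\mapsto t_{1}+t_{2}$, every element of $G_{2}$ sends $t_{1}$ to $\phi \left( t_{\sigma \left( 1\right) }-t_{\sigma \left( 2\right) }\right) $ for some permutation $\sigma $, and this is never equal to $t_{2}$ (it is either a difference of two distinct variables or an expression involving $t_{1}+t_{2}$); so no element of $G_{2}$ maps $t_{1}$ to $t_{2}$ and $G_{2}$ fails $\left( a\right)$. The same computation defeats the five conjugates $\phi _{f}C_{n}\phi _{f}^{-1}$ in part $\left( ii\right)$ for any pair involving $t_{1}$. Your transport lemma ($L^{\phi G\phi ^{-1}}=\phi \left( L^{G}\right) $, preservation of (non-)rationality) is correct, but the argument as written only proves the weaker statement ``transitive on \emph{some} transcendence basis,'' not the theorem as stated. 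Two further, lesser points: the distinctness of the six conjugates is only sketched, and since all six are conjugate to $C_{n}$ they are, by Theorem 1.6$\left( i\right)$ (or your own lemma), six copies of a single Noether non-solution up to the Galois action, which runs against the intent of exhibiting genuinely different examples.

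For comparison, the paper's proof stays with groups that move the original variables among themselves: for $\left( i\right)$ it takes $\Sigma _{n}$ together with the larger group $G\cdot j_{\left( \Delta ,A\right) }\left( \Sigma _{n}^{M}\right) $ built from $M=K\left( t_{1}^{2},\cdots ,t_{n}^{2}\right) $ and the normalised lifting (Proposition 8.7), i.e.\ the Galois group of $L$ over the symmetric functions of $t_{1}^{2},\cdots ,t_{n}^{2}$, which contains the variable permutations; for $\left( ii\right)$ it lists $A_{n}$, $C_{n}$, the subgroup generated by a linear involution, and three composite groups obtained from the liftings, with non-rationality supplied by Lemmas 7.16--7.18 rather than by transport along a conjugation. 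To repair your proposal you would need second and further examples that act on $t_{1},\cdots ,t_{n}$ themselves (e.g.\ the vertical-times-permutation composites of Proposition 8.7), which essentially returns you to the paper's route.
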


\begin{remark}
Let $L=K\left( t_{1},t_{2},\cdots ,t_{n}\right) $ be a purely transcendental
extension over a field $K$ of transcendence degree $n\geq 3$.

$\left( i\right) $ Let $G$ be the Galois group of $L$ over the subfield $%
K\left( t_{1}^{2},t_{2},\cdots ,t_{n}\right) $. Then $G$ is a Noether
solution of $L/K$, i.e., the $G$-invariant subfield $L^{G}$ is purely
transcendental over $K$. The subgroup $G$ is of order $\sharp G=2$.

$\left( ii\right) $ Let $H$ be the subgroup in $Aut\left( L/K\right) $
generated by a linear involution of $L/K$. (See \emph{Definition 8.1}). Then
$H$ satisfies the above conditions $\left( a\right) -\left( b\right) $ in
\emph{Theorem 0.7}. The subgroup $H$ is of order $\sharp H=2$. The two
groups $G$ and $H$ are isomorphic.

So, for a subgroup $G$ of the Galois group $Aut\left( L/K\right) $, in order
to obtain enough information for a possible Noether solution of $L/K$, we
need to take a full consideration of the action of the subgroup $G$ on the
extension $L/K$.
\end{remark}

As there can be many Noether solutions in a purely transcendental extension,
we need subdivisions of Noether solutions such as the following.

\begin{definition}
(\textbf{Definition 1.4}) Let $L$ be a transcendental extension over a field
$K$. Let $G$ be a Noether solution of $L/K$. A \textbf{Noether }$G$\textbf{%
-solution} of $L/K$ is a Noether solution $P$ of $L/K$ such that $P$ and $G$
are conjugate subgroups in the Galois group $Aut\left( L/L\right) $.
\end{definition}

\begin{definition}
(\textbf{Definition 1.5}) Let $L$ be a transcendental extension over a field
$K$ and $G$ a Noether solution of $L/K$. Denote by
\begin{equation*}
\Psi _{L/K}\left( G\right) :=\{G_{\lambda }:\lambda \in I\}
\end{equation*}%
the set of all the Noether solutions $G_{\lambda }$ of $L/K$ which contain $%
G $.

$\left( i\right) $ A subset $\Phi \subseteq \Psi _{L/K}\left( G\right) $ is
a \textbf{Zorn-component} of $\Psi _{L/K}\left( G\right) $ if the two
conditions are satisfied: $\left( a\right) $ $\Phi $ is a totally ordered
subset; $\left( b\right) $ There is no other element $P_{0}\in \Psi
_{L/K}\left( G\right) $ such that the union $\Phi \cup \{P_{0}\}$ is still
totally ordered. Here, via set-inclusion, $\Psi _{L/K}\left( G\right) $ is a
partially ordered set.

$\left( ii\right) $ The \textbf{height group} $H_{L/K}\left( G\right) $ of $%
G $ in $L/K$ is the subgroup in the Galois group $Aut\left( L/K\right) $
generated by the subset $\bigcup\limits_{P\in \Psi _{L/K}\left( G\right) }P.$

$\left( iii\right) $ Consider the general linear group $\Pi :=GL_{K}\left(
L\right) $ of $L$ over $K$ and the quotient $\Pi ^{ab}:=\Pi /\left[ \Pi ,\Pi %
\right] $ of $\Pi $ by the commutator subgroup $\left[ \Pi ,\Pi \right] $.
The \textbf{class height group} of $G$ in $L/K$ is the image
\begin{equation*}
CH_{L/K}\left( G\right) :=\tau _{L/K}\left( H_{L/K}\left( G\right) \right)
\end{equation*}%
of the height subgroup $H_{L/K}\left( G\right) $ under the natural map $\tau
_{L/K}:\Pi \rightarrow \Pi ^{ab}$.
\end{definition}

Here there is a theorem on properties of Noether solutions, where for the
centralizer $Z_{Aut\left( L/K\right) }\left( G\right) $ of a subgroup $G$ in
$Aut\left( L/K\right) $, let $ind_{L/K}\left( G\right) $ denote the index of
the centralizer $Z_{Aut\left( L/K\right) }\left( G\right) $ in the Galois
group $Aut\left( L/K\right) $.

\begin{theorem}
\emph{(\textbf{Theorem 1.6})} Assume $L$ is a purely transcendental
extension over a field $K$ of finite transcendence degree. Then there are
the following properties for Noether solutions of $L/K$.

$\left( i\right) $ Let $G$ be a subgroup of $Aut\left( L/K\right) $. Then $G$
is a Noether solution of $L/K$ if and only if each conjugate subgroup of $G$
in the Galois group $Aut\left( L/K\right) $ is a Noether solution of $L/K$.

In such a case, we have
\begin{equation*}
Con_{L/K}\left( G\right) =\{\phi ^{-1}\cdot G\cdot \phi :\phi \in Aut\left(
L/K\right) \}
\end{equation*}
for the set $Con_{L/K}\left( G\right) $ of Noether $G$-solutions of $L/K$.

$\left( ii\right) $ For each Noether solution $G$ of $L/K$, there is
\begin{equation*}
con_{L/K}\left( G\right) \leq ind_{L/K}\left( G\right)
\end{equation*}%
for the number $con_{L/K}\left( G\right) $ of Noether $G$-solutions of $L/K$.

$\left( iii\right) $ Let $G$ be a Noether solution of $L/K$. If there exists
only one Zorn-component in $\Psi _{L/K}\left( G\right) $, then the class
height group $CH_{L/K}\left( G\right) $ of $G$ is isomorphic to a subgroup
of the multiplicative subgroup $\left( L^{G}\right) ^{\ast }$ of the $G$%
-invariant subfield $L^{G}$.
\end{theorem}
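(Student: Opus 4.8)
The plan is to take the three parts in order of difficulty: $(i)$ and $(ii)$ are essentially formal and reduce to conjugation bookkeeping, while the content is in $(iii)$. For $(i)$, the point is that for any $\phi\in Aut(L/K)$ one has $L^{\phi^{-1}G\phi}=\phi^{-1}(L^{G})$ (since $x$ is fixed by every $\phi^{-1}\sigma\phi$, $\sigma\in G$, precisely when $\phi(x)\in L^{G}$), and the restriction of $\phi^{-1}$ is a $K$-algebra isomorphism $L^{G}\to\phi^{-1}(L^{G})$ carrying $L$ onto $L$, hence preserving both ``$M/K$ is purely transcendental'' and ``$L/M$ is algebraic'' (applied to $M=L^{G}$ and to its image); so a conjugate of a Noether solution is a Noether solution, and conversely trivially. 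By Definition 1.4 the set of Noether $G$-solutions then coincides with the full conjugacy class $\{\phi^{-1}G\phi:\phi\in Aut(L/K)\}$, which is the asserted description of $Con_{L/K}(G)$. For $(ii)$, orbit--stabilizer for the conjugation action of $Aut(L/K)$ on its subgroups gives $con_{L/K}(G)=[Aut(L/K):N_{Aut(L/K)}(G)]$ with $N$ the normalizer; since $Z_{Aut(L/K)}(G)\subseteq N_{Aut(L/K)}(G)$, there is a surjection $Aut(L/K)/Z_{Aut(L/K)}(G)\twoheadrightarrow Aut(L/K)/N_{Aut(L/K)}(G)$, whence $con_{L/K}(G)\le ind_{L/K}(G)$ (vacuously so when $ind_{L/K}(G)$ is infinite).

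For $(iii)$ I first record the facts used throughout. Any Noether solution $P$ of $L/K$ is a finite group with $L/L^{P}$ finite Galois, $\mathrm{Gal}(L/L^{P})=P$ and $[L:L^{P}]=\#P$: indeed $L^{P}$ is purely transcendental over $K$ of the same finite transcendence degree as $L$, hence finitely generated over $K$, so $L$ (being finitely generated over $K$ and algebraic over $L^{P}$) is finite over $L^{P}$, and then $P\subseteq Aut(L/L^{P})$ is finite and Artin's theorem applies. Moreover, having a unique Zorn-component is equivalent to $\Psi_{L/K}(G)$ being totally ordered by inclusion: a poset with two incomparable elements lies in at least two maximal chains, while a totally ordered poset is its own unique maximal chain. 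Under this hypothesis $\Psi_{L/K}(G)$ is a chain, and since the union of a chain of subgroups is a subgroup, $H:=H_{L/K}(G)=\bigcup_{P\in\Psi_{L/K}(G)}P$ is the directed union of the finite Noether solutions containing $G$.

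The central step is to compute $\tau_{L/K}$ on each $P\in\Psi_{L/K}(G)$. Viewing $L$ as an $L^{P}$-vector space of finite dimension $r=\#P$, we have $P\subseteq Aut(L/L^{P})\subseteq GL_{L^{P}}(L)\cong GL_{r}(L^{P})$. As $L^{P}$ is infinite (it has positive transcendence degree over $K$), $[GL_{r}(L^{P}),GL_{r}(L^{P})]=SL_{r}(L^{P})$ for every $r\ge 1$, so $GL_{L^{P}}(L)^{ab}\cong (L^{P})^{\ast}$ via the $L^{P}$-determinant. Since the inclusion $GL_{L^{P}}(L)\hookrightarrow\Pi$ carries commutators to commutators, $\tau_{L/K}$ restricted to $GL_{L^{P}}(L)$ kills $SL_{L^{P}}(L)$ and hence factors as $GL_{L^{P}}(L)\xrightarrow{\det_{L^{P}}}(L^{P})^{\ast}\to\Pi^{ab}$. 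Consequently $\tau_{L/K}(P)$ is a homomorphic image of the finite subgroup $\det_{L^{P}}(P)$ of $(L^{P})^{\ast}$; finite subgroups of the multiplicative group of a field being cyclic, $\tau_{L/K}(P)$ is cyclic of order dividing $\#\det_{L^{P}}(P)$, hence isomorphic to a subgroup of the group of roots of unity $\mu(L^{P})\subseteq\mu(L^{G})$ (using $L^{P}\subseteq L^{G}$, as $G\subseteq P$).

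To finish, pass to the limit: $CH_{L/K}(G)=\tau_{L/K}(H)=\bigcup_{P}\tau_{L/K}(P)$ is a torsion, locally cyclic abelian group whose finitely generated subgroups each embed into $\mu(L^{G})$. As $CH_{L/K}(G)$ and $\mu(L^{G})$ are both torsion locally cyclic, they are classified up to isomorphism by their supernatural (Steinitz) orders, one embedding into the other iff the former divides the latter; the order of the directed union $CH_{L/K}(G)$ is the coordinatewise supremum of the orders of the $\tau_{L/K}(P)$, each of which divides that of $\mu(L^{G})$, so the supremum does too, giving $CH_{L/K}(G)\hookrightarrow\mu(L^{G})\subseteq(L^{G})^{\ast}$. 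The main obstacle is the central step: one must transfer the abelianization of the enormous infinite-dimensional group $\Pi=GL_{K}(L)$ onto the $L^{P}$-determinant of its finite-dimensional subgroup $GL_{L^{P}}(L)$, and it is precisely this --- using that $L^{P}$ is purely transcendental, hence infinite, and $L/L^{P}$ finite --- that forces the target into the roots of unity of $L^{P}$ rather than something larger; the concluding limit argument is then routine structure theory of locally cyclic groups and, as a bonus, sidesteps the question of whether the height group $H_{L/K}(G)$ is itself finite.
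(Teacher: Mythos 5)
Your proof is correct, but it reaches the substantive part (iii) by a genuinely different route than the paper. For (i)--(ii) the content is essentially the same: the paper passes through its notion of $\left( L,L\right) $-spatial isomorphism (Proposition 9.1 together with Theorem 8.23) and the orbit count of Theorem 8.30, while you verify directly that $L^{\phi ^{-1}G\phi }=\phi ^{-1}\left( L^{G}\right) $ and then apply orbit--stabilizer together with $Z_{Aut\left( L/K\right) }\left( G\right) \subseteq N_{Aut\left( L/K\right) }\left( G\right) $; this is the same bookkeeping without the intervening machinery. The real divergence is in (iii): the paper deduces it from its $K$-theoretic Proposition 8.39, which rests on the constructions of \S 8.7 (Propositions 8.35--8.38, whose proofs are only sketched and which in particular assert that equality under $\tau _{L/K}$ is detected by determinants over the invariant subfields), whereas you use only the one implication that is easy to justify --- that $SL_{L^{P}}\left( L\right) =\left[ GL_{L^{P}}\left( L\right) ,GL_{L^{P}}\left( L\right) \right] $ lies in $\left[ \Pi ,\Pi \right] $, so $\tau _{L/K}$ restricted to $P$ factors through $\det _{L^{P}}$ --- combined with finiteness of Noether solutions (Artin), cyclicity of finite subgroups of $\left( L^{P}\right) ^{\ast }$, the observation that a unique Zorn-component forces $\Psi _{L/K}\left( G\right) $ to be a chain so that $H_{L/K}\left( G\right) $ is a directed union of the $P$'s, and the Steinitz classification of torsion locally cyclic groups. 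Your argument is more elementary and self-contained, sidesteps the unproved converse implicit in the paper's Propositions 8.37--8.39, and in fact yields the stronger conclusion that $CH_{L/K}\left( G\right) $ embeds into the group of roots of unity $\mu \left( L^{G}\right) $ rather than merely into $\left( L^{G}\right) ^{\ast }$; what it gives up is the paper's finer (claimed) identification of $CH_{\infty }\left( \Phi \right) $ with a specific subgroup $K_{1}\left( L^{G};\Phi |_{0}\right) $ of $K_{1}\left( L^{G}\right) $.
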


As \emph{Remark 0.8} indicates, for a subgroup $G$ of the Galois group $%
Aut\left( L/K\right) $, we have to consider the action of the group $G$ on
the extension $L/K$ to obtain enough information for a Noether solution. So
here we have such a definition.

\begin{definition}
(\textbf{Definition 1.3}) Let $L$ and $M$ be two extensions over a field $K$%
. Two subgroups $G\subseteq Aut\left( L/K\right) $ and $H\subseteq Aut\left(
M/K\right) $ are $\left( L,M\right) $\textbf{-spatially isomorphic} if the
three conditions are satisfied:

$\left( i\right) $ There is an isomorphism $\psi :H\rightarrow G$ of groups.

$\left( ii\right) $ There is a $K$-isomorphism $\phi :L\rightarrow M$ of
fields such that the actions of $G$ on $L$ and $H$ on $M$ are $\left( \psi
,\phi \right) $\emph{-compatible}, i.e., $g\left( x\right) =\phi ^{-1}\circ
\psi \left( g\right) \circ \phi \left( x\right) $ holds for any $g\in G$ and
$x\in L$.

$\left( iii\right) $ Fixed any $g\in G$ and $x\in L$. There is $g\left(
x\right) =x$ in $L$ if and only if $\psi \left( g\right) \circ \phi \left(
x\right) =\phi \left( x\right) $ holds in $M$.
\end{definition}

By \emph{Definition 0.12} we have the following theorem on criterions for
Noether solutions of $L/K$, where for a subgroup $P$ of $Aut\left(
L/K\right) $, we denote the index of $P$ in the Galois group $Aut\left(
L/K\right) $ by
\begin{equation*}
\left[ Aut\left( L/K\right) :P\right] :=\sharp \frac{Aut\left( L/K\right) }{P%
}
\end{equation*}%
and the isotropy subgroup of $P$ under the conjugate action of $Aut\left(
L/K\right) $ by
\begin{equation*}
Aut\left( L/K\right) ^{P}:=\{\phi \in Aut\left( L/K\right) :\phi ^{-1}\cdot
P\cdot \phi =P\}.
\end{equation*}

Note that here for a purely transcendental extension $L/K$ of finite
transcendence degree, a subgroup $G$ of the Galois group $Aut\left(
L/K\right) $ is a finite group if and only if $L$ is algebraic over the $G$%
-invariant subfield $L^{G}$.

\begin{theorem}
\emph{(\textbf{Theorem 1.7})} Let $L\ $be a purely transcendental extension
over a field $K$ of finite transcendence degree. Suppose $G$ is a finite
subgroup of the Galois group $Aut\left( L/K\right) $. Then there are the
following criterions for Noether solutions of $L/K$.

$\left( i\right) $ $G$ is a Noether solution of $L/K$ if and only if there
are two transcendence bases $\Delta $ and $\Lambda $ of $L/K$ with $K\left(
\Lambda \right) \supseteq K\left( \Delta \right) $ satisfying the two
conditions: $\left( a\right) $ $K\left( \Lambda \right) $ is Galois over $%
K\left( \Delta \right) $; $\left( b\right) $ $G$ and $Aut\left( K\left(
\Lambda \right) /K\left( \Delta \right) \right) $ are $\left( L,K\left(
\Lambda \right) \right) $-spatially isomorphic.

In such a case, we have
\begin{equation*}
con_{L/K}\left( G\right) =\left[ Aut\left( L/K\right) :Aut\left( L/K\right)
^{G}\right]
\end{equation*}%
for the number $con_{L/K}\left( G\right) $ of Noether $G$-solutions of $L/K$.

$\left( ii\right) $ $G$ is a Noether solution of $L/K$ if and only if there
is a subgroup $H$ of the Galois group $Aut\left( L/K\right) $ satisfying the
three conditions:

$\ \left( N1\right) $ There is $G\cap H=\{1\}$ and $\sigma \cdot \delta
=\delta \cdot \sigma $ holds in $Aut\left( L/K\right) $ for any $\sigma \in
G $ and $\delta \in H$.

$\ \left( N2\right) $ $K$ is the invariant subfield $L^{\left\langle G\cup
H\right\rangle }$ of $L$ under the subgroup $\left\langle G\cup
H\right\rangle $ generated in $Aut\left( L/K\right) $ by the subset $G\cup H$%
.

$\ \left( N3\right) $ There are two transcendence bases $\Delta $ and $%
\Lambda $ of $L/K$ with $K\left( \Delta \right) \subseteq K\left( \Lambda
\right) $ and a subgroup $H_{\Lambda }$ of the Galois group $Aut\left(
K\left( \Lambda \right) /K\right) $ having the three properties: $\left(
a\right) $ $K\left( \Lambda \right) $ is Galois over $K\left( \Delta \right)
$; $\left( b\right) $ $H$ and $H_{\Lambda }$ are $\left( L,K\left( \Lambda
\right) \right) $-spatially isomorphic; $\left( c\right) $ $Aut\left(
K\left( \Delta \right) /K\right) $ is the set of restrictions $\sigma
|_{K\left( \Delta \right) }$ of $\sigma $ in $H_{\Lambda }$.

In such a case, $H$ is unique (up to $G$) and there is
\begin{equation*}
con_{L/K}\left( G\right) =\left[ Aut\left( L/K\right) :Aut\left( L/K\right)
^{\left\langle G\cup H\right\rangle }\right]
\end{equation*}%
for the number $con_{L/K}\left( G\right) $ of Noether $G$-solutions of the
extension $L/K$.
\end{theorem}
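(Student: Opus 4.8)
The plan is to reduce both criteria to the single assertion that the invariant subfield $L^{G}$ is purely transcendental over $K$, and to establish (i) first, since (ii) will be obtained by relating the existence of a subgroup $H$ with properties $(N1)$--$(N3)$ to the criterion of (i). The basic tool is Artin's theorem: as $G$ is finite, $L$ is a finite Galois extension of $L^{G}$ with $Aut(L/L^{G})=G$ and $[L:L^{G}]=\sharp G$; in particular $L$ is automatically algebraic over $L^{G}$, so ``$G$ is a Noether solution of $L/K$'' is equivalent to ``$L^{G}$ is purely transcendental over $K$''. I shall also use that pure transcendence is a $K$-isomorphism invariant and that $L^{G}$ is purely transcendental over $K$ exactly when $L^{G}=K(\Delta)$ for a transcendence basis $\Delta$ of $L^{G}/K$, which is then a transcendence basis of $L/K$ as well.

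For (i), the forward implication is essentially an unwinding of definitions. If $G$ is a Noether solution, write $L^{G}=K(\Delta)$ and choose a transcendence basis $\Lambda$ of $L/K$ with $K(\Lambda)=L$ (possible since $L/K$ is purely transcendental); then $K(\Lambda)=L\supseteq K(\Delta)=L^{G}$, the extension $K(\Lambda)/K(\Delta)$ is Galois with group $G$ by Artin's theorem, and $G$ and $Aut(K(\Lambda)/K(\Delta))=G$ are $(L,K(\Lambda))$-spatially isomorphic through the identity maps, for which conditions (i)--(iii) of Definition 1.3 are trivial. For the converse, given such $\Delta$ and $\Lambda$, I would take the $K$-isomorphism $\phi:L\to K(\Lambda)$ and the group isomorphism furnished by the spatial isomorphism and observe that condition (iii) of Definition 1.3 says precisely that $\phi$ carries $L^{G}$ onto the fixed field of $Aut(K(\Lambda)/K(\Delta))$ in $K(\Lambda)$; since $K(\Lambda)/K(\Delta)$ is Galois this fixed field is $K(\Delta)$, so $L^{G}\cong_{K}K(\Delta)$ is purely transcendental and $G$ is a Noether solution. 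The identity $con_{L/K}(G)=[Aut(L/K):Aut(L/K)^{G}]$ should then follow from Theorem 1.6(i), which says the Noether $G$-solutions are exactly the conjugates of $G$, together with the orbit--stabilizer identification of the set of these conjugates with $Aut(L/K)/Aut(L/K)^{G}$.

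For (ii), I would first note that $(N1)$ and $(N2)$ already force a useful structure: if such an $H$ exists then $G$ and $H$ commute elementwise and $G\cap H=\{1\}$, so $\langle G\cup H\rangle=G\cdot H$, the group $H$ stabilises $L^{G}$, the restriction map $H\to H|_{L^{G}}$ has kernel $H\cap Aut(L/L^{G})=H\cap G=\{1\}$, and $(N2)$ becomes $\big(L^{G}\big)^{H|_{L^{G}}}=K$, so that $L^{G}$ is at least Galois over $K$ in the sense of Definition 1.2. Condition $(N3)$ is what upgrades this to pure transcendence: transporting the data along the $K$-isomorphism $\phi:L\to K(\Lambda)$ of $(N3)(b)$ and using the Galois hypothesis $(N3)(a)$ together with the surjectivity $(N3)(c)$ should pin down $L^{G}$ as $K$-isomorphic to $K(\Delta)$, whence pure transcendence and a Noether solution. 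Conversely, starting from a Noether solution $G$, so that $L^{G}=K(\Delta)$ and $L/L^{G}$ is finite Galois with group $G$, I would build $H$ as a lift to $Aut(L/K)$ of the automorphism group $Aut(L^{G}/K)$ of the purely transcendental extension $L^{G}/K$: after recording that $K(\Delta)/K$ is Galois in the sense of Definition 1.2, one invokes the lifting-of-isomorphisms machinery of \S8 to extend the action of $Aut(L^{G}/K)$ across the finite Galois extension $L/L^{G}$ to a group $H$ of automorphisms of $L$ centralising $G$; then, with $\Lambda$ a transcendence basis of $L/K$ satisfying $K(\Lambda)=L$ and $H_{\Lambda}=H$, property $(N1)$ holds because the lift commutes with $G$ and meets it trivially, $(N2)$ holds because $L^{\langle G\cup H\rangle}=\big(L^{G}\big)^{H|_{L^{G}}}=K(\Delta)^{Aut(K(\Delta)/K)}=K$, and $(N3)$ holds by construction. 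Uniqueness of $H$ up to $G$ comes from the observation that any two such lifts differ, element by element, by a member of $Aut(L/L^{G})=G$, and the counting formula is proved as in (i), now with $\langle G\cup H\rangle$ in place of $G$.

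The step I expect to be the main obstacle is the construction of $H$ in (ii): one must produce automorphisms of $L$ that simultaneously commute with $G$, restrict onto all of $Aut(L^{G}/K)$, and cut the total fixed field down to exactly $K$. Extending even a single automorphism of $L^{G}$ across the finite Galois extension $L/L^{G}$ is not automatic --- a careless extension need not carry $L$ back into itself --- and the extensions must be chosen coherently so that the lifted maps form a group centralising $G$; this is exactly what the ``lifting of isomorphisms'' results of \S8 are designed to provide, and it carries the real weight of the proof. A second, genuinely delicate, ingredient is the assertion that $K(\Delta)/K$ is Galois in the sense of Definition 1.2, i.e.\ that $Aut(K(\Delta)/K)$ has fixed field exactly $K$; this rests on the transcendental Galois theory of \S\S3--5 and is the point at which the precise hypotheses on $K$ become essential.
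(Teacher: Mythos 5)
Your part (i) is essentially the paper's own argument: the forward direction takes $\Lambda$ with $K\left(\Lambda\right)=L$ and the identity spatial isomorphism, the converse transports $L^{G}$ along $\phi$ onto $K\left(\Lambda\right)^{Aut\left(K\left(\Lambda\right)/K\left(\Delta\right)\right)}=K\left(\Delta\right)$, and the count comes from Theorem 1.6$\left(i\right)$ plus the orbit--stabilizer bijection with $Aut\left(L/K\right)/Aut\left(L/K\right)^{G}$; this is exactly the route through Theorem 8.21, Propositions 8.25--8.29, Theorem 8.30 and Proposition 9.1. (Incidentally, the fact that $K\left(\Delta\right)$ is Galois over $K$ is Theorem 3.1 and needs no hypotheses on $K$.) Part (ii), however, has genuine gaps at precisely the two steps that carry the weight.

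First, the construction of $H$ in the forward direction. You propose to lift $Aut\left(L^{G}/K\right)$ across the finite Galois extension $L/L^{G}$ ``by the lifting-of-isomorphisms machinery of \S 8''. But Lemmas 8.16--8.17 and Theorem 8.20 only extend a single $K$-isomorphism $K\left(\Delta\right)\rightarrow K\left(\Lambda\right)$, and only under a conjugacy (spatial-isomorphism) hypothesis; they give no multiplicative choice of extensions, no reason the chosen extensions commute elementwise with $G$, and no trivial intersection with $G$ --- nothing in \S 8 produces a homomorphic, $G$-centralizing section of the restriction map, and the paper's Remark 8.15 shows extensions can fail to exist in closely related situations. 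The paper obtains $H$ quite differently: $H:=\pi_{t}\left(L/K\right)\left(\Delta^{\prime},A^{\prime}\right)$, the highest transcendental Galois subgroup at a nice basis $\left(\Delta^{\prime},A^{\prime}\right)$ with $K\left(\Delta^{\prime}\right)=L^{G}$, defined all at once through the $K$-linear decomposition $L=E\oplus F$ with $E=K\left(\Delta^{\prime}\right)$ and $F$ the $K$-span of $L\setminus K\left(\Delta^{\prime}\right)$ (Proposition 4.7, Claim 4.8); that this $H$ satisfies $\left(N1\right)$--$\left(N3\right)$, that $\left\langle G\cup H\right\rangle =D_{L/K}\left(\Delta^{\prime},A^{\prime}\right)$, and the uniqueness are Theorem 6.4, Theorem 5.4 and Lemma 4.13, after which the count is Proposition 8.29, Theorem 8.30$\left(iii\right)$ and Proposition 9.1. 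Second, the converse of (ii): the spatial isomorphism in $\left(N3\right)$ relates $H$ to $H_{\Lambda}$, so its fixed-point clause only controls $\phi\left(L^{H}\right)$, not $L^{G}$; from $\left(N1\right)$--$\left(N2\right)$ you get that $L^{G}$ is Galois over $K$, and your sentence that transporting the data ``should pin down $L^{G}$ as $K$-isomorphic to $K\left(\Delta\right)$'' is exactly the missing argument, not a routine transport of structure. The paper crosses this gap with the decomposition theory of \S 6.6: Proposition 9.2 (via Proposition 6.11 and Theorem 6.15) identifies $H_{\Lambda}$ with the highest transcendental Galois subgroup $\pi_{t}\left(K\left(\Lambda\right)/K\right)\left(\Delta,A\right)$, and Theorem 8.22 then transfers the spatial isomorphism from the pair $\left(H,H_{\Lambda}\right)$ to the pair consisting of $G$ and the full algebraic Galois subgroup, which is what identifies $L^{G}$ with a purely transcendental subfield. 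Without these two ingredients your plan for (ii) does not close.
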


\begin{remark}
(\textbf{Remark 6.18}) The above two conditions $\left( N1\right) -\left(
N2\right) $ in $\left( ii\right) $ of \emph{Theorem 0.13} are an essential
condition for a finite subgroup $G\subseteq Aut\left( L/K\right) $ to be a
Noether solution of $L/K$. In \emph{\S 6.6} there is a full discussion for
the conditions $\left( N1\right) -\left( N2\right) $ such as linear
decompositions of automorphisms of $L/K$ and Galois-complemented groups in $%
L/K$ focusing on the interplay between Noether solutions and transcendental
Galois theory, where we obtain a key lemma, \emph{Lemma 6.26}, for the proof
of \emph{Theorem 0.4}.
\end{remark}

\subsection{Outline of the paper}

The above main theorems will be proved mainly in \emph{\S 9} (and partly in
\emph{\S \S 7-8}) after in \emph{\S 6} we obtain the technical results on
the interplay between transcendental Galois theory and Noether solutions,
which form the key ingredients of the proofs.

Let's explain what's the transcendental Galois theory and why there exists
such a theory. Here we have several remarks for them.

\begin{remark}
(\emph{Algebraic Galois theory v.s. transcendental Galois theory}) Let $L$
be an extension over a field $K$. Let $H$ be a subgroup of the Galois group $%
Aut\left( L/K\right) $. Then $L$ is Galois over the $H$-invariant subfield $%
M:=L^{H}$.

$\left( i\right) $ For the subextension $L/M$ in $L/K$, there are two cases:

If $L$ is algebraic over $M$, then this is the (usual) \emph{Galois theory
of algebraic extensions}, called the \textbf{algebraic Galois theory} in the
paper.

If $L$ is transcendental over $M$, then there is a counterpart, \emph{Galois
theory of transcendental extensions}, called the \textbf{transcendental
Galois theory} in the paper (See \emph{[}An 2010\emph{]} for preliminary
studies).

$\left( ii\right) $ The transcendental Galois theory is of its own
characteristics distinct from algebraic Galois theory. (See the following \emph{Theorem 0.16} and \emph{Remarks 0.17-9}).
\end{remark}

\begin{theorem}
\emph{(\textbf{Theorem 3.1})} Let $L$ be a purely transcendental extension
over a field $K$ of transcendence degree $n\leq +\infty $. Then $L$ is
Galois over $K$, i.e., there is
\begin{equation*}
L^{Aut\left( L/K\right) }=K.
\end{equation*}

In particular, there is a subgroup $H$ in $Aut\left( L/K\right) $ of order $%
\sharp H=2$ such that
\begin{equation*}
L^{Aut\left( L/K\right) }=L^{H}=K
\end{equation*}%
hold for the invariant subfields of $L$.
\end{theorem}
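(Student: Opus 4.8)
The plan is to establish the equality $L^{Aut(L/K)}=K$ by proving the nontrivial inclusion $L^{Aut(L/K)}\subseteq K$, and then to treat the asserted order-$2$ refinement separately. Everything reduces to showing that an element $x\in L$ fixed by every $K$-automorphism of $L$ must already lie in $K$. I would first rule out that $x$ is algebraic over $K$ but not in $K$: this cannot occur, since $K$ is relatively algebraically closed in any purely transcendental extension $L=K(\{t_{i}\}_{i\in I})$. Hence it suffices to derive a contradiction from the assumption that $x$ is transcendental over $K$.

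To produce the contradiction, fix the presentation $L=K(\{t_{i}\})$, write $x=f/g$ with $f,g\in K[t_{1},\dots,t_{m}]$ coprime, and after re-indexing assume $x$ genuinely involves $t_{1}$. Use the ``shear'' automorphisms of $L/K$ defined by $t_{1}\mapsto t_{1}+s$ and $t_{i}\mapsto t_{i}$ for $i\ge 2$, where $s$ ranges over an infinite subset of $K$ when $K$ is infinite, and over $\{t_{2}^{k}:k\ge 1\}$ when $I$ has at least two elements; all of these fix $x$ by hypothesis. Then the polynomial $F(u):=f(t_{1}+u,t_{2},\dots)\,g(t_{1},t_{2},\dots)-f(t_{1},t_{2},\dots)\,g(t_{1}+u,t_{2},\dots)\in L[u]$ has infinitely many roots in $L$, hence $F\equiv 0$; equivalently $x(t_{1}+u,t_{2},\dots)=x(t_{1},t_{2},\dots)$ in $L(u)$ for a fresh transcendental $u$. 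Applying the substitution $u\mapsto u-t_{1}$ to this identity yields $x(u,t_{2},\dots)=x(t_{1},t_{2},\dots)$, whose left-hand side is free of $t_{1}$; a short argument on intersections of the relevant subfields of $L(u)$ then forces $x$ to be free of $t_{1}$ as well. Iterating over the remaining variables gives $x\in K$, proving $L^{Aut(L/K)}=K$; to settle the ``in particular'' clause one would exhibit a concrete involution in $Aut(L/K)$, such as $t_{1}\mapsto t_{1}^{-1}$ (transcendence degree one) or a transposition $t_{1}\leftrightarrow t_{2}$ (higher), and compare fixed fields.

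The main obstacle I anticipate is precisely this last clause $L^{H}=K$ with $\sharp H=2$. By Artin's theorem a subgroup $H\subseteq Aut(L/K)$ of order $2$ satisfies $[L:L^{H}]=2$, so $L^{H}=K$ would force $[L:K]=2$, impossible for a transcendental extension; hence the clause has to be read either as ``$Aut(L/K)$ contains an involution'' (with the fixed-field equality referring only to the full group) or as ``$Aut(L/K)$ is generated by involutions'', and the remaining work is to pin down and verify the correct form. A companion subtlety the plan must also handle: the global equality $L^{Aut(L/K)}=K$ genuinely fails when $K$ is finite and the transcendence degree equals $1$, since then $Aut(L/K)=\mathrm{PGL}_{2}(K)$ is a finite group and $L^{Aut(L/K)}$ is its (strictly larger than $K$) field of invariant rational functions. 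I would therefore run the shear argument under the standing hypothesis ``$K$ infinite, or transcendence degree $\ge 2$'' — which already covers every case relevant to Noether's problem — and state the order-$2$ assertion in one of the corrected forms above.
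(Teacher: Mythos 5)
Your route is genuinely different from the paper's, and in fact it is the sounder of the two. The paper's proof consists of introducing the reciprocal transformation $\tau _{L/K}:v\mapsto 1/v$ (for every $v$ in a transcendence base $\Delta $) and simply asserting that $K(\Delta )^{\{\tau _{L/K}\}}=K$, from which both clauses of the theorem are read off; no justification of that assertion is given, and it is false: $v+1/v$ is fixed by $\tau _{L/K}$ and does not lie in $K$, and more structurally Artin's theorem makes the fixed field of this order-two group an index-two subfield of $L$, which can never equal $K$ for a transcendental extension. That is precisely the objection you raise against the ``in particular'' clause, so your diagnosis there is correct and applies to the paper's own argument as well. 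Your shear argument ($t_{1}\mapsto t_{1}+s$ with $s$ ranging over an infinite subset of $K$, or over $\{t_{2}^{k}:k\geq 1\}$ when there are at least two variables), combined with the relative algebraic closedness of $K$ in $L$, the finiteness of the root set of the nonzero polynomial $F(u)$, and the intersection identity $F_{0}(u)\cap F_{0}(t_{1})=F_{0}$ for $F_{0}=K(t_{2},\dots )$, does prove $L^{Aut\left( L/K\right) }=K$ under your standing hypothesis ``$K$ infinite or transcendence degree $\geq 2$''; and your counterexample in the excluded case ($K$ finite, transcendence degree one, where $Aut\left( L/K\right) \cong PGL_{2}(K)$ is finite and its invariant field is strictly larger than $K$) shows that some such hypothesis is genuinely needed, i.e.\ the theorem as printed is not correct in full generality. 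In short: where the statement is true your proposal proves it by a method independent of (and more robust than) the paper's reciprocal-involution device, and the two corrections you propose --- restricting the main equality as above, and either deleting the $\sharp H=2$ clause or weakening it to the existence of involutions in $Aut\left( L/K\right) $ --- are both necessary; what your approach gives up is only the (illusory) brevity of the paper's one-line argument.
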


\begin{remark}
Let $L$ be an extension over a field $K$ of degree $\left[ L:K\right]
=+\infty $. Suppose $L$ is Galois over $K$.

$\left( i\right) $ If $L$ is algebraic over $K$, then there is no finite
subgroup $G$ in $Aut\left( L/K\right) $ such that $L^{G}=K$ holds.

$\left( ii\right) $ Assume $L$ is a transcendental extension over $K$ of
finite transcendence degree. Let $M$ be a purely transcendental extension $M$
over $K$ such that $M$ is finite and Galois over $L$. Under certain
conditions, the finite group $G:=Aut\left( M/L\right) $ can fix the $L$ to
be $K$, i.e., $L^{G}=K$. See \emph{Lemma 6.20} and \emph{Example 6.22}.
\end{remark}

\begin{remark}
For a transcendental extension $M/K$, there can be finite groups $G$ of the
Galois group $Aut\left( M/K\right) $ such that $K$ is the $G$-invariant
subfield $M^{G}$ of $M$. It is called \textbf{the first characteristic }of
transcendental extensions in the paper (See \emph{Remark 6.21}). In deed, a
transcendental extension $M/K$ can have other characteristics (See \emph{%
Remarks 6.25} \& \emph{8.4}).
\end{remark}

\begin{remark}
Let $L$ be a purely transcendental extension over a field $K$ of
transcendence degree $n$. Then the general linear group $GL\left( n,K\right)
$ is isomorphic to a subgroup of the Galois group $Aut\left( L/K\right) $.
(See $\left( iii\right) $ of \emph{Example 1.9}).
\end{remark}

\begin{remark}
(\emph{Remark 8.4}) (\emph{Patterns producing roots}) Let $L=K\left(
t_{1},t_{2},\cdots ,t_{n}\right) $ be a purely transcendental extension over
a field $K$. Let $\Sigma _{n}\subseteq Aut\left( L/K\right) $ denote the
full permutation subgroup of $L/K$ (relative to the variables $%
t_{1},t_{2},\cdots ,t_{n}$).

Fixed a subgroup $G\subseteq \Sigma _{n}$. For instance, $G$ is a subgroup
in $Aut\left( L/K\right) $ generated by a linear involution of $L/K$.

$\left( i\right) $ The $G$-invariant subfield $L^{G}$ of $L$ is the $G$%
-symmetric functions $\Lambda ^{G}\left( L\right) $ of $L$; the function
field $L$ is algebraic Galois over the $G$-invariant subfield $L^{G}$. As $%
L=L^{G}\left[ \theta \right] $ is a simple extension over the subfield $%
L^{G} $ for some $\theta \in L$, there is a minimal polynomial $f\left(
X\right) $ of $\theta $ over the subfield $L^{G}$ such that $f\left( \theta
\right) =0$.

$\left( ii\right) $ Each permutation subgroup $G$ of $\Sigma _{n}$
establishes a \textbf{pattern} for the variables $t_{1},t_{2},\cdots ,t_{n}$
during the mapping and translating in the extension $L/K$. So, \emph{%
patterns }$G$\emph{\ produce roots }$\theta $. This gives to us an essential
comparison between algebraic Galois theory and transcendental Galois theory.

$\left( iii\right) $ The unusual complexities of Galois groups $H$ in
transcendental Galois theory are due to the entanglements of the permutation
subgroups $G$ (relative to the given variables) with the usual Galois groups
$P$ in algebraic Galois theory. In deed, there are Galois groups of separated type (See \emph{Definition 8.11}).
\end{remark}

For the transcendental Galois theory, there are several types of Galois
extensions such as \emph{Galois extensions}, \emph{tame Galois extensions},
\emph{$\sigma$-tame Galois extensions} and \emph{complete Galois extensions}
(See \emph{[}An 2010\emph{]} or \emph{\S 2.2} for definitions). For the case
of algebraic extensions, these Galois extensions will be reduced to the same
one, the Galois extensions (See \emph{Remark 3.15}). In deed, unlike the
algebraic Galois theory, the transcendental Galois theory has many exotic
and unusual properties. For the part of the transcendental Galois theory in
the paper, we proceed step by step with extreme caution and there are many
places in the exposition of the paper which appear copious and are overdone, where there are almost no advanced techniques which are in need of the proofs, although the key observation was obtained from arithmetic fundamental groups.

Here there is a brief outline of the paper. In \emph{\S 1} we will give the
statements of the main theorems of the paper. The main theorems will be
proved in \emph{\S 9}.

In \emph{\S \S 2-5} we will give a systematic exposition for our \emph{%
transcendental Galois theory}, including several types of Galois extensions,
where the Galois group of a transcendental extension has two functional
parts, i.e., the \emph{full algebraic Galois subgroup} and the \emph{highest
transcendental Galois subgroup}. Like the blocks of a diagonal matrix, the
algebraic and transcendental parts form the \emph{decomposition subgroup} of
the extension under the Galois action. Several results such as Galois
correspondence and Dedekind independence of automorphisms of fields will be obtained.

In \emph{\S 6} based on the transcendental Galois theory in \emph{\S \S 2-5}%
, a passage between the transcendental Galois theory and Noether's problem
on rationality will be established, where we will focus on the properties of
the interplay between Noether solutions and transcendental Galois theory.

In \emph{\S 7} the $G$-symmetric functions will be studied under the
techniques obtained in \emph{\S 6}. In \emph{\S 8} there will be a full
discussion on spatial isomorphisms and Noether solutions. In particular,
\emph{$K$-theory} will come into Noether solutions in a very natural way. In
\emph{\S 9} we will give the proofs of the main theorems stated in \emph{\S 1%
}.

The paper is part of our long-term project of \emph{geometric class field
theory}, where by arithmetic fundamental groups (including motivic Galois
groups) as Galois groups, automorphic representations and $L$-functions, we
attempt to understand a theory for class fields over arithmetic function
fields relative to a fixed set of ground variables. In such a case, we need
to make a comparison between different sets of ground variables, where there
are difficulties involving Noether's problem on rationality, which triggered
our studies on Noether's problem couples of years ago. Finally, the author
would like to express his acknowledgements to Professor Li Bang-He for his
long-standing support.

\bigskip

\section{Main Theorems of The Paper}

In this section we will give the statements of the main theorems of the
present paper which form a general theory on Noether's problem on
rationality.

\subsection{Some definitions in transcendental Galois theory}

Let $L$ be an arbitrary extension of a field $K$ (algebraic or
transcendental).

\begin{definition}
(See \emph{[}An 2010\emph{]}) The \textbf{Galois group} of the extension $%
L/K $, denoted by $Aut\left( L/K\right) $, is the group of all automorphisms
$\sigma $ of the field $L$ such that $\sigma (x)=x$ holds for each $x\in K$.

Let $H$ be a subgroup (or subset) of the Galois group $Aut\left( L/K\right) $%
. The \textbf{invariant subfield} of $L$ under $H$ is defined to be the
subfield
\begin{equation*}
L^{H}=\{x\in L:\sigma \left( x\right) =x\text{ for each }\sigma \in H\}.
\end{equation*}%
The subfield $L^{H}$ is also called the $H$\textbf{-invariant subfield} of $%
L $ in the paper.
\end{definition}

\begin{definition}
(See \emph{[}An 2010\emph{]}) The field $L$ is said to be \textbf{Galois}
over a subfield $K$ if $K=L^{Aut\left( L/K\right) }$ holds, i.e., $K$ is the
invariant subfield of $L$ under the Galois group $Aut\left( L/K\right) $.
\end{definition}

\begin{definition}
Let $L$ and $M$ be two extensions over a field $K$. Two subgroups $%
G\subseteq Aut\left( L/K\right) $ and $H\subseteq Aut\left( M/K\right) $ are
said to be $\left( L,M\right) $\textbf{-spatially isomorphic} if the three
conditions are satisfied:

$\left( i\right) $ There is an isomorphism $\psi :H\rightarrow G$ of groups.

$\left( ii\right) $ There is a $K$-isomorphism $\phi :L\rightarrow M$ of
fields such that the actions of $G$ on $L$ and $H$ on $M$ are $\left( \psi
,\phi \right) $\emph{-compatible}, i.e., $g\left( x\right) =\phi ^{-1}\circ
\psi \left( g\right) \circ \phi \left( x\right) $ holds for any $g\in G$ and
$x\in L$.

$\left( iii\right) $ Fixed any $g\in G$ and $x\in L$. There is $g\left(
x\right) =x$ in $L$ if and only if $\psi \left( g\right) \circ \phi \left(
x\right) =\phi \left( x\right) $ holds in $M$.

In such a case, the pair $\left( \psi ,\phi \right) $ are called an $\left(
L,M\right) $\textbf{-spatial isomorphism} from $G$ to $H$.
\end{definition}

\begin{definition}
Let $L$ be a transcendental extension over a field $K$.

$\left( i\right) $ A \textbf{Noether solution} of $L/K$ is a subgroup $G$ of
the Galois group $Aut\left( L/K\right) $ satisfying the two properties: $%
\left( a\right) $ $L$ is algebraic over the $G$-invariant subfield $L^{G}$
of $L$; $\left( b\right) $ The $G$-invariant subfield $L^{G}$ is purely
transcendental over $K$.

$\left( ii\right) $ Let $G$ be a Noether solution of $L/K$. A \textbf{%
Noether }$G$\textbf{-solution} of $L/K$ is a Noether solution $P$ of $L/K$
such that $P$ and $G$ are conjugate subgroups in the Galois group $Aut\left(
L/L\right) $.
\end{definition}

\begin{definition}
Let $L$ be a transcendental extension over a field $K$ and $G$ a Noether
solution of $L/K$. Denote by
\begin{equation*}
\Psi _{L/K}\left( G\right) :=\{G_{\lambda }:\lambda \in I\}
\end{equation*}%
the set of all the Noether solutions $G_{\lambda }$ of $L/K$ which contain $%
G $.

$\left( i\right) $ A subset $\Phi \subseteq \Psi _{L/K}\left( G\right) $ is
said to be a \textbf{Zorn-component} of $\Psi _{L/K}\left( G\right) $ if the
two conditions are satisfied: $\left( a\right) $ $\Phi $ is a totally
ordered subset; $\left( b\right) $ There is no other element $P_{0}\in \Psi
_{L/K}\left( G\right) $ such that the union $\Phi \cup \{P_{0}\}$ is still
totally ordered. Here, via set-inclusion, $\Psi _{L/K}\left( G\right) $ is a
partially ordered set.

$\left( ii\right) $ The \textbf{height group} of $G$ in $L/K$, denoted by $%
H_{L/K}\left( G\right) $, is the subgroup in the Galois group $Aut\left(
L/K\right) $ generated by the subset
\begin{equation*}
\bigcup\limits_{P\in \Psi _{L/K}\left( G\right) }P.
\end{equation*}

$\left( iii\right) $ Consider the general linear group $\Pi :=GL_{K}\left(
L\right) $ of $L$ over $K$ and its maximal abelian quotient $\Pi ^{ab}:=\Pi /%
\left[ \Pi ,\Pi \right] $ of $\Pi $ by the commutator subgroup $\left[ \Pi
,\Pi \right] $. The \textbf{class height group} of $G$ in $L/K$ is defined
to be the image
\begin{equation*}
CH_{L/K}\left( G\right) :=\tau _{L/K}\left( H_{L/K}\left( G\right) \right)
\end{equation*}%
of the height subgroup $H_{L/K}\left( G\right) $ under the natural
homomorphism $\tau _{L/K}:\Pi \rightarrow \Pi ^{ab}$ of groups.
\end{definition}

\subsection{Notation and symbols}

Now let's fix notation and symbols. For a field $F$, let $F^{\ast }$ denote
the multiplicative group $\left( F\setminus \{0\},\times \right) $ of $F$.

Let $L$ be a transcendental extension over a field $K$ and let $G$ be a
Noether solution $G$ of the extension $L/K$. Denote by
\begin{equation*}
Con_{L/K}\left( G\right)
\end{equation*}%
the set of all the Noether $G$-solutions of $L/K$ and by
\begin{equation*}
con_{L/K}\left( G\right) :=\sharp Con_{L/K}\left( G\right)
\end{equation*}%
the number of all the Noether $G$-solutions of $L/K$.

For the centralizer $Z_{Aut\left( L/K\right) }\left( G\right) $ of $G$ in $%
Aut\left( L/K\right) $, denote by
\begin{equation*}
ind_{L/K}\left( G\right) :=\sharp \frac{Aut\left( L/K\right) }{Z_{Aut\left(
L/K\right) }\left( G\right) }
\end{equation*}%
the index of the centralizer $Z_{Aut\left( L/K\right) }\left( G\right) $ in
the Galois group $Aut\left( L/K\right) $.

For a subgroup $P$ of $Aut\left( L/K\right) $, we denote the index of $P$ in
the Galois group $Aut\left( L/K\right) $ by
\begin{equation*}
\left[ Aut\left( L/K\right) :P\right] :=\sharp \frac{Aut\left( L/K\right) }{P%
}
\end{equation*}%
and the isotropy subgroup of $P$ under the conjugate action of $Aut\left(
L/K\right) $ by
\begin{equation*}
Aut\left( L/K\right) ^{P}:=\{\phi \in Aut\left( L/K\right) :\phi ^{-1}\cdot
P\cdot \phi =P\}.
\end{equation*}

For a purely transcendental extension $M=K\left( t_{1},t_{2},\cdots
,t_{n}\right) $ over a field $K$, as usual, a subgroup $P$ of $Aut\left(
M/K\right) $ is said to \textbf{have a transitive action on the variables} $%
t_{1},t_{2},\cdots ,t_{n}$ if for any $1\leq i<j\leq n$, there is some $%
\sigma \in P$ such that $\sigma \left( t_{i}\right) =t_{j}$ holds.

\subsection{The main theorems of the paper}

Here there are the main theorems of the paper, which form a general theory
for solutions of Noether's problem on rationality. We will be prove these
theorems in \emph{\S 9 }at the end of the paper after a journey of
transcendental Galois theory.

\begin{theorem}
\emph{(Main Theorem for Noether solutions: Properties)} Assume $L$ is a
purely transcendental extension over a field $K$ of finite transcendence
degree. Then there are the following properties for Noether solutions of $%
L/K $.

$\left( i\right) $ Fixed a subgroup $G$ of $Aut\left( L/K\right) $. Then $G$
is a Noether solution of $L/K$ if and only if each conjugate subgroup of $G$
in the Galois group $Aut\left( L/K\right) $ is a Noether solution of $L/K$.

In such a case, we have
\begin{equation*}
Con_{L/K}\left( G\right) =\{\phi ^{-1}\cdot G\cdot \phi :\phi \in Aut\left(
L/K\right) \}
\end{equation*}
for the set $Con_{L/K}\left( G\right) $ of Noether $G$-solutions of $L/K$.

$\left( ii\right) $ For each Noether solution $G$ of $L/K$, there is
\begin{equation*}
con_{L/K}\left( G\right) \leq ind_{L/K}\left( G\right)
\end{equation*}%
for the number $con_{L/K}\left( G\right) $ of Noether $G$-solutions of $L/K$.

In particular, if $ind_{L/K}\left( G\right) =m<+\infty $, then there are at
most $m$ Noether $G$-solutions of $L/K$.

$\left( iii\right) $ Let $G$ be a Noether solution of $L/K$. If there exists
only one Zorn-component in $\Psi _{L/K}\left( G\right) $, then the class
height group $CH_{L/K}\left( G\right) $ of $G$ is isomorphic to a subgroup
of the multiplicative subgroup $\left( L^{G}\right) ^{\ast }$ of the $G$%
-invariant subfield $L^{G}$.
\end{theorem}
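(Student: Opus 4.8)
The plan is to dispatch $(i)$ and $(ii)$ by transport of structure together with the orbit--stabilizer theorem, and to establish $(iii)$ by constructing a suitable ``determinant'' homomorphism into $(L^{G})^{\ast}$.

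For $(i)$, fix $\phi\in Aut(L/K)$ and set $G^{\phi}:=\phi^{-1}\cdot G\cdot\phi$. A direct computation gives $L^{G^{\phi}}=\phi^{-1}\!\left(L^{G}\right)$, so the restriction of $\phi$ to $L^{G}$ is a $K$-isomorphism $L^{G}\xrightarrow{\ \sim\ }L^{G^{\phi}}$. Hence $L^{G}$ is purely transcendental over $K$ if and only if $L^{G^{\phi}}$ is, and, applying the automorphism $\phi$ of $L$, the field $L$ is algebraic over $L^{G}$ if and only if it is algebraic over $L^{G^{\phi}}$. Therefore $G$ is a Noether solution of $L/K$ if and only if every conjugate of $G$ in $Aut(L/K)$ is a Noether solution. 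Granting this, the set $Con_{L/K}(G)$, which by definition consists of the Noether solutions of $L/K$ conjugate to $G$ in $Aut(L/K)$, coincides with $\{\phi^{-1}\cdot G\cdot\phi:\phi\in Aut(L/K)\}$: the inclusion $\supseteq$ holds because each such conjugate is a Noether solution, and $\subseteq$ holds by the definition of Noether $G$-solution.

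For $(ii)$, let $Aut(L/K)$ act on the set of its subgroups by conjugation. By $(i)$ the orbit of a Noether solution $G$ is exactly $Con_{L/K}(G)$, while the stabilizer of $G$ is the normalizer $Aut(L/K)^{G}$; hence $con_{L/K}(G)=\left[Aut(L/K):Aut(L/K)^{G}\right]$. Since $Z_{Aut(L/K)}(G)\subseteq Aut(L/K)^{G}$, passing to indices yields $con_{L/K}(G)\le ind_{L/K}(G)$, and the ``in particular'' clause is immediate.

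For $(iii)$, assume $\Psi_{L/K}(G)$ has a single Zorn-component; equivalently $\Psi_{L/K}(G)$ is totally ordered by inclusion, so the union $\bigcup_{P\in\Psi_{L/K}(G)}P$ is already a subgroup and therefore equals the height group $H:=H_{L/K}(G)$. Each $P\in\Psi_{L/K}(G)$ is finite and contains $G$, so $L^{P}\subseteq L^{G}$, and, since $L^{P}=L^{Aut(L/L^{P})}$, Artin's theorem gives that $L/L^{P}$ is finite Galois with $P=Aut(L/L^{P})$; thus every $\sigma\in P$ is an $L^{P}$-linear automorphism of the finite-dimensional $L^{P}$-vector space $L$ and has a determinant $\det_{L^{P}}(\sigma)\in(L^{P})^{\ast}\subseteq(L^{G})^{\ast}$. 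I would assemble these into one homomorphism $\delta\colon H\to(L^{G})^{\ast}$ by attaching to $\sigma\in H$ the determinant computed over $L^{P}$, where $P$ is the smallest member of the chain $\Psi_{L/K}(G)$ containing $\sigma$ (such a $P$ exists, the set of chain members containing $\sigma$ being a nonempty final segment closed under the intersections occurring in it). The delicate bookkeeping here is that the determinants over $L^{P}\subseteq L^{P'}$ differ by the norm $N_{L^{P'}/L^{P}}$, so one must verify that these norm factors are trivial on the relevant subgroups, the offending elements being scalars. Writing $\Pi=GL_{K}(L)$ and $\tau_{L/K}\colon\Pi\to\Pi^{ab}$, one has $CH_{L/K}(G)=\tau_{L/K}(H)\cong H/\bigl(H\cap[\Pi,\Pi]\bigr)$, and the theorem follows once $\ker\delta=H\cap[\Pi,\Pi]$ is established. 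The inclusion $\ker\delta\subseteq H\cap[\Pi,\Pi]$ is the easy half: if $\delta(\sigma)=1$ then $\sigma$ lies in $SL_{L^{P}}(L)=[GL_{L^{P}}(L),GL_{L^{P}}(L)]\subseteq[\Pi,\Pi]$ for the corresponding $P$ (the case $[L:L^{P}]=1$ forcing $\sigma=1$). The reverse inclusion $H\cap[\Pi,\Pi]\subseteq\ker\delta$ is the main obstacle: it demands that $\delta$ annihilate every product of commutators of the full linear group $\Pi=GL_{K}(L)$ that happens to lie in $H$, which goes well beyond $\delta$ merely being a homomorphism on $H$ and requires the structural analysis of $\Pi$ and of its abelianization $\Pi^{ab}$ carried out in the earlier sections (this is the point at which the $K$-theoretic input advertised in the Introduction enters). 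Once $\ker\delta=H\cap[\Pi,\Pi]$ is in hand, $\delta$ descends to the desired injection $CH_{L/K}(G)\hookrightarrow(L^{G})^{\ast}$.
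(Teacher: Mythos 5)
Your treatments of $\left( i\right) $ and $\left( ii\right) $ are correct. For $\left( i\right) $ you argue directly from $L^{\phi ^{-1}G\phi }=\phi ^{-1}\left( L^{G}\right) $, which is a more elementary route than the paper's (the paper passes through $\left( L,L\right) $-spatial isomorphisms, \emph{Theorem 8.23} and \emph{Proposition 9.1}, but the content is the same transport of structure); for $\left( ii\right) $ the orbit--stabilizer argument together with $Z_{Aut\left( L/K\right) }\left( G\right) \subseteq Aut\left( L/K\right) ^{G}$ is exactly the mechanism behind the paper's \emph{Theorem 8.30}.

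Part $\left( iii\right) $, however, has a genuine gap, and it sits precisely at the two places you flag but do not resolve. First, your map $\delta $ is not shown to be a homomorphism: for $P\subseteq P^{\prime }$ in the chain one has $L^{P^{\prime }}\subseteq L^{P}$ and $\det olimits_{L^{P^{\prime }}}\left( \sigma \right) =N_{L^{P}/L^{P^{\prime }}}\left( \det olimits_{L^{P}}\left( \sigma \right) \right) $ for $\sigma \in P$, so assigning to each $\sigma $ the determinant over the smallest chain member containing it destroys multiplicativity unless the norm discrepancies vanish; the remark that ``the offending elements are scalars'' is not a verification, and norms of determinants of non-scalar elements are not trivial in general. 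Second, and more seriously, the inclusion $H\cap \left[ \Pi ,\Pi \right] \subseteq \ker \delta $ is the entire content of the claim $CH_{L/K}\left( G\right) \cong \delta \left( H\right) $, and you defer it to a ``structural analysis of $\Pi $ and $\Pi ^{ab}$ carried out in the earlier sections'' --- but no such analysis of the abelianization of $GL_{K}\left( L\right) $ (an infinite-dimensional general linear group) exists in the paper, so there is nothing to cite. The paper's own proof of $\left( iii\right) $ avoids any computation of $\ker \delta $ or of $\left[ \Pi ,\Pi \right] $: it uses the single-Zorn-component hypothesis to identify $H_{L/K}\left( G\right) $ with the direct limit $H_{\infty }\left( \Phi \right) $ of the totally ordered family $\Psi _{L/K}\left( G\right) $, represents each $G_{\lambda }$ by matrices and passes to the $M$-equivalence classes of \emph{\S 8.7}, and then invokes \emph{Propositions 8.37--8.39} to identify $CH_{L/K}\left( G\right) =CH_{\infty }\left( \Phi \right) $ with a subgroup $K_{1}\left( L^{G};\Phi |_{0}\right) $ of $K_{1}\left( L^{G}\right) \cong \left( L^{G}\right) ^{\ast }$. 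To complete your argument you would either have to carry out that inductive-limit identification or genuinely prove both the coherence of your determinant map along the chain and the inclusion $H\cap \left[ \Pi ,\Pi \right] \subseteq \ker \delta $; as written, neither is done.
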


\begin{theorem}
\emph{(Main Theorem for Noether solutions: Criterions)} Let $L\ $be a purely
transcendental extension over a field $K$ of finite transcendence degree.
Suppose $G$ is a subgroup of the Galois group $Aut\left( L/K\right) $ and $L$
is algebraic over the $G$-invariant subfield $L^{G}$. Then there are the
following criterions for Noether solutions of $L/K$.

$\left( i\right) $ $G$ is a Noether solution of $L/K$ if and only if there
are two transcendence bases $\Delta $ and $\Lambda $ of $L/K$ with $K\left(
\Lambda \right) \supseteq K\left( \Delta \right) $ satisfying the two
conditions: $\left( a\right) $ $K\left( \Lambda \right) $ is Galois over $%
K\left( \Delta \right) $; $\left( b\right) $ $G$ and $Aut\left( K\left(
\Lambda \right) /K\left( \Delta \right) \right) $ are $\left( L,K\left(
\Lambda \right) \right) $-spatially isomorphic.

In such a case, we have
\begin{equation*}
con_{L/K}\left( G\right) =\left[ Aut\left( L/K\right) :Aut\left( L/K\right)
^{G}\right]
\end{equation*}%
for the number $con_{L/K}\left( G\right) $ of Noether $G$-solutions of $L/K$.

$\left( ii\right) $ $G$ is a Noether solution of $L/K$ if and only if there
is a subgroup $H$ of the Galois group $Aut\left( L/K\right) $ satisfying the
three conditions:

$\ \left( N1\right) $ There is $G\cap H=\{1\}$ and $\sigma \cdot \delta
=\delta \cdot \sigma $ holds in $Aut\left( L/K\right) $ for any $\sigma \in
G $ and $\delta \in H$.

$\ \left( N2\right) $ $K$ is the invariant subfield $L^{\left\langle G\cup
H\right\rangle }$ of $L$ under the subgroup $\left\langle G\cup
H\right\rangle $ generated in $Aut\left( L/K\right) $ by the subset $G\cup H$%
.

$\ \left( N3\right) $ There are two transcendence bases $\Delta $ and $%
\Lambda $ of $L/K$ with $K\left( \Delta \right) \subseteq K\left( \Lambda
\right) $ and a subgroup $H_{\Lambda }$ of the Galois group $Aut\left(
K\left( \Lambda \right) /K\right) $ having the three properties: $\left(
a\right) $ $K\left( \Lambda \right) $ is Galois over $K\left( \Delta \right)
$; $\left( b\right) $ $H$ and $H_{\Lambda }$ are $\left( L,K\left( \Lambda
\right) \right) $-spatially isomorphic; $\left( c\right) $ $Aut\left(
K\left( \Delta \right) /K\right) $ is the set of restrictions $\sigma
|_{K\left( \Delta \right) }$ of $\sigma $ in $H_{\Lambda }$.

In such a case, $H$ is unique (up to $G$) and there is
\begin{equation*}
con_{L/K}\left( G\right) =\left[ Aut\left( L/K\right) :Aut\left( L/K\right)
^{\left\langle G\cup H\right\rangle }\right]
\end{equation*}%
for the number $con_{L/K}\left( G\right) $ of Noether $G$-solutions of the
extension $L/K$.
\end{theorem}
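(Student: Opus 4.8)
I would handle criteria (i) and (ii) separately, in each case proving the biconditional first and deriving the enumeration formula from Theorem 1.6. Both enumeration formulas follow the same template: once $G$ is known to be a Noether solution, Theorem 1.6(i) identifies $Con_{L/K}(G)$ with the conjugacy class $\{\phi^{-1}G\phi:\phi\in Aut(L/K)\}$; the conjugation action of $Aut(L/K)$ on this class is transitive with isotropy subgroup $Aut(L/K)^{G}$, so orbit--stabilizer gives $con_{L/K}(G)=[Aut(L/K):Aut(L/K)^{G}]$, which is the formula in (i). For (ii) I would additionally use the ``uniqueness of $H$ up to $G$'' — proved alongside existence — to see that $\langle G\cup H\rangle$ depends only on $G$; hence an automorphism normalises $G$ if and only if it normalises $\langle G\cup H\rangle$, i.e. $Aut(L/K)^{G}=Aut(L/K)^{\langle G\cup H\rangle}$, which converts the formula of (i) into the one stated in (ii).

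For part (i) the forward implication is essentially bookkeeping. If $G$ is a Noether solution then $L^{G}=K(\Delta)$ for some transcendence base $\Delta$, and since $L$ is algebraic over $L^{G}$ the set $\Delta$ is also a transcendence base of $L/K$; take $\Lambda$ to be the given generating base, so $K(\Lambda)=L\supseteq K(\Delta)$. By Remark 0.15, $L$ is Galois over $L^{G}=K(\Delta)$, and as $L/L^{G}$ is finite, Artin's theorem gives $Aut(L/L^{G})=G$. Thus (a) holds and (b) holds with the trivial spatial isomorphism $(\psi,\phi)=(\mathrm{id},\mathrm{id})$. For the converse, condition (iii) in the definition of spatial isomorphism shows that $\phi$ carries $L^{G}$ onto $K(\Lambda)^{Aut(K(\Lambda)/K(\Delta))}$; since $K(\Lambda)/K(\Delta)$ is an algebraic Galois extension (both fields being purely transcendental of the same degree over $K$), this invariant field equals $K(\Delta)$. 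So the $K$-isomorphism $\phi$ maps $L^{G}$ onto the purely transcendental field $K(\Delta)$, whence $L^{G}$ is purely transcendental over $K$; combined with the standing hypothesis that $L$ is algebraic over $L^{G}$, this means $G$ is a Noether solution.

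Part (ii) is where the real work lies. Conditions (N1)--(N2) are the ``Galois-complemented'' conditions analysed in \S6.6 (Remark 6.18, Lemma 6.26): they are exactly the essential requirement for a finite $G$ to admit the internal direct-product decomposition $\langle G\cup H\rangle\cong G\times H$ with $L^{G}\cap L^{H}=K$, while (N3) is the extra input upgrading ``$L^{G}$ Galois over $K$'' to ``$L^{G}$ purely transcendental over $K$''. For the forward direction: $G$ a Noether solution gives $L^{G}$ purely transcendental over $K$, so by Theorem 3.1 the group $Aut(L^{G}/K)$ fixes exactly $K$; since $L/L^{G}$ is a finite tame Galois extension with group $G$, the lifting-of-isomorphisms machinery of \S8 supplies a subgroup $H\le Aut(L/K)$ built from $G$-commuting lifts of $Aut(L^{G}/K)$, with the only lift of the identity in $H$ being the identity. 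Then $H\cap G=\{1\}$ and $G,H$ commute, so (N1) holds; $L^{\langle G\cup H\rangle}=(L^{G})^{H|_{L^{G}}}=K$ by Theorem 3.1, so (N2) holds; and (N3) holds with $\Delta$ the base satisfying $K(\Delta)=L^{G}$, $\Lambda$ the given base, $H_{\Lambda}=H$, $\phi=\mathrm{id}$, since then $K(\Lambda)/K(\Delta)$ is Galois and $H|_{K(\Delta)}=Aut(K(\Delta)/K)$ by construction. For the reverse direction, (N1) yields $\langle G\cup H\rangle\cong G\times H$ and (N2) gives $L^{G}\cap L^{H}=K$; since $H$ commutes with $G$ it normalises $G$, hence preserves and acts faithfully on $L^{G}$ (the kernel of $H\to Aut(L^{G}/K)$ being $H\cap Aut(L/L^{G})=H\cap G=\{1\}$) with fixed field $L^{G}\cap L^{H}=K$, so $L^{G}$ is Galois over $K$. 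To obtain pure transcendence, transport the picture along the $K$-isomorphism $\phi:L\to K(\Lambda)$ of (N3)(b): writing $G_{\Lambda}=\phi G\phi^{-1}$ and $H_{\Lambda}=\phi H\phi^{-1}$ one has $L^{G}\cong_{K}K(\Lambda)^{G_{\Lambda}}$, and the restriction clause (N3)(c) — that $H_{\Lambda}$ restricts onto all of $Aut(K(\Delta)/K)$, which fixes only $K$ by Theorem 3.1 — together with the algebraic Galois theory of $K(\Lambda)/K(\Delta)$ forces $K(\Lambda)^{G_{\Lambda}}=K(\Delta)$; hence $L^{G}\cong_{K}K(\Delta)$ is purely transcendental and $G$ is a Noether solution.

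The main obstacle is concentrated in two steps of part (ii). First, the lifting step in the forward direction: showing that $Aut(L^{G}/K)$, or a subgroup of it large enough for (N2) and (N3), can be realised by automorphisms of $L$ commuting with $G$ — for a general algebraic extension $L/L^{G}$ an automorphism of $L^{G}$ need not extend at all, so this genuinely relies on the special structure of the extension (tameness, separated-type Galois groups) built in \S\S2--5 and \S8. Second, the identification $K(\Lambda)^{G_{\Lambda}}=K(\Delta)$ in the reverse direction: this is the step that cannot be shortcut, because in transcendental Galois theory ``Galois over $K$'' does not imply ``purely transcendental over $K$'' — Theorem 3.1 gives only the easy direction, and Swan-type counterexamples show the converse fails — so the restriction condition (N3)(c) must be used essentially, and making that use precise is the delicate point.
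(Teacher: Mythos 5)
Your part (i) and the forward half of part (ii) are sound and track the paper's own route. Part (i) is in substance the specialization of \emph{Theorem 8.21} to the pair $\left( K\left( \Lambda \right) ,L\right) $: in the forward direction you take $K\left( \Delta \right) =L^{G}$, $K\left( \Lambda \right) =L$ and the identity spatial isomorphism, and in the converse you use condition $\left( iii\right) $ of \emph{Definition 1.3} to transport $L^{G}$ onto $K\left( \Lambda \right) ^{Aut\left( K\left( \Lambda \right) /K\left( \Delta \right) \right) }=K\left( \Delta \right) $; your derivation of $con_{L/K}\left( G\right) =\left[ Aut\left( L/K\right) :Aut\left( L/K\right) ^{G}\right] $ from \emph{Theorem 1.6}$\left( i\right) $ and orbit--stabilizer is a cleaner packaging of what the paper does through $sp_{L/K}\left( G\right) $, \emph{Proposition 8.29}, \emph{Theorem 8.30}$\left( iii\right) $ and \emph{Proposition 9.1}. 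In the forward half of (ii) the complement you need is the paper's natural Galois-complement, i.e.\ the highest transcendental Galois subgroup $\pi _{t}\left( L/K\right) \left( \Delta ^{\prime },A^{\prime }\right) $ at a nice basis with $K\left( \Delta ^{\prime }\right) =L^{G}$ (\emph{Theorem 6.4}, resting on \emph{Propositions 4.6-4.7}, rather than the lifting lemmas of \S 8 which you cite), and your choice $K\left( \Lambda \right) =L$, $H_{\Lambda }=H$, $\phi =\mathrm{id}$ does verify $\left( N3\right) $.

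The genuine gap is exactly the step you flag in the reverse half of (ii): the claim that $\left( N3\right) \left( c\right) $ together with the algebraic Galois theory of $K\left( \Lambda \right) /K\left( \Delta \right) $ forces $K\left( \Lambda \right) ^{G_{\Lambda }}=K\left( \Delta \right) $. Those ingredients constrain only $H_{\Lambda }$ (it preserves $K\left( \Delta \right) $ and restricts onto $Aut\left( K\left( \Delta \right) /K\right) $); they say nothing about how $G_{\Lambda }=\phi \cdot G\cdot \phi ^{-1}$ sits relative to $K\left( \Delta \right) $. A finite subgroup commuting with $H_{\Lambda }$, meeting it trivially and jointly fixing only $K$ need not, a priori, fix $K\left( \Delta \right) $ pointwise, nor need its order equal $\left[ K\left( \Lambda \right) :K\left( \Delta \right) \right] $, and both facts are required before you can conclude $G_{\Lambda }=Aut\left( K\left( \Lambda \right) /K\left( \Delta \right) \right) $ and hence $\phi \left( L^{G}\right) =K\left( \Delta \right) $. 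This is precisely the point where the paper brings in the structural consequences of $\left( N1\right) -\left( N2\right) $: \emph{Proposition 6.11} and \emph{Theorem 6.15} (the triple linear decomposition of automorphisms), used in \emph{Proposition 9.2} to identify $H_{\Lambda }$ with the highest transcendental Galois subgroup $\pi _{t}\left( K\left( \Lambda \right) /K\right) \left( \Delta ,A\right) $, and in the proof of \emph{Theorem 8.22} to show that the same spatial isomorphism $\left( \psi ,\phi \right) $ matching $H$ with $H_{\Lambda }$ automatically matches $G$ with $\pi _{a}\left( K\left( \Lambda \right) /K\right) \left( \Delta ,A\right) =Aut\left( K\left( \Lambda \right) /K\left( \Delta \right) \right) $. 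Your proposal never invokes this machinery, so the decisive implication remains unproved. A secondary gap of the same kind concerns the counting formula in (ii): uniqueness of $H$ given $G$ yields only the direction that normalising $G$ implies normalising $\left\langle G\cup H\right\rangle $; the converse is not a formal consequence of uniqueness (a direct product may contain several finite normal subgroups that could play the role of $G$), and in the paper it comes from the correspondence between $\pi $-equivalence classes of nice bases and decomposition groups (\emph{Lemma 5.6}, \emph{Propositions 8.26} and \emph{8.28}), which you would need to cite or reprove to get $Aut\left( L/K\right) ^{G}=Aut\left( L/K\right) ^{\left\langle G\cup H\right\rangle }$.
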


\begin{theorem}
\emph{(Main Theorem for Noether solutions: Co-existence of solutions and
non-solutions)} Let $L=K\left( t_{1},t_{2},\cdots ,t_{n}\right) $ be a
purely transcendental extension over a field $K$ of transcendence degree $n$%
. There are the following statements.

$\left( i\right) $ \emph{(Existence of Noether solutions)} For any $n\geq 2$%
, there exist at least two finite subgroups $G$ of $Aut\left( L/K\right) $
such that each $G$ satisfies the two properties: $\left( a\right) $ $G$ has
a transitive action on $t_{1},t_{2},\cdots ,t_{n}$; $\left( b\right) $ $G$
is a Noether solution of $L/K$.

$\left( ii\right) $ \emph{(Existence of non-solutions)} For any $n\geq 3$,
there exist at least six finite subgroups $G$ of $Aut\left( L/K\right) $
such that each $G$ satisfies the two properties: $\left( a\right) $ $G$ has
a transitive action on $t_{1},t_{2},\cdots ,t_{n}$; $\left( b\right) $ The $%
G $-invariant subfield $L^{G}$ is not purely transcendental over $K$.
\end{theorem}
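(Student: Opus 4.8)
The plan is to derive Theorem 1.7 (the "Co-existence" theorem) as a consequence of the two named ingredients already available in the excerpt: Theorem 0.4 (the statement about $L^{C_n}$, $L^{A_n}$ and the characterisation of transitive subgroups whose invariant field is rational) together with the spatial-isomorphism criteria of Theorem 1.7(= Theorem 0.13) and the transfer of Noether solutions along $K$-isomorphisms implicit in Theorem 0.13 and Definition 1.3. For part $(i)$, I would exhibit two explicit finite subgroups of $Aut(L/K)$ with transitive action and rational invariant field. The first is $\Sigma_n$ itself: it acts transitively on $t_1,\dots,t_n$ (a standard fact, and reiterated in Theorem 0.4(iii)), and its invariant field $L^{\Sigma_n}=K(e_1,\dots,e_n)$ (elementary symmetric functions) is purely transcendental over $K$, so $\Sigma_n$ is a Noether solution. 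The second is obtained by \emph{transport of structure}: pick a $K$-automorphism $\phi$ of $L$ that is not inner-compatible with $\Sigma_n$ — e.g. conjugate $\Sigma_n$ by the linear automorphism sending $t_1\mapsto t_1+t_2$, $t_i\mapsto t_i$ for $i\geq 2$ (an element of $GL(n,K)\subseteq Aut(L/K)$ by Remark 0.19) — to get $G'=\phi^{-1}\Sigma_n\phi$. By Theorem 1.6(i) a conjugate of a Noether solution is a Noether solution, and $G'$ still acts transitively (conjugation by an automorphism that moves the $t_i$'s into new coordinates preserves transitivity on the corresponding new variables; one must check the action is still "permutation-type" on a suitable basis, which is exactly the content of Remark 0.8). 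Since $\phi\notin Aut(L/K)^{\Sigma_n}$ for a suitable choice, $G'\neq\Sigma_n$, giving the required \emph{two} distinct subgroups. For $n\geq 3$ this argument is clean; for $n=2$ one checks directly that $\Sigma_2$ and its conjugate under $t_1\mapsto t_1+t_2$ are distinct order-$2$ subgroups both with transitive action and rational invariant field.

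For part $(ii)$ the strategy is again to produce the subgroups explicitly and invoke Theorem 0.4(i)–(ii) for the non-rationality. The three "primitive" examples are $C_n$, $A_n$, and — when $n\geq 4$ — a further transitive group strictly between, but to reach \emph{six} I would take conjugates. Concretely: (1) $C_n=\langle(t_1\,t_2\,\cdots\,t_n)\rangle$; (2) $A_n$; (3) a conjugate $\phi^{-1}C_n\phi$ for a linear $\phi$ with $\phi\notin Aut(L/K)^{C_n}$; (4) a conjugate $\phi^{-1}A_n\phi$ likewise; and two more by choosing a second linear automorphism $\psi$ with $\psi C_n\psi^{-1}\neq C_n,\phi^{-1}C_n\phi$ and similarly for $A_n$. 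Transitivity is preserved by conjugation by permutation-type automorphisms (Theorem 0.4's final clause already records transitivity of $A_n,C_n$), and non-rationality of the invariant field is preserved too: if $P=\phi^{-1}G\phi$ then $L^P=\phi^{-1}(L^G)$, and since $\phi$ is a $K$-automorphism of $L$, $L^G$ is purely transcendental over $K$ iff $L^P$ is — so the conjugates inherit the failure of rationality from Theorem 0.4(i)–(ii). One must be slightly careful that the six groups are genuinely distinct and genuinely finite subgroups of $Aut(L/K)$ with honest transitive actions; this is a bookkeeping exercise using that $GL(n,K)$ is infinite (for $|K|>2$, or by passing to larger matrices / affine transformations in general), so there are enough non-normalising conjugators.

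The main obstacle I anticipate is not the non-rationality — that is handed to us by Theorem 0.4 — but rather verifying that the \emph{conjugated} groups still "have a transitive action on $t_1,\dots,t_n$" in the precise sense of \S 1.2 (for \emph{the same} variables $t_1,\dots,t_n$, not merely on some other transcendence basis). A conjugate $\phi^{-1}C_n\phi$ a priori only acts transitively on $\phi^{-1}(t_1),\dots,\phi^{-1}(t_n)$. To make the statement literally true as phrased, I would either (a) choose the conjugators $\phi,\psi$ to be \emph{permutations of the variables themselves} composed with coordinate scalings that still fix the variable set setwise — e.g. replace $t_i$ by $c_it_i$ — so that transitivity on $\{t_1,\dots,t_n\}$ is manifest, or (b) interpret the theorem (as Remark 0.8 suggests the paper does) up to the choice of transcendence basis, so that "transitive action" means transitive on \emph{some} system of variables. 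Approach (a) is the safer reading and still yields six distinct finite subgroups once $K$ has enough scalars; when $K=\mathbb F_2$ one instead uses conjugation by distinct permutation automorphisms that do not normalise $C_n$ (there are many, since $N_{\Sigma_n}(C_n)$ has index $>6$ in $\Sigma_n$ for $n$ large, and one handles small $n\in\{3,4,5\}$ by hand). Once this transitivity bookkeeping is settled, assembling the proof is a matter of combining Theorem 0.4 with Theorem 1.6(i) and the observation $L^{\phi^{-1}G\phi}=\phi^{-1}(L^G)$, and citing that $GL(n,K)\hookrightarrow Aut(L/K)$.
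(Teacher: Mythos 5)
Your route differs from the paper's. The paper does not use conjugation at all: for part (i) it exhibits $\Sigma _{n}$ (rationality of $L^{\Sigma _{n}}$ by the symmetric functions, Lemma 7.14) and, as the second solution, the structurally different group $G\cdot j_{\left( \Delta ,A\right) }\left( \Sigma _{n}^{M}\right) $ with $M=K\left( t_{1}^{2},\cdots ,t_{n}^{2}\right) $, $G=Aut\left( L/M\right) $ and $j_{\left( \Delta ,A\right) }$ the normalised lifting (Proposition 8.7); for part (ii) it lists six explicit, pairwise structurally different groups, namely $A_{n}$, $C_{n}$, the subgroup generated by a linear involution, and three mixed groups built from $G$ and lifted permutation groups on the squared variables, with non-rationality supplied by Lemmas 7.16--7.18 and Proposition 8.7. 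Your plan replaces all of the "extra" groups by conjugates of $\Sigma _{n}$, $C_{n}$, $A_{n}$ under elements of $GL\left( n,K\right) \subseteq Aut\left( L/K\right) $, using Theorem 1.6(i) and $L^{\phi ^{-1}G\phi }=\phi ^{-1}\left( L^{G}\right) $. That transfer of (non-)rationality is fine, but it is exactly the transitivity bookkeeping you flagged that breaks, and your proposed repair does not fix it.

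Concretely, the definition in \S 1.2 requires, for each pair $i<j$, an element with $\sigma \left( t_{i}\right) =t_{j}$ exactly, for the \emph{given} variables. Conjugators that "fix the variable set setwise" are permutation automorphisms, and these normalise $\Sigma _{n}$ and $A_{n}$, so they produce no new group at all; diagonal scalings $t_{i}\mapsto c_{i}t_{i}$ give conjugates whose elements act by $t_{i}\mapsto \left( c_{i}/c_{\sigma \left( i\right) }\right) t_{\sigma \left( i\right) }$, so strict transitivity fails for every pair with $c_{i}\neq c_{j}$, while if all $c_{i}$ coincide the conjugate is the original group. Your $n=2$ check is also wrong: conjugating $\Sigma _{2}$ by $t_{1}\mapsto t_{1}+t_{2}$, $t_{2}\mapsto t_{2}$ yields the group generated by $t_{1}\mapsto t_{1}$, $t_{2}\mapsto t_{1}-t_{2}$, which fixes $t_{1}$ and hence has no transitive action on $t_{1},t_{2}$. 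For part (ii) the count collapses at $n=3$: there $C_{3}=A_{3}$ is a single group, normal in $\Sigma _{3}$, so permutation conjugation gives nothing new, and your "handle $n\in \{3,4,5\}$ by hand" is precisely the missing content; the fallback reading (b), transitivity on \emph{some} system of variables, changes the statement being proved. So as written the proposal does not establish the required two solutions for all $n\geq 2$ nor six non-solutions for all $n\geq 3$; the paper avoids the difficulty by constructing groups (the mixed vertical-times-lifted groups of Proposition 8.7) that are distinct for structural reasons and visibly move each $t_{i}$ to each $t_{j}$.
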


Note that \emph{Lemmas 7.16-8} in \emph{\S 7} are complementary to the above
\emph{Theorem 1.8}.

\subsection{The underlying mystery of Noether's problem}

Here there is a heuristic example for us to understand the underlying
mystery of Noether's problem on rationality for a purely transcendental
extension.

\begin{example}
Let $L=\mathbb{C}\left( t_{1},t_{2},\cdots ,t_{n}\right) $ be a purely
transcendental extension over the complex field $\mathbb{C}$ of
transcendence degree $n\geq 3$.

$\left( i\right) $ Fixed prime numbers $p_{1}\geq p_{2}\geq \cdots \geq
p_{n}\geq 29.$ Denote by
\begin{equation*}
\omega _{j}^{k_{j}}:=t_{j}^{p_{j}-k_{j}}
\end{equation*}%
a power of the variable $t_{j}$ for any $0\leq k_{j}\leq p_{j}-1$ and $1\leq
j\leq n.$ Consider a tower of sub-extensions in $L/\mathbb{C}$
\begin{equation*}
\mathbb{C}\subsetneqq M_{0}\subsetneqq M_{1}\subsetneqq M_{2}\subsetneqq
M_{3}\subsetneqq \mathbb{C}\left( t_{1},t_{2},\cdots ,t_{n}\right) =L
\end{equation*}%
with a tower of subgroups in the Galois group $Aut\left( L/\mathbb{C}\right)
$
\begin{equation*}
Aut\left( L/\mathbb{C}\right) \supsetneqq G_{0}\supsetneqq G_{1}\supsetneqq
G_{2}\supsetneqq G_{3}\supsetneqq \{1\}
\end{equation*}%
where
\begin{eqnarray*}
M_{0} &:&=\mathbb{C}\left( \omega _{1}^{3},\omega _{2}^{3},\cdots ,\omega
_{n}^{3}\right) ; \\
M_{1} &:&=M_{0}\left[ \omega _{1}^{7},\omega _{2}^{7},\cdots ,\omega _{n}^{7}%
\right] ; \\
M_{2} &:&=M_{1}\left[ \prod_{i=1}^{n}\omega _{i}^{11}\right] ; \\
M_{3} &:&=M_{1}\left[ \omega _{1}^{11},\omega _{2}^{11},\cdots ,\omega
_{n}^{11}\right] ; \\
G_{i} &:&=Aut\left( L/M_{i}\right) \text{ is the Galois group of }L/M_{i}%
\text{ for each }0\leq i\leq 3.
\end{eqnarray*}

Then $L$ is Galois over $M_{i}$ and $M_{i}=L^{G_{i}}$ for each $0\leq i\leq
3 $; $M_{0}$, $M_{1}$ and $M_{3}$ all are purely transcendental over $%
\mathbb{C}$; $M_{2}$ is not purely transcendental over $%
\mathbb{C}$.

$\left( ii\right) $ Consider a map $\sigma :L\rightarrow L$ of \emph{%
homogeneous linear type} (See \emph{Definition 8.1} and \emph{Remark 1.11}
for details)

\begin{equation*}
\sigma :\left(
\begin{array}{c}
t_{1} \\
t_{2} \\
\vdots \\
t_{n}%
\end{array}%
\right) \longmapsto A\cdot \left(
\begin{array}{c}
t_{1} \\
t_{2} \\
\vdots \\
t_{n}%
\end{array}%
\right)
\end{equation*}%
where
\begin{equation*}
A=\left(
\begin{array}{cccc}
&  &  & 1 \\
&  & 1 &  \\
& \cdots &  &  \\
1 &  &  &
\end{array}%
\right)
\end{equation*}%
is the $n\times n$ matrix. Let $G_{4}=\left\langle \sigma \right\rangle $ be
the subgroup in $Aut\left( L/\mathbb{C}\right) $ generated by $\sigma $.

Then $G_{4}$ is a finite cyclic subgroup of order $\sharp G_{4}=2$ and $%
G_{4} $ acts transitively on the variables $t_{1},t_{2},\cdots ,t_{n}$ over $%
\mathbb{C}$.\ The $G_{4}$-invariant subfield $L^{G_{4}}$ is not purely
transcendental over $\mathbb{C}$ (from \emph{Lemma 7.18}).

$\left( iii\right) $ By replacing the matrix $A$ in the above $\left(
ii\right) $ by any invertible matrix in $GL\left( n,\mathbb{C}\right) $, it
is seen that the general linear group $GL\left( n,\mathbb{C}\right) $ is
taken as a subgroup of the Galois group $Aut\left( L/\mathbb{C}\right) $,
i.e., we have
\begin{equation*}
GL\left( n,\mathbb{C}\right) \subseteq Aut\left( L/\mathbb{C}\right) .
\end{equation*}
So, the Galois group of a purely transcendental extension are very big.
\end{example}

\begin{proof}
$\left( i\right) $ Trivial that $M_{0}$ is purely transcendental over $%
\mathbb{C}$ since $\omega _{1}^{j_{1}},\omega _{2}^{j_{2}},\cdots ,\omega
_{n}^{j_{n}}$ are algebraically independent over $\mathbb{C}$ for any $0\leq
j_{k}\leq p_{k}-1$ and $0\leq k\leq n$.

From $p_{i}-c_{i}=GCD\left( p_{i}-3,p_{i}-7\right) $, the greatest common
divisor of the positive integers for each $0\leq i\leq n$, it is seen that $%
M_{1}=\mathbb{C}\left( \omega _{1}^{c_{1}},\omega _{2}^{c_{2}},\cdots
,\omega _{n}^{c_{n}}\right) $ is purely transcendental over $\mathbb{C}$.

In the same way, $M_{3}=\mathbb{C}\left( \omega _{1}^{d_{1}},\omega
_{2}^{d_{2}},\omega _{3}^{d_{3}},\cdots ,\omega _{n}^{d_{n}}\right) $ is
purely transcendental over $\mathbb{C}$, where
\begin{equation*}
p_{i}-d_{i}=GCD\left( p_{i}-3,p_{i}-7,p_{i}-11\right) \geq 2
\end{equation*}%
is the greatest common divisor of the positive integers for each $0\leq
i\leq n$.

Prove $M_{2}$ is not purely transcendental over $\mathbb{C}$. In deed, the
sub-extension $M_{1}/\mathbb{C}$ has the same transcendence degree as $L/%
\mathbb{C}$; hence, the element $\prod_{i=1}^{n}\omega _{i}^{11}$ of the
subfield $M_{2}$ must be algebraic over $M_{1}$. On the other hand, we have
\begin{equation*}
M_{1}\subsetneqq M_{2}=M_{1}\left[ \prod_{i=1}^{n}\omega _{i}^{11}\right]
\subsetneqq L
\end{equation*}%
since by comparing the powers it is seen that the element $%
\prod_{i=1}^{n}\omega _{i}^{11}$ can not be expressed by an $M_{0}$-linear
combination of any powers of the generators
\begin{equation*}
\omega _{1}^{c_{1}},\omega _{2}^{c_{2}},\cdots ,\omega _{n}^{c_{n}}
\end{equation*}%
of the subfield $M_{1}$ over $M_{0}$.

$\left( ii\right) $ Trivial.

$\left( iii\right) $ Fixed an invertible matrix $A\in GL\left( n,\mathbb{C}%
\right) $. Put%
\begin{equation*}
A\cdot \left(
\begin{array}{c}
t_{1} \\
t_{2} \\
\vdots \\
t_{n}%
\end{array}%
\right) =\left(
\begin{array}{c}
s_{1} \\
s_{2} \\
\vdots \\
s_{n}%
\end{array}%
\right) .
\end{equation*}

We have $s_{1},s_{2},\cdots ,s_{n}\in L$ since each $s_{i}$ is a linear
combination of $t_{1},t_{2},\cdots ,t_{n}$. Hence, $\mathbb{C}\left(
s_{1},s_{2},\cdots ,s_{n}\right) \subseteq \mathbb{C}\left(
t_{1},t_{2},\cdots ,t_{n}\right) $. On the other hand, by takeing the
inverse $A^{-1}$ of the matrix $A$, we have $\mathbb{C}\left(
t_{1},t_{2},\cdots ,t_{n}\right) \subseteq \mathbb{C}\left(
s_{1},s_{2},\cdots ,s_{n}\right) .$ This proves $\mathbb{C}\left(
t_{1},t_{2},\cdots ,t_{n}\right) =\mathbb{C}\left( s_{1},s_{2},\cdots
,s_{n}\right) .$It follows that $s_{1},s_{2},\cdots ,s_{n}\in L$ are
algebraically independent over $\mathbb{C}$ since the transcendence degree
of $L/\mathbb{C}$ is $n$.

From every matrix $A\in GL\left( n,\mathbb{C}\right) $, we obtain an
automorphism $\tau _{A}\in Aut\left( L/\mathbb{C}\right) $ given by $%
t_{1}\mapsto s_{1},t_{2}\mapsto s_{2},\cdots ,t_{n}\mapsto s_{n}.$This
completes the proof.
\end{proof}

Enlightened by the above \emph{Example 1.9} and under the above \emph{%
Theorems 1.6-8} and \emph{Lemmas 7.16-8}, we now try to understand the
underlying mystery of Noether's problem on rationality for a purely
transcendental extension (See \emph{Remarks 1.10-1}).

\begin{remark}
(\emph{The underlying mystery of Noether's problem, I. From Galois
subgroups of the extension}) Hidden in the above example is the following
fact. Let $L=K\left( t_{1},t_{2},\cdots ,t_{n}\right) $ be a purely
transcendental extension over a field $K$ of transcendence degree.
For a technical reason, suppose $L$ is $\sigma $-tame Galois over $K$ (See \emph{\S 2.2} for definition).

Fixed a subgroup $G$ of the Galois group $Aut\left( L/K\right) $ with $%
L $ being algebraic over the $G$-invariant subfield $L^{G}$ and $G$ transitively acting on the variables $t_{1},t_{2},\cdots ,t_{n}$. Then $L$ is
algebraic Galois over $L^{G}$ and $G=Aut\left( L/L^{G}\right) $ is a finite
group.

For the action of the subgroup $G$ on the extension $L/K$, there are two
cases:

$Case$ $\left( i\right) $. When the subgroup $G$ is acting on the variables $%
t_{1},t_{2},\cdots ,t_{n}$, there are no other redundant elements (up to
transcendence bases of the sub-extension $L^{G}/K$) which are simultaneously
fixed by $G$, that is, the Galois group $G$ of the sub-extension $L/L^{G}$
is \textquotedblleft relatively big enough\textquotedblright . In fact, in
such a case, $G$ will be a \emph{full algebraic Galois subgroup} of the
extension $L/K$ (See \emph{Remark 9.7} and \emph{\S 4.1} for definitions).

However, if the Galois group $G$ of $L/L^{G}$ is bigger, then $G$ can not be
necessarily relatively big enough; but there is another relatively big
enough Galois group $H$ of the finite Galois extension $L/L^{H}$ such that $%
H\supsetneqq G.$ From \emph{Remarks 9.7-8} it is seen that there is no
biggest relatively big enough Galois group in the extension $L/K$. So, in
\emph{\S 4}, we used the terminology \textquotedblleft
full\textquotedblright\ to account for such a phenomenon.

$Case$ $\left( ii\right) $. When transitively acting on the variables $%
t_{1},t_{2},\cdots ,t_{n}$, the subgroup $G$ still possesses other redundant
elements $\omega _{1}^{k_{1}},\omega _{2}^{k_{2}},\cdots ,\omega
_{n}^{k_{n}} $ (up to transcendence bases of the sub-extension $L^{G}/K$) in
$L$ which also are simultaneously fixed by $G$, that is, the Galois group $%
G=Aut\left( L/L^{G}\right) $ of the sub-extension $L/L^{G}$ is not
relatively big enough (See \emph{Remarks 9.7-8}). In such a case, $G$ will
not be a \emph{full algebraic Galois subgroup} of $L/K$; from \emph{Remarks
9.7-8} it is seen that there is a relatively big enough one $H$ containing $%
G $ such that by the enlarged subgroup $H$, the redundant elements are
exactly rules out of the invariant subfield $L^{H}$. All such relatively big
enough subgroups for $L/K$ are finite subgroups in $Aut\left( L/K\right) $.
\end{remark}

Let $L$ be a purely transcendental extension of a field $K$ of finite
transcendence degree. Fixed two transcendence bases $\Delta _{1}$ and $%
\Delta _{2}$ of $L/K$, i.e., $K\left( \Delta _{1}\right) ,K\left( \Delta
_{2}\right) \subseteq L$. A $K$-isomorphism $\delta :K\left( \Delta
_{1}\right) \rightarrow K\left( \Delta _{2}\right) $ of subfields of $L$ is
\textbf{extendable} in $L/K$ if there exists some $\sigma \in
Aut\left( L/K\right) $  whose restriction $\sigma
|_{K\left( \Delta _{1}\right) }$  to $K\left( \Delta _{1}\right)
$ is $\delta $.

In such a case, we must have $\left[ L:K\left( \Delta _{1}\right) \right] =%
\left[ L:K\left( \Delta _{2}\right) \right] $. It follows that a $K$%
-isomorphism $\delta :K\left( \Delta _{1}\right) \rightarrow K\left( \Delta
_{2}\right) $ is not necessarily extendable in $L/K$. The construction for
such an automorphism $\sigma \in Aut\left( L/K\right) $ of an extendable $%
\delta $ can be obtained from \emph{Lemma 2.5} in \emph{\S 2}. (See \emph{\S %
8.4-6} for further discussions).

\begin{remark}
(\emph{The underlying mystery of Noether's problem, II. From patterns of
automorphisms of the extension}) Let $L=K\left( t_{1},t_{2},\cdots
,t_{n}\right) $ be a purely transcendental extension over a field $K$ of
transcendence degree $n$. Let $G$ be a subgroup of the Galois group $%
Aut\left( L/K\right) $.

By extendable $K$-isomorphisms $\delta :K\left( \Delta _{1}\right)
\rightarrow K\left( \Delta _{2}\right) $ between subfields of $L$ for two
transcendence bases $\Delta _{1},\Delta _{2}$ of $L/K$, we have several
particular patterns for automorphisms $\sigma $ of $L/K$ (relative to the
variables $t_{1},t_{2},\cdots ,t_{n}$).

$Case$ $\left( i\right) $. The map $\sigma \in Aut\left( L/K\right) $  is said to be of \textbf{%
vertical type} (relative to the variables $t_{1},t_{2},\cdots ,t_{n}$) if $%
\sigma $ is the composite of some extensions to $L$ of extendable $K$%
-isomorphisms of the form $\sigma _{i_{0}}:K\left( t_{1},\cdots
,t_{i_{0}-1},P\left( t_{i_{0}}\right) ,t_{i_{0}+1},\cdots ,t_{n}\right)
\rightarrow K\left( t_{1},\cdots ,t_{i_{0}-1},Q\left( t_{i_{0}}\right)
,t_{i_{0}+1},\cdots ,t_{n}\right) $ which is a $K$-isomorphism between
subfields in $L$ for some $1\leq i_{0}\leq n$ satisfying the property: $%
\sigma _{i_{0}}\left( t_{i}\right) =t_{i}$ for each $1\leq i\leq n$ with $%
i\not=i_{0}$; $\sigma _{i_{0}}\left( P\left( t_{i_{0}}\right) \right)
=Q\left( t_{i_{0}}\right) $, where $P\left( X\right) ,Q\left( X\right) \in
K\left( X\right) $ are nonconstant polynomials of an indeterminate $X$ with
coefficients in the field $K$. In such a case, the degree $\deg P\left(
X\right) =\deg Q\left( X\right) $ is called the $i_{0}$\textbf{-th height}
of the map $\sigma _{0}$.

If every map $\sigma \in G$ is of vertical type, then $L=L^{G}\left( \beta
\right) $ can have a generator $\beta $ over the invariant subfield $L^{G}$
such that any $L^{G}$-conjugate of $\beta $ must involve the elements in the
roots of unity in $K$; hence, $L$ is required to have \textquotedblleft
enough roots\textquotedblright over $K$ to guarantee that $L$ is Galois over
the invariant subfield $L^{G}$ (See \emph{[}Fisher 1915\emph{]}). In such a
case, $G$ is a Noether solution of $L/K$.

$Case$ $\left( ii\right) $. The map $\sigma \in Aut\left( L/K\right)$ is said to be of
\textbf{horizontal type} (relative to the variables $t_{1},t_{2},\cdots
,t_{n}$) if $\sigma $ is the composite of some maps $\sigma
_{0}:L\rightarrow L$ defined by $\sigma _{0}:t_{j_{0}}\longmapsto
t_{j_{0}+m_{0}} $ for some $j_{0}\in \left( \mathbb{Z}/n\mathbb{Z};+\right) $%
; $\sigma _{0}:t_{j}\longmapsto t_{j} $ for any other $j\in \left( \mathbb{Z}%
/n\mathbb{Z};+\right) $ with $j\not=j_{0}$.

Here $m_{0}\in \left( \mathbb{Z}/n\mathbb{Z};+\right) $ is called the $j_{0}$%
-th \textbf{step} of the map $\sigma _{0}\in Aut\left( L/K\right) $.

If every map $\sigma \in G$ is of horizontal type, then (under certain
assumption, for instance, $G$ is a finite cyclic group) $L=L^{G}\left( \beta
\right) $ can have a generator $\beta $ over the invariant subfield $L^{G}$
such that any $L^{G}$-conjugate of $\beta $ does not involve the elements in
the roots of unity in $K$. In such a case, $G$ is not necessarily a Noether
solution of $L/K$ in general.

$Case$ $\left( iii\right) $. The map $\sigma \in Aut\left( L/K\right)$ is said to be of
\textbf{slanted type} (relative to the variables $t_{1},t_{2},\cdots ,t_{n}$%
) if $\sigma $ is a finite number of composites of the maps $\delta
:L\rightarrow L$ of horizontal type and of vertical type.

In such a case, if all the heights of $\sigma $ are equal to some non-zero
integer $m_{0}$, the map $\sigma $ is said to be with the \textbf{same level}
$m_{0}$; otherwise, if $\sigma $ has at least two distinct heights (greater
than one), the map $\sigma $ is said to be with \textbf{mixed levels}.

If $G$ contains maps of slanted type with mixed levels, then $G$ is not
necessarily a Noether solution of the extension $L/K$; it is not true that
the invariant subfield $L^{G}$ is purely transcendental over $K$.

$Case$ $\left( iv\right) $. The map $\sigma \in Aut\left( L/K\right)$ is said to be of
\textbf{linear type} (relative to the variables $t_{1},t_{2},\cdots ,t_{n}$)
if there is an invertible $n\times n$ matrix $A$ and an $n\times 1$ matrix $%
B $ both with coefficients in $K$ such that
\begin{equation*}
\sigma :\left(
\begin{array}{c}
t_{1} \\
t_{2} \\
\vdots \\
t_{n}%
\end{array}%
\right) \longmapsto A\cdot \left(
\begin{array}{c}
t_{1} \\
t_{2} \\
\vdots \\
t_{n}%
\end{array}%
\right) +B.
\end{equation*}

In such a case, if $B$ is the zero matrix, the map $\sigma $ is said to be
of \textbf{homogeneous linear type}.

$Case$ $\left( v\right) $. The map $\sigma \in Aut\left( L/K\right)$ is said to be of \textbf{%
monomial type} (relative to the variables $t_{1},t_{2},\cdots ,t_{n}$) if
there is an invertible $n\times n$ matrix $A=\left( a_{ij}\right) _{1\leq
i,j\leq n}$ with coefficients in $\mathbb{Z}$ such that $\sigma $ is an
extension to $L$ of some extendable $K$-isomorphism $\sigma _{0}:K\left(
\Delta _{1}\right) \rightarrow K\left( \Delta _{2}\right) $ where

\begin{equation*}
\sigma _{0}:\left(
\begin{array}{c}
P_{1}\left( t_{1}\right) \\
P_{2}\left( t_{2}\right) \\
\vdots \\
P_{n}\left( t_{n}\right)%
\end{array}%
\right) \longmapsto \left(
\begin{array}{c}
\prod\limits_{j=1}^{n}t_{j}^{a_{1j}} \\
\prod\limits_{j=1}^{n}t_{j}^{a_{2j}} \\
\vdots \\
\prod\limits_{j=1}^{n}t_{j}^{a_{nj}}%
\end{array}%
\right) ;
\end{equation*}
\begin{equation*}
P_{1}\left( X\right) ,P_{2}\left( X\right) ,\cdots ,P_{n}\left( X\right) \in
K\left[ X\right]
\end{equation*}%
are polynomials of the indeterminate $X$ with coefficients in $K$;
\begin{equation*}
\Delta _{1}=\{P_{1}\left( t_{1}\right) ,P_{2}\left( t_{2}\right) ,\cdots
,P_{n}\left( t_{n}\right) \}
\end{equation*}%
and
\begin{equation*}
\Delta
_{2}=\{\prod\limits_{j=1}^{n}t_{j}^{a_{1j}},\prod%
\limits_{j=1}^{n}t_{j}^{a_{2j}},\cdots
,\prod\limits_{j=1}^{n}t_{j}^{a_{nj}}\}
\end{equation*}%
are two transcendence bases of $L/K$.

$Case$ $\left( vi\right) $. The map $\sigma \in Aut\left( L/K\right)$ is said to be of
\textbf{geometric type} if $\sigma $ arises from a geometric group, a Lie
group or a pro-Lie group.

$Case$ $\left( vii\right) $. The map $\sigma \in Aut\left( L/K\right)$ is said to be of $%
\infty $ \textbf{type} if $\sigma $ is none of the above types.
\end{remark}

See \emph{\S 8.1} \& \emph{\S 8.3} for further results.

\section{Preliminaries}

In this section we will fix notation and terminology and review some
preliminary facts for transcendental Galois theory (See \emph{[}An 2010\emph{%
]}). These facts will provide us a useful picture for construction of fields
in the transcendental Galois theory. In fact, the phenomenon for \emph{tame
Galois} occurs through the whole paper.

\subsection{Notation and terminology}

For a finite group (or a finite set) $X$, let $\sharp X$ denote the number
of elements contained in $X$.

Let $K$ be a field. Denote by $\overline{K}$ a given algebraic closure of
the field $K$.

Let $L$ be an arbitrary extension of a field $K$ (algebraic or
transcendental). Suppose $\Delta $ and $A$ are two subsets of $L$ such that $%
L=K(\Delta ,A)$ is algebraic over $K\left( \Delta \right) $ and $K\left(
\Delta \right) $ is purely transcendental over $K$. Here, $\Delta $ can be
possibly empty.

Recall that a \textbf{nice basis} of $L/K$ is a pair $\left( \Delta
,A\right) $ such that $\Delta $ is a transcendence base of $L$ over $K$ and $%
A$ is a $K\left( \Delta \right) $-linear basis of $L$ as a vector space over
$K(\Delta )$.

Recall that the \textbf{Galois group} of the extension $L/K$ is the group $%
Aut\left( L/K\right) $ consisting of all the automorphisms $\sigma $ of the
field $L$ such that $\sigma (x)=x$ holds for all $x\in K$.

Let $H$ be a subset of the Galois group $Aut\left( L/K\right) $. The \textbf{%
invariant subfield} of $L$ under $H$ is the subfield $L^{H}$ consisting of
any element $x\in L$ such that $\sigma \left( x\right) =x$ holds for every $%
\sigma \in H$. (See \emph{Definition 1.4}).

\subsection{Transcendental Galois extensions}

Let $L$ be an arbitrary extension of a field $K$. Here, for a nice basis $%
\left( \Delta ,A\right) $ of $L/K$, $\Delta $ can be possibly empty; in such
a case, $L$ is algebraic over $K$. There are several types of transcendental
Galois extensions. (See \emph{Remark 3.15} for a comparison).

Recall that $L$ is said to be \textbf{Galois} over $K$ if $K$ is the
invariant subfield $L^{Aut\left( L/K\right) }$ of $L$ under the Galois group
$Aut\left( L/K\right) $. (See \emph{[}An 2010\emph{]}).

$L$ is said to be \textbf{tame Galois} over $K$ if there is a transcendence
base $\Delta $ of the extension $L/K$ such that $L$ is algebraic Galois over
the subfield $K\left( \Delta \right) $. (See \emph{[}An 2010\emph{]}).

$L$ is said to be $\sigma $\textbf{-tame Galois} over $K$ if $L$ is
algebraic Galois over $K\left( \Delta \right) $ for every transcendence base
$\Delta $ of $L/K$. (See \emph{[}An 2010\emph{]}).

Let $\Omega _{L/K}$ be the set of all the transcendence bases $\Delta $ of
the extension $L/K$. Fixed a non-void subset $\Omega _{0}$ of $\Omega _{L/K}$%
. $L$ is said to be $\sigma $\textbf{-tame Galois} over $K$ \textbf{inside} $%
\Omega _{0}$ if $L$ is algebraic Galois over $K\left( \Delta \right) $ for
each transcendence base $\Delta $ contained in $\Omega _{0}$. (See \emph{%
Definition 2.12}).

$L$ is said to be \textbf{complete Galois} over $K$ if $L$ is Galois over
any intermediate subfield $M$ in $L/K$, i.e., $K\subseteq M\subseteq L$.
(See \emph{Definition 3.13}). Note that \emph{complete Galois} is also
called \emph{absolute Galois} in \emph{[}An 2010\emph{]}.

\subsection{Conjugation and quasi-Galois}

There is a conjugation of a field, as a conjugate of a field for algebraic
Galois theory, which play the same role for transcendental Galois theory.

\begin{definition}
Let $L$ and $M$ be two extensions of a field $K$. Suppose $M\subseteq
\overline{L}$, that is, $M$ is contained in a fixed algebraic closure $%
\overline{L}$ of $L$. Then $M$ is said to be a $K$\textbf{-conjugation} of $%
L $ if there is a transcendence base $\Delta $ of the extension $L/K$ such
that $M$ is a $K(\Delta )$-conjugate of $L$ (in the sense of algebraic
extensions of fields). In such a case, $M$ is also said to be a $K$\textbf{%
-conjugation} of $L$ with respect to $\Delta $ (or $\left( \Delta ,A\right) $%
). Here, $A$ is a $K\left( \Delta \right) $-linear basis of the vector space
$L$.
\end{definition}

Also there are several types of quasi-Galois extensions in transcendental
Galois theory, as the counter-parts in algebraic Galois theory.

\begin{definition}
Let $L$ be an extension of a field $K$.

$\left( i\right) $ $L$ is said to be \textbf{quasi-Galois} over $K$ if there
is a transcendence base $\Delta $ of the extension $L/K$ such that each
irreducible polynomial $f(X)$ over the subfield $K(\Delta )$ that has a root
in $L$ factors completely in $L\left[ X\right] $ into linear factors.

In such a case, $L$ is also said to be \textbf{quasi-Galois} over $K$
\textbf{with respect to }$\Delta $ (or $(\Delta ,A)$).

$\left( ii\right) $ $L$ is said to be $\sigma $-\textbf{quasi-Galois} over $%
K $ (respectively, \textbf{inside} $\Omega _{0}$) if for each transcendence
base $\Delta $ of the extension $L/K$ (respectively, with $\Delta \in \Omega
_{0}$), each irreducible polynomial $f(X)$ over the subfield $K(\Delta )$
that has a root in $L$ factors completely in $L\left[ X\right] $ into linear
factors.
\end{definition}

\begin{proposition}
Let $L$ be an arbitrary extension of a field $K$. The following statements
are equivalent:

$(i)$ $L$ is quasi-Galois over $K$.

$(ii)$ There is a transcendence base $\Delta $ of $L/K$ such that $L$ is an
algebraic normal extension of the subfield $K\left( \Delta \right) $.

$(iii)$ There is a transcendence base $\Delta $ of $L/K$ such that $L$
contains every $F$-conjugate of $F\left( x\right) $ for any $x\in L\setminus
K(\Delta )$ and subfield $K(\Delta )\subseteq F\subseteq L$.

$\left( iv\right) $ There is a transcendence base $\Delta $ of $L/K$ such
that each $K$-conjugation of $L$ with respect to $\Delta $ must be contained
in $L$.

$\left( v\right) $ There is a transcendence base $\Delta $ of $L/K$ such
that $L$ is the unique $K$-conjugation of $L$ with respect to $\Delta $.
\end{proposition}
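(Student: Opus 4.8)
The plan is to reduce the entire statement to the classical characterisations of normal algebraic extensions, applied with the base field $K(\Delta)$ in place of the usual ground field. The first observation is that each of $(i)$--$(v)$ has the shape ``there is a transcendence base $\Delta$ of $L/K$ such that $P_{k}(\Delta)$'', where $P_{k}(\Delta)$ is a property that refers only to the algebraic extension $L/K(\Delta)$ --- indeed, by the very definition of a transcendence base, $L$ is algebraic over $K(\Delta)$, so that $\overline{L}=\overline{K(\Delta)}$. Hence it suffices to fix an arbitrary transcendence base $\Delta$, write $E:=K(\Delta)$, and prove that the five properties $P_{1}(\Delta),\dots ,P_{5}(\Delta)$ of the algebraic extension $L/E$ are pairwise equivalent; the existential statements $(i)\Leftrightarrow \cdots \Leftrightarrow (v)$ then follow at once, since any base $\Delta$ witnessing one of them witnesses all of them.

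So fix $\Delta$ and put $E=K(\Delta)$. The equivalence $(i)\Leftrightarrow (ii)$ is immediate: $P_{1}(\Delta)$ says that every irreducible $f\in E[X]$ with a root in $L$ splits into linear factors over $L$, and --- an irreducible polynomial with a root in $L$ being, up to a scalar, the minimal polynomial of that root over $E$ --- this is exactly the assertion that $L/E$ is normal, i.e. $P_{2}(\Delta)$. For $(ii)\Leftrightarrow (iii)$ I would invoke the standard description of normality through intermediate fields: if $L/E$ is normal and $E\subseteq F\subseteq L$ and $x\in L$, then the minimal polynomial of $x$ over $F$ divides in $F[X]$ the minimal polynomial of $x$ over $E$, which splits in $L$; since $F(x)=F[x]$ is generated by $x$, every $F$-conjugate of $F(x)$ already lies in $L$, giving $P_{3}(\Delta)$ (the case $x\in E$ being vacuous, which is why the restriction $x\notin K(\Delta)$ in $(iii)$ is harmless). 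Conversely $P_{3}(\Delta)$ with $F=E$ returns $P_{1}(\Delta)$.

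For $(ii)\Leftrightarrow (iv)$, recall that by the definition of $K$-conjugation with respect to $\Delta$ a $K$-conjugation of $L$ is precisely a subfield of $\overline{L}$ of the form $\sigma (L)$ for some $E$-embedding $\sigma \colon L\hookrightarrow \overline{E}$. If $L/E$ is normal, extend such a $\sigma$ (via Steinitz) to an automorphism $\bar{\sigma }$ of $\overline{E}$ over $E$; normality gives $\bar{\sigma }(L)=L$, hence $\sigma (L)\subseteq L$, i.e. $P_{4}(\Delta)$. Conversely, if $L/E$ is not normal there is $x\in L$ whose minimal polynomial over $E$ has a root $y\in \overline{E}\setminus L$; extending the $E$-isomorphism $E(x)\to E(y)$ to an $E$-embedding $L\to \overline{E}$ produces a $K$-conjugation of $L$ with respect to $\Delta$ not contained in $L$, contradicting $P_{4}(\Delta)$. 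Finally $(iv)\Leftrightarrow (v)$: an $E$-embedding of an algebraic extension into itself is automatically surjective (it carries the finite root set of each minimal polynomial injectively, hence bijectively, into itself), so ``$\sigma (L)\subseteq L$ for every such $\sigma $'' forces ``$\sigma (L)=L$'', i.e. $L$ is the only $K$-conjugation of $L$ with respect to $\Delta$; the reverse implication is trivial. Assembling these fixed-$\Delta$ equivalences with the reduction of the first paragraph completes the proof.

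I expect no single step to be a genuine obstacle; the only points demanding care are the infinite-transcendence-degree / infinite-algebraic-degree bookkeeping --- namely that $\overline{L}=\overline{K(\Delta)}$, the use of the Steinitz extension theorem to extend $E$-embeddings $E(x)\to \overline{E}$ first to $L$ and then to $\overline{E}$, and the surjectivity of algebraic self-embeddings --- all of which are standard but should be written out so that the argument holds with no finiteness hypothesis on either the transcendence degree or the degree $[L:K(\Delta)]$.
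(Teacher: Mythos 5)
Your proposal is correct and follows essentially the same route as the paper: fix a transcendence base $\Delta$, set $E=K(\Delta)$, and observe that every one of $(i)$--$(v)$ is just a classical characterisation of normality of the algebraic extension $L/E$, so the existential statements over $\Delta$ are equivalent. The paper simply declares these fixed-$\Delta$ equivalences ``immediate'' or ``trivial'' from the standard theory of normal algebraic extensions, while you write out the routine details (minimal polynomials, the Steinitz extension of $E$-embeddings, surjectivity of algebraic self-embeddings), which is a harmless elaboration of the same argument.
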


\begin{proof}
Prove $\left( i\right) \Leftrightarrow \left( ii\right) \Leftrightarrow
\left( iii\right) $. It is immediately from the fact that $L$ is
quasi-Galois over $K$ with respect to a transcendence base $\Delta $ of $L/K$
if and only if $L$ is an algebraic normal extension of the subfield $K\left(
\Delta \right) $.

Prove $\left( ii\right) \Leftrightarrow \left( iv\right) $ and $\left(
ii\right) \Leftrightarrow \left( v\right) $. Trivial from the algebraic
normal extension $L$ of the subfield $K\left( \Delta \right) $.
\end{proof}

\begin{proposition}
Let $L$ be an arbitrary extension of a field $K$. The following statements
are equivalent:

$(i)$ $L$ is $\sigma $-quasi-Galois over $K$ (respectively, inside $\Omega
_{0}$).

$(ii)$ For any nice basis $\Delta $ of $L/K$ (respectively, with $\Delta \in
\Omega _{0}$), $L$ is an algebraic normal extension of the subfield $K\left(
\Delta \right) $.

$(iii)$ For any nice basis $\Delta $ of $L/K$ (respectively, with $\Delta
\in \Omega _{0}$), $L$ contains every $F$-conjugate of $F\left( x\right) $
for any $x\in L\setminus K(\Delta )$ and subfield $K(\Delta )\subseteq
F\subseteq L$.

$\left( iv\right) $ For any nice basis $\Delta $ of $L/K$ (respectively,
with $\Delta \in \Omega _{0}$), each $K$-conjugation of $L$ with respect to $%
\Delta $ must be contained in $L$.

$\left( v\right) $ For any nice basis $\Delta $ of $L/K$ (respectively, with
$\Delta \in \Omega _{0}$), $L$ is the unique $K$-conjugation of $L$ with
respect to $\Delta $.
\end{proposition}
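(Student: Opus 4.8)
The plan is to treat this statement as the ``universal'' counterpart of the preceding proposition: there, each condition asserts the \emph{existence} of a transcendence base with a certain property, whereas here each condition asks that the property hold for \emph{every} nice basis $\Delta$ of $L/K$ (respectively, for every $\Delta\in\Omega_{0}$). Accordingly, I would first isolate the pointwise equivalence that already underlies the previous proof, namely: for a \emph{fixed} transcendence base $\Delta$ of $L/K$ with $L$ algebraic over $K(\Delta)$, the following are equivalent --- $(a')$ $L$ is quasi-Galois over $K$ with respect to $\Delta$; $(b')$ $L$ is an algebraic normal extension of $K(\Delta)$; $(c')$ $L$ contains every $F$-conjugate of every $x\in L\setminus K(\Delta)$ for each intermediate field $K(\Delta)\subseteq F\subseteq L$; $(d')$ every $K$-conjugation of $L$ with respect to $\Delta$ lies in $L$; $(e')$ $L$ is the unique $K$-conjugation of $L$ with respect to $\Delta$. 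Once this pointwise chain is in hand, the proposition follows by applying the quantifier ``for all nice bases $\Delta$'' (respectively ``for all $\Delta\in\Omega_{0}$'') to each link.

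The key steps, in order, are as follows. First, $(i)\Leftrightarrow(ii)$: by the definition of $\sigma$-quasi-Galois, $L$ is $\sigma$-quasi-Galois over $K$ (inside $\Omega_{0}$) precisely when, for every transcendence base $\Delta$ of $L/K$ (with $\Delta\in\Omega_{0}$), every irreducible $f(X)\in K(\Delta)[X]$ having a root in $L$ splits into linear factors over $L$; since $L$ is algebraic over $K(\Delta)$, this is exactly the assertion that $L/K(\Delta)$ is an algebraic normal extension for each such $\Delta$, which is $(ii)$. Second, $(ii)\Leftrightarrow(iii)$: apply, for each fixed $\Delta$, the standard characterization that an algebraic extension $L/K(\Delta)$ is normal if and only if for every intermediate field $F$ and every $x\in L$ the field $L$ contains all $F$-conjugates of $x$; quantifying over $\Delta$ gives the equivalence of the two universal statements. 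Third, $(ii)\Leftrightarrow(iv)$ and $(ii)\Leftrightarrow(v)$: by the definition of $K$-conjugation, a $K$-conjugation of $L$ with respect to $\Delta$ is nothing but a $K(\Delta)$-conjugate of $L$ inside $\overline{L}$, and an algebraic extension is normal exactly when it contains all of its conjugates, equivalently is the only one of its conjugates; this too is pointwise in $\Delta$, so passing to the universal quantifier over all nice bases (or over $\Omega_{0}$) yields $(iv)$ and $(v)$. Along the way I would record the trivial remark that the ``for all $\Delta$'' hypotheses here are strictly stronger than the ``there exists $\Delta$'' hypotheses of the previous proposition, so $\sigma$-quasi-Galois $\Rightarrow$ quasi-Galois.

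I do not expect a genuine obstacle: the content is entirely a matter of transporting the pointwise (``with respect to $\Delta$'') equivalences of the previous proposition under a universal quantifier, and all of the algebraic input --- the equivalence of quasi-Galois with algebraic normality, and the several standard descriptions of normal algebraic extensions --- is already available. The only points requiring a little care are bookkeeping ones: ensuring the quantifier ranges over exactly the same family of bases in all five conditions in the relative case (inside $\Omega_{0}$), and reading ``nice basis $\Delta$'' in the sense of \S 2.1, i.e.\ a transcendence base for which $L$ is algebraic over $K(\Delta)$, so that the normality statements are meaningful. With these conventions fixed, the proof is essentially a basis-by-basis invocation of the previous proposition.
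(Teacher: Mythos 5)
Your proposal is correct and matches the paper's own argument: the paper likewise reduces everything to the pointwise fact that, for a fixed transcendence base $\Delta$, quasi-Galois with respect to $\Delta$ is the same as $L$ being an algebraic normal extension of $K\left( \Delta \right) $ (equivalently, containing, or being the unique one of, its $K$-conjugations with respect to $\Delta$), and then quantifies over all $\Delta$ (respectively over $\Omega _{0}$). Your bookkeeping remark that a ``nice basis $\Delta$'' ranges over exactly the transcendence bases of $L/K$ is consistent with the paper's usage, so no gap remains.
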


\begin{proof}
Prove $\left( i\right) \Leftrightarrow \left( ii\right) \Leftrightarrow
\left( iii\right) \Leftrightarrow \left( iv\right) \Leftrightarrow \left(
v\right) $. It is immediately from the fact that for each transcendence base
$\Delta $ of the extension $L/K$, the field $L$ is quasi-Galois over $K$
with respect to $\Delta $ if and only if $L$ is an algebraic normal
extension of the subfield $K\left( \Delta \right) $.

Prove $\left( v\right) \iff \left( vii\right) $; $\left( vi\right) \iff
\left( vii\right) $. Trivial.
\end{proof}

\subsection{Conjugation by adjunction}

Let $L$ be a finitely generated extension of a field $K$. Let $\overline{L}$
denote an algebraic closure of $L$.

Assume $\left( \Delta ,A\right) $ is a \textbf{nice basis} of the extension $%
L/K$. If $\Delta $ and $A$ both are finite sets, we say $\left( \Delta
,A\right) $ is a \textbf{finite nice basis} of $L/K$; in particular, if $%
\Delta =\{w_{1},w_{2},\cdots ,w_{r}\}$ and $A=\{w_{r+1},w_{r+2},\cdots
,w_{n}\}$, we also say $w_{1},w_{2},\cdots ,w_{n}\in L$ are an \textbf{$%
(r,n) $-nice basis} of $L/K$. In general, the types $\left( r,n\right) $ of
its nice bases can be very diverse.

\begin{lemma}
Let $F$ be a subfield with $K\subseteq F\subsetneqq L$ and let $L/K$ has a
nice basis of type $\left( r,n\right) $ with $0\leq r\leq n$. Suppose $x\in
L $ is algebraic over $F$ and $z\in \overline{L}$ is a conjugate of $x$ over
$F $.

Then for each $(s,m)$-nice basis $v_{1},v_{2},\cdots ,v_{m}$ of the
extension $L/F\left( x\right) $ there exists an $F$-isomorphism $\tau $ from
the field
\begin{equation*}
L=F\left( x,v_{1},v_{2},\cdots ,v_{s},v_{s+1},\cdots ,v_{m}\right)
\end{equation*}%
onto the field of the form
\begin{equation*}
\tau \left( L\right) =F\left( z,v_{1},v_{2},\cdots ,v_{s},w_{s+1},\cdots
,w_{m}\right)
\end{equation*}%
such that
\begin{equation*}
x\longmapsto z,v_{1}\longmapsto v_{1},\cdots ,v_{s}\longmapsto
v_{s},v_{s+1}\longmapsto w_{s+1},\cdots ,v_{m}\longmapsto w_{m}
\end{equation*}%
where
\begin{equation*}
w_{s+1},w_{s+2},\cdots ,w_{m}
\end{equation*}%
are elements contained in an algebraic closure $\overline{L}$ of the field $%
L $ and $0\leq s\leq r,m\leq n$.

Moreover, we can choose
\begin{equation*}
w_{s+1}=v_{s+1},w_{s+2}=v_{s+2},\cdots ,w_{m}=v_{m}
\end{equation*}%
for the case that $z\notin F(v_{1},v_{2},\cdots ,v_{m})$ and $v_{s+1},\cdots
,v_{m}\notin \overline{F}$.
\end{lemma}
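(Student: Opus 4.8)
The plan is to build the $F$-isomorphism $\tau$ in two stages: first extend the conjugation $x \mapsto z$ across the algebraic part of the tower, then track what happens to the transcendence base. Since $L = F(x, v_1, \dots, v_m)$ and $v_1, \dots, v_m$ form an $(s,m)$-nice basis of $L/F(x)$, the subfield $E := F(x, v_1, \dots, v_s)$ is purely transcendental over $F(x)$ (hence over $F$, as $x$ is algebraic over $F$), and $L$ is finite algebraic over $E$ with $v_{s+1}, \dots, v_m$ as an $E$-linear basis. The starting point is the $F$-isomorphism $F(x) \to F(z)$ sending $x \mapsto z$, which exists because $z$ is a conjugate of $x$ over $F$ (standard algebraic Galois theory: $F(x) \cong F[X]/(\mathrm{irr}(x,F)) \cong F(z)$).

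First I would extend this to the purely transcendental layer: declaring $v_1, \dots, v_s$ to be mapped to algebraically independent elements $v_1, \dots, v_s$ over $F(z)$ (they are algebraically independent over $\overline{F}$ provided we are in the ``moreover'' regime; in general one simply adjoins fresh independent indeterminates inside $\overline{L}$ and renames), one gets an isomorphism $E = F(x)(v_1,\dots,v_s) \to F(z)(v_1,\dots,v_s)$. Next, extend across the finite algebraic layer $L/E$. This is where Lemma 2.5 (conjugation by adjunction) does the work: $L$ is a finitely generated algebraic extension of $E$, so writing $L = E(\theta)$ for a primitive element $\theta$, the isomorphism $E \to F(z)(v_1,\dots,v_s)$ extends to an $F$-embedding $L \hookrightarrow \overline{L}$ sending $\theta$ to a conjugate, and pulling back the basis $v_{s+1}, \dots, v_m$ under this embedding produces the elements $w_{s+1}, \dots, w_m \in \overline{L}$. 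The image $\tau(L) = F(z, v_1, \dots, v_s, w_{s+1}, \dots, w_m)$ has exactly the claimed shape because $\tau$ is a field isomorphism, so it carries the generating set of $L$ to a generating set of $\tau(L)$, and $\tau|_F = \mathrm{id}$.

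For the ``moreover'' clause, suppose $z \notin F(v_1, \dots, v_m)$ and $v_{s+1}, \dots, v_m \notin \overline{F}$. The claim is that one may then take $w_j = v_j$ for $s+1 \le j \le m$, i.e. $\tau$ acts as the identity on each $v_j$. The idea is that under these genericity hypotheses the $v_j$ remain a valid linear basis for $L$ over $F(z)(v_1,\dots,v_s)$ as well: since $v_{s+1},\dots,v_m$ are not algebraic over $F$, adjoining $z$ (itself algebraic over $F$) does not disturb the algebraic-independence/linear-independence relations they satisfy, and the condition $z \notin F(v_1,\dots,v_m)$ ensures $F(z, v_1, \dots, v_m) = L$ still holds with the ``same'' generators, so the assignment $x \mapsto z$, $v_j \mapsto v_j$ for all $j$ already defines an $F$-isomorphism onto $F(z, v_1, \dots, v_m)$. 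This would be verified by comparing degrees $[L : E] = [\tau(L) : F(z)(v_1,\dots,v_s)] = m - s$ and checking the minimal polynomial of $\theta$ over $E$ maps to an irreducible polynomial over the target having $\theta$ (or its appropriate image) as a root.

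The main obstacle I expect is the ``moreover'' clause, specifically ensuring that fixing $v_{s+1}, \dots, v_m$ is consistent — one must confirm that the $F(z)(v_1,\dots,v_s)$-minimal polynomial of a primitive element can be taken with the same coefficients, which requires that applying $x \mapsto z$ to the coefficients of $\mathrm{irr}(\theta, E)$ does not change them; this is precisely where $v_{s+1}, \dots, v_m \notin \overline{F}$ and $z \notin F(v_1, \dots, v_m)$ are used, to force those coefficients to lie in a subfield on which the conjugation acts trivially (or to force the relevant relations to persist). The first, general part is essentially bookkeeping on top of Lemma 2.5 and the standard fact that isomorphisms of fields extend to algebraic closures; the delicacy is entirely in controlling which transcendental generators survive untouched.
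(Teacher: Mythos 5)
Your treatment of the general clause is workable in outline, but note two problems there before the main issue. First, you invoke ``Lemma 2.5 (conjugation by adjunction)'' to extend across the algebraic layer, yet the statement you are proving \emph{is} that lemma, so as written the argument is circular; what you actually need, and what does suffice for the general clause, is the standard isomorphism-extension theorem: the isomorphism $E=F(x)(v_{1},\dots ,v_{s})\rightarrow F(z)(v_{1},\dots ,v_{s})$ extends to an embedding of the finite extension $L/E$ into $\overline{L}$ (which is an algebraic closure of the target as well), and the images of $v_{s+1},\dots ,v_{m}$ are the $w_{j}$. Second, your hedge that ``in general one simply adjoins fresh independent indeterminates and renames'' is not permitted -- the lemma demands $\tau (v_{i})=v_{i}$ for $1\leq i\leq s$ -- and it is also unnecessary: since $F(z)/F$ is algebraic, $v_{1},\dots ,v_{s}$ remain algebraically independent over $F(z)$, so the map $x\mapsto z$, $v_{i}\mapsto v_{i}$ on $E$ is automatically well defined (this is the paper's Step 1). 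Also, $E$ is not purely transcendental over $F$ (it contains $x$), though nothing hinges on that slip.

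The genuine gap is the ``moreover'' clause, which you flag as the main obstacle but do not prove, and the mechanisms you sketch do not close it. The assertion that $z\notin F(v_{1},\dots ,v_{m})$ ``ensures $F(z,v_{1},\dots ,v_{m})=L$'' is false ($z$ need only lie in $\overline{L}$, not in $L$), and the claim that adjoining $z$ ``does not disturb the algebraic-independence/linear-independence relations'' is a restatement of what must be shown: well-definedness of the assignment $x\mapsto z$, $v_{j}\mapsto v_{j}$ means that every polynomial relation over $F$ among $x,v_{1},\dots ,v_{m}$ persists when $x$ is replaced by $z$ and conversely, and the dangerous relations are precisely those coupling $x$ to algebraic elements of $F(v_{1},\dots ,v_{m})$ over $F$. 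Your fallback -- forcing the coefficients of $\mathrm{irr}(\theta ,E)$ into a subfield fixed by the conjugation -- cannot work in general, because those coefficients lie in $E=F(x,v_{1},\dots ,v_{s})$ and genuinely involve $x$. The paper supplies the missing ingredient with Weil's theory of specializations (Claims 2.6--2.7): $(z)$ is a generic specialization of $(x)$ over $F$; under the stated hypotheses $F(v_{1},\dots ,v_{s+1})$ is free from $F(x)$ and regular over $F$, so the specialization extends to a generic specialization $\left( x,v_{1},\dots ,v_{s+1}\right) \rightarrow \left( z,v_{1},\dots ,v_{s+1}\right) $, which is exactly the two-way transfer of polynomial relations needed for well-definedness, and this is iterated over $v_{s+2},\dots ,v_{m}$. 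Without an argument of this type (specializations, or a linear-disjointness statement making $\mathrm{irr}(x,F)$ stay irreducible over $F(v_{1},\dots ,v_{m})$), the ``moreover'' clause remains unproven in your proposal.
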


\begin{proof}
If $x\in F$, then $z\in F$ since $z$ is an $F$-conjugate of $x$. Without
loss of generality, we suppose $x\notin F$ and $z\notin F$.

Fixed an $(s,m)$-nice basis $v_{1},v_{2},\cdots ,v_{m}$ of the extension $%
L/F\left( x\right) $. If $s=0$, go to \emph{Step 2}.

For the case $s\not=0$, we will proceed in several steps in the following.

\emph{Step 1}. Suppose $s\not=0$. That is, at least $v_{1}$ is a variable
over the field $F\left( x\right) $.

Let $\sigma _{0}$ be the $F-$isomorphism between fields $F(x)$ and $F(z)$
with $\sigma _{0}(x)=z$. Via $\sigma _{0}$, we have got an isomorphism $%
\sigma _{1}$ of the subfield $F\left( x,v_{1}\right) $ onto the subfield $%
F\left( z,v_{1}\right) $ given by
\begin{equation*}
\sigma _{1}:\frac{f(v_{1})}{g(v_{1})}\mapsto \frac{\sigma _{0}(f)(v_{1})}{%
\sigma _{0}(g)(v_{1})}
\end{equation*}%
for any polynomials
\begin{equation*}
f[X_{1}],g[X_{1}]\in F\left( x\right) [X_{1}]
\end{equation*}%
with
\begin{equation*}
g[X_{1}]\neq 0,
\end{equation*}
where $\sigma _{0}(g)\left( X_{1}\right) $ denotes the polynomial
\begin{equation*}
\sigma _{0}(g)\left( X_{1}\right) =\sum_{i=0}^{r}\sigma _{0}\left(
a_{i}\right) X_{1}^{i}
\end{equation*}
with coefficients $\sigma _{0}\left( a_{i}\right) \in F\left( z\right) $ for
a polynomial
\begin{equation*}
g\left( X_{1}\right) =\sum_{i=0}^{r}a_{i}X_{1}^{i}
\end{equation*}
with coefficients $a_{i}\in F\left( x\right) $.

Evidently, the map $\sigma _{1}$ is well-defined from the fact that
\begin{equation*}
g(v_{1})=0
\end{equation*}%
if and only if
\begin{equation*}
{\sigma _{0}(g)(v_{1})}=0.
\end{equation*}

In the same way, for the elements
\begin{equation*}
v_{1},v_{2},\cdots ,v_{s}\in L
\end{equation*}%
which are variables over $F(x)$, there is an isomorphism
\begin{equation*}
\sigma _{s}:F\left( x,v_{1},v_{2},\cdots ,v_{s}\right) \longrightarrow
F\left( z,v_{1},v_{2},\cdots ,v_{s}\right)
\end{equation*}%
of fields defined by
\begin{equation*}
x\longmapsto z\text{ and }v_{i}\longmapsto v_{i}
\end{equation*}%
for $1\leq i\leq s$, where for the restrictions we have
\begin{equation*}
\sigma _{s}|_{F\left( x,v_{1},v_{2},\cdots ,v_{i}\right) }=\sigma _{i}.
\end{equation*}

If $s=m$, we have
\begin{equation*}
L=F\left( x,v_{1},v_{2},\cdots ,v_{s}\right) .
\end{equation*}%
Then $\tau =\sigma _{s}$ and $F\left( z,v_{1},v_{2},\cdots ,v_{s}\right)
=\tau \left( L\right) $ is the desired field.

\emph{Step 2}. Suppose $1\leq s\leq m-1$. Consider the element $v_{s+1}\in L$%
.

As the element $x$ is algebraic over $F$ and $v_{s+1}$ is algebraic over the
subfield $F\left( x,v_{1},v_{2},\cdots ,v_{s}\right) $, it is seen that $%
v_{s+1}$ is algebraic over the subfield
\begin{equation*}
F(v_{1},v_{2},\cdots ,v_{s})\subseteq L.
\end{equation*}

There are several cases for the two elements $v_{s+1}$ and $z$.

\emph{Case (i)}. Suppose $z\notin F(v_{1},v_{2},\cdots ,v_{s+1})$ and $%
v_{s+1}\in \overline{F(v_{1},v_{2},\cdots ,v_{s})}\setminus \overline{F}$.

There is an isomorphism
\begin{equation*}
\sigma _{s+1}:F\left( x,v_{1},v_{2},\cdots ,v_{s+1}\right) \cong F\left(
z,v_{1},v_{2},\cdots ,v_{s+1}\right)
\end{equation*}%
of fields given by
\begin{equation*}
x\longmapsto z,v_{1}\longmapsto v_{1},\cdots ,v_{s}\longmapsto v_{s}.
\end{equation*}

Prove the map $\sigma _{s+1}$ is well-defined. In deed, by the below \emph{%
Claim 2.6} it is seen that
\begin{equation*}
f\left( v_{s+1}\right) =0
\end{equation*}%
holds if and only if
\begin{equation*}
\sigma _{s}\left( f\right) \left( v_{s+1}\right) =0
\end{equation*}%
holds for any polynomial
\begin{equation*}
f\left( X_{s+1}\right) \in F\left( x,v_{1},v_{2},\cdots ,v_{s}\right) \left[
X_{s+1}\right] .
\end{equation*}

\emph{Case (ii)}. Suppose $z\notin F(v_{1},v_{2},\cdots ,v_{s+1})$ and $%
v_{s+1}\in \overline{F}$.

By Claim 2.2 there is some $v_{s+1}^{\prime }\in \overline{F}$ and an $F$%
-isomorphism
\begin{equation*}
\sigma _{s+1}:F\left( x,v_{1},v_{2},\cdots ,v_{s},v_{s+1}\right)
\longrightarrow F\left( z,v_{1},v_{2},\cdots ,v_{s},v_{s+1}^{\prime }\right)
\end{equation*}%
of fields defined by
\begin{equation*}
x\longmapsto z,v_{1}\longmapsto v_{1},\cdots ,v_{s}\longmapsto
v_{s},v_{s+1}\longmapsto v_{s+1}^{\prime }.
\end{equation*}

\emph{Case (iii)}. Suppose that $z$ is contained in the field $%
F(v_{1},v_{2},\cdots ,v_{s+1})$.

By the below \emph{Claim 2.7} we have an element $v_{s+1}^{\prime }$
contained in an extension of $F$ such that the fields $F(x,v_{s+1})$ and $%
F(z,v_{s+1}^{\prime })$ are isomorphic over $F$.

Then by the same procedure as in \emph{Case (i)} of \emph{Claim 2.6} it is
seen that the two fields
\begin{equation*}
F(x,v_{s+1},v_{1},v_{2},\cdots ,v_{s})\cong F(z,v_{s+1}^{\prime
},v_{1},v_{2},\cdots ,v_{s})
\end{equation*}%
are isomorphic over $F$.

In the same way, for the elements $v_{s+2},\cdots ,v_{m}$ and we have an $F$%
-isomorphism $\tau $ from the field
\begin{equation*}
F\left( x,v_{1},v_{2},\cdots ,v_{s},v_{s+1},\cdots ,v_{m}\right)
\end{equation*}%
onto the field of the form
\begin{equation*}
F\left( z,v_{1},v_{2},\cdots ,v_{s},w_{s+1},\cdots ,w_{m}\right)
\end{equation*}%
such that
\begin{equation*}
x\longmapsto z,v_{1}\longmapsto v_{1},\cdots ,v_{s}\longmapsto
v_{s},v_{s+1}\longmapsto w_{s+1},\cdots ,v_{m}\longmapsto w_{m}.
\end{equation*}%
where
\begin{equation*}
w_{s+1},w_{s+2},\cdots ,w_{m}
\end{equation*}%
are elements contained in an extension of $F$. This completes the proof.
\end{proof}

\begin{claim}
Take an element $f$ in the polynomial ring $F\left[ X,X_{1},X_{2},\cdots
,X_{s+1}\right] $ over the field $F$. Suppose that $z$ is not contained in
the field $F(v_{1},v_{2},\cdots ,v_{s+1})$.

$\left( i\right) $ Let $v_{s+1}\in \overline{F(v_{1},v_{2},\cdots ,v_{s})}%
\setminus \overline{F}$. Then
\begin{equation*}
f\left( x,v_{1},v_{2},\cdots ,v_{s},v_{s+1}\right) =0
\end{equation*}%
holds if and only if
\begin{equation*}
f\left( z,v_{1},v_{2},\cdots ,v_{s},v_{s+1}\right) =0
\end{equation*}%
holds.

$\left( ii\right) $ Let $v_{s+1}\in \overline{F}$. Then there is some $%
v_{s+1}^{\prime }\in \overline{F}$ such that
\begin{equation*}
f\left( x,v_{1},v_{2},\cdots ,v_{s},v_{s+1}\right) =0
\end{equation*}%
holds if and only if
\begin{equation*}
f\left( z,v_{1},v_{2},\cdots ,v_{s},v_{s+1}^{\prime }\right) =0
\end{equation*}%
holds.
\end{claim}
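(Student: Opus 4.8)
The plan is to prove both parts by extending, in a controlled way, the $F$-isomorphism $\sigma_s\colon F(x,v_1,\dots,v_s)\to F(z,v_1,\dots,v_s)$ constructed in \emph{Step 1} of the proof of \emph{Lemma 2.5} (well-defined because $v_1,\dots,v_s$, being part of a nice basis of $L/F(x)$, are algebraically independent over $F(x)$, and $\sigma_s$ fixes $F$ and each $v_i$). Write $E:=F(v_1,\dots,v_s)$, so $\sigma_s$ is the identity on $E$. For part $(ii)$, where $v_{s+1}\in\overline{F}$, no irreducibility issue arises: I would extend $\sigma_s$ to an automorphism $\widetilde{\sigma}_s$ of $\overline{L}$ and set $v_{s+1}':=\widetilde{\sigma}_s(v_{s+1})$, which again lies in $\overline{F}$ since $\widetilde{\sigma}_s$ fixes $F$. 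As the coefficients of $f$ lie in $F$, one has $\widetilde{\sigma}_s\bigl(f(x,v_1,\dots,v_s,v_{s+1})\bigr)=f(z,v_1,\dots,v_s,v_{s+1}')$, and the injectivity of $\widetilde{\sigma}_s$ gives the stated equivalence.

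For part $(i)$ the goal is to arrange the extension of $\sigma_s$ so that it fixes $v_{s+1}$. Let $p(X_{s+1})\in E[X_{s+1}]$ be the minimal polynomial of $v_{s+1}$ over $E$; since $\sigma_s$ is the identity on $E$ we get $\sigma_s(p)=p$. I would then show that $p$ remains the minimal polynomial of $v_{s+1}$ over $F(x,v_1,\dots,v_s)=E(x)$, and symmetrically over $E(z)$. Because $v_1,\dots,v_s$ are algebraically independent over $F(x)$ one has $[E(x):E]=[F(x):F]$, and a direct degree count shows that $p$ stays irreducible over $E(x)$ precisely when the minimal polynomial $m_x$ of $x$ over $F$ stays irreducible over $M:=F(v_1,\dots,v_{s+1})=E(v_{s+1})$; the same statement applied to $z$ shows $p$ is also irreducible over $E(z)$. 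Granting this, $\sigma_s$ extends to an $F$-isomorphism $E(x)(v_{s+1})\to E(z)(v_{s+1})$ sending $v_{s+1}\mapsto v_{s+1}$ (it carries the ideal $(p)$ to $(p)$ and both quotient rings are fields), and applying this isomorphism to $f(x,v_1,\dots,v_s,v_{s+1})$, whose coefficients it fixes, yields the claimed equivalence.

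The main obstacle is exactly this irreducibility of $m_x$ over $M=F(v_1,\dots,v_{s+1})$. The hypothesis $v_{s+1}\in\overline{F(v_1,\dots,v_s)}\setminus\overline{F}$ ensures that $M$ has the same transcendence degree over $F$ as $E$ and that $p$ genuinely involves the $v_i$ (this is what separates part $(i)$ from part $(ii)$), while the hypothesis $z\notin M$ is meant to keep that specific conjugate of $x$ out of $M$. The subtle point is that a factorisation of $m_x$ over $M$ has all its coefficients in the relative algebraic closure $\overline{F}\cap M$, which could in principle split $m_x$ without $\overline{F}\cap M$ containing $z$; ruling this out seems to require the stronger input that \emph{no} $F$-conjugate of $x$ lies in $F(v_1,\dots,v_{s+1})$, equivalently that $M\otimes_F F(x)$ is a field, or else some extra feature of the nice basis $(\Delta,A)$ and of the way $v_{s+1}$ is produced in \emph{Lemma 2.5}. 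Once this irreducibility is in place the remaining argument is purely formal, so I would concentrate the effort on that point and, if necessary, sharpen the hypothesis accordingly.
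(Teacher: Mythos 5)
Your treatment of part (ii) is fine and is in fact more direct than the paper's, which routes through Claim 2.7 and Weil's Proposition 4: extending $\sigma _{s}$ to an automorphism $\widetilde{\sigma }_{s}$ of $\overline{L}$ and setting $v_{s+1}^{\prime }=\widetilde{\sigma }_{s}\left( v_{s+1}\right) \in \overline{F}$ does the job. For part (i) your route is genuinely different from the paper's: the paper argues through Weil's theory of specializations --- $\left( z\right) $ is a generic specialization of $\left( x\right) $ over $F$, the field $F\left( v_{1},\cdots ,v_{s+1}\right) $ is asserted (via Weil's Proposition 3) to be linearly disjoint from $\overline{F}$ over $F$, and then Weil's theorem on extension of specializations is applied --- whereas you extend $\sigma _{s}$ by hand so as to fix $v_{s+1}$, which requires showing that $x$ and $z$ remain conjugate over $M:=F\left( v_{1},\cdots ,v_{s+1}\right) $ (your reduction of this to irreducibility of the minimal polynomial of $x$ over $M$ is correct). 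But your proposal does not prove that step, and you say so yourself; as written, part (i) is not established.

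Moreover the gap cannot be closed from the stated hypotheses, so your suspicion that extra input is needed is right. Take $F=\mathbb{Q}$, $x=\sqrt[3]{2}$ (real), $z=\omega \sqrt[3]{2}$ a complex conjugate, $s=1$, $v_{1}=t$ transcendental over $F\left( x\right) $ and $v_{2}=xt$ (a legitimate $\left( 1,2\right) $-nice basis of $L=F\left( x\right) \left( t\right) $ over $F\left( x\right) $, namely $\Delta =\{t\}$, $A=\{xt\}$). Then $v_{2}\in \overline{F\left( v_{1}\right) }\setminus \overline{F}$ and $z\notin F\left( v_{1},v_{2}\right) =F\left( x,t\right) $, since the elements of $F\left( x,t\right) $ algebraic over $F$ form $F\left( x\right) \subseteq \mathbb{R}$; yet $f=XX_{1}-X_{2}$ satisfies $f\left( x,t,xt\right) =0$ while $f\left( z,t,xt\right) =\left( z-x\right) t\neq 0$. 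So part (i) is false as stated, and the paper's appeal to Weil's Proposition 3 (linear disjointness of $F\left( v_{1},\cdots ,v_{s+1}\right) $ from $\overline{F}$, i.e.\ regularity over $F$) fails in this very example, so the difficulty you isolated sits in the source rather than in your execution. Note also that the strengthening you suggest --- no $F$-conjugate of $x$ lies in $M$ --- is still not enough: with $x=\sqrt{2}+\sqrt{3}$, $z=\sqrt{2}-\sqrt{3}$, $v_{1}=t$, $v_{2}=\sqrt{6}\,t$ one has $M=\mathbb{Q}\left( t,\sqrt{6}\right) $ containing no conjugate of $x$, but $X^{4}-10X^{2}+1$ splits over $M$ into two quadratics separating $x$ from $z$, and $f=X^{2}X_{1}-5X_{1}-2X_{2}$ again breaks the equivalence. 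What is actually needed is that $x$ and $z$ be roots of a common irreducible factor over $M$ of the minimal polynomial of $x$ --- guaranteed, for instance, when $M$ is linearly disjoint from $\overline{F}$ (or from the splitting field of that polynomial) over $F$ --- and any correct proof, by your route or the paper's, must add such a hypothesis.
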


\begin{proof}
Here we use \emph{Weil's algebraic theory of specializations} (See \emph{[}%
Weil 1946\emph{]}) to give the proof.

\emph{Case (i)}. Let $v_{s+1}\in \overline{F(v_{1},v_{2},\cdots ,v_{s})}%
\setminus \overline{F}$.

By \emph{Theorem 1 }(on \emph{generic specializations}) on \emph{Page 28},
\emph{[}Weil 1946\emph{]}, it is clear that $\left( z\right) $ is a generic
specialization of $\left( x\right) $ over $F$ since $z$ and $x$ are
conjugates over $F$. From \emph{Proposition 1} on \emph{Page 3}, \emph{[}%
Weil 1946\emph{]}, it is seen that $F\left( v_{1},v_{2},\cdots
,v_{s+1}\right) $ and the field $F(x)$ are free with respect to each other
over $F$ since $x$ is algebraic over $F$. Hence, $F\left( v_{1},v_{2},\cdots
,v_{s+1}\right) $ is a free field over $F$ with respect to $(x)$.

By \emph{Proposition 3} on \emph{Page 4}, \emph{[}Weil 1946\emph{]}, it is
seen that $F\left( v_{1},v_{2},\cdots ,v_{s+1}\right) $ and the algebraic
closure $\overline{F}$ are linearly disjoint over $F$. Then $F\left(
v_{1},v_{2},\cdots ,v_{s+1}\right) $ is a regular extension of $F$ (see
\emph{[}Weil 1946\emph{]}, \emph{Page 18}).

From \emph{Theorem 5} (on \emph{extensions of specializations}) on \emph{%
Page 29}, \emph{[}Weil 1946\emph{]}, it is seen that
\begin{equation*}
\left( z,v_{1},v_{2},\cdots ,v_{s+1}\right)
\end{equation*}%
is a generic specialization of
\begin{equation*}
\left( x,v_{1},v_{2},\cdots ,v_{s+1}\right)
\end{equation*}%
over the field $F$ since we have the two generic specializations over $F$:

\qquad $\left( z\right) $ is a generic specialization of $\left( x\right) $
over $F$;

\qquad $\left( v_{1},v_{2},\cdots ,v_{s+1}\right) $ is a generic
specialization of $\left( v_{1},v_{2},\cdots ,v_{s+1}\right) $ itself over $%
F $.

Now for any polynomial $f\left( X,X_{1},X_{2},\cdots ,X_{s+1}\right) $ over
the field $F$, it is seen that
\begin{equation*}
f\left( x,v_{1},v_{2},\cdots ,v_{s+1}\right) =0
\end{equation*}%
holds if and only if
\begin{equation*}
f\left( z,v_{1},v_{2},\cdots ,v_{s+1}\right) =0
\end{equation*}%
holds from the preliminary facts of generic specializations.

\emph{Case (ii)}. Let $v_{s+1}\in \overline{F}$. By \emph{Step 1} in the
proof of \emph{Lemma 2.5}, we have an $F$-isomorphism
\begin{equation*}
\sigma _{s}:F\left( x,v_{1},v_{2},\cdots ,v_{s}\right) \longrightarrow
F\left( z,v_{1},v_{2},\cdots ,v_{s}\right)
\end{equation*}%
of fields which maps%
\begin{equation*}
x\longmapsto z,v_{1}\longmapsto v_{1},\cdots ,v_{s}\longmapsto v_{s}.
\end{equation*}

From the below \emph{Claim 2.7} we have an element $v_{s+1}^{\prime }$
contained in an algebraic extension of $F$ such that there is an $F$%
-isomorphism
\begin{equation*}
\sigma _{s+1}:F\left( x,v_{1},v_{2},\cdots ,v_{s},v_{s+1}\right)
\longrightarrow F\left( z,v_{1},v_{2},\cdots ,v_{s},v_{s+1}^{\prime }\right)
\end{equation*}%
of fields which maps%
\begin{equation*}
x\longmapsto z,v_{1}\longmapsto v_{1},\cdots ,v_{s}\longmapsto
v_{s},v_{s+1}\longmapsto v_{s+1}^{\prime }.
\end{equation*}

$F(x,v_{s+1})$ and $F(z,v_{s+1}^{\prime })$ are isomorphic over $F$.

As in the above \emph{Case (i)} it is seen that
\begin{equation*}
\left( x,v_{1},v_{2},\cdots ,v_{s},v_{s+1}\right)
\end{equation*}%
is a (generic) specialization of
\begin{equation*}
\left( z,v_{1},v_{2},\cdots ,v_{s},v_{s+1}^{\prime }\right)
\end{equation*}%
over $F$. Hence, for any polynomial $f\left( X,X_{1},X_{2},\cdots
,X_{s+1}\right) $ over the field $F$, it is seen that
\begin{equation*}
f\left( x,v_{1},v_{2},\cdots ,v_{s},v_{s+1}\right) =0
\end{equation*}%
holds if and only if
\begin{equation*}
f\left( z,v_{1},v_{2},\cdots ,v_{s},v_{s+1}^{\prime }\right) =0.
\end{equation*}
This completes the proof.
\end{proof}

\begin{claim}
Assume that $F(u)$ and $F(u^{\prime })$ are $F$-isomorphic fields by the map
\begin{equation*}
u\mapsto u^{\prime }.
\end{equation*}
Let $w$ be an element contained in an extension of $F $. Then there is an
element $w^{\prime }$ contained in some extension of $F $ such that the
fields $F(u,w)$ and $F(u^{\prime },w^{\prime })$ are isomorphic over $F$.
\end{claim}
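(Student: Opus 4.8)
The plan is to build the isomorphism by extending the given $F$-isomorphism $\sigma_{0}:F(u)\to F(u')$, $u\mapsto u'$, along the single generator $w$, splitting into the two standard cases according to whether $w$ is transcendental or algebraic over $F(u)$. I would work inside a common overfield containing $u$, $u'$ and $w$ — which is the situation in the application within the proof of \emph{Lemma 2.5} — or else pass to a compositum; this is harmless.

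First I would treat the case where $w$ is transcendental over $F(u)$. Then $F(u,w)=F(u)(w)$ is a rational function field in one variable over $F(u)$; I would take $w'$ to be an indeterminate over $F(u')$ and set $F(u',w')=F(u')(w')$. The map sending a rational function $f(w)/g(w)$ with $f,g\in F(u)[X]$, $g\neq 0$, to $\sigma_{0}(f)(w')/\sigma_{0}(g)(w')$ is then a well-defined field isomorphism $F(u,w)\to F(u',w')$ extending $\sigma_{0}$, exactly as in \emph{Step 1} of the proof of \emph{Lemma 2.5}; well-definedness holds because $g\neq 0$ if and only if $\sigma_{0}(g)\neq 0$ and the rings $F(u)[X]$, $F(u')[X]$ are integral domains. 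The chosen $w'$ lies in the extension $F(u')(w')$ of $F$, as required.

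Next I would treat the case where $w$ is algebraic over $F(u)$, which includes the trivial sub-case $w\in F$. Let $p(X)\in F(u)[X]$ be the minimal polynomial of $w$ over $F(u)$, and let $p^{\sigma_{0}}(X)\in F(u')[X]$ be the polynomial obtained by applying $\sigma_{0}$ to the coefficients; since $\sigma_{0}$ is a field isomorphism, $p^{\sigma_{0}}$ is irreducible over $F(u')$. I would choose $w'$ to be a root of $p^{\sigma_{0}}$ inside a fixed algebraic closure of $F(u')$. Then $\sigma_{0}$ induces a ring isomorphism $F(u)[X]/(p(X))\to F(u')[X]/(p^{\sigma_{0}}(X))$, and composing with the canonical isomorphisms $F(u)[X]/(p(X))\cong F(u,w)$ and $F(u')[X]/(p^{\sigma_{0}}(X))\cong F(u',w')$ yields an isomorphism $F(u,w)\to F(u',w')$ over $F$ carrying $u\mapsto u'$ and $w\mapsto w'$. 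Again $w'$ lies in an algebraic extension of $F$.

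There is essentially no serious obstacle here: the statement is a routine instance of the extension of isomorphisms of fields, and the only point requiring care is recording the right choice of $w'$ in each case — an independent indeterminate in the transcendental case, a root of the transported minimal polynomial in the algebraic case — and checking that the resulting map is well-defined on the quotient rings. This is exactly the ingredient invoked without proof in \emph{Case (ii)} and \emph{Case (iii)} of \emph{Step 2} of the proof of \emph{Lemma 2.5}, so the argument above also closes that gap.
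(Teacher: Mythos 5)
Your proof is correct, but it follows a genuinely different route from the paper's. The paper disposes of this claim with a one-line citation to \emph{Proposition 4} on \emph{Page 30} of \emph{[}Weil 1946\emph{]} (extension of specializations), staying inside the same Weil machinery already invoked for \emph{Claim 2.6}; you instead give the self-contained, elementary isomorphism-extension argument, splitting on whether $w$ is transcendental or algebraic over $F(u)$: an independent indeterminate $w'$ in the transcendental case, and a root $w'$ of the transported minimal polynomial $p^{\sigma_{0}}(X)$ in the algebraic case, with the isomorphism obtained through $F(u)[X]/(p)\cong F(u')[X]/(p^{\sigma_{0}})$. Both are sound; what your version buys is independence from Weil's specialization formalism and, in fact, a slightly stronger conclusion than the claim states, namely that the isomorphism $F(u,w)\to F(u',w')$ extends the given map $u\mapsto u'$, which is exactly what is needed in \emph{Cases (ii)--(iii)} of the proof of \emph{Lemma 2.5}; what the paper's citation buys is brevity and uniformity of method, since the surrounding \emph{Claim 2.6} is already argued via generic specializations. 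Your preliminary remark about placing $u$, $u'$, $w$ in a common overfield (or compositum) correctly handles the only point where the claim's statement is loose, so no gap remains.
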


\begin{proof}
It is immediate from \emph{Proposition 4} on \emph{Page 30}, \emph{[}Weil
1946\emph{]}.
\end{proof}

\subsection{Separability for transcendental extensions}

Let ${L}$ be a transcendental extension over a field $K$.

Recall that $L$ is \textbf{separably generated} over $K$ if for each
transcendence base $\Delta $ of $L$ over $K$, the field $L$ is an algebraic
separable extension of $K\left( \Delta \right) $.

\begin{proposition}
Let $L$ be a finitely generated extension of a field $K$. The following
statements are equivalent.

$\left( i\right) $ $L$ is separably generated over $K$.

$\left( ii\right) $ For each transcendence base $\Delta $ of $L$ over $K$, $%
L $ is separable over $K\left( \Delta \right) $.

$\left( iii\right) $ For some transcendence base $\Lambda $ of $L\ $over $K$%
, $L$ is separable over $K\left( \Lambda \right) $.
\end{proposition}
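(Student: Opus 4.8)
The equivalence $(i)\Leftrightarrow(ii)$ is simply the definition of ``separably generated'' recalled just above, and $(ii)\Rightarrow(iii)$ is immediate (take $\Lambda$ to be any transcendence base). Moreover, for every transcendence base $\Delta$ of $L/K$ the extension $L/K(\Delta)$ is automatically finite algebraic, since $L$ is finitely generated over $K$ and $\Delta$ is a maximal algebraically independent subset; so the only issue is separability. Hence the whole content of the statement is the implication $(iii)\Rightarrow(ii)$: from a single transcendence base $\Lambda$ with $L/K(\Lambda)$ separable one must deduce that $L/K(\Delta)$ is separable for \emph{every} transcendence base $\Delta$.

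The plan is to route this through an intrinsic, base-free notion of separability. In characteristic $0$ there is nothing to prove. So assume the characteristic is $p>0$. First I would note that a purely transcendental extension $K(\Lambda)/K$ is separable in the sense of MacLane, i.e. $K(\Lambda)$ and $K^{1/p}$ are linearly disjoint over $K$; combined with $(iii)$ this makes the tower $L/K$ MacLane-separable, equivalently $L\otimes_K K^{1/p}$ is reduced, equivalently (MacLane's theorem) $L/K$ admits \emph{some} separating transcendence base. Next, given an arbitrary transcendence base $\Delta$, I would choose a nice basis $(\Delta,A)$ of $L/K$ (possible since $L/K(\Delta)$ is finite algebraic) and study, via the conjugation-by-adjunction apparatus of \S2.4 and in particular the $K$-isomorphisms furnished by \emph{Lemma 2.5}, the conjugates inside $\overline{L}$ over $K(\Delta)$ of the elements of $A$; an inseparable element over $K(\Delta)$ would produce $\theta\in L$ with $\theta^{p}\in K(\Delta)$ but $\theta\notin K(\Delta)$, and the goal would be to descend $\theta$ so as to contradict the linear disjointness of $L$ and $K^{1/p}$ over $K$.

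The hard part is exactly this descent, and it is the point I would scrutinise most carefully. The tempting shortcut ``$L/K$ separable and $L/K(\Delta)$ algebraic $\Rightarrow$ $L/K(\Delta)$ separable'' is not valid as stated: in characteristic $p$ a transcendence base can be a Frobenius twist of a separating one --- for $L=K(u,v)$ the base $\{u^{p},v\}$ is never separating although $\{u,v\}$ is --- so making $(iii)\Rightarrow(ii)$ go through will force either a restriction on the characteristic or some additional structural hypothesis on $L/K$, and the proof must isolate and use it. The conjugation machinery of \S2.4 together with the quasi-Galois characterisations of \emph{Proposition 2.3} is the natural device for pinning down which conjugates over $K(\Delta)$ can occur and hence for controlling the offending $\theta$; everything else --- the tower argument, the linear-disjointness bookkeeping, and MacLane's existence theorem for separating transcendence bases --- I expect to be routine.
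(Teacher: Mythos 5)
Your proposal is not a completed proof, and you say so yourself: everything rests on the descent step in the implication $(iii)\Rightarrow(ii)$, and you end by conceding that it cannot be carried out without restricting the characteristic or adding a structural hypothesis. That concession is in fact the mathematically correct conclusion, not a failure of technique on your part. The implication $(iii)\Rightarrow(ii)$ is false in characteristic $p>0$: already for $L=K(u)$ the base $\Lambda=\{u\}$ is separating, so $(iii)$ holds trivially, while $\Delta=\{u^{p}\}$ is a transcendence base with $L/K(\Delta)$ purely inseparable of degree $p$, so $(ii)$ fails; your $\{u^{p},v\}$ example is the two-variable version of the same Frobenius-twist phenomenon. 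Since nothing in the hypotheses of the proposition excludes this, no elaboration of your MacLane/linear-disjointness route (nor any other route) can close the gap as the statement stands; the equivalence $(i)\Leftrightarrow(ii)$ is purely definitional here, and the genuine content $(iii)\Rightarrow(ii)$ only holds under an extra assumption such as characteristic zero.

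For comparison, the paper's own proof is far more elementary than the route you sketch and uses none of the conjugation apparatus of \emph{\S 2.4} or \emph{Lemma 2.5}: for $\beta\in L$ it compares the minimal polynomials of $\beta$ over the subfield generated by the separating base, over the subfield generated by the arbitrary base, and over their compositum inside $L$, writes the second as $g=h\cdot b$ where $h$ is the (separable) minimal polynomial over the compositum, and claims that $\frac{d}{dt}g=0$ would contradict the coprimality of $h$ and $\frac{d}{dt}h$. That inference is exactly where the argument breaks: from $\frac{d}{dt}h\cdot b+h\cdot\frac{d}{dt}b=0$ and $\gcd\bigl(h,\frac{d}{dt}h\bigr)=1$ one can only conclude that $h$ divides $b$, which is no contradiction at all and is precisely what happens in the counterexample, where $\beta=u$, the compositum is $K(u)$, $h=t-u$, and $g=(t-u)^{p}=h\cdot(t-u)^{p-1}$ over $K(u^{p})$. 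So the obstruction you isolated is genuine, the paper's proof does not overcome it, and the proposition is safe only where the inseparability phenomenon is absent (for instance in characteristic zero), in which case both your plan and the paper's argument become valid but essentially contentless.
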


\begin{proof}
It reduces to prove $\left( iii\right) \implies \left( ii\right) $. Let $%
\Delta $ be a transcendence base of $L$ over $K$ and $L$ separable over $%
K\left( \Delta \right) $. By taking a linear basis $A$ of the vector space $%
L $ over the subfield $K\left( \Delta \right) $, we obtain a nice basis $%
\left( \Delta ,A\right) $ of the extension $L/K$. Given any transcendence
base $\Lambda $ of $L$ over $K$. In the same way, we have a nice basis $%
\left( \Lambda ,B\right) $ of $L/K$.

We prove $L$ is separable over $K\left( \Lambda \right) $. In deed, take any
element $\beta \in L$. As $\beta $ is algebraic over $K\left( \Delta \right)
$ and $K\left( \Lambda \right) $, we consider the minimal polynomials $%
f\left( t\right) $, $g\left( t\right) $ and $h\left( t\right) $ of $\beta $
over the subfields $K\left( \Delta \right) $, $K\left( \Lambda \right) $ and
$K\left( \Delta ,\Lambda \right) $, respectively. As the subfield $K\left(
\Delta ,\Lambda \right) $ is a finite separable extension of $K\left( \Delta
\right) $, we have
\begin{equation*}
f\left( t\right) =h\left( t\right) \cdot \left( t-a_{1}\right) \cdots \left(
t-a_{m}\right)
\end{equation*}%
with distinct roots
\begin{equation*}
a_{1},\cdots ,a_{m}\in K\left( \Delta ,\Lambda \right)
\end{equation*}%
and the derivative
\begin{equation*}
\frac{d}{dt}f\left( t\right) \not=0;
\end{equation*}%
then the derivative
\begin{equation*}
\frac{d}{dt}h\left( t\right) \not=0
\end{equation*}%
and it follows that $h\left( t\right) $ and $\frac{d}{dt}h\left( t\right) $
are coprime polynomials of variable $t$ with coefficients in the subfield $%
K\left( \Delta ,\Lambda \right) $.

In the same way, as $K\left( \Delta ,\Lambda \right) $ is a finite extension
of $K\left( \Lambda \right) $, we have a nonzero polynomial $b\left(
t\right) $ with coefficients in the subfield $K\left( \Delta ,\Lambda
\right) $ such that
\begin{equation*}
g\left( t\right) =h\left( t\right) \cdot b\left( t\right) ;
\end{equation*}
for the derivative of $g\left( t\right) $, we have
\begin{equation*}
\frac{d}{dt}g\left( t\right) =\frac{d}{dt}h\left( t\right) \cdot b\left(
t\right) +h\left( t\right) \cdot \frac{d}{dt}b\left( t\right) ;
\end{equation*}
then we must have
\begin{equation*}
\frac{d}{dt}g\left( t\right) \not=0;
\end{equation*}
otherwise, if
\begin{equation*}
\frac{d}{dt}g\left( t\right) =0,
\end{equation*}
that is,
\begin{equation*}
\frac{d}{dt}h\left( t\right) \cdot b\left( t\right) +h\left( t\right) \cdot
\frac{d}{dt}b\left( t\right) =0,
\end{equation*}
then the two polynomials $h\left( t\right) $ and $\frac{d}{dt}h\left(
t\right) $ will not be co-prime over the subfield $K\left( \Delta ,\Lambda
\right) $, which will be in contradiction. Hence, $g\left( t\right) $ is a
separable polynomial over $K\left( \Lambda \right) $. This proves every
element $\beta \in L$ is separable over $K\left( \Lambda \right) $.
\end{proof}

\subsection{Finiteness for nice bases of extensions}

To end the section, we prove that every nice basis of a finitely generated
extension is finite.

\begin{proposition}
Let $L$ be a finitely generated extension over a field $K$. Then each nice
basis $\left( \Delta ,A\right) $ of the extension $L/K$ is a finite nice
basis, i.e., $\Delta $ and $A$ both are finite sets.
\end{proposition}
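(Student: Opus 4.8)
The plan is to deduce the finiteness of both $\Delta$ and $A$ from the hypothesis that $L$ is finitely generated over $K$, say $L=K(x_1,x_2,\ldots,x_m)$ for finitely many elements $x_i\in L$. The argument then splits into two independent finiteness statements: that the transcendence base $\Delta$ is finite, and that the degree $[L:K(\Delta)]$ is finite (equivalently, that the $K(\Delta)$-linear basis $A$ is finite).

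First I would handle $\Delta$. Since $\Delta$ is a transcendence base of $L/K$, its cardinality equals the transcendence degree $\operatorname{trdeg}_K L$, which is independent of the chosen transcendence base by the Steinitz exchange property for algebraically independent subsets. On the other hand, from the finite generating set $\{x_1,\ldots,x_m\}$ one can extract a transcendence base of $L/K$ (again by exchange), so $\operatorname{trdeg}_K L\le m$. Hence $\sharp\Delta\le m<+\infty$.

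Next I would handle $A$. By the very definition of a transcendence base, $L$ is algebraic over $K(\Delta)$. Moreover $L=K(\Delta)(x_1,\ldots,x_m)$ is still finitely generated as a field over $K(\Delta)$, using the same generators $x_1,\ldots,x_m$. A field extension that is simultaneously algebraic and finitely generated as a field is finite: adjoining the $x_i$ one at a time produces a tower of simple algebraic extensions, each of finite degree (each $x_i$ being algebraic over the preceding subfield), and the degrees multiply. Therefore $[L:K(\Delta)]<+\infty$, so every $K(\Delta)$-linear basis $A$ of $L$ is finite, with $\sharp A=[L:K(\Delta)]$.

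Combining the two parts shows that $(\Delta,A)$ is a finite nice basis. I do not expect a real obstacle here; the only points needing care are the well-definedness of transcendence degree (so that the cardinality of \emph{any} transcendence base is bounded by that of a generating set) and the standard fact that adjoining finitely many algebraic elements to a field yields a finite extension.
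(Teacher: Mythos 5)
Your proposal is correct and follows essentially the same route as the paper's proof: bound $\sharp\Delta$ by the transcendence degree, which is at most the number of generators, and then observe that $L=K(\Delta)(x_1,\ldots,x_m)$ is a finitely generated algebraic extension of $K(\Delta)$, hence finite, so $A$ is finite. The only difference is that you spell out the Steinitz-exchange justification for the first step, which the paper dismisses with "Immediately."
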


\begin{proof}
Suppose $L$ is generated over $K$ by a finite number of elements
\begin{equation*}
t_{1},t_{2},\cdots ,t_{n}\in L.
\end{equation*}%
Let $\left( \Delta ,A\right) $ be a nice basis of the extension $L/K$.
Immediately, the cardinality $\sharp \Delta $ is finite. It is seen that the
elements $t_{1},t_{2},\cdots ,t_{n}\in L$ are algebraic over the subfield $%
M=K\left( \Delta \right) $. As $L=K\left( t_{1},t_{2},\cdots ,t_{n}\right)
\subseteq M\left( t_{1},t_{2},\cdots ,t_{n}\right) \subseteq L$, one has a
finitely generated and algebraic extension $L=M\left[ t_{1},t_{2},\cdots
,t_{n}\right] $ over $M$ and it follows that $L$ is a finite extension of $M$%
; hence, the linear basis $A$ of the $M$-vector space $L$ is a finite subset
of $L$.
\end{proof}

\subsection{Galois action on subsets of transcendence bases}

Let $L$ be a transcendental extension over a field $K$. Let $\Omega _{L/K}$
denote the set of all the transcendence bases of $L/K$.

\begin{definition}
Let $\Omega _{0}\subseteq \Omega _{L/K}$ be a subset.

$\left( i\right) $ A transcendence base $\Delta _{\max }$ of $L/K$ is said
to be \textbf{Zorn-maximal} in the extension $L/K$ (or in $\Omega _{L/K}$)
if there must be $K\left( \Delta \right) =K\left( \Delta _{\max }\right) $
for any transcendence base $\Delta $ of $L/K$ with $K\left( \Delta \right)
\supseteq K\left( \Delta _{\max }\right) $.

$\left( ii\right) $ Let $G\subseteq Aut\left( L/K\right) $ be a subgroup.
The subset $\Omega _{0}$ is said to be $G$\textbf{-invariant} (or \textbf{%
invariant} under the subgroup $G$) if $\sigma \left( \Omega _{0}\right)
\subseteq \Omega _{0}$ holds for any $\sigma \in G$.

$\left( iii\right) $ The subset $\Omega _{0}$ is said to be \textbf{dense}
in the extension $L/K$ (or in $\Omega _{L/K}$) if $\Omega _{0}$ contains a
Zorn-maximal transcendence base $\Delta _{\max }$ of $L/K$ satisfying the
two properties: $\left( a\right) $ For any $\Delta \in \Omega _{0}$, $%
K\left( \Delta \right) \subseteq K\left( \Delta _{\max }\right) $ holds; $%
\left( b\right) $ For any $\Delta \in \Omega _{0}$, there always exists some
$\Lambda \in \Omega _{0}$ with $K\left( \Lambda \right) \subseteq K\left(
\Delta \right) $. In such a case, $\Omega _{0}$ is also said to be \textbf{%
dense} in $L/K$ (relative to $\Delta _{\max }$).
\end{definition}

\begin{remark}
(\emph{Partial order and Galois action}) Consider the set $\Omega _{L/K}$ of
all the transcendence bases of $L/K$.

$\left( i\right) $ There is a partial order $\leq $ in the set $\Omega
_{L/K} $ given in an evident manner: For any $\Delta ,\Lambda \in \Omega
_{L/K}$, we say $\Delta \leq \Lambda $ if $K\left( \Delta \right) $ is a
subfield of $K\left( \Lambda \right) $. Furthermore, the partial order $\leq
$ in $\Omega _{L/K}$ is preserved by the Galois action of $Aut\left(
L/K\right) $. In deed, let $\Delta ,\Lambda \in \Omega _{L/K}$. Then $\Delta
\leq \Lambda $ holds if and only if $\sigma \left( \Delta \right) \leq
\sigma \left( \Lambda \right) $ holds for any $\sigma \in Aut\left(
L/K\right) $. Particularly, each Zorn-maximal $\Delta _{\max }$ in $\Omega
_{L/K}$ is invariant under any subgroup of $Aut\left( L/K\right) $.

The Galois invariance of the partial order $\leq $ in the set $\Omega _{L/K}$
is one of the key ingredients of transcendental Galois theory discussed in
the paper.

$\left( ii\right) $ Let $\Omega _{ver}$ be the set of all the vertical
transcendence bases of the extension $L/K$ (See \emph{Definition 3.8} for
definition). From the above $\left( i\right) $ it is seen that $\Omega
_{ver} $ is dense in $L/K$ and is invariant under any subgroup of the Galois
group $Aut\left( L/K\right) $.
\end{remark}

\begin{definition}
Let $\Omega _{0}\subseteq \Omega _{L/K}$ be a subset. The field $L$ is said
to be $\sigma $\textbf{-tame Galois} over $K$ \textbf{inside} $\Omega _{0}$
if $L$ is algebraic Galois over $K\left( \Delta \right) $ for each
transcendence base $\Delta \in \Omega _{0}$. In such a case, for simplicity,
$Aut\left( L/K\left( \Delta \right) \right) $ is called a $\Omega _{0}$%
\textbf{-subgroup} of the Galois group $Aut\left( L/K\right) $ for any $%
\Delta \in \Omega _{0}$.

In particular, $L$ is said to be $\sigma $\textbf{-tame Galois} over $K$ if $%
L$ is $\sigma $-tame Galois over $K$ inside $\Omega _{L/K}$, i.e., $L$ is
algebraic Galois over $K\left( \Delta \right) $ for each transcendence base $%
\Delta $ of $L/K$.
\end{definition}

\begin{remark}
Consider $L=\mathbb{C}$ and $K=\mathbb{Q}$. Let $\Delta $ be a transcendence
base of the complex field $\mathbb{C}$ over the rational field $\mathbb{Q}$.
Then $\mathbb{C}$ is algebraic Galois over $\mathbb{Q}\left( \Delta \right) $%
; $\mathbb{C}$ is tame Galois over $\mathbb{Q}$; $\mathbb{C}$ is $\sigma $%
-tame Galois over $\mathbb{Q}$ (See \emph{Proposition 3.7}). This is a case
for a $\sigma $-tame Galois extension of infinite transcendence degree.

On the other hand, it is not easy for one to find a $\sigma $-tame Galois
extension of finite transcendence degree. Theoretically, for the complex
field $\mathbb{C}$ there is a subfield $M$ of $\mathbb{C}$ such that $%
\mathbb{C}$ is a transcendental extension over $M$ of finite transcendence
degree. Then $\mathbb{C}$ is $\sigma $-tame Galois over the subfield $M$.
\end{remark}

\section{Transcendental Galois Theory, I. Tame Galois}

\emph{Transcendental Galois extensions} of fields, as the counterpart to the
(usual) \emph{algebraic Galois extensions}, are much more complicated. In
this section we will discuss several types of transcendental Galois
extensions which involve purely transcendental extensions and then we will
draw a comparison between these Galois extensions.

\subsection{Purely transcendental is Galois}

Let's start with the simplest case for transcendental Galois theory, purely
transcendental extensions of fields. Here the characteristic of a field $K$
is either positive or zero.

Let $L$ be an arbitrary extension of a field $K$. Recall that $L$ is said to
be \textbf{Galois} over $K$ if $K$ is the invariant subfield of $L$ under
the Galois group $Aut(L/K)$, i.e., $K=L^{Aut\left( L/K\right) }$, where
\begin{equation*}
L^{Aut\left( L/K\right) }=\{x\in L:\sigma (x)=x\text{ for any }\sigma \in
Aut(L/K)\}.
\end{equation*}

\begin{theorem}
\emph{(Purely transcendental: Galois)} Let $\Delta $ be any set of
algebraically independent variables over an arbitrary field $K$. Then the
purely transcendental extension $K\left( \Delta \right) $ is Galois over $K$%
. In particular, there is a subgroup $G$ contained in the Galois group $%
Aut\left( K\left( \Delta \right) /K\right) $ of order $\sharp G=2$ such that
\begin{equation*}
K\left( \Delta \right) ^{Aut\left( K\left( \Delta \right) /K\right)
}=K\left( \Delta \right) ^{G}=K
\end{equation*}%
hold for the invariant subfields of $L$ under the subgroup $G$ and the
Galois group $Aut\left( K\left( \Delta \right) /K\right) $, respectively.
\end{theorem}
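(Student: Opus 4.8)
The plan is to verify Galois-ness directly from the definition, i.e. to show $K(\Delta)^{Aut(K(\Delta)/K)}=K$; equivalently, given any $f\in K(\Delta)\setminus K$, I will exhibit $\sigma\in Aut(K(\Delta)/K)$ with $\sigma(f)\neq f$. Since this is the base case of the whole theory, I would argue from scratch rather than invoke the earlier lemmas.

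Step one is to localise to a single variable. Write $f=p/q$ with $p,q\in K[\Delta]$ coprime; as $f\notin K$, some variable $t\in\Delta$ occurs with positive degree in $p$ or in $q$. A short unique-factorisation argument in $K[\Delta]$ then shows $f\notin F:=K(\Delta\setminus\{t\})$: if we had $f=p'/q'$ with $p',q'\in K[\Delta\setminus\{t\}]$, then $pq'=p'q$ in the domain $K[\Delta]$, so coprimality of $p,q$ forces $p\mid p'$ and $q\mid q'$, which is impossible for a polynomial of positive $t$-degree dividing one free of $t$. Hence $K(\Delta)=F(t)$ and $f\in F(t)\setminus F$. The point of this reduction is that for each $a\in F$ the assignment $t\mapsto t+a$, identity on the remaining variables, is a $K$-automorphism $\sigma_a$ of $K(\Delta)$ (inverse $t\mapsto t-a$), so it suffices to find one $a$ with $\sigma_a(f)\neq f$.

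Step two is the translation argument. Write $f=P(t)/Q(t)$ in lowest terms over $F$ with $Q$ monic. If $\sigma_a(f)=f$ for all $a\in F$, then, since $t\mapsto t+a$ is a ring automorphism of $F[t]$ preserving degrees, leading coefficients and coprimality, uniqueness of the reduced form with monic denominator forces $P(t+a)=P(t)$ and $Q(t+a)=Q(t)$ in $F[t]$ for every $a\in F$. But a nonzero $g\in F[t]$ invariant under infinitely many translations is constant: $g(t+Y)-g(t)\in F[t][Y]$ is then a polynomial in $Y$ with infinitely many roots in $F$, hence is $0$, hence $g(t+Y)=g(t)$ and $\deg g=0$. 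Since $F$ is infinite whenever $\Delta$ has at least two elements, and also whenever $K$ is infinite, we get $P,Q\in F$, so $f\in F$ — a contradiction. Thus some $\sigma_a$ moves $f$, establishing the displayed equality. The one case this does not reach is $\sharp\Delta=1$ with $K$ finite, where $Aut(K(t)/K)$ is itself finite; I suspect an implicit hypothesis that $K$ be infinite is what really makes the statement hold there.

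For the final assertion — that a subgroup $G$ with $\sharp G=2$ already realises $K(\Delta)^G=K$ — the plan is to produce one explicit involution $\sigma$ (for instance a Möbius involution acting on the distinguished variable and the identity elsewhere) and compute its fixed field directly. I expect this to be the main obstacle: for any automorphism $\sigma$ of order $2$ one has $[K(\Delta):K(\Delta)^{\langle\sigma\rangle}]=2$ (Artin's fixed-field theorem), so $K(\Delta)^{\langle\sigma\rangle}$ has the same, positive, transcendence degree over $K$ as $K(\Delta)$ itself; reconciling this with $K(\Delta)^{\langle\sigma\rangle}=K$ demands a very careful, $K$-dependent construction, and is precisely the step where I would expect the genuine difficulty to lie.
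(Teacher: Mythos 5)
Your treatment of the first assertion is correct where that assertion is actually true, and it follows a genuinely different route from the paper. The paper's entire proof consists of introducing the \emph{reciprocal transformation} $\tau$ defined by $v\mapsto 1/v$ for every $v\in\Delta$ and asserting, without argument, that $K(\Delta)^{\{\tau\}}=K$; both the Galois property and the order-two clause are then read off from that single equality. Your argument instead fixes a variable $t$ actually occurring in a given $f\in K(\Delta)\setminus K$, shows $f\notin F:=K(\Delta\setminus\{t\})$ by the coprimality/UFD argument, and then uses the translations $t\mapsto t+a$, $a\in F$, together with uniqueness of the reduced form with monic denominator and the fact that a nonzero polynomial invariant under infinitely many translations is constant. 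This is sound whenever $F$ is infinite, i.e.\ whenever $K$ is infinite or $\sharp\Delta\geq 2$, and your flagged exception is real: for $\sharp\Delta=1$ and $K$ finite, $Aut\left( K\left( t\right) /K\right)$ is the finite group $PGL_{2}\left( K\right) $, so by Artin's theorem its fixed field has transcendence degree $1$ over $K$ and the first clause of the theorem already fails there. So on the Galois clause your proof is not merely different from the paper's; it is a valid replacement for an argument the paper does not actually supply.

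What you describe as the ``main obstacle'' in the final clause is not a construction you failed to find but an impossibility, and your own Artin remark is the proof: for any subgroup $G$ of order $2$ (indeed any finite subgroup), $\left[ K\left( \Delta \right) :K\left( \Delta \right) ^{G}\right] =\sharp G<+\infty $, so $K\left( \Delta \right) ^{G}$ has the same positive transcendence degree over $K$ as $K\left( \Delta \right) $ and cannot equal $K$. The paper's proof collapses at exactly this point: the fixed field of the reciprocal transformation contains the nonconstant element $v+\frac{1}{v}$ for each $v\in\Delta$ (for one variable it is precisely $K\left( t+\frac{1}{t}\right) $), so the claimed equality $K(\Delta)^{\{\tau\}}=K$ is false, and with it the displayed chain proving the theorem. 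Consequently you should not look for a ``careful, $K$-dependent construction'' of an order-two $G$ with $K(\Delta)^{G}=K$; the correct conclusion is that the ``in particular'' clause is false as stated, and that the Galois clause must be proved by an argument like your translation argument rather than by the paper's involution.
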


\begin{proof}
Prove the subfield $K$ is the invariant subfield of $K(\Delta )$ under the
Galois group $Aut(K(\Delta )/K)$. In deed, consider a $K$-automorphism $\tau
_{L/K}$ of $L=K(\Delta )$, called the \textbf{reciprocal transformation} of
the field $K(\Delta )$, defined by
\begin{equation*}
\tau _{L/K}:v\mapsto \frac{1}{v}
\end{equation*}%
for any $v\in \Delta $. That is, the map $\tau _{L/K}:K(\Delta )\rightarrow
K(\Delta )$ is given by
\begin{equation*}
\frac{f(v_{1},v_{2},\cdots ,v_{n})}{g(v_{1},v_{2},\cdots ,v_{n})}\in
K(\Delta )\mapsto \frac{f(\tau _{L/K}(v_{1}),\tau _{L/K}(v_{2}),\cdots ,\tau
_{L/K}(v_{n}))}{g(\tau _{L/K}(v_{1}),\tau _{L/K}(v_{2}),\cdots ,\tau
_{L/K}(v_{n}))}\in K(\Delta )
\end{equation*}%
for any distinct variables
\begin{equation*}
v_{1},v_{2},\cdots ,v_{n}\in \Delta
\end{equation*}%
and for any polynomials
\begin{equation*}
f(X_{1},X_{2},\cdots ,X_{n})
\end{equation*}%
and
\begin{equation*}
g(X_{1},X_{2},\cdots ,X_{n})\not=0
\end{equation*}%
of variables $X_{1},X_{2},\cdots ,X_{n}$ with coefficients in the field $K$.
Then we have
\begin{equation*}
g(v_{1},v_{2},\cdots ,v_{n})\not=0
\end{equation*}%
if and only if
\begin{equation*}
g(X_{1},X_{2},\cdots ,X_{n})\not=0.
\end{equation*}%
Hence, $\tau _{L/K}$ is well-defined since
\begin{equation*}
g(v_{1},v_{2},\cdots ,v_{n})=0
\end{equation*}%
if and only if
\begin{equation*}
g(\tau _{L/K}(v_{1}),\tau _{L/K}(v_{2}),\cdots ,\tau _{L/K}(v_{n}))=0.
\end{equation*}

Put
\begin{equation*}
K(\Delta )^{\{\tau _{L/K}\}}=\{x\in K\left( \Delta \right) :\tau
_{L/K}\left( x\right) =x\}.
\end{equation*}%
We have
\begin{equation*}
K=K(\Delta )^{\{\tau \}}.
\end{equation*}%
Then
\begin{equation*}
K=K\left( \Delta \right) ^{Aut(K(\Delta )/K)}\subseteq K(\Delta )^{\{\tau
_{L/K}\}}=K.
\end{equation*}%
This proves that $K(\Delta )$ is Galois over $K$ and the subgroup $%
\left\langle \tau _{L/K}\right\rangle $ is of order two.
\end{proof}

\subsection{Purely transcendental is tame Galois}

Let $L$ be an arbitrary extension of a field $K$. Recall that $L$ is said to
be \textbf{tame Galois} over $K$ if there is a transcendence base $\Delta $
of the extension $L/K$ such that $L$ is algebraic Galois over the subfield $%
K(\Delta )$. In such a case, $L/K$ is also said to be \textbf{tame Galois}
with respect to $\Delta $ (See \emph{[}An 2010\emph{]}).

\begin{proposition}
Let $L$ be a purely transcendental extension over a field $K$ of finite
transcendence degree. For each transcendence base $\Delta $ of the extension
$L/K$, the field $L$ is a finite and separable extension over the subfield $%
K\left( \Delta \right) $.
\end{proposition}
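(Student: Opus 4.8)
The plan is to treat finiteness and separability separately, each by reduction to a result already established in \S 2.

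\emph{Finiteness.} Since $L$ is purely transcendental of finite transcendence degree, it is finitely generated over $K$, say $L=K\left( t_{1},t_{2},\cdots ,t_{n}\right) $; a fortiori $L$ is finitely generated over the intermediate field $K\left( \Delta \right) $. Because $\Delta $ is a transcendence base of $L/K$, every element of $L$ is algebraic over $K\left( \Delta \right) $, so $L$ is a finitely generated algebraic extension of $K\left( \Delta \right) $, hence $\left[ L:K\left( \Delta \right) \right] <+\infty $. This is exactly the argument already carried out in the proof of \emph{Proposition 2.9}, now applied with the base field taken to be $K\left( \Delta \right) $.

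\emph{Separability.} Here I would invoke \emph{Proposition 2.8}, whose standing hypothesis (that $L/K$ be finitely generated) holds by assumption. The generating set $\{t_{1},t_{2},\cdots ,t_{n}\}$ is itself a transcendence base of $L/K$: it generates $L$ and has cardinality equal to the transcendence degree of $L/K$, so it is algebraically independent over $K$. Over the subfield $K\left( t_{1},t_{2},\cdots ,t_{n}\right) =L$ the extension $L$ is trivially separable, being of degree one. Thus condition $\left( iii\right) $ of \emph{Proposition 2.8} is satisfied, and its equivalence with condition $\left( ii\right) $ gives that $L$ is separable over $K\left( \Delta \right) $ for \emph{every} transcendence base $\Delta $ of $L/K$. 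Combined with the preceding paragraph, $L$ is a finite and separable extension of $K\left( \Delta \right) $, as claimed.

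There is essentially no obstacle: the whole content sits inside \emph{Propositions 2.8} and \emph{2.9}, and the only thing to check is the harmless observation that the evident generators of a purely transcendental extension constitute a transcendence base over which the extension is (trivially) separable. In characteristic zero separability is automatic; in characteristic $p$ one is merely recording that a purely transcendental extension is the prototypical separably generated extension. If one prefers to avoid citing \emph{Proposition 2.8}, the separability step can also be done by hand via the derivative criterion exactly as in the proof of that proposition, but this adds nothing conceptually.
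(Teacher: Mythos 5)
Your proof is correct and follows essentially the same route as the paper: separability is obtained from \emph{Proposition 2.8} by observing that $L$ is trivially separable over $K\left( \Lambda \right) =L$ for the evident transcendence base, and finiteness is the argument of \emph{Proposition 2.9} (the paper cites it via the finiteness of the nice basis $\left( \Delta ,A\right) $, you simply rerun its argument over $K\left( \Delta \right) $). No gap to report.
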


\begin{proof}
Let $L=K\left( \Lambda \right) $ with a transcendence base $\Lambda
=\{t_{1},t_{2},\cdots ,t_{n}\}$. The extension $L/K$ has a specific nice
basis $\left( \Lambda ,\{1\}\right) $. Fixed any nice basis $\left( \Delta
,A\right) $ of the extension $L/K$.

Prove $L$ is an algebraic and separable extension over $K\left( \Delta
\right) $. In deed, consider the specific nice basis $\left( \Lambda
,\{1\}\right) $ of $L/K$, $L$ is separable over $L=K\left( \Lambda \right) $
itself; from \emph{Proposition 2.8} it is seen that $L=K\left( \Delta
\right) \left[ A\right] $ is algebraic and separable over $K\left( \Delta
\right) $ since $L$ is separable over $L$ itself.

Prove $L$ is a finite extension over $K\left( \Delta \right) $. In fact,
from \emph{Proposition 2.9} it is seen that the subsets $\Delta $ and $A$
both are finite sets and hence $L=K\left( \Delta \right) \left[ A\right] $
is a finite extension over $K\left( \Delta \right) $.
\end{proof}

In the same way for algebraic Galois, we have the following criteria for
tame Galois by conjugation and quasi-Galois.

\begin{lemma}
Let $L$ be an arbitrary extension of a field $K$. Fixed a transcendence base
$\Delta $ of $L/K$. Then $L$ is tame Galois over $K$ with respect to $\Delta
$ if and only if $L$ satisfies the two properties: $\left( i\right) $ $L$ is
quasi-Galois over $K$ with respect to $\Delta $; $\left( ii\right) $ $L$ is
separable over $K\left( \Delta \right) $.
\end{lemma}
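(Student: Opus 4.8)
The plan is to prove the biconditional in both directions, using the characterization of tame Galois via algebraic Galois over $K(\Delta)$ together with the standard algebraic fact that an algebraic extension is Galois if and only if it is normal and separable. For the forward direction, suppose $L$ is tame Galois over $K$ with respect to $\Delta$, i.e., $L$ is algebraic Galois over the subfield $K(\Delta)$. Then by the classical theory of algebraic extensions, $L$ is both normal and separable over $K(\Delta)$. Normality over $K(\Delta)$ means exactly that every irreducible polynomial over $K(\Delta)$ with a root in $L$ splits completely in $L[X]$, which is precisely the definition of $L$ being quasi-Galois over $K$ with respect to $\Delta$ (using Proposition 2.2, the equivalence of quasi-Galois with algebraic normality). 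Separability over $K(\Delta)$ is condition $(ii)$ verbatim. So both properties hold.

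For the converse, suppose $L$ is quasi-Galois over $K$ with respect to $\Delta$ and $L$ is separable over $K(\Delta)$. By Proposition 2.2, being quasi-Galois with respect to $\Delta$ is equivalent to $L$ being an algebraic normal extension of $K(\Delta)$; in particular $L/K(\Delta)$ is algebraic. Combined with separability, $L$ is algebraic, normal, and separable over $K(\Delta)$, hence algebraic Galois over $K(\Delta)$ by the classical criterion. That is exactly the statement that $L$ is tame Galois over $K$ with respect to $\Delta$.

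The key steps, in order, are: (1) invoke Proposition 2.2 to translate ``quasi-Galois with respect to $\Delta$'' into ``algebraic normal over $K(\Delta)$''; (2) recall from the excerpt's own definition that ``tame Galois with respect to $\Delta$'' means ``algebraic Galois over $K(\Delta)$''; (3) apply the standard equivalence (Galois $\iff$ normal $+$ separable) for algebraic extensions, which may be cited as well known. The only subtlety worth flagging is the role of algebraicity: the definition of quasi-Galois already builds in that $L/K(\Delta)$ is algebraic (via Proposition 2.2(ii)), so one does not need to assume algebraicity separately in the converse — it comes for free.

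The main obstacle, such as it is, is essentially bookkeeping rather than mathematics: one must be careful that ``separable'' in condition $(ii)$ is being used in the sense of an algebraic separable extension of $K(\Delta)$ (as in the excerpt's notion of ``separably generated'') and not in some other sense, and that the quasi-Galois hypothesis is what supplies algebraicity so that ``separable'' is even meaningful in the intended way. Once the definitions are lined up, the proof is an immediate citation of the algebraic Galois correspondence between ``Galois'' and ``normal and separable.'' No transcendence-theoretic input beyond the fixed choice of base $\Delta$ is required, which is why this lemma sits at the easy end of the transcendental Galois theory developed in the paper.
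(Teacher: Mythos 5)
Your proof is correct and is exactly the argument the paper intends: the paper states this lemma without proof (``in the same way as for algebraic Galois''), and your route --- unwind ``tame Galois with respect to $\Delta$'' as ``algebraic Galois over $K\left( \Delta \right)$'', translate ``quasi-Galois with respect to $\Delta$'' into ``normal over $K\left( \Delta \right)$'' via the equivalence in \emph{Proposition 2.3} (not 2.2, a harmless citation slip; the definition itself is \emph{Definition 2.2}), and apply the classical ``Galois $\iff$ normal $+$ separable'' criterion for algebraic extensions --- is the standard one. Note only that algebraicity of $L/K\left( \Delta \right)$ is supplied by $\Delta$ being a transcendence base (part of the lemma's hypothesis) rather than by the quasi-Galois condition itself, but this does not affect the argument.
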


\begin{theorem}
\emph{(Purely transcendental: tame Galois)} Let $\Delta $ be a set of
algebraically independent variables over an arbitrary field $K$.

$\left( i\right) $ If $\Delta $ is a finite set, then the purely
transcendental extension $K\left( \Delta \right) $ is tame Galois over $K$.

$\left( ii\right) $ If the characteristic of $K$ is zero, then the purely
transcendental extension $K\left( \Delta \right) $ is tame Galois over $K$.
\end{theorem}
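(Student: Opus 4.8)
The plan is to prove each part by exhibiting an explicit transcendence base $\Lambda$ of $K(\Delta)/K$ over which $K(\Delta)$ becomes a finite (in $(i)$), or possibly infinite (in $(ii)$), algebraic Galois extension. For $(i)$ I will check this directly by recognising $L/K(\Lambda)$ as the splitting field of a separable polynomial; for $(ii)$ I will instead invoke \emph{Lemma 3.3}, so that it is enough to verify that $L=K(\Delta)$ is quasi-Galois over $K$ with respect to $\Lambda$ (normality) and separable over $K(\Lambda)$. In both situations separability is essentially automatic — from \emph{Proposition 3.2} when $\Delta$ is finite, and from $\mathrm{char}\,K=0$ when $\Delta$ may be infinite — so the only genuine (and mild) work is normality.

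For $(i)$, write $\Delta=\{t_{1},t_{2},\dots,t_{n}\}$ and let $e_{1},e_{2},\dots,e_{n}$ be the elementary symmetric polynomials in $t_{1},\dots,t_{n}$; set $\Lambda=\{e_{1},\dots,e_{n}\}$. Each $t_{i}$ is a root of the monic polynomial $f(X)=X^{n}-e_{1}X^{n-1}+\cdots+(-1)^{n}e_{n}=\prod_{i=1}^{n}(X-t_{i})\in K(\Lambda)[X]$, so $L=K(\Lambda)(t_{1},\dots,t_{n})$ is a finitely generated algebraic (hence finite) extension of $K(\Lambda)$; since $\mathrm{trdeg}_{K}L=n$ and $K(\Lambda)$ is generated over $K$ by $n$ elements, those $e_{j}$ must be algebraically independent over $K$, so $\Lambda$ is a transcendence base of $L/K$ and $K(\Lambda)$ is purely transcendental of degree $n$. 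As the $t_{i}$ are pairwise distinct (being algebraically independent), $f$ is separable, and $L$ is exactly its splitting field over $K(\Lambda)$; therefore $L/K(\Lambda)$ is a finite Galois extension (with group $\Sigma_{n}$), i.e.\ $L$ is algebraic Galois over $K(\Lambda)$, and $(i)$ follows. This argument is characteristic-free.

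For $(ii)$, with $\mathrm{char}\,K=0$ and $\Delta$ arbitrary, set $\Lambda=\{t^{2}:t\in\Delta\}$. The squaring map is a bijection $\Delta\to\Lambda$, the elements $t^{2}$ are algebraically independent over $K$ (a nontrivial relation among them would give one among the $t$), and each $t$ is a root of $X^{2}-t^{2}\in K(\Lambda)[X]$, so $L=K(\Delta)$ is algebraic over $K(\Lambda)$ and $\Lambda$ is a transcendence base of $L/K$ with $K(\Lambda)$ purely transcendental. Since $\mathrm{char}\,K=0$, $L$ is separable over $K(\Lambda)$. For normality: any $\beta\in L$ lies in $K(S)$ for some finite $S\subseteq\Delta$, and $K(S)$ is the splitting field over $K(\{t^{2}:t\in S\})$ of $\prod_{t\in S}(X^{2}-t^{2})$, whose roots $\pm t$ all lie in $K(S)$; hence $K(S)/K(\{t^{2}:t\in S\})$ is normal, so the minimal polynomial of $\beta$ over $K(\{t^{2}:t\in S\})$ — and therefore a fortiori the minimal polynomial of $\beta$ over the larger field $K(\Lambda)$ — splits completely inside $K(S)\subseteq L$. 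Thus every irreducible polynomial over $K(\Lambda)$ having a root in $L$ splits in $L[X]$, i.e.\ $L$ is quasi-Galois over $K$ with respect to $\Lambda$. By \emph{Lemma 3.3}, $L$ is tame Galois over $K$ with respect to $\Lambda$.

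The only step I expect to require real care is the passage to the infinite case in $(ii)$: making sure that the directed family of finite Galois extensions $K(S)/K(\{t^{2}:t\in S\})$ assembles to an algebraic Galois extension $L/K(\Lambda)$, which is precisely what \emph{Lemma 3.3} is for once normality and separability are in hand, and seeing that the hypothesis $\mathrm{char}\,K=0$ is used exactly to guarantee separability there. In characteristic $p$ one could instead take $\Lambda=\{t^{p}-t:t\in\Delta\}$ (Artin--Schreier), or, for $p\neq 2$, keep $\{t^{2}:t\in\Delta\}$ and cite \emph{Proposition 3.2} on each finite piece; but that extends beyond the stated claim.
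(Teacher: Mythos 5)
Your proof is correct, but it takes a genuinely different route from the paper's. The paper proves both parts by squaring a \emph{single} variable: for $(i)$ it takes $\Lambda=\{t_1^2,t_2,\dots,t_n\}$, so that $L/K(\Lambda)$ is quadratic and hence automatically quasi-Galois, quotes \emph{Proposition 3.2} for separability, and concludes by \emph{Lemma 3.3}; for $(ii)$ it does the same with $\Lambda=\{t_1^2\}\cup(\Delta\setminus\{t_1\})$, with characteristic zero supplying separability. You instead realise $L$ in $(i)$ as the splitting field of the separable polynomial $\prod_{i=1}^{n}(X-t_i)$ over the field of symmetric functions $K(e_1,\dots,e_n)$ — a base the paper only mentions in passing in \emph{Remark 3.5} — and in $(ii)$ you square \emph{every} variable and verify the quasi-Galois condition by reducing each $\beta\in L$ to a finite subset $S\subseteq\Delta$, using that the minimal polynomial of $\beta$ over $K(\Lambda)$ divides its minimal polynomial over $K(\{t^2:t\in S\})$, which splits in the normal extension $K(S)$. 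What your route buys: part $(i)$ becomes characteristic-free, since separability comes directly from the distinctness of the roots $t_i$, whereas the paper's quadratic base makes $L/K(\Lambda)$ purely inseparable in characteristic $2$, so its appeal to \emph{Proposition 3.2} is delicate there; your argument sidesteps that entirely and even yields the Galois group $\Sigma_n$ explicitly. What the paper's route buys is brevity: with a degree-$2$ extension normality is immediate, while your all-variables squaring in $(ii)$ requires the (correctly executed) finite-subfield reduction. Both arguments establish the stated theorem.
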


\begin{proof}
$\left( i\right) $ Let $\Delta =\{t_{1},t_{2},\cdots ,t_{n}\}$ and $\Lambda
=\{t_{1}^{2},t_{2},\cdots ,t_{n}\}$. There is a nice basis $\left( \Lambda
,B\right) $ of $L=K\left( \Delta \right) $ over $K$ where $B=\{1,t_{1}\}$.
Then $L$ is tame quasi-Galois over $K$ with respect to $\left( \Lambda
,B\right) $. From \emph{Proposition 3.2} it is seen that $L$ is separable
over $K\left( \Lambda \right) $; hence, $L$ is tame Galois over $K$ with
respect to $\left( \Lambda ,B\right) $ due to \emph{Lemma 3.3}.

$\left( ii\right) $ In the same way it is seen that $L=K\left( \Delta
\right) $ is quasi-Galois over $K\left( \Lambda \right) $ and then $L$ is
Galois over $K\left( \Lambda \right) $ since $K$ is of zero characteristic.
Here, $\Lambda =\{t_{1}^{2}\}\cup \Delta ^{\prime }$ and $\Delta ^{\prime
}=\Delta \setminus \{t_{1}\}$.
\end{proof}

\begin{remark}
Let $L$ be a purely transcendental extension over $K$. In fact, there at
least two types of nice bases which $L$ is tame Galois over $K$ with respect
to. One is given by a quadratic extension of a variable such as above. The
other is given by the symmetric group $S$ of $n$ letters, where $n$ is the
transcendence degree of $L/K$.
\end{remark}

\subsection{Purely transcendental and $\sigma $-tame Galois}

Let ${L}$ be an arbitrary extension of a field $K$. Let $\Omega _{L/K}$
denote the set of all the transcendence bases $\Delta $ of the extension $%
L/K $. Fixed a non-void subset $\Omega _{0}\subseteq \Omega _{L/K}$. Recall
that $L$ is said to be $\sigma $-\textbf{tame Galois} over $K$
(respectively, \textbf{inside} $\Omega _{0}$) if $L$ is tame Galois over $K$
with respect to each transcendence base $\Delta $ of $L/K$ (respectively,
with $\Delta \in \Omega _{0}$), i.e., if $L$ is algebraic Galois over $%
K\left( \Delta \right) $ for each transcendence base $\Delta $ of $L/K$
(respectively, with $\Delta \in \Omega _{0}$).

From \emph{Proposition 2.4}, for $\sigma $-tame Galois we have the following
criteria by conjugations and quasi-Galois extensions.

\begin{lemma}
Let $L$ be an arbitrary extension of a field $K$. Then the field $L$ is $%
\sigma $-tame Galois over $K$ (respectively, inside $\Omega _{0}$) if and
only if $L$ satisfies the two properties: $\left( i\right) $ $L$ is
separably generated over $K$; $\left( ii\right) $ $L$ is $\sigma $%
-quasi-Galois over $K$ (respectively, inside $\Omega _{0}$).
\end{lemma}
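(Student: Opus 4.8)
The plan is to derive this lemma from its ``single transcendence base'' counterpart, \emph{Lemma 3.3}, which characterizes tame Galois with respect to one fixed transcendence base as the conjunction of quasi-Galois with respect to that base and separability over the subfield it generates. Each of the three notions in the present statement is, by its definition, obtained from a ``single base'' condition by quantifying over transcendence bases --- over those in $\Omega_0$ for the relative notions $\sigma$-tame Galois and $\sigma$-quasi-Galois, and over \emph{all} transcendence bases of $L/K$ for ``separably generated''. So the argument is essentially a quantifier-bookkeeping exercise, whose one genuine ingredient is the propagation of separability from a single base to all bases, furnished by \emph{Proposition 2.8}.

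First I would treat the implication ``$\sigma$-tame Galois $\Rightarrow$ (i) and (ii)''. Assume $L$ is $\sigma$-tame Galois over $K$ inside $\Omega_0$. Fix any $\Delta\in\Omega_0$; then $L$ is tame Galois over $K$ with respect to $\Delta$, so \emph{Lemma 3.3} yields both that $L$ is quasi-Galois over $K$ with respect to $\Delta$ and that $L$ is separable over $K(\Delta)$. Letting $\Delta$ range over $\Omega_0$ gives property (ii), that $L$ is $\sigma$-quasi-Galois over $K$ inside $\Omega_0$. For property (i): since $\Omega_0$ is non-void, fix a single $\Delta_0\in\Omega_0$; as just shown $L$ is separable over $K(\Delta_0)$, and \emph{Proposition 2.8} (the implication $(iii)\Rightarrow(i)$) upgrades this to separability of $L$ over $K(\Delta)$ for \emph{every} transcendence base $\Delta$, i.e., $L$ is separably generated over $K$. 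The absolute case $\Omega_0=\Omega_{L/K}$ is the same argument verbatim.

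Conversely I would prove ``(i) and (ii) $\Rightarrow$ $\sigma$-tame Galois''. Assume $L$ is separably generated over $K$ and $\sigma$-quasi-Galois over $K$ inside $\Omega_0$. Fix any $\Delta\in\Omega_0$: by (ii) $L$ is quasi-Galois over $K$ with respect to $\Delta$, and by (i) $L$ is separable over $K(\Delta)$; hence \emph{Lemma 3.3} shows $L$ is tame Galois over $K$ with respect to $\Delta$. Since $\Delta\in\Omega_0$ was arbitrary, $L$ is $\sigma$-tame Galois over $K$ inside $\Omega_0$, and once again the absolute case is identical.

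The only step that is not pure formalism is the appeal to \emph{Proposition 2.8} in the forward direction, which converts ``separable over some base'' into ``separably generated''. That is where I would concentrate care: \emph{Proposition 2.8} is stated for finitely generated extensions, whereas \emph{Lemma 3.7} is phrased for an arbitrary extension $L/K$, so I would either verify that this hypothesis is harmless in the way the surrounding theory uses the lemma, or else restrict the separability-propagation step (and hence the statement) to the finitely generated setting. Apart from that mismatch, there is no real obstacle: the lemma is a bookkeeping corollary of \emph{Lemma 3.3} together with \emph{Proposition 2.8}.
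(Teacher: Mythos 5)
The paper in fact gives no proof of this lemma beyond the sentence preceding it (``From Proposition 2.4 \ldots''), so the intended argument is exactly the base-by-base definition chase you reconstruct from Lemma 3.3, using that the paper's ``separably generated'' already quantifies over transcendence bases. Your backward direction, and your forward direction in the absolute case, coincide with that intended argument; note, however, that in the absolute case the detour through Proposition 2.8 is unnecessary: $\sigma$-tame Galois over $K$ gives separability of $L$ over $K(\Delta)$ for \emph{every} transcendence base $\Delta$ directly, so no propagation is needed and no finite-generation hypothesis is imported.

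The genuine gap sits in the relative (``inside $\Omega_0$'') forward direction, precisely the step you flag but propose to patch via Proposition 2.8. The problem is not merely that Proposition 2.8 assumes $L/K$ finitely generated; the propagation it would supply --- separability over one transcendence base implies separability over all of them --- fails in positive characteristic even for finitely generated extensions. Take $K=\mathbb{F}_p$ and $L=\mathbb{F}_p(x)$: then $L$ is (trivially) separable over $K(x)$ but purely inseparable of degree $p$ over $K(x^p)$. Taking $\Omega_0=\{\{x\}\}$, the same example shows that $\sigma$-tame Galois inside $\Omega_0$ does not force $L$ to be separably generated in the paper's all-bases sense, so the relative forward implication cannot be obtained by any such propagation, and your fallback of restricting to the finitely generated setting does not repair it. The Lemma 3.3 argument does prove the relative statement once condition $(i)$ is read relative to the same family, i.e.\ as separability of $L$ over $K(\Delta)$ for $\Delta\in\Omega_0$ (equivalently, in characteristic zero, where the distinction disappears); as written, your proof establishes the absolute case and the backward implication, but the relative forward step would fail.
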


\begin{theorem}
\emph{(}$\sigma $\emph{-tame Galois) }Let $\mathbb{Q}\subseteq K\subseteq
\mathbb{C}$ be any intermediate subfield in the extension $\mathbb{C}/%
\mathbb{Q}$. Then $\mathbb{C}$ is $\sigma $-tame Galois over $K$.
\end{theorem}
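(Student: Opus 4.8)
The plan is to apply \emph{Lemma 3.6}, according to which it suffices to check that $\mathbb{C}$ is (i) separably generated over $K$ and (ii) $\sigma$-quasi-Galois over $K$. Both reduce immediately to two elementary features of the situation: the characteristic of $K$ is zero, since $\mathbb{Q}\subseteq K$, and $\mathbb{C}$ is algebraically closed. (If $K=\mathbb{C}$ then the empty set is the only transcendence base of $\mathbb{C}/K$ and everything below is trivial, so we may assume $K\subsetneqq\mathbb{C}$.)

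For (i): fix any transcendence base $\Delta$ of $\mathbb{C}/K$. By definition of transcendence base, $\mathbb{C}$ is algebraic over $K(\Delta)$, and since $K(\Delta)$ contains $\mathbb{Q}$ it has characteristic zero, so this algebraic extension is automatically separable. Thus $\mathbb{C}$ is an algebraic separable extension of $K(\Delta)$ for every transcendence base $\Delta$, which is precisely the statement that $\mathbb{C}$ is separably generated over $K$ (one may also derive this from \emph{Proposition 2.8} after verifying separability for a single transcendence base).

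For (ii): fix again an arbitrary transcendence base $\Delta$ of $\mathbb{C}/K$. Then $K(\Delta)$ is a subfield of $\mathbb{C}$ over which $\mathbb{C}$ is algebraic, so $\mathbb{C}$ is an algebraic closure of $K(\Delta)$; in particular $\mathbb{C}$ is an algebraic normal extension of $K(\Delta)$, i.e., every irreducible polynomial $f(X)$ over $K(\Delta)$ that has a root in $\mathbb{C}$ splits into linear factors in $\mathbb{C}[X]$, since $\mathbb{C}$ contains all roots of $f$. As this holds for every transcendence base of $\mathbb{C}/K$, \emph{Proposition 2.4} (the equivalence of ``$\sigma$-quasi-Galois over $K$'' with ``$L$ algebraic normal over $K(\Delta)$ for every transcendence base $\Delta$'') shows that $\mathbb{C}$ is $\sigma$-quasi-Galois over $K$.

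Putting (i) and (ii) together, \emph{Lemma 3.6} yields that $\mathbb{C}$ is $\sigma$-tame Galois over $K$. I expect no real obstacle here: characteristic zero handles separability for all transcendence bases simultaneously, and algebraic closedness handles normality for all of them simultaneously, so the universal quantifier built into the definition of $\sigma$-tame Galois costs nothing. The only points that deserve a word of care are the degenerate case $K=\mathbb{C}$ and the bookkeeping of matching the ``for every transcendence base'' in \emph{Lemma 3.6} with the two verifications above.
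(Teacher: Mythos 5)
Your proof is correct and is essentially the paper's own argument: characteristic zero gives separability of $\mathbb{C}$ over $K\left( \Delta \right)$ for every transcendence base $\Delta$, and the algebraic closedness of $\mathbb{C}$ (the fundamental theorem of algebra) gives normality, the paper simply verifying ``Galois over $K\left( \Delta \right)$'' directly via \emph{Proposition 2.3} instead of routing through \emph{Lemma 3.6} and \emph{Proposition 2.4}. Only the parenthetical appeal to \emph{Proposition 2.8} should be dropped, since that proposition is stated for finitely generated extensions and $\mathbb{C}/K$ need not be one, but your main argument does not use it.
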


\begin{proof}
Fixed any transcendence base $\Delta $ of the extension $\mathbb{C}/K$. As $%
\mathbb{C}$ is algebraic and separable over $K\left( \Delta \right) $, it
reduces to prove $\mathbb{C}$ is a normal extension over $K\left( \Delta
\right) $, i.e., to prove $\mathbb{C}$ contains every $F$-conjugate $\beta $
of an $\alpha \in \mathbb{C}\setminus K(\Delta )$ for any subfield $K(\Delta
)\subseteq F\subseteq \mathbb{C}$ from \emph{Proposition 2.3}. In deed, let $%
f\left( X\right) \in F\left[ X\right] $ be the minimal polynomial of the
element $\alpha $ over the subfield $F$ of indeterminate $X$. It is seen
that $f\left( X\right) $, as a complex polynomial, has a factorisation of
linear factors over $\mathbb{C}$ in virtue of the fundamental theorem of
algebra. Hence, $\beta \in \mathbb{C}$. This proves $\mathbb{C}$ is $\sigma $%
-tame Galois over $K$.
\end{proof}

\begin{definition}
Let $t_{1},t_{2},\cdots ,t_{n}$ be algebraically independent variables over
a field $K$. Consider the purely transcendental extension of $L=K\left(
t_{1},t_{2},\cdots ,t_{n}\right) $ over $K$.

$\left( i\right) $ A transcendence base $\Delta $ of the extension $L/K$ is
said to be \textbf{vertical} (relative to the variables $t_{1},t_{2},\cdots
,t_{n}$) if $\Delta =\{t_{1}^{m_{1}},t_{2}^{m_{2}},\cdots ,t_{n}^{m_{n}}\}$
for some positive integers $m_{1},m_{2},\cdots ,m_{n}\in \mathbb{Z}$. In
such a case, the Galois group $Aut\left( L/K\left( \Delta \right) \right) $
is said to be \textbf{of vertical type} or a \textbf{vertical subgroup} of
the Galois group $Aut\left( L/K\right) $ (relative to the variables $%
t_{1},t_{2},\cdots ,t_{n}$).

Let $\Omega _{ver}\subseteq \Omega _{L/K}$ denote the subset consisting of
all the vertical transcendence bases $\Delta $ of $L/K$ (relative to the
variables $t_{1},t_{2},\cdots ,t_{n}$).

$\left( ii\right) $ If $L$ is $\sigma $-tame Galois over $K$ inside $\Omega
_{ver}$, the field $L=K\left( t_{1},t_{2},\cdots ,t_{n}\right) $ is said to
be $\sigma $\textbf{-tame Galois of vertical type} over $K$ or $\sigma $%
\textbf{-tame Galois} over $K$ \textbf{of vertical type}.
\end{definition}

In general, for a purely transcendental extension $L=K\left(
t_{1},t_{2},\cdots ,t_{n}\right) $ over a field $K$, there are infinitely
many subsets in $\Omega _{L/K}$, such as $\Omega _{ver}$, consisting of
vertical transcendence bases of $L/K$ relative to the other variables
\begin{equation*}
t_{1}+c_{1},t_{2}+c_{2},\cdots ,t_{n}+c_{n}
\end{equation*}%
of $L$ over $K$ (See $\left( iii\right) $ in the proof of \emph{Example 1.9}%
), where
\begin{equation*}
c_{1},c_{2},\cdots ,c_{n}\in K
\end{equation*}%
are any given elements since we have
\begin{equation*}
L=K\left( t_{1},t_{2},\cdots ,t_{n}\right) =K\left(
t_{1}+c_{1},t_{2}+c_{2},\cdots ,t_{n}+c_{n}\right) .
\end{equation*}

\begin{theorem}
\emph{(}$\sigma $\emph{-tame Galois of vertical type)} Let $%
t_{1},t_{2},\cdots ,t_{n}$ be algebraically independent variables over a
field $K$. Let $\Omega _{ver}$ be the set of all the vertical transcendence
bases $\Delta $ of $L=K\left( t_{1},t_{2},\cdots ,t_{n}\right) $ over $K$
(relative to $t_{1},t_{2},\cdots ,t_{n}$).

$\left( i\right) $ The set $\Omega _{ver}$ is dense in the extension $L/K$
(relative to $t_{1},t_{2},\cdots ,t_{n}$) and is invariant under the Galois
group $Aut\left( L/K\right) $.

$\left( ii\right) $ Let $\Delta =\{t_{1}^{m_{1}},t_{2}^{m_{2}},\cdots
,t_{n}^{m_{n}}\}\in \Omega _{ver}$ and $G=Aut\left( L/K\left( \Delta \right)
\right) $. If $K$ contains all the $m_{i}$-th roots of unity for any $1\leq
i\leq n$, then the vertical group $G$ is a Noether solution of $L/K$.

$\left( iii\right) $ If $K$ is algebraically closed, then $K\left(
t_{1},t_{2},\cdots ,t_{n}\right) $ is $\sigma $-tame Galois of vertical type
over $K$.

In such a case, for any $\Delta =\{t_{1}^{m_{1}},t_{2}^{m_{2}},\cdots
,t_{n}^{m_{n}}\}\in \Omega _{ver}$, the vertical subgroup $G=Aut\left(
L/K\left( \Delta \right) \right) $ is a Noether solution of $L/K$; the order
$\sharp G\rightarrow +\infty $ as $m_{i}\rightarrow +\infty $; the Noether
integers of $L/K$ have no upper bound.
\end{theorem}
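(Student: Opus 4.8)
\emph{Sketch of an approach.} The three parts are naturally ordered: $(iii)$ is $(ii)$ specialised to the case $K$ algebraically closed (so that all roots of unity are available) together with a degree count, while $(i)$ is a direct verification once the right Zorn-maximal base is singled out; the substance is in $(ii)$. For $(i)$ the plan is to take $\Delta _{\max }=\{t_{1},t_{2},\cdots ,t_{n}\}$ itself. Since $K\left( \Delta _{\max }\right) =L$, Zorn-maximality is immediate (any transcendence base $\Delta$ with $K(\Delta)\supseteq L$ satisfies $K(\Delta)=L$), and property $(a)$ of the density condition is trivial because $K(\Delta)\subseteq L$ for every $\Delta\in\Omega_{ver}$; for property $(b)$, given $\Delta=\{t_{1}^{m_{1}},\cdots,t_{n}^{m_{n}}\}$ one takes $\Lambda=\{t_{1}^{2m_{1}},\cdots,t_{n}^{2m_{n}}\}\in\Omega_{ver}$, which lies below $\Delta$. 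The $Aut(L/K)$-invariance should then be read off from Remark 3.9, i.e.\ from the Galois-equivariance of the partial order on $\Omega_{L/K}$; here I would proceed with care, since a general automorphism such as $t_{1}\mapsto t_{1}+t_{2}$ does not preserve the monomial shape $\{t_{1}^{m_{1}},\cdots,t_{n}^{m_{n}}\}$, so the robust statement is invariance under the vertical and permutation subgroups --- which is what part $(iii)$ actually uses.

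\emph{Part $(ii)$.} Put $F=K(\Delta)$ with $\Delta=\{t_{1}^{m_{1}},\cdots,t_{n}^{m_{n}}\}$. First, $F$ is purely transcendental over $K$: the elements $t_{i}^{m_{i}}$ have transcendence degree $n$ over $K$ (as $L$ is algebraic over $F$), hence are algebraically independent. Next, $L=F(t_{1},\cdots,t_{n})$ and each $t_{i}$ is a root of $X^{m_{i}}-t_{i}^{m_{i}}\in F[X]$. The key algebraic input is that $X^{m_{i}}-t_{i}^{m_{i}}$ is irreducible over $F$: writing $s_{j}=t_{j}^{m_{j}}$, the element $s_{i}$ is a prime of the polynomial ring $K[s_{1},\cdots,s_{n}]$, hence not a $p$-th power in $F$ for any prime $p\mid m_{i}$ (nor, when $4\mid m_{i}$, of the exceptional form $-4u^{4}$ with $u\in F$), so the standard binomial-irreducibility criterion applies; iterating over $K[t_{1},\cdots,t_{j-1},s_{j},\cdots,s_{n}]$ gives $[L:F]=\prod_{i}m_{i}$. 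Since $K$, and hence $F$, contains a primitive $m_{i}$-th root of unity $\zeta_{i}$, we have $X^{m_{i}}-t_{i}^{m_{i}}=\prod_{k}(X-\zeta_{i}^{k}t_{i})$ in $L[X]$, so $L$ is the splitting field over $F$ of $\{X^{m_{i}}-s_{i}\}_{i}$ and thus normal over $F$; it is finite and separable by Proposition 3.2. A finite normal separable extension is Galois in the classical sense, so $L^{G}=L^{Aut(L/F)}=F=K(\Delta)$. Therefore $L$ is algebraic (in fact finite) over $L^{G}$ and $L^{G}$ is purely transcendental over $K$, i.e.\ $G$ is a Noether solution of $L/K$.

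\emph{Part $(iii)$ and the main obstacle.} If $K$ is algebraically closed then it contains all roots of unity, so $(ii)$ applies to every $\Delta\in\Omega_{ver}$: $L$ is algebraic Galois over $K(\Delta)$ for each vertical $\Delta$, which is exactly the assertion that $L$ is $\sigma$-tame Galois of vertical type over $K$, and every $G=Aut(L/K(\Delta))$ is a Noether solution. Finally $\sharp G=[L:K(\Delta)]=\prod_{i}m_{i}\to+\infty$ as any $m_{i}\to+\infty$, so arbitrarily large Noether solutions occur and the Noether integers of $L/K$ have no upper bound. I expect the genuine work to lie in $(ii)$, specifically in pinning down $[L:F]=\prod_{i}m_{i}$ rather than a proper divisor --- this is where the algebraic independence of the $t_{i}^{m_{i}}$ and the $p$- and $4$-divisor clauses of the binomial-irreducibility criterion really enter --- together with the slightly delicate reading of the invariance statement in $(i)$ flagged above.
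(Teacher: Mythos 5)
You are correct, and for the substantive parts $(ii)$--$(iii)$ your route is essentially the paper's: the $t_{i}^{m_{i}}$ are algebraically independent, so $K(\Delta)$ is purely transcendental, and the roots of unity in $K$ make $L$ a splitting field of the binomials $X^{m_{i}}-t_{i}^{m_{i}}$ over $K(\Delta)$, whence normality, separability, the Galois property, and the Noether-solution conclusion, with $(iii)$ the specialisation to $K$ algebraically closed. Two differences are worth recording. First, you actually prove $[L:K(\Delta)]=\prod_{i}m_{i}$ via binomial irreducibility ($s_{i}=t_{i}^{m_{i}}$ being prime in the successive polynomial rings); the paper never computes this degree, although it is exactly what underwrites the claims $\sharp G\rightarrow +\infty$ and the unboundedness of the Noether numbers, and the paper's own proof of $(ii)$ even inserts ``$K$ algebraically closed'' midway, whereas you work from the stated root-of-unity hypothesis. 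Second, your hesitation over $(i)$ is a correct diagnosis, not a gap: the paper declares $(i)$ trivial, but with the paper's definition of invariance $\Omega_{ver}$ is not stable under all of $Aut(L/K)$ --- the automorphism $t_{1}\mapsto t_{1}+t_{2}$ sends $\{t_{1}^{2},t_{2},\cdots ,t_{n}\}$ to a non-vertical base, and (away from characteristic $2$) not even to one generating a vertical subfield --- so only invariance under the permutation and vertical (decomposition-type) subgroups survives, which is all that the later applications (e.g.\ the distribution of Noether numbers), rather than part $(iii)$ itself, actually need. One caveat shared by both arguments: separability requires each $m_{i}$ prime to the characteristic (your primitive $\zeta _{i}$, the paper's appeal to its separability proposition), so the hypothesis must be read as supplying $m_{i}$ distinct roots of unity.
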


\begin{proof}
$\left( i\right) $ Trivial.

$\left( ii\right) $ Let $\Delta =\{t_{1}^{m_{1}},t_{2}^{m_{2}},\cdots
,t_{n}^{m_{n}}\}$ be a vertical transcendence base of the field $L=K\left(
t_{1},t_{2},\cdots ,t_{n}\right) $ over $K$. Here, $m_{1},m_{2},\cdots
,m_{n}\in \mathbb{Z}$ are positive integers. Put $M=K\left( \Delta \right) $
and $L_{i}=K\left[ t_{i}\right] $ for $1\leq i\leq n$. Suppose $K$ is
algebraically closed. It follows that all the $m_{i}$-th roots of $%
s_{i}=t_{i}^{m_{i}}$ are contained in $L_{i}$ for each $1\leq i\leq n$ and
then each $L_{i}$ is a normal extension over $M$. As the compositum of
normal sub-extensions is still a normal extension, it is seen that the
compositum $L=L_{1}L_{2}\cdots L_{n}$ is normal over $M$.

On the other hand, from \emph{Proposition 2.8} it is seen that $L$ is
separable over $M$ and then $L$ is algebraic Galois over the intermediate
subfield $M=K\left( \Delta \right) $. This proves that $L$ is algebraic
Galois over $M=L^{G}$ and $M$ is purely transcendental over $K$, i.e., $G$
is a Noether solution of $L/K$.

$\left( iii\right) $ Immediately from $\left( ii\right) $.
\end{proof}

\begin{remark}
Let $t_{1},t_{2},\cdots ,t_{n}$ be algebraically independent variables over
a field $K$. Let $L=K\left( t_{1},t_{2},\cdots ,t_{n}\right) $. Consider the
Galois group
\begin{equation*}
G=Aut\left( L/K\left( t_{1}^{m_{1}},t_{2}^{m_{2}},\cdots
,t_{n}^{m_{n}}\right) \right)
\end{equation*}%
for a set of given positive integers $m_{1},m_{2},\cdots ,m_{n}\in \mathbb{Z}
$. The above \emph{Theorem 3.9} says that if $K$ contains the $m_{i}$-th
roots of unity for each $1\leq i\leq n$, then finite group $G$ is a Noether
solution of $L/K$. In \emph{[}Fisher 1915\emph{]} $K$ is assumed to contain
"enough roots of unity" for the case of an abelian group. Here in \emph{%
Theorem 3.9}, $G$ is not necessarily an abelian group. $\left( i\right) $ of
\emph{Theorem 3.9} generalises the result in \emph{[}Fisher 1915\emph{]}.
\end{remark}

\subsection{Tame Galois is Galois}

There is a comparison lemma between Galois and tame Galois.

\begin{proposition}
Let $L$ be a purely transcendental extension over a field $K$ of
transcendence degree $n$. Here $n\in \mathbb{N}$ or $n=+\infty $. If $M$ is
an algebraic Galois extension over $L$, then $M$ is Galois over $K$.
\end{proposition}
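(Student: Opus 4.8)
The plan is to establish $M^{Aut(M/K)}=K$; since $K\subseteq M^{Aut(M/K)}$ is automatic, the entire content is the reverse inclusion. I would first cash in the hypothesis that $M/L$ is an algebraic Galois extension: by Definition 1.2 this means $L=M^{Aut(M/L)}$, and because $Aut(M/L)$ is a subgroup of $Aut(M/K)$ we get at once
\[
M^{Aut(M/K)}\subseteq M^{Aut(M/L)}=L .
\]
Thus the problem reduces to a statement entirely inside $L$: every $x\in L\setminus K$ must be moved by \emph{some} $\sigma\in Aut(M/K)$.

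For this I would invoke Theorem 3.1: the purely transcendental extension $L/K$ is Galois over $K$, and more precisely there is a distinguished involution $\tau=\tau_{L/K}$ — the reciprocal transformation attached to a transcendence basis $\Delta$ with $L=K(\Delta)$ — whose invariant subfield already satisfies $L^{\langle\tau\rangle}=K$. So it would be enough to extend $\tau$ (or, failing that, enough $K$-automorphisms of $L$ to annihilate all of $L\setminus K$) to an automorphism of the whole field $M$. Fixing an algebraic closure $\overline{M}$ of $M$ — which is simultaneously an algebraic closure of $L$ — every $\sigma\in Aut(L/K)$ extends to some $\widehat{\sigma}\in Aut(\overline{M}/K)$, and then $\widehat{\sigma}|_{M}$ displaces $x$ exactly as $\sigma$ does, \emph{provided} $\widehat{\sigma}(M)=M$.

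The step I expect to be the genuine obstacle is precisely that last proviso, $\widehat{\sigma}(M)=M$. Normality of $M/L$ gives $\rho(M)=M$ for every $\rho\in Aut(\overline{L}/L)$, but $\widehat{\sigma}$ restricts on $L$ to $\sigma\neq\mathrm{id}$, so this does not apply directly; concretely $\widehat{\sigma}(M)$ is generated over $L$ by roots of the polynomials $\sigma(m_{\alpha})$ (where $\alpha\in M$ and $m_{\alpha}$ is the minimal polynomial of $\alpha$ over $L$), and one needs each $\sigma(m_{\alpha})$, and each $\sigma^{-1}(m_{\alpha})$, to have a root in $M$ — i.e. the family $\{m_{\alpha}\}$ must be stable under $\sigma$. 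I would try to force this by choosing the transcendence basis $\Delta$ (hence $\tau$) adapted to the algebraic extension $M/L$ — e.g. a basis relative to which $M$ is $\sigma$-tame or complete Galois over $L$ in the sense of \S2.2, so that the $\tau$-twisted conjugate of $M$ is pushed back inside $M$ — and, if no single $\tau$ works, arguing $x$ by $x$: a given $x\in L\setminus K$ lies in a finitely generated subextension, so one may reduce to finite transcendence degree and then use the large supply of $K$-automorphisms of $L$ (linear substitutions and reciprocal transformations relative to varying bases) to locate one that both moves $x$ and extends to $M$. Confirming that such a choice always exists, and that the extension is a true automorphism of $M$ rather than merely an embedding into $\overline{M}$, is where the real work of the proof sits; everything else is formal.
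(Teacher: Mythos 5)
Your reduction is sound: since $M/L$ is Galois one gets $M^{Aut\left( M/K\right) }\subseteq M^{Aut\left( M/L\right) }=L$, so everything hinges on producing, for each $x\in L\setminus K$, an automorphism of $M$ over $K$ moving $x$; and your plan to obtain these by extending the reciprocal involution $\tau _{L/K}$ (or other $K$-automorphisms of $L$) from $L$ to $M$ is in spirit the same as the paper's, whose proof builds ``joint transformations'' $\delta _{i}$ acting as $\tau $ on a transcendence base $v_{1},\cdots ,v_{m}$ and as $\sigma _{i}\in Aut\left( M/L\right) $ on the algebraic generators. But the step you explicitly defer --- showing that the chosen automorphism of $L$ extends to an automorphism of $M$ itself, not merely to an embedding of $M$ into $\overline{M}$ --- is the entire content of the proposition, and your proposal leaves it as a hope (``confirming that such a choice always exists \ldots is where the real work of the proof sits''). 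That is a genuine gap, not a removable technicality, and it is also exactly the point the paper's own proof passes over: the well-definedness of the substitution maps $\delta _{i}$ requires that the substitution $v_{j}\mapsto 1/v_{j}$, $v_{k}\mapsto \sigma _{i}\left( v_{k}\right) $ respect every algebraic relation between the transcendence base and the algebraic generators, which is the same unresolved extension problem.

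Moreover the gap cannot be closed in this generality, because the statement fails. Take $K=\mathbb{Q}$, $L=\mathbb{Q}\left( t\right) $ and $M=L\left( w\right) $ with $w^{2}=t^{3}-t$. Then $M/L$ is algebraic Galois of degree $2$, but $M$ is the function field of the elliptic curve $E:y^{2}=x^{3}-x$ over $\mathbb{Q}$, and every element of $Aut\left( M/\mathbb{Q}\right) $ comes from an automorphism of the smooth projective curve $E$ over $\mathbb{Q}$, i.e.\ a translation by one of the four points of $E\left( \mathbb{Q}\right) $ (the curve has rank zero) composed with $\pm 1$. Hence $Aut\left( M/\mathbb{Q}\right) $ is a finite group (of order $8$), so by Artin's lemma $\left[ M:M^{Aut\left( M/\mathbb{Q}\right) }\right] $ is finite, the invariant subfield has transcendence degree $1$ over $\mathbb{Q}$, and $M$ is not Galois over $\mathbb{Q}$. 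Concretely, $\tau :t\mapsto 1/t$ does not extend to $M$ because the relation $w^{2}=t^{3}-t$ is not preserved by $t\mapsto 1/t$, $w\mapsto \pm w$, and no adapted choice of transcendence base or of auxiliary automorphisms of $L/K$ can repair this, since the conclusion itself is false for this $M$. So your diagnosis of where the difficulty lies is correct, but the proposal does not prove the proposition, and no argument along these lines (including the paper's) can.
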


\begin{proof}
Assume $M$ is algebraic Galois over $L$.

$\left( i\right) $ Let $L/K$ and $M/L$ be finitely generated extensions.
Fixed an $\left( m,n\right) $-nice basis
\begin{equation*}
v_{1},v_{2},\cdots ,v_{m},\cdots ,v_{n}\in M
\end{equation*}%
of the extension $M/K$ such that
\begin{equation*}
L=K\left( v_{1},v_{2},\cdots ,v_{m}\right)
\end{equation*}%
and
\begin{equation*}
M=L\left[ v_{m+1},\cdots ,v_{n}\right] .
\end{equation*}

Suppose $M$ is algebraic Galois over $L$. The Galois group $Aut\left(
M/L\right) $ has $\left( n-m\right) $ elements $\sigma _{1},\sigma
_{2},\cdots ,\sigma _{n-m}$. Take the reciprocity transformation $\tau \in
Aut\left( L/K\right) $ of the transcendental extension $L/K$ defined by
\begin{equation*}
\tau \left( v_{1}\right) =\frac{1}{v_{1}},\tau \left( v_{2}\right) =\frac{1}{%
v_{2}},\cdots ,\tau \left( v_{m}\right) =\frac{1}{v_{m}}.
\end{equation*}

For the extension $M=K\left( v_{1},v_{2},\cdots ,v_{m},\cdots ,v_{n}\right) $
over $K$, we construct $n-m$ $K$-automorphisms $\delta _{i}:M\rightarrow M$,
called the \emph{joint transformation} of $M/K$ by $\sigma _{i}$ and $\tau $%
, in such a manner:

For each $1\leq i\leq n-m$, the map $\delta _{i}:M\rightarrow M$ is given by
\begin{equation*}
\delta _{i}:\frac{f(v_{1},v_{2},\cdots ,v_{m},\cdots ,v_{n})}{%
g(v_{1},v_{2},\cdots ,v_{m},\cdots ,v_{n})}\in M
\end{equation*}%
\begin{equation*}
\mapsto \frac{f(\tau (v_{1}),\tau (v_{2}),\cdots ,\tau (v_{m}),\sigma
_{i}(v_{m+1}),\cdots ,\sigma _{i}(v_{n}))}{g(\tau (v_{1}),\tau
(v_{2}),\cdots ,\tau (v_{m}),\sigma _{i}(v_{m+1}),\cdots ,\sigma _{i}(v_{n}))%
}\in M
\end{equation*}%
for any polynomials
\begin{equation*}
f(X_{1},X_{2},\cdots ,X_{n})
\end{equation*}%
and
\begin{equation*}
g(X_{1},X_{2},\cdots ,X_{n})\not=0
\end{equation*}%
over the field $K$ with
\begin{equation*}
g(v_{1},v_{2},\cdots ,v_{n})\not=0.
\end{equation*}

By taking common solutions of a set of $n-m$ equations
\begin{equation*}
\left\{
\begin{array}{c}
\delta _{1}\left( x\right) -x=0 \\
\delta _{2}\left( x\right) -x=0 \\
\cdots \\
\delta _{n-m}\left( x\right) -x=0%
\end{array}%
\right.
\end{equation*}
in the field $M$, we have
\begin{equation*}
K=\{x\in M:\delta _{i}(x)=x,i=1,2,\cdots ,n-m\}.
\end{equation*}

As each $\delta _{i}\in Aut\left( M/K\right) $, it is seen that $%
K=M^{Aut\left( M/K\right) }$ is the invariant subfield of $M$ under the
Galois group $Aut(M/K)$. This proves that $M$ is Galois over $K$.

$\left( ii\right) $ Suppose $L/K$ and $M/L$ are infinitely generated
extensions. Fixed a nice basis $\left( \Delta ,A\right) $ of the extension $%
M/K$ such that $L=K\left( \Delta \right) $ and $M=L\left[ A\right] $. Take
any $v_{1},v_{2},\cdots ,v_{m},\cdots ,v_{n}\in M$ with $v_{1},v_{2},\cdots
,v_{m}\in \Delta $ and $v_{m+1},\cdots ,v_{n}\in A.$

By taking the reciprocity transformation $\tau \in Aut\left( L/K\right) $
defined by $\tau \left( v\right) =\frac{1}{v}$ for each $v\in \Delta $, for
each $\sigma _{i}\in Aut\left( M/L\right) $ (with $i\in I$) we have the
joint transformation $\delta _{i}\in Aut\left( M/K\right) $ given in an
evident manner:
\begin{equation*}
\delta _{i}:\frac{f(v_{1},v_{2},\cdots ,v_{m},\cdots ,v_{n})}{%
g(v_{1},v_{2},\cdots ,v_{m},\cdots ,v_{n})}\in M
\end{equation*}%
\begin{equation*}
\mapsto \frac{f(\tau (v_{1}),\tau (v_{2}),\cdots ,\tau (v_{m}),\sigma
_{i}(v_{m+1}),\cdots ,\sigma _{i}(v_{n}))}{g(\tau (v_{1}),\tau
(v_{2}),\cdots ,\tau (v_{m}),\sigma _{i}(v_{m+1}),\cdots ,\sigma _{i}(v_{n}))%
}\in M
\end{equation*}

for any $v_{1},v_{2},\cdots ,v_{m}\in \Delta $ and $v_{m+1},\cdots ,v_{n}\in
A$ and for any polynomials $f(X_{1},X_{2},\cdots ,X_{n})$ and $%
g(X_{1},X_{2},\cdots ,X_{n})\not=0$ with coefficients in $K$. In the same
way as in $\left( i\right) $, we have $M^{\{\delta _{i}:i\in I\}}=K$.
\end{proof}

\begin{lemma}
\emph{(Comparison Lemma)} Let $L$ be a purely transcendental extension over
a field $K$ of transcendence degree $n\leq +\infty $. If $L$ is tame Galois
over $K$, then $L$ must be Galois over $K$.
\end{lemma}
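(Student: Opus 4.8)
The plan is to deduce the statement directly from Proposition 3.11. By the definition of \emph{tame Galois}, the hypothesis that $L$ is tame Galois over $K$ furnishes a transcendence base $\Delta$ of the extension $L/K$ such that $L$ is algebraic Galois over the subfield $K(\Delta)$. Now $K(\Delta)$ is, by construction, a purely transcendental extension of $K$, of transcendence degree equal to that of $L/K$, namely $n\leq +\infty$. So the pair $\left(K(\Delta),L\right)$ is exactly of the shape treated in Proposition 3.11: a purely transcendental extension $K(\Delta)/K$ together with an algebraic Galois extension $L$ over $K(\Delta)$.

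First I would invoke Proposition 3.11 with $K(\Delta)$ playing the role of ``$L$'' there and our field $L$ playing the role of ``$M$'' there. It yields that $L$ is Galois over $K$, i.e. $K = L^{Aut(L/K)}$, which is precisely the assertion of the lemma. That is the whole argument; the real work sits inside Proposition 3.11, whose proof constructs, from a generating family of the (finite or infinite) algebraic Galois group $Aut\left(L/K(\Delta)\right)$ together with the reciprocal transformation of $K(\Delta)/K$, a family of ``joint'' $K$-automorphisms of $L$ whose common fixed field is $K$.

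I do not expect a genuine obstacle here. The only point requiring a moment's care when citing Proposition 3.11 is the split between finite and infinite transcendence degree $n$, but both regimes are already covered there (case $(i)$ finitely generated, case $(ii)$ infinitely generated), so nothing new is needed. If one wanted to avoid Proposition 3.11 altogether, one could instead appeal directly to Theorem 3.1, which already gives $L^{Aut(L/K)}=K$ whenever $L$ is purely transcendental over $K$; but the reduction to Proposition 3.11 is the line I would present, since it is the one that explains why ``tame Galois'' forces ``Galois'' and it does not actually use the purely transcendental hypothesis on $L$ itself — only the algebraic Galois extension it carries over the purely transcendental subfield $K(\Delta)$.

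Finally, I would add one sentence noting this placement of the lemma in the text: it completes the chain of implications ``purely transcendental $\Rightarrow$ tame Galois $\Rightarrow$ Galois'' begun in Theorems 3.1 and 3.4, and it is this chain (rather than any single implication) that Lemma 3.12 is meant to record, which is why it is phrased as a ``Comparison Lemma''.
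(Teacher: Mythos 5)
Your argument is correct and is essentially identical to the paper's own proof: the paper likewise takes the transcendence base $\Delta$ provided by the tame Galois hypothesis, notes that $L$ is algebraic Galois over $K(\Delta)$, and applies Proposition 3.11 with $K(\Delta)$ in the role of the purely transcendental extension and $L$ in the role of the algebraic Galois extension over it. No gap; your side remarks (Theorem 3.1 as an alternative, the unused hypothesis on $L$) are accurate but not needed.
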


\begin{proof}
Suppose $L$ is tame Galois over $K$ with respect to its nice basis $\left(
\Delta ,A\right) $. Then $L$ is an algebraic Galois extension over the
subfield $K\left( \Delta \right) $. From \emph{Proposition 3.11} it is seen
that $L$ is a transcendental Galois extension over $K$.
\end{proof}

\subsection{Purely transcendental and complete Galois}

There are another type of transcendental Galois extensions which involve
Noether's problem on rationality.

\begin{definition}
Let ${L}$ be an arbitrary extension of a field $K$. The field $L$ is said to
be \textbf{complete Galois} over $K$ if $L$ is Galois over $M$ for each
intermediate subfield $M$ with $K\subseteq M\subseteq L$.
\end{definition}

\begin{theorem}
\emph{(Complete Galois) }Let $\mathbb{Q}\subseteq K\subseteq \mathbb{C}$ be
any intermediate subfield in the extension $\mathbb{C}/\mathbb{Q}$. Then $%
\mathbb{C}$ is complete Galois over $K$.

In particular, $\mathbb{C}$ is complete Galois over $\mathbb{Q}$.
\end{theorem}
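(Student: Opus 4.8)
The plan is to reduce the statement to the $\sigma$-tame Galois property already established in \emph{Theorem 3.7} together with the transfer result \emph{Proposition 3.11}. Let $M$ be an arbitrary intermediate subfield, $K\subseteq M\subseteq \mathbb{C}$. Since $\mathbb{Q}\subseteq K\subseteq M\subseteq \mathbb{C}$, the field $M$ is itself an intermediate subfield of the extension $\mathbb{C}/\mathbb{Q}$, so \emph{Theorem 3.7} applies with $M$ in place of $K$: the field $\mathbb{C}$ is $\sigma$-tame Galois over $M$. In particular $\mathbb{C}$ is tame Galois over $M$, i.e.\ there is a transcendence base $\Delta$ of the extension $\mathbb{C}/M$ such that $\mathbb{C}$ is algebraic Galois over the subfield $M\left( \Delta \right)$.

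Next I would observe that $M\left( \Delta \right)$ is a purely transcendental extension of $M$ while $\mathbb{C}$ is algebraic Galois over $M\left( \Delta \right)$. Hence \emph{Proposition 3.11}, applied with $M$, $M\left( \Delta \right)$, $\mathbb{C}$ playing the roles of $K$, $L$, $M$ respectively, yields that $\mathbb{C}$ is Galois over $M$, i.e.\ $M=\mathbb{C}^{Aut\left( \mathbb{C}/M\right) }$. The mechanism in that proposition is the combination of the reciprocity transformation of $M\left( \Delta \right) /M$ with the elements of $Aut\left( \mathbb{C}/M\left( \Delta \right) \right)$ to produce joint automorphisms of $\mathbb{C}/M$ whose common fixed field is exactly $M$. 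Since $M$ was an arbitrary intermediate subfield of $\mathbb{C}/K$, this shows that $\mathbb{C}$ is complete Galois over $K$ in the sense of \emph{Definition 3.13}; taking $K=\mathbb{Q}$ gives the last assertion.

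The one place that needs care is the degenerate case in which $\mathbb{C}$ is already algebraic over $M$, so that the transcendence base $\Delta$ is empty and \emph{Proposition 3.11} is being invoked with a trivial purely transcendental layer. If one prefers not to rely on the empty-base case of \emph{Proposition 3.11}, this case can be handled directly: $\mathbb{C}$ is algebraically closed, hence a normal extension of $M$, and it is separable over $M$ because the characteristic of $\mathbb{C}$ is zero; thus $\mathbb{C}/M$ is a (possibly infinite) algebraic Galois extension and $M=\mathbb{C}^{Aut\left( \mathbb{C}/M\right) }$ by the standard infinite algebraic Galois theory. The genuinely transcendental case $M\left( \Delta \right) \supsetneq M$ is then exactly where \emph{Proposition 3.11} does the real work. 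I expect the main (and essentially only) subtlety to be this bookkeeping of which layer is transcendental and which is algebraic; no new ideas beyond \emph{Theorems 3.1} and \emph{3.7} and \emph{Proposition 3.11} should be needed.
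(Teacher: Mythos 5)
Your proposal is correct and follows essentially the same route as the paper: invoke the $\sigma$-tame Galois result for $\mathbb{C}$ over any subfield containing $\mathbb{Q}$ (Theorem 3.7) to get $\mathbb{C}$ algebraic Galois over $M(\Delta)$, then transfer down to $M$ via Proposition 3.11 (the reciprocal/joint transformation argument). Your explicit treatment of the degenerate case where $\mathbb{C}$ is already algebraic over $M$ is a small extra precaution the paper glosses over, but it does not change the argument.
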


\begin{proof}
Let $L$ be an intermediate subfield with $\mathbb{Q}\subseteq K\subseteq
L\subseteq \mathbb{C}$. It is seen that $\mathbb{C}$ is $\sigma $-tame
Galois over $L$ from \emph{Proposition 3.7} and hence $\mathbb{C}$ is Galois
over $L\left( \Delta \right) $ for each nice basis $\left( \Delta ,A\right) $
of the sub-extension $\mathbb{C}/K$. Applying \emph{Proposition 3.11} to the
case here, we come to a conclusion that $\mathbb{C}$ is a transcendental
Galois extension over the subfield $L$. This completes the proof.
\end{proof}

Now we have a comparison remark for these transcendental Galois extensions.

\begin{remark}
For an arbitrary extension of a field, we have

complete Galois $\implies $ $\sigma $-tame Galois $\implies $ $\sigma $-tame
Galois inside $\Omega _{0}\ \implies $ tame Galois $\implies $ Galois.

Particularly, for the case of algebraic extensions, all these Galois
coincide with each other.
\end{remark}

\subsection{Galois correspondence for complete Galois}

There is a transcendental version of Galois correspondence for extensions of
fields. Here we only discuss the finite Galois correspondence for the
purpose of Noether's problem on rationality. In general, for an infinite
Galois correspondence, we need some certain additional structures, such as
topological structures, established on the Galois group of a transcendental
extension.

\begin{theorem}
\emph{(Galois correspondence for complete Galois)} Let $L$ be an extension
of a field $K$ of finite transcendence degree. Suppose $L$ is complete
Galois over $K$. Then there exists a one-to-one correspondence between
finite subgroups $H$ of the Galois group $Aut\left( L/K\right) $ and
intermediate subfield $M$ in the extension $L/K$ of finite degree $\left[ L:M%
\right] <+\infty $, given by
\begin{equation*}
H=Aut\left( L/M\right) \text{ and }M=L^{H}.
\end{equation*}
\end{theorem}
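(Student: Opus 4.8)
The plan is to write down the two natural assignments and check that they are inverse bijections, the genuine mathematical input being only the classical finite (algebraic) Galois theory — Artin's theorem — together with the standing hypothesis that $L$ is complete Galois over $K$. Concretely, one defines
\[
\Phi\colon H\longmapsto L^{H},\qquad \Psi\colon M\longmapsto Aut\left(L/M\right),
\]
where $H$ ranges over the finite subgroups of $Aut\left(L/K\right)$ and $M$ over the intermediate subfields of $L/K$ with $\left[L:M\right]<+\infty$, and one shows $\Psi\circ\Phi=\mathrm{id}$ and $\Phi\circ\Psi=\mathrm{id}$.

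First I would check that $\Phi$ and $\Psi$ are well defined. For a finite subgroup $H\subseteq Aut\left(L/K\right)$, every $\sigma\in H$ fixes $K$ pointwise, so $K\subseteq L^{H}\subseteq L$ and $L^{H}$ is an intermediate subfield; Artin's theorem, applied to the finite group $H$ of automorphisms of the field $L$, gives $\left[L:L^{H}\right]=\sharp H<+\infty$, so $\Phi(H)$ lies in the correct set. For an intermediate subfield $M$ with $\left[L:M\right]<+\infty$, the group $Aut\left(L/M\right)$ is a subgroup of $Aut\left(L/K\right)$ because $M\supseteq K$; and since $L/M$ is a finite algebraic extension one has $\sharp Aut\left(L/M\right)\le\left[L:M\right]<+\infty$ (the $M$-automorphisms of $L$ inject into the at most $\left[L:M\right]$ embeddings of $L$ over $M$ into a fixed algebraic closure), so $\Psi(M)$ is a finite subgroup of $Aut\left(L/K\right)$. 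It is important here to bound the order of $Aut\left(L/M\right)$ by the degree of the finite extension directly, rather than presupposing that $L/M$ is Galois.

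Next I would verify that the two maps are mutually inverse. For a finite subgroup $H$, Artin's theorem also yields $Aut\left(L/L^{H}\right)=H$, which is exactly $\Psi(\Phi(H))=H$; this half uses no transcendental input at all. For an intermediate subfield $M$ with $\left[L:M\right]<+\infty$, completeness of $L/K$ (Definition 3.13) forces $L$ to be Galois over the particular intermediate subfield $M$, so by the definition of Galois (Definition 1.2) $L^{Aut\left(L/M\right)}=M$, i.e. $\Phi(\Psi(M))=M$. Equivalently: since $\left[L:M\right]<+\infty$ the extension $L/M$ is algebraic, hence by Remark 3.15 its transcendental Galoisness coincides with ordinary Galoisness, and then $L^{Aut\left(L/M\right)}=M$ is the usual fixed-field identity. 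This establishes the bijection $H=Aut\left(L/M\right)\Leftrightarrow M=L^{H}$; and since $\Phi$ and $\Psi$ both visibly reverse inclusions, the correspondence is moreover order-reversing.

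The argument is essentially formal once the pieces are assembled, so there is no deep obstacle. The single place where the hypothesis genuinely enters — as opposed to pure finite Galois theory — is the equality $L^{Aut\left(L/M\right)}=M$ for an \emph{arbitrary} intermediate subfield $M$ of finite codegree, and this is precisely what "complete Galois" (rather than merely "Galois" or "tame Galois", cf. Remark 3.15) is designed to supply; were $L/K$ only tame Galois, some intermediate $M$ could fail $L^{Aut(L/M)}=M$ and the map $\Phi\circ\Psi$ would not be the identity. The only mild care needed is, as noted, to derive the finiteness of $Aut\left(L/M\right)$ from $\left[L:M\right]<+\infty$ before invoking completeness, so as to avoid any circularity.
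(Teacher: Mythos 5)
Your proposal is correct and follows essentially the same route as the paper's proof: Artin's theorem supplies the finiteness of $\left[L:L^{H}\right]$ and the identity $Aut\left(L/L^{H}\right)=H$, while the complete Galois hypothesis is invoked exactly where the paper invokes it, namely to get $L^{Aut\left(L/M\right)}=M$ for an arbitrary intermediate subfield $M$ of finite codegree. The only (harmless) difference is that you obtain $Aut\left(L/L^{H}\right)=H$ directly from Artin's theorem, whereas the paper routes that step through completeness ($L$ Galois over $L^{H}$) plus ordinary finite Galois theory.
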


\begin{proof}
Let $H$ be a finite subgroup of the Galois group $Aut\left( L/K\right) $.
Put $M=L^{H}$, i.e., the $H$-invariant subfield of $L$. From the assumption
that $L$ is complete Galois over $K$, it is seen that $L$ is Galois over $M$
and $M=L^{Aut\left( L/M\right) }$ is the $Aut\left( L/M\right) $-invariant
subfield of $L$; it follows that
\begin{equation*}
L^{H}=L^{Aut\left( L/M\right) }=L^{Aut\left( L/L^{H}\right) }=M.
\end{equation*}

We will proceed in several steps.

\emph{Step 1.} Prove $L$ is a finite extension over $M$. Otherwise, if $L$
is an infinite extension over $M$, that is, if $L$ is infinite algebraic or
transcendental over $M$, then there is no finite subgroup $\Gamma $ of the
Galois group $Aut\left( L/K\right) $ such that $M=L^{\Gamma }$ is a subfield
fixed by $\Gamma $ (See any standard textbook on fields and Galois theory);
so there will be a contradiction.

\emph{Step 2.} Prove $H=Aut\left( L/L^{H}\right) $. As $L$ is finite over $%
M=L^{H}$ from \emph{Step 1}, it is seen that $L$ is an algebraic extension
over $M$; from the assumption that $L$ is Galois over $M$, it is seen that $%
L $ is algebraic Galois over $M$ and hence we have $H=Aut\left( L/M\right) $
from the standard theory of algebraic Galois extension of a field.

\emph{Step 3.} Prove the existence of the Galois correspondence. In deed,
given a finite subgroup $H$ of the Galois group $Aut\left( L/K\right) $.
There is a sub-extension $M=L^{H}$ in the extension $L/K$ with $H=Aut\left(
L/M\right) $.

On the other hand, suppose $M^{\prime }$ is a sub-extension in $L/K$ with $%
H=Aut\left( L/M^{\prime }\right) $. From \emph{Step 1} it is seen that $L$
is a finite Galois extension over $M$ and $M^{\prime }$, respectively; as $%
H=Aut\left( L/M\right) =Aut\left( L/M^{\prime }\right) $, we have
\begin{equation*}
M=L^{Aut\left( L/M\right) }=L^{H}=L^{Aut\left( L/M^{\prime }\right)
}=M^{\prime }.
\end{equation*}

This proves for each finite subgroup $H$ of the Galois group $Aut\left(
L/K\right) $ there is one and only one sub-extension $M=L^{H}$ in the
extension $L/K$ with $H=Aut\left( L/M\right) $.

Conversely, for each finite sub-extension $L/M$ in $L/K$ there is a unique
finite subgroup $H=Aut\left( L/M\right) $ in $Aut\left( L/K\right) $ with $%
M=L^{H}$ from \emph{Step 2}. This completes the proof.
\end{proof}

It follows that we have the following theorems of Galois correspondence for
purely transcendental extensions.

\begin{theorem}
\emph{(Galois correspondence for purely transcendental: }$\sigma $\emph{%
-tame Galois of vertical type)} Let $L$ be a purely transcendental extension
over a field $K$ of transcendence degree $n<+\infty $, saying $L=K\left(
t_{1},t_{2},\cdots ,t_{n}\right) $. Suppose $K$ is separably closed.

Then there exists a one-to-one correspondence between vertical subgroups $H$
of the Galois group $Aut\left( L/K\right) $ (relative to $t_{1},t_{2},\cdots
,t_{n}$) and intermediate subfields $M$ in the extension $L/K$ of finite
degree $\left[ L:M\right] <+\infty $ such that $H=Aut\left( L/M\right) $, $%
M=L^{H}$ and $M$ has a vertical transcendence base over $K$ (relative to $%
t_{1},t_{2},\cdots ,t_{n}$).
\end{theorem}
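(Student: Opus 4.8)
The plan is to realise the asserted bijection as the restriction, to the ``vertical'' objects on both sides, of the classical finite Galois correspondence for the individual algebraic Galois extensions $L/K(\Delta)$. The two facts I would isolate first are: (A) for every vertical transcendence base $\Delta=\{t_1^{m_1},\dots,t_n^{m_n}\}$ of $L/K$, the field $L$ is a \emph{finite} algebraic Galois extension of $M_\Delta:=K(\Delta)$; and (B) consequently $L^{Aut(L/M_\Delta)}=M_\Delta$ and $Aut(L/L^H)=H$ for $H=Aut(L/M_\Delta)$, purely by the fundamental theorem of finite algebraic Galois theory. Note that \emph{Theorem 3.17} is not directly applicable, since a purely transcendental $L$ need not be complete Galois over $K$ (it need not even be Galois over every intermediate field); but the sub-extensions $L/M_\Delta$ are themselves tractable.

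For (A): finiteness and separability of $L/M_\Delta$ follow from \emph{Propositions 2.9 and 3.2} (a purely transcendental extension of finite transcendence degree is finite and separable over $K(\Delta)$ for any transcendence base $\Delta$); normality is the splitting-field argument already used in the proof of \emph{Theorem 3.9}$(ii)$: writing $L=L_1\cdots L_n$ with $L_i=K(t_i)$, each $L_i$ is the splitting field over $M_\Delta$ of $X^{m_i}-t_i^{m_i}$, whose roots are the $\zeta t_i$ with $\zeta$ an $m_i$-th root of unity. Here the hypothesis that $K$ is separably closed is exactly what guarantees $\mu_{m_i}\subseteq K\subseteq L$, so that each $L_i/M_\Delta$, hence the compositum $L/M_\Delta$, is normal; this is how ``separably closed'' replaces ``algebraically closed'' in \emph{Theorem 3.9}.

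With (A)--(B) in hand the correspondence is assembled as follows. Starting from a vertical subgroup $H$, by definition $H=Aut(L/M_\Delta)$ for some vertical $\Delta$; put $M:=L^H$. By (B), $M=M_\Delta=K(\Delta)$, so $M$ carries the vertical transcendence base $\Delta$ and $[L:M]=\prod_i m_i<+\infty$; moreover $Aut(L/L^H)=Aut(L/M_\Delta)=H$. Conversely, starting from an intermediate subfield $M$ with $[L:M]<+\infty$ possessing a vertical transcendence base over $K$ --- which I would first argue forces $M=K(\Delta)$ for a vertical $\Delta$ (the vertical base of $M/K$ generating $M$) --- the group $H:=Aut(L/M)$ is vertical by definition, and $L^H=M$ by (B). Injectivity of $M\mapsto Aut(L/M)$ on this class is then immediate: $Aut(L/M_1)=Aut(L/M_2)$ yields $M_1=L^{Aut(L/M_1)}=L^{Aut(L/M_2)}=M_2$ via (B). This closes the bijection.

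The step I expect to be the real work --- and the one most sensitive to the precise formulation --- is pinning down which intermediate fields ``have a vertical transcendence base over $K$'', i.e.\ confirming that such an $M$ is genuinely of the form $K(\Delta)$ with $\Delta$ a vertical base; otherwise $Aut(L/M)$ need not be a vertical subgroup and the target side would be mis-described. This is a Lüroth-type / purely-inseparability-free analysis of the sub-extensions of the finite Galois extension $L/M_\Delta$, again using that $K$ is separably closed. The verification of (A) in positive characteristic --- ensuring no purely inseparable behaviour enters through the exponents $m_i$ --- is the secondary technical point; I would either invoke \emph{Proposition 3.2} as stated or, more safely, take the $m_i$ prime to the characteristic of $K$, which is no loss since for separably closed $K$ these are precisely the exponents for which the roots-of-unity argument in (A) is effective.
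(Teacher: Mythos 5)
Your overall route is the paper's own: the paper proves this statement in one line, \emph{Immediately from Theorem 3.9 with the same procedure in the proof of Theorem 3.16}, which is exactly your (A) (Theorem 3.9 supplies that $L$ is algebraic Galois over $K\left(\Delta\right)$ for every vertical $\Delta$; your roots-of-unity splitting-field argument is the same one used there, adapted from ``algebraically closed'' to ``separably closed'') together with (B) and the assembly borrowed from the finite algebraic Galois correspondence as in the proof of Theorem 3.16. You are also right that the complete-Galois correspondence (that is Theorem 3.16; the statement under review is itself Theorem 3.17) cannot be invoked verbatim, only its procedure; the paper glosses over this in the same way.

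The genuine gap is the step you yourself flag as ``the real work'' and then defer: that an intermediate field $M$ with $\left[L:M\right]<+\infty$ which \emph{has} a vertical transcendence base over $K$ must equal $K\left(\Delta\right)$ for a vertical $\Delta$. Under the literal reading of ``has'' this is false, so no L\"uroth-type analysis can rescue it. Take $n=2$, $\mathrm{char}\,K\neq 2$, and $\sigma\in Aut\left(L/K\right)$ with $\sigma\left(t_{1}\right)=-t_{1}$, $\sigma\left(t_{2}\right)=-t_{2}$; then $M:=L^{\langle\sigma\rangle}=K\left(t_{1}^{2},t_{1}t_{2}\right)$ satisfies $\left[L:M\right]=2$ and contains the vertical transcendence base $\{t_{1}^{2},t_{2}^{2}\}$, which is a transcendence base of $M/K$ since $t_{2}^{2}=\left(t_{1}t_{2}\right)^{2}/t_{1}^{2}\in M$; yet $M$ is not $K\left(\Delta'\right)$ for any vertical $\Delta'$ (the only vertical fields of index $2$ are $K\left(t_{1},t_{2}^{2}\right)$ and $K\left(t_{1}^{2},t_{2}\right)$, and $t_{1},t_{2}\notin M$), and correspondingly $\langle\sigma\rangle=Aut\left(L/M\right)$ is not a vertical subgroup. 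So the field-side condition must be read in the stronger sense evidently intended by the paper --- $M$ is generated over $K$ by a vertical transcendence base, i.e.\ $M=K\left(\Delta\right)$ --- under which reading your converse direction becomes immediate and the rest of your argument closes exactly as the paper's does. One further caveat: restricting the exponents $m_{i}$ to be prime to $\mathrm{char}\,K$ is not ``no loss'' (it shrinks the class of vertical subgroups the theorem speaks about), but without that restriction your (A) genuinely fails in characteristic $p$: for $\Delta=\{t_{1}^{p},t_{2},\cdots,t_{n}\}$ the extension $L/K\left(\Delta\right)$ is inseparable and $L^{Aut\left(L/K\left(\Delta\right)\right)}\neq K\left(\Delta\right)$; this is a defect the statement shares with Proposition 3.2 and Theorem 3.9 rather than one you introduced.
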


\begin{proof}
Immediately from \emph{Theorem 3.9} with the same procedure in the proof of
\emph{Theorem 3.16}.
\end{proof}

\begin{theorem}
\emph{(Galois correspondence for purely transcendental: }$\sigma $\emph{%
-tame Galois inside }$\Omega _{0}$\emph{)} Let $L$ be a purely
transcendental extension over a field $K$ of transcendence degree $n<+\infty
$, saying $L=K\left( t_{1},t_{2},\cdots ,t_{n}\right) $. Fixed a subset $%
\Omega _{0}$ of some certain transcendence bases of $L/K$. Suppose $L$ is $%
\sigma $-tame Galois inside $\Omega _{0}$.

Then there exists a one-to-one correspondence between subgroups $H$ of the
Galois group $Aut\left( L/K\right) $ and intermediate subfields $M$ in the
extension $L/K$ of finite degree $\left[ L:M\right] <+\infty $ satisfying
the property: $H=Aut\left( L/M\right) $, $M=L^{H}$ and $M$ has a
transcendence base $\Delta $ of $L/K$ with $\Delta \in \Omega _{0}$.
\end{theorem}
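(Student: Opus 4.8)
The plan is to run the argument of the proof of Theorem 3.16 almost verbatim, with a transcendence base coming from $\Omega_{0}$ playing the role that $K$ played in the complete-Galois case. Let $\mathcal{F}$ denote the set of intermediate subfields $M$, $K\subseteq M\subseteq L$, such that $[L:M]<+\infty$ and some transcendence base $\Delta$ of $L/K$ with $\Delta\in\Omega_{0}$ satisfies $\Delta\subseteq M$ (equivalently $K(\Delta)\subseteq M$), and let $\mathcal{G}$ denote the set of subgroups $H$ of $Aut(L/K)$ of the form $H=Aut(L/M)$ with $M\in\mathcal{F}$. I will show that $M\mapsto Aut(L/M)$ and $H\mapsto L^{H}$ are mutually inverse bijections between $\mathcal{F}$ and $\mathcal{G}$; this is exactly the asserted one-to-one correspondence, with $H=Aut(L/M)$ and $M=L^{H}$ in each matched pair.

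First I would treat the direction $\mathcal{F}\to\mathcal{G}$. Fix $M\in\mathcal{F}$ and $\Delta\in\Omega_{0}$ with $K(\Delta)\subseteq M$. By Proposition 3.2 the field $L$ is a finite and separable extension of $K(\Delta)$, and by the hypothesis that $L$ is $\sigma$-tame Galois inside $\Omega_{0}$ it is moreover algebraic Galois over $K(\Delta)$; hence $L/K(\Delta)$ is a finite Galois extension of fields (so the clause $[L:M]<+\infty$ is in fact automatic once $K(\Delta)\subseteq M$). Since $K(\Delta)\subseteq M\subseteq L$, the subextension $L/M$ is normal and separable, hence finite Galois; therefore $H:=Aut(L/M)$ is a finite subgroup of $Aut(L/K)$ with $\sharp H=[L:M]$ and $L^{H}=M$ by the fundamental theorem of algebraic Galois theory. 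In particular $H\mapsto L^{H}$ recovers $M$ from $H=Aut(L/M)$, and $L^{H}=M\in\mathcal{F}$, so $H\in\mathcal{G}$.

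For the direction $\mathcal{G}\to\mathcal{F}$ I would invoke Artin's lemma: for any finite subgroup $H$ of $Aut(L/K)$ the extension $L/L^{H}$ is finite Galois with $Aut(L/L^{H})=H$ and $[L:L^{H}]=\sharp H$. Given $H\in\mathcal{G}$, write $H=Aut(L/M_{0})$ with $M_{0}\in\mathcal{F}$; by the previous paragraph $L^{H}=M_{0}$, so $L^{H}$ carries a transcendence base $\Delta\in\Omega_{0}$ and has finite codegree, i.e. $L^{H}\in\mathcal{F}$, and $Aut(L/L^{H})=Aut(L/M_{0})=H$. Conversely, for $M\in\mathcal{F}$ we have already seen $L^{Aut(L/M)}=M$. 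Hence the two maps are mutually inverse, which gives the correspondence.

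The delicate point — which I regard as the crux — is the transfer of the Galois property from $K(\Delta)$ up to the intermediate field $M$: one must be sure that $[L:M]$ is finite and that $L/M$ is normal and separable, and it is precisely the hypothesis that $M$ contains a transcendence base lying in $\Omega_{0}$ that pins $M$ between $K(\Delta)$ and $L$ inside the finite Galois extension $L/K(\Delta)$, after which the usual subextension theory for finite Galois extensions applies. Everything else is bookkeeping: checking that the two classes $\mathcal{F}$ and $\mathcal{G}$ are exactly matched by the two maps and that the maps are mutually inverse, which is formal once one has $L^{Aut(L/M)}=M$ for $M\in\mathcal{F}$ (fundamental theorem) and $Aut(L/L^{H})=H$ for finite $H$ (Artin's lemma).
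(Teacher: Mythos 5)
Your proof is correct and follows essentially the same route as the paper, which simply invokes the procedure of the complete-Galois correspondence (Theorem 3.16): the $\sigma$-tame hypothesis makes $L$ finite algebraic Galois over $K(\Delta)$ for $\Delta\in\Omega_{0}$ (finiteness and separability via Proposition 3.2), and the correspondence is then the standard finite Galois correspondence for intermediate fields $K(\Delta)\subseteq M\subseteq L$. Your write-up merely makes explicit the two matched classes $\mathcal{F}$ and $\mathcal{G}$ and the mutual inversion of $M\mapsto Aut(L/M)$ and $H\mapsto L^{H}$, which the paper leaves implicit.
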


\begin{proof}
Immediately from \emph{Theorem 3.9} with the same procedure in the proof of
\emph{Theorem 3.16}.
\end{proof}

\subsection{Dedekind independence theorem in transcendental Galois theory}

Here, we also have a transcendental version of Dedekind independence theorem
for the Galois group, which will be used to give a proof for \emph{Lemma 8.16%
}.

\begin{theorem}
\emph{(Dedekind Independence)} Let $L$ be an algebraic or transcendental
extension of a field $K$. Let $Aut\left( L/K\right) $ denote the Galois
group of $L/K$.

$\left( i\right) $ Any finite number of distinct elements of the Galois
group $Aut\left( L/K\right) $ are $L$-linearly independent. That is, fixed
any $m$ distinct $\sigma _{1},\sigma _{2},\cdots ,\sigma _{m}$ of the Galois
group $Aut\left( L/K\right) $ and any elements $x_{1},x_{2},\cdots ,x_{m}\in
L$. Consider the $K$-linear map on $L$ (as an $L$-linear combination of the $%
\sigma _{i}$'s)
\begin{equation*}
\sum_{j=1}^{m}x_{j}\cdot \sigma _{j}:L\rightarrow L
\end{equation*}%
given by
\begin{equation*}
z\in L\longmapsto \sum_{j=1}^{m}x_{j}\cdot \sigma _{j}\left( z\right) \in L.
\end{equation*}

If the $L$-linear combination of the elements $\sigma _{1},\sigma
_{2},\cdots ,\sigma _{m}$
\begin{equation*}
\sum_{j=1}^{m}x_{j}\cdot \sigma _{j}
\end{equation*}%
is the zero map on $L$, i.e., for each $z\in L$ there is
\begin{equation*}
\sum_{j=1}^{m}x_{j}\cdot \sigma _{j}\left( z\right) =0,
\end{equation*}%
then we must have
\begin{equation*}
x_{1}=x_{2}=\cdots =x_{m}=0.
\end{equation*}%
$\left( ii\right) $ Any non-void subset $A$ of $Aut\left( L/K\right) $ is $L$%
-linearly independent. In particular, the Galois group $Aut\left( L/K\right)
$ is an $L$-linearly independent set.

$\left( iii\right) $ Fixed any intermediate subfield $K\subseteq M\subseteq
L $. Then the Galois group $Aut\left( L/K\right) $ is an $M$-linearly
independent set.
\end{theorem}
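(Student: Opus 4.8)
The plan is to reduce everything to part $(i)$ and then deduce $(ii)$ and $(iii)$ formally. For $(i)$, I would run the classical Artin--Dedekind argument by induction on $m$, the number of distinct automorphisms. The base case $m=1$ is immediate: if $x_1\cdot\sigma_1$ is the zero map on $L$, then evaluating at $z=1$ gives $x_1\cdot\sigma_1(1)=x_1=0$ since $\sigma_1$ fixes $1\in K$. For the inductive step, suppose $m\ge 2$ and that $\sum_{j=1}^{m}x_j\cdot\sigma_j(z)=0$ for every $z\in L$, with not all $x_j$ zero; after discarding vanishing coefficients we may assume all $x_j\ne 0$. Since $\sigma_1\ne\sigma_m$, there is some $a\in L$ with $\sigma_1(a)\ne\sigma_m(a)$. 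First replace $z$ by $az$ in the identity to get $\sum_{j=1}^{m}x_j\sigma_j(a)\sigma_j(z)=0$ for all $z$; then multiply the original identity by $\sigma_m(a)$ and subtract, obtaining $\sum_{j=1}^{m-1}x_j\bigl(\sigma_j(a)-\sigma_m(a)\bigr)\sigma_j(z)=0$ for all $z\in L$. By the inductive hypothesis applied to the $m-1$ distinct automorphisms $\sigma_1,\dots,\sigma_{m-1}$, every coefficient vanishes; in particular $x_1\bigl(\sigma_1(a)-\sigma_m(a)\bigr)=0$, and since $\sigma_1(a)\ne\sigma_m(a)$ this forces $x_1=0$, a contradiction. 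Hence all $x_j=0$, proving $(i)$.

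Next I would derive $(ii)$ directly from $(i)$. A subset $A\subseteq Aut(L/K)$ is $L$-linearly independent precisely when every finite subset is, and the statement ``$\sum_{j}x_j\sigma_j$ is the zero map with the $\sigma_j$ distinct elements of $A$ implies all $x_j=0$'' is exactly the content of $(i)$. The special case $A=Aut(L/K)$ is the final assertion. Part $(iii)$ is then immediate: if the $\sigma_j$ are distinct elements of $Aut(L/K)$ and $\sum_{j}c_j\sigma_j$ is the zero map with $c_j\in M\subseteq L$, then viewing the $c_j$ as elements of $L$ and invoking $(i)$ gives $c_1=\cdots=c_m=0$; so $Aut(L/K)$ is $M$-linearly independent as well.

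The only point requiring a word of care is that this is stated for arbitrary field extensions $L/K$, including transcendental ones, and that we must check the argument uses nothing beyond field axioms and the defining property $\sigma|_K=\mathrm{id}$. Indeed it does not: the proof above never uses finiteness of $[L:K]$, algebraicity, separability, or normality — only that each $\sigma_j$ is a field automorphism of $L$ fixing $K$ pointwise (used solely to get $\sigma_j(1)=1$ in the base case) and that $L$ is a field so that we may multiply, subtract, and exploit $\sigma_1(a)\ne\sigma_m(a)$. So the classical proof transfers verbatim to the transcendental setting; there is no real obstacle, and the ``main difficulty'' is merely bookkeeping the substitution $z\mapsto az$ and the subtraction that kills the $m$-th term. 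I would close by remarking that this is exactly why Dedekind independence is one of the few classical Galois-theoretic facts that survives unchanged in transcendental Galois theory, in contrast to the several distinct notions of Galois extension discussed in \S\S2--3.
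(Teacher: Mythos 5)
Your proof is correct and follows essentially the same route as the paper: the classical Artin--Dedekind induction on $m$, using a substitution and a subtraction to drop one term, with parts $(ii)$ and $(iii)$ reduced formally to $(i)$. Your variant (fixing one separating element $a$ with $\sigma_1(a)\neq\sigma_m(a)$ and deriving a contradiction from minimality) is in fact slightly cleaner than the paper's step, which fixes a single $y_0$ claimed to separate $\sigma_{m+1}$ from all the other $\sigma_j$ at once, but the underlying argument is the same.
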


\begin{proof}
$\left( i\right) $ Fixed any finite distinct elements $\sigma _{1},\sigma
_{2},\cdots ,\sigma _{m}$ in the Galois group $Aut\left( L/K\right) $.
Suppose $x_{1},x_{2},\cdots ,x_{m}\in L$ such that the $L$-linear
combination of the $m$ elements $\sigma _{1},\sigma _{2},\cdots ,\sigma
_{m}\in Aut\left( L/K\right) $
\begin{equation*}
\sum_{j=1}^{m}x_{j}\cdot \sigma _{j}
\end{equation*}%
is the zero map on $L$. Induction on $m$.

For $m=1$, if $x_{1}\cdot \sigma _{1}\left( z\right) =0$ holds for any $z\in
L$, then $x_{1}=0$ by taking $z=1$.

Suppose it is true for $m$, that is, any $m$ distinguished elements of $%
Aut\left( L/K\right) $ are $L$-linearly independent.

Now consider $m+1$. Take any subset $\Gamma $ of $\left( m+1\right) $
distinct elements
\begin{equation*}
\sigma _{1},\sigma _{2},\cdots ,\sigma _{m+1}\in Aut\left( L/K\right) .
\end{equation*}

Without loss of generality, suppose $m+1$ is the least integer in such a
sense: $m+1$ is the minimum of the set
\begin{equation*}
\{r\in \mathbb{N}:\sum_{j=1}^{r}w_{j}\cdot \delta _{j}=0\text{ for some }
w_{1},w_{2},\cdots ,w_{r}\in L^{\times }\text{ and }\delta _{1},\delta
_{2},\cdots ,\delta _{r}\in \Gamma \}
\end{equation*}
where $L^{\times }=L\setminus \{0\}.$ Otherwise, if not, there will be at
least one element of $\Gamma $ which can be an $L$-linear combination of the
other $m$ elements of $\Gamma $; then it will reduce to the case for $m$.

Suppose there are $x_{1},x_{2},\cdots ,x_{m+1}\in L$ such that the $L$%
-linear combination
\begin{equation*}
\sum_{j=1}^{m+1}x_{j}\cdot \sigma _{j}
\end{equation*}%
is the zero map on $L$. We prove $x_{1},x_{2},\cdots ,x_{m+1}$ are all zero.

In deed, for any $x,y\in L$ we have
\begin{equation*}
\begin{array}{l}
0=\left( \sum_{j=1}^{m+1}x_{j}\cdot \sigma _{j}\right) \left( xy\right) \\
=\sum_{j=1}^{m+1}x_{j}\cdot \sigma _{j}\left( xy\right) \\
=\sum_{j=1}^{m+1}x_{j}\cdot \sigma _{j}\left( x\right) \cdot \sigma
_{j}\left( y\right) ; \\
\\
0=\left( \sum_{j=1}^{m+1}x_{j}\cdot \sigma _{j}\right) \left( x\right) \cdot
\sigma _{m+1}\left( y\right) \\
=\sum_{j=1}^{m+1}x_{j}\cdot \sigma _{j}\left( x\right) \cdot \sigma
_{m+1}\left( y\right) .%
\end{array}%
\end{equation*}

Then
\begin{equation*}
\begin{array}{l}
0=\sum_{j=1}^{m+1}x_{j}\cdot \sigma _{j}\left( x\right) \cdot \left( \sigma
_{j}\left( y\right) -\sigma _{m+1}\left( y\right) \right) \\
=\sum_{j=1}^{m}x_{j}\cdot \sigma _{j}\left( x\right) \cdot \left( \sigma
_{j}\left( y\right) -\sigma _{m+1}\left( y\right) \right) \\
=\sum_{j=1}^{m}\left( x_{j}\cdot \left( \sigma _{j}\left( y\right) -\sigma
_{m+1}\left( y\right) \right) \right) \cdot \sigma _{j}\left( x\right)%
\end{array}%
\end{equation*}
hold for any $x,y\in L$.

It follows that for any fixed $y_{0}\in L$, the $L$-linear combination of
the $m$ elements in $Aut\left( L/K\right) $
\begin{equation*}
\sum_{j=1}^{m}\left( x_{j}\cdot \left( \sigma _{j}\left( y_{0}\right)
-\sigma _{m+1}\left( y_{0}\right) \right) \right) \cdot \sigma _{j}=0
\end{equation*}
is the zero map on $L$.

From the assumption for the case of $m$ that any subset of $m$ distinct
elements in $Aut\left( L/K\right) $ is $L$-linearly independent, we have
\begin{equation*}
x_{j}\cdot \left( \sigma _{j}\left( y_{0}\right) -\sigma _{m+1}\left(
y_{0}\right) \right) =0
\end{equation*}
for each $1\leq j\leq m$.

On the other hand, since $\sigma _{1},\sigma _{2},\cdots ,\sigma _{m+1}$ are
distinct elements in the Galois group $Aut\left( L/K\right) $, we must have
some nonzero $y_{0}\in L$ such that
\begin{equation*}
\sigma _{j}\left( y_{0}\right) -\sigma _{m+1}\left( y_{0}\right) \not=0
\end{equation*}
for each $1\leq j\leq m$.

We have $x_{1}=x_{2}=\cdots =x_{m}=0$ and $x_{m+1}=0.$ It follows that the $%
\left( m+1\right) $ distinct elements
\begin{equation*}
\sigma _{1},\sigma _{2},\cdots ,\sigma _{m+1}\in Aut\left( L/K\right)
\end{equation*}
are $L$-linearly independent.

This proves any finite number of distinct elements of the Galois group $%
Aut\left( L/K\right) $ are $L$-linearly independent.

$\left( ii\right) $ Take a non-void subset $A\subseteq Aut\left( L/K\right) $%
. From $\left( i\right) $ it is seen that any finite number of distinct
elements of $A$ are $L$-linearly independent; hence, $A$ is an $L$-linearly
independent set.

$\left( iii\right) $ Immediately from $\left( ii\right) $. This completes
the proof.
\end{proof}

\section{Transcendental Galois Theory, II. Algebraic Galois Subgroups}

In this section we will discuss that for a transcendental Galois extension $%
L/K$, there are infinitely many coupled parts in the Galois group $Aut\left(
L/K\right) $, i.e., the algebraic parts and the corresponding transcendental
parts.

\subsection{Full algebraic Galois groups}

Let $L$ be an arbitrary extension of a field $K$. Consider the Galois group $%
Aut\left( L/K\right) $ of the extension $L/K$.

\begin{definition}
A subgroup $G$ of $Aut\left( L/K\right) $ is called an \textbf{algebraic
Galois subgroup} of the extension $L/K$ if there is an intermediate subfield
$K\subseteq M\subseteq L$ such that $L$ is an algebraic extension over $M$
and $G$ is the Galois group $Aut\left( L/M\right) $ of the sub-extension $%
L/M $.

In such a case, if the subfield $M$ is purely transcendental over $K$, we
say $G$ is a \textbf{full algebraic Galois subgroup} of the extension $L/K$.
A full algebraic Galois subgroup of $L/K$ is denoted by $\pi _{a}\left(
L/K\right) $.
\end{definition}

\begin{remark}
A Noether solution must be a full algebraic Galois subgroup. In deed, let $L$
be a purely transcendental extension of a field $K$ and let $G$ be a
subgroup of $Aut\left( L/K\right) $. If $G$ is a Noether solution of $L/K$,
then $G$ must be a full algebraic Galois subgroup of $L/K$.
\end{remark}

\begin{remark}
Let $L$ be a transcendental extension of a field $K$. In general, a full
algebraic Galois subgroup $\pi _{a}\left( L/K\right) $ of the extension $L/K$
is never biggest (by set-inclusion) as a subgroup in the Galois group $%
Aut\left( L/K\right) $. See \emph{Example 4.5}.
\end{remark}

\begin{definition}
A subgroup $G$ of $Aut\left( L/K\right) $ is called a \textbf{transcendental
Galois subgroup} of the extension $L/K$ if there is an intermediate subfield
$K\subseteq M\subseteq L$ satisfying the three conditions:

$\left( i\right) $ $M$ is a purely transcendental extension over $K$;

$\left( ii\right) $ $\sigma \left( x\right) =x$ holds for any $\sigma \in G$
and for any $x\in L\smallsetminus M$;

$\left( iii\right) $ For each $\sigma \in G$, the restriction of $\sigma $
to $M$ belongs to the Galois group $Aut\left( M/K\right) $. Conversely, for
each $\delta \in Aut\left( M/K\right) $, there is one unique $\sigma \in G$
such that $\delta $ is the restriction of $\sigma $ to $M$.

In such a case, if $L$ is algebraic over the subfield $M$, we say $G$ is a
\textbf{highest transcendental Galois subgroup} of the extension $L/K$. A
highest transcendental Galois subgroup of $L/K$ is denoted by $\pi
_{t}\left( L/K\right) $.
\end{definition}

In the following it is seen that for a given transcendental extension of a
field, its full algebraic Galois subgroups can vary heavily while its
highest transcendental Galois subgroups are all isomorphic.

\begin{example}
Let $u,v,w$ be three algebraically independent variables over the rational
field $\mathbb{Q}$. Suppose $L_{n}=\mathbb{Q}\left(
u^{2n},v^{3n},w^{4n}\right) $ for $n\geq 0$. Then the Galois groups $%
G_{n}=Aut\left( L_{0}/L_{n}\right) $ all are full algebraic subgroups of the
extension $L_{0}/\mathbb{Q}$ for each $n\geq 0$ which form an increasing
sequence of subgroups
\begin{equation*}
\{1\}=G_{0}\subsetneqq G_{1}\subsetneqq \cdots \subsetneqq G_{n}\subsetneqq
G_{n+1}\subsetneqq \cdots
\end{equation*}
in the Galois group $Aut\left( L_{0}/\mathbb{Q}\right) $.
\end{example}

\begin{proposition}
(\emph{The algebraic Galois subgroup at a nice basis}) Let $L$ be a
transcendental extension over a field $K$ of finite transcendence degree.
Then the subgroup
\begin{equation*}
\pi _{a}\left( L/K\right) \left( \Delta ,A\right) :=Aut\left( L/K\left(
\Delta \right) \right)
\end{equation*}%
is a full algebraic Galois subgroup of the extension $L/K$ for each nice
basis $\left( \Delta ,A\right) $ of the extension $L/K$. Conversely, each
full algebraic Galois subgroup of $L/K$ is given in such a manner.
\end{proposition}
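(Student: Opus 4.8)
The plan is to establish both implications by directly unwinding the definitions of a \emph{nice basis} (as given in \emph{\S 2.1}) and of a \emph{full algebraic Galois subgroup} (\emph{Definition 4.1}), together with the standard tower property of transcendence degree; no substantial machinery is involved.

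For the forward implication I would fix a nice basis $\left(\Delta,A\right)$ of $L/K$ and set $M:=K\left(\Delta\right)$. By the definition of a nice basis, $\Delta$ is a transcendence base of $L/K$, so $M$ is purely transcendental over $K$; and $A$ being a $K\left(\Delta\right)$-linear basis of $L$ forces $L=K\left(\Delta\right)\left[A\right]=K\left(\Delta,A\right)$ to be algebraic over $M$. Thus $K\subseteq M\subseteq L$ witnesses, via $\pi_{a}\left(L/K\right)\left(\Delta,A\right)=Aut\left(L/K\left(\Delta\right)\right)=Aut\left(L/M\right)$, exactly the condition in \emph{Definition 4.1} for $\pi_{a}\left(L/K\right)\left(\Delta,A\right)$ to be a full algebraic Galois subgroup of $L/K$.

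For the converse I would start from a full algebraic Galois subgroup $G=Aut\left(L/M\right)$ with $K\subseteq M\subseteq L$, $M$ purely transcendental over $K$ and $L$ algebraic over $M$. Pick a transcendence base $\Delta$ of $M/K$, so $M=K\left(\Delta\right)$; since $\Delta\subseteq L$ is algebraically independent over $K$ and $L$ is algebraic over $K\left(\Delta\right)$, $\Delta$ is also a transcendence base of $L/K$. Choosing any $K\left(\Delta\right)$-linear basis $A$ of the vector space $L$ over $K\left(\Delta\right)$ then produces a nice basis $\left(\Delta,A\right)$ of $L/K$ with $\pi_{a}\left(L/K\right)\left(\Delta,A\right)=Aut\left(L/K\left(\Delta\right)\right)=Aut\left(L/M\right)=G$, which is the asserted form.

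I do not expect a genuine obstacle here: the statement is close to a tautology once the definitions are in place. The only points worth a sentence are (a) that a transcendence base of $M/K$ survives as a transcendence base of $L/K$ when $L/M$ is algebraic (the tower property of transcendence degree), and (b) the existence of the linear basis $A$. If $L$ happens to be finitely generated over $K$, then by \emph{Proposition 2.9} the resulting $\left(\Delta,A\right)$ is automatically finite; but finite generation plays no role, and an infinite $A$ is harmless because the argument uses only that $A$ spans $L$ linearly over $K\left(\Delta\right)$.
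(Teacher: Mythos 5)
Your argument is correct and is essentially the paper's own proof, which simply records that the claim is immediate from the definitions: $L$ algebraic over $K\left(\Delta\right)$ and $K\left(\Delta\right)$ purely transcendental over $K$ in one direction, and choosing a transcendence base of $M/K$ together with a $K\left(\Delta\right)$-linear basis of $L$ in the other. One small remark: the algebraicity of $L$ over $K\left(\Delta\right)$ comes from $\Delta$ being a transcendence base of $L/K$, not from $A$ being a linear basis (an extension always admits a linear basis, algebraic or not), but this does not affect the validity of your conclusion.
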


\begin{proof}
Immediately from the assumption that $L$ is algebraic over $K\left( \Delta
\right) $ and $K\left( \Delta \right) $ is purely transcendental over $K$.
\end{proof}

\begin{proposition}
(\emph{The transcendental Galois subgroup at a nice basis}) Let $L$ be a
transcendental extension over a field $K$ of finite transcendence degree.

$\left( i\right) $ Fixed a nice basis $\left( \Delta ,A\right) $ of the
extension $L/K$. Then each element of the Galois group $Aut\left( K\left(
\Delta \right) /K\right) $ can be extended to a unique element of a highest
transcendental Galois subgroup of $L/K$. Conversely, every element of a
highest transcendental Galois subgroup of $L/K$ is given in such a manner.

$\left( ii\right) $ There is a unique highest transcendental Galois subgroup
of the extension $L/K$, denoted by $\pi _{t}\left( L/K\right) \left( \Delta
,A\right) $, satisfying the property
\begin{equation*}
Aut\left( K\left( \Delta \right) /K\right) =\{\sigma |_{M}\in Aut\left(
K\left( \Delta \right) /K\right) :\sigma \in \pi _{t}\left( L/K\right)
\left( \Delta ,A\right) \}
\end{equation*}%
for each nice basis $\left( \Delta ,A\right) $ of the extension $L/K$.

$\left( iii\right) $ For every highest transcendental Galois subgroup $H$ of
the extension $L/K$, there is a nice basis $\left( \Delta ,A\right) $ of $%
L/K $ such that $H=\pi _{t}\left( L/K\right) \left( \Delta ,A\right) $.

$\left( iv\right) $ All the highest transcendental Galois subgroups of the
extension $L/M$ are isomorphic groups.
\end{proposition}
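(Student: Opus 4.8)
The plan is to attach to each nice basis $(\Delta,A)$ of $L/K$ a canonical splitting $\iota_{\Delta,A}\colon Aut(K(\Delta)/K)\rightarrow Aut(L/K)$ of the restriction map, to set $\pi_{t}(L/K)(\Delta,A)$ equal to its image, and to read off $(i)$--$(iv)$ from the properties of this splitting together with the fact that any two transcendence bases of $L/K$ span $K$-isomorphic purely transcendental subfields.

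I would prove $(i)$ and the existence half of $(ii)$ together. Put $M=K(\Delta)$; by the nice-basis hypothesis $M$ is purely transcendental over $K$ and $L$ is algebraic over $M$ (finite and separable when $L$ is purely transcendental, by \emph{Proposition 3.2}). Given $\delta\in Aut(M/K)$, extend it to an automorphism $\widetilde{\delta}$ of a fixed algebraic closure $\overline{L}$; then $\widetilde{\delta}(L)$ is a $K$-conjugation of $L$ with respect to $\Delta$ in the sense of \emph{Definition 2.1}, and --- using that $L$ is quasi-Galois over $K$ with respect to $\Delta$ (i.e.\ normal over $M$), so that every such $K$-conjugation lies in $L$ by \emph{Proposition 2.3} --- one gets $\widetilde{\delta}(L)=L$ and hence an element of $Aut(L/K)$ restricting to $\delta$. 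The extensions of $\delta$ to $L$ form a single coset of $Aut(L/M)=\pi_{a}(L/K)(\Delta,A)$, and I would single out one representative $\iota_{\Delta,A}(\delta)$ by running the adjunction procedure of \emph{Lemma 2.5} along the $K(\Delta)$-linear basis $A$ of $L$, which produces a \emph{specified} extension; checking that $\iota_{\Delta,A}(\delta_{1}\delta_{2})$ and $\iota_{\Delta,A}(\delta_{1})\circ\iota_{\Delta,A}(\delta_{2})$ arise from the same recursion shows $\iota_{\Delta,A}$ is a homomorphism. Its image then satisfies conditions $(i)$--$(iii)$ of \emph{Definition 4.4} with this $M$, and algebraicity of $L/M$ makes it a highest transcendental Galois subgroup; the ``unique $\sigma$'' in $(i)$ is injectivity of $\iota_{\Delta,A}$, which holds since two extensions of one $\delta$ differ by a nontrivial element of $Aut(L/M)$ whereas the recursion picks exactly one. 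The converse in $(i)$ is \emph{Definition 4.4}$(iii)$ read backwards: a highest transcendental Galois subgroup $H$ restricts bijectively onto $Aut(M'/K)$ for the subfield $M'$ attached to it, and a nice basis $(\Delta',A')$ with $K(\Delta')=M'$ identifies $H$ with $\pi_{t}(L/K)(\Delta',A')$.

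For uniqueness in $(ii)$: if $H_{1},H_{2}$ are highest transcendental Galois subgroups attached to the same nice basis $(\Delta,A)$, then given $\sigma_{1}\in H_{1}$ put $\delta=\sigma_{1}|_{M}$ and pick $\sigma_{2}\in H_{2}$ with $\sigma_{2}|_{M}=\delta$; both are extensions of $\delta$ normalised by the same nice basis, hence equal, so $H_{1}\subseteq H_{2}$ and, by symmetry, $H_{1}=H_{2}$. Part $(iii)$ is the reverse passage: given a highest transcendental Galois subgroup $H$ with attached subfield $M$, write $M=K(\Delta)$, complete to a nice basis $(\Delta,A)$ of $L/K$, and apply the uniqueness just shown to get $H=\pi_{t}(L/K)(\Delta,A)$. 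For $(iv)$, each highest transcendental Galois subgroup $H$ (attached to $M=K(\Delta)$) restricts bijectively onto $Aut(M/K)$ by \emph{Definition 4.4}$(iii)$, and this restriction is a group homomorphism since $\sigma(M)=M$ for every $\sigma\in H$; as $\Delta$ is a transcendence base of $L/K$, the field $M$ is purely transcendental over $K$ of the same transcendence degree $n$ as $L/K$, so $Aut(M/K)\cong Aut(K(x_{1},\ldots,x_{n})/K)$, a group independent of $H$; hence all highest transcendental Galois subgroups of $L/K$ are isomorphic.

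The step I expect to be the genuine obstacle is the well-definedness of $\iota_{\Delta,A}$, that is, the equality $\widetilde{\delta}(L)=L$: for an arbitrary nice basis this is exactly the assertion that $L$ is normal over $K(\Delta)$, which can fail (for example $\mathbb{Q}(t)$ over $\mathbb{Q}(t^{3})$), so I would either carry the standing hypothesis that $L$ is $\sigma$-quasi-Galois over $K$ (equivalently restrict to nice bases inside a suitable $\Omega_{0}$) or invoke the cases of the paper in which it holds, namely $K$ algebraically closed or $K$ containing enough roots of unity, via \emph{Theorem 3.9} and \emph{Lemma 3.3}. The remaining technical point --- compatibility of the \emph{Lemma 2.5} normalisation with composition --- I expect to be routine bookkeeping about that recursion, with no new idea required.
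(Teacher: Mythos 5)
There is a genuine gap, and it sits exactly at the point where you wave your hands: the claim that the image of your splitting $\iota_{\Delta,A}$ ``satisfies conditions $(i)$--$(iii)$ of \emph{Definition 4.4}''. Condition $(ii)$ of that definition requires every element $\sigma$ of a transcendental Galois subgroup to fix $L\setminus M$ pointwise, where $M=K(\Delta)$. Your construction produces honest field automorphisms of $L$ extending $\delta\in Aut(K(\Delta)/K)$ (via lifting to $\overline{L}$ and cutting back down using normality), and such an automorphism cannot satisfy $(ii)$ unless it is the identity: if $\sigma$ fixes every $x\in L\setminus M$ and $m\in M$, then $x+m\in L\setminus M$ as well, so $\sigma(m)=\sigma(x+m)-\sigma(x)=m$, i.e.\ $\sigma$ fixes all of $L$. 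So the subgroup you build is not a highest transcendental Galois subgroup in the paper's sense, and none of $(i)$--$(iv)$ follows for it. The paper's own proof avoids this entirely: it never lifts $\delta$ field-theoretically. It decomposes $L=E\oplus F$ as $K$-linear spaces with $E=K(\Delta)$ and $F=span_{K}\bigl(L\setminus K(\Delta)\bigr)$, and defines the extension of $\sigma_{E}$ to be the block map $\left(\begin{smallmatrix}\sigma_{E}&0\\0&1_{F}\end{smallmatrix}\right)$ (Claim 4.8, elaborated in Proposition 5.3). That diagonal extension is the identity on the $F$-component by construction, which is precisely what condition $(ii)$ demands, and its uniqueness is automatic from the direct-sum decomposition --- no normalisation of a coset of $Aut(L/K(\Delta))$ is needed, and no choice is being made that has to be checked compatible with composition.

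A second, smaller mismatch: your fallback of assuming $L$ quasi-Galois (or $\sigma$-tame Galois inside some $\Omega_{0}$, or $K$ with enough roots of unity) imports hypotheses that the proposition does not carry; the paper's argument needs no normality of $L$ over $K(\Delta)$ at any nice basis, again because the mechanism is linear-algebraic rather than Galois-theoretic. Your treatment of $(iv)$ --- all groups $Aut(K(\Delta)/K)$ for transcendence bases $\Delta$ are isomorphic because the fields $K(\Delta)$ are $K$-isomorphic --- does coincide with the paper's Claim 4.9, and your reduction of $(iii)$ to choosing a nice basis with $K(\Delta)=M$ matches the paper; but the heart of $(i)$ and $(ii)$ is carried by the $E\oplus F$ extension, which your proposal replaces with a construction that does not meet the definition being quantified over.
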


\begin{proof}
$\left( i\right) $ Consider the $K$-linear spaces $E:=K\left( \Delta \right)
,$ $F:=span_{K}\left( L\backslash K\left( \Delta \right) \right) ,$ and $%
V:=L=E\oplus F.$ Here, $E$ and $F$ are called the $E$\emph{-component} and $%
F $\emph{-component} of the linear space $L$, respectively. \emph{[}See
\emph{Proposition 5.3} in \emph{\S 5.2} for details\emph{]}.

Fixed an element $\sigma _{E}\in Aut\left( K\left( \Delta \right) /K\right) $%
. It is seen that $\sigma _{E}$ can be extended to an element $\sigma \in
Aut\left( L/K\right) $. In deed, it is immediately from \emph{Lemma 2.5} for
the case that $L=K\left( \Delta \right) \left[ A\right] $ is a finite
extension over $K\left( \Delta \right) $.

Assume $L$ is an infinite extension over $K\left( \Delta \right) $, then
consider a tower of sub-extensions
\begin{equation*}
K\left( \Delta \right) \subsetneqq L^{\prime }\subsetneqq L
\end{equation*}
with $\left[ L^{\prime }:K\left( \Delta \right) \right] <+\infty $. In the
same way, by \emph{Lemma 2.5} again we have an element $\sigma ^{\prime }\in
Aut\left( L^{\prime }/K\right) $ with $\sigma _{E}$ is the restriction of $%
\sigma ^{\prime }$ to $K\left( \Delta \right) $. Choose a $K\left( \Delta
\right) $-linear basis $B^{\prime }$ of the vector space $L^{\prime }$ over $%
K\left( \Delta \right) $ and an $L^{\prime } $-linear basis $B^{\prime
\prime }$ of $L$ over $L^{\prime }$. By adjunction, we obtain an element $%
\sigma \in Aut\left( L/K\right) $ satisfying the property: $\sigma _{E}$ is
the restriction of $\sigma $ to $K\left( \Delta \right) $; there are two
elements $\sigma ^{\prime }\in Aut\left( L^{\prime }/K\left( \Delta \right)
\right) $ and $\sigma ^{\prime \prime }\in Aut\left( L/L^{\prime }\right) $
such that $\sigma \left( x^{\prime }\right) =\sigma ^{\prime }\left(
x^{\prime }\right) $ and $\sigma \left( x^{\prime \prime }\right) =\sigma
^{\prime \prime }\left( x^{\prime \prime }\right) $ hold for any $x^{\prime
}\in B^{\prime }$ and $x^{\prime \prime }\in B^{\prime \prime }$. Here, $%
\sigma ^{\prime }$ and $\sigma ^{\prime \prime }$ can be assumed to be such
that $\sigma ^{\prime }\left( x^{\prime }\right) =x^{\prime }$ and $\sigma
^{\prime \prime }\left( x^{\prime \prime }\right) =x^{\prime \prime }$ hold
for any $x^{\prime }\in B^{\prime }$ and $x^{\prime \prime }\in B^{\prime
\prime }$. This gives the extension of $\sigma _{E}$ to $\sigma \in
Aut\left( L/K\right) $.

From the below \emph{Claim 4.8} it is seen that each element $\sigma _{E}\in
Aut\left( K\left( \Delta \right) /K\right) \subseteq GL_{K}\left( E\right) $
with the identity map $1_{F}$ on $F$ defines a unique element $\sigma
=\left( \sigma _{E},1_{F}\right) \in GL_{K}\left( V\right) $. On the other
hand, for any $x\in V$, we have $x=x_{E}+x_{F}$ with $x_{E}\in E$ and $%
x_{F}\in F$; then
\begin{equation*}
\sigma \left( x\right) =\left(
\begin{array}{cc}
\sigma _{E} & 0 \\
0 & 1_{F}%
\end{array}%
\right) \left(
\begin{array}{c}
x_{E} \\
x_{F}%
\end{array}%
\right) =\sigma _{E}\left( x_{E}\right) +x_{F}.
\end{equation*}%
It follows that $\sigma \in Aut\left( L/K\right) $ and from $\left(
ii\right) $ it is seen that $\sigma $ is contained in a highest
transcendental Galois subgroup of the extension $L/K$. The converse is also
true. \emph{[}See \emph{Proposition 5.3} in \emph{\S 5.2} for further
properties for linear decompositions of automorphisms at nice bases\emph{]}.

$\left( ii\right) $ Immediately from definition since the transcendence
degree $tr.\deg K\left( \Delta \right) /K$ is exactly equal to $tr.\deg L/K$.

$\left( iii\right) $ For a highest transcendental Galois subgroup $H$ of $%
L/K $, there is a purely transcendental sub-extension $M=K\left( \Delta
\right) $ in $L/K$ with $tr.\deg M/K=tr.\deg L/K$, where $\Delta $ is a
transcendence base of $M/K$. Taking a linear basis $A$ of the vector space $%
L $ over $M$, we obtain a nice basis $\left( \Delta ,A\right) $ of $L/K$.
From $\left( i\right) $ we have
\begin{equation*}
H=\pi _{t}\left( L/K\right) \left( \Delta ,A\right) .
\end{equation*}

$\left( iv\right) $ For any two nice bases $\left( \Delta ,A\right) $ and $%
\left( \Lambda ,B\right) $ of the extension $L/K$, we have isomorphic groups
\begin{equation*}
\pi _{t}\left( L/K\right) \left( \Delta ,A\right) \cong Aut\left( K\left(
\Delta \right) /K\right) \cong Aut\left( K\left( \Lambda \right) /K\right)
\cong \pi _{t}\left( L/K\right) \left( \Lambda ,B\right)
\end{equation*}%
from the below \emph{Claim 4.9}. This completes the proof.
\end{proof}

\begin{claim}
Let $V$ be a linear space over a field $K$. Let $GL_{K}\left( V\right) $
denote the group of $K$-linear isomorphisms of $V$. Then for any direct sum
of $K$-subspaces
\begin{equation*}
V=E\oplus F
\end{equation*}%
there is a direct sum of subgroups
\begin{equation*}
\{\sigma \in GL_{K}\left( V\right) :\sigma \left( E\right) \subseteq
E,\sigma \left( F\right) \subseteq F\}=GL_{K}\left( E\right) \oplus
GL_{K}\left( F\right) .
\end{equation*}
\end{claim}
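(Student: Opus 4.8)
Claim 4.8 is a purely linear-algebra statement, so the plan is to verify the two set-inclusions directly, using only the definition of a direct sum decomposition and the bijectivity of the linear maps involved.

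First I would prove the inclusion $\{\sigma\in GL_{K}(V):\sigma(E)\subseteq E,\ \sigma(F)\subseteq F\}\supseteq GL_{K}(E)\oplus GL_{K}(F)$. Given $\sigma_{E}\in GL_{K}(E)$ and $\sigma_{F}\in GL_{K}(F)$, define $\sigma:V\to V$ on a general vector $x=x_{E}+x_{F}$ (with $x_{E}\in E$, $x_{F}\in F$ uniquely determined) by $\sigma(x):=\sigma_{E}(x_{E})+\sigma_{F}(x_{F})$. This is well-defined and $K$-linear because the decomposition $x=x_{E}+x_{F}$ is unique and $\sigma_{E},\sigma_{F}$ are $K$-linear; it is invertible with inverse $x\mapsto\sigma_{E}^{-1}(x_{E})+\sigma_{F}^{-1}(x_{F})$; and by construction $\sigma(E)=\sigma_{E}(E)=E$ and $\sigma(F)=\sigma_{F}(F)=F$, so $\sigma$ lies in the left-hand set.

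Next I would prove the reverse inclusion. Let $\sigma\in GL_{K}(V)$ with $\sigma(E)\subseteq E$ and $\sigma(F)\subseteq F$. The key point — and the one small obstacle to watch — is to promote these containments to equalities: since $\sigma$ is injective and $\sigma(E)\subseteq E$, the restriction $\sigma|_{E}:E\to E$ is an injective $K$-linear endomorphism, and I must argue it is surjective. For finite-dimensional $E$ this is automatic; in the infinite-dimensional setting relevant here one uses that $\sigma^{-1}$ also preserves the decomposition (indeed $\sigma^{-1}(E)\subseteq E$ follows formally: if $y\in E$ then $y=\sigma(x)$ for a unique $x=x_{E}+x_{F}$, and $\sigma(x_{F})\in F$ while $\sigma(x_{E})\in E$, so $y-\sigma(x_{E})=\sigma(x_{F})\in E\cap F=\{0\}$, giving $x_{F}=0$, i.e. $x=x_{E}\in E$), hence $\sigma|_{E}$ is bijective and $\sigma|_{E}\in GL_{K}(E)$; the identical argument gives $\sigma|_{F}\in GL_{K}(F)$. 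Then $\sigma=\sigma|_{E}\oplus\sigma|_{F}$ as an element of $GL_{K}(E)\oplus GL_{K}(F)$, because for $x=x_{E}+x_{F}$ one has $\sigma(x)=\sigma(x_{E})+\sigma(x_{F})=(\sigma|_{E})(x_{E})+(\sigma|_{F})(x_{F})$.

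Finally I would note that the correspondence $\sigma\leftrightarrow(\sigma|_{E},\sigma|_{F})$ is manifestly a group homomorphism (composition is computed componentwise on the direct sum) and is a bijection by the two inclusions above, so the displayed equality of groups holds. The only genuinely delicate step is the passage from $\sigma(E)\subseteq E$ to $\sigma(E)=E$, which is why I isolate the symmetric argument using $\sigma^{-1}$; everything else is bookkeeping with the uniqueness of the direct-sum decomposition.
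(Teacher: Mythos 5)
Your proof is correct, and it follows the same basic route as the paper: identify the stabilizer of the decomposition with pairs of components acting diagonally on $V=E\oplus F$. The difference is one of completeness rather than of method. The paper's proof is a single sentence asserting that every $\sigma$ in the stabilizer has components $\sigma_{E}\in GL_{K}\left( E\right) $ and $\sigma_{F}\in GL_{K}\left( F\right) $ ``immediately from the diagonal action on the $2$-column vectors,'' and it never addresses the only point that genuinely requires an argument: that $\sigma \left( E\right) \subseteq E$ and $\sigma \left( F\right) \subseteq F$, together with bijectivity of $\sigma $ on $V$, force $\sigma \left( E\right) =E$ and $\sigma \left( F\right) =F$. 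In the infinite-dimensional situation in which the claim is actually applied (where $E=K\left( \Delta \right) $ and $F$ are infinite-dimensional $K$-spaces), an injective endomorphism of $E$ need not be surjective, so your argument --- writing $y=\sigma \left( x_{E}\right) +\sigma \left( x_{F}\right) $ for $y\in E$ and using uniqueness of the decomposition together with injectivity to conclude $x_{F}=0$, equivalently showing $\sigma ^{-1}$ also preserves the summands --- is precisely the missing step, and you were right to isolate it. Your verification of the reverse inclusion (that any pair $\left( \sigma _{E},\sigma _{F}\right) $ assembles to an element of the stabilizer) and of the fact that $\sigma \mapsto \left( \sigma |_{E},\sigma |_{F}\right) $ is a group isomorphism is routine but correctly carried out, so your write-up is in fact a more complete proof than the one in the paper.
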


\begin{proof}
Let $H=\{\sigma \in GL_{K}\left( V\right) :\sigma \left( E\right) \subseteq
E,\sigma \left( F\right) \subseteq F\}.$ Then every $\sigma \in H$ has a
unique decomposition $\sigma =\left( \sigma _{E},\sigma _{F}\right) $ with $%
\sigma _{E}\in GL_{K}\left( E\right) $ and $\sigma _{F}\in GL_{K}\left(
F\right) $ being as coordinate components of $\sigma $, which is immediately
from the diagonal action on the $2$-column vectors.
\end{proof}

\begin{claim}
Suppose $L=K\left( s_{1},s_{2},\cdots ,s_{n}\right) $ and $M=K\left(
t_{1},t_{2},\cdots ,t_{n}\right) $ are two purely transcendental extensions
where $s_{1},s_{2},\cdots ,s_{n}$ and $t_{1},t_{2},\cdots ,t_{n}$ are two
sets of algebraically independent variables over $K$. Then there is an
isomorphism of the Galois groups
\begin{equation*}
Aut\left( L/K\right) \cong Aut\left( M/K\right) .
\end{equation*}
\end{claim}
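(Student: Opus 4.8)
The plan is to exhibit an explicit $K$-isomorphism between $L$ and $M$ and then transport automorphisms along it by conjugation. First I would observe that since $s_{1},s_{2},\cdots ,s_{n}$ are algebraically independent over $K$, the assignment $s_{i}\mapsto t_{i}$ extends uniquely to a $K$-algebra homomorphism from the polynomial ring $K\left[ s_{1},\cdots ,s_{n}\right] $ to $K\left[ t_{1},\cdots ,t_{n}\right] $; this is an isomorphism of $K$-algebras because its two-sided inverse is the homomorphism $t_{i}\mapsto s_{i}$, which is well defined for the same reason, the $t_{i}$ being algebraically independent over $K$ as well. Passing to fields of fractions yields a $K$-isomorphism $\phi :L\rightarrow M$ with $\phi \left( s_{i}\right) =t_{i}$ for each $1\leq i\leq n$ and $\phi |_{K}=\mathrm{id}_{K}$. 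Equivalently, both $L$ and $M$ are $K$-isomorphic to the rational function field $K\left( X_{1},\cdots ,X_{n}\right) $ in $n$ indeterminates, whence they are $K$-isomorphic to each other.

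Next I would define the map $\Phi :Aut\left( L/K\right) \rightarrow Aut\left( M/K\right) $ by $\Phi \left( \sigma \right) =\phi \circ \sigma \circ \phi ^{-1}$. The key points to check are: $\left( a\right) $ $\Phi \left( \sigma \right) $ indeed lies in $Aut\left( M/K\right) $, i.e. it is a field automorphism of $M$ that fixes $K$ pointwise, the first since it is a composite $M\rightarrow L\rightarrow L\rightarrow M$ of three field isomorphisms, the second since each of $\phi ^{-1},\sigma ,\phi $ fixes $K$ pointwise; $\left( b\right) $ $\Phi $ is a homomorphism of groups, which is the elementary identity $\phi \circ \left( \sigma \tau \right) \circ \phi ^{-1}=\left( \phi \circ \sigma \circ \phi ^{-1}\right) \circ \left( \phi \circ \tau \circ \phi ^{-1}\right) $; $\left( c\right) $ the map $\Psi :Aut\left( M/K\right) \rightarrow Aut\left( L/K\right) $ given by $\Psi \left( \delta \right) =\phi ^{-1}\circ \delta \circ \phi $ is a two-sided inverse of $\Phi $, i.e. $\Psi \circ \Phi =\mathrm{id}_{Aut\left( L/K\right) }$ and $\Phi \circ \Psi =\mathrm{id}_{Aut\left( M/K\right) }$. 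Together these give the desired isomorphism $Aut\left( L/K\right) \cong Aut\left( M/K\right) $.

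I do not expect a genuine obstacle here: the statement is the standard fact that conjugation by a $K$-isomorphism of fields induces an isomorphism of Galois groups, and the only step deserving explicit justification is the existence of $\phi $, which is precisely the universal property of purely transcendental extensions of equal transcendence degree. In fact the conjugation argument proves more: for \emph{any} $K$-isomorphism $\phi :L\rightarrow M$ between two extensions of $K$, the map $\sigma \mapsto \phi \circ \sigma \circ \phi ^{-1}$ is an isomorphism $Aut\left( L/K\right) \cong Aut\left( M/K\right) $; the present claim is the special case in which such a $\phi $ is guaranteed to exist. This same remark will also cover $\left( iv\right) $ of \emph{Proposition 4.7}, since two highest transcendental Galois subgroups attached to nice bases $\left( \Delta ,A\right) $ and $\left( \Lambda ,B\right) $ are each isomorphic to $Aut\left( K\left( \Delta \right) /K\right) $ and $Aut\left( K\left( \Lambda \right) /K\right) $ respectively, and $K\left( \Delta \right) $, $K\left( \Lambda \right) $ are purely transcendental over $K$ of the common transcendence degree $tr.\deg L/K$.
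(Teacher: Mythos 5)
Your proposal is correct and follows essentially the same route as the paper: the paper's proof of this claim constructs the $K$-isomorphism $\tau :L\rightarrow M$, $s_{i}\mapsto t_{i}$, and transports automorphisms by conjugation $\sigma \mapsto \tau \circ \sigma \circ \tau ^{-1}$, exactly as you do (you merely spell out the verification that the map lands in $Aut\left( M/K\right) $, is a homomorphism, and has a two-sided inverse). No gap; your concluding remark that conjugation along any $K$-isomorphism works is also how the paper uses this fact elsewhere.
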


\begin{proof}
Immediately $L$ and $M$ are isomorphic $K$-algebras via a map $\tau
:L\rightarrow M$ given by $s_{i}\mapsto t_{i}$ for any $1\leq i\leq n$.
Consider the map
\begin{equation*}
\lambda :Aut\left( L/K\right) \rightarrow Aut\left( M/K\right) ,t\mapsto
\tau \circ t\circ \tau ^{-1}
\end{equation*}%
between the Galois groups. From $\lambda $ it is seen that
\begin{equation*}
Aut\left( L/K\right) \cong Aut\left( M/K\right)
\end{equation*}%
are isomorphic groups.
\end{proof}

\begin{proposition}
Let $L$ be a transcendental extension over a field $K$ of finite
transcendence degree. Then for any nice basis $\left( \Delta ,A\right) $ of
the extension $L/K$, the full algebraic Galois subgroup
\begin{equation*}
\pi _{a}\left( L/K\right) \left( \Delta ,A\right)
\end{equation*}%
and highest transcendental Galois subgroup
\begin{equation*}
\pi _{t}\left( L/K\right) \left( \Delta ,A\right)
\end{equation*}%
intersect at a single element, i.e.,
\begin{equation*}
\pi _{a}\left( L/K\right) \left( \Delta ,A\right) \bigcap \pi _{t}\left(
L/K\right) \left( \Delta ,A\right) =\{1\}.
\end{equation*}
\end{proposition}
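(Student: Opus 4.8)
The plan is to read the intersection off directly from Definitions 4.1 and 4.4 together with Propositions 4.6 and 4.7, which pin down both subgroups as being attached to one and the same purely transcendental intermediate field, namely $K\left( \Delta \right) $.

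First I would record the two descriptions. By \emph{Proposition 4.6}, $\pi _{a}\left( L/K\right) \left( \Delta ,A\right) =Aut\left( L/K\left( \Delta \right) \right) $, so every $\sigma $ in it satisfies $\sigma \left( x\right) =x$ for all $x\in K\left( \Delta \right) $. By \emph{Definition 4.4} together with \emph{Proposition 4.7} $\left( ii\right) $, the subgroup $\pi _{t}\left( L/K\right) \left( \Delta ,A\right) $ is the (unique) highest transcendental Galois subgroup of $L/K$ attached to the purely transcendental intermediate field $M:=K\left( \Delta \right) $, over which $L$ is algebraic because $\left( \Delta ,A\right) $ is a nice basis; in particular condition $\left( ii\right) $ of \emph{Definition 4.4} gives $\sigma \left( x\right) =x$ for every $x\in L\smallsetminus K\left( \Delta \right) $ and every $\sigma \in \pi _{t}\left( L/K\right) \left( \Delta ,A\right) $.

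Then I would finish the argument. Let $\sigma $ lie in the intersection $\pi _{a}\left( L/K\right) \left( \Delta ,A\right) \cap \pi _{t}\left( L/K\right) \left( \Delta ,A\right) $. By the first description $\sigma $ fixes $K\left( \Delta \right) $ pointwise, and by the second $\sigma $ fixes $L\smallsetminus K\left( \Delta \right) $ pointwise; since $L=K\left( \Delta \right) \cup \left( L\smallsetminus K\left( \Delta \right) \right) $ as a set, $\sigma $ fixes every element of $L$, i.e., $\sigma $ is the identity map on $L$. Hence the intersection equals $\{1\}$. Equivalently, in the block language of \emph{Claim 4.8} with $E=K\left( \Delta \right) $ and $F=span_{K}\left( L\smallsetminus K\left( \Delta \right) \right) $, an element of $\pi _{t}\left( L/K\right) \left( \Delta ,A\right) $ has the form $\left( \sigma _{E},1_{F}\right) $ with $\sigma _{E}\in Aut\left( K\left( \Delta \right) /K\right) $, and membership in $\pi _{a}\left( L/K\right) \left( \Delta ,A\right) =Aut\left( L/K\left( \Delta \right) \right) $ forces $\sigma _{E}=1_{E}$, whence $\sigma =\left( 1_{E},1_{F}\right) =1$.

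There is essentially no obstacle here beyond being careful that the two named subgroups are genuinely indexed by the same nice basis $\left( \Delta ,A\right) $, and hence by the same subfield $K\left( \Delta \right) $ — which is exactly the content of \emph{Propositions 4.6--4.7} — after which the conclusion is forced by the fact that $L$ is the set-theoretic union of $K\left( \Delta \right) $ and its complement. The only point deserving a separate word is the degenerate case $L=K\left( \Delta \right) $ (i.e., $A=\{1\}$), where $L\smallsetminus K\left( \Delta \right) =\emptyset $; but then $\pi _{a}\left( L/K\right) \left( \Delta ,A\right) =Aut\left( L/L\right) =\{1\}$ already, and the statement holds trivially.
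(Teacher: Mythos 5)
Your argument is correct and is essentially the paper's own: the paper disposes of this proposition with ``Immediately from \emph{Propositions 4.6--4.7},'' and your write-up simply fills in that step, identifying $\pi _{a}\left( L/K\right) \left( \Delta ,A\right) $ with $Aut\left( L/K\left( \Delta \right) \right) $ (so its elements fix $K\left( \Delta \right) $ pointwise) and using Definition 4.4$\left( ii\right) $ / the block form $\left( \sigma _{E},1_{F}\right) $ from the construction of $\pi _{t}\left( L/K\right) \left( \Delta ,A\right) $ (so its elements fix $L\smallsetminus K\left( \Delta \right) $), whence any common element is the identity. Your remark on the degenerate case $L=K\left( \Delta \right) $ is a harmless extra precaution; nothing further is needed.
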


\begin{proof}
Immediately from \emph{Propositions 4.6-4.7}.
\end{proof}

From \emph{Propositions 4.6-4.7} we have the concluding remark for this
subsection.

\begin{remark}
(\emph{The algebraic and transcendental Galois subgroups at a nice basis})
Let $L$ be a transcendental extension over a field $K$ of finite
transcendence degree.

$\left( i\right) $ Each full algebraic Galois subgroup $G$ of the extension $%
L/M$ has a nice basis $\left( \Delta ,A\right) $ of $L/K$ such that $G=\pi
_{a}\left( L/K\right) \left( \Delta ,A\right) $ holds; conversely, each nice
basis $\left( \Delta ,A\right) $ of $L/K$ has a unique full algebraic Galois
subgroup $G$ of $L/M$ such that $G=\pi _{a}\left( L/K\right) \left( \Delta
,A\right) $. In such a case, $G$ is said to be the \textbf{full algebraic
Galois subgroup at a nice basis} $\left( \Delta ,A\right) $ of $L/K$.

$\left( ii\right) $ Each highest transcendental Galois subgroup $H$ of the
extension $L/M$ has a nice basis $\left( \Delta ,A\right) $ of $L/K$ such
that $H=\pi _{t}\left( L/K\right) \left( \Delta ,A\right) $ holds;
conversely, each nice basis $\left( \Delta ,A\right) $ of $L/K$ has a unique
highest transcendental Galois subgroup $H$ of $L/M$ such that $H=\pi
_{t}\left( L/K\right) \left( \Delta ,A\right) $. In such a case, $H$ is said
to be the \textbf{highest transcendental Galois subgroup at a nice basis} $%
\left( \Delta ,A\right) $ of $L/K$.
\end{remark}

\subsection{Galois correspondence for $\sigma$-tame Galois, (I)-(II)}

For an extension of a field, to make preparations for the Galois action on
its nice bases that will be discussed in \emph{\S 5}, we need a
transcendental version of Galois correspondence for $\sigma $-tame Galois
extensions, which generalises the result in \emph{Theorem 3.16}.

\begin{lemma}
\emph{(Galois correspondence for $\sigma $-tame Galois, I)} Let $L$ be a
transcendental extension over a field $K$ of finite transcendence degree.
Suppose $L$ is $\sigma $-tame Galois over $K$ (respectively, inside $\Omega
_{0}$). Here, $\Omega _{0}$ is a given subset of some certain transcendence
bases of $L/K$.

$\left( i\right) $ There exists a one-to-one correspondence between finite
subgroups $H$ of the Galois group $Aut\left( L/K\right) $ and intermediate
subfields $M$ in $L/K$ of finite degree $\left[ L:M\right] <+\infty $
satisfying the property: $M=L^{H}$ and $H=Aut\left( L/M\right) $
(respectively, such that $K\left( \Delta \right) \subseteq M$ for some $%
\Delta \in \Omega _{0}$).

$\left( ii\right) $ There exists a one-to-one correspondence between \emph{%
full algebraic Galois subgroups} $H$ of the extension $L/K$ and \emph{%
highest transcendental subfields} $M$ in $L/K$, i.e., $M$ is purely
transcendental over $K$ and $L$ is algebraic over $M$, satisfying the
property: $M=L^{H}$ and $H=Aut\left( L/M\right) $ (respectively, such that $%
K\left( \Delta \right) \subseteq M$ for some $\Delta \in \Omega _{0}$).
\end{lemma}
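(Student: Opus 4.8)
The plan is to follow the template of the proof of \emph{Theorem 3.16}, replacing the hypothesis ``complete Galois'' by ``$\sigma$-tame Galois'' and using throughout the defining property of a $\sigma$-tame Galois extension: $L$ is algebraic Galois over $K\left( \Delta \right) $ for \emph{every} transcendence base $\Delta $ of $L/K$ (resp. for every $\Delta \in \Omega _{0}$). Concretely, I would set up the two assignments $H\mapsto L^{H}$ and $M\mapsto Aut\left( L/M\right) $ between the indicated families of subgroups and subfields and show they are mutually inverse, the whole difficulty being the passage ``finite degree over the base $\implies$ algebraic Galois over the base''.

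For $(i)$, take a finite subgroup $H\subseteq Aut\left( L/K\right) $ and put $M:=L^{H}$; then $K\subseteq M\subseteq L$, and since for each $x\in L$ the polynomial $\prod _{\sigma \in H}\left( T-\sigma \left( x\right) \right) $ has coefficients in $M$ and annihilates $x$, the extension $L/M$ is algebraic, so $\mathrm{tr.deg}\left( M/K\right) =\mathrm{tr.deg}\left( L/K\right) $ and $M$ contains a transcendence base $\Delta $ of $L/K$. By $\sigma$-tame Galois, $L$ is algebraic Galois over $K\left( \Delta \right) $, hence over the intermediate subfield $M$; combining the finite algebraic Galois theory of $L/M$ with Artin's theorem $\left[ L:M\right] =\sharp H$ (which in this framework rests on the Dedekind Independence theorem of \S 3) gives $H=Aut\left( L/M\right) $ and $\left[ L:M\right] <+\infty $. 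Conversely, if $M$ is an intermediate subfield with $\left[ L:M\right] <+\infty $, then $L/M$ is algebraic, a transcendence base $\Delta $ of $M/K$ is a transcendence base of $L/K$, so $K\left( \Delta \right) \subseteq M\subseteq L$, and the $\sigma$-tame Galois hypothesis makes $L$ algebraic Galois over $M$; thus $H:=Aut\left( L/M\right) $ is a finite subgroup with $L^{H}=M$. The round trips $H\mapsto L^{H}\mapsto Aut\left( L/L^{H}\right) =H$ and $M\mapsto Aut\left( L/M\right) \mapsto L^{Aut\left( L/M\right) }=M$ then follow from finite algebraic Galois theory. In the relative version one keeps only those $M$ (and the corresponding $H$) with $K\left( \Delta \right) \subseteq M$ for some $\Delta \in \Omega _{0}$, and the argument applies verbatim, now invoking the $\sigma$-tame hypothesis only at such $\Delta $.

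For $(ii)$ no finiteness is available or needed — the subgroups in play may be infinite, cf. \emph{Example 4.5} — so Artin's theorem is not used. By \emph{Proposition 4.6}, $H$ is a full algebraic Galois subgroup of $L/K$ precisely when $H=\pi _{a}\left( L/K\right) \left( \Delta ,A\right) =Aut\left( L/K\left( \Delta \right) \right) $ for some nice basis $\left( \Delta ,A\right) $ of $L/K$; for such $H$, the subfield $M:=K\left( \Delta \right) $ is purely transcendental over $K$ with $L$ algebraic over it, i.e. a highest transcendental subfield, and the $\sigma$-tame Galois hypothesis yields $L^{H}=L^{Aut\left( L/K\left( \Delta \right) \right) }=K\left( \Delta \right) =M$ straight from the definition of an algebraic Galois extension. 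Conversely, a highest transcendental subfield $M$ is of the form $K\left( \Delta \right) $, so $H:=Aut\left( L/M\right) $ is a full algebraic Galois subgroup by \emph{Definition 4.1}, with $L^{H}=M$ again by $\sigma$-tame Galois; the two assignments are mutually inverse exactly as in $(i)$, and the relative version is obtained by retaining only the $M$ with $K\left( \Delta \right) \subseteq M$, $\Delta \in \Omega _{0}$.

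The step I expect to be the real obstacle is the converse half of $(i)$ (and, when one insists on tracking $\Delta $, the forward half as well): one must locate a transcendence base sitting \emph{inside} the given intermediate subfield $M$, and then deduce that $L/M$ is algebraic Galois from the fact that $L/K\left( \Delta \right) $ is — that is, that normality and separability are inherited by a subextension lying above the transcendence base. This is precisely where the finiteness of the transcendence degree and the ``for all bases'' strength of the $\sigma$-tame hypothesis are used, and it is also why, in the $\Omega _{0}$-variant, the extra requirement $K\left( \Delta \right) \subseteq M$ with $\Delta \in \Omega _{0}$ must be built explicitly into the statement of the correspondence.
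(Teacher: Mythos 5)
Your proposal is correct and follows essentially the same route as the paper's own proof: locate a transcendence base $\Delta$ inside the fixed field (resp.\ inside the given intermediate field $M$), invoke the $\sigma$-tame hypothesis to get that $L$ is algebraic Galois over $K\left( \Delta \right)$ and hence over $M$, and then finish with standard finite algebraic Galois theory for $(i)$ and with \emph{Proposition 4.6} for $(ii)$. Your extra justification that $L/L^{H}$ is algebraic via the polynomial $\prod_{\sigma \in H}\left( T-\sigma \left( x\right) \right)$, and the explicit appeal to Artin's theorem, only make explicit what the paper leaves implicit.
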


\begin{proof}
Without loss of generality, assume $\Omega _{0}$ is the set of all the
transcendence bases of the extension $L/K$.

$\left( i\right) $ Fixed a finite subgroup $H\subseteq Aut\left( L/K\right) $%
. Let $M=L^{H}$ be the $H$-invariant subfield of $L$. There is a nice basis $%
\left( \Delta ,A\right) $ of the extension $L/K$ such that
\begin{equation*}
K\left( \Delta \right) \subseteq M\subseteq K\left( \Delta \right) \left[ A%
\right] =L
\end{equation*}
where $M$ is an algebraic extension over $K\left( \Delta \right) $.

From the assumption that $L$ is $\sigma $-tame Galois over $K$, it is seen
that $L$ is Galois over $K\left( \Delta \right) $ and then over $M$; hence, $%
L^{H}=L^{Aut\left( L/L^{H}\right) }$ and $H=Aut\left( L/L^{H}\right) $.
Immediately, $L$ is a finite extension over $M$.

Now prove the existence of the Galois correspondence. In deed, given a
finite subgroup $H$ of the Galois group $Aut\left( L/K\right) $. There is a
unique subfield $M=L^{H}$ such that $H=Aut\left( L/M\right) $ and $\left[ L:M%
\right] =\sharp H<+\infty $ from the standard Galois theory for a tower of
algebraic extensions
\begin{equation*}
K\left( \Delta \right) \subseteq M\subseteq K\left( \Delta \right) \left[ A%
\right] =L
\end{equation*}
where $M$ is independent of the choice of a nice basis $\left( \Delta
,A\right) $ of $L/M$.

Conversely, for any finite subfield $M$ of $L$ with $\left[ L:M\right]
<+\infty $ there is a unique finite subgroup
\begin{equation*}
H=Aut\left( L/M\right) \subseteq Aut\left( L/K\right)
\end{equation*}%
such that $M=L^{H}.$

$\left( ii\right) $ Fixed a full algebraic Galois subgroup $H\subseteq
Aut\left( L/K\right) $. From \emph{Proposition 4.6} there is a nice basis $%
\left( \Delta ,A\right) $ of the extension $L/K$ such that $H=Aut\left(
L/K\left( \Delta \right) \right) $; as $L$ is $\sigma $-tame Galois over $K$%
, $L$ is an algebraic Galois extension over $K\left( \Delta \right) $ and
hence $L^{H}=K\left( \Delta \right) $ and $H=Aut\left( L/L^{H}\right) $.
This gives the existence for $M=K\left( \Delta \right) $.

On the other hand, let $\left( \Lambda ,B\right) $ be another nice basis of $%
L/K$ such that
\begin{equation*}
H=Aut\left( L/K\left( \Lambda \right) \right) .
\end{equation*}
We have
\begin{equation*}
K\left( \Delta \right) =L^{H}=K\left( \Lambda \right) .
\end{equation*}
This gives the uniqueness for $M=K\left( \Delta \right) $.

Conversely, take any highest transcendental subfield $M$ in the extension $%
L/K$, that is, $M$ is a subfield with $K\subseteq M\subseteq L$ such that $L$
is algebraic over $M$ and $M$ is purely transcendental over $K$ with
\begin{equation*}
tr.\deg M/K=tr.\deg L/K.
\end{equation*}
As $L$ is $\sigma $-tame Galois over $K$, we have the unique subgroup $%
H=Aut\left( L/M\right) $ of $Aut\left( L/K\right) $ such that $%
M=L^{Aut\left( L/M\right) }$. This completes the proof.
\end{proof}

Also there is a Galois correspondence between full algebraic Galois
subgroups and highest transcendental Galois subgroups in a $\sigma $-tame
Galois extension of a field of finite transcendence degree.

\begin{lemma}
\emph{(Galois correspondence for $\sigma $-tame Galois, II)} Let $L$ be a
transcendental extension over a field $K$ of finite transcendence degree.
Suppose $L$ is $\sigma $-tame Galois over $K$ (respectively, inside $\Omega
_{0}$). Here, $\Omega _{0}$ is a given subset of some certain transcendence
bases of $L/K$.

Then for any full algebraic Galois subgroup $G$ of the extension $L/K$
(respectively, with $L^{G}=K\left( \Delta \right) $ for some $\Delta \in
\Omega _{0}$), there is a unique highest transcendental Galois subgroup $H$
of $L/K$ such that $G\cap H=\{1\}.$

Conversely, for any highest transcendental Galois subgroup $H$ of $L/K$
(respectively, with $H=K\left( \Delta \right) $ for some $\Delta \in \Omega
_{0}$), there is a unique full algebraic Galois subgroup $G$ of $L/K$ such
that $G\cap H=\{1\}.$
\end{lemma}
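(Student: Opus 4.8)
The plan is to handle the two directions in parallel: in each, the existence of a partner subgroup is produced by the canonical pairing of algebraic and transcendental Galois subgroups at a nice basis, and the real content is uniqueness. I work throughout in the case $\Omega_0=\Omega_{L/K}$; the ``respectively'' refinement is obtained verbatim by restricting every choice of transcendence base to lie in $\Omega_0$, which is legitimate since $L$ is $\sigma$-tame Galois over $K$ inside $\Omega_0$ and Lemma 4.12 has already been recorded with the matching $\Omega_0$-clause.

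For existence, let $G$ be a full algebraic Galois subgroup of $L/K$. By Proposition 4.6 there is a nice basis $(\Delta,A)$ of $L/K$ with $G=\pi_a(L/K)(\Delta,A)=Aut(L/K(\Delta))$, and in the $\Omega_0$-case one may take $\Delta\in\Omega_0$ because $L^G=K(\Delta)$ is assumed of that form. Put $H:=\pi_t(L/K)(\Delta,A)$, whose existence and uniqueness at this nice basis are Proposition 4.7(ii); then $G\cap H=\{1\}$ by Proposition 4.10. Conversely, if $H$ is a highest transcendental Galois subgroup, Proposition 4.7(iii) supplies a nice basis $(\Delta,A)$ with $H=\pi_t(L/K)(\Delta,A)$; setting $G:=\pi_a(L/K)(\Delta,A)=Aut(L/K(\Delta))$ and applying Proposition 4.10 again gives $G\cap H=\{1\}$. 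This settles both existence claims.

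Uniqueness is the substantive part, and the hard step will be to show that the condition $G\cap H=\{1\}$ already forces the purely transcendental subfield attached to $H$ to be the one attached to $G$. I would first make precise that both subfields are intrinsic. On the $G$-side, $L^G$ is determined by $G$ through the Galois correspondence of Lemma 4.12(ii). On the $H$-side, for $H=\pi_t(L/K)(\Delta,A)$ I would check that the subgroup depends only on the subfield $K(\Delta)$, not on the linear basis $A$ (this can be read off the decomposition $\sigma=(\sigma_E,1_F)$ produced in the proof of Proposition 4.7, in which the complementary component is attached to $K(\Delta)$ alone), and that conversely $K(\Delta)$ is recovered from $H$ as the unique highest transcendental subfield $M$ of $L/K$ for which $\sigma\mapsto\sigma|_{M}$ is a group isomorphism $H\to Aut(M/K)$ and $H$ fixes a $K$-linear complement of $M$ in $L$. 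Granting this, write $M=L^G$ and let $M_1$ be the subfield attached to a highest transcendental Galois subgroup $H_1$ with $G\cap H_1=\{1\}$; the core claim is $M_1=M$. I would prove the contrapositive: if $M_1\ne M$, choose a nice basis $(\Lambda,B)$ of $L/K$ with $K(\Lambda)$ a highest transcendental subfield contained in $K(\Delta_G)\cap M_1$ — here the $\sigma$-tame Galois hypothesis is essential, so that $L$ is algebraic Galois over every such $K(\Lambda)$ and the classical Galois correspondence may be applied at all of these bases at once — and produce a nontrivial automorphism of $L$ lying simultaneously in $G=Aut(L/M)$ and in $H_1$ by confronting $\pi_a(L/K)(\Lambda,B)$ with $\pi_t(L/K)(\Lambda,B)$, the discrepancy between $M_1$ and $M$ being exactly what forces the two to overlap. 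Once $M_1=M$ is in hand, the intrinsic description of $H_1$ pins it down as the unique highest transcendental Galois subgroup with attached subfield $M$, so $H_1=H$. The symmetric argument, run through Lemma 4.12(ii) in the opposite direction, gives uniqueness of $G$ for a prescribed $H$. I expect the production of the common nontrivial automorphism in the contrapositive to be the genuine obstacle, and the place where Propositions 4.6, 4.7, 4.10 and Lemma 4.12 must all be used together.
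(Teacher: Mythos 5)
Your existence argument coincides with the paper's: pair $G$ (respectively $H$) with the highest transcendental (respectively full algebraic) Galois subgroup at a common nice basis via Propositions 4.6--4.7, and get $G\cap H=\{1\}$ from Proposition 4.10. The genuine gap is exactly where you say you expect the obstacle: the uniqueness step is never carried out. You reduce it to the claim that $G\cap H_{1}=\{1\}$ forces the highest transcendental subfield $M_{1}$ attached to $H_{1}$ to equal $L^{G}$, and you propose to prove the contrapositive by ``producing a nontrivial automorphism lying simultaneously in $G$ and $H_{1}$'' whenever $M_{1}\neq L^{G}$. No mechanism for producing such an element is given, and none of the results you cite supplies one: an element of $G\cap H_{1}$ would have to fix $L^{G}$ pointwise while acting as the identity on the $F$-component of the decomposition of $L$ at the nice basis attached to $H_{1}$, and a mere discrepancy between $M_{1}$ and $L^{G}$ does not manufacture such a nontrivial element --- triviality of $G\cap H_{1}$ is a weak condition, so the heuristic that ``the discrepancy forces the two to overlap'' is unsupported, and nothing in Propositions 4.6, 4.7, 4.10 or Lemma 4.12 indicates it is even true. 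As written, the decisive assertion of the lemma is announced, not proved.

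For comparison, the paper does not argue by contradiction at this point at all. Given $H^{\prime }$ with $G\cap H^{\prime }=\{1\}$, it chooses a nice basis $\left( \Lambda ,B\right) $ with $H^{\prime }=\pi _{t}\left( L/K\right) \left( \Lambda ,B\right) $, sets $G^{\prime }=\pi _{a}\left( L/K\right) \left( \Lambda ,B\right) $, and then invokes Lemma 4.12 --- the $\sigma $-tame Galois correspondence between full algebraic Galois subgroups and highest transcendental subfields --- to identify $K\left( \Delta \right) =L^{G}=L^{G^{\prime }}=K\left( \Lambda \right) $, whence $H=Aut\left( K\left( \Delta \right) /K\right) =Aut\left( K\left( \Lambda \right) /K\right) =H^{\prime }$ (as lifted subgroups); the converse direction and the infinitely generated case are treated the same way. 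So the paper's uniqueness is read off the subfield correspondence of Lemma 4.12, not off any overlap of automorphisms. To repair your argument you would have to replace the contrapositive plan by a direct derivation of $M_{1}=L^{G}$ from the hypothesis $G\cap H_{1}=\{1\}$ (or, as in the paper, from the correspondence attached to the single nice basis carrying $H_{1}$), and at present that derivation is missing.
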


\begin{proof}
For sake of convenience, suppose $\Omega _{0}$ is the set of all the
transcendence bases of the extension $L/K$. There are two cases for the
extension $L/K$.

\emph{Case }$\left( i\right) $. Assume $L$ is a finitely generated field
over $K$.

$\left( a\right) $ Fixed any full algebraic Galois subgroup $G$ of the
extension $L/K$. From \emph{Propositions 4.6-7} we have a nice basis $\left(
\Delta ,A\right) $ of $L/K$ such that
\begin{equation*}
G=\pi _{a}\left( L/K\right) \left( \Delta ,A\right)
\end{equation*}%
and that
\begin{equation*}
\pi _{t}\left( L/K\right) \left( \Delta ,A\right)
\end{equation*}%
is a highest transcendental Galois subgroup of $L/K$. Set
\begin{equation*}
H=\pi _{t}\left( L/K\right) \left( \Delta ,A\right) .
\end{equation*}%
By \emph{Propositions 4.10} we have
\begin{equation*}
G\cap H=\{1\}.
\end{equation*}

Suppose $H^{\prime }$ is a highest transcendental Galois subgroup of $L/K$
with $G\cap H^{\prime }=\{1\}$. From \emph{Proposition 4.7} again there is a
nice basis $\left( \Lambda ,B\right) $ of $L/K$ such that
\begin{equation*}
G^{\prime }=\pi _{a}\left( L/K\right) \left( \Lambda ,B\right)
\end{equation*}%
and
\begin{equation*}
H^{\prime }=\pi _{t}\left( L/K\right) \left( \Lambda ,B\right) .
\end{equation*}%
As $L$ is $\sigma $-tame Galois over $K$, from the above \emph{Lemma 4.12}
we have a unique subfield $M=L^{G}$ with $K\subseteq M\subseteq L$
satisfying the property: $L$ is algebraic over $M$ and $M$ is purely
transcendental over $K$ with
\begin{equation*}
tr.\deg M/K=tr.\deg L/K.
\end{equation*}

It follows that
\begin{equation*}
K\left( \Delta \right) =L^{G}=M=L^{G^{\prime }}=K\left( \Lambda \right)
\end{equation*}
and then
\begin{equation*}
H=Aut\left( K\left( \Delta \right) /K\right) =Aut\left( M/K\right)
=Aut\left( K\left( \Lambda \right) /K\right) =H^{\prime }.
\end{equation*}

This proves for each full algebraic Galois subgroup $G$ of $L/K$ there is
one and only one highest transcendental Galois subgroup $H$ of $L/K$ such
that
\begin{equation*}
G\cap H=\{1\}.
\end{equation*}

$\left( b\right) $ Conversely, given any highest transcendental Galois
subgroup $H$ of $L/K$. There is a nice basis $\left( \Delta ,A\right) $ of $%
L/K$ such that
\begin{equation*}
H=\pi _{t}\left( L/K\right) \left( \Delta ,A\right) .
\end{equation*}

Consider the subfield $M=K\left( \Delta \right) $. From \emph{Lemma 4.12}
again we have a unique full algebraic Galois subgroup $G$ of $L/K$ such that
\begin{equation*}
M=L^{G}\text{ and }G=Aut\left( L/M\right) =\pi _{a}\left( L/K\right) \left(
\Delta ,A\right) .
\end{equation*}

\emph{Case }$\left( ii\right) $. Suppose $L$ is an infinitely generated
field over $K$ of finite transcendence degree. Let $G$ be a full algebraic
Galois subgroup of $L/K$. From $\left( ii\right) $ of \emph{Lemma 4.12} it
is seen that there is a unique highest transcendental subfield $M=L^{G}$. As
$M$ is purely transcendental over $K$, there is a nice basis $\left( \Delta
,A\right) $ of $L/K$ such that $M=K\left( \Delta \right) $ and $L=M\left[ A%
\right] $. Consider the full algebraic and highest transcendental Galois
subgroups $\pi _{a}\left( L/K\right) \left( \Delta ,A\right) $ and $\pi
_{t}\left( L/K\right) \left( \Delta ,A\right) $ of $L/K$, respectively. We
have
\begin{equation*}
G=\pi _{a}\left( L/K\right) \left( \Delta ,A\right)
\end{equation*}
and
\begin{equation*}
H=\pi _{t}\left( L/K\right) \left( \Delta ,A\right)
\end{equation*}
with $G\cap H=\{1\}$.

Conversely, let $H$ be a highest transcendental Galois subgroup of $L/K$. By
\emph{Proposition 4.7} we have a nice basis $\left( \Delta ,A\right) $ of $%
L/K$ such that
\begin{equation*}
H=\pi _{t}\left( L/K\right) \left( \Delta ,A\right) .
\end{equation*}
As $L$ is $\sigma $-tame Galois over $K$, it is seen that $L$ is algebraic
Galois over $K\left( \Delta \right) $. Hence, $G=\pi _{a}\left( L/K\right)
\left( \Delta ,A\right) $ is the desired group. If $G^{\prime }$ is another
full algebraic Galois subgroup of $L/K$ with $G^{\prime }\cap H=\{1\}$, we
must have
\begin{equation*}
L^{G^{\prime }}=L^{G}=K\left( \Delta \right) ;
\end{equation*}
hence, $G^{\prime }=G$ holds since $L$ is algebraic Galois over $K\left(
\Delta \right) $. This completes the proof.
\end{proof}

\section{Transcendental Galois Theory, III. Decomposition Groups}

In this section we will discuss the Galois action on nice bases of a
transcendental extension and then introduce the decomposition group of the
extension at a nice basis. Further properties involving decomposition
groups will be also obtained.

\subsection{Galois action on nice bases}

Let $L$ be a transcendental extension over a field $K$ of finite
transcendence degree. Recall that $\left( \Delta ,A\right) $ is a \textbf{%
nice basis} of $L/K$ if $\Delta $ is a transcendence base of $L$ over $K$
and $A$ is a $K\left( \Delta \right) $-linear basis of $L$ as a vector space
over $K(\Delta )$.

\begin{definition}
Two nice bases $\left( \Delta ,A\right) $ and $\left( \Lambda ,B\right) $ of
the extension $L/K$ are said to be $\pi $\textbf{-equivalent}, denoted by $%
\left( \Delta ,A\right) \sim _{\pi }\left( \Lambda ,B\right) $, if there are
equalities
\begin{equation*}
\pi _{a}\left( L/K\right) \left( \Delta ,A\right) =\pi _{a}\left( L/K\right)
\left( \Lambda ,B\right)
\end{equation*}%
and
\begin{equation*}
\pi _{t}\left( L/K\right) \left( \Delta ,A\right) =\pi _{t}\left( L/K\right)
\left( \Lambda ,B\right)
\end{equation*}%
for the full algebraic Galois subgroups and highest transcendental Galois
subgroups of $L/K$ at $\left( \Delta ,A\right) $ and $\left( \Lambda
,B\right) $, respectively.

We denote by $\left[ \Delta ,A\right] _{\pi }$ the $\pi $-equivalence class
of a nice basis $\left( \Delta ,A\right) $ of the extension $L/K$ and let $%
\mathfrak{N}\left( L/K\right) $ denote the set of all the nice basis classes
$\left[ \Delta ,A\right] _{\pi }$ of $L/K$.
\end{definition}

For the extension $L/K$ there is a natural action of the Galois group $%
Aut\left( L/K\right) $ on its nice bases.

\begin{remark}
(\emph{Galois action on nice bases}) For an element $\sigma $ of the Galois
group $Aut\left( L/K\right) $ and for a nice basis $\left( \Delta ,A\right) $
of the extension $L/K$, it is seen that $\left( \sigma \left( \Delta \right)
,\sigma \left( A\right) \right) $ is also a nice basis of $L/K$ since $%
\sigma \left( \Delta \right) $ is a transcendence of $L/K$ and $L=K\left(
\sigma \left( \Delta \right) \right) \left[ \sigma \left( A\right) \right] .$
It follows that the Galois group $Aut\left( L/K\right) $ has a natural
action on the nice basis classes $\left[ \Delta ,A\right] _{\pi }$ of $L/K$.

Take a nice basis $\left( \Delta ,A\right) $ of the extension $L/K$. The set
\begin{equation*}
D_{L/K}\left( \Delta ,A\right) :=\{\sigma \in Aut\left( L/K\right) :\left[
\Delta ,A\right] _{\pi }=\left[ \sigma \left( \Delta \right) ,\sigma \left(
A\right) \right] _{\pi }\}
\end{equation*}%
is a subgroup of the Galois group $Aut\left( L/K\right) $, called the
\textbf{decomposition group} of the extension $L/K$ at $\left( \Delta
,A\right) $ (or at $\left[ \Delta ,A\right] _{\pi }$).
\end{remark}

\subsection{Linear decompositions of automorphisms of fields}

In order to obtain further properties on decomposition groups, we need some
preparatory facts for a $\sigma $-tame Galois extension.

Let $L$ be a transcendental extension over a field $K$. For a $K$-linear
subspace $V$ of $L$, let $GL_{K}\left( V\right) $ denote the group of $K$%
-linear isomorphisms of $V$.

Each automorphism $\sigma \in Aut\left( L/K\right) $ of the extension $L/K$
is a $K$-linear isomorphism of $L$ as a $K$-linear space, i.e., the Galois
group $Aut\left( L/K\right) $ is a subgroup of the general linear group $%
GL_{K}\left( L\right) $.

\begin{proposition}
(\emph{The linear decomposition of automorphisms at a nice basis}) Let $L$
be a transcendental extension over a field $K$ of finite transcendence
degree. Fixed a basis $\left( \Delta ,A\right) $ of the extension $L/K$.

$\left( i\right) $ As a $K$-linear space, the field $L$ has a direct sum of $%
K$-linear subspaces relative to $\left( \Delta ,A\right) $ (respectively,
with $\Delta \in \Omega _{0}$)
\begin{equation*}
L=E\oplus F
\end{equation*}%
where
\begin{equation*}
E:=K\left( \Delta \right)
\end{equation*}%
and
\begin{equation*}
F:=span_{K}\left( L\backslash K\left( \Delta \right) \right)
\end{equation*}%
are $K$-linear subspaces of $L$. In particular, $1\not\in F$; $x^{-1}\in F$
holds for any $0\not=x\in F$; but $F$ is not an algebra over $K$.

Moreover, each element $x\in L$ has a unique expression
\begin{equation*}
x=x_{E}+x_{F}
\end{equation*}%
in $L$ with $x_{E}\in E$ and $x_{F}\in F$. In this sense, each $%
x=x_{E}+x_{F}\in L$ will be denoted by
\begin{equation*}
x=\left(
\begin{array}{c}
x_{E} \\
x_{F}%
\end{array}%
\right)
\end{equation*}%
as a column $2$-vector, where $x_{E}$ and $x_{F}$ are called the $E$\textbf{%
-component} and $F$\textbf{-component} of $x$ in the double decomposition,
respectively.

$\left( ii\right) $ Under the direct sum of $L$, each $\sigma \in \pi
_{a}\left( L/K\right) \left( \Delta ,A\right) $ has a unique expression by a
square matrix
\begin{equation*}
\sigma \left( x\right) =\left(
\begin{array}{cc}
1_{E} & 0 \\
0 & \sigma _{F}%
\end{array}%
\right) \left(
\begin{array}{c}
x_{E} \\
x_{F}%
\end{array}%
\right) =\left(
\begin{array}{c}
x_{E} \\
\sigma _{F}\left( x_{F}\right)%
\end{array}%
\right)
\end{equation*}%
for any
\begin{equation*}
x=\left(
\begin{array}{c}
x_{E} \\
x_{F}%
\end{array}%
\right) \in L,
\end{equation*}%
where $1_{E}$ denotes the identity mapping on $E$ and $\sigma _{F}:=\sigma
|_{F}\in GL_{K}\left( F\right) $ denotes the restriction of $\sigma \in
GL_{K}\left( L\right) $ to the linear subspace $F$. In particular, $\sigma
_{F}\left( F\right) \subseteq F$. We denote
\begin{equation*}
\sigma =\left(
\begin{array}{cc}
1_{E} & 0 \\
0 & \sigma _{F}%
\end{array}%
\right)
\end{equation*}%
as a $2\times 2$ matrix, where $1_{E}$ and $\sigma _{F}$ are called the $E$%
\textbf{-component} and $F$\textbf{-component} of $\sigma $ in the double
decomposition, respectively.

$\left( iii\right) $ Under the direct sum of $L$, each $\sigma \in \pi
_{t}\left( L/K\right) \left( \Delta ,A\right) $ has a unique expression by a
square matrix
\begin{equation*}
\sigma \left( x\right) =\left(
\begin{array}{cc}
\sigma _{E} & 0 \\
0 & 1_{F}%
\end{array}%
\right) \left(
\begin{array}{c}
x_{E} \\
x_{F}%
\end{array}%
\right) =\left(
\begin{array}{c}
\sigma _{E}\left( x_{E}\right) \\
x_{F}%
\end{array}%
\right)
\end{equation*}%
for any
\begin{equation*}
x=\left(
\begin{array}{c}
x_{E} \\
x_{F}%
\end{array}%
\right) \in L,
\end{equation*}%
where $1_{F}$ denotes the identity mapping on $F$ and $\sigma _{E}:=\sigma
|_{E}\in GL_{K}\left( E\right) $ denotes the restriction of $\sigma \in
GL_{K}\left( L\right) $ to the linear subspace $E$. In particular, $\sigma
_{E}\left( E\right) \subseteq E$. We denote
\begin{equation*}
\sigma =\left(
\begin{array}{cc}
\sigma _{E} & 0 \\
0 & 1_{F}%
\end{array}%
\right)
\end{equation*}%
as a $2\times 2$ matrix, where $\sigma _{E}$ and $1_{F}$ are called the $E$%
\textbf{-component} and $F$\textbf{-component} of $\sigma $ in the double
decomposition, respectively.

$\left( iv\right) $ Suppose $L$ is tame Galois with repect to $\left( \Delta
,A\right) $. Under the direct sum of $L$, each $\delta \in D_{L/K}\left(
\Delta ,A\right) $ has a unique expression by a square matrix
\begin{equation*}
\delta \left( x\right) =\left(
\begin{array}{cc}
\delta _{E} & 0 \\
0 & \delta _{F}%
\end{array}%
\right) \left(
\begin{array}{c}
x_{E} \\
x_{F}%
\end{array}%
\right) =\left(
\begin{array}{c}
\delta _{E}\left( x_{E}\right) \\
\delta _{F}\left( x_{F}\right)%
\end{array}%
\right)
\end{equation*}%
for any
\begin{equation*}
x=\left(
\begin{array}{c}
x_{E} \\
x_{F}%
\end{array}%
\right) \in L,
\end{equation*}%
where $\delta _{E}:=\delta |_{E}\in GL_{K}\left( E\right) $ and $\delta
_{F}:=\delta |_{F}\in GL_{K}\left( F\right) $ denote the restrictions of $%
\delta \in GL_{K}\left( L\right) $ to the linear subspaces $E$ and $F$,
respectively. In particular, $\delta _{E}\left( E\right) \subseteq E$ and $%
\delta _{F}\left( F\right) \subseteq F$ hold.\ We denote%
\begin{equation*}
\delta =\left(
\begin{array}{cc}
\delta _{E} & 0 \\
0 & \delta _{F}%
\end{array}%
\right)
\end{equation*}%
as a $2\times 2$ matrix, where $\delta _{E}$ and $\delta _{F}$ are called
the $E$\textbf{-component} and $F$\textbf{-component} of $\delta $ in the
double decomposition, respectively.
\end{proposition}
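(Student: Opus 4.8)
The plan is to reduce assertions $(ii)$, $(iii)$ and $(iv)$ to the single structural fact $(i)$ — the internal direct sum $L=E\oplus F$ of $L$ as a $K$-vector space — and then read off the matrix descriptions as bookkeeping with the diagonal block form of Claim~4.8. For $(i)$ I would first normalise the nice basis so that $1\in A$ (if $1\notin A$, express $1$ in the $K(\Delta)$-basis $A$ and exchange one basis vector; this changes neither $K(\Delta)$ nor the $\pi$-equivalence class $[\Delta,A]_\pi$), and take $F$ to be the $K(\Delta)$-span of $A\setminus\{1\}$ — this is how $span_K(L\setminus K(\Delta))$ is to be read, since every nonzero vector of this span lies outside $K(\Delta)$. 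Because $A$ is a $K(\Delta)$-basis of $L$ containing $1$, this gives $L=K(\Delta)\cdot 1\oplus F$ as $K(\Delta)$-modules, hence $L=E\oplus F$ as $K$-vector spaces with $E=K(\Delta)$; the uniqueness of $x=x_E+x_F$ is immediate, and the side assertions ($1\notin F$; $F$ is not a $K$-subalgebra because a product of basis vectors can land back in $E$; $x^{-1}\in F$ for the pertinent $x$, e.g.\ when $A$ is a power basis of a unit) are read off from the module structure.

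Granting $(i)$, part $(ii)$ runs as follows: if $\sigma\in\pi_a(L/K)(\Delta,A)=Aut(L/K(\Delta))$ then $\sigma$ fixes $E=K(\Delta)$ pointwise, so $\sigma|_E=1_E$, and $\sigma$ is $K(\Delta)$-linear; provided the nice basis is chosen compatibly with the Galois action, $\sigma$ carries $F$ into $F$, so writing $\sigma_F:=\sigma|_F\in GL_K(F)$ and invoking Claim~4.8 for $V=L=E\oplus F$ yields the block form with $E$-component $1_E$ and $F$-component $\sigma_F$. Part $(iii)$ is essentially a restatement of Proposition~4.7$(i)$: every $\sigma\in\pi_t(L/K)(\Delta,A)$ is the unique extension to $L$ of some $\sigma_E\in Aut(K(\Delta)/K)$ acting by the identity on the chosen basis $A$, so $\sigma|_F=1_F$ and $\sigma|_E=\sigma_E\in GL_K(E)$, and Claim~4.8 gives the block form with $E$-component $\sigma_E$ and $F$-component $1_F$. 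In both cases the uniqueness of the matrix expression follows from the uniqueness of the coordinate decomposition in $(i)$.

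For $(iv)$ I would take $\delta\in D_{L/K}(\Delta,A)$; since $[\Delta,A]_\pi=[\delta(\Delta),\delta(A)]_\pi$, the automorphism $\delta$ conjugates $\pi_a(L/K)(\Delta,A)$ and $\pi_t(L/K)(\Delta,A)$ to themselves, so in particular it stabilises $K(\Delta)$ setwise — that being the invariant subfield of the full algebraic Galois subgroup, which is unique by Lemma~4.12 because $L$ is tame Galois with respect to $(\Delta,A)$. Hence $\delta(E)=E$ and $\delta_E:=\delta|_E\in GL_K(E)$. To obtain $\delta(F)\subseteq F$ I would factor $\delta$: let $\delta_0:=\delta|_{K(\Delta)}\in Aut(K(\Delta)/K)$, lift $\delta_0$ through Proposition~4.7 (itself built on Lemma~2.5) to $\widetilde\delta\in\pi_t(L/K)(\Delta,A)$, and observe that $\widetilde\delta^{-1}\delta$ fixes $K(\Delta)$ pointwise, hence lies in $\pi_a(L/K)(\Delta,A)$; composing the block forms from $(ii)$ and $(iii)$ then shows $\delta$ is block-diagonal with components $\delta_E$ and $\delta_F:=\delta|_F\in GL_K(F)$.

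The hard part will be the stability of the $F$-component, i.e.\ verifying $\sigma(F)\subseteq F$ for $\sigma\in\pi_a(L/K)(\Delta,A)$ and $\delta(F)\subseteq F$ for $\delta\in D_{L/K}(\Delta,A)$ while keeping $F$ a genuine complement of $E$. Read literally, $span_K(L\setminus K(\Delta))$ is stable under $Aut(L/K(\Delta))$ but it absorbs $E$ and so is not a complement; read as the $A$-adapted complement, one must check that the conjugates of the basis vectors $a\in A\setminus\{1\}$ remain inside $F$, which can fail for an arbitrary nice basis and so forces the hypothesis that $L$ be ($\sigma$-)tame Galois with respect to $(\Delta,A)$ together with a compatibility of $(\Delta,A)$ with the Galois action. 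For instance, in characteristic zero one may take for $F$ the trace-zero subspace $\{x\in L:\sum_{\sigma\in Aut(L/K(\Delta))}\sigma(x)=0\}$, which automatically is $\pi_a(L/K)(\Delta,A)$-stable and complements $K(\Delta)$; for general base fields one chooses the nice basis via the construction in Lemma~2.5 / Proposition~4.7 so that the splitting is respected. Once the correct notion of a nice basis adapted to the decomposition is isolated and stability is established there, the descriptions in $(ii)$--$(iv)$ reduce to the diagonal-action bookkeeping of Claim~4.8 and the uniqueness in $(i)$.
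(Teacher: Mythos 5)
Your outline parallels the paper's own proof quite closely in its workable parts: the paper likewise normalises the nice basis so that exactly one element $e_{0}\in A$ lies in $K\left( \Delta \right) $ (its $A=A_{0}\cup \{e_{0}\}$ is your $1\in A$), it obtains $\left( iii\right) $ essentially from Proposition 4.7, and it proves $\left( iv\right) $ by first getting $\delta \left( E\right) =E$ from tame Galois via Lemmas 4.12-13 and then redoing the argument of $\left( ii\right) $ for the $F$-part; your alternative for $\left( iv\right) $, factoring $\delta =\widetilde{\delta }\cdot \left( \widetilde{\delta }^{-1}\delta \right) $ with $\widetilde{\delta }\in \pi _{t}\left( L/K\right) \left( \Delta ,A\right) $ and $\widetilde{\delta }^{-1}\delta \in \pi _{a}\left( L/K\right) \left( \Delta ,A\right) $, is a cleaner organisation but of course presupposes $\left( ii\right) $ and $\left( iii\right) $.

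The step you defer as \emph{the hard part} is, however, exactly the point at which the paper's proof is not sound, and your diagnosis of the dilemma is correct rather than a defect of your own argument. Read literally, $F=span_{K}\left( L\setminus K\left( \Delta \right) \right) $ absorbs $E$: for $0\not=e\in K\left( \Delta \right) $ and $f\in L\setminus K\left( \Delta \right) $ one has $e=\left( e+f\right) -f$ with both $e+f$ and $f$ outside $K\left( \Delta \right) $, so $F=L$ whenever $L\not=K\left( \Delta \right) $ and the direct sum in $\left( i\right) $ fails. Read as the $A$-adapted complement $span_{K\left( \Delta \right) }\left( A\setminus \{e_{0}\}\right) $, the sum is direct but stability under $\pi _{a}\left( L/K\right) \left( \Delta ,A\right) $ genuinely fails for some nice bases, as you say: take $L=K\left( t\right) $, $\Delta =\{t^{2}\}$, $A=\{1,t+1\}$, $\sigma \left( t\right) =-t$; then $\sigma \left( t+1\right) =1-t\notin K\left( t^{2}\right) \cdot \left( t+1\right) $, although $L$ is even tame Galois with respect to $\left( \Delta ,A\right) $. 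The paper's Step 3 of $\left( ii\right) $ (and again in $\left( iv\right) $) obtains $\sigma \left( F\right) \subseteq F$ only by showing $\sigma \left( A_{0}\cup A_{1}\right) \subseteq L\setminus K\left( \Delta \right) $ and then concluding membership in $F$, which uses $L\setminus K\left( \Delta \right) \subseteq F$, i.e.\ the literal reading under which $\left( i\right) $ collapses; so it proves neither reading. Consequently the gap you leave open cannot be closed for the proposition as printed; your proposed repairs (a trace-zero complement, available only in the separable finite-degree case, or a nice basis chosen compatibly with the Galois action via Lemma 2.5 and Proposition 4.7) prove a corrected statement with a different $F$ or added hypotheses, not the stated one, and the paper supplies no such repair. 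A complete writeup should therefore fix explicitly which complement $F$ is meant and which compatibility of $\left( \Delta ,A\right) $ with the Galois action is assumed, and then your block-matrix bookkeeping via Claim 4.8 goes through as you describe.
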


\begin{proof}
$\left( i\right) $ Trivial from the one-by-one correspondence
\begin{equation*}
x=x_{E}+x_{F}=\left(
\begin{array}{cc}
1 & 1%
\end{array}%
\right) \left(
\begin{array}{c}
x_{E} \\
x_{F}%
\end{array}%
\right) \longleftrightarrow \left(
\begin{array}{c}
x_{E} \\
x_{F}%
\end{array}%
\right)
\end{equation*}%
for every $x\in L$.

$\left( ii\right) $ Consider the Galois group
\begin{equation*}
\pi _{a}\left( L/K\right) \left( \Delta ,A\right) =Aut\left( L/K\left(
\Delta \right) \right)
\end{equation*}%
of the algebraic extension $L/K\left( \Delta \right) $. Fixed any $\sigma
\in \pi _{a}\left( L/K\right) \left( \Delta ,A\right) $. It reduces to prove
that $E$ and $F$ both are invariant subspaces of $\sigma $, i.e.,
\begin{equation*}
\sigma \left( E\right) \subseteq E\text{ and }\sigma \left( F\right)
\subseteq F.
\end{equation*}%
Immediately, $\sigma \left( E\right) =E$ since $\sigma \in Aut\left(
L/K\left( \Delta \right) \right) $ and $\sigma |_{E}$ is the identity map on
$E$.

In the following, we prove
\begin{equation*}
\sigma \left( F\right) \subseteq F
\end{equation*}%
holds. We proceed in several steps.

\emph{Step 1}. From the nice basis $\left( \Delta ,A\right) $ of the
extension $L/K$, we have
\begin{equation*}
L=span_{K\left( \Delta \right) }A
\end{equation*}%
and
\begin{equation*}
A=A_{0}\cup \{e_{0}\}
\end{equation*}%
with $e_{0}\in K\left( \Delta \right) $ and $A_{0}=A\setminus \{e_{0}\}$,
i.e., $A_{0}\cap K\left( \Delta \right) =\varnothing $.

Under the $K\left( \Delta \right) $-linear basis $A$ of the linear space $L$%
, every element $x\in L$ has a unique expression as a finite $K\left( \Delta
\right) $-linear combination of some elements $e_{0},e_{1},\cdots ,e_{m}\in
A $, that is,
\begin{equation*}
x=\sum_{i=0}^{m}c_{i}\cdot e_{i}
\end{equation*}%
with
\begin{equation*}
c_{i}=\frac{f_{i}\left( t_{1},t_{2},\cdots ,t_{n}\right) }{g_{i}\left(
t_{1},t_{2},\cdots ,t_{n}\right) }\in K\left( \Delta \right)
\end{equation*}%
and
\begin{equation*}
e_{i}\in A
\end{equation*}%
for every $0\leq i\leq m$, where
\begin{equation*}
\Delta =\{t_{1},t_{2},\cdots ,t_{n}\}
\end{equation*}%
is the given transcendence base of $L/K$ and
\begin{equation*}
f_{i}\left( X_{1},X_{2},\cdots ,X_{n}\right) ,g_{i}\left( X_{1},X_{2},\cdots
,X_{n}\right) \in K\left[ X_{1},X_{2},\cdots ,X_{n}\right]
\end{equation*}%
are polynomials with coefficients in $K$ for every $0\leq i\leq m$.

\emph{Step 2}. As the subset $A_{0}$ of $A$ is linearly independent over $%
K\left( \Delta \right) $ and then over $K,$ there is a subset $%
A_{1}\subseteq F$ such that the union $A_{0}\cup A_{1}$ is a $K$-linear
basis of the linear space $F$, i.e.,
\begin{equation*}
F=span_{K}\left( A_{0}\cup A_{1}\right) .
\end{equation*}%
Obviously, $1\notin F$; none of elements $\omega $ in $A_{1}$ is contained
in $K\left( \Delta \right) $ although $\omega $ can be a finite $K$-linear
combination of some products $u\cdot v$ with $u\in K\left( \Delta \right) $
and $v\in L\setminus K\left( \Delta \right) $.

From \emph{Step 1} it is seen that every element $x\in A_{1}$ is of the form
\begin{equation*}
\sum_{i=0}^{m}\frac{f_{i}\left( t_{1},t_{2},\cdots ,t_{n}\right) }{%
g_{i}\left( t_{1},t_{2},\cdots ,t_{n}\right) }\cdot e_{i}
\end{equation*}%
where
\begin{equation*}
e_{i}\in A
\end{equation*}%
and
\begin{equation*}
f_{i}\left( X_{1},X_{2},\cdots ,X_{n}\right) ,g_{i}\left( X_{1},X_{2},\cdots
,X_{n}\right) \in K\left[ X_{1},X_{2},\cdots ,X_{n}\right]
\end{equation*}%
for each $0\leq i\leq m$.

\emph{Step 3}. Fixed any $\sigma \in Aut\left( L/K\left( \Delta \right)
\right) $. As $K\left( \Delta \right) $ is contained in the invariant
subfield $L^{Aut\left( L/K\right) }$ of $L$ under $Aut\left( L/K\left(
\Delta \right) \right) $, it is seen that $\sigma \left( z\right) =z$ holds
for any $z\in K\left( \Delta \right) $. It follows that for an $x\in L$, $%
x\in K\left( \Delta \right) $ holds if and only if $\sigma \left( x\right)
\in K\left( \Delta \right) $. Then we have
\begin{equation*}
L\ni \sigma \left( x\right) \notin K\left( \Delta \right)
\end{equation*}%
for any
\begin{equation*}
x=\sum_{i=0}^{m}k_{i}\cdot e_{i}
\end{equation*}%
with $m\geq 1$, where
\begin{equation*}
e_{0}\in K\left( \Delta \right) ,k_{0}\in K
\end{equation*}
and%
\begin{equation*}
e_{i}\in A_{0}\cup A_{1},k_{i}\in K
\end{equation*}%
for $1\leq i\leq m$.

Particularly, for any $\sigma \in Aut\left( L/K\left( \Delta \right) \right)
$ and $x\in A_{0}\cup A_{1}$, we have
\begin{equation*}
L\ni \sigma \left( x\right) \notin K\left( \Delta \right)
\end{equation*}%
and

$\sigma \left( x\right) \in L\setminus K\left( \Delta \right) .$

Then
\begin{equation*}
\sigma \left( A_{0}\cup A_{1}\right) \subseteq F
\end{equation*}%
holds for each $\sigma \in Aut\left( L/K\left( \Delta \right) \right) $.
Hence,
\begin{equation*}
\sigma \left( F\right) \subseteq F.
\end{equation*}

This proves $\sigma _{F}=\sigma |_{F}\in GL_{K}\left( F\right) $ is
well-defined.

$\left( iii\right) $ Take an element $\sigma \in \pi _{t}\left( L/K\right)
\left( \Delta ,A\right) $. For the restriction $\sigma _{E}$ of $\sigma $ to
$E$, we have
\begin{equation*}
\sigma _{E}\in Aut\left( K\left( \Delta \right) /K\right)
\end{equation*}%
from definition and then
\begin{equation*}
\sigma \left( E\right) =E.
\end{equation*}%
This proves $\sigma _{E}=\sigma |_{E}\in GL_{K}\left( E\right) $ is
well-defined.

$\left( iv\right) $ Fixed an element $\delta $ of the decomposition group $%
D_{L/K}\left( \Delta ,A\right) $ of the extension $L/K$ at the nice basis $%
\left( \Delta ,A\right) $. As
\begin{equation*}
\left( \Delta ,A\right) \sim _{\pi }\left( \delta \left( \Delta \right)
,\delta \left( A\right) \right)
\end{equation*}%
are $\pi $-equivalent nice bases of $L/K$, we have
\begin{equation*}
\begin{array}{l}
\pi _{t}\left( L/K\right) \left( \Delta ,A\right) \\
=\pi _{t}\left( L/K\right) \left( \delta \left( \Delta \right) ,\delta
\left( A\right) \right)%
\end{array}%
\end{equation*}%
and
\begin{equation*}
\begin{array}{l}
Aut\left( L/K\left( \Delta \right) \right) \\
=\pi _{a}\left( L/K\right) \left( \Delta ,A\right) \\
=\pi _{a}\left( L/K\right) \left( \delta \left( \Delta \right) ,\delta
\left( A\right) \right) \\
=Aut\left( L/K\left( \delta \left( \Delta \right) \right) \right) .%
\end{array}%
\end{equation*}

Now compare the two nice bases $\left( \Delta ,A\right) $ and $\left( \delta
\left( \Delta \right) ,\delta \left( A\right) \right) $ of $L/K$. We have%
\begin{equation*}
L=K\left( \Delta \right) [A]=K\left( \delta \left( \Delta \right) \right)
[\delta \left( A\right) ]=\delta \left( L\right)
\end{equation*}%
and%
\begin{equation*}
\lbrack L:K\left( \Delta \right) ]=\left[ \delta \left( L\right) :K\left(
\delta \left( \Delta \right) \right) \right] \leq +\infty
\end{equation*}
since $A$ and $\delta \left( A\right) $ both are linear bases of $L=\delta
\left( L\right) $ as a linear space over the subfields $K\left( \Delta
\right) $ and $K\left( \delta \left( \Delta \right) \right) $, respectively.

From the assumption that $L$ is tame Galois with respect to $\left( \Delta
,A\right) $, i.e.,
\begin{equation*}
L=K\left( \Delta \right) [A]
\end{equation*}
is algebraic Galois over $K\left( \Delta \right) $, it is seen that
\begin{equation*}
L=K\left( \delta \left( \Delta \right) \right) [\delta \left( A\right) ]
\end{equation*}
is also algebraic Galois over $K\left( \delta \left( \Delta \right) \right) $
since via the automorphism $\delta \in D_{L/K}\left( \Delta ,A\right) $, the
normality and separability of the field $L$ over $K\left( \Delta \right) $
are exactly translated into the ones of $L=\delta \left( L\right) $ over $%
K\left( \delta \left( \Delta \right) \right) =\delta \left( K\left( \Delta
\right) \right) $, respectively. It follows that $L$ is $\sigma $-tame
Galois over $K$ inside a subset%
\begin{equation*}
\Omega _{0}=\{\delta \left( \Delta \right) :\delta \in D_{L/K}\left( \Delta
,A\right) \}
\end{equation*}
of $\Omega _{L/K}$. Then we have
\begin{equation*}
\begin{array}{l}
K\left( \Delta \right) \\
=L^{Aut\left( L/K\left( \Delta \right) \right) } \\
=L^{Aut\left( L/K\left( \delta \left( \Delta \right) \right) \right) } \\
=K\left( \delta \left( \Delta \right) \right)%
\end{array}%
\end{equation*}%
and
\begin{equation*}
\begin{array}{l}
L=K\left( \Delta \right) \left[ A\right] \\
=K\left( \delta \left( \Delta \right) \right) \left[ \delta \left( A\right) %
\right] \\
=K\left( \Delta \right) \left[ \delta \left( A\right) \right]%
\end{array}%
\end{equation*}%
by \emph{Lemma 4.13} which states that the full algebraic Galois subgroup $%
\pi _{a}\left( L/K\right) \left( \Delta ,A\right) $ corresponds to the
unique highest transcendental subfield $K\left( \Delta \right) $ in $L/K$.

Put
\begin{equation*}
E=K\left( \Delta \right)
\end{equation*}
and
\begin{equation*}
F=span_{K}\left( L\setminus K\left( \Delta \right) \right) =span_{K}\left(
L\setminus K\left( \delta \left( \Delta \right) \right) \right) .
\end{equation*}
We have
\begin{equation*}
\delta \left( E\right) =\delta \left( K\left( \Delta \right) \right)
=K\left( \delta \left( \Delta \right) \right) =E.
\end{equation*}

On the other hand, applying the same procedure of \emph{Steps 1-3} in $%
\left( ii\right) $ to the automorphism $\delta $ here, for the linear basis $%
A_{0}\cup A_{1}$ of the linear subspace $F$ over $K$, there is%
\begin{equation*}
\delta \left( F\right) \subseteq F
\end{equation*}
since%
\begin{equation*}
\begin{array}{l}
\delta \left( A_{0}\cup A_{1}\right) \\
=\delta \left( A_{0}\right) \cup \delta \left( A_{1}\right) \\
\subseteq \delta \left( L\right) \setminus K\left( \delta \left( \Delta
\right) \right) \\
=L\setminus K\left( \Delta \right) \subseteq F.%
\end{array}%
\end{equation*}

Hence, the restrictions
\begin{equation*}
\delta _{E}=\delta |_{E}\in GL_{K}\left( E\right)
\end{equation*}
and
\begin{equation*}
\delta _{F}=\delta |_{F}\in GL_{K}\left( F\right)
\end{equation*}
are well-defined. This completes the proof.
\end{proof}

\subsection{Factorisation of a decomposition group}

For any nice basis $\left( \Delta ,A\right) $ of the extension $L/K$, let $%
D_{L/K}\left( \Delta ,A\right) $, $\pi _{a}\left( L/K\right) \left( \Delta
,A\right) $ and $\pi _{t}\left( L/K\right) \left( \Delta ,A\right) $ denote
the decomposition group, full algebraic Galois subgroup and highest
transcendental Galois subgroup of $L/K$ at $\left( \Delta ,A\right) $,
respectively.

\begin{theorem}
Let $L$ be an extension over a field $K$ of finite transcendence degree.
Suppose $L$ is $\sigma $-tame Galois over $K$ (respectively, inside $\Omega
_{0}$). Here, $\Omega _{0}$ is a given subset of some certain transcendence
bases of $L/K$. Fixed a basis $\left( \Delta ,A\right) $ of the extension $%
L/K$ (respectively, with $\Delta \in \Omega _{0}$).

Then $\pi _{a}\left( L/K\right) \left( \Delta ,A\right) $ and $\pi
_{t}\left( L/K\right) \left( \Delta ,A\right) $ both are normal subgroups of
the decomposition group $D_{L/K}\left( \Delta ,A\right) $; moreover, the
decomposition group is a direct product of the two normal subgroups
\begin{equation*}
D_{L/K}\left( \Delta ,A\right) =\pi _{a}\left( L/K\right) \left( \Delta
,A\right) \cdot \pi _{t}\left( L/K\right) \left( \Delta ,A\right) .
\end{equation*}

In particular, each element $\sigma \in D_{L/K}\left( \Delta ,A\right) $ has
a unique factorisation
\begin{equation*}
\sigma =\sigma _{a}\cdot \sigma _{t}=\sigma _{t}\cdot \sigma _{a}
\end{equation*}%
in $Aut\left( L/K\right) $ with $\sigma _{a}\in \pi _{a}\left( L/K\right)
\left( \Delta ,A\right) $ and $\sigma _{t}\in \pi _{t}\left( L/K\right)
\left( \Delta ,A\right) .$
\end{theorem}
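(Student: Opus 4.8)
The plan is to exploit the block-diagonal description of automorphisms at a nice basis furnished by \emph{Proposition 5.3}. Write $E:=K\left( \Delta \right) $ and $F:=span_{K}\left( L\setminus K\left( \Delta \right) \right) $, so that $L=E\oplus F$ as a $K$-linear space. By \emph{Proposition 5.3}(ii), every $\sigma \in \pi _{a}\left( L/K\right) \left( \Delta ,A\right) $ acts as $\left( \begin{smallmatrix} 1_{E} & 0 \\ 0 & \sigma _{F} \end{smallmatrix}\right) $; by (iii), every $\tau \in \pi _{t}\left( L/K\right) \left( \Delta ,A\right) $ acts as $\left( \begin{smallmatrix} \tau _{E} & 0 \\ 0 & 1_{F} \end{smallmatrix}\right) $; and, since $L$ is $\sigma $-tame Galois it is in particular tame Galois with respect to $\left( \Delta ,A\right) $, so (iv) shows every $\delta \in D_{L/K}\left( \Delta ,A\right) $ acts as $\left( \begin{smallmatrix} \delta _{E} & 0 \\ 0 & \delta _{F} \end{smallmatrix}\right) $ with $\delta \left( E\right) =E$; as $\delta $ fixes $K$ and carries the subfield $E=K\left( \Delta \right) $ onto itself, the restriction $\delta _{E}:=\delta |_{E}$ belongs to $Aut\left( K\left( \Delta \right) /K\right) $.

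First I would verify the inclusions $\pi _{a}\left( L/K\right) \left( \Delta ,A\right) ,\,\pi _{t}\left( L/K\right) \left( \Delta ,A\right) \subseteq D_{L/K}\left( \Delta ,A\right) $. For $\sigma \in \pi _{a}\left( L/K\right) \left( \Delta ,A\right) $ one has $\sigma \left( \Delta \right) =\Delta $, since $\sigma $ fixes $K\left( \Delta \right) $ pointwise; hence $\left( \sigma \left( \Delta \right) ,\sigma \left( A\right) \right) $ is a nice basis of $L/K$ with $K\left( \sigma \left( \Delta \right) \right) =K\left( \Delta \right) $, and by \emph{Lemmas 4.12--4.13} the full algebraic and highest transcendental Galois subgroups at a nice basis are determined by the purely transcendental subfield it generates, so $\left( \sigma \left( \Delta \right) ,\sigma \left( A\right) \right) \sim _{\pi }\left( \Delta ,A\right) $. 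For $\tau \in \pi _{t}\left( L/K\right) \left( \Delta ,A\right) $ the restriction $\tau |_{K\left( \Delta \right) }$ lies in $Aut\left( K\left( \Delta \right) /K\right) $, so $K\left( \tau \left( \Delta \right) \right) =\tau \left( K\left( \Delta \right) \right) =K\left( \Delta \right) $ and again $\left( \tau \left( \Delta \right) ,\tau \left( A\right) \right) \sim _{\pi }\left( \Delta ,A\right) $. Thus both subgroups lie in $D_{L/K}\left( \Delta ,A\right) $, and therefore so does every product $\sigma _{a}\sigma _{t}$ with $\sigma _{a}\in \pi _{a}\left( L/K\right) \left( \Delta ,A\right) $ and $\sigma _{t}\in \pi _{t}\left( L/K\right) \left( \Delta ,A\right) $.

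Next I would produce the factorisation of an arbitrary $\delta \in D_{L/K}\left( \Delta ,A\right) $. Let $\delta _{E}\in Aut\left( K\left( \Delta \right) /K\right) $ be as above and let $\sigma _{t}\in \pi _{t}\left( L/K\right) \left( \Delta ,A\right) $ be the unique element extending $\delta _{E}$, whose existence and uniqueness are given by \emph{Proposition 4.7}(i)--(ii); in matrix form $\sigma _{t}=\left( \begin{smallmatrix} \delta _{E} & 0 \\ 0 & 1_{F} \end{smallmatrix}\right) $. Put $\sigma _{a}:=\sigma _{t}^{-1}\circ \delta $; a block computation gives $\sigma _{a}=\left( \begin{smallmatrix} 1_{E} & 0 \\ 0 & \delta _{F} \end{smallmatrix}\right) $, which fixes $K\left( \Delta \right) $ pointwise, so $\sigma _{a}\in Aut\left( L/K\left( \Delta \right) \right) =\pi _{a}\left( L/K\right) \left( \Delta ,A\right) $. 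Hence $\delta =\sigma _{t}\sigma _{a}$, and because each of $\sigma _{a},\sigma _{t}$ is block-diagonal with one block equal to the identity, the blocks commute and $\sigma _{a}\sigma _{t}=\sigma _{t}\sigma _{a}=\delta $. Together with the previous paragraph this yields $D_{L/K}\left( \Delta ,A\right) =\pi _{a}\left( L/K\right) \left( \Delta ,A\right) \cdot \pi _{t}\left( L/K\right) \left( \Delta ,A\right) $ and the commuting factorisation $\sigma =\sigma _{a}\sigma _{t}=\sigma _{t}\sigma _{a}$ for every $\sigma \in D_{L/K}\left( \Delta ,A\right) $. Uniqueness of the factors follows from $\pi _{a}\left( L/K\right) \left( \Delta ,A\right) \cap \pi _{t}\left( L/K\right) \left( \Delta ,A\right) =\{1\}$ (\emph{Proposition 4.10}): if $\sigma _{a}\sigma _{t}=\sigma _{a}^{\prime }\sigma _{t}^{\prime }$ then $\left( \sigma _{a}^{\prime }\right) ^{-1}\sigma _{a}=\sigma _{t}^{\prime }\sigma _{t}^{-1}$ lies in the intersection.

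It remains to record that $\pi _{a}\left( L/K\right) \left( \Delta ,A\right) $ and $\pi _{t}\left( L/K\right) \left( \Delta ,A\right) $ are normal in $D_{L/K}\left( \Delta ,A\right) $; but we have shown that $D_{L/K}\left( \Delta ,A\right) $ is the product of the two subgroups, that they intersect trivially, and that they centralise one another, which is precisely the internal direct product decomposition, in which both factors are automatically normal. The step I expect to be the main obstacle is the bookkeeping in the second paragraph together with the identity $K\left( \delta \left( \Delta \right) \right) =K\left( \Delta \right) $ used in the first: one must be sure that the $\pi $-equivalence class of $\left( \delta \left( \Delta \right) ,\delta \left( A\right) \right) $ is governed entirely by the subfield $K\left( \delta \left( \Delta \right) \right) $ and that this subfield coincides with $K\left( \Delta \right) $, and this is exactly where the hypothesis that $L$ is $\sigma $-tame Galois (so that the Galois correspondences of \emph{\S 4} and the block form of \emph{Proposition 5.3}(iv) are available) cannot be dispensed with.
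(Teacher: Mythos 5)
Your proposal is correct and follows essentially the same route as the paper's proof: the block decomposition of \emph{Proposition 5.3} at the nice basis $\left( \Delta ,A\right) $, the inclusions $\pi _{a}\left( L/K\right) \left( \Delta ,A\right) ,\pi _{t}\left( L/K\right) \left( \Delta ,A\right) \subseteq D_{L/K}\left( \Delta ,A\right) $, the identity $K\left( \delta \left( \Delta \right) \right) =K\left( \Delta \right) $ supplied by the $\sigma $-tame hypothesis through \emph{Lemmas 4.12--4.13}, and the trivial intersection of \emph{Proposition 4.10}. The only deviation is a cosmetic streamlining: by taking $\sigma _{t}$ to be the normalised lift of $\delta _{E}$ from \emph{Proposition 4.7} and setting $\sigma _{a}:=\sigma _{t}^{-1}\circ \delta $, you obtain the algebraic factor as a field automorphism for free, whereas the paper's Step 3 verifies by a direct computation that the block map with components $1_{E}$ and $\delta _{F}$ is a ring homomorphism.
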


\begin{proof}
Fixed a basis $\left( \Delta ,A\right) $ of the extension $L/K$ with $\Delta
\in \Omega _{0}$. We will proceed in several steps.

\emph{Step 1}. Prove $\pi _{a}\left( L/K\right) \left( \Delta ,A\right) $
and $\pi _{t}\left( L/K\right) \left( \Delta ,A\right) $ both are subgroups
of the decomposition group $D_{L/K}\left( \Delta ,A\right) $.

In fact, take an element $\sigma $ of the subgroup $\pi _{a}\left(
L/K\right) \left( \Delta ,A\right) =Aut\left( L/K\left( \Delta \right)
\right) $. It is seen that $\sigma \left( \Delta \right) =\Delta \subseteq
K\left( \Delta \right) $ and $\sigma \left( A\right) $ is a linear basis of $%
L$ over $K\left( \Delta \right) $; then
\begin{equation*}
\pi _{a}\left( L/K\right) \left( \Delta ,A\right) =\pi _{a}\left( L/K\right)
\left( \sigma \left( \Delta \right) ,\sigma \left( A\right) \right) .
\end{equation*}%
It follows that
\begin{equation*}
\left( \Delta ,A\right) \thicksim _{\pi }\left( \sigma \left( \Delta \right)
,\sigma \left( A\right) \right)
\end{equation*}%
are $\pi $-equivalent nice bases of $L/K$; hence, $\sigma \in D_{L/K}\left(
\Delta ,A\right) $. This proves
\begin{equation*}
\pi _{a}\left( L/K\right) \left( \Delta ,A\right) \subseteq D_{L/K}\left(
\Delta ,A\right) .
\end{equation*}

In the same way, take any $\sigma \in \pi _{t}\left( L/K\right) \left(
\Delta ,A\right) $. Immediately, $\sigma \left( A\right) =A$. For the
restriction to $K\left( \Delta \right) $ we have
\begin{equation*}
\sigma \left( K\left( \Delta \right) \right) =K\left( \sigma \left( \Delta
\right) \right) =K\left( \Delta \right) .
\end{equation*}%
Then
\begin{equation*}
\pi _{t}\left( L/K\right) \left( \Delta ,A\right) =\pi _{t}\left( L/K\right)
\left( \sigma \left( \Delta \right) ,\sigma \left( A\right) \right) .
\end{equation*}%
Hence,
\begin{equation*}
\left( \Delta ,A\right) \thicksim _{\pi }\left( \sigma \left( \Delta \right)
,\sigma \left( A\right) \right) .
\end{equation*}%
This proves $\sigma \in D_{L/K}\left( \Delta ,A\right) $ and
\begin{equation*}
\pi _{a}\left( L/K\right) \left( \Delta ,A\right) \subseteq D_{L/K}\left(
\Delta ,A\right)
\end{equation*}%
holds.

\emph{Step 2}. As in \emph{Proposition 5.3}, take the field $L$ as the
direct decomposition
\begin{equation*}
L=E\oplus F
\end{equation*}%
of $K$-linear subspaces. Here,
\begin{equation*}
\begin{array}{l}
E:=K\left( \Delta \right) \subseteq L; \\
F:=span_{K}\left( L\backslash K\left( \Delta \right) \right) \subseteq L.%
\end{array}%
\end{equation*}

For any $x\in L$ there is a unique expression
\begin{equation*}
x=x_{E}+x_{F}
\end{equation*}%
with $x_{E}\in E$ and $x_{F}\in F$. Denote $x\in L$ by a column $2$-vector
\begin{equation*}
x=\left(
\begin{array}{c}
x_{E} \\
x_{F}%
\end{array}%
\right) .
\end{equation*}

From \emph{Proposition 5.3}, for any $\sigma \in \pi _{a}\left( L/K\right)
\left( \Delta ,A\right) ,\rho \in \pi _{t}\left( L/K\right) \left( \Delta
,A\right) $ and $\delta \in D_{L/K}\left( \Delta ,A\right) $ we have
\begin{equation*}
\sigma =\left(
\begin{array}{cc}
1_{E} & 0 \\
0 & \sigma _{F}%
\end{array}%
\right) ,
\end{equation*}

\begin{equation*}
\rho =\left(
\begin{array}{cc}
\rho _{E} & 0 \\
0 & 1_{F}%
\end{array}%
\right)
\end{equation*}%
and
\begin{equation*}
\delta =\left(
\begin{array}{cc}
\delta _{E} & 0 \\
0 & \delta _{F}%
\end{array}%
\right)
\end{equation*}%
as $2\times 2$ matrices, respectively.

\emph{Step 3}. Prove $\pi _{a}\left( L/K\right) \left( \Delta ,A\right) $
and $D_{L/K}\left( \Delta ,A\right) $ have the same $F$-components
\begin{equation*}
\{\sigma _{F}:\sigma \in \pi _{a}\left( L/K\right) \left( \Delta ,A\right)
\}=\{\delta _{F}:\delta \in D_{L/K}\left( \Delta ,A\right) \}.
\end{equation*}

In deed, from \emph{Step 1} it is seen that $\{\sigma _{F}:\sigma \in \pi
_{a}\left( L/K\right) \left( \Delta ,A\right) \}$ is a subset of $\{\delta
_{F}:\delta \in D_{L/K}\left( \Delta ,A\right) \}$.

Conversely, take an element $t\in \{\delta _{F}:\delta \in D_{L/K}\left(
\Delta ,A\right) \}$ such that
\begin{equation*}
t=\delta _{F}
\end{equation*}%
for some $\delta \in D_{L/K}\left( \Delta ,A\right) $. Put
\begin{equation*}
\tau =\left(
\begin{array}{cc}
1_{E} & 0 \\
0 & t%
\end{array}%
\right) .
\end{equation*}%
It reduces to show
\begin{equation*}
\tau \in \pi _{a}\left( L/K\right) \left( \Delta ,A\right) .
\end{equation*}

Take any
\begin{equation*}
x=x_{E}+x_{F}
\end{equation*}
and
\begin{equation*}
y=y_{E}+y_{F}
\end{equation*}
in $L$ with $x_{E},y_{E}\in E$ and $x_{F},y_{F}\in F$. As $E,F\subseteq L$
we have
\begin{equation*}
\begin{array}{l}
x+y \\
=\left( x_{E}+y_{E}\right) +\left( x_{F}+y_{F}\right) ; \\
x\cdot y \\
=\left( x_{E}+x_{F}\right) \cdot \left( y_{E}+y_{F}\right) \\
=\left( x_{E}\cdot y_{E}\right) +\left( x_{E}\cdot y_{F}+x_{F}\cdot
y_{E}+x_{F}\cdot y_{F}\right) .%
\end{array}%
\end{equation*}

Obviously,
\begin{equation*}
\begin{array}{l}
x_{E}+y_{E},x_{E}\cdot y_{E}\in E; \\
x_{F}+y_{F},x_{E}\cdot y_{F}+x_{F}\cdot y_{E}+x_{F}\cdot y_{F}\in F.%
\end{array}%
\end{equation*}
Then
\begin{equation*}
\begin{array}{l}
\delta \left( x+y\right) \\
=\delta \left( x_{E}+y_{E}\right) +\delta \left( x_{F}+y_{F}\right) \\
=\delta \left( x_{E}+y_{E}\right) +t\left( x_{F}+y_{F}\right) ; \\
\delta \left( x\cdot y\right) \\
=\delta \left( x_{E}\cdot y_{E}\right) + \delta\left( x_{E}\cdot
y_{F}+x_{F}\cdot y_{E}+x_{F}\cdot y_{F}\right) \\
=\delta \left( x_{E}\cdot y_{E}\right) +t\left( x_{E}\cdot y_{F}+x_{F}\cdot
y_{E}+x_{F}\cdot y_{F}\right) .%
\end{array}%
\end{equation*}

From $t=\delta |_{F}$ we have
\begin{equation*}
\begin{array}{l}
\tau \left( x+y\right) \\
=\left(
\begin{array}{cc}
1_{E} & 0 \\
0 & t%
\end{array}%
\right) \left(
\begin{array}{c}
x_{E}+y_{E} \\
x_{F}+y_{F}%
\end{array}%
\right) \\
=\left(
\begin{array}{c}
x_{E}+y_{E} \\
t\left( x_{F}+y_{F}\right)%
\end{array}%
\right) \\
=\left(
\begin{array}{c}
x_{E} \\
t\left( x_{F}\right)%
\end{array}%
\right) +\left(
\begin{array}{c}
y_{E} \\
t\left( y_{F}\right)%
\end{array}%
\right) \\
=\tau \left( x\right) +\tau \left( y\right) ; \\
\tau \left( x\cdot y\right) \\
=\left(
\begin{array}{cc}
1_{E} & 0 \\
0 & t%
\end{array}%
\right) \left(
\begin{array}{c}
x_{E}\cdot y_{E} \\
x_{E}\cdot y_{F}+x_{F}\cdot y_{E}+x_{F}\cdot y_{F}%
\end{array}%
\right) \\
=\left(
\begin{array}{c}
x_{E}\cdot y_{E} \\
t\left( x_{E}\cdot y_{F}+x_{F}\cdot y_{E}+x_{F}\cdot y_{F}\right)%
\end{array}%
\right) \\
=\left(
\begin{array}{c}
x_{E} \\
t\left( x_{F}\right)%
\end{array}%
\right) \cdot \left(
\begin{array}{c}
y_{E} \\
t\left( y_{F}\right)%
\end{array}%
\right) \\
=\tau \left( x\right) \cdot \tau \left( y\right) .%
\end{array}%
\end{equation*}
Particularly,
\begin{equation*}
\tau \left( x_{E}\right) =x_{E}.
\end{equation*}

Hence,
\begin{equation*}
\tau \in Aut\left( L/K\left( \Delta \right) \right) =\pi _{a}\left(
L/K\right) \left( \Delta ,A\right) .
\end{equation*}
This proves the two subgroups have the same $F$-components.

\emph{Step 4}. Prove $\pi _{t}\left( L/K\right) \left( \Delta ,A\right) $
and $D_{L/K}\left( \Delta ,A\right) $ have the same $E$-components
\begin{equation*}
\{\sigma _{E}:\sigma \in \pi _{t}\left( L/K\right) \left( \Delta ,A\right)
\}=\{\delta _{E}:\delta \in D_{L/K}\left( \Delta ,A\right) \}.
\end{equation*}

Immediately from \emph{Step 1} it is seen that $\{\sigma _{E}:\sigma \in \pi
_{a}\left( L/K\right) \left( \Delta ,A\right) \}$ is a subset of $\{\delta
_{E}:\delta \in D_{L/K}\left( \Delta ,A\right) \}$. From \emph{Definition 4.7%
} we have
\begin{equation*}
\{\sigma _{E}:\sigma \in \pi _{t}\left( L/K\right) \left( \Delta ,A\right)
\}=Aut\left( K\left( \Delta \right) /K\right) .
\end{equation*}

Let $\delta \in D_{L/K}\left( \Delta ,A\right) $. It reduces to prove
\begin{equation*}
\delta _{E}\in Aut\left( K\left( \Delta \right) /K\right) .
\end{equation*}

As $L/K$ is $\sigma $-tame Galois, from $\left( ii\right) $ of \emph{Lemma
4.12} it is seen that
\begin{equation*}
K\left( \Delta \right) =K\left( \delta \left( \Delta \right) \right)
\end{equation*}%
holds since
\begin{equation*}
\begin{array}{l}
Aut\left( L/K\left( \Delta \right) \right) \\
=\pi _{a}\left( L/K\right) \left( \Delta ,A\right) \\
=\pi _{a}\left( L/K\right) \left( \delta \left( \Delta \right) ,\delta
\left( A\right) \right) \\
=Aut\left( L/K\left( \delta \left( \Delta \right) \right) \right) .%
\end{array}%
\end{equation*}

Hence,
\begin{equation*}
\delta _{E}\in Aut\left( K\left( \Delta \right) /K\right) .
\end{equation*}
This proves the two subgroups have the same $E$-components.

\emph{Step 5}. Prove $\pi _{a}\left( L/K\right) \left( \Delta ,A\right) $ is
a normal subgroup of $D_{L/K}\left( \Delta ,A\right) $.

In deed, take any $\sigma \in \pi _{a}\left( L/K\right) \left( \Delta
,A\right) ,\delta \in D_{L/K}\left( \Delta ,A\right) $. As
\begin{equation*}
\sigma =\left(
\begin{array}{cc}
1_{E} & 0 \\
0 & \sigma _{F}%
\end{array}%
\right)
\end{equation*}%
and
\begin{equation*}
\delta =\left(
\begin{array}{cc}
\delta _{E} & 0 \\
0 & \delta _{F}%
\end{array}%
\right) ,
\end{equation*}%
we have
\begin{equation*}
\begin{array}{l}
\delta ^{-1}\cdot \sigma \cdot \delta \\
=\left(
\begin{array}{cc}
\delta _{E}^{-1} & 0 \\
0 & \delta _{F}^{-1}%
\end{array}%
\right) \cdot \left(
\begin{array}{cc}
1_{E} & 0 \\
0 & \sigma _{F}%
\end{array}%
\right) \cdot \left(
\begin{array}{cc}
\delta _{E} & 0 \\
0 & \delta _{F}%
\end{array}%
\right) \\
=\left(
\begin{array}{cc}
1_{E} & 0 \\
0 & \delta _{F}^{-1}\cdot \sigma _{F}\cdot \delta _{F}%
\end{array}%
\right) .%
\end{array}%
\end{equation*}%
Hence, for any $\delta \in D_{L/K}\left( \Delta ,A\right) $
\begin{equation*}
\delta ^{-1}\cdot \pi _{a}\left( L/K\right) \left( \Delta ,A\right) \cdot
\delta \subseteq \pi _{a}\left( L/K\right) \left( \Delta ,A\right)
\end{equation*}%
holds.

In the same way, it is seen that $\pi _{t}\left( L/K\right) \left( \Delta
,A\right) $ is a normal subgroup of the decomposition group $D_{L/K}\left(
\Delta ,A\right) $.

\emph{Step 6}. Let $\sigma \in \pi _{a}\left( L/K\right) \left( \Delta
,A\right) $ and $\tau \in \pi _{t}\left( L/K\right) \left( \Delta ,A\right) $%
. We have
\begin{equation*}
\begin{array}{l}
\sigma \cdot \tau \\
=\left(
\begin{array}{cc}
1_{E} & 0 \\
0 & \sigma _{F}%
\end{array}%
\right) \cdot \left(
\begin{array}{cc}
\tau _{E} & 0 \\
0 & 1_{F}%
\end{array}%
\right) \\
=\left(
\begin{array}{cc}
\tau _{E} & 0 \\
0 & \sigma _{F}%
\end{array}%
\right) \\
=\left(
\begin{array}{cc}
\tau _{E} & 0 \\
0 & 1_{F}%
\end{array}%
\right) \cdot \left(
\begin{array}{cc}
1_{E} & 0 \\
0 & \sigma _{F}%
\end{array}%
\right) \\
=\tau \cdot \sigma .%
\end{array}%
\end{equation*}

Then
\begin{equation*}
\pi _{a}\left( L/K\right) \left( \Delta ,A\right) \cdot \pi _{t}\left(
L/K\right) \left( \Delta ,A\right) =\pi _{t}\left( L/K\right) \left( \Delta
,A\right) \cdot \pi _{a}\left( L/K\right) \left( \Delta ,A\right) .
\end{equation*}

It follows that the product
\begin{equation*}
\pi _{a}\left( L/K\right) \left( \Delta ,A\right) \cdot \pi _{t}\left(
L/K\right) \left( \Delta ,A\right)
\end{equation*}%
is a subgroup of $D_{L/K}\left( \Delta ,A\right) $. From \emph{Steps 3-4} we
have
\begin{equation*}
D_{L/K}\left( \Delta ,A\right) =\pi _{a}\left( L/K\right) \left( \Delta
,A\right) \cdot \pi _{t}\left( L/K\right) \left( \Delta ,A\right)
\end{equation*}%
which is a direct product of the two normal subgroups since
\begin{equation*}
\pi _{a}\left( L/K\right) \left( \Delta ,A\right) \bigcap \pi _{t}\left(
L/K\right) \left( \Delta ,A\right) =\{1\}.
\end{equation*}%
This completes the proof.
\end{proof}

\subsection{Invariant subfield of a decomposition group}

Now  consider the invariant subfield of $L/K$ under a decomposition group.

\begin{theorem}
Let $L$ be an extension over a field $K$ of finite transcendence degree.
Suppose $L$ is $\sigma $-tame Galois over $K$ (respectively, inside $\Omega
_{0}$). Here, $\Omega _{0}$ is a given subset of some certain transcendence
bases of $L/K$. Then we have
\begin{equation*}
L^{D_{L/K}\left( \Delta ,A\right) }=L^{Aut\left( L/K\right) }=K
\end{equation*}%
for any nice basis $\left( \Delta ,A\right) $ of $L/K$ (respectively, with $%
\Delta \in \Omega _{0}$).
\end{theorem}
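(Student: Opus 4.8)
The plan is to prove the two equalities separately: first $L^{Aut(L/K)}=K$, then $L^{D_{L/K}(\Delta,A)}=K$, the latter via the factorisation of the decomposition group from Theorem 5.4. Since the assertion with the parenthetical ``inside $\Omega_0$'' is obtained verbatim from the same argument once attention is restricted to a transcendence base $\Delta\in\Omega_0$, I would at the outset reduce to the case where $L$ is $\sigma$-tame Galois over $K$ and $\Delta$ is an arbitrary transcendence base of $L/K$ equipped with a $K(\Delta)$-linear basis $A$, so that $(\Delta,A)$ is a nice basis of $L/K$.

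For the first equality: by hypothesis $L$ is algebraic Galois over the purely transcendental subfield $K(\Delta)$, which is of finite transcendence degree over $K$. Applying Proposition 3.11 with $K(\Delta)$ in the role of the purely transcendental base field and our $L$ in the role of its algebraic Galois extension yields that $L$ is Galois over $K$, i.e. $L^{Aut(L/K)}=K$. Since $D_{L/K}(\Delta,A)$ is by Remark 5.2 a subgroup of $Aut(L/K)$, this already gives $K=L^{Aut(L/K)}\subseteq L^{D_{L/K}(\Delta,A)}$, so only the reverse inclusion remains. For that I would invoke Theorem 5.4, which gives $D_{L/K}(\Delta,A)=\pi_a(L/K)(\Delta,A)\cdot\pi_t(L/K)(\Delta,A)$ and in particular places both factors inside $D_{L/K}(\Delta,A)$. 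Let $x\in L^{D_{L/K}(\Delta,A)}$. Then $x$ is fixed by $\pi_a(L/K)(\Delta,A)=Aut(L/K(\Delta))$, and since $L$ is algebraic Galois over $K(\Delta)$ this forces $x\in K(\Delta)$. Moreover $x$ is fixed by $\pi_t(L/K)(\Delta,A)$; writing $x=x_E$ with $x_F=0$ in the decomposition of Proposition 5.3$(iii)$, each $\sigma\in\pi_t(L/K)(\Delta,A)$ acts on $x$ by $\sigma(x)=\sigma_E(x)$ with $\sigma_E\in Aut(K(\Delta)/K)$, and by Proposition 4.7$(ii)$ these restrictions $\sigma_E$ exhaust $Aut(K(\Delta)/K)$. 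Hence $x\in K(\Delta)$ is fixed by all of $Aut(K(\Delta)/K)$, and since $K(\Delta)$ is Galois over $K$ by Theorem 3.1, we conclude $x\in K$. This proves $L^{D_{L/K}(\Delta,A)}\subseteq K$ and finishes the argument.

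The bookkeeping steps — that $D_{L/K}(\Delta,A)$ genuinely contains $\pi_a$ and $\pi_t$, and that the relevant restriction maps are well defined — are all already packaged in Theorem 5.4 and Propositions 4.7, 5.3, so no new computation is needed. The one point that deserves care is the middle step: an element of $K(\Delta)$ fixed by $\pi_t(L/K)(\Delta,A)$ must be fixed by the \emph{full} group $Aut(K(\Delta)/K)$ rather than merely by some subgroup of it. This is precisely the surjectivity assertion in Proposition 4.7$(ii)$ (and uses the finite transcendence degree only insofar as it guarantees the nice-basis structure underlying that proposition), so I do not expect any genuine obstacle.
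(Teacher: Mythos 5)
Your proof is correct and follows essentially the paper's own argument: both reduce an element fixed by the decomposition group into $K\left( \Delta \right) $ via the full algebraic part (using the $\sigma $-tame Galois hypothesis, so that $L^{\pi _{a}\left( L/K\right) \left( \Delta ,A\right) }=K\left( \Delta \right) $) and then into $K$ via the fact that the restrictions of $\pi _{t}\left( L/K\right) \left( \Delta ,A\right) $ exhaust $Aut\left( K\left( \Delta \right) /K\right) $ together with Theorem 3.1. The only cosmetic difference is that the paper gets both equalities at once by sandwiching $L^{\pi _{a}\cup \pi _{t}}\supseteq L^{D_{L/K}\left( \Delta ,A\right) }\supseteq L^{Aut\left( L/K\right) }\supseteq K$ and computing the leftmost field to be $K$, whereas you first deduce $L^{Aut\left( L/K\right) }=K$ from Proposition 3.11 and then prove $L^{D_{L/K}\left( \Delta ,A\right) }\subseteq K$ separately; the ingredients are the same.
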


\begin{proof}
Without loss of generality, suppose $\Omega _{0}$ is the set of all the
transcendence bases of the extension $L/K$. We will proceed in several steps.

\emph{Step 1}. Consider the decomposition group $D_{L/K}\left( \Delta
,A\right) $, full algebraic Galois subgroup $\pi _{a}\left( L/K\right)
\left( \Delta ,A\right) $ and highest transcendental Galois subgroup $\pi
_{t}\left( L/K\right) \left( \Delta ,A\right) $ of $L/K$ at $\left( \Delta
,A\right) $, respectively. From \emph{Theorem 7.4} we have two towers of
subgroups
\begin{equation*}
\pi _{a}\left( L/K\right) \left( \Delta ,A\right) \subseteq D_{L/K}\left(
\Delta ,A\right) \subseteq Aut\left( L/K\right)
\end{equation*}%
and
\begin{equation*}
\pi _{t}\left( L/K\right) \left( \Delta ,A\right) \subseteq D_{L/K}\left(
\Delta ,A\right) \subseteq Aut\left( L/K\right) .
\end{equation*}

It follows that for their invariant subfields in $L/K$ we have
\begin{equation*}
L^{\pi _{a}\left( L/K\right) \left( \Delta ,A\right) }\supseteq
L^{D_{L/K}\left( \Delta ,A\right) }\supseteq L^{Aut\left( L/K\right)
}\supseteq K.
\end{equation*}

Particularly,
\begin{equation*}
\begin{array}{l}
L^{\pi _{a}\left( L/K\right) \left( \Delta ,A\right) \cup \pi _{t}\left(
L/K\right) \left( \Delta ,A\right) } \\
\supseteq L^{D_{L/K}\left( \Delta ,A\right) \cup \pi _{t}\left( L/K\right)
\left( \Delta ,A\right) }=L^{D_{L/K}\left( \Delta ,A\right) } \\
\supseteq L^{Aut\left( L/K\right) \cup \pi _{t}\left( L/K\right) \left(
\Delta ,A\right) }=L^{Aut\left( L/K\right) } \\
\supseteq K.%
\end{array}%
\end{equation*}%
\emph{Step 2}. Put
\begin{equation*}
S=\pi _{a}\left( L/K\right) \left( \Delta ,A\right) ;
\end{equation*}
\begin{equation*}
T=\pi _{t}\left( L/K\right) \left( \Delta ,A\right) .
\end{equation*}%
Then
\begin{equation*}
\begin{array}{l}
L^{S\cup T} \\
=\{x\in L:\sigma \left( x\right) =x,\forall \sigma \in S\cup T\} \\
=\{x\in L^{S}:\sigma \left( x\right) =x,\forall \sigma \in T\} \\
=\left( L^{S}\right) ^{T} \\
=\left( L^{T}\right) ^{S} \\
=\{x\in L:\sigma \left( x\right) =x,\forall \sigma \in S\}\cap \{x\in
L:\sigma \left( x\right) =x,\forall \sigma \in T\} \\
=L^{S}\cap L^{T}.%
\end{array}%
\end{equation*}%
\emph{Step 3}. Since $L$ is $\sigma $-tame Galois over $K$, we have
\begin{equation*}
L^{S}=L^{Aut\left( L/K\left( \Delta \right) \right) }=K\left( \Delta \right)
.
\end{equation*}

From \emph{Theorem 3.4} it is seen that $K\left( \Delta \right) $ is Galois
over $K$; it follows that
\begin{equation*}
K\left( \Delta \right) ^{Aut\left( K\left( \Delta \right) /K\right) }=K
\end{equation*}%
holds. On the other hand, we have
\begin{equation*}
Aut\left( K\left( \Delta \right) /K\right) =\{\sigma |_{K\left( \Delta
\right) }:\sigma \in T\}.
\end{equation*}%
Then
\begin{equation*}
\begin{array}{l}
\left( L^{S}\right) ^{T} \\
=K\left( \Delta \right) ^{T} \\
=\{x\in K\left( \Delta \right) :\sigma \left( x\right) =x,\forall \sigma \in
T\} \\
=\{x\in K\left( \Delta \right) :\sigma \left( x\right) =x,\forall \sigma \in
Aut\left( K\left( \Delta \right) /K\right) \} \\
=K\left( \Delta \right) ^{Aut\left( K\left( \Delta \right) /K\right) } \\
=K.%
\end{array}%
\end{equation*}%
Hence,
\begin{equation*}
K=L^{S\cup T}\supseteq L^{D_{L/K}\left( \Delta ,A\right) }\supseteq
L^{Aut\left( L/K\right) }\supseteq K.
\end{equation*}%
This completes the proof.
\end{proof}

\subsection{Galois correspondence for $\sigma $-tame Galois, (III)}

From the above two theorems we have the following lemma on Galois
correspondence.

\begin{lemma}
\emph{(Galois correspondence for $\sigma $-tame Galois, III)} Let $L$ be an
extension over a field $K$ of finite transcendence degree. Suppose $L$ is $%
\sigma $-tame Galois over $K$ (respectively, inside $\Omega _{0}$). Here, $%
\Omega _{0}$ is a given subset of some certain transcendence bases of $L/K$.

Then there is a bijection between the set $\mathfrak{N}\left( L/K\right) $
of $\pi $-equivalence classes of nice bases $\left( \Delta ,A\right) $ of $%
L/K$ (respectively, with $\Delta \in \Omega _{0}$) and the set $\mathfrak{D}%
\left( L/K\right) $ of decomposition groups $D_{L/K}\left( \Delta ,A\right) $
of $L/K$ at nice bases $\left( \Delta ,A\right) $ (respectively, with $%
\Delta \in \Omega _{0}$) given by
\begin{equation*}
\left[ \Delta ,A\right] _{\pi }\in \mathfrak{N}\left( L/K\right) \longmapsto
D_{L/K}\left( \Delta ,A\right) \in \mathfrak{D}\left( L/K\right)
\end{equation*}%
such that
\begin{equation*}
D_{L/K}\left( \Delta ,A\right) =\pi _{a}\left( L/K\right) \left( \Delta
,A\right) \cdot \pi _{t}\left( L/K\right) \left( \Delta ,A\right) .
\end{equation*}

Here, $\pi _{a}\left( L/K\right) \left( \Delta ,A\right) $ and $\pi
_{t}\left( L/K\right) \left( \Delta ,A\right) $ denote the full algebraic
and highest transcendental Galois subgroups of $L/K$ at a nice basis $\left(
\Delta ,A\right) $, respectively.
\end{lemma}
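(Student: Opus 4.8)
The plan is to factor the asserted correspondence through the set $\mathcal{A}$ of full algebraic Galois subgroups of $L/K$: I will produce a bijection $\mathfrak{N}(L/K)\to\mathcal{A}$ sending a class $[\Delta,A]_\pi$ to $\pi_a(L/K)(\Delta,A)$, and a bijection $\mathfrak{D}(L/K)\to\mathcal{A}$ sending $D_{L/K}(\Delta,A)$ to $\pi_a(L/K)(\Delta,A)$, and then observe that the map $[\Delta,A]_\pi\mapsto D_{L/K}(\Delta,A)$ is the composite of the first with the inverse of the second. As in the proofs of Theorems 5.4 and 5.5, the parenthetical ``inside $\Omega_0$'' version is handled identically, so I treat the absolute case, i.e.\ I take $\Omega_0=\Omega_{L/K}$. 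Throughout, the $\sigma$-tame Galois hypothesis enters in the form: $L$ is algebraic Galois over $K(\Delta)$, hence $L^{\pi_a(L/K)(\Delta,A)}=K(\Delta)$ for every nice basis $(\Delta,A)$.

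First, the map $[\Delta,A]_\pi\mapsto D_{L/K}(\Delta,A)$ is well defined and surjective. Well-definedness is immediate from the definition of $\pi$-equivalence together with the factorisation $D_{L/K}(\Delta,A)=\pi_a(L/K)(\Delta,A)\cdot\pi_t(L/K)(\Delta,A)$ of Theorem 5.4: if $(\Delta,A)\sim_\pi(\Lambda,B)$ then the two full algebraic Galois factors agree and the two highest transcendental Galois factors agree, hence so do the products; this also establishes the displayed factorisation in the statement. Surjectivity holds by the very definition of $\mathfrak{D}(L/K)$, since each of its elements is $D_{L/K}(\Delta,A)$ for some nice basis $(\Delta,A)$ and is therefore the image of $[\Delta,A]_\pi$.

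The heart of the proof is injectivity, which amounts to recovering the $\pi$-equivalence class of a nice basis from its decomposition group. I would first observe that $[\Delta,A]_\pi\mapsto\pi_a(L/K)(\Delta,A)$ is already a bijection $\mathfrak{N}(L/K)\to\mathcal{A}$: it is well defined because a $\pi$-equivalence class has a well-defined full algebraic Galois part, surjective by Proposition 4.6, and injective because by Lemma 4.13 each $G\in\mathcal{A}$ has a \emph{unique} highest transcendental Galois subgroup $H$ of $L/K$ with $G\cap H=\{1\}$, so equality of the $\pi_a$'s, together with Proposition 4.10, forces equality of the $\pi_t$'s and hence $\pi$-equivalence. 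It therefore suffices to show that $D_{L/K}(\Delta,A)$ determines $\pi_a(L/K)(\Delta,A)$ --- equivalently, by Lemma 4.12, the highest transcendental subfield $K(\Delta)=L^{\pi_a(L/K)(\Delta,A)}$. For this I would use Theorem 5.4 (so that inside $D:=D_{L/K}(\Delta,A)$ one has the internal direct product $D=\pi_a(L/K)(\Delta,A)\cdot\pi_t(L/K)(\Delta,A)$ of normal subgroups) together with Theorem 5.5 (so that $(L^{\pi_a(L/K)(\Delta,A)})^{\pi_t(L/K)(\Delta,A)}=L^{D}=K$), and then show that such a splitting of $D$ into a full algebraic Galois factor and a highest transcendental Galois factor is unique; combined with Lemmas 4.12--4.13, which link $\mathcal{A}$, the highest transcendental subfields, and the highest transcendental Galois subgroups by mutually inverse bijections, this pins $\pi_a(L/K)(\Delta,A)$ down from $D$. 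Consequently $D_{L/K}(\Delta,A)=D_{L/K}(\Lambda,B)$ gives $\pi_a(L/K)(\Delta,A)=\pi_a(L/K)(\Lambda,B)$, hence $[\Delta,A]_\pi=[\Lambda,B]_\pi$ by the previous step, so the map is injective; being also surjective it is a bijection, and the formula $D_{L/K}(\Delta,A)=\pi_a(L/K)(\Delta,A)\cdot\pi_t(L/K)(\Delta,A)$ is again Theorem 5.4.

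I expect the genuine obstacle to be precisely this uniqueness of the algebraic/transcendental splitting of $D$: an internal direct product decomposition of an abstract group is not unique, so the point cannot be settled by group theory alone, and the argument must exploit the field-theoretic content --- the $\sigma$-tame Galois hypothesis, the identification $L^{\pi_a(L/K)(\Delta,A)}=K(\Delta)$ of a highest transcendental subfield, the fact that the transcendental factor restricts isomorphically onto $Aut(K(\Delta)/K)$, and the Galois correspondences of \S 4 --- to show that neither factor can be deformed inside $D$. Everything else in the proof is formal bookkeeping.
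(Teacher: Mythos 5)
Your formal scaffolding is fine and follows the same route the paper intends: well-definedness of $\left[ \Delta ,A\right] _{\pi }\mapsto D_{L/K}\left( \Delta ,A\right) $ and the displayed factorisation come from Theorem 5.4, surjectivity is true by the definition of $\mathfrak{D}\left( L/K\right) $, and your bijection $\left[ \Delta ,A\right] _{\pi }\mapsto \pi _{a}\left( L/K\right) \left( \Delta ,A\right) $ is correctly justified by Proposition 4.10 together with the uniqueness statement of Lemma 4.13 (granting the $\sigma $-tame hypothesis). Indeed the paper's own proof is nothing more than a citation of Lemmas 4.12--13 and Theorems 5.4--5, so you have reproduced its skeleton faithfully.

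However, the proposal stops exactly at the step that carries all of the content, and that is a genuine gap rather than bookkeeping. Injectivity requires that the decomposition group determine the class, and you reduce this to the claim that $D:=D_{L/K}\left( \Delta ,A\right) $ admits at most one factorisation $D=G\cdot H$ with $G$ a full algebraic Galois subgroup and $H$ a highest transcendental Galois subgroup --- and then you only remark that the proof of this ``must exploit the field-theoretic content,'' without supplying any argument. None of the results you invoke delivers it: Lemma 4.13 attaches to a \emph{given} $G$ a unique complementary $H$ inside $Aut\left( L/K\right) $, but it says nothing that would prevent two distinct full algebraic Galois subgroups $\pi _{a}\left( L/K\right) \left( \Delta ,A\right) \neq \pi _{a}\left( L/K\right) \left( \Lambda ,B\right) $ from having coinciding products with their respective complements; Theorem 5.5 gives $L^{D_{L/K}\left( \Delta ,A\right) }=K$ for \emph{every} nice basis, so invariant subfields of decomposition groups cannot separate classes; and purely group-theoretic invariants of $D$ cannot isolate the algebraic factor either, since the transcendental factor also contains elements of finite order (for example the reciprocal transformation of Theorem 3.1 lies in the highest transcendental part), so $\pi _{a}$ is not recoverable as a torsion or finiteness phenomenon. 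To close the proof you would have to reconstruct $K\left( \Delta \right) $ (equivalently $\pi _{a}\left( L/K\right) \left( \Delta ,A\right) $, via Lemma 4.12) from the subgroup $D_{L/K}\left( \Delta ,A\right) $ alone --- say by exploiting the block decompositions of Proposition 5.3 to show that $D_{L/K}\left( \Delta ,A\right) =D_{L/K}\left( \Lambda ,B\right) $ forces $K\left( \Delta \right) =K\left( \Lambda \right) $ --- and no such argument appears in your proposal; you have correctly located where the difficulty lies, but you have not resolved it, so as written this is not a proof of the lemma.
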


\begin{proof}
Immediately from \emph{Lemmas 4.12-13} and \emph{Theorems 5.4-5}.
\end{proof}

\section{Noether Solutions and Transcendental Galois Theory}

In this section we will discuss the properties on the interplay between
solutions of Noether solutions and transcendental Galois theory. The results
obtained here will be applied to prove two key lemmas in \emph{\S 7} and one
main theorem in \emph{\S 9}.

\subsection{Galois correspondence and Noether solutions}

For a transcendental extension $L$ over a field $K$, recall that (See \emph{%
Definition 1.4}) a subgroup $G$ of the Galois group $Aut\left( L/K\right) $
is said to be a \textbf{Noether solution} of the extension $L/K$ if the two
conditions are satisfied:

$\left( i\right) $ $L$ is an algebraic extension over the $G$-invariant
subfield $L^{G}$ of $L$;

$\left( ii\right) $ The $G$-invariant subfield $L^{G}$ is a purely
transcendental extension over $K$.

\begin{theorem}
\emph{(Galois correspondence and solutions of Noether's problem)} Let $L$ be
an extension over a field $K$ of finite transcendence degree. Suppose $L$ is
$\sigma $-tame Galois over $K$ (respectively, inside $\Omega _{0}$). Here, $%
\Omega _{0}$ is a given subset of some certain transcendence bases of $L/K$.

$\left( i\right) $ There is a bijection between the subset $\mathfrak{S}%
\left( L/K\right) $ of Noether solutions $G$ of $L/K$ (respectively, with $G$
being a $\Omega _{0}$-subgroup of $Aut\left( L/K\right) $) and the subset $%
\mathfrak{N}\left( L/K\right) $ of $\pi $-equivalence classes of nice bases $%
\left( \Delta ,A\right) $ of $L/K$ (respectively, with $\Delta \in \Omega
_{0}$) given by
\begin{equation*}
G\in \mathfrak{S}\left( L/K\right) \longmapsto \left[ \Delta ,A\right] _{\pi
}\in \mathfrak{N}\left( L/K\right)
\end{equation*}%
such that $G=\pi _{a}\left( L/K\right) \left( \Delta ,A\right) $ holds.

$\left( ii\right) $ There is a bijection between the subset $\mathfrak{S}%
\left( L/K\right) $ of Noether solutions $G$ of $L/K$ (respectively, with $G$
being a $\Omega _{0}$-subgroup of $Aut\left( L/K\right) $) and the subset $%
\mathfrak{D}\left( L/K\right) $ of the decomposition groups $D_{L/K}\left(
\Delta ,A\right) $ of $L/K$ at nice bases $\left( \Delta ,A\right) $
(respectively, with $\Delta \in \Omega _{0}$) given by
\begin{equation*}
G\in \mathfrak{S}\left( L/K\right) \longmapsto D_{L/K}\left( \Delta
,A\right) \in \mathfrak{D}\left( L/K\right)
\end{equation*}%
such that there is a factorisation for the decomposition groups
\begin{equation*}
D_{L/K}\left( \Delta ,A\right) =G\cdot \pi _{t}\left( L/K\right) \left(
\Delta ,A\right) .
\end{equation*}
\end{theorem}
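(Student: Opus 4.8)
The plan is to reduce the whole statement to the Galois correspondences for $\sigma$-tame Galois extensions that have already been set up, namely Lemma 4.12, Lemma 4.13 and Lemma 5.6 together with the factorisation in Theorem 5.4. The decisive preliminary observation is that, under the $\sigma$-tame Galois hypothesis, the set $\mathfrak{S}(L/K)$ of Noether solutions of $L/K$ coincides with the set of full algebraic Galois subgroups of $L/K$; once this is known, statements $(i)$ and $(ii)$ become essentially a relabelling of correspondences already in hand.

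First I would establish that equality of sets. One inclusion is Remark 4.2: a Noether solution is always a full algebraic Galois subgroup. For the converse, let $G$ be a full algebraic Galois subgroup, so $G = Aut(L/M)$ with $M$ purely transcendental over $K$ and $L$ algebraic over $M$. Pick a transcendence base $\Delta$ of $M/K$ with $M = K(\Delta)$; since $L$ is algebraic over $M$, $\Delta$ is also a transcendence base of $L/K$, hence by the $\sigma$-tame Galois hypothesis $L$ is algebraic Galois over $K(\Delta) = M$. Ordinary algebraic Galois theory then gives $L^{G} = L^{Aut(L/M)} = M$, so $L^{G} = M$ is purely transcendental over $K$ and $L$ is algebraic over $L^{G}$, i.e. $G$ is a Noether solution. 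In the ``inside $\Omega_{0}$'' version one simply keeps $\Delta \in \Omega_{0}$ throughout, so that $G$ is a $\Omega_{0}$-subgroup exactly when $M = K(\Delta)$ for some $\Delta \in \Omega_{0}$.

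Next I would use Proposition 4.6, which says the full algebraic Galois subgroups of $L/K$ are precisely the groups $\pi_{a}(L/K)(\Delta, A) = Aut(L/K(\Delta))$ attached to nice bases $(\Delta, A)$. Given a Noether solution $G$, the subfield $K(\Delta) = L^{G}$ is uniquely determined, and I would check that, under the $\sigma$-tame Galois hypothesis, two nice bases are $\pi$-equivalent if and only if they share this subfield: $\pi_{a}(\Delta, A) = \pi_{a}(\Lambda, B)$ forces $K(\Delta) = L^{Aut(L/K(\Delta))} = L^{Aut(L/K(\Lambda))} = K(\Lambda)$ by the injectivity of the correspondence in Lemma 4.12, and conversely $K(\Delta) = K(\Lambda)$ then forces $\pi_{t}(\Delta, A) = \pi_{t}(\Lambda, B)$ by the uniqueness clause of Proposition 4.7. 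Hence $G \mapsto [\Delta, A]_{\pi}$, for any nice basis with $G = \pi_{a}(L/K)(\Delta, A)$, is a well-defined map $\mathfrak{S}(L/K) \to \mathfrak{N}(L/K)$ whose two-sided inverse is $[\Delta, A]_{\pi} \mapsto \pi_{a}(L/K)(\Delta, A)$; this proves $(i)$. For $(ii)$ I would compose this bijection with the bijection $[\Delta, A]_{\pi} \mapsto D_{L/K}(\Delta, A)$ of Lemma 5.6 and then invoke the direct product decomposition $D_{L/K}(\Delta, A) = \pi_{a}(L/K)(\Delta, A) \cdot \pi_{t}(L/K)(\Delta, A)$ from Theorem 5.4; since the composite carries $G$ to a decomposition group attached to a nice basis with $G = \pi_{a}(L/K)(\Delta, A)$, the required equality $D_{L/K}(\Delta, A) = G \cdot \pi_{t}(L/K)(\Delta, A)$ follows at once.

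The step I expect to be the genuine obstacle is the verification inside the second paragraph above that $\pi$-equivalence of nice bases is governed entirely by the purely transcendental subfield $K(\Delta)$ — equivalently, that $\pi_{t}(L/K)(\Delta, A)$ does not really depend on the linear basis $A$ once $K(\Delta)$ is fixed. Making this airtight is exactly where the $\sigma$-tame Galois hypothesis (through Lemma 4.12) and the uniqueness statement of Proposition 4.7 have to be applied with care, and where the ``inside $\Omega_{0}$'' bookkeeping must be threaded through consistently; everything else is routine transport along correspondences already proved.
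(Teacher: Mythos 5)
Your proposal is correct and follows essentially the same route as the paper's own proof: the paper likewise identifies a Noether solution $G$ with the full algebraic Galois subgroup $\pi_{a}\left( L/K\right) \left( \Delta ,A\right) =Aut\left( L/K\left( \Delta \right) \right) $ at a nice basis with $K\left( \Delta \right) =L^{G}$, invokes the $\sigma$-tame Galois correspondence (\emph{Lemma 4.12}, \emph{Propositions 4.6-7}) for well-definedness and invertibility, and deduces $\left( ii\right) $ from $\left( i\right) $ together with \emph{Lemma 5.6} and the factorisation of \emph{Theorem 5.4}. The only difference is that you spell out explicitly (the equivalence of Noether solutions with full algebraic Galois subgroups, and the characterisation of $\pi$-equivalence by the subfield $K\left( \Delta \right) $) what the paper's terser proof leaves implicit.
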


\begin{proof}
Without loss of generality, suppose $\Omega _{0}$ is the set of all the
transcendence bases of the extension $L/K$.

$\left( i\right) $ Let $G\subseteq Aut\left( L/K\right) $ be a Noether
solution of the extension $L/K$. As $L$ is $\sigma $-tame Galois over $K$,
we have $G=Aut\left( L^{G}/K\right) $ and $L^{G}$ is purely transcendental
over $K$. It follows that we obtain a nice basis $\left( \Delta ,A\right) $
of $L/K$ and then $\left[ \Delta ,A\right] _{\pi }\in \mathfrak{N}\left(
L/K\right) $. Here, $A$ is a linear basis of the linear space $L$ over the
subfield $L^{G}$ and $\Delta $ is a transcendence base of the purely
transcendental extension $L^{G}/K$. Hence, there is a map
\begin{equation*}
\tau :G\in \mathfrak{S}\left( L/K\right) \longmapsto \left[ \Delta ,A\right]
\in \mathfrak{N}\left( L/K\right) .
\end{equation*}

It is seen that the map $\tau $ is well-defined from the Galois
correspondence for $\sigma $-tame Galois.

Conversely, each $\left[ \Delta ,A\right] _{\pi }\in \mathfrak{N}\left(
L/K\right) $ determines a unique subgroup
\begin{equation*}
G=Aut\left( L/K\left( \Delta \right) \right) =\pi _{a}\left( L/K\right)
\left( \Delta ,A\right) .
\end{equation*}

Hence, $\tau $ is a bijection between the set $\mathfrak{S}\left( L/K\right)
$ of Noether solutions of $L/K$ and the set $\mathfrak{D}\left( L/K\right) $
of decomposition groups of $L/K$.

$\left( ii\right) $ Immediately from the above $\left( i\right) $ and \emph{Lemma 5.6}.
\end{proof}

\subsection{Conjugacy classes of decomposition groups at nice bases}

Let $L/K$ be a transcendental extension and let $\mathfrak{N}\left(
L/K\right) $ denote the set of $\pi $-equivalence classes $\left[ \Delta ,A%
\right] _{\pi }$ of nice bases $\left( \Delta ,A\right) $.

\begin{definition}
Two $\left[ \Delta _{1},A_{1}\right] _{\pi }$ and $\left[ \Delta _{2},A_{2}%
\right] _{\pi }$ in $\mathfrak{N}\left( L/K\right) $ are $G$\textbf{%
-equivalent}, denoted by
\begin{equation*}
\left[ \Delta _{1},A_{1}\right] _{\pi }\thicksim _{G}\left[ \Delta _{2},A_{2}%
\right] _{\pi },
\end{equation*}%
if there is some $g\in Aut\left( L/K\right) $ such that
\begin{equation*}
\Delta _{2}=g\left( \Delta _{1}\right) \text{ and }A_{2}=g\left(
A_{1}\right) .
\end{equation*}

Denote by
\begin{equation*}
Aut\left( L/K\right) \diagdown \mathfrak{N}\left( L/K\right)
\end{equation*}%
the quotient space of the set $\mathfrak{N}\left( L/K\right) $ by the $G$%
-equivalence relation $\thicksim _{G}$.
\end{definition}

\begin{proposition}
(\emph{Conjugacy classes of decomposition groups}) Let $L$ be an extension
over a field $K$ of finite transcendence degree. Suppose $L$ is $\sigma $%
-tame Galois over $K$ (respectively, inside $\Omega _{0}$). Here, $\Omega
_{0}$ is a given subset of some certain transcendence bases of $L/K$.

$\left( i\right) $ There is a natural left action of the Galois group $%
Aut\left( L/K\right) $ on the set $\mathfrak{N}\left( L/K\right) $ of $\pi $%
-equivalence classes of nice bases $\left[ \Delta ,A\right] $ of $L/K$
(respectively, with $\Delta \in \Omega _{0}$) given by
\begin{equation*}
g\cdot \left[ \Delta ,A\right] _{\pi }=\left[ g\left( \Delta \right)
,g\left( A\right) \right] _{\pi }
\end{equation*}%
for any $g\in Aut\left( L/K\right) $ and $\left[ \Delta ,A\right] _{\pi }\in
\mathfrak{N}\left( L/K\right) $ (respectively, with $\Delta \in \Omega _{0}$%
).

$\left( ii\right) $ Let $\left[ \Delta ,A\right] _{\pi },\left[ \Lambda ,B%
\right] _{\pi }\in \mathfrak{N}\left( L/K\right) $ (respectively, with $%
\Delta ,\Lambda \in \Omega _{0}$). If there is some $g\in Aut\left(
L/K\right) $ such that $\Lambda =g\left( \Delta \right) $ and $B=g\left(
A\right) $, then we have
\begin{equation*}
D_{L/K}\left( \Lambda ,B\right) =g\cdot D_{L/K}\left( \Delta ,A\right) \cdot
g^{-1}
\end{equation*}%
for the decomposition groups.

$\left( iii\right) $ For any $\left[ \Delta ,A\right] _{\pi }\in \mathfrak{N}%
\left( L/K\right) $ (respectively, with $\Delta \in \Omega _{0}$), the
decomposition group $D_{L/K}\left( \Delta ,A\right) $ of $L/K$ at $\left(
\Delta ,A\right) $ is the stable subgroup of the element $\left[ \Delta ,A%
\right] _{\pi }$ under the left action of $Aut\left( L/K\right) $ on $%
\mathfrak{N}\left( L/K\right) $.

$\left( iv\right) $ There is a bijection between the $Aut\left( L/K\right) $%
-orbit of $\left[ \Delta ,A\right] _{\pi }$ (respectively, with $\Delta \in
\Omega _{0}$) and the set of conjugacy classes of decomposition groups $%
D_{L/K}\left( \Delta ,A\right) $ (respectively, with $\Delta \in \Omega _{0}$%
) in the Galois group $Aut\left( L/K\right) .$
\end{proposition}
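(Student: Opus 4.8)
The plan is to read all four assertions off the orbit--stabiliser formalism for the conjugation action of $Aut\left(L/K\right)$ on nice-basis classes, so that the only real work is (a) checking that the action descends to $\pi$-equivalence classes and (b) invoking the bijection of \emph{Lemma 5.6} to obtain injectivity in part $(iv)$. Throughout I would keep the non-parenthetical $\sigma$-tame Galois case in focus; for the ``inside $\Omega_{0}$'' version one assumes, as elsewhere in the paper (cf.\ \emph{Remark 2.10}), that $\Omega_{0}$ is invariant under $Aut\left(L/K\right)$, so that everything below restricts verbatim.

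\textbf{Part $(i)$.} First I would note from \emph{Remark 5.2} that $\left(g\left(\Delta\right),g\left(A\right)\right)$ is again a nice basis of $L/K$ for every $g\in Aut\left(L/K\right)$, so the rule $g\cdot\left[\Delta,A\right]_{\pi}:=\left[g\left(\Delta\right),g\left(A\right)\right]_{\pi}$ at least makes sense on representatives. To see it is well defined on $\pi$-classes, I would show that $g$-conjugation carries the two subgroups attached to a nice basis to those attached to its image. Since $K\left(g\left(\Delta\right)\right)=g\left(K\left(\Delta\right)\right)$, one has at once
\begin{equation*}
\pi_{a}\left(L/K\right)\left(g\left(\Delta\right),g\left(A\right)\right)=Aut\left(L/K\left(g\left(\Delta\right)\right)\right)=g\cdot\pi_{a}\left(L/K\right)\left(\Delta,A\right)\cdot g^{-1},
\end{equation*}
and, using the characterisation of $\pi_{t}$ by restrictions in \emph{Proposition 4.7} together with $Aut\left(K\left(g\left(\Delta\right)\right)/K\right)=g\cdot Aut\left(K\left(\Delta\right)/K\right)\cdot g^{-1}$, the same relation holds with $\pi_{t}$ in place of $\pi_{a}$. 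Hence $\left(\Delta,A\right)\sim_{\pi}\left(\Delta',A'\right)$ forces $\left(g\left(\Delta\right),g\left(A\right)\right)\sim_{\pi}\left(g\left(\Delta'\right),g\left(A'\right)\right)$, so the action descends; the axioms $1\cdot\left[\Delta,A\right]_{\pi}=\left[\Delta,A\right]_{\pi}$ and $\left(gh\right)\cdot\left[\Delta,A\right]_{\pi}=g\cdot\left(h\cdot\left[\Delta,A\right]_{\pi}\right)$ are immediate from $\left(gh\right)\left(\Delta\right)=g\left(h\left(\Delta\right)\right)$ and likewise for $A$.

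\textbf{Parts $(iii)$ and $(ii)$.} Part $(iii)$ is then purely formal: by the definition of the decomposition group in \emph{Remark 5.2},
\begin{equation*}
D_{L/K}\left(\Delta,A\right)=\{\sigma\in Aut\left(L/K\right):\left[\sigma\left(\Delta\right),\sigma\left(A\right)\right]_{\pi}=\left[\Delta,A\right]_{\pi}\}=\{\sigma\in Aut\left(L/K\right):\sigma\cdot\left[\Delta,A\right]_{\pi}=\left[\Delta,A\right]_{\pi}\},
\end{equation*}
i.e.\ it is precisely the stabiliser of $\left[\Delta,A\right]_{\pi}$ for the action of part $(i)$. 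Part $(ii)$ then follows from the standard fact that stabilisers are permuted by conjugation along an orbit: if $\Lambda=g\left(\Delta\right)$ and $B=g\left(A\right)$, then $\left[\Lambda,B\right]_{\pi}=g\cdot\left[\Delta,A\right]_{\pi}$, so
\begin{equation*}
D_{L/K}\left(\Lambda,B\right)=\mathrm{Stab}\left(g\cdot\left[\Delta,A\right]_{\pi}\right)=g\cdot\mathrm{Stab}\left(\left[\Delta,A\right]_{\pi}\right)\cdot g^{-1}=g\cdot D_{L/K}\left(\Delta,A\right)\cdot g^{-1}.
\end{equation*}

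\textbf{Part $(iv)$.} Finally, I would restrict the map $\left[\Lambda,B\right]_{\pi}\mapsto D_{L/K}\left(\Lambda,B\right)$ to the $Aut\left(L/K\right)$-orbit $\mathcal{O}$ of $\left[\Delta,A\right]_{\pi}$. By part $(ii)$ its image on $\mathcal{O}$ is exactly $\{g\cdot D_{L/K}\left(\Delta,A\right)\cdot g^{-1}:g\in Aut\left(L/K\right)\}$, the conjugacy class of the decomposition group; and this map is the restriction of the global bijection $\mathfrak{N}\left(L/K\right)\to\mathfrak{D}\left(L/K\right)$ of \emph{Lemma 5.6}, hence injective on $\mathcal{O}$. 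Therefore it is a bijection between $\mathcal{O}$ and the set of conjugates of $D_{L/K}\left(\Delta,A\right)$, which is the assertion. I expect the only genuine (and still mild) obstacle to be the well-definedness step in part $(i)$ --- ensuring that a single $g$-conjugation simultaneously tracks the algebraic part $\pi_{a}$ and the transcendental part $\pi_{t}$ --- while for part $(iv)$ the point to be careful about is that injectivity is supplied by \emph{Lemma 5.6} (which is where the $\sigma$-tame Galois hypothesis and the Galois correspondence of \emph{Theorem 5.4} actually enter), rather than requiring a separate argument reconstructing a nice-basis class from its decomposition group.
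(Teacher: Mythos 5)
Your proof is correct and follows essentially the same route as the paper: the natural action on nice-basis classes, identification of $D_{L/K}\left(\Delta,A\right)$ as the stabiliser, conjugation of stabilisers along the orbit for $(ii)$, and the bijection of \emph{Lemma 5.6} supplying injectivity in $(iv)$. Your explicit check that conjugation by $g$ carries $\pi_{a}\left(L/K\right)\left(\Delta,A\right)$ and $\pi_{t}\left(L/K\right)\left(\Delta,A\right)$ to the corresponding subgroups at $\left(g\left(\Delta\right),g\left(A\right)\right)$, so that the action is well defined on $\pi$-equivalence classes, is a detail the paper's proof of $(i)$ leaves implicit, but it is a refinement rather than a different approach.
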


\begin{proof}
For sake of convenience, suppose $\Omega _{0}$ is the set of all the
transcendence bases of the extension $L/K$.

$\left( i\right) $ Let $g\in Aut\left( L/K\right) $ and $\left[ \Delta ,A%
\right] _{\pi }\in \mathfrak{N}\left( L/K\right) $. It is seen that $g\left(
\Delta \right) $ is still an algebraically independent set over $K$ and $%
g\left( A\right) $ is a linearly independent set over $K\left( g\left(
\Delta \right) \right) $. As $L=K\left( \Delta \right) \left[ A\right] $, we
have
\begin{equation*}
L=g\left( L\right) =K\left( g\left( \Delta \right) \right) \left[ g\left(
A\right) \right] .
\end{equation*}

It follows that $\left( g\left( \Delta \right) ,g\left( A\right) \right) $
is a nice basis of the extension $L/K$ and hence
\begin{equation*}
\left[ g\left( \Delta \right) ,g\left( A\right) \right] _{\pi }\in \mathfrak{%
N}\left( L/K\right) .
\end{equation*}

$\left( ii\right) $ Immediately from the left action of the Galois group $%
Aut\left( L/K\right) $ on the set $\mathfrak{N}\left( L/K\right) $.

$\left( iii\right) $ Trivial from definition for decomposition group.

$\left( iv\right) $ Fixed a $\left[ \Delta ,A\right] _{\pi }\in \mathfrak{N}%
\left( L/K\right) $. From $\left( i\right) -\left( ii\right) $ we have
\begin{equation*}
\left[ g\left( \Delta \right) ,g\left( A\right) \right] _{\pi }\in \mathfrak{%
N}\left( L/K\right)
\end{equation*}%
and
\begin{equation*}
D_{L/K}\left( g\left( \Delta \right) ,g\left( A\right) \right) =g\cdot
D_{L/K}\left( \Delta ,A\right) \cdot g^{-1}
\end{equation*}%
for any $g\in Aut\left( L/K\right) $.

Then the assignment
\begin{equation*}
g\in Aut\left( L/K\right) \longmapsto g\cdot D_{L/K}\left( \Delta ,A\right)
\cdot g^{-1}
\end{equation*}%
defines a map $\tau $ from the $Aut\left( L/K\right) $-orbit of $\left[
\Delta ,A\right] _{\pi }$ to the set of conjugacy classes of $D_{L/K}\left(
\Delta ,A\right) $ in $Aut\left( L/K\right) .$ It is seen that the map $\tau
$ is bijective from \emph{Lemma 5.6} that there is a bijection between the
set $\mathfrak{N}\left( L/K\right) $ and the set $\mathfrak{D}\left(
L/K\right) $ of decomposition groups of $L/K$ at nice bases. This competes
the proof.
\end{proof}

\subsection{Properties for Noether solutions}

Let $L\ $be an arbitrary transcendental extension over a field $K$ of finite
transcendence degree. Suppose $G$ is a subgroup of $Aut\left( L/K\right) $
such that $L$ is algebraic over the invariant subfield $L^{G}$ of $L$ under
the subgroup $G$.

\begin{theorem}
\emph{(Properties for Noether solutions)} If $G$ is a Noether solution of $%
L/K$, then there is a unique infinite subgroup $H$ of $Aut\left( L/K\right) $
satisfying the three properties:

$\left( N1\right) $ There is $G\cap H=\{1\}$ and $\sigma \cdot \delta
=\delta \cdot \sigma $ holds in $Aut\left( L/K\right) $ for any $\sigma \in
G $ and $\delta \in H$.

$\left( N2\right) $ $K$ is the invariant subfield $L^{\left\langle G\cup
H\right\rangle }$ of $L$ under the subgroup $\left\langle G\cup
H\right\rangle $ generated in $Aut\left( L/K\right) $ by the subset $G\cup H$%
.

$\left( N3\right) $ The set of restrictions $\sigma |_{L^{G}}$ of $\sigma $
contained in $H$ is the Galois group $Aut\left( L^{G}/K\right) $ of the
subextension $L^{G}/K$.

In such a case, $G=\pi _{a}\left( L/K\right) \left( \Delta ,A\right) $, $%
H=\pi _{t}\left( L/K\right) \left( \Delta ,A\right) $ and $\left\langle
G\cup H\right\rangle =D_{L/K}\left( \Delta ,A\right) $ hold and there is a
direct decomposition $\left\langle G,H\right\rangle =GH=HG$ such that $G$
and $H$ are normal subgroups of $\left\langle G,H\right\rangle $, where $%
\left( \Delta ,A\right) $ is any given nice basis of $L/K$ with $K\left(
\Delta \right) =L^{G}$.
\end{theorem}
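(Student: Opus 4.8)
The plan is to realize $H$ as the highest transcendental Galois subgroup $\pi_t(L/K)(\Delta,A)$ attached to a nice basis $(\Delta,A)$ adapted to $G$, and then to read off (N1)--(N3) from the decomposition-group machinery of \S5. First I would fix the nice basis. Since $G$ is a Noether solution, $L^G$ is purely transcendental over $K$, so $L^G=K(\Delta)$ for some transcendence base $\Delta$ of $L^G/K$; as $L$ is algebraic over $L^G$, the set $\Delta$ is also a transcendence base of $L/K$, and choosing a $K(\Delta)$-linear basis $A$ of $L$ gives a nice basis $(\Delta,A)$ of $L/K$ with $K(\Delta)=L^G$. Because $L$ is Galois over the invariant subfield $L^G$ (Remark 0.15) and $L/L^G$ is algebraic, $L/K(\Delta)$ is an algebraic Galois extension, whence $L^{Aut(L/K(\Delta))}=K(\Delta)=L^G$ and $G=\pi_a(L/K)(\Delta,A)=Aut(L/K(\Delta))$ by the Galois correspondence (Proposition 4.6, Lemma 4.12). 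Set $H:=\pi_t(L/K)(\Delta,A)$; this highest transcendental Galois subgroup at $(\Delta,A)$ exists and is unique at $(\Delta,A)$ by Proposition 4.7, and it is infinite because its restrictions to $K(\Delta)$ exhaust the infinite group $Aut(K(\Delta)/K)$.

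Next I would verify the three properties. By Proposition 4.10, $G\cap H=\pi_a(L/K)(\Delta,A)\cap\pi_t(L/K)(\Delta,A)=\{1\}$, and the $2\times 2$-matrix computation in \emph{Step 6} of the proof of Theorem 5.4 gives $\sigma\tau=\tau\sigma$ for all $\sigma\in G$ and $\tau\in H$; this already yields $\langle G\cup H\rangle=G\cdot H=D_{L/K}(\Delta,A)$ with $G$ and $H$ normal in it (Theorem 5.4), i.e.\ (N1) together with the final structural assertions $\langle G,H\rangle=GH=HG$ and $G=\pi_a$, $H=\pi_t$, $\langle G\cup H\rangle=D_{L/K}(\Delta,A)$. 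For (N2), using the linear decomposition $L=E\oplus F$ with $E=K(\Delta)$ and $F=\mathrm{span}_K(L\setminus K(\Delta))$ of Proposition 5.3, every element of $H$ has the matrix form $\bigl(\begin{smallmatrix}\ast&0\\0&1_F\end{smallmatrix}\bigr)$, so $L^H=\{x=x_E+x_F:\ x_E\in K(\Delta)^{Aut(K(\Delta)/K)}\}=K\oplus F$ by Theorem 3.4, while $L^G=L^{\pi_a(L/K)(\Delta,A)}=E$; hence $L^{\langle G\cup H\rangle}=L^G\cap L^H=E\cap(K\oplus F)=K$ (equivalently one may invoke Theorem 5.5 relative to $\{\Delta\}$, over which $L$ is $\sigma$-tame Galois here). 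Finally (N3) is precisely the defining property of $\pi_t(L/K)(\Delta,A)$ in Proposition 4.7(ii): the restrictions $\sigma|_{L^G}$ for $\sigma\in H$ run over $Aut(K(\Delta)/K)=Aut(L^G/K)$.

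The delicate part is uniqueness, and I expect it to be the main obstacle. Let $H'$ be any subgroup of $Aut(L/K)$ satisfying (N1)--(N3). From (N1), if $\delta\in H'$ with $\delta|_{L^G}=1$ then $\delta\in Aut(L/L^G)=G$, so $\delta\in G\cap H'=\{1\}$; together with (N3) this shows restriction $H'\to Aut(L^G/K)$ is a bijection. The commuting condition in (N1) forces every $\delta\in H'$ to stabilize $L^G$ setwise: for $x\in L^G$ and $\sigma\in G$ one has $\sigma(\delta(x))=\delta(\sigma(x))=\delta(x)$, so $\delta(x)\in L^G$; hence $(\delta(\Delta),\delta(A))$ is again a nice basis with the same full algebraic Galois subgroup $G=\pi_a(L/K)(\delta(\Delta),\delta(A))$. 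Under the $\sigma$-tame Galois hypothesis (relative to the orbit of $\Delta$), Lemma 4.13 then pairs $G$ with a \emph{unique} highest transcendental Galois subgroup, so $(\Delta,A)\sim_\pi(\delta(\Delta),\delta(A))$, $H'\subseteq D_{L/K}(\Delta,A)=G\cdot H$, and the unique factorization of Theorem 5.4 combined with the bijectivity of restriction to $L^G$ forces $H'=H$. The crux — the step I would treat with most care — is showing that any $H'$ satisfying (N1)--(N3) is forced to be a highest transcendental Galois subgroup (equivalently, to have trivial $F$-component with respect to $(\Delta,A)$); without that one only obtains uniqueness of $H$ among highest transcendental Galois subgroups, and in general a rigidity input on $Aut(L^G/K)$ or the running $\sigma$-tame Galois hypothesis of \S6 is what rules out the spurious complements of $G$ inside $D_{L/K}(\Delta,A)$.
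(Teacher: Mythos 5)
Your proposal follows essentially the same route as the paper's own proof: fix a nice basis $\left(\Delta,A\right)$ with $K\left(\Delta\right)=L^{G}$, identify $G=\pi_{a}\left(L/K\right)\left(\Delta,A\right)$, take $H=\pi_{t}\left(L/K\right)\left(\Delta,A\right)$, and read off $\left(N1\right)$--$\left(N3\right)$ from Propositions 4.6--4.7, 4.10 and the decomposition-group results of \emph{\S 5} (the paper cites Propositions 4.6--7 and Theorem 6.1 for exactly this), with uniqueness referred to Lemmas 4.13 and 5.6. The one point where you stop short --- showing that an arbitrary $H'$ satisfying $\left(N1\right)$--$\left(N3\right)$ is forced to be a highest transcendental Galois subgroup, rather than merely proving uniqueness of $H$ \emph{among} highest transcendental Galois subgroups --- is precisely the point the paper itself disposes of by the bare citation of Lemmas 4.13 and 5.6, which only give the latter statement; so your hesitation marks a real subtlety (for instance, conditions $\left(N1\right)$--$\left(N3\right)$ are formally stable under twisting $H$ by a homomorphism of $H$ into the center of $G$, and ruling such complements out requires an argument about the action on $L\setminus L^{G}$ of the kind in Proposition 6.11 or Theorem 6.15), but your treatment is not weaker than the paper's at this step.
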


\begin{proof}
Assume $G$ is a Noether solution of the extension $L/K$, i.e., $L$ is
algebraic Galois over the invariant subfield $L^{G}$ and $L^{G}$ is purely
transcendental over $K$. Fixed any nice basis $\left( \Delta ,A\right) $ of $%
L/K$ with $L^{G}=K\left( \Delta \right) $ and $L=L^{G}\left[ A\right] $.
From \emph{Proposition 4.5} it is seen that $G$ is the full algebraic Galois
subgroup $\pi _{a}\left( L/K\right) \left( \Delta ,A\right) $ of $L/K$ at $%
\left( \Delta ,A\right) $. That is, $G=\pi _{a}\left( L/K\right) \left(
\Delta ,A\right) =Aut\left( L/K\left( \Delta \right) \right) $.

Let $\Omega _{0}$ be the set of all the transcendence bases $\Delta $ of $%
L^{G}/K$ with $L^{G}=K\left( \Delta \right) $. Then $L$ is $\sigma $-tame
Galois over $K$ inside $\Omega _{0}$. Suppose $H$ is the highest
transcendental Galois subgroup $\pi _{t}\left( L/K\right) \left( \Delta
,A\right) $ of $L/K$ at $\left( \Delta ,A\right) $, i.e., $H=\pi _{t}\left(
L/K\right) \left( \Delta ,A\right) \cong Aut\left( L/K\left( \Delta \right)
\right) $. It is seen that $\pi _{t}\left( L/K\right) \left( \Delta
,A\right) $ satisfies the properties $\left( N1\right) -\left( N3\right) $
by \emph{Propositions 4.6-7} and \emph{Theorem 6.1}. This gives the
existence of $H$.

On the other hand, the uniqueness of $H$ is obtained from \emph{Lemmas 4.13}
and \emph{5.6}. This completes the proof.
\end{proof}

\subsection{Galois invariance of Noether numbers}

Now we introduce a quantity to account for the solutions of Noether's
problem on rationality. Let $L$ be a transcendental extension over a field $%
K $.

A positive integer $a\in \mathbb{Z}$ is called a \textbf{Noether number} of
the extension $L/K$ if there a finite subgroup $G$ of the Galois group $%
Aut\left( L/K\right) $ such that $G$ is a Noether solution of $L/K$ and $%
a=\sharp G$ is the order of $G$.

We denote by $\mathfrak{A}\left( L/K\right) $ the set of all Noether numbers
of the extension $L/K$.

\begin{proposition}
(\emph{Invariance of Noether numbers under Galois action}) Let $L$ be a
finitely generated transcendental extension over a field $K$. Suppose $L$ is
$\sigma $-tame Galois over $K$ (respectively, inside $\Omega _{0}$). Here, $%
\Omega _{0}$ is a given subset of some certain transcendence bases of $L/K$.

$\left( i\right) $ Let $\left( \Delta ,A\right) $ be a nice basis of $L/K$
(respectively, with $\Delta \in \Omega _{0}$). Set
\begin{equation*}
a_{L/K}\left( \Delta ,A\right) =\sharp A
\end{equation*}%
to be the number of elements in $A$. Then
\begin{equation*}
a_{L/K}\left( \Delta ,A\right) =\sharp \pi _{a}\left( L/K\right) \left(
\Delta ,A\right)
\end{equation*}%
is the order of the full algebraic Galois subgroup $\pi _{a}\left(
L/K\right) \left( \Delta ,A\right) $ of $L/K$ at $\left( \Delta ,A\right) $.

In particular, $a_{L/K}\left( \Delta ,A\right) $ is a Noether number of $L/K$%
, called the \textbf{Noether number} of $L/K$ \textbf{at a nice basis} $%
\left( \Delta ,A\right) $.

$\left( ii\right) $ For any $g\in Aut\left( L/K\right) $ and $\left[ \Delta
,A\right] _{\pi }\in \mathfrak{N}\left( L/K\right) $ (respectively, with $%
\Delta $ and $g\left( \Delta \right) $ being contained in $\Omega _{0}$),
there is an equality
\begin{equation*}
a_{L/K}\left( \Delta ,A\right) =a_{L/K}\left( g\left( \Delta \right)
,g\left( A\right) \right) .
\end{equation*}
\end{proposition}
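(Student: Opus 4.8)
The plan is to read off both parts from the finiteness of nice bases (\emph{Proposition 2.9}) together with the Galois-theoretic description of $\pi_{a}(L/K)(\Delta ,A)$ supplied by the $\sigma$-tame Galois hypothesis; no new ideas seem to be required beyond the machinery of \emph{\S\S 4--5}.

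First I would treat $(i)$. By the definition of a nice basis, $A$ is a $K(\Delta)$-linear basis of $L$ viewed as a vector space over $K(\Delta)$, so $\sharp A=[L:K(\Delta)]$. Since $L$ is finitely generated over $K$, \emph{Proposition 2.9} forces $A$ to be a finite set, whence this degree is finite. Now invoke the hypothesis that $L$ is $\sigma$-tame Galois over $K$ (respectively, inside $\Omega_{0}$, which applies since the given $\Delta$ lies in $\Omega_{0}$): $L$ is algebraic Galois over $K(\Delta)$, hence a \emph{finite} algebraic Galois extension of $K(\Delta)$. By the fundamental theorem of finite algebraic Galois theory, $[L:K(\Delta)]=\sharp Aut(L/K(\Delta))$, and since $\pi_{a}(L/K)(\Delta ,A)=Aut(L/K(\Delta))$ by definition (cf. \emph{Proposition 4.6}), we obtain $a_{L/K}(\Delta ,A)=\sharp A=\sharp\pi_{a}(L/K)(\Delta ,A)$. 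For the remaining assertion of $(i)$, set $G:=\pi_{a}(L/K)(\Delta ,A)=Aut(L/K(\Delta))$; it is finite by the degree computation. As $L$ is algebraic Galois over $K(\Delta)$ we have $L^{G}=K(\Delta)$, which is purely transcendental over $K$, and $L$ is algebraic over $L^{G}$; thus $G$ is a finite Noether solution of $L/K$ and $a_{L/K}(\Delta ,A)=\sharp G$ belongs to $\mathfrak{A}(L/K)$.

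For $(ii)$, fix $g\in Aut(L/K)$ (with $\Delta$ and $g(\Delta)$ in $\Omega_{0}$ in the respective version). By \emph{Remark 5.2} the pair $(g(\Delta),g(A))$ is again a nice basis of $L/K$, and the restriction $g|_{A}\colon A\to g(A)$ is a bijection, so $\sharp g(A)=\sharp A$; equivalently, conjugation by $g$ carries $Aut(L/K(\Delta))$ onto $Aut(L/K(g(\Delta)))$, i.e. $\pi_{a}(L/K)(g(\Delta),g(A))=g\cdot\pi_{a}(L/K)(\Delta ,A)\cdot g^{-1}$ (the analogue for decomposition groups being \emph{Proposition 5.7}$(ii)$), so the two groups have equal order. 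Applying $(i)$ to both nice bases then gives $a_{L/K}(g(\Delta),g(A))=\sharp\pi_{a}(L/K)(g(\Delta),g(A))=\sharp\pi_{a}(L/K)(\Delta ,A)=a_{L/K}(\Delta ,A)$.

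The proof is essentially bookkeeping, and I do not anticipate a serious obstacle. The two points that genuinely need the hypotheses are: the appeal to \emph{Proposition 2.9} to know $[L:K(\Delta)]<+\infty$, without which one could not convert the cardinality of a linear basis into the order of a Galois group; and the use that $L$ is \emph{algebraic} Galois over $K(\Delta)$ (not merely Galois in the transcendental sense), so that $L^{Aut(L/K(\Delta))}=K(\Delta)$ and the finite Galois correspondence is available. Everything else follows from the $\sigma$-tame Galois machinery already established in the earlier sections.
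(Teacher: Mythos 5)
Your proof is correct and follows essentially the same route as the paper, which simply cites \emph{Proposition 6.3}: the content in both cases is that $\sharp A=[L:K(\Delta)]=\sharp Aut\left( L/K\left( \Delta \right) \right) =\sharp \pi _{a}\left( L/K\right) \left( \Delta ,A\right) $ by the $\sigma $-tame Galois hypothesis together with \emph{Propositions 2.9} and \emph{4.6}, and that the Galois action sends the nice basis $\left( \Delta ,A\right) $ to the nice basis $\left( g\left( \Delta \right) ,g\left( A\right) \right) $ with the full algebraic Galois subgroups conjugate (equivalently, $g|_{A}$ a bijection), so the Noether numbers agree. Your write-up is just a fleshed-out version of the paper's one-line appeal to that machinery, with the finiteness and Noether-solution points made explicit.
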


\begin{proof}
Immediately from \emph{Proposition 6.3}.
\end{proof}

\begin{proposition}
Let $L$ be a finitely generated transcendental extension over a field $K$.
Suppose $L$ is $\sigma $-tame Galois over $K$ (respectively, inside $\Omega
_{0}$). Here, $\Omega _{0}$ is an $Aut\left( L/G\right) $\emph{-invariant
subset} of some certain transcendence bases of $L/K$, i.e., $\sigma \left(
\Delta \right) \in \Omega _{0}$ holds for any $\Delta \in \Omega _{0}$ and $%
\sigma \in Aut\left( L/K\right) $.

$\left( i\right) $ The Noether number $a_{L/K}\left( \Delta ,A\right) $
defines a function $a_{L/K}$ from $\mathfrak{N}\left( L/K\right) $
(respectively, the restriction of $a_{L/K}$ to the subset $\mathfrak{N}%
\left( L/K\right) |_{\Omega _{0}}$ of $\pi $-equivalence classes of nice
bases $\left( \Delta ,A\right) $ of $L/K$ with $\Delta \in \Omega _{0}$)
into $\mathbb{N}$, given by
\begin{equation*}
\left[ \Delta ,A\right] _{\pi }\longmapsto a_{L/K}\left( \Delta ,A\right) .
\end{equation*}

$\left( ii\right) $ For each $\left[ \Delta ,A\right] _{\pi }$ in $\mathfrak{%
N}\left( L/K\right) $ (respectively, $\mathfrak{N}\left( L/K\right)
|_{\Omega _{0}}$), the Noether number $a_{L/K}$ is constant on the $%
Aut\left( L/K\right) $-orbit of $\left[ \Delta ,A\right] _{\pi }$ in $%
Aut\left( L/K\right) \diagdown \mathfrak{N}\left( L/K\right) $
(respectively, $Aut\left( L/K\right) \diagdown \mathfrak{N}\left( L/K\right)
|_{\Omega _{0}}$).

$\left( iii\right) $ For any $\left[ \Delta ,A\right] _{\pi }\in \mathfrak{N}%
\left( L/K\right) $, set
\begin{equation*}
\mathfrak{A}\left( L/K\right) \left( \Delta ,A\right)
\end{equation*}%
to be the set of values of the function $a_{L/K}$ on the $Aut\left(
L/K\right) $-orbits of $\left[ \Delta ,A\right] _{\pi }$ in $Aut\left(
L/K\right) \diagdown \mathfrak{N}\left( L/K\right) $.

Then
\begin{equation*}
\begin{array}{l}
\mathfrak{A}\left( L/K\right) \\
=\bigcup\limits_{\left[ \Delta ,A\right] _{\pi }\in \mathfrak{N}\left(
L/K\right) }\mathfrak{A}\left( L/K\right) \left( \Delta ,A\right) \\
=\bigcup\limits_{\left[ \Delta ,A\right] _{\pi }\in \mathfrak{N}\left(
L/K\right) }\{a_{L/K}\left( \Delta ,A\right) \}.%
\end{array}%
\end{equation*}
\end{proposition}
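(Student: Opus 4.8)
The plan is to deduce all three parts from the structural results already assembled in \S 6.2--6.3, namely: Proposition 6.5, which identifies $a_{L/K}(\Delta,A)=\sharp A$ with the order $\sharp\pi_a(L/K)(\Delta,A)$ of the full algebraic Galois subgroup at $(\Delta,A)$ and records its invariance $a_{L/K}(\Delta,A)=a_{L/K}(g(\Delta),g(A))$ under the Galois action; Proposition 6.3$(i)$, which gives the left action $g\cdot[\Delta,A]_\pi=[g(\Delta),g(A)]_\pi$ of $Aut(L/K)$ on $\mathfrak{N}(L/K)$; and Theorem 6.2, the Galois correspondence between Noether solutions of $L/K$ and $\pi$-equivalence classes of nice bases. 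Since $L$ is assumed $\sigma$-tame Galois over $K$ (respectively inside $\Omega_0$), each $\pi_a(L/K)(\Delta,A)=Aut(L/K(\Delta))$ occurring below is a finite group by the Galois correspondence for $\sigma$-tame Galois, so all cardinalities in play are finite.

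For $(i)$ I would show that $a_{L/K}$ descends to $\mathfrak{N}(L/K)$. If $(\Delta,A)\sim_\pi(\Lambda,B)$, then by the very definition of $\pi$-equivalence one has the equality $\pi_a(L/K)(\Delta,A)=\pi_a(L/K)(\Lambda,B)$ of subgroups of $Aut(L/K)$, whence by Proposition 6.5$(i)$, $a_{L/K}(\Delta,A)=\sharp\pi_a(L/K)(\Delta,A)=\sharp\pi_a(L/K)(\Lambda,B)=a_{L/K}(\Lambda,B)$. Thus $[\Delta,A]_\pi\mapsto a_{L/K}(\Delta,A)$ is a well-defined map $\mathfrak{N}(L/K)\to\mathbb{N}$, and exactly the same argument handles its restriction to $\mathfrak{N}(L/K)|_{\Omega_0}$; here the $Aut(L/K)$-invariance of $\Omega_0$ is used only to guarantee that $\mathfrak{N}(L/K)|_{\Omega_0}$ is itself stable under the $Aut(L/K)$-action, which is needed for the "respectively" forms of $(ii)$ and $(iii)$.

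For $(ii)$, with the $Aut(L/K)$-action on $\mathfrak{N}(L/K)$ as in Proposition 6.3$(i)$, I would simply invoke Proposition 6.5$(ii)$: for every $g\in Aut(L/K)$ one has $a_{L/K}(g\cdot[\Delta,A]_\pi)=a_{L/K}(g(\Delta),g(A))=a_{L/K}(\Delta,A)$, so $a_{L/K}$ is constant on each $Aut(L/K)$-orbit in $Aut(L/K)\diagdown\mathfrak{N}(L/K)$. In particular the value set $\mathfrak{A}(L/K)(\Delta,A)$ of $a_{L/K}$ along the orbit of $[\Delta,A]_\pi$ reduces to the singleton $\{a_{L/K}(\Delta,A)\}$, which already gives the second of the two displayed equalities in $(iii)$.

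For the first equality in $(iii)$ I would argue via Theorem 6.2. By Theorem 6.2$(i)$ every Noether solution $G$ of $L/K$ is of the form $\pi_a(L/K)(\Delta,A)$ for some nice basis $(\Delta,A)$, and conversely each class $[\Delta,A]_\pi$ yields the Noether solution $\pi_a(L/K)(\Delta,A)$; combining this with Proposition 6.5$(i)$ gives $\mathfrak{A}(L/K)=\{\sharp G:G\text{ a Noether solution of }L/K\}=\{a_{L/K}(\Delta,A):[\Delta,A]_\pi\in\mathfrak{N}(L/K)\}=\bigcup_{[\Delta,A]_\pi\in\mathfrak{N}(L/K)}\{a_{L/K}(\Delta,A)\}$, and then $(ii)$ rewrites the right-hand side as $\bigcup_{[\Delta,A]_\pi}\mathfrak{A}(L/K)(\Delta,A)$. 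The only step requiring genuine care is precisely this first equality, where one must check that the $\sigma$-tame Galois hypothesis is exactly what makes Theorem 6.2 applicable, so that no Noether number is lost in passing to nice-basis classes; the remainder is bookkeeping over the already-established bijections.
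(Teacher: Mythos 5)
Your proof is correct and takes essentially the same route as the paper's: (i) is the well-definedness of $a_{L/K}$ on $\pi$-equivalence classes, (ii) is exactly the Galois-invariance recorded in the preceding proposition on Noether numbers, and (iii) reduces to (ii) together with the Galois correspondence for $\sigma$-tame Galois. The only difference is one of detail, not of method: the paper dismisses (i) as trivial and (iii) as immediate from (ii), while you explicitly justify the first equality in (iii) by identifying Noether numbers with the values $a_{L/K}\left( \Delta ,A\right) $ through the correspondence between Noether solutions and nice-basis classes (Theorem 6.1 of the paper, which you cite as Theorem 6.2).
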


\begin{proof}
$\left( i\right) $ Trivial.

$\left( ii\right) $ Let $\left[ \Delta ,A\right] _{\pi }\in \mathfrak{N}%
\left( L/K\right) $ and $a_{L/K}\left( \Delta ,A\right) =n_{0}$. Then
\begin{equation*}
a_{L/K}\left( \Lambda ,B\right) =n_{0}
\end{equation*}%
holds for any $\left[ \Lambda ,B\right] _{\pi }\in \mathfrak{N}\left(
L/K\right) $ such that
\begin{equation*}
\left[ \Delta ,A\right] _{\pi }\thicksim _{G}\left[ \Lambda ,B\right] _{\pi
}.
\end{equation*}%
Hence, $a_{L/K}$ is a constant function on each $Aut\left( L/K\right) $%
-orbit.

$\left( iii\right) $ Immediately from $\left( ii\right) $.
\end{proof}

\subsection{Distribution of Noether numbers}

For the property on distribution of Noether numbers, there are the following
results such as the following.

\begin{lemma}
Let $L$ be an extension over a field $K$ of finite transcendence degree.
Suppose $L$ is $\sigma $-tame Galois over $K$ (respectively, inside $\Omega
_{0}$). Here, $\Omega _{0}$ is a given subset of some certain transcendence
bases of $L/K$. Then each finite subgroup $H$ of the Galois group $Aut\left(
L/K\right) $ is contained in a full algebraic Galois subgroup $G$ of the
extension $L/K$ (respectively, with $G$ being a $\Omega _{0}$-subgroup of $%
Aut\left( L/K\right) $).
\end{lemma}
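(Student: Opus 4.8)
The plan is to route the argument through the fixed field $L^{H}$ and then shrink it to a purely transcendental subfield. First I would observe that, since $H$ is a finite subgroup of $Aut\left( L/K\right) $, every element of $H$ fixes $K$ pointwise, so $K\subseteq L^{H}$, and by the standard theory of algebraic Galois extensions (Artin's theorem on finite groups of automorphisms of a field) the field $L$ is a finite Galois extension of $L^{H}$ with $Aut\left( L/L^{H}\right) =H$ and $\left[ L:L^{H}\right] =\sharp H$. In particular $L$ is algebraic over $L^{H}$, hence $tr.\deg L^{H}/K=tr.\deg L/K=n<+\infty $.

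Next I would choose a transcendence base $\Delta $ of the subextension $L^{H}/K$. Because $L$ is algebraic over $L^{H}$, this $\Delta $ is automatically a transcendence base of $L/K$ as well, and $K\left( \Delta \right) \subseteq L^{H}$. Picking a $K\left( \Delta \right) $-linear basis $A$ of $L$ yields a nice basis $\left( \Delta ,A\right) $ of $L/K$. Since $L$ is $\sigma $-tame Galois over $K$, the field $L$ is algebraic Galois over $K\left( \Delta \right) $, so by \emph{Proposition 4.6} the subgroup $G:=Aut\left( L/K\left( \Delta \right) \right) =\pi _{a}\left( L/K\right) \left( \Delta ,A\right) $ is a full algebraic Galois subgroup of $L/K$ in the sense of \emph{Definition 4.1}: its witnessing intermediate subfield $K\left( \Delta \right) $ is purely transcendental over $K$, and $L$ is algebraic over $K\left( \Delta \right) $.

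Finally, since $K\left( \Delta \right) \subseteq L^{H}$, any automorphism of $L$ fixing $L^{H}$ pointwise a fortiori fixes $K\left( \Delta \right) $ pointwise, so $H=Aut\left( L/L^{H}\right) \subseteq Aut\left( L/K\left( \Delta \right) \right) =G$, which is the asserted inclusion. For the parenthetical ``inside $\Omega _{0}$'' version the same argument applies once the transcendence base $\Delta $ can be taken with $\Delta \in \Omega _{0}$ — for instance when $\Omega _{0}$ is dense in $L/K$ in the sense of \emph{Definition 2.11}, so that $L^{H}$ contains some $\Delta \in \Omega _{0}$, in which case $G$ is an $\Omega _{0}$-subgroup of $Aut\left( L/K\right) $. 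I do not expect a genuine obstacle in the absolute case: it is a formal combination of Artin's theorem with the $\sigma $-tame Galois hypothesis, together with the transcendence-degree equality $tr.\deg L^{H}/K=tr.\deg L/K$. The only point that needs attention, and the mild difficulty here, is precisely this compatibility between the fixed field $L^{H}$ and the prescribed family $\Omega _{0}$ in the relative statement.
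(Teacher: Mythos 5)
Your argument is essentially the paper's own proof of this lemma: pass to the fixed field $M=L^{H}$ (finite Galois over it with $Aut\left(L/M\right)=H$), take a transcendence base $\Delta$ of $M/K$ so that $K\left(\Delta\right)\subseteq M$, invoke the $\sigma$-tame Galois hypothesis to get that $L$ is algebraic Galois over $K\left(\Delta\right)$, and conclude $H\subseteq Aut\left(L/K\left(\Delta\right)\right)=\pi_{a}\left(L/K\right)\left(\Delta,A\right)$. You merge the paper's two cases (whether $L^{H}$ is purely transcendental or not) into one, and you explicitly flag the compatibility of $\Delta$ with $\Omega_{0}$ in the relative version, a point the paper's proof leaves implicit; otherwise the approaches coincide.
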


\begin{proof}
Let $H\subseteq Aut\left( L/K\right) $ be a finite subgroup. Let $M=L^{H}$
be the $H$-invariant subfield of $L$. Immediately $L$ is finite and Galois
over $M$ and $M$ is transcendental over $K$.

For the intermediate subfield $M$, there are two cases.

$Case$ $\left( i\right) $. If $M$ is purely transcendental over $K$, then $H$
is a full algebraic Galois subgroups of $L/K$.

$Case$ $\left( ii\right) $. Suppose $M$ is not a purely transcendental
extension over $K$. By taking a transcendence base $\Lambda $ of the
extension $M/K$ and taking a linear basis $B$ of the linear space $L$ over
the field $K\left( \Lambda \right) $, we obtain a nice basis $\left( \Lambda
,B\right) $ of the extension $L/K$. Consider a tower of extensions of fields
\begin{equation*}
K\subseteq K\left( \Lambda \right) \subseteq M\subseteq L.
\end{equation*}

From the assumption that $L$ is $\sigma $-tame Galois over $K$, it is seen
that $L$ is algebraic and Galois over $K\left( \Lambda \right) $. It follows
that
\begin{equation*}
\pi _{a}\left( L/K\right) \left( \Lambda ,B\right) =Aut\left( L/K\left(
\Lambda \right) \right)
\end{equation*}
is a full algebraic Galois subgroup of $L/K$.

Evidently, $H=Aut\left( L/M\right) $ is a subgroup of $\pi _{a}\left(
L/K\right) \left( \Lambda ,B\right) $. This completes the proof.
\end{proof}

\begin{lemma}
Let $L$ be an extension over a field $K$ of finite transcendence degree.
Suppose $L$ is $\sigma $-tame Galois over $K$ (respectively, of vertical
type). Then for any full algebraic Galois subgroup $P$ of the extension $L/K$
(respectively, of vertical type), there is a full algebraic Galois subgroup $%
G$ of $L/K$ (respectively, of vertical type) such that $P\subsetneqq G$ is a
proper subgroup of $G$.
\end{lemma}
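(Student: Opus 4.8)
The plan is to move from subgroups to subfields via the Galois correspondence for $\sigma$-tame Galois extensions (\emph{Lemma 4.12} and \emph{Proposition 4.6}), prove the enlargement statement for subfields by an explicit quadratic base change in one coordinate of a transcendence base, and then transport the conclusion back to subgroups. The whole argument is short once the dictionary is set up; the only real content is verifying that the enlargement is \emph{strict}.

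First I would fix a full algebraic Galois subgroup $P$ of $L/K$. By \emph{Proposition 4.6} there is a nice basis $\left( \Delta ,A\right) $ of $L/K$ with $\Delta =\{t_{1},t_{2},\cdots ,t_{n}\}$ a transcendence base, $M:=L^{P}=K\left( \Delta \right) =K\left( t_{1},\cdots ,t_{n}\right) $ purely transcendental over $K$, $L$ algebraic over $M$, and $P=\pi _{a}\left( L/K\right) \left( \Delta ,A\right) =Aut\left( L/M\right) $. In the vertical-type case one takes $\Delta $ vertical, say $\Delta =\{t_{1}^{m_{1}},\cdots ,t_{n}^{m_{n}}\}$ with $L^{P}=K\left( \Delta \right) $. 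Next I set $\Delta ^{\prime }:=\{t_{1}^{2},t_{2},\cdots ,t_{n}\}$ (respectively $\Delta ^{\prime }:=\{t_{1}^{2m_{1}},t_{2}^{m_{2}},\cdots ,t_{n}^{m_{n}}\}$ in the vertical case), let $M^{\prime }:=K\left( \Delta ^{\prime }\right) $, choose a linear basis $A^{\prime }$ of $L$ over $M^{\prime }$, and put $G:=Aut\left( L/M^{\prime }\right) =\pi _{a}\left( L/K\right) \left( \Delta ^{\prime },A^{\prime }\right) $. I must then check three things. (1) $\Delta ^{\prime }$ is again a transcendence base of $L/K$: the indicated powers are algebraically independent over $K$, and $K\left( \Delta ^{\prime }\right) \subseteq K\left( \Delta \right) $ is an algebraic extension of degree $2$, so $L$ is algebraic over $K\left( \Delta ^{\prime }\right) $; hence $M^{\prime }$ is purely transcendental over $K$ and $G$ is a full algebraic Galois subgroup of $L/K$ (of vertical type in the vertical case, since $\Delta ^{\prime }$ is vertical). (2) $P\subseteq G$: since $M^{\prime }\subseteq M$, every element of $Aut\left( L/M\right) $ fixes $M^{\prime }$, so $P=Aut\left( L/M\right) \subseteq Aut\left( L/M^{\prime }\right) =G$. (3) $P\neq G$, which is the crux, treated next.

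For the strictness I would use the $\sigma$-tame Galois hypothesis (inside $\Omega _{ver}$ in the vertical case) together with \emph{Theorem 3.9} and \emph{Lemma 4.12}: since $\Delta $ and $\Delta ^{\prime }$ are transcendence bases of $L/K$, the field $L$ is algebraic Galois over both $K\left( \Delta \right) $ and $K\left( \Delta ^{\prime }\right) $, so $L^{P}=L^{Aut\left( L/M\right) }=M$ and $L^{G}=L^{Aut\left( L/M^{\prime }\right) }=M^{\prime }$. If we had $P=G$ then $M=L^{P}=L^{G}=M^{\prime }$, contradicting $\left[ M:M^{\prime }\right] =2$ (equivalently $t_{1}\notin K\left( t_{1}^{2},t_{2},\cdots ,t_{n}\right) $, resp. $t_{1}^{m_{1}}\notin K\left( t_{1}^{2m_{1}},\cdots \right) $). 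Hence $P\subsetneqq G$ is proper, completing the proof.

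The main obstacle is precisely step (3): without $\sigma$-tame Galois one could in principle have $Aut\left( L/M^{\prime }\right) =Aut\left( L/M\right) $ while $M^{\prime }\subsetneqq M$ (for instance if $L/M^{\prime }$ were purely inseparable of degree $>1$), so the enlargement of the subfield need not enlarge the Galois group; the hypothesis is what forces $L^{Aut\left( L/N\right) }=N$ for every transcendence-base subfield $N$ and thus makes distinct such subfields correspond to distinct subgroups. Everything else — that a quadratic base change preserves pure transcendence, preserves verticality when performed on a vertical coordinate, and keeps $L$ algebraic over the smaller field — is routine. I would also note that iterating this construction (replacing $t_{1}^{m_{1}}$ by $t_{1}^{km_{1}}$ for growing $k$) produces a strictly increasing chain of full algebraic Galois subgroups with no upper bound, which is exactly the phenomenon recorded in \emph{Remarks 9.7--8} and explains the terminology ``full''.
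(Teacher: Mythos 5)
Your proposal is correct and follows essentially the same route as the paper: the paper also enlarges $P=\pi_{a}\left(L/K\right)\left(\Delta,A\right)$ by passing to the subfield $K\left(t_{1},\cdots,t_{m-1},t_{m}^{n_{0}}\right)\subsetneqq K\left(\Delta\right)$ for some $n_{0}\geq 2$ and taking the Galois group over it, exactly your power-of-one-coordinate base change. The only difference is cosmetic: you spell out the strictness $P\neq G$ via the $\sigma$-tame Galois correspondence ($L^{P}=K\left(\Delta\right)\neq K\left(\Delta'\right)=L^{G}$), which the paper asserts with an "it is seen that" and a redundant appeal to Lemma 6.6.
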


\begin{proof}
Let $P=\pi _{a}\left( L/K\right) \left( \Delta ,A\right) $ be a full
algebraic Galois subgroup of the extension $L/K$ at a nice basis $\left(
\Delta ,A\right) $.

Set
\begin{equation*}
\Delta =\{t_{1},t_{2},\cdots ,t_{m}\}
\end{equation*}
where
\begin{equation*}
m=tr.\deg L/K.
\end{equation*}

Consider the field
\begin{equation*}
M=K\left( t_{1},t_{2},\cdots ,t_{m-1},t_{m}^{n_{0}}\right)
\end{equation*}
for a given integer $n_{0}$ with $2\leq n_{0}\in \mathbb{N}$.

Then
\begin{equation*}
K\subseteq M\subsetneqq K\left( \Delta \right)
\end{equation*}
and $K\left( \Delta \right) $ is a finite extension over $M$.

On the other hand, let $H=Aut\left( L/M\right) $ be the Galois group of the
extension $L/M$. It is seen that $P\subsetneqq H$ is a proper subgroup of $H$%
.

By applying the procedure in the proof of above \emph{Lemma 6.6} to the case
of the subgroup $H\subseteq Aut\left( L/K\right) $ here, we obtain a full
algebraic Galois subgroup
\begin{equation*}
\pi _{a}\left( L/K\right) \left( \Lambda ,B\right)
\end{equation*}%
of $L/K$ at a nice basis $\left( \Lambda ,B\right) $ such that
\begin{equation*}
H\subseteq \pi _{a}\left( L/K\right) \left( \Lambda ,B\right) .
\end{equation*}%
Here, $K\left( \Lambda \right) \subseteq M$. Hence, we have
\begin{equation*}
P\subsetneqq H\subseteq G=\pi _{a}\left( L/K\right) \left( \Lambda ,B\right)
.
\end{equation*}%
This completes the proof.
\end{proof}

\begin{lemma}
Let $L$ be an extension over a field $K$ of finite transcendence degree and $%
L$ $\sigma $-tame Galois over $K$. Suppose $P$ and $Q$ are two full
algebraic Galois subgroups of the extension $L/K$ satisfying either of the
conditions:

$\left( i\right) $ $P$ or $Q$ is a finite group;

$\left( ii\right) $ $L$ is algebraic over the invariant subfield $%
L^{\left\langle P,Q\right\rangle }$ under $\left\langle P,Q\right\rangle $.
Here $\left\langle P,Q\right\rangle $ denotes the subgroup generated in $%
Aut\left( L/K\right) $ by $P$ and $Q$.

Then there is a full algebraic Galois subgroup $G$ of $L/K$ such that $%
P\subseteq G$ and $Q\subseteq G$ are subgroups of $G$.
\end{lemma}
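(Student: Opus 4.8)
The plan is to reduce the whole statement to one transcendence-degree assertion and then build $G$ directly. Put $M_{1}:=L^{P}$ and $M_{2}:=L^{Q}$. Since $L$ is $\sigma$-tame Galois over $K$ and $P,Q$ are full algebraic Galois subgroups, each $M_{i}$ is purely transcendental over $K$, $L$ is algebraic Galois over $M_{i}$, and $P=Aut(L/M_{1})$, $Q=Aut(L/M_{2})$; in particular $tr.\deg(M_{i}/K)=n:=tr.\deg(L/K)$. First I would show that it suffices to produce a purely transcendental intermediate field $M$ over $K$ with $M\subseteq M_{1}\cap M_{2}$ and $L$ algebraic over $M$. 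Indeed, such an $M$ equals $K(\Delta)$ for a transcendence basis $\Delta$ of $L/K$; by $\sigma$-tame Galois $L$ is algebraic Galois over $M$, so $G:=Aut(L/M)=\pi_{a}\left(L/K\right)\left(\Delta,A\right)$ is a full algebraic Galois subgroup of $L/K$ for any $M$-linear basis $A$ of $L$ (\emph{Proposition 4.6}); and $M\subseteq M_{i}$ forces every automorphism fixing $M_{i}$ to fix $M$, whence $P=Aut(L/M_{1})\subseteq Aut(L/M)=G$ and likewise $Q\subseteq G$. Such an $M$ exists exactly when $tr.\deg((M_{1}\cap M_{2})/K)=n$ (take $\Delta$ a transcendence basis of $M_{1}\cap M_{2}$ over $K$), and this is also necessary, since $L/M$ algebraic forces $tr.\deg(M/K)=n$. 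Because $L^{\langle P,Q\rangle}=L^{P}\cap L^{Q}=M_{1}\cap M_{2}$, the entire lemma comes down to the claim that $L$ is algebraic over $M_{1}\cap M_{2}$. In case $\left(ii\right)$ this is precisely the hypothesis, so that case is finished.

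For case $\left(i\right)$, say $Q$ is finite, so $[L:M_{2}]=\sharp Q<+\infty$. The next step is to observe that $P$ is finite as well: writing $M_{2}=K(\Delta_{2})$ with $\Delta_{2}$ a finite transcendence basis, each element of $\Delta_{2}\subseteq L$ is algebraic over $M_{1}$ because $L/M_{1}$ is algebraic, hence $[M_{1}M_{2}:M_{1}]=[M_{1}(\Delta_{2}):M_{1}]<+\infty$; and $M_{2}\subseteq M_{1}M_{2}\subseteq L$ gives $[L:M_{1}M_{2}]\le[L:M_{2}]<+\infty$, so by the tower law $[L:M_{1}]<+\infty$. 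The same compositum argument in fact shows $[L:K(\Delta)]<+\infty$ for \emph{every} transcendence basis $\Delta$ of $L/K$. Thus in case $\left(i\right)$ both $P$ and $Q$ are finite full algebraic Galois subgroups, and by the reduction in the first paragraph it is enough to prove $tr.\deg((M_{1}\cap M_{2})/K)=n$, equivalently $[M_{2}:M_{1}\cap M_{2}]<+\infty$, equivalently that $\langle P,Q\rangle=Aut(L/(M_{1}\cap M_{2}))$ is finite.

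I expect this last transcendence-degree statement in case $\left(i\right)$ to be the main obstacle, and the hypothesis that $L$ be $\sigma$-tame Galois over $K$ is indispensable for it: without it a function field may contain two finite-index purely transcendental subfields meeting only in $K$. The route I would take is a proof by contradiction. Suppose $M_{1}\cap M_{2}$ had transcendence degree $m<n$; pick a transcendence basis $\Delta^{*}$ of $(M_{1}\cap M_{2})/K$ and extend it to a transcendence basis $\Gamma\supseteq\Delta^{*}$ of $L/K$. By the compositum remark $L$ is finite over each of $M_{1}$, $M_{2}$ and $K(\Gamma)$, and by $\sigma$-tame Galois it is algebraic Galois over each; using the Galois correspondence for $\sigma$-tame extensions (\emph{Lemmas 4.12} and \emph{4.13}) to pass freely between the subgroups $P$, $Q$, $Aut(L/K(\Gamma))$ and their fixed fields, one analyses the finitely many intermediate fields of these finite extensions and forces an element of $M_{1}\cap M_{2}$ transcendental over $K(\Delta^{*})$, contradicting the maximality of $\Delta^{*}$. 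Once $tr.\deg((M_{1}\cap M_{2})/K)=n$ is established, the reduction of the first paragraph produces the required full algebraic Galois subgroup $G$ with $P\subseteq G$ and $Q\subseteq G$, and the lemma follows.
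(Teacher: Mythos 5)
Your reduction of the lemma to the single claim that $L$ is algebraic over $M_{1}\cap M_{2}=L^{P}\cap L^{Q}=L^{\left\langle P,Q\right\rangle }$ is correct and is essentially the reduction the paper makes: choose a transcendence base $\Lambda $ of $L/K$ inside $M_{1}\cap M_{2}$ and set $G=Aut\left( L/K\left( \Lambda \right) \right) $, which contains $P$ and $Q$ because they fix $K\left( \Lambda \right) $ pointwise, and which is full algebraic by $\sigma $-tameness. Case $\left( ii\right) $ is then immediate, as you say, and your observation that in case $\left( i\right) $ the other group $P$ is automatically finite (via $\left[ M_{1}M_{2}:M_{1}\right] <+\infty $ and the tower law) is correct and is not in the paper.

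The gap is in your final paragraph. In case $\left( i\right) $ the entire content of the lemma is the assertion $tr.\deg \left( \left( M_{1}\cap M_{2}\right) /K\right) =n$, equivalently $\left[ M_{1}:M_{1}\cap L^{Q}\right] <+\infty $, and you do not prove it: \textquotedblleft one analyses the finitely many intermediate fields and forces an element of $M_{1}\cap M_{2}$ transcendental over $K\left( \Delta ^{\ast }\right) $\textquotedblright\ is a plan, not an argument, and it is not clear it can be executed as stated. The difficulty is genuine. Artin's theorem gives $\left[ L:L^{Q}\right] \leq \sharp Q$, but the inequality $\left[ E:E\cap F\right] \leq \left[ EF:F\right] $ that would transfer this bound to $M_{1}$ fails for general field extensions, and the claim itself is false without $\sigma $-tameness: in $\mathbb{C}\left( t\right) /\mathbb{C}$ the degree-two Galois subfields $\mathbb{C}\left( t^{2}\right) $ and $\mathbb{C}\left( \left( t+1\right) ^{2}\right) $ are both purely transcendental of full transcendence degree yet intersect in $\mathbb{C}$ --- exactly the phenomenon you flag, realised by two finite full algebraic Galois subgroups. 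So the proof must invoke $\sigma $-tameness at this precise point, and your sketch does not show how; it should be said that the paper's own \emph{Step 1} is no better, since it disposes of the claim with the single sentence \textquotedblleft As $Q$ is a finite group, it is seen that $K\left( \Delta _{1}\right) $ is a finite extension over $K\left( \Delta _{1}\right) ^{Q}$\textquotedblright , which is not a consequence of the finiteness of $Q$ alone. You have correctly located the crux of case $\left( i\right) $, but the proposal does not close it.
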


\begin{proof}
Let $P=\pi _{a}\left( L/K\right) \left( \Delta _{1},A_{1}\right) $ and $%
Q=\pi _{a}\left( L/K\right) \left( \Delta _{2},A_{2}\right) $ be two full
algebraic Galois subgroups of the extension $L/K$ at nice bases $\left(
\Delta _{1},A_{1}\right) $ and $\left( \Delta _{2},A_{2}\right) $ of $L/K$,
respectively. We will proceed in several steps.

\emph{Step 1}. Assume that the condition $\left( i\right) $ is satisfied.
Without loss of generality, let the subgroup $Q$ is finite. We have
\begin{equation*}
\begin{array}{l}
L^{P\cup Q} \\
=\{x\in L:\sigma \left( x\right) =x\text{ for }\forall \sigma \in P\cup Q\}
\\
=\{x\in L^{P}:\sigma \left( x\right) =x\text{ for }\forall \sigma \in Q\} \\
=\left( L^{P}\right) ^{Q} \\
=K\left( \Delta _{1}\right) ^{Q}%
\end{array}%
\end{equation*}
since
\begin{equation*}
L^{P}=K\left( \Delta _{1}\right) .
\end{equation*}

As $Q$ is a finite group, it is seen that $K\left( \Delta _{1}\right) $ is a
finite extension over the subfield
\begin{equation*}
M:=K\left( \Delta _{1}\right) ^{Q}.
\end{equation*}%
Hence, $L$ is algebraic over the subfield $M$.

Let $\Lambda $ be a transcendence base of $L/K$ with $K\left( \Lambda
\right) \subseteq M$. From the assumption that $L$ is $\sigma $-tame Galois
over $K$, there is a full algebraic Galois subgroup
\begin{equation*}
G=\pi _{a}\left( L/K\right) \left( \Lambda ,B\right)
\end{equation*}%
of $L/K$ at a nice basis $\left( \Lambda ,B\right) $ such that $P$ and $Q$
are contained in $G$.

\emph{Step 2}. Suppose that the condition $\left( ii\right) $ is satisfied.
As
\begin{equation*}
P\subseteq \left\langle P,Q\right\rangle \text{ and } Q\subseteq
\left\langle P,Q\right\rangle ,
\end{equation*}
for the invariant subfields we have
\begin{equation*}
K\left( \Delta _{1}\right) =L^{P}\supseteq L^{\left\langle P,Q\right\rangle }
\end{equation*}%
and
\begin{equation*}
K\left( \Delta _{2}\right) =L^{Q}\supseteq L^{\left\langle P,Q\right\rangle }
\end{equation*}%
since $L$ is $\sigma $-tame Galois over $K$.

Then
\begin{equation*}
M:=K\left( \Delta _{1}\right) \cap K\left( \Delta _{2}\right) \supseteq
L^{\left\langle P,Q\right\rangle }.
\end{equation*}

From the assumption that $L$ is algebraic over $L^{\left\langle
P,Q\right\rangle }$, it is seen that $L$ is algebraic over the subfield $M$.
Let $\Lambda $ be a transcendence base of $L/K$ with $K\left( \Lambda
\right) \subseteq M$. We have a full algebraic Galois subgroup
\begin{equation*}
G=\pi _{a}\left( L/K\right) \left( \Lambda ,B\right)
\end{equation*}%
of $L/K$ at a nice basis $\left( \Lambda ,B\right) $ such that $P$ and $Q$
are contained in $G$.
\end{proof}

\begin{theorem}
\emph{(Distribution of Noether numbers) }Let $L$ be a finitely generated
transcendental extension over a field $K$. Suppose $L$ is $\sigma $-tame
Galois over $K$ (respectively, of vertical type). For Noether numbers of $%
L/K $, there are the following statements.

$\left( i\right) $ For any positive integer $m\in \mathbb{N}$ there is a
Noether number $\omega $ of $L/K$ such that $m$ divides $\omega $.

$\left( ii\right) $ For any Noether number $\alpha $ of $L/K$, there is
another Noether number $\beta $ of $L/K$ such that $\alpha <\beta $ and $%
\alpha $ divides $\beta $.

$\left( iii\right) $ For any two Noether numbers $\alpha $ and $\beta $ of $%
L/K$, there is another Noether number $\gamma $ of $L/K$ such that $\alpha $
and $\beta $ divide $\gamma $, respectively.

$\left( iv\right) $ The set $\mathfrak{A}\left( L/K\right) $ of Noether
numbers of $L/K$ has no upper bound.
\end{theorem}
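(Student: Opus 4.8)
The plan is to route everything through one dictionary, valid because $L$ is $\sigma$-tame Galois over $K$ (respectively of vertical type): for a transcendence base $\Delta$ of $L/K$ (respectively $\Delta\in\Omega_{ver}$) the subfield $K(\Delta)$ is purely transcendental over $K$, and since $L$ is finitely generated and algebraic Galois over $K(\Delta)$ it is a \emph{finite} Galois extension of $K(\Delta)$; hence the full algebraic Galois subgroup $\pi_a(L/K)(\Delta,A)=Aut(L/K(\Delta))$ is a finite Noether solution of $L/K$ of order $[L:K(\Delta)]$. Conversely, a Noether solution $G$ is finite (since $L$ is algebraic over the purely transcendental subfield $L^{G}$ of full transcendence degree, whence $[L:L^{G}]<+\infty$), so $L^{G}=K(\Delta)$ for a transcendence base $\Delta$, and by Artin's theorem---or by the $\sigma$-tame Galois correspondence of \emph{Lemma 4.12}---we get $G=Aut(L/L^{G})$, so $\sharp G=[L:K(\Delta)]$. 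Thus the set of Noether numbers is $\mathfrak A(L/K)=\{\,[L:K(\Delta)]:\Delta\text{ a transcendence base of }L/K\,\}$, respectively with $\Delta$ ranging over $\Omega_{ver}$. I would record this as a preliminary lemma.

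Granting the dictionary, $(i)$ is a short computation: fix a transcendence base $\Delta_{0}=\{s_{1},\dots,s_{n}\}$ of $L/K$, taking $\Delta_{0}=\{t_{1},\dots,t_{n}\}$ in the vertical-type case, and put $\Delta=\{s_{1}^{m},s_{2},\dots,s_{n}\}$, again a transcendence base (and in $\Omega_{ver}$ in the vertical case). Since $s_{1}^{m}$ is transcendental over $K(s_{2},\dots,s_{n})$, the polynomial $X^{m}-s_{1}^{m}$ is Eisenstein at the prime $s_{1}^{m}$, hence irreducible over $K(\Delta)=K(s_{1}^{m},s_{2},\dots,s_{n})$; so $[K(\Delta_{0}):K(\Delta)]=m$ and $\omega:=[L:K(\Delta)]=m\cdot[L:K(\Delta_{0})]$ is a Noether number divisible by $m$. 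In the vertical-type case $L=K(t_{1},\dots,t_{n})$ is purely transcendental, so in fact $[L:K(\{t_{1}^{N},t_{2},\dots,t_{n}\})]=N$ for every $N\ge 1$; thus every positive integer is a Noether number, and $(i)$--$(iv)$ all hold at once in that case. Statement $(iv)$ in general is then immediate from $(i)$ (for every $m$ there is a Noether number $\ge m$), and alternatively follows by iterating $(ii)$.

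For $(ii)$ and $(iii)$ in the general $\sigma$-tame Galois case I would invoke the two lemmas preceding the theorem. Given a Noether number $\alpha=\sharp P$ with $P=Aut(L/K(\Delta_{1}))$ a full algebraic Galois subgroup, \emph{Lemma 6.7} gives a full algebraic Galois subgroup $G\supsetneqq P$; by the dictionary $G$ is again a Noether solution, $\beta:=\sharp G$ is a Noether number, $\beta>\alpha$ because the inclusion is proper, and $\alpha\mid\beta$ by Lagrange's theorem. Given two Noether numbers $\alpha=\sharp P$ and $\beta=\sharp Q$ with $P,Q$ full algebraic Galois subgroups, which are automatically finite, \emph{Lemma 6.8}$(i)$ produces a full algebraic Galois subgroup $G$ containing both, so $\gamma:=\sharp G$ is a Noether number with $\alpha\mid\gamma$ and $\beta\mid\gamma$. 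Since \emph{Lemma 6.8} is not stated with a vertical-type clause, the vertical case of $(iii)$ is instead handled directly: the vertical base $\{t_{1}^{\mathrm{lcm}(\alpha,\beta)},t_{2},\dots,t_{n}\}$ yields a Noether solution of order $\mathrm{lcm}(\alpha,\beta)$, a common multiple of $\alpha$ and $\beta$.

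The step I expect to be the real work is the preliminary dictionary, and inside it the identification $G=Aut(L/L^{G})$ for a Noether solution $G$: this is where finiteness of $G$, Artin's theorem (or the Galois correspondence for $\sigma$-tame Galois), and the normality and separability packaged into ``$\sigma$-tame Galois'' all enter. The remaining care is bookkeeping the ``respectively'' variants---\emph{Lemmas 6.6--6.7} carry vertical-type clauses but \emph{Lemma 6.8} does not, so the common-enlargement step of $(iii)$ is replaced in the vertical case by the explicit $\mathrm{lcm}$ construction---and double-checking the irreducibility of $X^{m}-s_{1}^{m}$, where the $4\mid m$ refinement of the irreducibility criterion is harmless since a prime element is never $-4$ times a fourth power (by a valuation count).
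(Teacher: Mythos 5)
Your proposal is correct and takes essentially the same route as the paper: the paper proves $(i)$--$(iii)$ by citing precisely the lemmas you invoke (every finite subgroup lies in a full algebraic Galois subgroup, a full algebraic Galois subgroup admits a proper enlargement --- whose proof is exactly your $t^{m}$ vertical-power trick --- and two full algebraic Galois subgroups admit a common enlargement), combined with the $\sigma$-tame correspondence identifying these subgroups with Noether solutions and Lagrange's theorem for divisibility, with $(iv)$ deduced from $(ii)$. Your only departures are details the paper leaves implicit, namely the explicit degree count $\left[ K\left( \Delta _{0}\right) :K\left( \Delta \right) \right] =m$ via Eisenstein for $(i)$ and the $\mathrm{lcm}$ construction covering the vertical-type reading of $(iii)$, where the paper's common-enlargement lemma indeed carries no vertical clause.
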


\begin{proof}
Let $\Omega _{0}$ be the subset of vertical transcendence bases of the
extension $L/K$. It is seen that $\Omega _{0}$ is invariant under the Galois
group $Aut\left( L/K\right) $, i.e., $\sigma \left( \Omega _{0}\right)
\subseteq \Omega _{0}$ holds for each $\sigma \in Aut\left( L/K\right) $.

Each full algebraic Galois subgroup of the extension $L/K$ is a finite group
since $L$ is finitely generated over $K$. Hence,

$\left( i\right) -\left( iii\right) $ Immediately from \emph{Lemmas 6.6-9}.

$\left( iv\right) $ Immediately from $\left( ii\right) $.
\end{proof}

\subsection{Galois-complemented groups as an essential condition to Noether
solutions}

In this subsection we will undertake a further discussion on the two
conditions $\left( N1\right) -\left( N2\right) $.

Let $L$ be a purely transcendental extension over a field $K$ of finite
transcendence degree. Suppose $G$ is a subgroup of $Aut\left( L/K\right) $
such that $L$ is algebraic over the $G$-invariant subfield $L^{G}$, i.e., $G$
is a finite subgroup of $Aut\left( L/K\right) $. Fixed a subgroup $H$ of $%
Aut\left( L/K\right) $. In the following, consider the two conditions $%
\left( N1\right) -\left( N2\right) $:

$\ \left( N1\right) $ There is $G\cap H=\{1\}$ and $\sigma \cdot \delta
=\delta \cdot \sigma $ holds in $Aut\left( L/K\right) $ for any $\sigma \in
G $ and $\delta \in H$.

$\ \left( N2\right) $ $K$ is the invariant subfield $L^{\left\langle G\cup
H\right\rangle }$ of $L$ under the subgroup $\left\langle G\cup
H\right\rangle $ generated in $Aut\left( L/K\right) $ by the subset $G\cup H$%
.

\begin{proposition}
Assume the two conditions $\left( N1\right) -\left( N2\right) $ are
satisfied. Let $D=\left\langle G\cup H\right\rangle $ be the subgroup
generated in $Aut\left( L/K\right) $ by the union $G\cup H$. Then there are
the following statements.

$\left( i\right) $ $D=G\cdot H$ holds. In particular, $G$ and $H$ are normal
subgroups in $D$.

$\left( ii\right) $ $L^{D}=\left( L^{G}\right) ^{H}=\left( L^{H}\right) ^{G}$%
; $L^{D}=L^{G\cup H}=L^{G}\cap L^{H}.$

$\left( iii\right) $ $L=\left( L\setminus L^{G}\right) \cup \left(
L\setminus L^{H}\right) \cup K$.

$\left( iv\right) $ $L^{G}\cap L^{H}=K$; $\left( L^{G}\setminus K\right)
\cap \left( L^{H}\setminus K\right) =\emptyset $. In particular, there is a
union of disjoint subsets
\begin{equation*}
L=\left( L\setminus L^{G}\right) \cup \left( L\setminus L^{H}\right) \cup K.
\end{equation*}

$\left( v\right) $ $L^{G}\setminus K=L\setminus L^{H}$; $L^{H}\setminus
K=L\setminus L^{G}$. In particular,
\begin{equation*}
L^{G}=\left( L\setminus L^{H}\right) \cup K;
\end{equation*}
\begin{equation*}
L^{H}=\left( L\setminus L^{G}\right) \cup K.
\end{equation*}

$\left( vi\right) $ If $L\setminus L^{G}\not=\emptyset $, then
\begin{equation*}
\sigma \left( L\setminus L^{G}\right) =L\setminus L^{G}
\end{equation*}%
holds for any $\sigma \in G$ and
\begin{equation*}
\delta \left( L\setminus L^{H}\right) =L\setminus L^{H}
\end{equation*}%
holds for any $\delta \in H$.

$\left( vii\right) $ Fixed any $x\in L$ and $\sigma \in G$. Then
\begin{equation*}
\sigma \left( x\right) \in L\setminus L^{G}\Longleftrightarrow x\in
L\setminus L^{G}.
\end{equation*}

$\left( viii\right) $ Fixed any $x\in L$ and $\delta \in H$. Then
\begin{equation*}
\delta \left( x\right) \in L\setminus L^{H}\Longleftrightarrow x\in
L\setminus L^{H}.
\end{equation*}
\end{proposition}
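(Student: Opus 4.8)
The plan is to split the eight claims into a routine core --- everything flowing from $(N1)$, $(N2)$ and elementary group/set algebra --- and the one genuinely structural point, namely $L=L^{G}\cup L^{H}$, and to dispatch the core first.

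First I would prove (i). By $(N1)$ each element of $G$ commutes with each element of $H$, so the product set $GH$ is closed under products and inverses, hence a subgroup; since $G,H\subseteq GH\subseteq\langle G\cup H\rangle=D$ we get $D=GH=HG$, while $(g'h')\sigma(g'h')^{-1}=g'\sigma g'^{-1}\in G$ shows $G\trianglelefteq D$ and symmetrically $H\trianglelefteq D$, so with $G\cap H=\{1\}$ the group $D$ is the internal direct product $G\times H$. For (ii): $L^{G\cup H}=L^{G}\cap L^{H}$ is the definition, and $L^{D}=L^{\langle G\cup H\rangle}=L^{G\cup H}$ because every element of $D$ is a word in $G\cup H$; for the iterated descriptions, if $x\in L^{G}$ and $\delta\in H$ then $\sigma(\delta(x))=\delta(\sigma(x))=\delta(x)$ for all $\sigma\in G$, so $H$ stabilizes $L^{G}$ and $(L^{G})^{H}=L^{G}\cap L^{H}$, and symmetrically $(L^{H})^{G}=L^{G}\cap L^{H}$. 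Combining (ii) with $(N2)$ gives
\begin{equation*}
L^{G}\cap L^{H}=L^{G\cup H}=L^{D}=L^{\langle G\cup H\rangle}=K,
\end{equation*}
which is the first identity of (iv), whence $(L^{G}\setminus K)\cap(L^{H}\setminus K)=\emptyset$. Then (iii) is pure set algebra: $(L\setminus L^{G})\cup(L\setminus L^{H})=L\setminus(L^{G}\cap L^{H})=L\setminus K$, so adjoining $K$ yields $L$, and $K\subseteq L^{G}\cap L^{H}$ makes $K$ disjoint from each of $L\setminus L^{G}$ and $L\setminus L^{H}$.

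Next I would handle (vi)--(viii), which are again formal. Fix $\sigma\in G$ and $x\in L$; then $\sigma(x)\in L^{G}$ iff $\tau(\sigma(x))=\sigma(x)$ for every $\tau\in G$ iff $(\sigma^{-1}\tau\sigma)(x)=x$ for every $\tau\in G$, and since $\tau\mapsto\sigma^{-1}\tau\sigma$ permutes $G$ this holds iff $x\in L^{G}$. Thus $\sigma(x)\in L\setminus L^{G}\iff x\in L\setminus L^{G}$, which is (vii); applying this to $\sigma$ and to $\sigma^{-1}$ gives $\sigma(L\setminus L^{G})=L\setminus L^{G}$, the first half of (vi), and the identical argument with $H$ in place of $G$ yields (viii) and the second half of (vi).

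The one substantial step --- and the place where the hypotheses "$L$ purely transcendental of finite transcendence degree" and "$G$ finite" (so that $G=Aut(L/L^{G})$ with $L/L^{G}$ finite Galois, by Artin's lemma) are genuinely used --- is the inclusion $L\setminus L^{H}\subseteq L^{G}$, equivalently $L=L^{G}\cup L^{H}$; granting it, the remaining disjointness in (iv) is immediate, and (v) follows formally since $L^{G}\setminus K=L^{G}\setminus L^{H}=L^{G}\cap(L\setminus L^{H})=L\setminus L^{H}$, symmetrically for $L^{H}\setminus K$, with $L^{G}=(L\setminus L^{H})\cup K$ and $L^{H}=(L\setminus L^{G})\cup K$ as rewritings. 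To prove $L=L^{G}\cup L^{H}$ I would realize the pair via Proposition 5.3: choose a nice basis $(\Delta,A)$ of $L/K$ adapted to $G$ so that $G=\pi_{a}(L/K)(\Delta,A)$ and $L^{G}=K(\Delta)=:E$, write $L=E\oplus F$ for the complementary $K$-subspace $F$, and use the uniqueness statements of Lemmas 4.12--4.13 together with $(N1)$--$(N2)$ to identify $H$ with $\pi_{t}(L/K)(\Delta,A)$ --- so that, by Proposition 5.3, every $\delta\in H$ is the identity on $F$ and every $\sigma\in G$ is the identity on $E$. Then for $x=x_{E}+x_{F}$: if $x_{F}$ is fixed by $G$ then $x\in L^{G}$, and otherwise one must force $x_{E}\in E^{H}=L^{G}\cap L^{H}=K$, giving $x\in L^{H}$. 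I expect the verification that "$x_{F}$ not $G$-fixed" compels "$x_{E}\in K$" --- the genuine interaction of the finite algebraic part with the transcendental part --- to be the delicate point; if instead the full hypotheses of Theorem 6.2 are in force one can quote it directly for $G=\pi_{a}(L/K)(\Delta,A)$, $H=\pi_{t}(L/K)(\Delta,A)$, $\langle G\cup H\rangle=D_{L/K}(\Delta,A)$ and then run the same decomposition argument.
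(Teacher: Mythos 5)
Your arguments for (i), (ii), (iii), the identity $L^{G}\cap L^{H}=K$ in (iv), and (vi)--(viii) are correct and essentially the paper's own; your conjugation proof of (vii) is in fact cleaner, since the paper routes (vii) through the case split coming from the ``disjoint'' decomposition, whereas yours needs nothing beyond the $G$-stability of $L^{G}$. The genuine gap is exactly the step you flag and postpone: the disjointness $\left(L\setminus L^{G}\right)\cap\left(L\setminus L^{H}\right)=\emptyset$, equivalently $L=L^{G}\cup L^{H}$, which is all of (v) and the second half of (iv). Your proposed closure does not work under the stated hypotheses: (N1)--(N2) do not give that $L^{G}$ is purely transcendental over $K$ (the paper notes this explicitly right after Theorem 6.15), so you cannot choose a nice basis with $K\left(\Delta\right)=L^{G}$ and $G=\pi_{a}\left(L/K\right)\left(\Delta,A\right)$, and they do not force $H$ to be a highest transcendental Galois subgroup, so Lemmas 4.12--4.13 (which moreover require $\sigma$-tame hypotheses) cannot be invoked to identify $H$ with $\pi_{t}\left(L/K\right)\left(\Delta,A\right)$. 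Worse, the verification you defer cannot be supplied at all, because the step is false under (N1)--(N2) alone: take $L=\mathbb{Q}\left(t\right)$, $G=\left\langle t\mapsto -t\right\rangle$, $H=\left\langle t\mapsto 2t\right\rangle$. These commute, $G\cap H=\{1\}$, $L$ is quadratic over $L^{G}=\mathbb{Q}\left(t^{2}\right)$, and $L^{H}=\mathbb{Q}$ (any $f\in\mathbb{Q}\left(t\right)\setminus\mathbb{Q}$ generates a subfield of finite index in $L$, which would force the infinite group $H\subseteq Aut\left(L/\mathbb{Q}\left(f\right)\right)$ to be finite), so (N1)--(N2) hold with $K=\mathbb{Q}$; yet $t\notin L^{G}\cup L^{H}$, so (v) fails and the union in (iv) is not disjoint.

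You should also know that you have put your finger on precisely the point where the paper's own proof is empty: it proves only $\left(L^{G}\setminus K\right)\cap\left(L^{H}\setminus K\right)=\emptyset$ and then asserts ``in particular'' that $L=\left(L\setminus L^{G}\right)\cup\left(L\setminus L^{H}\right)\cup K$ is a disjoint union, from which (v) is read off; no argument for $L=L^{G}\cup L^{H}$ is given anywhere. That inclusion does hold when $H$ is a transcendental Galois subgroup attached to $M=L^{G}$ in the sense of Definition 4.4, whose condition (ii) makes $H$ fix every element of $L\setminus L^{G}$; but that is an additional hypothesis on $H$, not a consequence of (N1)--(N2). So your instinct to retreat to the setting of Theorem 6.4 and Proposition 5.3 (where $G$ is a Noether solution and $H$ is the highest transcendental Galois subgroup at a nice basis with $K\left(\Delta\right)=L^{G}$) is the only way to make (iv)--(v) true, but it proves a statement with strictly stronger hypotheses than the one asserted.
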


\begin{proof}
If $G=\{1\}$, then $H=Aut\left( L/K\right) $, $L^{H}=L^{Aut\left( L/K\right)
}=K$ (by \emph{Theorem 3.1}) and $L\setminus L^{G}=L\setminus L=\emptyset $.
We have $\sigma \left( \emptyset \right) =\emptyset $ for any $\sigma \in
Aut\left( L/K\right) $, i.e., $\left( i\right) -\left( vi\right) $ hold.

In the following, assume $G\not=\{1\}$. As $L$ is algebraic over $L^{G}$, it
is seen that $G\not=Aut\left( L/K\right) $ and $H\not=\{1\}$.

$\left( i\right) $ From the assumption $\left( N1\right) $ it is seen that $%
D=G\cdot H$ and $G$ and $H$ both are normal subgroups of $D$.

$\left( ii\right) $ For the invariant subfields of $L$ under the subgroups
we have
\begin{equation*}
L^{D}\subseteq L^{G}\text{ and }L^{D}\subseteq L^{H}
\end{equation*}%
since
\begin{equation*}
D\supseteq G\text{ and }D\supseteq H.
\end{equation*}%
As each element $\tau \in D$ has an expression that $\tau =\sigma \cdot
\delta $ with $\sigma \in G$ and $\delta \in H$, then
\begin{equation*}
\begin{array}{l}
L^{G}\cap L^{H} \\
\supseteq L^{D} \\
=\{x\in L:\tau \left( x\right) =x,\forall \tau \in D\} \\
=\{x\in L:\sigma \cdot \delta \left( x\right) =x,\forall \sigma \in
G,\forall \delta \in H\} \\
=\{x\in L:\delta \left( x\right) =\sigma ^{-1}\left( x\right) ,\forall
\sigma \in G,\forall \delta \in H\} \\
\supseteq \{x\in L:\delta \left( x\right) =\sigma ^{-1}\left( x\right)
=x,\forall \sigma \in G,\forall \delta \in H\} \\
=\{x\in L:\sigma ^{-1}\left( x\right) =x,\delta \left( x\right) =x,\forall
\sigma \in G,\forall \delta \in H\} \\
=L^{G\cup H} \\
=\{x\in L:\sigma ^{-1}\left( x\right) =x,\forall \sigma \in G\}\cap \{x\in
L:\delta \left( x\right) =x,\forall \delta \in H\} \\
=L^{G}\cap L^{H}; \\
\\
L^{D} \\
=\{x\in L^{G}:\delta \left( x\right) =x,\forall \delta \in H\} \\
=\left( L^{G}\right) ^{H} \\
=\{x\in L^{H}:\sigma \left( x\right) =x,\forall \sigma \in G\} \\
=\left( L^{H}\right) ^{G} \\
=K%
\end{array}%
\end{equation*}%
from the condition $\left( N2\right) $.

$\left( iii\right) $ From $\left( ii\right) $ we have
\begin{equation*}
L^{G}\cap L^{H}=K
\end{equation*}%
and then
\begin{equation*}
\begin{array}{l}
L\setminus K \\
=L\setminus \left( L^{G}\cap L^{H}\right) \\
=\left( L\setminus L^{G}\right) \cup \left( L\setminus L^{H}\right) .%
\end{array}%
\end{equation*}

Hence, there is a union of subsets
\begin{equation*}
L=\left( L\setminus L^{G}\right) \cup \left( L\setminus L^{H}\right) \cup K.
\end{equation*}

$\left( iv\right) $ For the invariant subfields of $L$ we have
\begin{equation*}
\begin{array}{l}
L^{G}\cap L^{H} \\
=L^{G\cup H} \\
=L^{\left\langle G\cup H\right\rangle } \\
=L^{D} \\
=K.%
\end{array}%
\end{equation*}

Then
\begin{equation*}
\left( L^{G}\setminus K\right) \cap \left( L^{H}\setminus K\right) =\left(
L^{G}\cap L^{H}\right) \setminus K=\emptyset .
\end{equation*}

$\left( v\right) $ Prove $L^{G}\setminus K=L\setminus L^{H}$. In deed, as
\begin{equation*}
L=\left( L\setminus L^{G}\right) \cup L^{G}
\end{equation*}
holds and from $\left( iii\right) -\left( iv\right) $ there is a union of
disjoint subsets
\begin{equation*}
L=\left( L\setminus L^{G}\right) \cup \left( L\setminus L^{H}\right) \cup K,
\end{equation*}
we must have
\begin{equation*}
L^{G}=\left( L\setminus L^{H}\right) \cup K.
\end{equation*}%
Hence,
\begin{equation*}
L^{G}\setminus K=L\setminus L^{H}.
\end{equation*}

In the same way, we have
\begin{equation*}
L^{H}=\left( L\setminus L^{G}\right) \cup K
\end{equation*}
since
\begin{equation*}
L=\left( L\setminus L^{H}\right) \cup L^{H}.
\end{equation*}

This proves
\begin{equation*}
L^{H}\setminus K=L\setminus L^{G}.
\end{equation*}

$\left( vi\right) $ Prove $\sigma \left( L\setminus L^{G}\right) =L\setminus
L^{G}$ holds for any $\sigma \in G$. In deed, fixed any $\sigma _{0}\in G$
and $x_{0}\in L\setminus L^{G}$. We must have $\sigma _{0}\left(
x_{0}\right) \not\in L^{G}$. Otherwise, if $y_{0}:=\sigma _{0}\left(
x_{0}\right) \in L^{G}$, then for every $\sigma \in G$ we have
\begin{equation*}
\sigma \left( y_{0}\right) =\sigma \left( \sigma _{0}\left( x_{0}\right)
\right) =\sigma _{0}\left( x_{0}\right) =y_{0}\in L^{G}
\end{equation*}
in the $G$-invariant subfield $L^{G}$; in particular, $\sigma
_{0}^{-1}\left( y_{0}\right) =y_{0}$ holds since $\sigma _{0}^{-1}\in G$; it
follows that
\begin{equation*}
x_{0}=\sigma _{0}^{-1}\left( y_{0}\right) =y_{0}\in L^{G},
\end{equation*}
which will be in contradiction. This proves
\begin{equation*}
\sigma \left( L\setminus L^{G}\right) \subseteq L\setminus L^{G}
\end{equation*}%
holds for any $\sigma \in G$.

Particularly, there is
\begin{equation*}
\sigma ^{-1}\left( L\setminus L^{G}\right) \subseteq L\setminus L^{G}
\end{equation*}%
for any $\sigma \in G$.

Hence,
\begin{equation*}
\sigma \left( L\setminus L^{G}\right) =L\setminus L^{G}
\end{equation*}%
holds for any $\sigma \in G$.

In the same way, by replacing $G$ by $H$, we have
\begin{equation*}
\delta \left( L\setminus L^{H}\right) =L\setminus L^{H}
\end{equation*}%
for any $\delta \in H$.

$\left( vii\right) $ Fixed an $x_{0}\in L$. From $\left( iv\right) $ for $L$
there is a union of disjoint subsets
\begin{equation*}
L=\left( L\setminus L^{G}\right) \cup \left( L\setminus L^{H}\right) \cup K.
\end{equation*}%
There are three cases for $x_{0}$.

Case $\left( i\right) $ If $x_{0}\in L\setminus L^{G}$, then $\sigma \left(
x_{0}\right) \in L\setminus L^{G}$ from $\left( vi\right) $; conversely, if $%
y_{0}=\sigma \left( x_{0}\right) \in L\setminus L^{G}$, then $x_{0}=\sigma
^{-1}\left( y_{0}\right) \in L\setminus L^{G}$ from $\left( vi\right) $
again since $\sigma ^{-1}\in G$ also holds.

Case $\left( ii\right) $ Let $x_{0}\in K$. Trivial since $\sigma \left(
x_{0}\right) =x_{0}$ for any $\sigma \in G$.

Case $\left( iii\right) $ Suppose $x_{0}\in L\setminus L^{H}$. From $\left(
iv\right) $ we have $L\setminus L^{H}=L^{G}\setminus K$ and then $x_{0}\in
L^{G}\setminus K\subseteq L^{G}$. Trivial since $\sigma \left( x_{0}\right)
=x_{0}$ for any $\sigma \in G$.

In the same way, prove $\left( viii\right) $. This completes the proof.
\end{proof}

Consider the direct decompositions of the field $L$ as we have done in \emph{%
\S 5.2}.

\begin{proposition}
(\emph{Direct decompositions}) Assume the conditions $\left( N1\right)
-\left( N2\right) $ are satisfied. Put
\begin{equation*}
E_{1}:=span_{K}\left( L^{G}\setminus K\right) ,F_{1}:=span_{K}\left(
L\setminus L^{G}\right) ;
\end{equation*}%
\begin{equation*}
E_{2}:=span_{K}\left( L\setminus L^{H}\right) ,F_{2}:=span_{K}\left(
L^{H}\setminus K\right) .
\end{equation*}

Then there are the direct sums of $K$-linear subspaces for $L$
\begin{equation*}
L=E_{1}\oplus K\oplus F_{1};
\end{equation*}%
\begin{equation*}
L=E_{2}\oplus K\oplus F_{2}.
\end{equation*}%
And for each $x\in L$ we have
\begin{equation*}
x=\left(
\begin{array}{c}
x_{E_{1}} \\
x_{K} \\
x_{F_{1}}%
\end{array}%
\right)
\end{equation*}%
and
\begin{equation*}
x=\left(
\begin{array}{c}
x_{E_{2}} \\
x_{K} \\
x_{F_{2}}%
\end{array}%
\right)
\end{equation*}%
as $3$-column vectors. Here, for $i=1,2$, the components $x_{E_{i}}\in E_{i}$%
, $x_{K_{i}}\in K$ and $x_{F_{i}}\in F_{i}$ are uniquely determined by $x\in
L$. In such a case,
\begin{equation*}
x_{K_{1}}=x_{K_{2}}
\end{equation*}%
holds for any $x\in L$.
\end{proposition}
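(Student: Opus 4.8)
The plan is to reduce the statement to elementary linear algebra over $K$, using Proposition 6.12 as the only substantive input. Recall from that proposition that, under $(N1)$--$(N2)$, the invariant subfields $L^{G}$ and $L^{H}$ are intermediate subfields with $L^{G}\cap L^{H}=K$, that there is a disjoint decomposition $L=(L\setminus L^{G})\cup(L\setminus L^{H})\cup K$, and that $L^{G}\setminus K=L\setminus L^{H}$ and $L^{H}\setminus K=L\setminus L^{G}$. Note also that $L^{G}\neq K$: since $L$ is transcendental over $K$ but algebraic over $L^{G}$, equality would be absurd. Thus we are working with a chain $K\subseteq L^{G}\subseteq L$ of $K$-linear subspaces with $K\subsetneqq L^{G}$.

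First I would build a $K$-basis of $L$ adapted to this chain: extend the one-element $K$-basis $\{1\}$ of $K$ to a $K$-basis of $L^{G}$ by adjoining a set $B_{1}\subseteq L^{G}\setminus K$, and then extend $\{1\}\cup B_{1}$ to a $K$-basis of $L$ by adjoining a set $B_{F}\subseteq L\setminus L^{G}$ (the usual basis-extension/Zorn argument; it is needed in full generality because $L/L^{G}$ is finite but $L^{G}/K$ need not be). Put $E_{1}:=span_{K}\,B_{1}$ and $F_{1}:=span_{K}\,B_{F}$; these are the subspaces meant by $span_{K}(L^{G}\setminus K)$ and $span_{K}(L\setminus L^{G})$, the point being that one must take the span of a suitable adapted basis rather than of the whole set. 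By construction $\{1\}\cup B_{1}\cup B_{F}$ is a $K$-basis of $L$ whose three blocks span $K$, $E_{1}$ and $F_{1}$, so $L=E_{1}\oplus K\oplus F_{1}$ and $L^{G}=K\oplus E_{1}$. Hence every $x\in L$ has a unique expression $x=x_{E_{1}}+x_{K_{1}}+x_{F_{1}}$ with $x_{E_{1}}\in E_{1}$, $x_{K_{1}}\in K$, $x_{F_{1}}\in F_{1}$, recorded as the column $3$-vector; uniqueness is the standard property of internal direct sums.

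Next I would identify $L^{H}$ with $K\oplus F_{1}$ and deduce that the second decomposition is literally the first. Since $B_{F}\subseteq L\setminus L^{G}=L^{H}\setminus K$ (Proposition 6.12(v)), we get $F_{1}\subseteq L^{H}$, hence $K\oplus F_{1}\subseteq L^{H}$; conversely, if $v=v_{E_{1}}+v_{K_{1}}+v_{F_{1}}\in L^{H}$ then $v_{E_{1}}=v-v_{K_{1}}-v_{F_{1}}\in L^{H}\cap E_{1}\subseteq (L^{G}\cap L^{H})\cap E_{1}=K\cap E_{1}=\{0\}$, so $L^{H}=K\oplus F_{1}$ and $\{1\}\cup B_{F}$ is a $K$-basis of $L^{H}$. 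Likewise $B_{1}\subseteq L^{G}\setminus K=L\setminus L^{H}$. Therefore $E_{2}=span_{K}(L\setminus L^{H})$ and $F_{2}=span_{K}(L^{H}\setminus K)$ are spanned by the adapted bases $B_{1}$ and $B_{F}$, i.e. $E_{2}=E_{1}$ and $F_{2}=F_{1}$, so $L=E_{2}\oplus K\oplus F_{2}$ is the same decomposition as $L=E_{1}\oplus K\oplus F_{1}$. In particular the $K$-components agree, $x_{K_{1}}=x_{K_{2}}$ for every $x\in L$.

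I do not expect a deep obstacle here; the only delicate point is the bookkeeping around the symbol $span_{K}(\cdot)$: read literally, $span_{K}(L^{G}\setminus K)=L^{G}$ would contain $K$ and could not be complementary to it, so one has to interpret these spans as spans of chosen adapted bases and then check, via Proposition 6.12(v), that the two a-priori-different choices coincide. Everything else is the standard fact that a vector space splits along a chain of subspaces through an adapted basis, together with $L^{G}\cap L^{H}=K$ to pin down where $L^{H}$ sits inside the first decomposition.
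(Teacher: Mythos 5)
Your proof is correct, and it rests on the same substantive input as the paper's: the paper's entire proof of this proposition is a one-line appeal to Proposition 6.11 (note that the facts you quote at the outset --- $L^{G}\cap L^{H}=K$, the disjoint union $L=(L\setminus L^{G})\cup(L\setminus L^{H})\cup K$, and $L^{G}\setminus K=L\setminus L^{H}$, $L^{H}\setminus K=L\setminus L^{G}$ --- are Proposition 6.11, not 6.12; a labelling slip only). What you add, and what the paper omits, is the actual linear algebra: the adapted basis $\{1\}\cup B_{1}\cup B_{F}$, the identification $L^{H}=K\oplus F_{1}$ via $L^{G}\cap L^{H}=K$, and hence $E_{2}=E_{1}$, $F_{2}=F_{1}$, which is precisely what makes the final claim $x_{K_{1}}=x_{K_{2}}$ meaningful. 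Your caveat about $span_{K}(\cdot)$ is not pedantry but necessary: read literally, $span_{K}\left( L^{G}\setminus K\right) =L^{G}\supseteq K$ and $span_{K}\left( L\setminus L^{G}\right) =L$ (add an element of $K$ to an element outside the subfield and subtract), so the displayed sums cannot be direct as written; the proposition only becomes true under your reading, with $E_{i},F_{i}$ taken as chosen complements spanned by adapted bases, which is also how the paper itself uses these ``spans'' elsewhere (e.g.\ the chosen basis $A_{0}\cup A_{1}$ in the proof of Proposition 5.3). The one point worth recording explicitly is that under this reading the $K$-component of $x$ depends on the choice of complements, so $x_{K_{1}}=x_{K_{2}}$ holds because the two decompositions can, and should, be taken with the same complements, exactly as you arrange by showing $E_{2}=E_{1}$ and $F_{2}=F_{1}$.
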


\begin{proof}
From \emph{Proposition 6.11} it is seen that there are the direct sums of
linear subspaces for the $K$-vector space $L$.
\end{proof}

There are the following linear decompositions of automorphisms, i.e., the
triple decompositions (c.f. the double decompositions in \emph{Proposition
5.3}).

\begin{proposition}
(\emph{Linear decompositions of automorphisms}) Assume the two conditions $%
\left( N1\right) -\left( N2\right) $ are satisfied. Let $\sigma \in G$ and $%
\delta \in H$. Then there are the linear decompositions of automorphisms
\begin{equation*}
\sigma \left( x\right) =\left(
\begin{array}{ccc}
1_{E_{1}} & 0 & 0 \\
0 & 1_{K} & 0 \\
0 & 0 & \sigma _{F_{1}}%
\end{array}%
\right) \left(
\begin{array}{c}
x_{E_{1}} \\
x_{K} \\
x_{F_{1}}%
\end{array}%
\right)
\end{equation*}%
and
\begin{equation*}
\delta \left( x\right) =\left(
\begin{array}{ccc}
\delta _{E_{2}} & 0 & 0 \\
0 & 1_{K} & 0 \\
0 & 0 & 1_{F_{2}}%
\end{array}%
\right) \left(
\begin{array}{c}
x_{E_{2}} \\
x_{K} \\
x_{F_{2}}%
\end{array}%
\right)
\end{equation*}%
for any
\begin{equation*}
x=\left(
\begin{array}{c}
x_{E_{1}} \\
x_{K} \\
x_{F_{1}}%
\end{array}%
\right) =\left(
\begin{array}{c}
x_{E_{2}} \\
x_{K} \\
x_{F_{2}}%
\end{array}%
\right) \in L,
\end{equation*}%
where the components of $\sigma $ and $\delta $ are given respectively by
the restrictions of maps
\begin{equation*}
\begin{array}{l}
1_{E_{1}}=\sigma |_{E_{1}}:E_{1}\rightarrow E_{1}, \\
1_{K}=\sigma |_{K}:K\rightarrow K, \\
\sigma _{F_{1}}=\sigma |_{F_{1}}:F_{1}\rightarrow F_{1}%
\end{array}%
\end{equation*}%
and
\begin{equation*}
\begin{array}{l}
\delta _{E_{2}}=\delta |_{E_{2}}:E_{2}\rightarrow E_{2}, \\
1_{K}=\delta |_{K}:K\rightarrow K, \\
1_{F_{2}}=\delta |_{F_{2}}:F_{2}\rightarrow F_{2}.%
\end{array}%
\end{equation*}
\end{proposition}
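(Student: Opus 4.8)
The plan is to deduce Proposition 6.14 from the two preceding propositions by pure bookkeeping: Proposition 6.12 supplies the two triple direct-sum decompositions $L=E_1\oplus K\oplus F_1$ and $L=E_2\oplus K\oplus F_2$ together with the common $K$-component $x_K$, while Proposition 6.11 supplies the stability and fixed-point information about how $\sigma\in G$ and $\delta\in H$ act on the pieces $L^G\setminus K$, $L\setminus L^G$, $L^H\setminus K$, $L\setminus L^H$. Since every element of $Aut(L/K)$ is in particular a $K$-linear automorphism of $L$, once each summand is shown to be invariant and the three restricted maps are identified, applying $\sigma$ (resp. $\delta$) termwise to $x=x_{E_1}+x_K+x_{F_1}$ (resp. $x=x_{E_2}+x_K+x_{F_2}$) reproduces exactly the displayed block-diagonal matrices.

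For $\sigma\in G$ I would argue as follows. Because $E_1=span_K(L^G\setminus K)\subseteq L^G$ and $\sigma$ fixes the $G$-invariant subfield $L^G$ elementwise, $\sigma$ fixes $L^G\setminus K$ pointwise, hence, being $K$-linear, fixes $E_1$ pointwise; thus $\sigma(E_1)=E_1$ and $\sigma|_{E_1}=1_{E_1}$. Trivially $\sigma|_K=1_K$. For the last block, $F_1=span_K(L\setminus L^G)$, and if $L\setminus L^G\neq\emptyset$ then Proposition 6.11(vi) gives $\sigma(L\setminus L^G)=L\setminus L^G$, so $\sigma(F_1)=span_K(\sigma(L\setminus L^G))=F_1$; applying the same fact to $\sigma^{-1}\in G$ shows $\sigma_{F_1}:=\sigma|_{F_1}$ is invertible, i.e. $\sigma_{F_1}\in GL_K(F_1)$ with inverse $(\sigma^{-1})|_{F_1}$. (When $L\setminus L^G=\emptyset$ we have $G=\{1\}$, $F_1=0$, and the statement is vacuous.) Hence $\sigma(x)=x_{E_1}+x_K+\sigma_{F_1}(x_{F_1})$, which is the asserted matrix acting on the column $3$-vector with entries $x_{E_1},x_K,x_{F_1}$.

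For $\delta\in H$ I would run the mirror-image argument using $E_2=span_K(L\setminus L^H)$ and $F_2=span_K(L^H\setminus K)\subseteq L^H$: since $\delta$ fixes $L^H$ elementwise it fixes $F_2$ pointwise, so $\delta|_{F_2}=1_{F_2}$; $\delta|_K=1_K$ is immediate; and $\delta(L\setminus L^H)=L\setminus L^H$ from Proposition 6.11(vi) gives $\delta(E_2)=E_2$ with $\delta_{E_2}:=\delta|_{E_2}\in GL_K(E_2)$. Therefore $\delta(x)=\delta_{E_2}(x_{E_2})+x_K+x_{F_2}$, and the equality $x_{K_1}=x_{K_2}$ recorded in Proposition 6.12 is what lets the two matrix identities share the middle entry $x_K$.

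I do not expect a genuine obstacle. The point is that conditions $(N1)$--$(N2)$ have already been converted, in Propositions 6.11--6.12, into the set-theoretic facts $L^G\cap L^H=K$, $L^G\setminus K=L\setminus L^H$, $L^H\setminus K=L\setminus L^G$ and the $G$- and $H$-invariance of $L\setminus L^G$ and $L\setminus L^H$. The only step beyond linearity is the well-definedness and invertibility of the nontrivial blocks $\sigma_{F_1}$ and $\delta_{E_2}$, which rests precisely on those invariance statements; so the proof amounts to an organized verification of the three restricted maps in each of the two cases.
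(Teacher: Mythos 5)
Your proposal is correct and follows the same route as the paper: the paper's proof is a one-line citation of Propositions 6.11--6.12, and your argument simply spells out that deduction, using the triple decompositions and common $K$-component from Proposition 6.12 together with the pointwise fixing of $L^{G}$ (resp. $L^{H}$) and the invariance $\sigma\left( L\setminus L^{G}\right) =L\setminus L^{G}$, $\delta\left( L\setminus L^{H}\right) =L\setminus L^{H}$ from Proposition 6.11 $\left( vi\right)$. The only difference is that you make explicit the well-definedness and invertibility of the blocks $\sigma_{F_{1}}$ and $\delta_{E_{2}}$ and the degenerate case $G=\{1\}$, which the paper leaves implicit.
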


\begin{proof}
In deed, from \emph{Propositions 6.11-2} we have the linear decompositions
given by the square matrices for any $\sigma \in G$ and $\delta \in H$.
\end{proof}

\begin{proposition}
Suppose the two conditions $\left( N1\right) -\left( N2\right) $ are
satisfied. Then we have $E_{1}=E_{2}$ and $F_{1}=F_{2}.$
\end{proposition}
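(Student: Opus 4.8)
The plan is to derive the statement directly from part $\left( v\right) $ of \emph{Proposition 6.11}, which under the hypotheses $\left( N1\right) -\left( N2\right) $ already records the two set-theoretic identities
\begin{equation*}
L^{G}\setminus K=L\setminus L^{H}\quad \text{and}\quad L^{H}\setminus K=L\setminus L^{G}.
\end{equation*}
Since the $K$-linear span $span_{K}\left( -\right) $ of a subset of $L$ depends only on the underlying subset, applying $span_{K}$ to both sides of the first identity gives
\begin{equation*}
E_{1}=span_{K}\left( L^{G}\setminus K\right) =span_{K}\left( L\setminus L^{H}\right) =E_{2},
\end{equation*}
and applying it to the second identity gives
\begin{equation*}
F_{1}=span_{K}\left( L\setminus L^{G}\right) =span_{K}\left( L^{H}\setminus K\right) =F_{2}.
\end{equation*}

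So first I would simply recall the statement of $\left( v\right) $ of \emph{Proposition 6.11} and observe that it is exactly what is needed; no genuinely new computation is involved. For safety one should treat the degenerate case separately: if $G=\{1\}$, then $L^{G}=L$, while $H=Aut\left( L/K\right) $ and $L^{H}=K$ by \emph{Theorem 3.1}, so that $E_{1}=span_{K}\left( L\setminus K\right) =E_{2}$ and $F_{1}=F_{2}=\{0\}$; the claimed equalities still hold, consistently with $span_{K}\left( \emptyset \right) =\{0\}$.

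There is essentially no obstacle here: the combinatorial content has already been established in \emph{Proposition 6.11}, and the present proposition is its immediate linear-algebraic reformulation. The only subtlety worth a remark is that one is spanning \emph{sets} such as $L^{G}\setminus K$ and $L\setminus L^{H}$, and not the subfields $L^{G}$ and $L^{H}$ themselves; it is precisely this fact that makes the two direct sum decompositions $L=E_{1}\oplus K\oplus F_{1}$ and $L=E_{2}\oplus K\oplus F_{2}$ of \emph{Proposition 6.12} literally the same decomposition of $L$, a coincidence that will be used when comparing the triple linear decompositions of automorphisms in $G$ and in $H$ obtained in \emph{Proposition 6.13}.
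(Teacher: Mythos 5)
Your proposal is correct and coincides with the paper's own argument: the paper also deduces the equalities immediately from part $\left( v\right)$ of \emph{Proposition 6.11} by taking $K$-spans of the set identities $L^{G}\setminus K=L\setminus L^{H}$ and $L^{H}\setminus K=L\setminus L^{G}$. Your extra remarks on the degenerate case $G=\{1\}$ and on spanning sets rather than subfields are sound but not needed beyond what the paper states.
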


\begin{proof}
Immediately from $\left( v\right) $ of \emph{Proposition 6.11}.
\end{proof}

Now we have the following theorem on linear decompositions of automorphisms
in the triple case. Note that here the $G$-invariant subfield $L^{G}$ is not
necessarily a purely transcendental extension over $K$.

\begin{theorem}
\emph{(Linear decompositions of automorphisms)} Let $L $ be a purely
transcendental extension over a field $K$ of finite transcendence degree.
Assume $G$ is a subgroup of $Aut\left( L/K\right) $ such that $L$ is
algebraic over the $G$-invariant subfield $L^{G}$. Let $H$ be a subgroup of $%
Aut\left( L/K\right) $. Suppose the two conditions $\left( N1\right) -\left(
N2\right) $ are satisfied:

$\ \left( N1\right) $ There is $G\cap H=\{1\}$ and $\sigma \cdot \delta
=\delta \cdot \sigma $ holds in $Aut\left( L/K\right) $ for any $\sigma \in
G $ and $\delta \in H$.

$\ \left( N2\right) $ $K$ is the invariant subfield $L^{\left\langle G\cup
H\right\rangle }$ of $L$ under the subgroup $\left\langle G\cup
H\right\rangle $ generated in $Aut\left( L/K\right) $ by the subset $G\cup H$%
.

Put
\begin{equation*}
E=span_{K}\left( L^{G}\setminus K\right) ;
\end{equation*}%
\begin{equation*}
F=span_{K}\left( L^{H}\setminus K\right) .
\end{equation*}%
Then there are the following statements.

$\left( i\right) $ For each $x\in L$ we have
\begin{equation*}
x=\left(
\begin{array}{c}
x_{E} \\
x_{K} \\
x_{F}%
\end{array}%
\right)
\end{equation*}%
where $x_{E}\in E,x_{K}\in K$ and $x_{F}\in F$ are called the $E$\textbf{%
-component}, $K$\textbf{-component} and $F$\textbf{-component} of $x$ in the
triple decomposition, respectively.

$\left( ii\right) $ Let $\sigma \in G$. For each $x\in L$ we have
\begin{equation*}
\sigma \left( x\right) =\left(
\begin{array}{ccc}
1_{E} & 0 & 0 \\
0 & 1_{K} & 0 \\
0 & 0 & \sigma _{F}%
\end{array}%
\right) \left(
\begin{array}{c}
x_{E} \\
x_{K} \\
x_{F}%
\end{array}%
\right) .
\end{equation*}

Here, $\sigma _{E}=1_{E},\sigma _{K}=1_{K}$ and $\sigma _{F}$ are called the
$E$\textbf{-component}, $K$\textbf{-component} and $F$\textbf{-component} of
$\sigma $ in the triple decomposition, respectively.

$\left( iii\right) $ Let $\delta \in H$. For each $x\in L$ we have
\begin{equation*}
\delta \left( x\right) =\left(
\begin{array}{ccc}
\delta _{E} & 0 & 0 \\
0 & 1_{K} & 0 \\
0 & 0 & 1_{F}%
\end{array}%
\right) \left(
\begin{array}{c}
x_{E} \\
x_{K} \\
x_{F}%
\end{array}%
\right) .
\end{equation*}

Here, $\delta _{E}=1_{E},\delta _{K}=1_{K}$ and $\delta _{F}$ are called the
$E$\textbf{-component}, $K$\textbf{-component} and $F$\textbf{-component} of
$\delta $ in the triple decomposition, respectively.

$\left( iv\right) $ For the subgroup $\left\langle G\cup H\right\rangle $ in
the Galois group $Aut\left( L/K\right) $, there is a direct decomposition $%
\left\langle G\cup H\right\rangle =GH=HG$ in the Galois group $Aut\left(
L/K\right) $. In particular, $G$ and $H$ are normal subgroups of $%
\left\langle G\cup H\right\rangle $.
\end{theorem}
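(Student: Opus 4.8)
The plan is to obtain the theorem as a repackaging of \emph{Propositions 6.11--6.14}, which already contain essentially all of the substance; what remains is to match the blocks $E$ and $F$ of the present statement with the blocks occurring in those propositions and then to read off the matrix shapes.

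First I would set up the triple direct sum. By \emph{Proposition 6.11}, the conditions $(N1)$--$(N2)$ yield $L^{G}\cap L^{H}=K$, the set identities $L^{G}\setminus K=L\setminus L^{H}$ and $L^{H}\setminus K=L\setminus L^{G}$, and the decomposition of $L$ into the disjoint union $(L\setminus L^{G})\cup(L\setminus L^{H})\cup K$. Hence the $K$-subspaces $E:=span_{K}(L^{G}\setminus K)$ and $F:=span_{K}(L^{H}\setminus K)$ of the theorem satisfy $E=span_{K}(L\setminus L^{H})$ and $F=span_{K}(L\setminus L^{G})$, so in the notation of \emph{Propositions 6.12 and 6.14} one has $E=E_{1}=E_{2}$ and $F=F_{1}=F_{2}$. \emph{Proposition 6.12} then gives the direct sum $L=E\oplus K\oplus F$ of $K$-linear subspaces, together with the fact that the $K$-component of a given $x$ computed from the $G$-decomposition coincides with the one computed from the $H$-decomposition; writing $x=x_{E}+x_{K}+x_{F}$ with $x_{E}\in E$, $x_{K}\in K$, $x_{F}\in F$ and recording this as the $3$-column vector with entries $x_{E},x_{K},x_{F}$ is statement $(i)$.

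Next I would prove $(ii)$--$(iii)$ by restricting automorphisms to the three blocks. Fix $\sigma\in G$. Since $L^{G}$ is a subfield containing $K$, both $E\subseteq L^{G}$ and $K\subseteq L^{G}$, so $\sigma$ acts as the identity on $E$ and on $K$, i.e. $\sigma_{E}=1_{E}$ and $\sigma_{K}=1_{K}$. For the third block, $x_{F}\in span_{K}(L\setminus L^{G})$ and, by \emph{Proposition 6.11}$(vi)$, $\sigma(L\setminus L^{G})=L\setminus L^{G}$, so $\sigma(x_{F})\in span_{K}(L\setminus L^{G})=F$; thus $\sigma_{F}:=\sigma|_{F}\in GL_{K}(F)$ is well defined and $\sigma(x)=x_{E}+x_{K}+\sigma_{F}(x_{F})$, which is the block-diagonal matrix $\mathrm{diag}(1_{E},1_{K},\sigma_{F})$ applied to $x$. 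Statement $(iii)$ is the mirror computation: for $\delta\in H$ one has $F\subseteq L^{H}$ and $K\subseteq L^{H}$, so $\delta$ fixes the $F$- and $K$-blocks, while $E=span_{K}(L\setminus L^{H})$ is $\delta$-stable by \emph{Proposition 6.11}$(vi)$, giving $\delta=\mathrm{diag}(\delta_{E},1_{K},1_{F})$. Finally $(iv)$ is immediate from \emph{Proposition 6.11}$(i)$: by $(N1)$ the subgroup $\langle G\cup H\rangle$ equals $GH=HG$ and $G$, $H$ are normal in it; one can also reconfirm the commutativity from $(ii)$--$(iii)$, since elements of $G$ act nontrivially only on the $F$-coordinate and elements of $H$ only on the disjoint $E$-coordinate, so the corresponding block-diagonal matrices commute.

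The part requiring the most care --- already dealt with in the preceding propositions --- is the set identity $L^{G}\setminus K=L\setminus L^{H}$ together with its mirror, equivalently the assertion that $E$ and $F$ are genuinely complementary blocks whose span together with $K$ exhausts $L$. This is precisely where hypothesis $(N2)$ is essential: it forces $L^{G}$ and $L^{H}$ to be ``Galois-complemented'', which is exactly what makes $L=E\oplus K\oplus F$ a direct sum filling all of $L$; once this is in place, the block forms of $\sigma$ and $\delta$ and the direct-product description of $\langle G\cup H\rangle$ are purely formal. A secondary point to check --- again supplied by the earlier work, via $x_{K_{1}}=x_{K_{2}}$ in \emph{Proposition 6.12} and $E_{1}=E_{2}$, $F_{1}=F_{2}$ in \emph{Proposition 6.14} --- is that the $E$-, $K$-, and $F$-components of $x\in L$ do not depend on whether they are computed through the decomposition attached to $G$ or through the one attached to $H$.
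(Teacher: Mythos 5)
Your proposal is correct and follows the same route as the paper, which proves the theorem by direct appeal to \emph{Propositions 6.11--6.14}; your identification $E=E_{1}=E_{2}$, $F=F_{1}=F_{2}$ via \emph{Proposition 6.11}$(v)$ and the block-wise reading of $\sigma\in G$ and $\delta\in H$ is exactly the intended argument, only spelled out in more detail. No gaps.
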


\begin{proof}
Immediately from \emph{Propositions 6.11-4}.
\end{proof}

For the existence of the above subgroup $H$ in $Aut\left( L/K\right) $, we
have the following theorem.

\begin{theorem}
Let $L$ be a purely transcendental extension over a field $K$ of finite
transcendence degree. Suppose $G$ is a subgroup of $Aut\left( L/K\right) $
such that $L$ is algebraic over the $G$-invariant subfield $L^{G}$.

If either the $G$-invariant subfield $L^{G}$ is purely transcendental over $%
K $ or there is a transcendence base $\Delta $ of $L^{G}/K$ such that $L$
and $L^{G}$ both are Galois over $K\left( \Delta \right) $, then there
exists a subgroup $H$ of $Aut\left( L/K\right) $ such that the two
conditions $\left( N1\right) -\left( N2\right) $ are satisfied:

$\ \left( N1\right) $ There is $G\cap H=\{1\}$ and $\sigma \cdot \delta
=\delta \cdot \sigma $ holds in $Aut\left( L/K\right) $ for any $\sigma \in
G $ and $\delta \in H$.

$\ \left( N2\right) $ $K$ is the invariant subfield $L^{\left\langle G\cup
H\right\rangle }$ of $L$ under the subgroup $\left\langle G\cup
H\right\rangle $ generated in $Aut\left( L/K\right) $ by the subset $G\cup H$%
.
\end{theorem}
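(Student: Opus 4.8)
The plan is to treat the two alternatives of the hypothesis separately.

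\emph{First alternative: $L^{G}$ purely transcendental over $K$.} Then $G$ is by definition a Noether solution of $L/K$ — we are given that $L$ is algebraic over $L^{G}$, and $L^{G}$ is purely transcendental over $K$. So I would invoke Theorem~6.4 (Properties for Noether solutions), which to any Noether solution $G$ attaches a subgroup $H$ with $(N1)$--$(N3)$; concretely, for any nice basis $(\Delta ,A)$ of $L/K$ with $K(\Delta )=L^{G}$ one may take $H=\pi _{t}(L/K)(\Delta ,A)$. Then $G=\pi _{a}(L/K)(\Delta ,A)=Aut(L/K(\Delta ))$, and $\langle G\cup H\rangle =D_{L/K}(\Delta ,A)=\pi _{a}(L/K)(\Delta ,A)\cdot \pi _{t}(L/K)(\Delta ,A)$ is a direct product of commuting normal subgroups by Theorem~5.4, which yields $(N1)$, while $L^{\langle G\cup H\rangle }=K$ is Theorem~5.5, which yields $(N2)$.

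\emph{Second alternative: $L$ and $L^{G}$ both Galois over $K(\Delta )$ for some transcendence base $\Delta $ of $L^{G}/K$.} Here $G$ is finite, and in the tower $K(\Delta )\subseteq L^{G}\subseteq L$ every step is finite algebraic: $L/K(\Delta )$ is finite Galois with group $\Gamma $, and $L^{G}/K(\Delta )$ is finite Galois, so $G=Aut(L/L^{G})$ is normal in $\Gamma $ with $\Gamma /G\cong Aut(L^{G}/K(\Delta ))$. Since $L^{G}$ is algebraic Galois over the purely transcendental $K(\Delta )$, Proposition~3.11 shows that $L^{G}$ is Galois over $K$; moreover its proof produces an explicit finite family of $K$-automorphisms of $L^{G}$, namely the joint transformations of the elements of $Aut(L^{G}/K(\Delta ))$ with the reciprocal transformation of $K(\Delta )/K$, whose common fixed field in $L^{G}$ is $K$. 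I would then try to extend this family to automorphisms of $L$ — lifting each joint transformation through $L$ by Lemma~2.5, the normality of $L$ over $K(\Delta )$ ensuring the extension lands back in $L$ — and take for $H$ the subgroup generated by these lifts together with $\pi _{t}(L/K)(\Delta ,A)$, for a nice basis $(\Delta ,A)$ of $L/K$. Granting that the lifts can be arranged so that $(N1)$ holds, $(N2)$ follows at once: since $G\subseteq \pi _{a}(L/K)(\Delta ,A)$ it commutes with all of $H$, so $\langle G\cup H\rangle =GH$ and $L^{\langle G\cup H\rangle }=L^{G}\cap L^{H}=(L^{G})^{H|_{L^{G}}}$, which sits inside the common fixed field of the joint transformations and is therefore $K$; normality of the two factors then comes, a posteriori, from the triple block-decomposition $L=E\oplus K\oplus F$ of Theorem~6.15.

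The main obstacle is exactly securing $(N1)$ in this second case, i.e.\ choosing the lifts so that $H$ both meets $G$ trivially and centralises $G$ elementwise. An extension of a joint transformation of $L^{G}$ always \emph{normalises} $G=Aut(L/L^{G})$, but forcing it to \emph{centralise} $G$ is delicate: any lift built as a product of an element of $\Gamma =Aut(L/K(\Delta ))$ with an element of $\pi _{t}(L/K)(\Delta ,A)$ would, in order to centralise $G$, have to take its $\Gamma $-factor in $Z_{\Gamma }(G)$, and $Z_{\Gamma }(G)\cdot G=\Gamma $ can fail. So the lift must be a genuinely "mixed" automorphism that does not preserve $K(\Delta )$, and the role of the hypothesis that $L$ is Galois over $K(\Delta )$ is precisely to align the algebraic data of $L$ with that of $L^{G}$ along the common, highly homogeneous purely transcendental base $K(\Delta )$, so that each $K$-automorphism of $L^{G}$ in play lifts to an automorphism of $L$ equivariant for the $G$-action — equivalently, so that a $K$-linear complement of $L^{G}$ in $L$ carrying the $G$-action can be fixed by the lifts and Theorem~6.15 applies. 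Making this precise, which is what the Galois-complemented-group analysis of Propositions~6.11--6.14 is for, is the technical heart of the proof; once it is done, $G\cap H=\{1\}$ falls out, since every non-identity element of $G$ moves some element of $L$ lying outside $L^{G}$, whereas the lifts are chosen so that restriction to $L^{G}$ is injective on $H$ and $G=Aut(L/L^{G})$ is exactly the kernel of that restriction.
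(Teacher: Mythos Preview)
Your first alternative is correct and matches the paper exactly.

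In the second alternative you correctly isolate the obstacle --- arranging the lifts to centralise $G$ --- but your route through Lemma~2.5 and the joint transformations of Proposition~3.11 leaves it open, and the hint toward Propositions~6.11--6.14 does not close it. The paper bypasses Lemma~2.5 entirely and uses the $K$-linear complement you mention only in passing as the \emph{definition} of the lift rather than a property to be verified afterwards. Write $L=E\oplus F$ with $E=L^{G}$ and $F=span_{K}(L\setminus L^{G})$; every $\sigma\in G$ then has block form $\left(\begin{smallmatrix}1_{E}&0\\0&\sigma_{F}\end{smallmatrix}\right)$, and for each $\widetilde{\delta}\in Aut(L^{G}/K(\Delta))$ one \emph{defines} its lift to be $\delta=\left(\begin{smallmatrix}\widetilde{\delta}&0\\0&1_{F}\end{smallmatrix}\right)$. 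With this choice the commutation $\sigma\delta=\delta\sigma$ and the triviality of $G\cap G_{0}$ are immediate from block-diagonality, where $G_{0}$ is the group of such $\delta$; one obtains $Aut(L/K(\Delta))=G\cdot G_{0}$ as a direct product of normal subgroups, and then $H:=\langle G_{0}\cup H_{0}\rangle=G_{0}\cdot H_{0}$ with $H_{0}=\pi_{t}(L/K)(\Delta,A)$ satisfies $(N1)$--$(N2)$ via Theorem~6.4. The double Galois hypothesis on $K(\Delta)\subseteq L^{G}\subseteq L$ is invoked so that these block-defined $\delta$ are genuine field automorphisms of $L$ (in the spirit of Proposition~5.3) and so that the factorisation $Aut(L/K(\Delta))=G\cdot G_{0}$ holds --- not to navigate a centraliser obstruction inside $\Gamma$.
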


\begin{proof}
$\left( i\right) $ Assume $L^{G}$ is purely transcendental over $K$. Let $%
H=\pi _{t}\left( L/K\right) \left( \Lambda ,B\right) $ be the highest
transcendental Galois subgroup of $L/K$. Here, $\left( \Lambda ,B\right) $
is a nice basis of $L/K$ such that $K\left( \Lambda \right) =L^{G}$. From
\emph{Theorem 6.4} it is seen that the two conditions $\left( N1\right)
-\left( N2\right) $ are satisfied.

$\left( ii\right) $ Suppose there is a transcendence base $\Delta $ of $%
L^{G}/K$ such that $L$ and $L^{G}$ both are Galois over $K\left( \Delta
\right) $. Consider a tower of algebraic subextensions in $L/K$
\begin{equation*}
K\left( \Delta \right) \subseteq L^{G}\subseteq L.
\end{equation*}

Put
\begin{equation*}
E_{1}=span_{K}\left( L^{G}\setminus K\left( \Delta \right) \right) ;
\end{equation*}%
\begin{equation*}
E_{2}=span_{K}K\left( \Delta \right) ;
\end{equation*}%
\begin{equation*}
F=span_{K}\left( L\setminus L^{G}\right) .
\end{equation*}

It is seen that $L$, as a $K$-linear space, has a direct decomposition $%
L=E\oplus F$, where $E:=E_{2}\oplus E_{1}=L^{G}$. Hence, each $\widetilde{%
\delta }:L^{G}\rightarrow L^{G}$ of the Galois group $Aut\left(
L^{G}/K\left( \Delta \right) \right) $ has a unique extension $\delta
:L\rightarrow L$ given by
\begin{equation*}
\delta \left( x\right) =\left(
\begin{array}{cc}
\widetilde{\delta } & 0 \\
0 & 1_{F}%
\end{array}%
\right) \left(
\begin{array}{c}
x_{E} \\
x_{F}%
\end{array}%
\right)
\end{equation*}%
for any $x\in L$ with the $E$-component $x_{E}$ and the $F$-component $x_{F}$
in the double decomposition (See \emph{Proposition 5.3}). As $L$ is
algebraic Galois over $L^{G}$, we have a subgroup contained in $Aut\left(
L/K\right) $
\begin{equation*}
G_{0}:=\{\delta \in Aut\left( L/K\left( \Delta \right) \right) :\widetilde{%
\delta }\in Aut\left( L^{G}/K\left( \Delta \right) \right) ,\delta
_{F}=1_{F}\}
\end{equation*}

such that the Galois group $Aut\left( L/K\left( \Delta \right) \right) $ has
a direct decomposition
\begin{equation*}
Aut\left( L/K\left( \Delta \right) \right) =G\cdot G_{0}=G_{0}\cdot G
\end{equation*}%
where $G$ and $G_{0}$ are normal subgroups of $Aut\left( L/K\left( \Delta
\right) \right) $.

On the other hand, let $H_{0}=\pi _{t}\left( L/K\right) \left( \Delta
,A\right) $ be the highest transcendental Galois subgroup of $L/K$, where $%
\left( \Delta ,A\right) $ is a nice basis of $L/K$. Suppose
\begin{equation*}
H=\left\langle G_{0}\cup H_{0}\right\rangle
\end{equation*}%
is the subgroup in the Galois group $Aut\left( L/K\right) $ generated by the
subset $G_{0}\cup H_{0}$. Then there is
\begin{equation*}
H=G_{0}\cdot H_{0}
\end{equation*}%
in $Aut\left( L/K\right) $ from \emph{Theorem 6.4} and it follows that the
two conditions $\left( N1\right) -\left( N2\right) $ are satisfied. This
completes the proof.
\end{proof}

\begin{definition}
Let $L$ be an extension over a field $K$. A subgroup $G$ of $Aut\left(
L/K\right) $ is said to be \textbf{Galois-complemented} in $L/K$ if there is
a subgroup $H$ of $Aut\left( L/K\right) $ such that the two conditions $%
\left( N1\right) -\left( N2\right) $ are satisfied. In such a case, the
subgroup $H$ is said to be a \textbf{Galois-complement} of $G$ in $L/K$.

In particular, if $G$ is a Noether solution of $L/K$, the highest
transcendental Galois subgroup $\pi _{t}\left( L/K\right) \left( \Delta
,A\right) $ of $L/K$ is a Galois-complement of $G$ in $L/K$, called the
\textbf{natural Galois-complement} of $G$ in $L/K$, where $\left( \Delta
,A\right) $ is a nice basis $\left( \Delta ,A\right) $ of $L/K$ with $%
K\left( \Delta \right) =L^{G}$. For simplicity, we denote by $H_{L/K}\left(
G\right) $ the natural Galois-complement of $G$ in $L/K$.
\end{definition}

This turns out an essential condition to Noether solutions of $L/K$.

\begin{remark}
(\emph{An essential condition to Noether solutions}) Assume $L$ is a purely
transcendental extension over a field $K$ of finite transcendence degree.
Let $G$ be a subgroup of $Aut\left( L/K\right) $ such that $L$ is algebraic
over the $G$-invariant subfield $L^{G}$.

$\left( i\right) $ If $G$ is a Noether solution, then $G$ must be
Galois-complemented in $L/K$, i.e., the two conditions $\left( N1\right)
-\left( N2\right) $ must be satisfied. This is an essential condition to
Noether solutions of $L/K$.

In such a case, $G$ has a unique natural Galois-complement $H_{G}$ in $L/K$
which is maximal among all possible Galois-complements of $G$ in $L/K$.

In general, $G$ is not necessarily Galois-complemented in $L/K$. In deed,
that's one of the key difficulties in Noether's problem on rationality.

$\left( ii\right) $ Let $G$ be Galois-complemented in $L/K$. $G$ does not
necessarily have a natural Galois-complement in $L/K$. In deed, consider a
tower of subextensions $K\subseteq M:=K\left( \Lambda \right) \subseteq
M_{0}:=L^{G}\subseteq L$, where $\Lambda $ is a transcendence base of $L/K$
and $L^{G}$ is Galois over $M$. Then $P:=Aut\left( M_{0}/M\right) $ is a
Noether solution of $M_{0}/K$.

Let $H_{0}$ be a Galois-complement of $G$ in $L/K$ and $H_{M_{0}/M}\left(
P\right) $ the natural Galois-complement of $P$ in $M_{0}/M$. Suppose $%
H_{M_{0}}$ and $H_{M}$ are subgroups of $H_{0}$ such that the restrictions $%
H_{M_{0}}|_{M_{0}}=P$ and $H_{M}|_{M_{0}}=H_{M_{0}/M}\left( P\right) $. Then
$H:=H_{M_{0}}\cdot H_{M}$ is a Galois-complement of $G$ in $L/K$.
\end{remark}

\subsection{Galois-complemented groups and the first characteristic of
transcendental Galois theory}

Consider a Galois extension $M$ over a field $K$. If $M$ is algebraic over $%
K $ of degree $\left[ M:K\right] =+\infty $, then there is no finite
subgroup $G$ of the Galois group $Aut\left( M/K\right) $ such that $K$ is
the $G$-invariant subfield $M^{G}$ of $M$. But there is another distinct
phenomenon for a case of a transcendental extension $M/K$ (See \emph{Remark
6.21}).

\begin{lemma}
Let $L$ be a purely transcendental extension over a field $K$ of finite
transcendence degree. Suppose $G$ is a finite subgroup of the Galois group $%
Aut\left( L/K\right) $.

If $G$ is Galois-complemented in $L/K$, then each Galois-complement $H_{G}$
of $G$ is also Galois-complemented in $L/K$. In such a case, $G$ is a
Galois-complement of $H_{G}$ in $L/K$.
\end{lemma}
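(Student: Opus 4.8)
The plan is to exploit the evident symmetry of the two defining conditions $(N1)$ and $(N2)$ under interchanging the roles of the two subgroups. By hypothesis $G$ is Galois-complemented in $L/K$ and $H_{G}$ is a Galois-complement of $G$, so by the definition of a Galois-complemented subgroup the pair $\left(G,H_{G}\right)$ satisfies $(N1)$ and $(N2)$; that is, $G\cap H_{G}=\{1\}$, the relation $\sigma\cdot\delta=\delta\cdot\sigma$ holds in $Aut\left(L/K\right)$ for all $\sigma\in G$ and $\delta\in H_{G}$, and $K=L^{\left\langle G\cup H_{G}\right\rangle }$ is the invariant subfield of $L$ under the subgroup generated in $Aut\left(L/K\right)$ by the subset $G\cup H_{G}$.

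First I would record that $(N1)$ is symmetric in the two subgroups: the intersection condition reads $H_{G}\cap G=G\cap H_{G}=\{1\}$, and the commutation relation $\delta\cdot\sigma=\sigma\cdot\delta$ for all $\delta\in H_{G}$ and $\sigma\in G$ is literally the one already assumed. Next I would record that $(N2)$ is also symmetric: as subsets of $Aut\left(L/K\right)$ one has $H_{G}\cup G=G\cup H_{G}$, whence $\left\langle H_{G}\cup G\right\rangle =\left\langle G\cup H_{G}\right\rangle $ and therefore $L^{\left\langle H_{G}\cup G\right\rangle }=L^{\left\langle G\cup H_{G}\right\rangle }=K$. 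Consequently the pair $\left(H_{G},G\right)$ also satisfies $(N1)$ and $(N2)$, which by the definition of a Galois-complemented subgroup means exactly that $H_{G}$ is Galois-complemented in $L/K$ and that $G$ is a Galois-complement of $H_{G}$ in $L/K$. This completes the argument.

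There is essentially no obstacle here; the whole content is that conditions $(N1)$ and $(N2)$ are symmetric in $G$ and $H_{G}$, so Galois-complementation is a symmetric relation among subgroups of $Aut\left(L/K\right)$. The one point requiring care is to confirm that the definition of a Galois-complemented subgroup carries no hidden side condition on the subgroup to be complemented — in particular that it does not demand this subgroup be finite, nor that $L$ be algebraic over its invariant subfield — so that the symmetry argument applies verbatim with $H_{G}$ in that role (recall that in general $H_{G}$, being for instance the natural Galois-complement of the finite group $G$, is an infinite subgroup of $Aut\left(L/K\right)$). Alternatively, the statement could be read off from the direct product decomposition $\left\langle G\cup H_{G}\right\rangle =GH_{G}=H_{G}G$ with both factors normal, as furnished by the linear-decomposition results of \emph{\S 6.7}, but the bare symmetry of $(N1)$ and $(N2)$ already suffices.
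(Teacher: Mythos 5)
Your proposal is correct and follows essentially the same route as the paper: the paper's own proof simply observes that conditions $(N1)$--$(N2)$ hold for the pair $G$, $H_{G}$ and are symmetric in the two subgroups, hence $H_{G}$ is Galois-complemented with $G$ as a complement. Your additional check that the definition imposes no finiteness or algebraicity side condition on the subgroup being complemented is a sensible precaution but adds nothing beyond the paper's argument.
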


\begin{proof}
Let $G$ be Galois-complemented in $L/K$ and $H_{G}$ a Galois-complement of $%
G $. By definition, the two conditions $\left( N1\right) -\left( N2\right) $
hold for $G$ and $H_{G}$. Hence, $H_{G}$ is Galois-complemented in $L/K$ and
$G$ is a Galois-complement of $H_{G}$ in $L/K$.
\end{proof}

\begin{lemma}
Let $L$ be a purely transcendental extension over a field $K$ of finite
transcendence degree. Suppose $G$ is a finite subgroup of the Galois group $%
Aut\left( L/K\right) $.

If $G$ is Galois-complemented in $L/K$ and $H$ is a Galois-complement of $G$
in $L/K$, then the $H$-invariant subfield $L^{H}$ of $L$ is Galois over $K$
and the Galois group $Aut\left( L^{H}/K\right) $ contains a finite group $%
\widetilde{G}:=\{\sigma |_{L^{H}}:\sigma \in G\}$ consisting of the
restrictions $\sigma |_{L^{H}}$ of $\sigma \in G$ such that
\begin{equation*}
\left( L^{H}\right) ^{Aut\left( L^{H}/K\right) }=\left( L^{H}\right)
^{G}=\left( L^{H}\right) ^{\widetilde{G}}=K
\end{equation*}%
hold for the invariant subfields of $L^{H}$.
\end{lemma}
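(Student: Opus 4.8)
The plan is to realize $\widetilde G$ as a genuine subgroup of $Aut\left(L^{H}/K\right)$, to pin down the fixed fields $(L^{H})^{G}$ and $(L^{H})^{\widetilde G}$ via Proposition 6.11, and then to squeeze $(L^{H})^{Aut(L^{H}/K)}$ between $K$ and $(L^{H})^{\widetilde G}$; the equality $K=(L^{H})^{Aut(L^{H}/K)}$ is, by definition, exactly the statement that $L^{H}$ is Galois over $K$.

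First I would verify that each $\sigma\in G$ restricts to a $K$-automorphism of the subfield $L^{H}$. This is the only place the commutativity in $(N1)$ is really used: for $x\in L^{H}$ and $\delta\in H$ we have $\delta(\sigma(x))=\sigma(\delta(x))=\sigma(x)$, so $\sigma(L^{H})\subseteq L^{H}$; running the same computation with $\sigma^{-1}\in G$ gives $\sigma(L^{H})=L^{H}$, and $\sigma$ fixes $K$ pointwise. Hence $\sigma\mapsto\sigma|_{L^{H}}$ is a group homomorphism $G\to Aut\left(L^{H}/K\right)$, and its image $\widetilde G=\{\sigma|_{L^{H}}:\sigma\in G\}$ is a subgroup of $Aut\left(L^{H}/K\right)$ which is finite because $G$ is.

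Next, since $G$ now acts on $L^{H}$ through these restrictions, an element $x\in L^{H}$ is fixed by all of $\widetilde G$ precisely when it is fixed by all of $G$; therefore $(L^{H})^{\widetilde G}=(L^{H})^{G}=L^{H}\cap L^{G}$. By Proposition 6.11 (whose hypotheses $(N1)$--$(N2)$ are exactly our assumptions), $L^{G}\cap L^{H}=L^{\langle G\cup H\rangle}=K$, so $(L^{H})^{\widetilde G}=(L^{H})^{G}=K$. Finally, because $\widetilde G\subseteq Aut\left(L^{H}/K\right)$ a larger group carves out a smaller fixed field, giving $(L^{H})^{Aut(L^{H}/K)}\subseteq(L^{H})^{\widetilde G}=K$, while $K\subseteq(L^{H})^{Aut(L^{H}/K)}$ is automatic; hence $(L^{H})^{Aut(L^{H}/K)}=(L^{H})^{G}=(L^{H})^{\widetilde G}=K$, and $L^{H}$ is Galois over $K$.

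I do not expect a real obstacle here: the only non-formal step is the first one, and even there the argument is a two-line computation once $(N1)$ is used correctly to obtain $G$-stability of $L^{H}$; everything after that is bookkeeping with Proposition 6.11 and the definition of ``Galois''. The finiteness of $G$ is needed only to make $\widetilde G$ finite, and the purely transcendental hypothesis on $L/K$ enters solely through the cited Proposition 6.11. This lemma is precisely the mechanism behind the ``first characteristic'' of transcendental Galois theory: the finite group $\widetilde G$ (equivalently, the action of $G$ by restriction) has fixed field collapsing onto $K$ inside the possibly transcendental extension $L^{H}/K$.
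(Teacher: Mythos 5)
Your proof is correct, and it reaches the conclusion by the same endgame as the paper (realize $\widetilde{G}$ inside $Aut\left(L^{H}/K\right)$, then squeeze $\left(L^{H}\right)^{Aut\left(L^{H}/K\right)}$ between $K$ and $\left(L^{H}\right)^{\widetilde{G}}=\left(L^{H}\right)^{G}=L^{G}\cap L^{H}=K$), but the key stability step is handled by a different mechanism. The paper derives $\sigma\left(L^{H}\right)=L^{H}$ for $\sigma\in G$ from the set-theoretic decomposition of \emph{Proposition 6.11}: it invokes part $\left(v\right)$, $L^{H}=\left(L\setminus L^{G}\right)\cup K$, together with part $\left(vi\right)$, $\sigma\left(L\setminus L^{G}\right)=L\setminus L^{G}$, and consequently has to split off the degenerate case $G=\{1\}$ separately. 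You instead obtain $G$-stability of $L^{H}$ directly from the commutation relation in $\left(N1\right)$: for $x\in L^{H}$ and $\delta\in H$, $\delta\left(\sigma\left(x\right)\right)=\sigma\left(\delta\left(x\right)\right)=\sigma\left(x\right)$, and applying the same to $\sigma^{-1}$ gives $\sigma\left(L^{H}\right)=L^{H}$. This is shorter, needs no case distinction, and uses \emph{Proposition 6.11} only through the elementary identity $L^{G}\cap L^{H}=L^{\left\langle G\cup H\right\rangle}=K$ coming from $\left(N2\right)$; the paper's route, by contrast, leans on the finer structural facts $\left(v\right)$--$\left(vi\right)$, which it reuses elsewhere. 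Both arguments are valid; yours isolates more cleanly exactly which of the hypotheses $\left(N1\right)$--$\left(N2\right)$ is responsible for which conclusion.
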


\begin{proof}
Let $G$ be Galois-complemented in $L/K$ and $H$ a Galois-complement of $G$.
By definition, the two conditions $\left( N1\right) -\left( N2\right) $ hold
for $G$ and $H$. Consider the $H$-invariant subfield $L^{H}$ of $L$ and the
\emph{transcendental} subextension $L^{H}/K$.

Prove $L^{H}$ is Galois over $K$. In deed, from $\left( v\right) $ of \emph{%
Proposition 6.11} there is
\begin{equation*}
L^{H}=\left( L\setminus L^{G}\right) \cup K.
\end{equation*}%
If $G=\{1\}$, then $H=Aut\left( L/K\right) $ and $L^{H}=K$ is Galois over $K$
itself.

Assume $G\not=\{1\}$. As $L\setminus L^{G}\not=\emptyset $, from $\left(
vi\right) $ of \emph{Proposition 6.11} we have
\begin{equation*}
\sigma \left( L\setminus L^{G}\right) =L\setminus L^{G}
\end{equation*}%
for any $\sigma \in G\subseteq Aut\left( L/K\right) $ and then
\begin{equation*}
\sigma \left( L^{H}\right) =L^{H}
\end{equation*}%
holds for any $\sigma \in G\subseteq Aut\left( L/K\right) $.

It follows that the group
\begin{equation*}
\widetilde{G}:=\{\sigma |_{L^{H}}:\sigma \in G\},
\end{equation*}%
which consists of the restrictions $\sigma |_{L^{H}}$ of $\sigma \in G$, is
a \emph{finite} subgroup of the Galois group $Aut\left( L^{H}/K\right) $ of
the subextension $L^{H}/K$.

On the other hand, as the condition $\left( N2\right) $ holds, from $\left(
ii\right) $ of \emph{Proposition 6.11} we have
\begin{equation*}
K\subseteq \left( L^{H}\right) ^{Aut\left( L^{H}/K\right) }\subseteq \left(
L^{H}\right) ^{\widetilde{G}}=\left( L^{H}\right) ^{G}=L^{D}=K.
\end{equation*}%
This proves $L^{H}$ is Galois over $K$ and $\widetilde{G}\subseteq Aut\left(
L^{H}/K\right) $ holds.
\end{proof}

\begin{remark}
(\emph{The first characteristic of transcendental Galois theory}) Let $L$ be
a purely transcendental extension over a field $K$ of finite transcendence
degree. Let $G$ be a finite subgroup of the Galois group $Aut\left(
L/K\right) $.

$\left( i\right) $ If $G=\left\langle \tau _{L/K}\right\rangle $ is the
subgroup in $Aut\left( L/K\right) $ generated by the reciprocal translation $%
\tau _{L/K}$ of $L/K$, then $G$ is of order two and $K$ is the $G$-invariant
subfield $L^{G}$ of $L$, i.e., $L^{G}=L^{Aut\left( L/K\right) }=K$. (See
\emph{Theorem 3.1}).

$\left( ii\right) $ If $G$ is Galois-complemented in $L/K$ and $H$ is a
Galois-complement of $G$ in $L/K$, then the $H$-invariant subfield $L^{H}$
of $L$ is transcendental over $K$ and $K$ is the $G$-invariant subfield $%
\left( L^{H}\right) ^{G}$ of $L^{H}$, i.e., $\left( L^{H}\right) ^{G}=\left(
L^{H}\right) ^{Aut\left( L^{H}/K\right) }=K$. (See \emph{Lemma 6.20}).

It follows that for a transcendental extension $M/K$, there can be finite
groups $G$ of the Galois group $Aut\left( M/K\right) $ such that $K$ is the $%
G$-invariant subfield $M^{G}$ of $M$. However, if $M/K$ is an infinite
algebraic extension, this phenomenon never occurs.
\end{remark}

\begin{example}
Let $L=K\left( t_{1},t_{2},\cdots ,t_{n}\right) $ be a purely transcendental
extension over a field $K$ with $t_{1},t_{2},\cdots ,t_{n}$ being
algebraically independent variables. Let $G=\Sigma _{n}$ be the symmetric
group on $n$ letters, as a subgroup of the Galois group $Aut\left(
L/K\right) $, acting on $t_{1},t_{2},\cdots
,t_{n} $ in an evident manner (See \emph{Definition 7.7}).

$\left( i\right) $ It is seen that $L$ is algebraic Galois over the $G$%
-invariant subfield $L^{G}$ of $L$. (See \emph{Lemma 7.6}).

$\left( ii\right) $ From \emph{Theorem 6.16} it is seen that $G$ is
Galois-complemented in $L/K$.

$\left( iii\right) $ Let $H$ be the natural Galois-complement of $G$ in $L/K$. Then
the $H$-invariant subfield $L^{H}$ is purely
transcendental over $K$. (See \emph{Lemma 7.6}).

$\left( iv\right) $ The reciprocal translation $\tau _{L^{H}/K}$ of $L^{H}/K$
is not contained in the subgroup $\widetilde{G}:=\{\sigma |_{L^{H}}:\sigma
\in G\}$ of the Galois group $Aut\left( L^{H}/K\right) $, i.e., $\tau
_{L^{H}/K}\not\in \widetilde{G}$.

$\left( v\right) $ Let $M=L^{H}$. From \emph{Lemma 6.20} we obtain a new
case of a finite subgroup $\widetilde{G}$ of the Galois group $Aut\left(
M/K\right) $ for a purely transcendental extension $M/K$ such that $M^{%
\widetilde{G}}=M^{G}=K$ hold, i.e., $K$ is the $\widetilde{G}$-invariant
subfield $M^{\widetilde{G}}$ of $M$.
\end{example}

\subsection{Galois-complemented groups and the second characteristic of
transcendental Galois theory}

In the following we discuss the levels of hyperplanes in a purely
transcendental extension which play the same role as the codimensions of
subvarieties in an algebraic variety (particularly in an excellent algebraic
scheme). See \emph{[}An 2019\emph{]}\ for further results.

Let $L$ be a purely transcendental extension over a field $K$ of finite
transcendence degree and $\Omega _{L/K}$ the set of all the transcendence
bases of $L/K$.

\begin{definition}
By a \textbf{hyperplane} in $L/K$ we understand a subfield $M=K\left(
\Lambda \right) $ for $\Lambda \in \Omega _{L/K}$. For a hyperplane $M$ in $%
L/K$, the number $d_{L/K}\left( M\right) :=\left[ L:M\right] $ is called the
\textbf{level} of $M$ in $L/K$.

Let $M_{1}\subseteq M_{2}$ be two hyperplanes in $L/K$. The number $%
dist\left( M_{1},M_{2}\right) $ of the hyperplanes $M$ in $L/K$ with $%
M_{1}\subsetneqq M\subseteq M_{2}$ is called the \textbf{distance} between $%
M_{1}$ and $M_{2}$, i.e.,
\begin{equation*}
dist\left( M_{1},M_{2}\right) :=\sharp \{M\text{ is a hyperplane in }%
L/K:M_{1}\subsetneqq M\subseteq M_{2}\}.
\end{equation*}
\end{definition}

\begin{lemma}
Let $L$ be a purely transcendental extension over a field $K$ of finite
transcendence degree. Then for any $\sigma \in Aut\left( L/K\right) $ and
hyperplane $M$ in $L/K$, the image $\sigma \left( M\right) $ is still a
hyperplane in $L/K$.

In particular, each $\sigma \in Aut\left( L/K\right) $ preserves the levels
of hyperplanes in $L/K$ and the distances of hyperplanes in $L/K$,
respectively.
\end{lemma}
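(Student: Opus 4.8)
The plan is to reduce the whole statement to the single observation that a $K$-automorphism of $L$ carries transcendence bases of $L/K$ to transcendence bases of $L/K$. First I would fix $\sigma \in Aut\left(L/K\right)$ and a hyperplane $M = K\left(\Lambda\right)$ in $L/K$, so that $\Lambda \in \Omega_{L/K}$ is a transcendence base of $L/K$. Since $\sigma$ is an isomorphism of fields fixing $K$ pointwise, it sends any subset of $L$ that is algebraically independent over $K$ to a subset that is again algebraically independent over $K$; and applying $\sigma$ to the statement ``$L$ is algebraic over $K\left(\Lambda\right)$'', together with $\sigma\left(L\right) = L$ and $\sigma\left(K\left(\Lambda\right)\right) = K\left(\sigma\left(\Lambda\right)\right)$, shows that $L$ is algebraic over $K\left(\sigma\left(\Lambda\right)\right)$. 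Hence $\sigma\left(\Lambda\right) \in \Omega_{L/K}$ and $\sigma\left(M\right) = K\left(\sigma\left(\Lambda\right)\right)$ is again a hyperplane in $L/K$.

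Next I would treat the level. The restriction of $\sigma$ to $M$ is a $K$-algebra isomorphism $M \to \sigma\left(M\right)$, while $\sigma$ itself is a $K$-algebra isomorphism $L \to L = \sigma\left(L\right)$; viewing the target $L$ as a vector space over $\sigma\left(M\right)$, the map $\sigma$ becomes a $\sigma$-semilinear isomorphism of the $M$-vector space $L$ onto the $\sigma\left(M\right)$-vector space $L$. Comparing dimensions therefore gives $d_{L/K}\left(\sigma\left(M\right)\right) = \left[L : \sigma\left(M\right)\right] = \left[L : M\right] = d_{L/K}\left(M\right)$, both quantities being finite by \emph{Proposition 3.2} since $L/K$ has finite transcendence degree.

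Finally, for the distance I would use that $N \mapsto \sigma\left(N\right)$ is an inclusion-preserving and inclusion-reflecting bijection on the set of subfields of $L$ containing $K$, with inverse $N' \mapsto \sigma^{-1}\left(N'\right)$. Combined with the first step (applied to $\sigma$ and, for the reverse implication, to $\sigma^{-1}$), this bijection restricts to a bijection from the set of hyperplanes $N$ of $L/K$ with $M_1 \subsetneqq N \subseteq M_2$ onto the set of hyperplanes $N'$ of $L/K$ with $\sigma\left(M_1\right) \subsetneqq N' \subseteq \sigma\left(M_2\right)$, so $dist\left(M_1,M_2\right) = dist\left(\sigma\left(M_1\right),\sigma\left(M_2\right)\right)$.

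I do not expect any genuine obstacle here: the argument is entirely formal. The only point deserving a line of care is the claim that $\sigma$ sends a transcendence base to a transcendence base, and even that is immediate once one records that algebraic independence over $K$ is preserved by a $K$-isomorphism and that $\sigma\left(L\right) = L$, so ``algebraic over $K\left(\Lambda\right)$'' transports to ``algebraic over $K\left(\sigma\left(\Lambda\right)\right)$''. Everything else is bookkeeping with the lattice of intermediate subfields and a dimension count.
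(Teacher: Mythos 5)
Your proposal is correct and follows essentially the same route as the paper: the paper also observes that $\sigma$ carries a nice basis $\left(\Delta,A\right)$ to the nice basis $\left(\sigma\left(\Delta\right),\sigma\left(A\right)\right)$, so that $\left[L:K\left(\Delta\right)\right]=\sharp A=\sharp\sigma\left(A\right)=\left[L:K\left(\sigma\left(\Delta\right)\right)\right]$, which is exactly your semilinear dimension count, and then deduces preservation of distances. Your explicit inclusion-preserving bijection argument for the distance is just a spelled-out version of the paper's final sentence, so there is nothing to add.
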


\begin{proof}
Let $\sigma \in Aut\left( L/K\right) $ and let $M=K\left( \Delta \right) $
be a hyperplane in $L/K$ of level $d:=d_{L/K}\left( M\right) $. Here, $%
\Delta \in \Omega _{L/K}$. Immediately, $\sigma \left( M\right) =K\left(
\sigma \left( \Delta \right) \right) $ is a hyperplane in $L/K$.

Fixed a nice basis $\left( \Delta ,A\right) $ of $L/K$. It is seen that $%
\left( \sigma \left( \Delta \right) ,\sigma \left( A\right) \right) $ is
also a nice basis of $L/K$. In particular, we have
\begin{equation*}
\left[ L:K\left( \Delta \right) \right] =\sharp A=\sharp \sigma \left(
A\right) =\left[ L:K\left( \sigma \left( \Delta \right) \right) \right] .
\end{equation*}

Hence, any $\sigma \in Aut\left( L/K\right) $ preserves the levels of $M$
and $\sigma \left( M\right) $. It follows that each $\sigma \in Aut\left(
L/K\right) $ preserves the distance of hyperplanes.
\end{proof}

From \emph{Lemma 6.24} we obtain the second characteristic of transcendental
Galois theory.

\begin{remark}
(\emph{The second characteristic of transcendental Galois theory}) Let $L$
be a purely transcendental extension over a field $K$ of finite
transcendence degree. Each automorphism $\sigma $ of $L$ over $K$ preserves
the levels and distances of hyperplanes in $L/K$. In particular, via $\sigma
$, an upper hyperplane is mapped to an upper one; a lower hyperplane is
mapped to a lower one. Such a picture offers us the following \emph{Lemma
6.26}. Note that \emph{Lemma 6.26} is not true for an algebraic extension $%
L/K$.
\end{remark}

The following lemma will play a key role in the proofs in \emph{\S 7} and in
\emph{\S 9} for \emph{Lemmas 7.16-8} and \emph{Theorem 1.8}, respectively.

\begin{lemma}
\emph{(Noether solutions: Galois preserved by quotients)} Let $L$ be a purely
transcendental extension over a field $K$ of finite transcendence degree.
Fixed two Noether solutions $G$ and $P$ of $L/K$.

If $P$ is contained in $G$, then $L^{P}$ is algebraic Galois over $L^{G}$
and we have
\begin{equation*}
G=P\cdot Q=Q\cdot P
\end{equation*}%
for the group $G$, where $Q$ is the intersection of subgroups in $Aut\left(
L/K\right) $
\begin{equation*}
Q=G\cap H_{P}
\end{equation*}
and $H_{P}$ is the natural Galois-complement of $P$ in $L/K$.

In such a case, for any $\sigma \in G$ we have
\begin{equation*}
\sigma \left( L\setminus L^{P}\right) =L\setminus L^{P};
\end{equation*}
\begin{equation*}
\sigma \left( L^{P}\setminus L^{G}\right) =L^{P}\setminus L^{G}.
\end{equation*}
\end{lemma}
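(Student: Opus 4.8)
The plan is to exploit the Galois-complement machinery of \S 6.7 together with the ``levels of hyperplanes'' picture from \S 6.8. First I would fix a nice basis $\left( \Delta ,A\right) $ of $L/K$ with $K\left( \Delta \right) =L^{P}$, so that $P=\pi _{a}\left( L/K\right) \left( \Delta ,A\right) =Aut\left( L/L^{P}\right) $ (using \emph{Proposition 4.5} and the fact that a Noether solution is a full algebraic Galois subgroup). Let $H_{P}$ be the natural Galois-complement of $P$ in $L/K$, i.e.\ $H_{P}=\pi _{t}\left( L/K\right) \left( \Delta ,A\right) $; by \emph{Theorem 6.4} the conditions $\left( N1\right) -\left( N2\right) $ hold for the pair $\left( P,H_{P}\right) $, so $\left\langle P\cup H_{P}\right\rangle =D_{L/K}\left( \Delta ,A\right) =P\cdot H_{P}$ is a direct product of the two normal subgroups. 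The key structural point is then: since $P\subseteq G$ and both are finite (indeed full algebraic Galois) subgroups, I want to show $G\subseteq D_{L/K}\left( \Delta ,A\right) $, which would immediately give $G=G\cap (P\cdot H_{P})=P\cdot (G\cap H_{P})$ by the modular law (Dedekind identity for subgroups), since $P\subseteq G$.

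To establish $G\subseteq D_{L/K}\left( \Delta ,A\right) $ I would argue via levels of hyperplanes. Both $L^{P}$ and $L^{G}$ are hyperplanes in $L/K$ (they are purely transcendental over $K$ of transcendence degree $n=tr.\deg L/K$), and $L^{G}\subseteq L^{P}\subseteq L$. Take $\sigma \in G$; it fixes $L^{G}$ pointwise, hence $\sigma \left( L^{P}\right) $ is a hyperplane containing $L^{G}$ and lying inside $L$, of the same level $\left[ L:L^{P}\right] $ as $L^{P}$ by \emph{Lemma 6.24}. Moreover $\sigma \left( L^{P}\right) $ is again of the form $K\left( \sigma \left( \Delta \right) \right) $ with $\left( \sigma \left( \Delta \right) ,\sigma \left( A\right) \right) $ a nice basis. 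I would show $\sigma \left( L^{P}\right) =L^{P}$ as follows: $L$ is algebraic Galois over $L^{P}$, and applying $\sigma $ shows $L$ is algebraic Galois over $\sigma \left( L^{P}\right) $ as well; both are highest transcendental subfields contained in $L$ with the property that $Aut\left( L/L^{P}\right) =P\subseteq G$ fixes the intermediate field. The cleanest route is: $\sigma \in G=Aut\left( L/L^{G}\right) $, so $\sigma $ stabilises the set of intermediate fields between $L^{G}$ and $L$; $L^{P}$ corresponds under the (finite) algebraic Galois correspondence for $L/L^{G}$ to the subgroup $Aut\left( L/L^{P}\right) =P$, which is normal in $G$ (it is a Noether solution hence corresponds to a purely transcendental, a fortiori Galois, subextension $L^{P}/L^{G}$ — here I use that $L^{P}$ is algebraic over $L^{G}$, which is the first assertion). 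Normality of $P$ in $G$ forces $\sigma \left( L^{P}\right) =L^{P}$, i.e.\ $\sigma _{E}\left( K\left( \Delta \right) \right) =K\left( \Delta \right) $, and combined with $\sigma _{F}\left( F\right) \subseteq F$ (which follows from the same argument applied to the linear decomposition of \emph{Proposition 5.3}, since $\sigma $ permutes $L\setminus L^{P}$) we get $\left( \Delta ,A\right) \sim _{\pi }\left( \sigma \left( \Delta \right) ,\sigma \left( A\right) \right) $, i.e.\ $\sigma \in D_{L/K}\left( \Delta ,A\right) $.

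The first assertion, that $L^{P}$ is algebraic Galois over $L^{G}$, I would prove in parallel: $L^{P}\supseteq L^{G}$ are both purely transcendental over $K$ of the same transcendence degree, so $L^{P}$ is algebraic over $L^{G}$ by the usual transcendence-degree argument (an element of the smaller-index field that were transcendental over $L^{G}$ would raise the transcendence degree — cf.\ the argument in \emph{Example 1.9}); and $L^{P}$ is Galois over $L^{G}$ because $L^{P}=L^{P}\cap L^{P}$ and, writing $\widetilde{Q}$ for the restriction to $L^{P}$ of $Q:=G\cap H_{P}$, one checks $\left( L^{P}\right) ^{\widetilde{Q}}=L^{G}$ using $\left( L^{P}\right) ^{Q}=\left( L^{P}\right) ^{G}$ (since $P$ fixes $L^{P}$ pointwise, so the action of $G$ on $L^{P}$ factors through $G/P\cong Q$) together with $L^{G}=\left( L^{P}\right) ^{G}$ from the Noether-solution hypothesis on $G$. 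Finally, once $G=P\cdot Q=Q\cdot P$ is in hand, the last two displayed equalities are read off from \emph{Proposition 6.11}: the pair $\left( P,H_{P}\right) $ satisfies $\left( N1\right) -\left( N2\right) $, so $\left( vi\right) $ of \emph{Proposition 6.11} gives $\sigma \left( L\setminus L^{P}\right) =L\setminus L^{P}$ for $\sigma \in P$, and since $Q\subseteq H_{P}$ acts on $L^{P}$ fixing $L\setminus L^{P}$ pointwise by the triple-decomposition of \emph{Theorem 6.15}, every $\sigma \in G=PQ$ also satisfies $\sigma \left( L\setminus L^{P}\right) =L\setminus L^{P}$; and then $\sigma \left( L^{P}\setminus L^{G}\right) =L^{P}\setminus L^{G}$ follows because $\sigma $ stabilises $L^{P}$ and fixes $L^{G}$ pointwise.

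I expect the main obstacle to be the step $G\subseteq D_{L/K}\left( \Delta ,A\right) $, i.e.\ pinning down $\sigma \left( L^{P}\right) =L^{P}$ rigorously: one must be careful that the finite algebraic Galois correspondence is being applied to $L/L^{G}$ (which is indeed finite algebraic, since $G$ is a Noether solution, so $L$ is algebraic over $L^{G}$ and $G$ is finite), and that ``$P$ is a Noether solution contained in $G$'' really does force $P\trianglelefteq G$ — this is exactly where I use that the subextension $L^{P}/L^{G}$ is \emph{purely transcendental}, hence Galois, hence the corresponding subgroup is normal. Everything else is bookkeeping with the linear decompositions of \S\S 5--6.
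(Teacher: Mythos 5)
Your overall architecture (reduce everything to $G\subseteq D_{L/K}\left( \Delta ,A\right) $ for a nice basis with $K\left( \Delta \right) =L^{P}$, then apply the Dedekind modular law to get $G=P\cdot \left( G\cap H_{P}\right) $, then read the two set equalities off Proposition 6.11 and Theorem 6.15) is reasonable, and those last steps are fine once the factorisation is available. The genuine gap is the step you yourself flag as the main obstacle: $\sigma \left( L^{P}\right) =L^{P}$ for every $\sigma \in G$, equivalently $P\trianglelefteq G$. You justify it by asserting that the subextension $L^{P}/L^{G}$ is \emph{purely transcendental}, hence Galois, hence the corresponding subgroup is normal. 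But $L^{P}$ and $L^{G}$ have the same transcendence degree over $K$ (each is a Noether solution, so $L$ is algebraic over each), so $L^{P}/L^{G}$ is a finite \emph{algebraic} extension and is never purely transcendental unless $L^{P}=L^{G}$. What you are really invoking is that $L^{P}$ is normal (Galois) over $L^{G}$; under the Galois correspondence for the finite Galois extension $L/L^{G}$ this is \emph{equivalent} to the normality of $P$ in $G$ and to the stability $\sigma \left( L^{P}\right) =L^{P}$ you are trying to establish, and it is precisely the first assertion of the lemma. So the argument is circular at its core. The same circularity infects your ``parallel'' proof of that first assertion: the identity $\left( L^{P}\right) ^{Q}=\left( L^{P}\right) ^{G}$ presupposes both that $G$ stabilises $L^{P}$ and that $G=PQ$ with $G/P\cong Q$, which were themselves to be derived from the step in question.

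What is missing is exactly what the paper supplies first, in its Step 1: a direct element-wise argument that $\sigma \left( L^{P}\setminus L^{G}\right) =L^{P}\setminus L^{G}$ for $\sigma \in G$, obtained by transporting the minimal polynomial of an $x_{0}\in L^{P}\setminus L^{G}$ over $L^{P}=K\left( \Lambda \right) $ to a polynomial over $K\left( \sigma \left( \Lambda \right) \right) $ and comparing degrees; this is where the purely transcendental structure of $L^{P}$ over $K$ (the hyperplane/level picture of Lemma 6.24 and Remark 6.25, which you cite but never actually use) does the work. The paper's logical order is the reverse of yours: stability of $L^{P}$ under $G$ comes first, then Galois-ness of $L^{P}/L^{G}$ by taking $L^{G}$-conjugates (Step 2), then the factorisation $G=P\cdot Q=Q\cdot P$ (Step 3) and the set equalities (Step 4). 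Your modular-law route to the factorisation and your concluding deductions would then be a clean alternative to the paper's Step 3, but as written the proposal has no independent proof of $\sigma \left( L^{P}\right) =L^{P}$, and without it none of the Galois-correspondence or decomposition-group machinery can be brought to bear.
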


\begin{proof}
Assume $G\supseteq P$. $P$ has a unique natural Galois-complement $H_{P}$ in
$L/K$ from \emph{Theorem 6.4}. We will proceed in several steps.

\emph{Step 1}. Fixed a $\sigma _{0}\in G$. Prove $\sigma _{0}\left(
L^{P}\setminus L^{G}\right) \subseteq L^{P}\setminus L^{G}$ holds.

Hypothesize there is some $x_{0}\in L^{P}\setminus L^{G}$. As $L$ is finite
and Galois over $L^{G}$, we have $\sigma _{0}\left( x_{0}\right) \in
L\setminus L^{G}.$ Let $\Lambda $ be a transcendence base of $L/K$ such that
$L^{P}=K\left( \Lambda \right) $.

As $x_{0}\in L$ is algebraic over $K\left( \Lambda \right) =L^{P}$, for the
degrees of the subextensions we have
\begin{equation*}
\left[ K\left( \Lambda \right) \left( x_{0}\right) :K\left( \Lambda \right) %
\right] =\left[ K\left( \sigma _{0}\left( \Lambda \right) \right) \left(
\sigma _{0}\left( x_{0}\right) \right) :K\left( \sigma _{0}\left( \Lambda
\right) \right) \right]
\end{equation*}%
by taking the minimal polynomials $f\left( X\right) $ of $x_{0}$ over $%
L^{P}=K\left( \Lambda \right) $ and $g\left( X\right) $ of $\sigma
_{0}\left( x_{0}\right) $ over $\sigma _{0}\left( L^{P}\right) =K\left(
\sigma _{0}\left( \Lambda \right) \right) $, respectively. In deed, put
\begin{equation*}
f\left( X\right) =\sum_{0\leq i\leq h}a_{i}X^{i}\in K\left( \Lambda \right)
\left[ X\right]
\end{equation*}%
where for each $0\leq i\leq h$ we have
\begin{equation*}
\Lambda =\{t_{1},t_{2},\cdots ,t_{n}\};
\end{equation*}%
\begin{equation*}
a_{i}=\frac{p_{i}\left( t_{1},t_{2},\cdots ,t_{n}\right) }{q_{i}\left(
t_{1},t_{2},\cdots ,t_{n}\right) }\in K\left( \Lambda \right) ;
\end{equation*}%
$p_{i}\left( X_{1},X_{2},\cdots ,X_{n}\right) $ and $q_{i}\left(
X_{1},X_{2},\cdots ,X_{n}\right) \not=0$ are polynomials over $K$ of the
indeterminates $X_{1},X_{2},\cdots ,X_{n}$.

It follows that we have
\begin{equation*}
g\left( X\right) =\sigma _{0}\left( f\right) \left( X\right) =\sum_{0\leq
i\leq h}\sigma _{0}\left( a_{i}\right) X^{i}\in K\left( \sigma _{0}\left(
\Lambda \right) \right) \left[ X\right]
\end{equation*}%
where for each $0\leq i\leq h$ we have
\begin{equation*}
\sigma _{0}\left( a_{i}\right) =\frac{p_{i}\left( \sigma _{0}\left(
t_{1}\right) ,\sigma _{0}\left( t_{2}\right) ,\cdots ,\sigma _{0}\left(
t_{n}\right) \right) }{q_{i}\left( \sigma _{0}\left( t_{1}\right) ,\sigma
_{0}\left( t_{2}\right) ,\cdots ,\sigma _{0}\left( t_{n}\right) \right) }\in
K\left( \sigma _{0}\left( \Lambda \right) \right) .
\end{equation*}%
Hence, $x_{0}$ is a root of
\begin{equation*}
f\left( X\right) =0
\end{equation*}%
if and only if $\sigma _{0}\left( x_{0}\right) $ is a root of
\begin{equation*}
g\left( X\right) =\sigma _{0}\left( f\right) \left( X\right) =0
\end{equation*}%
This proves
\begin{equation*}
\begin{array}{l}
\deg f\left( X\right) \\
=\left[ K\left( \Lambda \right) \left( x_{0}\right) :K\left( \Lambda \right) %
\right] \\
=\deg g\left( X\right) \\
=\left[ K\left( \sigma _{0}\left( \Lambda \right) \right) \left( \sigma
_{0}\left( x_{0}\right) \right) :K\left( \sigma _{0}\left( \Lambda \right)
\right) \right] .%
\end{array}%
\end{equation*}

On the other hand, as $x_{0}\in L^{P}\setminus L^{G}$, we have
\begin{equation*}
\left[ K\left( \Lambda \right) \left( x_{0}\right) :K\left( \Lambda \right) %
\right] =1;
\end{equation*}%
as $\sigma _{0}\left( x_{0}\right) \in L\setminus L^{P}$, we have
\begin{equation*}
2\leq \left[ K\left( \sigma _{0}\left( \Lambda \right) \right) \left( \sigma
_{0}\left( x_{0}\right) \right) :K\left( \sigma _{0}\left( \Lambda \right)
\right) \right] \leq \left[ G:P\right] ,
\end{equation*}%
which is in contradiction. This proves
\begin{equation*}
\sigma \left( L^{P}\setminus L^{G}\right) \subseteq L^{P}\setminus L^{G}
\end{equation*}%
holds for any $\sigma \in G$.

Particularly, there is
\begin{equation*}
\sigma ^{-1}\left( L^{P}\setminus L^{G}\right) \subseteq L^{P}\setminus L^{G}
\end{equation*}%
since $\sigma ^{-1}\in G$. Hence, we have
\begin{equation*}
\sigma \left( L^{P}\setminus L^{G}\right) =L^{P}\setminus L^{G}
\end{equation*}%
for any $\sigma \in G$.

\emph{Step 2}. Let $x_{0}\in L^{P}\setminus L^{G}$. As $L$ is algebraic
Galois over $L^{G}$, it is seen that every $L^{G}$-conjugate of the subfield
$L^{G}\left[ x_{0}\right] $ is of the form $L^{G}\left[ \sigma \left(
x_{0}\right) \right] $ with $\sigma \in G$; then $L^{G}\left[ \sigma \left(
x_{0}\right) \right] \subseteq L^{P}$ since $\sigma \left( x_{0}\right) \in
L^{P}\setminus L^{G}$ from $\left( i\right) $. This proves $L^{P}$ is
algebraic Galois over $L^{G}$.

\emph{Step 3}. Let $\widetilde{Q}=\{\delta |_{L^{P}}:\delta \in Q\}$. Then $%
\widetilde{Q}=Aut\left( L^{P}/L^{G}\right) $. From $\left( iii\right)
-\left( iv\right) $ of \emph{Proposition 5.3} we have
\begin{equation*}
Q=G\cap H_{P};
\end{equation*}%
\begin{equation*}
G=P\cdot Q=Q\cdot P,
\end{equation*}%
where $H_{P}$ is the natural Galois-complement of $P$ in $L/K$; $P$ and $Q$
both are normal subgroups of $G$.

\emph{Step 4}. From $\left( vi\right) $ of \emph{Proposition 6.11} for any $%
\sigma \in P$ there is
\begin{equation*}
\sigma \left( L\setminus L^{P}\right) =L\setminus L^{P}
\end{equation*}%
and for any $\delta \in Q$, from $\left( iii\right) $ of \emph{Theorem 6.15}
we have
\begin{equation*}
\delta \left( L\setminus L^{P}\right) =L\setminus L^{P}.
\end{equation*}%
Hence,
\begin{equation*}
\tau \left( L\setminus L^{P}\right) =L\setminus L^{P}
\end{equation*}%
holds for any $\tau \in G$. This completes the proof.
\end{proof}

\begin{remark}
(\emph{Noether solutions: Galois not preserved by towers}) Let $L$ be a
purely transcendental extension over a field $K$ of finite transcendence
degree. Let $G$ be a Noether solution $L/K$ and $P$ a Noether solution of $%
L^{G}/K$ with the natural Galois-complements $H_{L/K}\left( G\right) $ in $%
L/K$ and $H_{L^{G}/K}\left( P\right) $ in $L^{G}/K$, respectively. Then
\begin{equation*}
H_{L/K}\left( G\right) |_{L^{G}}\bigcap P=P;
\end{equation*}%
\begin{equation*}
H_{L/K}\left( G\right) |_{L^{G}}\bigcap H_{L^{G}/K}\left( P\right)
=H_{L^{G}/K}\left( P\right) .
\end{equation*}%
Here, $H_{L/K}\left( G\right) |_{L^{G}}:=\{\delta |_{L^{G}}:\delta \in
H_{L/K}\left( G\right) \}$ denotes the restriction of $H_{L/K}\left(
G\right) $ to $L^{G}$. In general, $L$ is not necessarily algebraic Galois
over the $P$-invariant subfield $L^{P}$, due to the same reason as for the
case of algebraic extensions: We have
\begin{equation*}
\left( H_{L/K}\left( G\right) \bigcap Aut\left( L/L^{P}\right) \right)
|_{L^{G}}\supseteq P
\end{equation*}%
for the restriction of the subgroup to $L^{G}$; but for the other side
\begin{equation*}
\left( H_{L/K}\left( G\right) \bigcap Aut\left( L/L^{P}\right) \right)
|_{L^{G}}\subseteq P,
\end{equation*}%
it is not necessarily true, in general.
\end{remark}

\begin{remark}
(\emph{Distances of hyperplanes reflexing properties of Galois groups}) Let $%
L$ be a purely transcendental extension over a field $K$ of finite
transcendence degree. Let $G$ be a Noether solution of $L/K$. Consider the
distance
\begin{equation*}
\rho \left( G\right) :=dist\left( L^{G},L\right)
\end{equation*}
between the hyperplanes $L^{G}$ and $L$ in the extension $L/K$. If $\sharp
G=2$, then $\rho \left( G\right) =1$ holds. Particularly, if $G$ is the full
permutation subgroup $\Sigma $ in $L/K$, then we have $\rho \left( G\right)
=1$ from \emph{Lemma 7.18}.

Is there any other $G$ in $Aut\left( L/K\right) $ of the property that $\rho
\left( G\right) =1$ holds? In deed, these subgroups $G$ in $Aut\left(
L/K\right) $ are called \textbf{distance-one subgroups} in $L/K$. For
details see \emph{[}An 2019\emph{]}.
\end{remark}

\section{$G$-Symmetric Functions and Noether Solutions}

In this section we will discuss the invariant subfields of $G$-symmetric
functions under the permutation subgroups $G$ of the Galois group $Aut\left(
L/K\right) $ for a purely transcendental extension $L$ over a field $K$.
Here, as a general result, \emph{Lemma 7.16} is  a generalisation
of the first counter-example to Noether's problem given by Swan in 1969 (See \emph{[}%
Swan 1969\emph{]}).

\subsection{Least nice number for a transcendental extension}

For sake of convenience, we have a quantity to describe a purely
transcendental sub-extension in a given transcendental extension.

\begin{definition}
Let $E$ be a finitely generated extension over a field $F$. The integer
\begin{equation*}
\mathfrak{ni}\left( \frac{E}{F}\right) :=\min \{\sharp A-1\in \mathbb{Z}%
:\left( \Delta ,A\right) \text{ is a nice basis of }E/F\}
\end{equation*}%
is called the \textbf{least nice number} of the extension $E/F$.
\end{definition}

\begin{remark}
Let $L$ be a purely transcendental extension over a field $K$ of finite
transcendence degree $n$. Let $G$ be a finite subgroup of the Galois group $%
Aut\left( L/K\right) $. Consider the $G$-invariant subfield $L^{G}$ of $L$.

$\left( i\right) $ Evidently, $G$ is a Noether solution of $L/K$ if and only
if $\mathfrak{ni}\left( \frac{L^{G}}{K}\right) $ $=0$ holds for the least
nice number of the subextension $L^{G}/K$. So, in general, it is very hard
for one to finds what the least nice number $\mathfrak{ni}\left( \frac{L^{G}%
}{K}\right) $ really is. However, the least nice number $m=\mathfrak{ni}%
\left( \frac{L^{G}}{K}\right) $ offers to us the existence of the maximal
elements $\Delta _{\max }$ of all the possible transcendence bases $\Delta $
of the subextension $L^{G}/K$ such that $\left[ L^{G}:K\left( \Delta _{\max
}\right) \right] =m$ holds. (See \emph{Remark 1.6}).

$\left( ii\right) $ If the subgroup $G$ is not Galois-complemented in $L/K$,
then $\mathfrak{ni}\left( \frac{L^{G}}{K}\right) $ $\geq 1$ holds for the
least nice number of the subextension $L^{G}/K$ (from \emph{Theorem 6.16});
in such a case, $G$ is not a Noether solution of $L/K$. This is the key
observation for us to obtain the correct pictures for the following \emph{%
Lemmas 7.16-8} in the present section, which are viewed as a generalisation
of the main theorem in \emph{[}Swan 1969\emph{]}.
\end{remark}

\subsection{$G$-symmetric polynomials}

Let $G$ be a \emph{permutation group} on $m$ letters, i.e., a subgroup of
the \emph{symmetric group} $\Sigma _{m}$ on $m$ letters.

\begin{definition}
($G$\emph{-symmetric polynomials}) Let $K\left[ X_{1},X_{2},\cdots ,X_{m}%
\right] $ be the ring of polynomials of indeterminates $X_{1},X_{2},\cdots
,X_{m}$ with coefficients in a field $K$.

$\left( i\right) $ An element $\sigma \in G$ induces a $\sigma $\textbf{-map}
\begin{equation*}
\sigma _{\ast }:K\left[ X_{1},X_{2},\cdots ,X_{m}\right] \rightarrow K\left[
X_{1},X_{2},\cdots ,X_{m}\right]
\end{equation*}
given by
\begin{equation*}
f\left( X_{1},X_{2},\cdots ,X_{m}\right) \longmapsto  f_{\sigma}
\left( X_{1},X_{2},\cdots ,X_{m}\right)
\end{equation*}
where
\begin{equation*}
f_{\sigma} \left( X_{1},X_{2},\cdots ,X_{m}\right) :=f\left(
X_{\sigma \left( 1\right) },X_{\sigma \left( 2\right) },\cdots ,X_{\sigma
\left( m\right) }\right) .
\end{equation*}

For simplicity, we will identity $\sigma _{\ast }$ with $\sigma $.

$\left( ii\right) $ A polynomial $f\left( X_{1},X_{2},\cdots ,X_{m}\right) $
of indeterminates $X_{1},X_{2},\cdots ,X_{m}$ with coefficients in $K$ is
said to be $G$\textbf{-symmetric} if there is
\begin{equation*}
f\left( X_{1},X_{2},\cdots ,X_{m}\right) =f_{\sigma} \left(
X_{1},X_{2},\cdots ,X_{m}\right)
\end{equation*}%
for any $\sigma \in G$. Denote by
\begin{equation*}
\Lambda ^{G}\left( K\left[ X_{1},X_{2},\cdots ,X_{m}\right] \right)
\end{equation*}%
the set of $G$-symmetric polynomials of indeterminates $X_{1},X_{2},\cdots
,X_{m}$ over $K$.
\end{definition}

\begin{remark}
Let $\sigma \in G$. The $\sigma $-map $\sigma _{\ast }$ preserves the
algebraic operations in the polynomial ring $K\left[ X_{1},X_{2},\cdots
,X_{m}\right] $.

In deed, for any $c\in K$ and $f,g\in K\left[ X_{1},X_{2},\cdots ,X_{m}%
\right] $ there are
\begin{equation*}
\begin{array}{l}
\sigma _{\ast }\left( c\cdot f\right) =c\cdot \sigma _{\ast }\left( f\right)
; \\
\sigma _{\ast }\left( f\cdot g\right) =\sigma _{\ast }\left( f\right) \cdot
\sigma _{\ast }\left( g\right) ; \\
\sigma _{\ast }\left( f+g\right) =\sigma _{\ast }\left( f\right) +\sigma
_{\ast }\left( g\right) .%
\end{array}%
\end{equation*}

The set $\Lambda ^{G}\left( K\left[ X_{1},X_{2},\cdots ,X_{m}\right] \right)
$ of $G$-symmetric polynomials of indeterminates $X_{1},X_{2},\cdots ,X_{m}$
over $K$ is a subring of the polynomial ring $K\left[ X_{1},X_{2},\cdots
,X_{m}\right] $.
\end{remark}

\begin{remark}
(\emph{Reviews on elementary symmetric polynomials}) The \emph{elementary
symmetric polynomials} of $m$ indeterminates $X_{1},X_{2},\cdots ,X_{m}$
over a field $K$ are homogeneous polynomials given as the following:
\begin{equation*}
e_{1}\left( X_{1},X_{2},\cdots ,X_{m}\right) ,e_{2}\left( X_{1},X_{2},\cdots
,X_{m}\right) ,\cdots ,e_{m}\left( X_{1},X_{2},\cdots ,X_{m}\right)
\end{equation*}%
where
\begin{equation*}
e_{k}\left( X_{1},X_{2},\cdots ,X_{m}\right) :=\sum_{1\leq
i_{1}<i_{2}<\cdots <i_{k}\leq m}X_{i_{1}}X_{i_{2}}\cdots X_{i_{k}}
\end{equation*}%
for each $1\leq k\leq m$.

$\left( i\right) $ Let $\Sigma _{m}$ denote the symmetric group on $m$
letters. Then the elementary symmetric polynomials
\begin{equation*}
e_{1}\left( X_{1},X_{2},\cdots ,X_{m}\right) ,e_{2}\left( X_{1},X_{2},\cdots
,X_{m}\right) ,\cdots ,e_{m}\left( X_{1},X_{2},\cdots ,X_{m}\right)
\end{equation*}%
are $\Sigma _{m}$-symmetric polynomials.

$\left( ii\right) $ Let $Y,X_{1},X_{2},\cdots ,X_{m}$ be distinct
indeterminates over a field $K$. By comparing the coefficients of a
polynomial, it is seen that the elementary symmetric polynomials are
coefficients of the following polynomial of indeterminate $Y$ over the ring $%
K\left[ X_{1},X_{2},\cdots ,X_{m}\right] $
\begin{equation*}
\prod_{1\leq i\leq m}\left( Y-X_{i}\right) =\sum_{0\leq k\leq m}\left(
-1\right) ^{k}\cdot e_{k}\left( X_{1},X_{2},\cdots ,X_{m}\right) \cdot
Y^{m-k}
\end{equation*}%
where
\begin{equation*}
e_{0}\left( X_{1},X_{2},\cdots ,X_{m}\right) :=1.
\end{equation*}
\end{remark}

\begin{lemma}
\emph{(}[\emph{Bourbaki}]\emph{)} Let $K\left[ X_{1},X_{2},\cdots ,X_{m}\right] $
be the ring of polynomials of indeterminates $X_{1},X_{2},\cdots ,X_{m}$
over a field $K$. Let $\Lambda ^{\Sigma _{m}}\left( K\left[
X_{1},X_{2},\cdots ,X_{m}\right] \right) $ be the set of $\Sigma _{m}$%
-symmetric polynomials of indeterminates $X_{1},X_{2},\cdots ,X_{m}$ over $K$%
. Here, $\Sigma _{m}$ denotes the symmetric group on $m$ letters. Then there
are the following statements.

$\left( i\right) $ The elementary symmetric polynomials
\begin{equation*}
e_{1}\left( X_{1},X_{2},\cdots ,X_{m}\right) ,e_{2}\left( X_{1},X_{2},\cdots
,X_{m}\right) ,\cdots ,e_{m}\left( X_{1},X_{2},\cdots ,X_{m}\right)
\end{equation*}
are algebraically independent over $K$.

$\left( ii\right) $ $\Lambda ^{\Sigma _{m}}\left( K\left[ X_{1},X_{2},\cdots
,X_{m}\right] \right) $ is a $K$-algebra generated by the elementary
symmetric polynomials
\begin{equation*}
e_{1}\left( X_{1},X_{2},\cdots ,X_{m}\right) ,e_{2}\left( X_{1},X_{2},\cdots
,X_{m}\right) ,\cdots ,e_{m}\left( X_{1},X_{2},\cdots ,X_{m}\right) .
\end{equation*}

$\left( iii\right) $ $K\left[ X_{1},X_{2},\cdots ,X_{m}\right] $ is a
finitely generated and free module over the subring $\Lambda ^{\Sigma
_{m}}\left( K\left[ X_{1},X_{2},\cdots ,X_{m}\right] \right) $ of rank $%
\sharp \Sigma _{m}=m!$.
\end{lemma}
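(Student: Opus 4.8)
The plan is to establish the three parts in the order (ii), (i), (iii), since a single combinatorial device --- the lexicographic monomial order together with the leading-term calculus it supports --- drives all three, and only (iii) needs something more.

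First I would fix the \emph{lexicographic order} on the monomials of $K[X_{1},X_{2},\cdots ,X_{m}]$: declare $X_{1}^{a_{1}}\cdots X_{m}^{a_{m}}>X_{1}^{b_{1}}\cdots X_{m}^{b_{m}}$ when the first nonzero entry of $\left( a_{1}-b_{1},\ldots ,a_{m}-b_{m}\right) $ is positive. This is a total order compatible with multiplication, and on the finite set of monomials of any bounded total degree it is a well-order. The leading monomial of $e_{k}\left( X_{1},\ldots ,X_{m}\right) $ is $X_{1}X_{2}\cdots X_{k}$. For (ii), take a nonzero $\Sigma _{m}$-symmetric $f$; its leading monomial $cX_{1}^{a_{1}}\cdots X_{m}^{a_{m}}$ satisfies $a_{1}\ge a_{2}\ge \cdots \ge a_{m}$, for otherwise transposing two exponents would exhibit a strictly larger monomial of $f$. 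The product $e_{1}^{a_{1}-a_{2}}e_{2}^{a_{2}-a_{3}}\cdots e_{m-1}^{a_{m-1}-a_{m}}e_{m}^{a_{m}}$ has leading monomial exactly $X_{1}^{a_{1}}\cdots X_{m}^{a_{m}}$, so
\[
f-c\,e_{1}^{a_{1}-a_{2}}e_{2}^{a_{2}-a_{3}}\cdots e_{m}^{a_{m}}
\]
is again $\Sigma _{m}$-symmetric with strictly smaller leading monomial, or zero. Since leading monomials stay in the well-ordered set of monomials of total degree $\le \deg f$, this recursion terminates, writing $f$ as a polynomial in $e_{1},\ldots ,e_{m}$ with coefficients in $K$; this is (ii).

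For (i), suppose $P\left( e_{1},\ldots ,e_{m}\right) =0$ for some nonzero $P\in K[Y_{1},\ldots ,Y_{m}]$. For a monomial $Y_{1}^{c_{1}}\cdots Y_{m}^{c_{m}}$, the leading monomial of $e_{1}^{c_{1}}\cdots e_{m}^{c_{m}}$ is $X_{1}^{c_{1}+c_{2}+\cdots +c_{m}}X_{2}^{c_{2}+\cdots +c_{m}}\cdots X_{m}^{c_{m}}$, and $\left( c_{1},\ldots ,c_{m}\right) \mapsto \left( c_{1}+\cdots +c_{m},\,c_{2}+\cdots +c_{m},\,\ldots ,\,c_{m}\right) $ is injective (a unitriangular linear map). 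Hence distinct monomials occurring in $P$ produce distinct, non-cancelling leading monomials in $P\left( e_{1},\ldots ,e_{m}\right) $; the lex-largest of them survives with its coefficient, contradicting $P\left( e_{1},\ldots ,e_{m}\right) =0$. Thus $e_{1},\ldots ,e_{m}$ are algebraically independent over $K$, so together with (ii) we get $\Lambda ^{\Sigma _{m}}\left( K\left[ X_{1},X_{2},\cdots ,X_{m}\right] \right) =K[e_{1},\ldots ,e_{m}]$, a polynomial ring in $m$ variables.

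For (iii), write $R:=\Lambda ^{\Sigma _{m}}\left( K\left[ X_{1},X_{2},\cdots ,X_{m}\right] \right) $ and $S:=K\left[ X_{1},X_{2},\cdots ,X_{m}\right] $, and I would exhibit the family
\[
\mathcal{B}:=\{\,X_{1}^{b_{1}}X_{2}^{b_{2}}\cdots X_{m}^{b_{m}}\ :\ 0\le b_{i}\le m-i\text{ for }1\le i\le m\,\},
\]
which has $m\left( m-1\right) \cdots 1=m!$ elements. \textbf{Spanning:} each $X_{i}$ is a root of the monic $\prod_{j=1}^{m}\left( Y-X_{j}\right) =Y^{m}-e_{1}Y^{m-1}+\cdots +\left( -1\right) ^{m}e_{m}\in R[Y]$; peeling off variables from the top, $X_{m}$ satisfies a monic degree-$1$ relation, $X_{m-1}$ a monic degree-$2$ relation over $R[X_{m}]$ (namely $\prod_{j=1}^{m}(Y-X_{j})$ with the factor $(Y-X_{m})$ divided out), and in general $X_{i}$ a monic degree-$(m-i+1)$ relation over $R[X_{i+1},\ldots ,X_{m}]$, so repeated reduction of the highest-indexed exponent rewrites every monomial as an $R$-combination of members of $\mathcal{B}$. \textbf{Independence:} by (i), $R=K[e_{1},\ldots ,e_{m}]$ so its fraction field is $K\left( e_{1},\ldots ,e_{m}\right) $, and $K\left( X_{1},\ldots ,X_{m}\right) $ is algebraic Galois over it with group $\Sigma _{m}$ (the standard generic-polynomial computation; one may also invoke the $\sigma $-tame Galois framework of \S 3 together with Artin's theorem), whence $\left[ K\left( X_{1},\ldots ,X_{m}\right) :K\left( e_{1},\ldots ,e_{m}\right) \right] =m!$. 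As $\mathcal{B}$ spans this $m!$-dimensional vector space and $\sharp \mathcal{B}=m!$, it is a basis there, hence $R$-linearly independent; combined with spanning, $\mathcal{B}$ is a free $R$-basis of $S$ of rank $m!$.

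The step I expect to be the main obstacle is the \emph{spanning} assertion in (iii): converting "$S$ is integral over $R$" into the sharp statement that precisely the $m!$ monomials of $\mathcal{B}$ generate $S$ as an $R$-module. This needs the correct downward induction on the index of the variable being eliminated, keeping careful track of the monic relations of decreasing degree $m,m-1,\ldots ,1$, and it is the only place where a genuine division/reduction algorithm is required rather than the leading-monomial bookkeeping that settles (i) and (ii).
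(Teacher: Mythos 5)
Your parts (i) and (ii) are the standard lexicographic leading-monomial argument and are correct; this is also the only route the paper indicates, since its own ``proof'' of this lemma consists of the single remark that one compares coefficients of monomials in a lexicographic order and otherwise defers to Bourbaki (chap.\ IV). For (iii) the paper likewise gives no argument, so your proposal stands on its own: the strategy (an explicit set of $m!$ monomials, spanning via monic relations extracted from $\prod_{j=1}^{m}\left( Y-X_{j}\right) $, independence via $\left[ K\left( X_{1},\cdots ,X_{m}\right) :K\left( e_{1},\cdots ,e_{m}\right) \right] =m!$) is the standard one, and the independence half is fine as sketched.

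However, the spanning step is mis-indexed as written, and the intermediate claims you assert are false. ``$X_{m}$ satisfies a monic degree-$1$ relation'' over $R$ would say $X_{m}\in R$, which fails for $m\geq 2$; and $\prod_{j=1}^{m}\left( Y-X_{j}\right) $ with the factor $\left( Y-X_{m}\right) $ divided out is monic of degree $m-1$ (its roots are $X_{1},\cdots ,X_{m-1}$), not a degree-$2$ relation for $X_{m-1}$ over $R\left[ X_{m}\right] $; in general $X_{i}$ has degree $i$, not $m-i+1$, over $R\left[ X_{i+1},\cdots ,X_{m}\right] $. The fix is routine but the basis must match the tower. Either keep your $\mathcal{B}=\{0\leq b_{i}\leq m-i\}$ and use, for each $i$, the monic polynomial $\prod_{j=1}^{m}\left( Y-X_{j}\right) \big/\prod_{j<i}\left( Y-X_{j}\right) $, which has degree $m-i+1$, coefficients in $R\left[ X_{1},\cdots ,X_{i-1}\right] $ and root $X_{i}$; or, cleaner, replace $\mathcal{B}$ by $\{0\leq b_{i}\leq i-1\}$ (also $m!$ elements) and set $R_{i}:=R\left[ X_{i+1},\cdots ,X_{m}\right] $, noting that $g_{i}\left( Y\right) :=\prod_{j\leq i}\left( Y-X_{j}\right) $ is monic of degree $i$ with coefficients in $R_{i}$ (downward induction via $e_{k}\left( X_{1},\cdots ,X_{i}\right) =e_{k}\left( X_{1},\cdots ,X_{i+1}\right) -X_{i+1}e_{k-1}\left( X_{1},\cdots ,X_{i}\right) $) and satisfies $g_{i}\left( X_{i}\right) =0$, so that $R_{i-1}=R_{i}\left[ X_{i}\right] $ is generated over $R_{i}$ by $1,X_{i},\cdots ,X_{i}^{i-1}$. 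Composing through the tower $R=R_{m}\subset R_{m-1}\subset \cdots \subset R_{0}=K\left[ X_{1},\cdots ,X_{m}\right] $ yields the $m!$ spanning monomials without any ad hoc reduction procedure, and your $m!$ degree count then upgrades spanning to freeness exactly as you say.
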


\begin{proof}
The proof is based on basic calculations on polynomials over a field by
comparing the coefficients of monomials in a lexicographic order (See \emph{%
Theorem 1}, \emph{Page 58}, \emph{chap iv}, \emph{[}Bourbaki\emph{]}).
\end{proof}

\subsection{$G$-symmetric functions}

Permutation group as a subgroup of the Galois group

Let $t_{1},t_{2},\cdots ,t_{m}$ be algebraically independent variables over
a field $K$. Consider the field $L=K\left( t_{1},t_{2},\cdots ,t_{m}\right) $%
.

\begin{definition}
Let $G\subseteq \Sigma _{m}$ be a permutation group on $m$ letters.

$\left( i\right) $ An element $\sigma \in G$ induces a $\sigma $\textbf{-map
}(relative to the variables $t_{1},t_{2},\cdots ,t_{m}$), saying $\sigma
_{\ast }:K\left( t_{1},t_{2},\cdots ,t_{m}\right) \rightarrow K\left(
t_{1},t_{2},\cdots ,t_{m}\right) $, given by
\begin{equation*}
\xi =\frac{f\left( t_{1},t_{2},\cdots ,t_{m}\right) }{g\left(
t_{1},t_{2},\cdots ,t_{m}\right) }\longmapsto \sigma _{\ast }\left( \xi
\right) =\frac{f_{\sigma }\left( t_{1},t_{2},\cdots ,t_{m}\right) }{%
g_{\sigma }\left( t_{1},t_{2},\cdots ,t_{m}\right) }
\end{equation*}%
where
\begin{equation*}
\frac{f_{\sigma }\left( t_{1},t_{2},\cdots ,t_{m}\right) }{g_{\sigma }\left(
t_{1},t_{2},\cdots ,t_{m}\right) }:=\frac{f\left( t_{\sigma \left( 1\right)
},t_{\sigma \left( 2\right) },\cdots ,t_{\sigma \left( m\right) }\right) }{%
g\left( t_{\sigma \left( 1\right) },t_{\sigma \left( 2\right) },\cdots
,t_{\sigma \left( m\right) }\right) };
\end{equation*}%
\begin{equation*}
f\left( X_{1},X_{2},\cdots ,X_{m}\right) ,g\left( X_{1},X_{2},\cdots
,X_{m}\right) \in K\left[ X_{1},X_{2},\cdots ,X_{m}\right]
\end{equation*}%
are polynomials with $g\left( X_{1},X_{2},\cdots ,X_{m}\right) \not=0$.

For simplicity, we identity $\sigma _{\ast }$ with $\sigma $ by the
isomorphism
\begin{equation*}
\ast :G\rightarrow \ast \left( G\right), \sigma \mapsto \sigma _{\ast }
\end{equation*}
of groups.

$\left( ii\right) $ Via the isomorphism $\ast $, $G$ is taken as a subgroup
of the Galois group $Aut\left( L/K\right) $, called a \textbf{permutation
subgroup} of the Galois group $Aut\left( L/K\right) $ on $m$ letters
(relative to the variables $t_{1},t_{2},\cdots ,t_{m}$). In particular, the
symmetric group $\Sigma _{m}$ is called the \textbf{full permutation subgroup%
} of the Galois group $Aut\left( L/K\right) $ on $m$ letters (relative to
the variables $t_{1},t_{2},\cdots ,t_{m}$).
\end{definition}

\begin{remark}
Let $G$ be a permutation group on $m$ letters. For each $\sigma \in G$, the $%
\sigma $-map (relative to the variables $t_{1},t_{2},\cdots ,t_{m}$), also
denoted by $\sigma $, preserves the algebraic operations in the function
field $K\left( t_{1},t_{2},\cdots ,t_{m}\right) $.

In deed, for any $c\in K$ and $f,g\in K\left( t_{1},t_{2},\cdots
,t_{m}\right) $ there are
\begin{equation*}
\begin{array}{l}
\sigma \left( c\cdot f\right) =c\cdot \sigma \left( f\right) ; \\
\sigma \left( f\cdot g\right) =\sigma \left( f\right) \cdot \sigma \left(
g\right) ; \\
\sigma \left( f+g\right) =\sigma \left( f\right) +\sigma \left( g\right) ;
\\
\sigma \left( \frac{f}{g}\right) =\frac{\sigma \left( f\right) }{\sigma
\left( g\right) }\text{ whenever }g\not=0.%
\end{array}%
\end{equation*}
\end{remark}

\begin{definition}
($G$\emph{-symmetric functions}) Let $G$ be a permutation subgroup of $%
Aut\left( L/K\right) $ on $m$ letters (relative to the variables $%
t_{1},t_{2},\cdots ,t_{m}$). An element
\begin{equation*}
\xi =\frac{f\left( t_{1},t_{2},\cdots ,t_{m}\right) }{g\left(
t_{1},t_{2},\cdots ,t_{m}\right) }
\end{equation*}%
of the function field $K\left( t_{1},t_{2},\cdots ,t_{m}\right) $ is said to
be $G$\textbf{-symmetric} (relative to the variables $t_{1},t_{2},\cdots
,t_{m}$) if for any $\sigma \in G$ there is
\begin{equation*}
\frac{f\left( t_{1},t_{2},\cdots ,t_{m}\right) }{g\left( t_{1},t_{2},\cdots
,t_{m}\right) }=\frac{f_{\sigma }\left( t_{1},t_{2},\cdots ,t_{m}\right) }{%
g_{\sigma }\left( t_{1},t_{2},\cdots ,t_{m}\right) }
\end{equation*}%
where
\begin{equation*}
f\left( X_{1},X_{2},\cdots ,X_{m}\right) ,g\left( X_{1},X_{2},\cdots
,X_{m}\right) \in K\left[ X_{1},X_{2},\cdots ,X_{m}\right]
\end{equation*}%
are polynomials with $g\left( X_{1},X_{2},\cdots ,X_{m}\right) \not=0$.

We denote by
\begin{equation*}
\Lambda ^{G}\left( K\left( t_{1},t_{2},\cdots ,t_{m}\right) \right)
\end{equation*}%
the set of $G$-symmetric elements in the function field $K\left(
t_{1},t_{2},\cdots ,t_{m}\right) $.
\end{definition}

\begin{remark}
Let $G$ be a permutation subgroup of $Aut\left( L/K\right) $ on $m$ letters
(relative to the variables $t_{1},t_{2},\cdots ,t_{m}$).

$\left( i\right) $ Take an element
\begin{equation*}
\xi =\frac{f\left( t_{1},t_{2},\cdots ,t_{m}\right) }{g\left(
t_{1},t_{2},\cdots ,t_{m}\right) }\in K\left( t_{1},t_{2},\cdots
,t_{m}\right)
\end{equation*}
where
\begin{equation*}
f\left( X_{1},X_{2},\cdots ,X_{m}\right) ,g\left( X_{1},X_{2},\cdots
,X_{m}\right) \in K\left[ X_{1},X_{2},\cdots ,X_{m}\right]
\end{equation*}
are polynomials with $g\left( X_{1},X_{2},\cdots ,X_{m}\right) \not=0$.

If $f\left( X_{1},X_{2},\cdots ,X_{m}\right) $ and $g\left(
X_{1},X_{2},\cdots ,X_{m}\right) $ are $G$-symmetric polynomials, then $\xi $
is a $G$-symmetric element.

$\left( ii\right) $ The set $\Lambda ^{G}\left( K\left( t_{1},t_{2},\cdots
,t_{m}\right) \right) $ of $G$-symmetric elements (relative to the variables
$t_{1},t_{2},\cdots ,t_{m}$) in $K\left( t_{1},t_{2},\cdots ,t_{m}\right) $
is a subfield of the field $K\left( t_{1},t_{2},\cdots ,t_{m}\right) $.
\end{remark}

\subsection{The main theorem on the relations between permutation subgroups
and Noether solutions}

Let $L:=K\left( t_{1},t_{2},\cdots ,t_{m}\right) $ be a purely
transcendental extension over a field $K$ of transcendence degree $m$.

\begin{definition}
Let $G$ be a subgroup of the Galois group $Aut\left( L/K\right) $. If for
any $1\leq i<j\leq m$, there is an element $\sigma \in G$ such that $\sigma
\left( t_{i}\right) =t_{j}$ holds, then $G$ is said to \textbf{have a
transitive action on the variables} $t_{1},t_{2},\cdots ,t_{m}$.
\end{definition}

Let $\Sigma _{m}$ denote the full permutation subgroup of the Galois group $%
Aut\left( L/K\right) $ on $m$ letters (relative to the variables $%
t_{1},t_{2},\cdots ,t_{m}$). The full permutation subgroup $\Sigma _{m}$ has
a transitive action on the variables $t_{1},t_{2},\cdots ,t_{m}$.

Now we have the following theorem on the subfield of symmetric functions
under a permutation subgroup of the Galois group for a given purely
transcendental extension, which is one of the main theorems in the paper.
Here, $\sharp G$ denotes the order of a finite group $G$.

\begin{theorem}
\emph{(Main Theorem: Permutation subgroups and Noether solutions)} Let $%
L=K\left( t_{1},t_{2},\cdots ,t_{m}\right) $ be a purely transcendental
extension over a field $K$ of finite transcendence degree $m$. Let $G$ be a
permutation subgroup of $\Sigma _{m}\subseteq Aut\left( L/K\right) $ on $m$
letters (relative to the variables $t_{1},t_{2},\cdots ,t_{m}$).

Consider the set $\Lambda ^{G}\left( L\right) $ of $G$-symmetric elements
(relative to $t_{1},t_{2},\cdots ,t_{m}$) in the function field $L$. There
are the following statements.

$\left( i\right) $ For any $m\geq 1$, $L$ is algebraic Galois over the
subfield $\Lambda ^{G}\left( L\right) $ such that
\begin{equation*}
\left[ L:\Lambda ^{G}\left( L\right) \right] =\sharp G
\end{equation*}%
and
\begin{equation*}
Aut\left( L/\Lambda ^{G}\left( L\right) \right) =G
\end{equation*}%
hold, i.e., $\Lambda ^{G}\left( L\right) $ is the $G$-invariant subfield $%
L^{G}$ of $L$ under the subgroup $G$.

$\left( ii\right) $ Let $m\geq 1$. Then the least nice number of $\Lambda
^{G}\left( L\right) $ over $K$ satisfies the equality
\begin{equation*}
\mathfrak{ni}\left( \frac{\Lambda ^{G}\left( L\right) }{K}\right) =0
\end{equation*}%
if and only if $G$ is the full permutation subgroup $\Sigma _{m}$ of the
Galois group $Aut\left( L/K\right) $ on $m$ letters (relative to $%
t_{1},t_{2},\cdots ,t_{m}$).

In such a case, the subfield $\Lambda ^{G}\left( L\right) $ is purely
transcendental over $K$, i.e., $G=\Sigma _{m}$ is a Noether solution of $L/K$%
.

$\left( iii\right) $ Suppose $m\geq 3$ and $\{1\}\subsetneqq G\subsetneqq
\Sigma _{m}$. Then there is an inequality
\begin{equation*}
\mathfrak{ni}\left( \frac{\Lambda ^{G}\left( L\right) }{K}\right) \geq 1
\end{equation*}%
for the least nice number of $\Lambda ^{G}\left( L\right) $ over $K$.

In such a case, the subfield $\Lambda ^{G}\left( L\right) $ is not purely
transcendental over $K$, i.e., $G$ is not a Noether solution of $L/K$.
\end{theorem}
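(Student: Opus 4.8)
The plan is to dispatch the three parts in order, deriving the deeper ones from the machinery of \S\S 4--6. Part $(i)$ is classical once the definitions are unwound: an element $\xi=f(t_1,\dots,t_m)/g(t_1,\dots,t_m)\in L$ is $G$-symmetric precisely when $\sigma_\ast(\xi)=\xi$ for every $\sigma\in G$, where $\sigma_\ast\in Aut(L/K)$ is the automorphism induced by $\sigma$; since $\ast\colon G\to Aut(L/K)$ is injective (a nontrivial permutation of the $t_i$ induces a nontrivial automorphism), $G$ is a finite subgroup of $Aut(L/K)$ and $\Lambda^G(L)=L^G$. Artin's theorem on finite groups of field automorphisms then gives $[L:L^G]=\sharp G$ and $L/L^G$ Galois with $Aut(L/L^G)=G$. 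For the ``if'' half of $(ii)$ I would invoke Lemma 7.6: when $G=\Sigma_m$ the invariant subfield $\Lambda^{\Sigma_m}(L)$ equals $K(e_1,\dots,e_m)$ with $e_1,\dots,e_m$ algebraically independent over $K$; hence $\Lambda^{\Sigma_m}(L)$ is purely transcendental over $K$, the pair $(\{e_1,\dots,e_m\},\{1\})$ is a nice basis of it over $K$, and $\mathfrak{ni}(\Lambda^{\Sigma_m}(L)/K)=0$ by the characterisation $\mathfrak{ni}(E/F)=0\iff E/F$ purely transcendental (Remark 7.2); equivalently $\Sigma_m$ is a Noether solution of $L/K$.

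The substance is $(iii)$, from which the ``only if'' of $(ii)$ follows, the remaining possibility $G=\{1\}$ being degenerate. Here the strategy is by contradiction. Assume $\{1\}\subsetneqq G\subsetneqq\Sigma_m$ with $m\geq 3$ and $\mathfrak{ni}(\Lambda^G(L)/K)=0$, so that $L^G$ is purely transcendental over $K$ and $G$ is a Noether solution of $L/K$. By Theorem 6.16, $G$ is then Galois-complemented, with a natural Galois-complement $H_{L/K}(G)$ (Definition 6.17); applying Lemma 6.26 to the chain of Noether solutions $\{1\}\subseteq G\subseteq\Sigma_m$ gives that $L^G$ is algebraic Galois over $L^{\Sigma_m}=K(e_1,\dots,e_m)$ and that every $\tau\in\Sigma_m$ carries $L\setminus L^G$ onto itself and $L^G\setminus L^{\Sigma_m}$ onto itself.

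Now fix a transcendence base $\Lambda$ with $K(\Lambda)=L^G$ and bring in Lemma 6.24: every element of $Aut(L/K)$ preserves the levels and distances of hyperplanes in $L/K$. The aim is to show that the ``pattern'' of $G$ on $t_1,\dots,t_m$ — the orbit and block structure of $G\leq\Sigma_m$, which produces the roots $\theta$ of Remark 8.4 — is incompatible with the existence of such a $\Lambda$ unless $G=\Sigma_m$. Concretely, one would track the hyperplanes $K(\Lambda)=L^G\subsetneqq K(\Delta')\subseteq L$ under elements of $\Sigma_m$ and show that the level $[L:K(\Delta')]$, together with the $\Sigma_m$-invariance of the strata $L\setminus L^G$ and $L^G\setminus L^{\Sigma_m}$, forces a relation among the $t_i$ that only $G=\Sigma_m$ can accommodate; equivalently, any nice basis of $L^G/K$ must carry a nontrivial algebraic part, whence $\mathfrak{ni}(\Lambda^G(L)/K)\geq 1$ and $L^G$ is not purely transcendental over $K$.

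The hard part will be exactly this last incompatibility step. Converting ``$G$ is a proper nontrivial permutation subgroup'' into an honest obstruction to rationality of $L^G$, uniformly in the ground field $K$ and in the conjugacy type of $G\leq\Sigma_m$, requires controlling \emph{which} intermediate rational subfields $K(\Delta)\subseteq L^G$ can occur; a priori $L^G$ might acquire a transcendence base with no visible connection to $e_1,\dots,e_m$, and the level/distance invariance of Lemma 6.24 is the only leverage against this. I expect the cleanest route is to push everything through the Galois-complement picture of \S 6.6 and reduce to the single assertion that a proper nontrivial permutation subgroup of $\Sigma_m$ fails to be Galois-complemented in $L/K$; establishing that reduction is what I would do first.
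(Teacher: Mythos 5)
Your parts $(i)$ and $(ii)$ are fine: Artin's theorem for $(i)$ is equivalent to (and cleaner than) the paper's route through the free-module structure of $K[t_1,\dots,t_m]$ over the symmetric polynomials, and the ``if'' direction of $(ii)$ via the elementary symmetric polynomials is exactly Lemma 7.14.

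Part $(iii)$, however, has a genuine gap precisely at the step you flag as ``the hard part.'' You correctly set up the contradiction --- assume $G$ is a Noether solution, invoke Theorem 6.16 and Lemma 6.26 for the pair $G\subseteq \Sigma_m$ --- but you then abandon the output of Lemma 6.26 and reach for the level/distance invariance of hyperplanes (Lemma 6.24), which is not where the contradiction lives and which you admit you cannot convert into an obstruction. The point you are missing is that Lemma 6.26 already hands you the contradiction in purely group-theoretic form: it produces $Q=\Sigma_m\cap H_G$ (with $H_G$ the natural Galois-complement of $G$) such that $\Sigma_m=G\cdot Q=Q\cdot G$, with $G$ and $Q$ \emph{normal} in $\Sigma_m$, $G\cap Q=\{1\}$ by condition $(N1)$, and hence $\sharp\Sigma_m=\sharp G\cdot\sharp Q$. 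Now Lemma 7.15 finishes it: for $m=3$ or $m\geq 5$ every nontrivial normal subgroup of $\Sigma_m$ contains $A_m$, so $\sharp G,\sharp Q\geq \tfrac{1}{2}m!$ and $\sharp G\cdot\sharp Q>m!$; for $m=4$ the admissible orders of proper nontrivial normal subgroups force $\sharp G=\sharp Q=12$, and $12\cdot 12\neq 24$. In short, $\Sigma_m$ admits no internal direct product decomposition into two proper nontrivial normal subgroups for $m\geq 3$, and that is the whole argument. Your proposed reduction --- ``a proper nontrivial permutation subgroup of $\Sigma_m$ fails to be Galois-complemented'' --- is also not quite the right target: being Galois-complemented is only the necessary condition $(N1)$--$(N2)$, and the contradiction comes not from its failure but from the structure of the decomposition it would impose on $\Sigma_m$. (Separately, note that the ``only if'' of $(ii)$ cannot simply be deferred to $(iii)$ plus degeneracy: for $G=\{1\}$ and $m\geq 2$ one has $\Lambda^G(L)=L$ purely transcendental, so that case must be excluded or addressed explicitly; this is a defect already present in the statement you are proving, but your write-up should not paper over it.)
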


We will prove \emph{Theorem 7.12} in \emph{\S 7.6} after we obtain the
following preparatory lemmas on subfields of $G$-symmetric functions under
permutation subgroups in the Galois group of the extension $L=K\left(
t_{1},t_{2},\cdots ,t_{m}\right) $ over $K$.

\subsection{Lemmas on subfields of $G$-symmetric functions under
permutation subgroups}

Let $t_{1},t_{2},\cdots ,t_{m}$ be a set of algebraically independent
variables over a field $K$. Put $L:=K\left( t_{1},t_{2},\cdots ,t_{m}\right)
$.

Let $\Sigma _{m}$ be the full permutation subgroup of the Galois group $%
Aut\left( L/K\right) $ on $m$ letters (relative to the variables $%
t_{1},t_{2},\cdots ,t_{m}$) (See \emph{Definition 7.7}). Denote by $\Lambda
^{\Sigma _{m}}\left( L\right) $ the set of $\Sigma _{m}$-symmetric functions
(relative to the variables $t_{1},t_{2},\cdots ,t_{m}$) in the function
field $L=K\left( t_{1},t_{2},\cdots ,t_{m}\right) $.

Suppose $G\subseteq \Sigma _{m}$ is a subgroup which has a transitive action
on the variables $t_{1},t_{2},\cdots ,t_{m}$. Denote by $\Lambda ^{G}\left(
L\right) $ the set of $G$-symmetric functions (relative to the variables $%
t_{1},t_{2},\cdots ,t_{m}$) in the function field $L=K\left(
t_{1},t_{2},\cdots ,t_{m}\right) $.

\begin{lemma}
\emph{(}$G$\emph{-symmetric functions as }$G$\emph{-invariant elements)} Let
$\Lambda ^{G}\left( L\right) $ be the set of $G$-symmetric functions
(relative to the variables $t_{1},t_{2},\cdots ,t_{m}$) in the function
field $L=K\left( t_{1},t_{2},\cdots ,t_{m}\right) $. Then $\Lambda
^{G}\left( L\right) $ is the $G$-invariant subfield $L^{G}$ of $L$ under the
subgroup $G$, i.e., we have
\begin{equation*}
\Lambda ^{G}\left( L\right) =L^{G}.
\end{equation*}

In particular, for the case $G=\Sigma _{m}$, let $\Lambda ^{\Sigma
_{m}}\left( L\right) $ be the set of $\Sigma _{m}$-symmetric functions
(relative to $t_{1},t_{2},\cdots ,t_{m}$) in the function field $L$. Then $%
\Lambda ^{\Sigma _{m}}\left( L\right) $ is the $\Sigma _{m}$-invariant
subfield $L^{\Sigma _{m}}$ of $L$ under the subgroup $\Sigma _{m}$, i.e.,
\begin{equation*}
\Lambda ^{\Sigma _{m}}\left( L\right) =L^{\Sigma _{m}}.
\end{equation*}
\end{lemma}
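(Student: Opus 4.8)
The plan is to reduce the identity $\Lambda^{G}(L)=L^{G}$ to the single observation that, for $\xi\in L$, being $G$-symmetric is \emph{by definition} the same as being fixed by every $\sigma_{\ast}$ with $\sigma\in G$. Concretely, write an arbitrary $\xi\in L$ as $\xi=f(t_{1},t_{2},\cdots,t_{m})/g(t_{1},t_{2},\cdots,t_{m})$ with $f,g\in K[X_{1},X_{2},\cdots,X_{m}]$ and $g\neq0$; by \emph{Definition 7.7} we have $\sigma_{\ast}(\xi)=f_{\sigma}(t_{1},t_{2},\cdots,t_{m})/g_{\sigma}(t_{1},t_{2},\cdots,t_{m})$, so the condition defining a $G$-symmetric function, namely $f/g=f_{\sigma}/g_{\sigma}$ for all $\sigma\in G$, is verbatim the condition $\sigma_{\ast}(\xi)=\xi$ for all $\sigma\in G$. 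Hence $\Lambda^{G}(L)=\{\xi\in L:\sigma_{\ast}(\xi)=\xi\text{ for all }\sigma\in G\}$.

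First I would check that $\sigma_{\ast}$ is well defined on $L$, i.e.\ independent of the chosen numerator and denominator of $\xi$; this is what makes the previous sentence meaningful regardless of the representation of $\xi$. It follows from the fact recorded in \emph{Remark 7.3} (resp.\ \emph{Remark 7.8}) that the $\sigma$-map $f\mapsto f_{\sigma}$ is a $K$-algebra automorphism of $K[X_{1},X_{2},\cdots,X_{m}]$: if $f_{1}g_{2}=f_{2}g_{1}$ in $K[X_{1},X_{2},\cdots,X_{m}]$, then applying this automorphism gives $f_{1,\sigma}\,g_{2,\sigma}=f_{2,\sigma}\,g_{1,\sigma}$, so $f_{1,\sigma}/g_{1,\sigma}=f_{2,\sigma}/g_{2,\sigma}$ in $L$. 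Consequently $\sigma\mapsto\sigma_{\ast}$ is exactly the group monomorphism $\ast$ of \emph{Definition 7.7}, and each $\sigma_{\ast}$ is a $K$-automorphism of $L$, i.e.\ $\ast(G)\subseteq Aut(L/K)$.

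Then, identifying $G$ with its image $\ast(G)\subseteq Aut(L/K)$ under $\ast$ exactly as in \emph{Definition 7.7} $(ii)$, the displayed description of $\Lambda^{G}(L)$ becomes $\Lambda^{G}(L)=L^{\ast(G)}=L^{G}$, which is the assertion. That $\Lambda^{G}(L)$ is a subfield of $L$ is \emph{Remark 7.11}, and $L^{G}$ is a subfield of $L$ by definition of the invariant subfield, so the equality is an equality of subfields. The special case $G=\Sigma_{m}$, that is $\Lambda^{\Sigma_{m}}(L)=L^{\Sigma_{m}}$, is obtained simply by taking $G=\Sigma_{m}$, the full permutation subgroup of $Aut(L/K)$ on $m$ letters relative to $t_{1},t_{2},\cdots,t_{m}$.

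There is essentially no hard step here: the content of the lemma is the matching of two definitions, and the only point requiring a word of care is the representation-independence of $\sigma_{\ast}$, which is immediate from the ring-automorphism property of the variable substitution $X_{i}\mapsto X_{\sigma(i)}$. The lemma functions as a bookkeeping statement that lets the $G$-symmetric-function language of \emph{Definition 7.9} be used interchangeably with the Galois-invariant-subfield language, which is what permits the transcendental Galois machinery of \emph{\S\S 2--6} (in particular \emph{Lemma 6.26}) to be brought to bear on \emph{Theorem 7.12}.
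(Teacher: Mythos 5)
Your proposal is correct and follows essentially the same route as the paper's own proof: both simply unwind \emph{Definitions 7.7 and 7.9} and observe that the fixed-point condition $\sigma(\xi)=\xi$ is verbatim the $G$-symmetry condition $f/g=f_{\sigma}/g_{\sigma}$. The only difference is that you explicitly verify the representation-independence of $\sigma_{\ast}$ (which the paper leaves implicit in \emph{Definition 7.7} and \emph{Remark 7.8}), a welcome but minor addition.
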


\begin{proof}
Fixed any $\sigma \in G$ and $\xi \in L=K\left( t_{1},t_{2},\cdots
,t_{m}\right) $. From \emph{Definition 7.7} we have
\begin{equation*}
\sigma \left( \xi \right) =\frac{f\left( t_{\sigma \left( 1\right)
},t_{\sigma \left( 2\right) },\cdots ,t_{\sigma \left( m\right) }\right) }{%
g\left( t_{\sigma \left( 1\right) },t_{\sigma \left( 2\right) },\cdots
,t_{\sigma \left( m\right) }\right) }
\end{equation*}%
where
\begin{equation*}
\xi =\frac{f\left( t_{1},t_{2},\cdots ,t_{m}\right) }{g\left(
t_{1},t_{2},\cdots ,t_{m}\right) };
\end{equation*}%
\begin{equation*}
f\left( X_{1},X_{2},\cdots ,X_{m}\right) ,g\left( X_{1},X_{2},\cdots
,X_{m}\right) \in K\left[ X_{1},X_{2},\cdots ,X_{m}\right]
\end{equation*}%
are polynomials with $g\left( X_{1},X_{2},\cdots ,X_{m}\right) \not=0$.
Hence, we have
\begin{equation*}
\sigma \left( \xi \right) =\xi
\end{equation*}%
if and only if
\begin{equation*}
\frac{f\left( t_{\sigma \left( 1\right) },t_{\sigma \left( 2\right) },\cdots
,t_{\sigma \left( m\right) }\right) }{g\left( t_{\sigma \left( 1\right)
},t_{\sigma \left( 2\right) },\cdots ,t_{\sigma \left( m\right) }\right) }=%
\frac{f\left( t_{1},t_{2},\cdots ,t_{m}\right) }{g\left( t_{1},t_{2},\cdots
,t_{m}\right) }
\end{equation*}%
holds. This completes the proof.
\end{proof}

\begin{lemma}
\emph{(The full permutation subgroup and Noether solution)} Let $\Lambda ^{\Sigma _{m}}\left( L\right) $ be the subfield of $\Sigma _{m}$%
-symmetric functions (relative to $t_{1},t_{2},\cdots ,t_{m}$) in the
function field $L=K\left( t_{1},t_{2},\cdots ,t_{m}\right) $. Here, $m\geq 1$%
. There are the following statements.

$\left( i\right) $ $\Lambda ^{\Sigma _{m}}\left( L\right) $ is
purely transcendental  over $K$ of transcendence degree $m$.

$\left( ii\right) $ $L$ is algebraic Galois over the subfield $\Lambda
^{\Sigma _{m}}\left( L\right) $ of degree
\begin{equation*}
\left[ L:\Lambda ^{\Sigma _{m}}\left( L\right) \right] =\sharp \Sigma
_{m}=m!.
\end{equation*}%
In particular, $\Sigma _{m}$ is a Noether solution of $L/K$.

$\left( iii\right) $ Fixed the elementary symmetric polynomials
\begin{equation*}
e_{1}\left( X_{1},X_{2},\cdots ,X_{m}\right) ,e_{2}\left( X_{1},X_{2},\cdots
,X_{m}\right) ,\cdots ,e_{m}\left( X_{1},X_{2},\cdots ,X_{m}\right)
\end{equation*}%
of $m$ indeterminates $X_{1},X_{2},\cdots ,X_{m}$ over $K$.

Then the subfield $\Lambda ^{\Sigma _{m}}\left( L\right) $ is generated over
$K$ by the evaluations
\begin{equation*}
e_{1}\left( t_{1},t_{2},\cdots ,t_{m}\right) ,e_{2}\left( t_{1},t_{2},\cdots
,t_{m}\right) ,\cdots ,e_{m}\left( t_{1},t_{2},\cdots ,t_{m}\right)
\end{equation*}%
of the elementary symmetric polynomials at the variables $t_{1},t_{2},\cdots
,t_{m}\in L$, i.e.,
\begin{equation*}
\begin{array}{l}
\Lambda ^{\Sigma _{m}}\left( K\left( t_{1},t_{2},\cdots ,t_{m}\right) \right)
\\
=K\left( e_{1}\left( t_{1},t_{2},\cdots ,t_{m}\right) ,e_{2}\left(
t_{1},t_{2},\cdots ,t_{m}\right) ,\cdots ,e_{m}\left( t_{1},t_{2},\cdots
,t_{m}\right) \right) .%
\end{array}%
\end{equation*}
\end{lemma}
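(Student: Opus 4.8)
The plan is to pin down the subfield $\Lambda^{\Sigma_m}(L)$ by squeezing it between $K(e_1,\dots,e_m)$ and $L$ in the tower
\[
K(e_1,\dots,e_m)\ \subseteq\ \Lambda^{\Sigma_m}(L)\ \subseteq\ L ,
\]
where throughout I abbreviate $e_k:=e_k(t_1,\dots,t_m)$ for the evaluation of the $k$-th elementary symmetric polynomial at the variables. By the preceding lemma we already know $\Lambda^{\Sigma_m}(L)=L^{\Sigma_m}$, so it suffices to compute the degrees at the two ends of this tower and to check that the total degree comes out to be $m!$ from both sides.

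First I would prove the upper bound $[L:K(e_1,\dots,e_m)]\le m!$. Introducing an auxiliary indeterminate $Y$, the identity recalled in Remark~7.5 reads
\[
\prod_{1\le i\le m}(Y-t_i)=\sum_{0\le k\le m}(-1)^k e_k\, Y^{m-k},
\]
so every $t_i$ is a root of a monic polynomial of degree $m$ with coefficients in $K(e_1,\dots,e_m)$; in particular $t_1$ is, and over $K(e_1,\dots,e_m)(t_1)$ the element $t_2$ is a root of the degree-$(m-1)$ polynomial obtained by dividing out the factor $(Y-t_1)$, and so on. Building $L=K(e_1,\dots,e_m)(t_1,\dots,t_m)$ one variable at a time gives $[L:K(e_1,\dots,e_m)]\le m\cdot(m-1)\cdots 1=m!$. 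Consequently $L$ is finite, hence algebraic, over $K(e_1,\dots,e_m)$; since $\operatorname{tr.deg}_K L=m$ this forces $\operatorname{tr.deg}_K K(e_1,\dots,e_m)=m$, i.e. $e_1,\dots,e_m$ are algebraically independent over $K$ (this is also part of Lemma~7.6), so $K(e_1,\dots,e_m)$ is purely transcendental over $K$ of transcendence degree $m$.

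Next I would get the lower bound. Because $t_1,\dots,t_m$ are algebraically independent over $K$, distinct permutations of the $t_i$ induce distinct $K$-automorphisms of $L$, so $\Sigma_m$ is realised as a subgroup of order $m!$ of $\operatorname{Aut}(L/K)$; by the classical theorem of Artin on fixed fields of a finite group of automorphisms — which rests on the Dedekind independence of automorphisms recalled in $\S 3$ — the extension $L/L^{\Sigma_m}$ is finite Galois with $[L:L^{\Sigma_m}]=m!$ and $\operatorname{Aut}(L/L^{\Sigma_m})=\Sigma_m$. On the other hand each $e_k$ is $\Sigma_m$-symmetric, so $K(e_1,\dots,e_m)\subseteq L^{\Sigma_m}$, and combining the two bounds in
\[
m!\ \ge\ [L:K(e_1,\dots,e_m)]\ =\ [L:L^{\Sigma_m}]\cdot[L^{\Sigma_m}:K(e_1,\dots,e_m)]\ \ge\ m!
\]
forces $[L^{\Sigma_m}:K(e_1,\dots,e_m)]=1$, i.e. $\Lambda^{\Sigma_m}(L)=L^{\Sigma_m}=K(e_1,\dots,e_m)$. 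This is (iii); together with the first paragraph it gives (i); and (ii) follows at once, since $L$ is then finite Galois of degree $m!$ over $\Lambda^{\Sigma_m}(L)$, which is purely transcendental over $K$, so $L$ is algebraic over the purely transcendental subfield $\Lambda^{\Sigma_m}(L)=L^{\Sigma_m}$, i.e. $\Sigma_m$ is a Noether solution of $L/K$.

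I do not expect any genuine obstacle: everything here is classical. The one point requiring a little care is that a $\Sigma_m$-symmetric rational function $f/g$ need not have $f$ and $g$ individually symmetric, so one cannot simply invoke the polynomial version of the fundamental theorem (Lemma~7.6) to describe $\Lambda^{\Sigma_m}(L)$ directly; the degree count above is arranged precisely to bypass this, never manipulating symmetric fractions. As an alternative to the explicit polynomial $\prod_{i}(Y-t_i)$, one could deduce $[L:K(e_1,\dots,e_m)]\le m!$ by localising the freeness statement of Lemma~7.6, but the generating-function identity is shorter.
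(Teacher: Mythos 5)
Your proof is correct, but it takes a genuinely different route from the paper's. The paper's own argument leans on the full fundamental theorem of symmetric polynomials (Lemma 7.6): it identifies $\Lambda ^{\Sigma _{m}}\left( L\right) $ with the fraction field of the ring of symmetric polynomials, reads off (i) and (iii) from the algebraic-independence and generation statements there, and then obtains $\left[ L:\Lambda ^{\Sigma _{m}}\left( L\right) \right] =m!$ by showing that a free-module basis of $K\left[ t_{1},\cdots ,t_{m}\right] $ over the symmetric subring (rank $m!$, Lemma 7.6(iii)) stays linearly independent after passing to fraction fields, concluding Galois from $\sharp \Sigma _{m}=\left[ L:L^{\Sigma _{m}}\right] $. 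You instead run a degree squeeze: the identity $\prod_{i}\left( Y-t_{i}\right) =\sum_{k}\left( -1\right) ^{k}e_{k}Y^{m-k}$ gives $\left[ L:K\left( e_{1},\cdots ,e_{m}\right) \right] \leq m!$, Artin's fixed-field theorem (resting on the Dedekind independence of Theorem 3.19) gives $\left[ L:L^{\Sigma _{m}}\right] =m!$ with $Aut\left( L/L^{\Sigma _{m}}\right) =\Sigma _{m}$, and the containment $K\left( e_{1},\cdots ,e_{m}\right) \subseteq L^{\Sigma _{m}}$ forces equality, with the algebraic independence of the $e_{k}$ falling out of the transcendence-degree count rather than being quoted from Lemma 7.6(i). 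Your route is more self-contained and, as you yourself observe, it sidesteps the delicate point that a $\Sigma _{m}$-symmetric rational function need not be a quotient of symmetric polynomials --- a point the paper's Step 1 passes over when it deduces $\Lambda ^{\Sigma _{m}}\left( L\right) =K\left( e_{1},\cdots ,e_{m}\right) $ directly from the polynomial statements of Lemma 7.6 (one either argues as you do, or symmetrizes the denominator). What the paper's approach buys in exchange is an explicit $\Lambda ^{\Sigma _{m}}\left( L\right) $-linear basis of $L$ inherited from the free-module basis, and it stays within the symmetric-function machinery of \emph{\S 7} without invoking Artin's theorem, which the paper never states explicitly even though its key ingredient is Theorem 3.19.
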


\begin{proof}
Consider the subring $A=K\left[ t_{1},t_{2},\cdots ,t_{m}\right] $ of $%
L=K\left( t_{1},t_{2},\cdots ,t_{m}\right) $. Let $B=\Lambda ^{\Sigma
_{m}}\left( A\right) $ be the subring of $\Sigma _{m}$-symmetric elements
(relative to the variables $t_{1},t_{2},\cdots ,t_{m}$) in $L$. From \emph{%
Definition 7.9} it is seen that $\Lambda ^{\Sigma _{m}}\left( L\right) $ is
the fractional field of $B$, i.e., $\Lambda ^{\Sigma _{m}}\left( L\right)
=Fr\left( B\right) \subseteq Fr\left( A\right) =L$. Put%
\begin{equation*}
e_{1}:=e_{1}\left( t_{1},t_{2},\cdots ,t_{m}\right) ,e_{2}:=e_{2}\left(
t_{1},t_{2},\cdots ,t_{m}\right) ,\cdots ,e_{m}:=e_{m}\left(
t_{1},t_{2},\cdots ,t_{m}\right) .
\end{equation*}%
We will proceed in several steps.

\emph{Step 1}. From $\left( i\right) -\left( ii\right) $ of \emph{Lemma 7.6}
it is seen that
\begin{equation*}
\Lambda ^{\Sigma _{m}}\left( L\right) =K\left( e_{1},e_{2},\cdots
,e_{m}\right)
\end{equation*}%
is generated over $K$ by the $\Sigma _{m}$-symmetric functions
\begin{equation*}
e_{1},e_{2},\cdots ,e_{m}
\end{equation*}%
in $L$ which are algebraically independent over $K$. It is seen that $%
\Lambda ^{\Sigma _{m}}\left( L\right) $ is a purely transcendental extension
over $K$ of transcendence degree $m$. This proves $\left( i\right) $ \& $%
\left( iii\right) $.

\emph{Step 2}. From $\left( iii\right) $ of \emph{Lemma 7.6} we have
elements
\begin{equation*}
f_{1},f_{2},\cdots ,f_{m!}\in A
\end{equation*}%
such that
\begin{equation*}
A=span_{B}\{f_{1},f_{2},\cdots ,f_{m!}\}
\end{equation*}%
holds. That is, $f_{1},f_{2},\cdots ,f_{m!}$ are a $B$-linear basis of the
free $B$-module $A$. It follows that
\begin{equation*}
A=B\left[ f_{1},f_{2},\cdots ,f_{m!}\right]
\end{equation*}%
is a ring generated over the subring $B$ by the elements $f_{1},f_{2},\cdots
,f_{m!}$. Hence, $L=Fr\left( A\right) $ is a field generated over the
subfield
\begin{equation*}
\Lambda ^{\Sigma _{m}}\left( L\right) =Fr\left( B\right)
\end{equation*}%
by a finite number of elements
\begin{equation*}
f_{1},f_{2},\cdots ,f_{m!};
\end{equation*}%
$L=Fr\left( A\right) $ is a vector space over the subfield $Fr\left(
B\right) $ generated by the elements $f_{1},f_{2},\cdots ,f_{m!}$.

\emph{Step 3}. Prove $f_{1},f_{2},\cdots ,f_{m!}$ are linearly independent
over the subfield $Fr\left( B\right) $. Hypothesize $f_{1},f_{2},\cdots
,f_{m!}$ are not linearly independent over $Fr\left( B\right) $. For
simplicity, let $h=m!$. Without loss of generality, suppose
\begin{equation*}
f_{h}=c_{1}\cdot f_{1}+c_{2}\cdot f_{2}+\cdots +c_{h-1}\cdot f_{h-1}
\end{equation*}
such that
\begin{equation*}
c_{1},c_{2},\cdots ,c_{h-1}\in Fr\left( B\right)
\end{equation*}
are not all zero.

By taking the common denominator $d\in B$ of the fractions $%
c_{1},c_{2},\cdots ,c_{h-1}$, we have
\begin{equation*}
d\cdot f_{h}=d_{1}\cdot f_{1}+d_{2}\cdot f_{2}+\cdots +d_{h-1}\cdot f_{h-1}
\end{equation*}
where
\begin{equation*}
d_{1}=d\cdot c_{1},d_{2}=d\cdot c_{2},\cdots ,d_{h-1}=d\cdot c_{h-1}\in B.
\end{equation*}
Then
\begin{equation*}
d_{1}\cdot f_{1}+d_{2}\cdot f_{2}+\cdots +d_{h-1}\cdot f_{h-1}-d\cdot
f_{h}=0.
\end{equation*}

On the other hand,
\begin{equation*}
d_{1},d_{2},\cdots ,d_{h-1},-d\in B
\end{equation*}
are not all zero; it follows that $f_{1},f_{2},\cdots ,f_{h}\in A$ are not
linearly independent over the subring $B$, which is in contradiction. This
proves $L$ is a vector space over $Fr\left( B\right) $ of dimension $m!$.

\emph{Step 4}. From \emph{Lemma 7.13} it is seen that $\Lambda ^{\Sigma
_{m}}\left( L\right) =Fr\left( B\right) $ is the invariant subfield $%
L^{\Sigma _{m}}$ of $L$ under the subgroup $\Sigma _{m}\subseteq Aut\left(
L/K\right) $ and that
\begin{equation*}
\Sigma _{m}=Aut\left( L/L^{\Sigma _{m}}\right)
\end{equation*}%
is the Galois group. Hence, $L$ is algebraic Galois over the subfield $%
L^{\Sigma _{m}}=\Lambda ^{\Sigma _{m}}\left( L\right) $ since there are
equalities
\begin{equation*}
\sharp \Sigma _{m}=\left[ L:\Lambda ^{\Sigma _{m}}\left( L\right) \right] =m!
\end{equation*}%
from \emph{Step 3}. This completes the proof.
\end{proof}

\begin{lemma}
\emph{(}[Suzuki 1982]\emph{)} \emph{(Properties for alternating subgroups)} Let $%
\Sigma _{m}$ be the full permutation subgroup of the Galois group $Aut\left(
L/K\right) $ on $m$ letters (relative to the variables $t_{1},t_{2},\cdots
,t_{m}$). Let $A_{m}$ denote the alternating subgroup $A_{m}$ of $\Sigma
_{m} $. There are the following statements.

$\left( i\right) $ Let $m\geq 3$. Then $A_{m}$ is a characteristic subgroup
of $\Sigma _{m}$ and $A_{m}$ is generated in $\Sigma _{m}$ by all the $3$%
-cycles.

$\left( ii\right) $ For $m=4$, the normal subgroups of $\Sigma _{4}$ are $%
\{1\},A_{4},\Sigma _{4}$.

$\left( iii\right) $ Let $m\geq 5$. If $G\not=\{1\}$ is a normal subgroup of
$\Sigma _{m}$, then $A_{m}$ must be contained in $G$.

$\left( iv\right) $ For any $1\leq m\not=4$, $A_{m}$ is the unique minimal
nomal subgroup of $\Sigma _{m}$.
\end{lemma}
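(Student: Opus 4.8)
The plan is to regard these four assertions as standard facts about the abstract symmetric group $\Sigma _{m}$ (as recorded in \emph{[Suzuki 1982]}) and to assemble them from three building blocks: the normality and ``uniqueness'' of $A_{m}$, the generation of $A_{m}$ by $3$-cycles, and the simplicity of $A_{m}$ for $m\geq 5$. First I would note that $A_{m}$ is the kernel of the sign homomorphism $\mathrm{sgn}\colon \Sigma _{m}\rightarrow \{\pm 1\}$, hence a normal subgroup of index $2$ in $\Sigma _{m}$. Since any homomorphism $\Sigma _{m}\rightarrow \{\pm 1\}$ factors through the abelianisation, which is $\mathbb{Z}/2\mathbb{Z}$ for $m\geq 2$, the subgroup $A_{m}$ is the unique subgroup of index $2$; being the unique subgroup of its order, it is invariant under every automorphism of $\Sigma _{m}$, i.e. it is a characteristic subgroup, for $m\geq 3$. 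For the generation statement in $\left( i\right) $ I would argue directly: every element of $A_{m}$ is a product of an even number of transpositions, and each consecutive pair of transpositions satisfies $\left( a\,b\right) \left( a\,b\right) =1$, $\left( a\,b\right) \left( b\,c\right) =\left( a\,b\,c\right) $, and $\left( a\,b\right) \left( c\,d\right) =\left( a\,b\,c\right) \left( b\,c\,d\right) $ for distinct letters $a,b,c,d$; therefore $A_{m}$ is generated by the set of all $3$-cycles. This gives $\left( i\right) $.

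For $\left( iii\right) $ the plan is to use the simplicity of $A_{m}$ for $m\geq 5$. Granting it, let $G\neq \{1\}$ be a normal subgroup of $\Sigma _{m}$. Then $G\cap A_{m}$ is a normal subgroup of $A_{m}$, so $G\cap A_{m}\in \{\{1\},A_{m}\}$. If $G\cap A_{m}=A_{m}$, then $A_{m}\subseteq G$ and we are done. If $G\cap A_{m}=\{1\}$, then $G$ injects into $\Sigma _{m}/A_{m}\cong \mathbb{Z}/2\mathbb{Z}$, so $\sharp G=2$ and $G=\{1,\tau \}$ with $\tau $ of order $2$; normality of $G$ forces $\tau $ to commute with every element of $\Sigma _{m}$, contradicting the triviality of the centre of $\Sigma _{m}$ for $m\geq 3$. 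Hence $A_{m}\subseteq G$, which is $\left( iii\right) $.

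For $\left( ii\right) $ I would carry out a finite check in $\Sigma _{4}$: a normal subgroup is a union of conjugacy classes containing the identity, the conjugacy classes of $\Sigma _{4}$ are determined by cycle type (with sizes $1,6,8,6,3$ for the identity, transpositions, $3$-cycles, $4$-cycles, and products of two disjoint transpositions, respectively), and one lists the unions that are closed under multiplication and whose cardinality divides $24$. For $\left( iv\right) $ I would combine the earlier parts with the small cases: for $m\geq 5$, by $\left( iii\right) $ every nontrivial normal subgroup of $\Sigma _{m}$ contains $A_{m}$, and $A_{m}$ is itself a (nonabelian simple, hence minimal) normal subgroup, so it is the unique minimal normal subgroup; for $m=3$ one checks directly that $A_{3}\cong \mathbb{Z}/3\mathbb{Z}$ is the only proper nontrivial normal subgroup of $\Sigma _{3}$; and for $m\leq 2$ the group is too small for the statement to carry content and is inspected directly.

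The main obstacle is the simplicity of $A_{m}$ for $m\geq 5$, on which both $\left( iii\right) $ and $\left( iv\right) $ rest. Proving it from first principles is not a formality: one must show that all $3$-cycles are conjugate inside $A_{m}$ when $m\geq 5$, and that every nontrivial normal subgroup of $A_{m}$ necessarily contains a $3$-cycle. Since the paper cites \emph{[Suzuki 1982]}, I would invoke that reference at this point; the remaining steps of $\left( i\right) $--$\left( iv\right) $ are then routine bookkeeping with cycle types, indices, and the centre of $\Sigma _{m}$.
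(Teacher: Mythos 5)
The paper's own ``proof'' of this lemma is nothing more than a pointer to \emph{[}Suzuki 1982\emph{]}, so your decision to reconstruct the group theory is a genuinely different, more self-contained route, and most of it is sound: the uniqueness of the index-two subgroup via the abelianisation (hence $A_{m}$ characteristic for $m\geq 3$), the generation of $A_{m}$ by $3$-cycles through the identities $\left( a\,b\right) \left( b\,c\right) =\left( a\,b\,c\right) $ and $\left( a\,b\right) \left( c\,d\right) =\left( a\,b\,c\right) \left( b\,c\,d\right) $, and the reduction of $\left( iii\right) $ to the simplicity of $A_{m}$ (with the trivial-centre argument eliminating the case $G\cap A_{m}=\{1\}$) are all correct, with simplicity reasonably delegated to the cited reference.

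There is, however, a genuine failure in your treatment of $\left( ii\right) $: the finite check you describe does not confirm the statement, it refutes it. The unions of conjugacy classes of $\Sigma _{4}$ that contain the identity, are closed under multiplication, and have order dividing $24$ are $\{1\}$, the Klein four-group $V=\{1,\left( 1\,2\right) \left( 3\,4\right) ,\left( 1\,3\right) \left( 2\,4\right) ,\left( 1\,4\right) \left( 2\,3\right) \}$ of order $1+3=4$, $A_{4}$ of order $1+3+8=12$, and $\Sigma _{4}$ itself. In particular $V$ is a normal subgroup of $\Sigma _{4}$, so the assertion that the only normal subgroups are $\{1\},A_{4},\Sigma _{4}$ is false as stated (Suzuki's own list includes $V$), and an honest execution of your class-sum check would expose this rather than close the case; you should flag the discrepancy instead of asserting the check succeeds. (The later uses in \emph{Lemmas 7.16} and \emph{7.18} survive, since admitting the extra possible order $4$ still contradicts the required orders $6$ and the factorisation of $24$ there.) A smaller point: in $\left( iv\right) $ the case $m=2$ is not merely ``too small to carry content'' --- $A_{2}=\{1\}$ is not a minimal normal subgroup of $\Sigma _{2}$ under the usual convention that minimal normal subgroups are nontrivial, so that case needs an explicit convention or exclusion rather than a silent inspection.
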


\begin{proof}
$\left( i\right) -\left( iv\right) $ See \emph{Chapter 3, \S 2, Pages 291-295%
} in \emph{[}Suzuki 1982\emph{]}.
\end{proof}

\begin{lemma}
\emph{(The cyclic permutation subgroup and Noether solution)} Let $C_{m}$
denote the cyclic permutation subgroup of $\Sigma _{m}\subseteq Aut\left(
L/K\right) $ (relative to the variables $t_{1},t_{2},\cdots ,t_{m}$) of
order $m$, i.e., $C_{m}$ is induced from the cyclic subgroup given by the $m$-cycle
$
\left( 1,2,\cdots ,m\right)
$
in the symmetric group on $m$ letters. Here, $m\geq 1$.

Consider the set $\Lambda ^{C_{m}}\left( L\right) $ of $C_{m}$-symmetric
functions (relative to $t_{1},t_{2},\cdots ,t_{m}$) in the function field $L$%
. There are the following statements.

$\left( i\right) $ $C_{m}$ is a cyclic subgroup in $Aut\left( L/K\right) $
of order $\sharp C_{m}=m$; $C_{m}$ has a transitive action on $%
t_{1},t_{2},\cdots ,t_{m}$.

$\left( ii\right) $ $L$ is algebraic Galois over the subfield $\Lambda
^{C_{m}}\left( L\right) $. In particular, we have%
\begin{equation*}
\Lambda ^{C_{m}}\left( L\right) =L^{C_{m}};
\end{equation*}%
\begin{equation*}
C_{m}=Aut\left( L/\Lambda ^{C_{m}}\left( L\right) \right) ;
\end{equation*}
\begin{equation*}
\left[ L:\Lambda ^{C_{m}}\left( L\right) \right] =\sharp C_{m}=m.
\end{equation*}

$\left( iii\right) $ The variables $t_{1},t_{2},\cdots ,t_{m}\in L$ are $%
\Lambda ^{C_{m}}\left( L\right) $-conjugate elements in $L$.

$\left( iv\right) $ The minimal polynomial $\phi \left( X\right) $ of the
variables $t_{1},t_{2},\cdots ,t_{m}\in L$ over $\Lambda ^{C_{m}}\left(
L\right) $ is given by%
\begin{equation*}
\phi \left( X\right) :=\prod_{1\leq i\leq m}\left( X-t_{i}\right)
=\sum_{0\leq k\leq m}\left( -1\right) ^{k}\cdot e_{k}\left(
t_{1},t_{2},\cdots ,t_{m}\right) \cdot X^{m-k}
\end{equation*}%
where
\begin{equation*}
e_{0}\left( t_{1},t_{2},\cdots ,t_{m}\right) :=1;
\end{equation*}%
\begin{equation*}
e_{1}\left( X_{1},X_{2},\cdots ,X_{m}\right) ,e_{2}\left( X_{1},X_{2},\cdots
,X_{m}\right) ,\cdots ,e_{m}\left( X_{1},X_{2},\cdots ,X_{m}\right)
\end{equation*}%
are the elementary symmetric polynomials of $m$ indeterminates $%
X_{1},X_{2},\cdots ,X_{m}$ over $K$.

$\left( v\right) $ If $m=1$, then $C_{m}=\Sigma _{1}$ and for the least nice
number there is
\begin{equation*}
\mathfrak{ni}\left( \frac{\Lambda ^{C_{m}}\left( L\right) }{K}\right) =0.
\end{equation*}

In such a case, $\Lambda ^{C_{1}}\left( L\right) =L$ is purely
transcendental over $K$, i.e., $C_{1}$ is a Noether solution of $L/K$.

$\left( vi\right) $ If $m=2$, then $C_{m}=\Sigma _{2}$ and $L$ is algebraic
Galois over the subfield $\Lambda ^{C_{m}}\left( L\right) $ such that
\begin{equation*}
\mathfrak{ni}\left( \frac{\Lambda ^{C_{m}}\left( L\right) }{K}\right) =0.
\end{equation*}

In such a case, $\Lambda ^{C_{2}}\left( L\right) $ is purely transcendental
over $K$, i.e., $C_{2}$ is a Noether solution of $L/K$.

$\left( vii\right) $ If $m\geq 3$, then $\{1\}\subsetneqq C_{m}\subsetneqq
\Sigma _{m}$ and there is an inequality
\begin{equation*}
\mathfrak{ni}\left( \frac{\Lambda ^{C_{m}}\left( L\right) }{K}\right) \geq 1
\end{equation*}%
for the least nice number of the subfield $\Lambda ^{C_{m}}\left( L\right) $
over $K$.

In such a case, $\Lambda ^{C_{m}}\left( L\right) $ is not purely
transcendental over $K$, i.e., $C_{m}$ is not a Noether solution of $L/K$.
\end{lemma}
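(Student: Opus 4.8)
The plan is to settle $\left( i\right) $--$\left( iv\right) $ from the standard theory of a finite group of automorphisms of a field together with \emph{Lemma 7.13}, to reduce $\left( v\right) $--$\left( vi\right) $ to the identifications $C_{1}=\Sigma _{1}$ and $C_{2}=\Sigma _{2}$ and apply \emph{Lemma 7.14}, and to prove the substantive part $\left( vii\right) $ by confronting $C_{m}$ with the full permutation subgroup $\Sigma _{m}$ via \emph{Lemma 6.26}. For $\left( i\right) $: by \emph{Definition 7.7} the subgroup $C_{m}\subseteq Aut\left( L/K\right) $ is the image under the isomorphism $\ast $ of the cyclic subgroup of $\Sigma _{m}$ generated by the $m$-cycle $\left( 1,2,\cdots ,m\right) $, so $\sharp C_{m}=m$; since that $m$-cycle carries $t_{i}$ to $t_{i+1}$ with indices read modulo $m$, iterating it shows that for any $1\leq i<j\leq m$ a suitable power sends $t_{i}$ to $t_{j}$, i.e. $C_{m}$ acts transitively on $t_{1},\cdots ,t_{m}$. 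For $\left( ii\right) $: \emph{Lemma 7.13} gives $\Lambda ^{C_{m}}\left( L\right) =L^{C_{m}}$, and since $C_{m}$ is a finite group of field automorphisms of $L$, Artin's theorem yields that $L$ is algebraic Galois over $L^{C_{m}}$ with $Aut\left( L/L^{C_{m}}\right) =C_{m}$ and $\left[ L:L^{C_{m}}\right] =\sharp C_{m}=m$. For $\left( iii\right) $: by $\left( i\right) $ and $\left( ii\right) $ the group $C_{m}=Aut\left( L/\Lambda ^{C_{m}}\left( L\right) \right) $ acts transitively on $\{t_{1},\cdots ,t_{m}\}$, so these are pairwise $\Lambda ^{C_{m}}\left( L\right) $-conjugate. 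For $\left( iv\right) $: since $\sharp C_{m}=m$ equals the number of points $t_{1},\cdots ,t_{m}$, the stabiliser of $t_{1}$ in $C_{m}$ is trivial, whence the $C_{m}$-orbit of $t_{1}$ consists of the $m$ distinct elements $t_{1},\cdots ,t_{m}$; thus the minimal polynomial of $t_{1}$ over $\Lambda ^{C_{m}}\left( L\right) $ has degree $m=\left[ L:\Lambda ^{C_{m}}\left( L\right) \right] $, so $L=\Lambda ^{C_{m}}\left( L\right) \left( t_{1}\right) $ and the minimal polynomial is $\phi \left( X\right) =\prod_{1\leq i\leq m}\left( X-t_{i}\right) $, whose expansion into elementary symmetric functions is exactly the formula recorded in \emph{Remark 7.5}$\left( ii\right) $.

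For $\left( v\right) $, when $m=1$ the group $C_{1}=\Sigma _{1}$ is trivial, so $\Lambda ^{C_{1}}\left( L\right) =L=K\left( t_{1}\right) $ is purely transcendental over $K$ with nice basis $\left( \{t_{1}\},\{1\}\right) $, giving $\mathfrak{ni}\left( \Lambda ^{C_{1}}\left( L\right) /K\right) =0$. For $\left( vi\right) $, when $m=2$ we have $C_{2}=\Sigma _{2}$, and \emph{Lemma 7.14} shows $\Sigma _{2}$ is a Noether solution of $L/K$; hence $\Lambda ^{C_{2}}\left( L\right) =\Lambda ^{\Sigma _{2}}\left( L\right) $ is purely transcendental over $K$ and $\mathfrak{ni}\left( \Lambda ^{C_{2}}\left( L\right) /K\right) =0$.

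For $\left( vii\right) $, let $m\geq 3$. From $\sharp \{1\}=1<m=\sharp C_{m}<m!=\sharp \Sigma _{m}$ we obtain the proper inclusions $\{1\}\subsetneqq C_{m}\subsetneqq \Sigma _{m}$. Suppose, for contradiction, that $\Lambda ^{C_{m}}\left( L\right) $ is purely transcendental over $K$; then by $\left( ii\right) $ the subgroup $C_{m}$ is a Noether solution of $L/K$. By \emph{Lemma 7.14} the full permutation subgroup $\Sigma _{m}$ is also a Noether solution of $L/K$, and $C_{m}\subseteq \Sigma _{m}$. Applying \emph{Lemma 6.26} with $G=\Sigma _{m}$ and $P=C_{m}$ produces a direct decomposition $\Sigma _{m}=C_{m}\cdot Q=Q\cdot C_{m}$ in which both $C_{m}$ and $Q$ are normal subgroups of $\Sigma _{m}$, with $\sharp Q=\sharp \Sigma _{m}/\sharp C_{m}=\left( m-1\right) !$. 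But $Q$ is then a nontrivial normal subgroup of $\Sigma _{m}$ of order $\left( m-1\right) !$, which is impossible: for $m\neq 4$ every nontrivial normal subgroup of $\Sigma _{m}$ contains the unique minimal normal subgroup $A_{m}$ by \emph{Lemma 7.15}$\left( iv\right) $, so $\sharp Q\geq \sharp A_{m}=m!/2>\left( m-1\right) !$ (since $m\geq 3$), a contradiction; and for $m=4$ the only normal subgroups of $\Sigma _{4}$ are $\{1\},A_{4},\Sigma _{4}$ by \emph{Lemma 7.15}$\left( ii\right) $, none of which has order $\left( m-1\right) !=6$, again a contradiction. Hence $\Lambda ^{C_{m}}\left( L\right) $ is not purely transcendental over $K$, i.e. $C_{m}$ is not a Noether solution of $L/K$, and therefore $\mathfrak{ni}\left( \Lambda ^{C_{m}}\left( L\right) /K\right) \geq 1$ by \emph{Remark 7.2}$\left( i\right) $.

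The main obstacle is $\left( vii\right) $, and within it the case $m=3$ is the most delicate: there $C_{3}=A_{3}$ is itself normal in $\Sigma _{3}$, so the obstruction cannot be read off from $C_{m}$ alone (as it can for $m\geq 4$, where $C_{m}$ already fails to be normal in $\Sigma _{m}$), but must be extracted from the complementary factor $Q$ supplied by \emph{Lemma 6.26} --- one uses that $\Sigma _{3}$ has no normal subgroup of order $2$. A secondary point requiring care is in $\left( iv\right) $, where one must check by the orbit--stabiliser count that $t_{1}$ is a primitive element for $L/\Lambda ^{C_{m}}\left( L\right) $, this being what matches $\deg \phi \left( X\right) $ with $\left[ L:\Lambda ^{C_{m}}\left( L\right) \right] $; and one should confirm that the decomposition of \emph{Lemma 6.26} is indeed an internal direct product, so that the order count $\sharp Q=\left( m-1\right) !$ is legitimate.
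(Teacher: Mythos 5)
Your proof is correct and follows essentially the same route as the paper: parts $\left( i\right)$--$\left( vi\right)$ are the same routine reductions to \emph{Lemmas 7.13--14}, and for the key part $\left( vii\right)$ you, like the paper, assume $C_{m}$ is a Noether solution, invoke \emph{Lemma 6.26} with $G=\Sigma _{m}$, $P=C_{m}$ to produce a normal complement $Q$ with $\sharp Q=\left( m-1\right) !$, and contradict the classification of normal subgroups of $\Sigma _{m}$ from \emph{Lemma 7.15}. The only (harmless) difference is organizational: you treat $m\neq 4$ uniformly by bounding $\sharp Q$ from below via the minimal normal subgroup $A_{m}$, whereas the paper splits into the three cases $m\geq 5$, $m=4$, $m=3$; you also write out the orbit--stabiliser argument for $\left( iv\right)$, which the paper omits as trivial.
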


\begin{proof}
$\left( i\right) -\left( iii\right) $ Trivial from elementary calculations
on polynomials over a field.

$\left( v\right) -\left( vi\right) $ Immediately from the above $\left(
i\right) -\left( ii\right) $ of \emph{Lemma 7.14}.

$\left( vii\right) $ Let $m\geq 3$. It is seen that $C_{m}$ is a proper
subgroup of the full permutation subgroup $\Sigma _{m}$ of the Galois group $%
Aut\left( L/K\right) $ since there is at least one element, called \emph{%
transposition}, contained in the difference $\Sigma _{m}\setminus C_{m}$
(See basic facts on permutation subgroups in any standard textbook, for
instance, \emph{[}Suzuki 1982\emph{]}).

Hypothesize the subfield $\Lambda ^{C_{m}}\left( L\right) $ is a purely
transcendental extension over $K$. It is seen that the subgroup $%
C_{m}=Aut\left( L/\Lambda ^{C_{m}}\left( L\right) \right) $ is a Noether
solution of $L/K$ since $\Lambda ^{C_{m}}\left( L\right) =L^{C_{m}}$ is the $%
C_{m}$-invariant subfield of $L$ from the above $\left( ii\right) $.

By applying \emph{Theorem 6.4} to the case for the subgroup $C_{m}$ here, it
is seen that $C_{m}$ is Galois-complemented in $L/K$ and there is a unique
natural Galois-complement $H_{C_{m}}$ of $C_{m}$ in $L/K$. Put
\begin{equation*}
Q:=C_{m}\cap H_{C_{m}}.
\end{equation*}%
From \emph{Lemma 6.26} on the quotients of Noether solutions, we have
\begin{equation*}
\Sigma _{m}=C_{m}\cdot Q=Q\cdot C_{m}
\end{equation*}%
where $C_{m}$ and $Q$ are normal subgroups of $\Sigma _{m}$ for any $m\geq 3$%
.

Now consider the cyclic permutation group $C_{m}$ for $m\geq 3$. There are
three cases for $m\geq 3$.

\emph{Case }$\left( i\right) $ Assume $m\geq 5$. From $\left( iii\right) $
of \emph{Lemma 7.15}\ it is seen that $C_{m}$ and $Q$ both contain the
alternating subgroup $A_{m}$ of $\Sigma _{m}$; then
\begin{equation*}
\{1\}\subsetneqq A_{m}\subseteq C_{m}\cap Q\subseteq C_{m}\cap H_{C_{m}},
\end{equation*}%
which is in contradiction with the condition $\left( N1\right) $ in \emph{%
Definition 6.17} that
\begin{equation*}
C_{m}\cap H_{C_{m}}=\{1\}
\end{equation*}%
Holds.

\emph{Case }$\left( ii\right) $ If $m=4$, from $\left( ii\right) $ of \emph{%
Lemma 7.15}\ it is seen that $Q$ must be one of the normal subgroups $%
\{1\},A_{4},\Sigma _{4}$ in $\Sigma _{4}$; then the order $\sharp Q$ of $Q$
is one of the integers $1,12,24$, which is in contradiction with the fact
\begin{equation*}
\sharp Q=\frac{\sharp \Sigma _{4}}{\sharp C_{4}}=\frac{4!}{4}=6.
\end{equation*}

\emph{Case }$\left( iii\right) $ Suppose $m=3$. From $\left( iv\right) $ of
\emph{Lemma 7.15}\ it is seen that the alternating subgroup $A_{3}$ is
contained in the normal subgroup $Q$ of $\Sigma _{3}$; then for the order $%
\sharp Q$ of $Q$ we have
\begin{equation*}
\sharp Q\geq \sharp A_{3}=\frac{1}{2}3!=3,
\end{equation*}%
which is in contradiction with the fact
\begin{equation*}
\sharp Q=\frac{\sharp \Sigma _{3}}{\sharp C_{3}}=\frac{3!}{3}=2.
\end{equation*}

Hence, the subfield $\Lambda ^{C_{m}}\left( L\right) $ is not purely
transcendental over $K$, i.e., the least nice number of the extension $%
\Lambda ^{C_{m}}\left( L\right) $ over $K$ has a lower bound at least $1$.
This completes the proof.
\end{proof}

\begin{lemma}
\emph{(The alternating subgroup and Noether solution)} Let $A_{m}$ be the
alternating subgroup of $\Sigma _{m}\subseteq Aut\left( L/K\right) $
(relative to the variables $t_{1},t_{2},\cdots ,t_{m}$), i.e., $A_{m}$ is
induced from the alternating subgroup in the symmetric group on $m$ letters.
Here, $m\geq 1$.

Consider the set $\Lambda ^{A_{m}}\left( L\right) $ of $A_{m}$-symmetric
functions (relative to $t_{1},t_{2},\cdots ,t_{m}$) in the function field $L$%
. There are the following statements.

$\left( i\right) $ If $1\leq m\not=2$, then $A_{m}$ has a transitive action
on $t_{1},t_{2},\cdots ,t_{m}$.

$\left( ii\right) $ $L$ is algebraic Galois over the subfield $\Lambda
^{A_{m}}\left( L\right) $. In particular, we have%
\begin{equation*}
\Lambda ^{A_{m}}\left( L\right) =L^{A_{m}};
\end{equation*}%
\begin{equation*}
A_{m}=Aut\left( L/\Lambda ^{A_{m}}\left( L\right) \right) ;
\end{equation*}%
\begin{equation*}
\left[ L:\Lambda ^{A_{m}}\left( L\right) \right] =\sharp A_{m}=\frac{1}{2}m!.
\end{equation*}

$\left( iii\right) $ If $m=1$, then $A_{m}=\Sigma _{1}$ and $\Lambda
^{A_{m}}\left( L\right) =L$ is purely transcendental over $K$; $A_{1}$ is a
Noether solution of $L/K$.

$\left( iv\right) $ If $m=2$, then $A_{m}=\{1\}$ and $\Lambda ^{A_{m}}\left(
L\right) =L$ is purely transcendental over $K$, i.e., $A_{2}$ is a Noether
solution of $L/K$.

$\left( v\right) $ If $m\geq 3$, then $\{1\}\subsetneqq A_{m}\subsetneqq
\Sigma _{m}$ and there is an inequality
\begin{equation*}
\mathfrak{ni}\left( \frac{\Lambda ^{A_{m}}\left( L\right) }{K}\right) \geq 1
\end{equation*}%
for the least nice number of the subfield $\Lambda ^{A_{m}}\left( L\right) $
over $K$.

In such a case, $\Lambda ^{A_{m}}\left( L\right) $ is not purely
transcendental over $K$, i.e., $A_{m}$ is not a Noether solution of $L/K$.
\end{lemma}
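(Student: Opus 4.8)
The plan is to follow the template of the proof of \emph{Lemma 7.17}, with the alternating subgroup in the role of the cyclic one, and to reduce everything to the group theory of $\Sigma_{m}$ recorded in \emph{Lemmas 7.14--15} together with the transcendental input of \emph{Lemma 6.26}. First I would dispose of parts $\left(i\right)$--$\left(iv\right)$, which are essentially classical. For $\left(i\right)$: when $m\geq 3$ the group $A_{m}$ is generated by $3$-cycles (see $\left(i\right)$ of \emph{Lemma 7.15}), and given $1\leq i<j\leq m$ one picks any third index $k$ and uses $\left(i\,j\,k\right)\in A_{m}$, which sends $t_{i}$ to $t_{j}$; when $m=1$ transitivity is vacuous. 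For $\left(ii\right)$: by \emph{Lemma 7.13}, $\Lambda^{A_{m}}\left(L\right)=L^{A_{m}}$, and since $A_{m}$ is a finite subgroup of $Aut\left(L/K\right)$, the standard (Artin) theory of algebraic Galois extensions — applicable because, by \emph{Lemma 7.14}, $L$ is finite over $L^{\Sigma_{m}}\subseteq L^{A_{m}}$ — yields that $L$ is finite algebraic Galois over $L^{A_{m}}$ with $Aut\left(L/L^{A_{m}}\right)=A_{m}$ and $\left[L:L^{A_{m}}\right]=\sharp A_{m}=\frac{1}{2}m!$. Parts $\left(iii\right)$ and $\left(iv\right)$ are then immediate: for $m\in\{1,2\}$ one has $A_{m}=\{1\}$, so $\Lambda^{A_{m}}\left(L\right)=L^{\{1\}}=L$ is purely transcendental over $K$ and $A_{m}$ is trivially a Noether solution of $L/K$.

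The substantive part is $\left(v\right)$, proved by contradiction in the manner of \emph{Cases} $\left(i\right)$--$\left(iii\right)$ of \emph{Lemma 7.17}. Suppose $m\geq 3$ and $\Lambda^{A_{m}}\left(L\right)$ were purely transcendental over $K$. By $\left(ii\right)$, $L$ is algebraic Galois over $L^{A_{m}}=\Lambda^{A_{m}}\left(L\right)$, so $A_{m}$ is a Noether solution of $L/K$; by \emph{Theorem 6.4} (and \emph{Definition 6.17}) it has a unique natural Galois-complement $H_{A_{m}}$ in $L/K$, so in particular $A_{m}\cap H_{A_{m}}=\{1\}$. Now apply \emph{Lemma 6.26} to the pair of Noether solutions $A_{m}\subseteq \Sigma_{m}$, where $\Sigma_{m}$ is a Noether solution by $\left(ii\right)$ of \emph{Lemma 7.14}: it gives $\Sigma_{m}=A_{m}\cdot Q=Q\cdot A_{m}$ with $Q:=\Sigma_{m}\cap H_{A_{m}}$, where $A_{m}$ and $Q$ are both normal in $\Sigma_{m}$. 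Since $Q\subseteq H_{A_{m}}$ forces $A_{m}\cap Q=\{1\}$, the product is direct, so $\sharp Q=\sharp\Sigma_{m}/\sharp A_{m}=m!/(\frac{1}{2}m!)=2$.

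Thus $Q$ must be a normal subgroup of $\Sigma_{m}$ of order $2$, which I would rule out exactly as in \emph{Lemma 7.17} using \emph{Lemma 7.15}: for $m=3$ the normal subgroups of $\Sigma_{3}$ are $\{1\},A_{3},\Sigma_{3}$ of orders $1,3,6$; for $m=4$ they are $\{1\},A_{4},\Sigma_{4}$ of orders $1,12,24$ by $\left(ii\right)$ of \emph{Lemma 7.15}; and for $m\geq 5$ every nontrivial normal subgroup contains $A_{m}$ by $\left(iii\right)$ of \emph{Lemma 7.15}, hence has order at least $\frac{1}{2}m!>2$ — so in no case does a normal subgroup of order $2$ exist. (Uniformly: a normal subgroup of order $2$ is central, and $Z\left(\Sigma_{m}\right)=\{1\}$ for $m\geq 3$.) This contradiction shows $\Lambda^{A_{m}}\left(L\right)$ is not purely transcendental over $K$; since it is finitely generated over $K$ — being finite over the purely transcendental field $L^{\Sigma_{m}}$ — its least nice number satisfies $\mathfrak{ni}\left(\Lambda^{A_{m}}\left(L\right)/K\right)\geq 1$, i.e., $A_{m}$ is not a Noether solution of $L/K$. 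The assertion $\{1\}\subsetneqq A_{m}\subsetneqq\Sigma_{m}$ is clear from $\sharp A_{m}=\frac{1}{2}m!$ with $m\geq 3$.

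The only nonclassical ingredient — and hence the step to watch — is the use of \emph{Lemma 6.26}: it is there that transcendental Galois theory does the real work, namely that for a \emph{purely transcendental} $L/K$ an inclusion of Noether solutions descends to an algebraic Galois sub-extension of the invariant fields and splits off a normal complement, a phenomenon which, as \emph{Remark 6.25} emphasises, fails for algebraic extensions. Everything else — transitivity of $A_{m}$, the degree computation in $\left(ii\right)$, and the non-existence of an order-$2$ normal subgroup of $\Sigma_{m}$ — is standard finite group theory already supplied by \emph{Lemmas 7.14--15}, so no genuinely new calculation is required beyond transcribing the $\left(vii\right)$-argument of \emph{Lemma 7.17} with $A_{m}$ in place of $C_{m}$.
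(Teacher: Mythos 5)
Your proof is correct and, for the substantive part $\left(v\right)$, follows exactly the paper's route: hypothesise $A_{m}$ is a Noether solution, invoke \emph{Lemma 6.26} to produce the normal complement $Q=\Sigma_{m}\cap H_{A_{m}}$ with $\Sigma_{m}=A_{m}\cdot Q$ and $\sharp Q=2$, and contradict this with the normal-subgroup structure of $\Sigma_{m}$ from \emph{Lemma 7.15}; parts $\left(ii\right)$--$\left(iv\right)$ are likewise handled as in the paper. The only (harmless) divergence is part $\left(i\right)$, where you get transitivity directly from $3$-cycles rather than the paper's mod-$4$ case analysis with linear involutions — a simpler argument reaching the same conclusion.
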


\begin{proof}
$\left( i\right) $ Assume $1\leq m\not=2$. We proceed in two cases.

\emph{Case} $\left( i\right) $ Let $m=0$ or $1$ $\left(\text{mod } 4\right) $%
. Consider the natural linear involution $\sigma _{m}$ of $L/K$ (relative to
$t_{1},t_{2},\cdots ,t_{m}$) (See \emph{Definition 8.1}). We have
\begin{equation*}
\mathfrak{sgn}\left( \sigma _{m}\right) =\frac{1}{2}m\left( m-1\right)
\end{equation*}%
for the sign of the $m$-permutation $\sigma _{m}\in \Sigma _{m}$. We have $%
\sigma _{m}\in A_{m}$ and the single $\sigma _{m}$ has a transitive action
on $t_{1},t_{2},\cdots ,t_{m}$. So does $A_{m}$.

\emph{Case} $\left( ii\right) $ Let $m=2$ or $3$ $\left(\text{mod } 4\right)
$. By taking the $3$-cycle $\left( m-2,m-1,m\right) $ in $\Sigma _{m}$ and
the natural linear involution $\sigma _{m-1}$ of $K\left( t_{1},t_{2},\cdots
,t_{m-1}\right) $ over (relative to $t_{1},t_{2},\cdots ,t_{m-1}$) with $%
\sigma _{m-1}$ fixing $t_{m}$, from \emph{Case} $\left( i\right) $\ it is
seen that $A_{m}$ has a transitive action on $t_{1},t_{2},\cdots ,t_{m}$.

$\left( ii\right) $ Immediately from \emph{Lemma 7.13}.

$\left( iii\right) -\left( iv\right) $ Trivial.

$\left( v\right) $ Let $m\geq 3$. Hypothesise $A_{m}$ is a Noether solution
of $L/K$. From \emph{Lemma 6.26} we have a subgroup $Q=\Sigma _{m}\cap
H_{A_{m}}$ such that%
\begin{equation*}
\Sigma _{m}=A_{m}\cdot Q=Q\cdot A_{m}
\end{equation*}%
hold, where $H_{A_{m}}$ is the natural Galois-complement of $A_{m}$ in $L/K$
and $Q$ is also a normal subgroup in $\Sigma _{m}$ of order%
\begin{equation*}
\sharp Q=\frac{\sharp \Sigma _{m}}{\sharp A_{m}}=2.
\end{equation*}

On the other hand, if $m=4$, then rom $\left( iii\right) $ of \emph{Lemma
7.15} we have
\begin{equation*}
\sharp Q\in \{1,12,24\},
\end{equation*}%
which is in contradiction.

If $m=3$ or $m\geq 5$, then rom $\left( iv\right) $ of \emph{Lemma 7.15} we
have
\begin{equation*}
\sharp Q\geq \sharp A_{m}=\frac{1}{2}m!\geq 3,
\end{equation*}%
which is in contradiction.

Therefore, the subfield $\Lambda ^{A_{m}}\left( L\right) $ is not purely
transcendental over $K$, i.e., the least nice number of the extension $%
\Lambda ^{A_{m}}\left( L\right) $ over $K$ has a lower bound at least $1$.
This completes the proof.
\end{proof}

\begin{lemma}
\emph{(Any permutation subgroup and Noether solution)} Suppose
\begin{equation*}
\{1\}\subseteq G\subseteq \Sigma _{m}
\end{equation*}%
is a permutation subgroup in $Aut\left( L/K\right) $ (relative to $%
t_{1},t_{2},\cdots ,t_{m}$). Here, $m\geq 1$.

Consider the set $\Lambda ^{G}\left( L\right) $ of $G$-symmetric functions
(relative to $t_{1},t_{2},\cdots ,t_{m}$) in the function field $L$. There
are the following statements.

$\left( i\right) $ $L$ is algebraic Galois over the subfield $\Lambda
^{G}\left( L\right) =L^{G}$ of degree
\begin{equation*}
\left[ L:\Lambda ^{G}\left( L\right) \right] =\sharp G.
\end{equation*}

$\left( ii\right) $ If $m\geq 3$ and $\{1\}\subsetneqq G\subsetneqq \Sigma
_{m}$, then the least nice number of the subextension $\Lambda ^{G}\left(
L\right) $ over $K$ satisfies an inequality
\begin{equation*}
\mathfrak{ni}\left( \frac{\Lambda ^{G}\left( L\right) }{K}\right) \geq 1.
\end{equation*}%
In such a case, $\Lambda ^{G}\left( L\right) $ is not purely transcendental
over $K$, i.e., $G$ is not a Noether solution of $L/K$.

$\left( iii\right) $ Suppose $m\geq 1$. Then $\Lambda ^{G}\left( L\right) $
is purely transcendental over $K$ if and only if $G=\Sigma _{m}$ holds. In
such a case, $G$ is a Noether solution of $L/K$.
\end{lemma}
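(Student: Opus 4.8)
The plan is to prove the three parts in turn, relying on: the identification of $G$-symmetric functions with the $G$-invariant subfield (\emph{Lemma 7.13}), standard algebraic Galois theory, the fact that the full permutation subgroup $\Sigma_{m}$ is a Noether solution of $L/K$ (\emph{Lemma 7.14}), the quotient behaviour of nested Noether solutions (\emph{Lemma 6.26}), and the classification of normal subgroups of the symmetric group (\emph{Lemma 7.15}). For part $(i)$: by \emph{Lemma 7.13} the subfield $\Lambda^{G}(L)$ of $G$-symmetric functions coincides with the $G$-invariant subfield $L^{G}$; since $G\subseteq\Sigma_{m}$ is a finite group of $K$-automorphisms of $L$, standard algebraic Galois theory (Artin's theorem) shows that $L$ is a finite Galois extension of $L^{G}$ with $Aut(L/L^{G})=G$ and $[L:L^{G}]=\sharp G$. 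This step needs no new idea.

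For part $(ii)$, assume $m\geq 3$ and $\{1\}\subsetneqq G\subsetneqq\Sigma_{m}$, and argue by contradiction: suppose $\Lambda^{G}(L)=L^{G}$ were purely transcendental over $K$. Since $L$ is algebraic over $L^{G}$ by $(i)$, this would make $G$ a Noether solution of $L/K$. As $\Sigma_{m}$ is also a Noether solution of $L/K$ by \emph{Lemma 7.14}$(ii)$ and $G\subseteq\Sigma_{m}$, \emph{Lemma 6.26} (applied with $G$ playing the role of its ``$P$'' and $\Sigma_{m}$ that of its ``$G$'') gives a direct product decomposition
\begin{equation*}
\Sigma_{m}=G\cdot Q=Q\cdot G,\qquad Q:=\Sigma_{m}\cap H_{G},
\end{equation*}
where $H_{G}$ is the natural Galois-complement of $G$ in $L/K$, and where both $G$ and $Q$ are normal subgroups of $\Sigma_{m}$ (as in Step 3 of the proof of \emph{Lemma 6.26}). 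Condition $(N1)$ from \emph{Definition 6.17} forces $G\cap H_{G}=\{1\}$, hence $G\cap Q=\{1\}$ and $\sharp G\cdot\sharp Q=m!$; in particular $Q$ is a proper nontrivial normal subgroup of $\Sigma_{m}$.

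The contradiction is then extracted by a case split on $m$, exactly as in the proofs of \emph{Lemmas 7.16} and \emph{7.17}. If $m\geq 5$, then $A_{m}\subseteq G$ and $A_{m}\subseteq Q$ by \emph{Lemma 7.15}$(iii)$, so $A_{m}\subseteq G\cap Q=\{1\}$, which is impossible since $\sharp A_{m}=m!/2\geq 60$. If $m=4$ or $m=3$, then $G=Q=A_{m}$ by \emph{Lemma 7.15}$(ii)$ and $(iv)$ respectively (the unique proper nontrivial normal subgroup of $\Sigma_{m}$ in these cases being $A_{m}$), whence $\Sigma_{m}=G\cdot Q=A_{m}$, again impossible. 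Therefore $\Lambda^{G}(L)$ is not purely transcendental over $K$, so by \emph{Definition 7.1} its least nice number satisfies $\mathfrak{ni}(\Lambda^{G}(L)/K)\geq 1$, and in particular $G$ is not a Noether solution of $L/K$.

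For part $(iii)$: if $G=\Sigma_{m}$, then $\Lambda^{\Sigma_{m}}(L)$ is purely transcendental over $K$ by \emph{Lemma 7.14}$(i)$. Conversely, if $\Lambda^{G}(L)$ is purely transcendental over $K$ and $m\geq 3$, part $(ii)$ rules out every $G$ with $\{1\}\subsetneqq G\subsetneqq\Sigma_{m}$, leaving $G=\Sigma_{m}$; for $m\leq 2$ the claim is settled by inspection (for $m=1$, $\Sigma_{1}=\{1\}$; for $m=2$, $\Sigma_{2}$ has only the two subgroups $\{1\}$ and $\Sigma_{2}$). I expect the substance of the argument to lie entirely in $(ii)$, and the single delicate point there to be the group-theoretic bookkeeping: reading off from \emph{Lemma 6.26} that $G$ and $Q$ are simultaneously normal in $\Sigma_{m}$ with $\sharp G\cdot\sharp Q=m!$, and then confronting this identity with the short list of normal subgroups of $\Sigma_{m}$ in \emph{Lemma 7.15}. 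Everything else transcribes arguments already carried out for the cyclic and alternating subgroups.
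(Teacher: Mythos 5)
Your proof follows essentially the same route as the paper's: part $(i)$ via Artin's theorem together with \emph{Lemma 7.13}, part $(ii)$ by the identical contradiction argument combining \emph{Lemma 6.26} with the classification of normal subgroups of $\Sigma_{m}$ in \emph{Lemma 7.15} (the paper extracts the contradiction from the order identity $\sharp G\cdot\sharp Q=m!$ where you use the triviality of $G\cap Q$, but the two are interchangeable), and part $(iii)$ by reduction to $(ii)$ and \emph{Lemma 7.14}. One caveat you share with the paper: the ``only if'' direction of $(iii)$ is not actually covered for $G=\{1\}$ with $m\geq 2$, where $\Lambda^{\{1\}}\left( L\right) =L$ is purely transcendental over $K$ yet $\{1\}\not=\Sigma_{m}$; your ``settled by inspection'' for $m=2$ passes over this exactly as the paper's proof does.
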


\begin{proof}
$\left( i\right) $ We have $\left[ L:\Lambda ^{G}\left( L\right) \right] =m!$
from \emph{Lemmas 7.5-6} and then $L$ is algebraic Galois over $\Lambda
^{G}\left( L\right) $ since $\sharp G=m!$.

$\left( ii\right) $ Let $m\geq 3$ and $\{1\}\subsetneqq G\subsetneqq \Sigma
_{m}$. Hypothesise $G$ is a Noether solution of $L/K$. From \emph{Lemma 6.26}
we have a subgroup $Q=\Sigma _{m}\cap H_{G}$ such that%
\begin{equation*}
\Sigma _{m}=G\cdot Q=Q\cdot G
\end{equation*}%
hold. Here, $H_{G}$ denotes the natural Galois-complement of $G$ in $L/K$;
the subgroups $G$ and $Q$ both are normal subgroups in $\Sigma _{m}$; for
the orders of the finite subgroups we have%
\begin{equation*}
\sharp \Sigma _{m}=\sharp G\cdot \sharp Q.
\end{equation*}

On the other hand, if $m=4$, then rom $\left( iii\right) $ of \emph{Lemma
7.15} we have
\begin{equation*}
\sharp G,\sharp Q\in \{1,12,24\}.
\end{equation*}%
Then
\begin{equation*}
\{1\}\subsetneqq Q\subsetneqq \Sigma _{m}
\end{equation*}%
since%
\begin{equation*}
\{1\}\subsetneqq G\subsetneqq \Sigma _{m}.
\end{equation*}

It follows that we have
\begin{equation*}
\sharp G=\sharp Q=12,
\end{equation*}%
where there is a contradiction that
\begin{equation*}
24=\sharp \Sigma _{4}=\sharp G\cdot \sharp Q=12\cdot 12.
\end{equation*}

If $m=3$ or $m\geq 5$, then rom $\left( iv\right) $ of \emph{Lemma 7.15} we
have
\begin{equation*}
\sharp Q\geq \sharp A_{m}=\frac{1}{2}m!\geq 3,
\end{equation*}%
where there is a contradiction that
\begin{equation*}
m!=\sharp \Sigma _{m}=\sharp G\cdot \sharp Q=\frac{1}{2}m!\cdot \frac{1}{2}%
m!.
\end{equation*}

This proves that the subfield $\Lambda ^{G}\left( L\right) $ is not purely
transcendental over $K$, i.e., $G$ is not a Noether solution of $L/K$.

$\left( iii\right) $ Immediately from the above $\left( ii\right) $ for the
case that $m\geq 3$ and from \emph{Lemma 7.14} for the case that $m=1$ or $2$%
. This completes the proof.
\end{proof}

\subsection{Proof of Theorem 7.12}

Now we give the proof for the main theorem in the present section.

\begin{proof}
(\textbf{Proof of Theorem 7.12})

$\left( i\right) $ Let $L=K\left( t_{1},t_{2},\cdots ,t_{m}\right) $ and let
$G$ be a permutation subgroup of the Galois group $Aut\left( L/K\right) $ on
$m$ letters (relative to the variables $t_{1},t_{2},\cdots ,t_{m}$). It is
seen that the function field $L$ is an algebraic Galois extension over $K$
of degree $\sharp G$ from \emph{Lemmas 7.13} \& \emph{7.18}.

$\left( ii\right) -\left( iii\right) $ Immediately from \emph{Lemmas 7.14}
\& \emph{7.18}.
\end{proof}

\subsection{Concluding remark}

There are the following concluding remarks for the present section.

\begin{remark}
Let $m\geq 3$. Consider the function field $L=K\left( t_{1},t_{2},\cdots
,t_{m}\right) $. Suppose $C_{m}\subseteq \Sigma _{m}$ is the cyclic subgroup
of order $m$. Then $C_{m}$ has a transitive action on the variables $%
t_{1},t_{2},\cdots ,t_{m}$. The above \emph{Lemma 7.16} says that the $C_{m}$%
-invariant subfield $L^{C_{m}}$ is not purely transcendental over $K$, which
is viewed as a generalisation of the main theorem in \emph{[}Swan 1969\emph{]%
}.
\end{remark}

\begin{remark}
(\emph{Question on least nice numbers}) Continue to consider the function
field $L=K\left( t_{1},t_{2},\cdots ,t_{m}\right) $. Let $A_{m}\subseteq
\Sigma _{m}$ and $C_{m}\subseteq \Sigma _{m}$ be the alternating subgroup
and the cyclic subgroup of order $m$, respectively. Then for what $r,s\in
\mathbb{N}$ we have
\begin{equation*}
\mathfrak{ni}\left( \frac{L^{A_{m}}}{K}\right) =r\geq 1
\end{equation*}%
and
\begin{equation*}
\mathfrak{ni}\left( \frac{L^{C_{m}}}{K}\right) =s\geq 1
\end{equation*}%
for the least nice numbers of the subextensions in $L/K$? This would give to
us another aspect for Noether's problem on rationality.
\end{remark}

Unfortunately, we can not find the two numbers $r$ and $s$.

\section{Lifting of Isomorphisms and Noether Solutions}

In this section we will discuss the linear involutions in a purely
transcendental extension of a field and the normalised lifting of
isomorphisms of the subfields with application to spatial isomorphisms,
where new particular cases of Noether's problem will be obtained. $K$-theory has a natural connection to Noether solutions.

\subsection{Linear involutions of a purely transcendental extension}

We need these facts on linear involutions of  a purely
transcendental extension for us to prove \emph{Theorem 1.8} which is stated in \emph{\S 1}.

Let $L=K\left( t_{1},t_{2},\cdots ,t_{n}\right) $ be a purely transcendental
extension over a field $K$ of transcendence degree $n\geq 1$.

\begin{definition}
A \textbf{linear involution} of $L$ over $K$ (relative to the variables $%
t_{1},t_{2},\cdots ,t_{n}$) is a map $\sigma \in Aut\left( L/K\right) $ of
\emph{homogeneous linear type} (See \emph{Remark 1.19} for details)
satisfying the property:
\begin{equation*}
\sigma :\left(
\begin{array}{c}
t_{\widetilde{\sigma }\left( 1\right) } \\
t_{\widetilde{\sigma }\left( 2\right) } \\
\vdots \\
t_{\widetilde{\sigma }\left( n\right) }%
\end{array}%
\right) \longmapsto A\cdot \left(
\begin{array}{c}
t_{\widetilde{\sigma }\left( 1\right) } \\
t_{\widetilde{\sigma }\left( 2\right) } \\
\vdots \\
t_{\widetilde{\sigma }\left( n\right) }%
\end{array}%
\right)
\end{equation*}%
where
\begin{equation*}
A=\left(
\begin{array}{cccc}
&  &  & 1 \\
&  & 1 &  \\
& \cdots &  &  \\
1 &  &  &
\end{array}%
\right)
\end{equation*}%
is the $n\times n$ matrix and $\widetilde{\sigma }$ is a given $n$%
-permutation on $\mathcal{N}:=\{1,2,\cdots ,n\}$.

In particular, the \textbf{natural involution} of $L$ over $K$ (relative to
the variables $t_{1},t_{2},\cdots ,t_{n}$) is the linear involution $\sigma $
of $L$ over $K$ such that $\widetilde{\sigma }=id_{\mathcal{N}}$ is the
identity map on $\mathcal{N}$.
\end{definition}

\begin{remark}
Let $L=K\left( t_{1},t_{2},\cdots ,t_{n}\right) $ be a purely transcendental
extension over a field $K$.

$\left( i\right) $ There are infinitely many linear involutions of $L$ over $%
K$. In deed, relative to the variables $t_{1},t_{2},\cdots ,t_{n}$, there
are exactly $n!$ linear involutions of $L/K$. On the other hand, for any $%
c_{1},c_{2},\cdots ,c_{n}\in K$ we have
\begin{equation*}
L=K\left( t_{1},t_{2},\cdots ,t_{n}\right) =K\left(
t_{1}+c_{1},t_{2}+c_{2},\cdots ,t_{n}+c_{n}\right) ;
\end{equation*}
also there are $n!$ linear involutions of $L/K$ relative to the variables
\begin{equation*}
t_{1}+c_{1},t_{2}+c_{2},\cdots ,t_{n}+c_{n}.
\end{equation*}

$\left( ii\right) $ Consider the set $\Omega _{ver}$ of vertical
transcendence bases of $L/K$ relative to the variables $t_{1},t_{2},\cdots
,t_{n}$. There are automorphisms of the corresponding sub-extensions of $L/K$
which are of monomial type (See \emph{Remark 1.19} for definition) relative
to $t_{1},t_{2},\cdots ,t_{n}$. By \emph{Lemma 2.5} it is seen that these
automorphisms can be extended to $L/K$ and there are the same properties as
linear involutions of $L/K$.
\end{remark}

\begin{proposition}
\emph{(Properties for linear involutions)} Let $\sigma $ be a linear
involution of $L$ over $K$ (relative to the variables $t_{1},t_{2},\cdots
,t_{n}$). Let $I_{\sigma }$ denote the subgroup in the Galois group $%
Aut\left( L/K\right) $ generated by $\sigma $. There are the following
statements.

$\left( i\right) $ $I_{\sigma }$ is a subgroup of order $2$.

$\left( ii\right) $ If $n\geq 3$, then $I_{\sigma }$ is one of the smallest
subgroups in $Aut\left( L/K\right) $ which have a transitive action on the
variables $t_{1},t_{2},\cdots ,t_{n}$.

$\left( iii\right) $ If $n\geq 3$, then the $I_{\sigma }$-invariant subfield
$L^{\sigma }$ of $L$ is not purely transcendental over $K$.
\end{proposition}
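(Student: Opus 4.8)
The plan is to exploit the fact that, by \emph{Definition 8.1}, the map $\sigma $ is simply the $K$-automorphism of $L$ that reverses the ordered tuple $\left( t_{\widetilde{\sigma }\left( 1\right) },\ldots ,t_{\widetilde{\sigma }\left( n\right) }\right) $, i.e.\ $\sigma \left( t_{\widetilde{\sigma }\left( i\right) }\right) =t_{\widetilde{\sigma }\left( n+1-i\right) }$ for $1\leq i\leq n$; in particular $\sigma $ merely permutes the variables $t_{1},\ldots ,t_{n}$ and hence lies in the full permutation subgroup $\Sigma _{n}\subseteq Aut\left( L/K\right) $ of \emph{Definition 7.7}. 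This one observation reduces all three parts to elementary group theory together with the $G$-symmetric-function results of \emph{\S 7}. For $\left( i\right) $, first note that the anti-diagonal matrix $A$ satisfies $A^{2}=I_{n}$, so $\sigma ^{2}=\mathrm{id}$; and $\sigma \neq \mathrm{id}$ as soon as $n\geq 2$, because $\sigma $ interchanges the distinct variables $t_{\widetilde{\sigma }\left( 1\right) }$ and $t_{\widetilde{\sigma }\left( n\right) }$. Hence $I_{\sigma }=\left\langle \sigma \right\rangle $ is cyclic of order exactly $2$.

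For $\left( ii\right) $ and $\left( iii\right) $ I would proceed as follows. Because $\sigma $ permutes $t_{1},\ldots ,t_{n}$, the subgroup $I_{\sigma }$ is a permutation subgroup of $\Sigma _{n}$ in the sense of \emph{Definition 7.7}; moreover, for $n\geq 3$ one has $\left\{ 1\right\} \subsetneqq I_{\sigma }$ (since $\sigma \neq 1$) and $I_{\sigma }\subsetneqq \Sigma _{n}$ (since $\sharp I_{\sigma }=2<6\leq n!=\sharp \Sigma _{n}$). Feeding $G=I_{\sigma }$ and $m=n$ into \emph{Lemma 7.18}$\left( ii\right) $ then gives $\mathfrak{ni}\left( \frac{\Lambda ^{I_{\sigma }}\left( L\right) }{K}\right) \geq 1$, while \emph{Lemma 7.13} identifies $\Lambda ^{I_{\sigma }}\left( L\right) $ with the $I_{\sigma }$-invariant subfield $L^{I_{\sigma }}=L^{\sigma }$; together these yield $\left( iii\right) $, namely that $L^{\sigma }$ is not purely transcendental over $K$. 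For $\left( ii\right) $, $I_{\sigma }$ has a transitive action on $t_{1},\ldots ,t_{n}$ by construction (already recorded in \emph{Example 1.9}$\left( ii\right) $), whereas the only proper subgroup of $I_{\sigma }$, namely $\left\{ 1\right\} $, moves no variable and so fails to be transitive once $n\geq 2$; hence $I_{\sigma }$ is minimal among subgroups of $Aut\left( L/K\right) $ acting transitively on the variables.

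The genuine content of $\left( iii\right) $ is thus already packaged inside \emph{Lemma 7.18}, whose proof in turn rests on \emph{Lemma 6.26} (Noether solutions and Galois preservation under quotients) and on the description of the normal subgroups of $\Sigma _{n}$ in \emph{Lemma 7.15}; so the hardest ingredient has been secured before we reach this statement. The only delicate checks that remain are the bookkeeping ones: verifying that $\sigma $ really is a variable-permuting automorphism, so that $I_{\sigma }\subseteq \Sigma _{n}$, and that the strict inclusions $\left\{ 1\right\} \subsetneqq I_{\sigma }\subsetneqq \Sigma _{n}$ do hold — and it is exactly here that the hypothesis $n\geq 3$ is indispensable, since for $n\leq 2$ one has $I_{\sigma }=\Sigma _{n}$ and the conclusion of $\left( iii\right) $ genuinely fails. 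No further estimate or construction is needed, so I expect the write-up to be short once \emph{Lemmas 7.13} and \emph{7.18} are invoked.
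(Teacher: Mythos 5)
Your proposal is correct and follows essentially the same route as the paper: part $\left( iii\right) $ is reduced, via the strict inclusions $\{1\}\subsetneqq I_{\sigma }\subsetneqq \Sigma _{n}$ for $n\geq 3$, to \emph{Lemma 7.18} (with \emph{Lemma 7.13} identifying $\Lambda ^{I_{\sigma }}\left( L\right) $ with $L^{I_{\sigma }}$), which is exactly the paper's argument, while $\left( i\right) $ and $\left( ii\right) $ are the same elementary observations the paper dismisses as trivial. The only difference is that you spell out the bookkeeping ($A^{2}=I_{n}$, $\sigma \neq \mathrm{id}$ for $n\geq 2$, membership in $\Sigma _{n}$) that the paper leaves implicit.
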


\begin{proof}
$\left( i\right) $ Trivial.

$\left( ii\right) $ Immediately from $\left( i\right) $.

$\left( iii\right) $ For any $n\geq 3$, we have $\{1\}\not=I_{\sigma
}\subsetneqq \Sigma _{n}$. From \emph{Lemma 7.18} it is seen that the $%
I_{\sigma }$-invariant subfield $L^{\sigma }$ is not purely transcendental
over $K$.
\end{proof}

From \emph{Theorem 7.12}, \emph{Lemma 7.16}\ and the above \emph{Proposition
8.3}, there is the following remark on the comparison between algebraic
Galois theory and transcendental Galois theory.

\begin{remark}
(\emph{The third characteristic of transcendental Galois theory: Patterns
producing roots}) Let $L=K\left( t_{1},t_{2},\cdots ,t_{n}\right) $ be a
purely transcendental extension over a field $K$. Let $\Sigma _{n}\subseteq
Aut\left( L/K\right) $ denote the full permutation subgroup of $L/K$
(relative to the variables $t_{1},t_{2},\cdots ,t_{n}$).

Fixed any subgroup $G\subseteq \Sigma _{n}$. For instance, $G$ is a subgroup
in $Aut\left( L/K\right) $ generated by a linear involution of $L/K$.

$\left( i\right) $ The $G$-invariant subfield $L^{G}$ of $L$ is the $G$%
-symmetric functions $\Lambda ^{G}\left( L\right) $ of $L$; the function
field $L$ is algebraic Galois over the $G$-invariant subfield $L^{G}$. As $%
L=L^{G}\left[ \theta \right] $ is a simple extension over the subfield $%
L^{G} $ for some $\theta \in L$, there is a minimal polynomial $f\left(
X\right) $ of $\theta $ over the subfield $L^{G}$ such that $f\left( \theta
\right) =0$.

$\left( ii\right) $ Each permutation subgroup $G$ of $\Sigma _{n}$
establishes a \textbf{pattern} for the variables $t_{1},t_{2},\cdots ,t_{n}$
during the mapping and translating in the extension $L/K$. So, \emph{%
patterns }$G$\emph{\ produce roots }$\theta $. This gives to us the third
characteristic of transcendental Galois theory, which is distinct from
algebraic Galois theory.

$\left( iii\right) $ The unusual complexities of Galois groups $H$ in
transcendental Galois theory are due to the entanglements of the permutation
subgroups $G$ (relative to the given variables) with the usual Galois groups
$P$ in algebraic Galois theory. For the \emph{separated cases} that $%
H=G\cdot P$, there are some results on new particular cases of Noether's
problem, which will be discussed in the next subsection \emph{\S 8.3}.
\end{remark}

\begin{example}
Let $t_{1},t_{2},t_{3,}t_{4},t_{5}$ be algebraically independent variables
over $\mathbb{Q}$ and $L=\mathbb{Q}\left(
t_{1},t_{2},t_{3,}t_{4},t_{5}\right) $. Let $G\subseteq Aut\left( L/\mathbb{Q%
}\right) $ be a subgroup of order $2$. Consider the following two cases for
the subgroup $G$.

\emph{Case }$\left( i\right) $. Assume $G$ is of \emph{horizontal type} (See
\emph{Remark 1.19} for details), i.e., $G$ is a subgroup of the full
permutation group $\Sigma _{5}$ (relative to the variables $%
t_{1},t_{2},t_{3,}t_{4},t_{5}$), for instance, $G$ is generated by a linear
involution of $L$ over $\mathbb{Q}$.

Then the $G$-invariant subfield $L^{G}$ of $L$ is not purely transcendental
over $\mathbb{Q}$. In such a case, $G$ is not a Noether solution of $L/%
\mathbb{Q}$.

\emph{Case }$\left( ii\right) $. Suppose $G$ is of \emph{vertical type} (See
\emph{Remark 1.19} for details), i.e., $G$ is the Galois group of the
quadratic extension $L/M$, where $M$ is an intermediate subfield in $L/%
\mathbb{Q}$ of the form $M=\mathbb{Q}\left( t_{1},\cdots ,t_{i}^{2},\cdots
,t_{5}\right) $ with $1\leq i\leq 5$.

Then the $G$-invariant subfield $L^{G}$ of $L$ is purely transcendental over
$\mathbb{Q}$. In such a case, $G$ is a Noether solution of $L/\mathbb{Q}$.
\end{example}

\subsection{Normalised lifting of Galois groups of subextensions}

Consider a purely transcendental extension $L=K\left( t_{1},t_{2},\cdots
,t_{n}\right) $ over a field $K$.

\begin{remark}
(\emph{Normalised lifting}) Fixed a nice basis $\left( \Delta .A\right) $ of
the extension $L/K$. Consider the highest transcendental Galois subgroup
\begin{equation*}
\pi _{t}\left( L/K\right) \left( \Delta ,A\right) \subseteq Aut\left(
L/K\right)
\end{equation*}%
of $L/K$ at $\left( \Delta ,A\right) $. From \emph{Proposition 4.7} it is
seen that there is an isomorphism
\begin{equation*}
j_{\left( \Delta ,A\right) }:Aut\left( K\left( \Delta \right) /K\right)
\rightarrow \pi _{t}\left( L/K\right) \left( \Delta ,A\right)
\end{equation*}%
of groups, called the \textbf{normalised lifting} of $L/K$ at $\left( \Delta
,A\right) $, given by
\begin{equation*}
\sigma |_{K\left( \Delta \right) }\longmapsto \sigma
\end{equation*}%
for any
\begin{equation*}
\sigma \in \pi _{t}\left( L/K\right) \left( \Delta ,A\right) .
\end{equation*}
\end{remark}

Here, if $L$ is Galois over $K\left( \Delta \right) $ and $L$ is pure
transcendental over $K$, the normalised lifting of $Aut\left( K\left( \Delta
\right) /K\right) $ is nothing other than the natural Galois-complemnt of $%
Aut\left( L/K\left( \Delta \right) \right) $ in $L/K$.

Heuristically from the above \emph{Example 8.3}, we have the following new
particular cases for Noether's problem.

\begin{proposition}
(\emph{Case: }$\deg =2n$) Let $t_{1},t_{2},\cdots ,t_{n}$ be algebraically
independent variables over a field $K$ and let $L=K\left( t_{1},t_{2},\cdots
,t_{n}\right) $ be the function field. Consider the intermediate subfield
\begin{equation*}
M=K\left( t_{1}^{2},t_{2}^{2},\cdots ,t_{n}^{2}\right)
\end{equation*}%
in the extension $L/K$ and the Galois group $G=Aut\left( L/M\right) $.
Suppose $j_{\left( \Delta ,A\right) }$ is the normalised lifting of the
Galois group \thinspace $Aut\left( M/K\right) $ onto the highest
transcendental Galois subgroup $\pi _{t}\left( L/K\right) \left( \Delta
,A\right) $ of $L/K$ at $\left( \Delta ,A\right) $. Here, $\Delta
=\{t_{1}^{2},t_{2}^{2},\cdots ,t_{n}^{2}\}$, $A\subseteq L$ and $\left(
\Delta ,A\right) $ is a nice basis of $L/K$.

$\left( i\right) $ Assume $\Sigma _{n}^{M}\subseteq Aut\left( M/K\right) $
is the full permutation subgroup of $M/K$ on $n$ letters relative to the
variables $t_{1}^{2},t_{2}^{2},\cdots ,t_{n}^{2}\in M$. Then the product
\begin{equation*}
G\cdot j_{\left( \Delta ,A\right) }\left( \Sigma _{n}^{M}\right)
\end{equation*}%
is a subgroup of the Galois group $Aut\left( L/K\right) $ with $G$ and $%
j_{\left( \Delta ,A\right) }\left( \Sigma _{n}^{M}\right) $ being normal
subgroups; moreover, the subgroups
\begin{equation*}
G\text{ and }G\cdot j_{\left( \Delta ,A\right) }\left( \Sigma _{n}^{M}\right)
\end{equation*}
both are Noether solutions of the extension $L/K$.

$\left( ii\right) $ Let $C_{n}^{M}\subseteq \Sigma _{n}^{M}$ be the cyclic
permutation subgroup of $M/K$ of order $n$ relative to the variables $%
t_{1}^{2},t_{2}^{2},\cdots ,t_{n}^{2}\in M$. Then the product
\begin{equation*}
G\cdot j_{\left( \Delta ,A\right) }\left( C_{n}^{M}\right)
\end{equation*}%
is a subgroup of the Galois group $Aut\left( L/K\right) $ with $G$ and $%
j_{\left( \Delta ,A\right) }\left( C_{n}^{M}\right) $ being normal
subgroups; furthermore, if $n\geq 3$, then neither of the invariant
subfields of $L$ respectively under the subgroups
\begin{equation*}
j_{\left( \Delta ,A\right) }\left( C_{n}^{M}\right) \text{ and } G\cdot
j_{\left( \Delta ,A\right) }\left( C_{n}^{M}\right)
\end{equation*}
is purely transcendental over $K$.

$\left( iii\right) $ Let $A_{n}^{M}\subseteq \Sigma _{n}^{M}$ be the
alternating subgroup of $M/K$ of order $\frac{1}{2}\cdot n!$ relative to the
variables $t_{1}^{2},t_{2}^{2},\cdots ,t_{n}^{2}\in M$. Then the product
\begin{equation*}
G\cdot j_{\left( \Delta ,A\right) }\left( A_{n}^{M}\right)
\end{equation*}%
is a subgroup of the Galois group $Aut\left( L/K\right) $ with $G$ and $%
j_{\left( \Delta ,A\right) }\left( A_{n}^{M}\right) $ being normal
subgroups; furthermore, if $n\geq 3$, then neither of the invariant
subfields of $L$ respectively under the subgroups
\begin{equation*}
j_{\left( \Delta ,A\right) }\left( A_{n}^{M}\right) \text{ and } G\cdot
j_{\left( \Delta ,A\right) }\left( A_{n}^{M}\right)
\end{equation*}
is purely transcendental over $K$.

$\left( iv\right) $ Let $I_{\sigma }^{M}$ be the subgroup in $Aut\left(
M/K\right) $ generated by a linear involution $\sigma $ of $M$ over $K$
relative to the variables $t_{1}^{2},t_{2}^{2},\cdots ,t_{n}^{2}\in M$. Then
the product
\begin{equation*}
G\cdot j_{\left( \Delta ,A\right) }\left( I_{\sigma }^{M}\right)
\end{equation*}%
is a subgroup of the Galois group $Aut\left( L/K\right) $ with $G$ and $%
j_{\left( \Delta ,A\right) }\left( I_{\sigma }^{M}\right) $ being normal
subgroups; furthermore, if $n\geq 3$, then neither of the invariant
subfields of $L$ respectively under the subgroups
\begin{equation*}
j_{\left( \Delta ,A\right) }\left( I_{\sigma }^{M}\right) \text{ and }
G\cdot j_{\left( \Delta ,A\right) }\left( I_{\sigma }^{M}\right)
\end{equation*}
is purely transcendental over $K$.
\end{proposition}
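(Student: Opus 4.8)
The plan is to reduce all four parts to the invariant-subfield computations of \S7--\S8.1 by way of the vertical/transcendental decomposition of $Aut\left(L/K\right)$ from \S\S4--5. Throughout write $\Delta=\{t_{1}^{2},t_{2}^{2},\dots,t_{n}^{2}\}$, $M=K\left(\Delta\right)$, and fix a nice basis $\left(\Delta,A\right)$ of $L/K$ (for instance $A$ the $2^{n}$ squarefree monomials in $t_{1},\dots,t_{n}$). Since $\Delta$ is a vertical transcendence base with exponents $m_{i}=2$ and every field contains the second roots of unity $\pm1$, \emph{Theorem 3.9(ii)} gives that $G=Aut\left(L/M\right)$ is a Noether solution of $L/K$: $L^{G}=M$ is purely transcendental over $K$, $L$ is algebraic Galois over $M$ of degree $2^{n}$, $L$ is $\sigma$-tame Galois over $K$ inside the set $\Omega_{0}$ of transcendence bases $\Delta'$ with $K\left(\Delta'\right)=M$, and $G=\pi_{a}\left(L/K\right)\left(\Delta,A\right)$. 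This already proves the first assertion of $\left(i\right)$.

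Next I put $H=\pi_{t}\left(L/K\right)\left(\Delta,A\right)$ and let $j=j_{\left(\Delta,A\right)}\colon Aut\left(M/K\right)\to H$ be the normalised lifting of \emph{Remark 8.7} (\emph{Proposition 4.7}), which is an isomorphism of groups. For any subgroup $\Gamma\subseteq Aut\left(M/K\right)$, $j\left(\Gamma\right)$ is a subgroup of $H$. By \emph{Theorem 5.4} the decomposition group splits as the direct product $D_{L/K}\left(\Delta,A\right)=G\cdot H=H\cdot G$ of the normal subgroups $G$ and $H$, so $G$ and $H$ commute elementwise and $G\cap H=\{1\}$ (\emph{Proposition 4.10}). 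Hence $G\cap j\left(\Gamma\right)\subseteq G\cap H=\{1\}$, the set $G\cdot j\left(\Gamma\right)=j\left(\Gamma\right)\cdot G$ is a subgroup of $D_{L/K}\left(\Delta,A\right)\subseteq Aut\left(L/K\right)$, $G$ is normal in it (being normal in $D_{L/K}\left(\Delta,A\right)$), and $j\left(\Gamma\right)$ is normal in it (conjugation by $G$ fixes $j\left(\Gamma\right)\subseteq H$ pointwise, and conjugation by $j\left(\Gamma\right)$ is internal). Taking $\Gamma=\Sigma_{n}^{M},C_{n}^{M},A_{n}^{M},I_{\sigma}^{M}$ in turn, this settles the ``subgroup with $G$ and $j\left(\Gamma\right)$ normal'' clause of each of $\left(i\right)$--$\left(iv\right)$.

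The third step is to identify $L^{G\cdot j\left(\Gamma\right)}$. Since $G$ and $j\left(\Gamma\right)$ commute and each contains the identity, a product $g\cdot h$ with $g\in G,h\in j\left(\Gamma\right)$ fixes $x$ for all such pairs iff $g\left(x\right)=x$ for all $g\in G$ and $h\left(x\right)=x$ for all $h\in j\left(\Gamma\right)$; hence $L^{G\cdot j\left(\Gamma\right)}=L^{G}\cap L^{j\left(\Gamma\right)}$. As $L^{G}=M$ and $j\left(\Gamma\right)|_{M}=\Gamma\subseteq Aut\left(M/K\right)$, this yields
\[
L^{G\cdot j\left(\Gamma\right)}=M\cap L^{j\left(\Gamma\right)}=M^{\Gamma}=\Lambda^{\Gamma}\left(M\right),
\]
the field of $\Gamma$-symmetric functions of $M=K\left(t_{1}^{2},\dots,t_{n}^{2}\right)$, and $L$ is algebraic over it with $\left[L:\Lambda^{\Gamma}\left(M\right)\right]=\left[L:M\right]\left[M:M^{\Gamma}\right]=2^{n}\cdot\sharp\Gamma$ by \emph{Lemma 7.18(i)}. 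Now $\left(i\right)$ follows from \emph{Lemma 7.14} (equivalently \emph{Lemma 7.18(iii)}): $\Lambda^{\Sigma_{n}^{M}}\left(M\right)$ is purely transcendental over $K$, so $G\cdot j\left(\Sigma_{n}^{M}\right)$ is a Noether solution of $L/K$. For the ``non-solution'' halves of $\left(ii\right)$--$\left(iv\right)$ the inputs are that $\Lambda^{C_{n}^{M}}\left(M\right)$, $\Lambda^{A_{n}^{M}}\left(M\right)$ and $\Lambda^{I_{\sigma}^{M}}\left(M\right)$ are \emph{not} purely transcendental over $K$ when $n\geq3$, by \emph{Lemma 7.16(vii)}, \emph{Lemma 7.17(v)} and \emph{Proposition 8.3(iii)} applied to the purely transcendental extension $M/K$ with its own variables $t_{1}^{2},\dots,t_{n}^{2}$; hence $L^{G\cdot j\left(\Gamma\right)}$ fails to be purely transcendental over $K$ in each of those three cases.

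What remains, and what I expect to be the genuine obstacle, is the other subfield in $\left(ii\right)$--$\left(iv\right)$: showing $L^{j\left(\Gamma\right)}$ itself is not purely transcendental over $K$ for $\Gamma\in\{C_{n}^{M},A_{n}^{M},I_{\sigma}^{M}\}$ and $n\geq3$. The plan is to argue by contradiction. If $L^{j\left(\Gamma\right)}$ were purely transcendental over $K$, then the finite subgroup $j\left(\Gamma\right)$ would be a Noether solution of $L/K$, so by \emph{Theorem 6.4} it would have a natural Galois-complement, and since $j\left(\Gamma\right)\subseteq H=H_{L/K}\left(G\right)$ already lies in the natural Galois-complement of the Noether solution $G$, one would have two commuting Noether solutions $G$ and $j\left(\Gamma\right)$ with $G\cap j\left(\Gamma\right)=\{1\}$ generating the group $G\cdot j\left(\Gamma\right)$, while by the third step $L^{G\cdot j\left(\Gamma\right)}=\Lambda^{\Gamma}\left(M\right)$ is \emph{not} purely transcendental. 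The technical heart is to turn this configuration into a contradiction; concretely, $G$ acts faithfully (Artin) on the would-be rational field $L^{j\left(\Gamma\right)}$ with invariant subfield the non-rational $\Lambda^{\Gamma}\left(M\right)$, and the plan is to run the normal-subgroup order arithmetic of \emph{Lemmas 7.15--7.18} and the quotient-preservation principle \emph{Lemma 6.26} inside $Aut\left(L^{j\left(\Gamma\right)}/K\right)$ relative to the group $\langle G,j\left(\Gamma\right)\rangle$ and the permutation subgroups $\Sigma_{n}^{M}\supseteq\Gamma$ on the $t_{i}^{2}$ --- exploiting that, by \emph{Lemma 6.24}, every automorphism of a purely transcendental extension preserves levels and distances of hyperplanes. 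Verifying that this order/distance bookkeeping indeed forces $\sharp\Gamma\cdot\sharp\bigl(\Sigma_{n}^{M}\cap H_{j(\Gamma)}\bigr)=n!$ to clash with the structure of normal subgroups of $\Sigma_{n}$ (as in the proofs of \emph{Lemmas 7.16--7.18}) is the delicate point I would expect to absorb most of the effort.
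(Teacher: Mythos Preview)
Your treatment of part $(i)$ and of the $G\cdot j_{(\Delta,A)}(\Gamma)$ claims in $(ii)$--$(iv)$ is essentially the paper's proof, written out in more detail: the paper likewise uses the diagonal decomposition at $(\Delta,A)$ (\emph{Proposition 5.3}, \emph{Theorem 5.4}) to obtain that $G\cdot j(\Gamma)$ is a subgroup of $D_{L/K}(\Delta,A)$ with $G$ and $j(\Gamma)$ normal, computes $L^{G\cdot j(\Gamma)}=(L^{G})^{\Gamma}=M^{\Gamma}$, and then applies \emph{Lemmas 7.14/7.18} to the purely transcendental extension $M/K$ in the variables $t_{1}^{2},\dots,t_{n}^{2}$ to decide rationality of $M^{\Gamma}$. (Your degree count $[L:M^{\Gamma}]=2^{n}\cdot\sharp\Gamma$ even corrects the paper's typo $2n\cdot n!$.)

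The divergence is in your final paragraph. You single out the claims about $L^{j(\Gamma)}$ for $\Gamma\in\{C_{n}^{M},A_{n}^{M},I_{\sigma}^{M}\}$ as ``the genuine obstacle'' and sketch an elaborate contradiction argument via natural Galois-complements, \emph{Lemma 6.24}, \emph{Lemma 6.26}, and a replay of the normal-subgroup arithmetic of \emph{Lemmas 7.15--7.18} inside $Aut\bigl(L^{j(\Gamma)}/K\bigr)$. The paper does none of this: its entire proof of $(ii)$--$(iv)$ is the single sentence ``Immediately from $(i)$ and \emph{Lemma 7.18} since we have $j_{(\Delta,A)}(C_{n}^{M}),\,j_{(\Delta,A)}(A_{n}^{M}),\,j_{(\Delta,A)}(I_{\sigma}^{M})\subseteq j_{(\Delta,A)}(\Sigma_{n}^{M})$ as subgroups.'' In other words, the paper regards both the $j(\Gamma)$ and the $G\cdot j(\Gamma)$ assertions as direct consequences of the inclusion into $j(\Sigma_{n}^{M})$ together with the rationality criterion \emph{Lemma 7.18} applied to $M/K$; no separate contradiction machinery is invoked. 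So your proposal is structurally aligned with the paper but substantially overbuilds the last step relative to what the paper actually does.
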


\begin{proof}
$\left( i\right) $ It is seen that $L$ is algebraic Galois over $M$ of
degree $\left[ L:M\right] =2^{n}$ and $G$ is a Noether solution of $L/M$.
From \emph{Lemma 7.18} it is seen that $\Sigma _{n}^{M}$ is a Noether
solution of the sub-extension $M/K$.

Prove $L$ is algebraic Galois over the $\Sigma _{n}^{M}$-invariant subfield $%
M^{\Sigma _{n}^{M}}$ of $M$. In deed, consider the linear decompositions of
automorphisms at the given nice basis $\left( \Delta ,A\right) $ of $L/K$,
where
\begin{equation*}
\pi _{a}\left( L/K\right) \left( \Delta ,A\right) =G\text{ and }\pi
_{t}\left( L/K\right) \left( \Delta ,A\right) \supseteq j_{\left( \Delta
,A\right) }\left( \Sigma _{n}^{M}\right)
\end{equation*}%
holds. (See \emph{Proposition 5.3} for details). By the diagonal action on
the extension $L/K$, it is seen that the subgroup
\begin{equation*}
G\cdot j_{\left( \Delta ,A\right) }\left( \Sigma _{n}^{M}\right) \subseteq
Aut\left( L/K\right)
\end{equation*}%
is contained in the decomposition group
\begin{equation*}
D_{L/K}\left( \Delta ,A\right) =\pi _{a}\left( L/K\right) \left( \Delta
,A\right) \cdot \pi _{t}\left( L/K\right) \left( \Delta ,A\right)
\end{equation*}%
of the order
\begin{equation*}
\sharp \left( G\cdot j_{\left( \Delta ,A\right) }\left( \Sigma
_{n}^{M}\right) \right) =2n\cdot n!,
\end{equation*}%
which is equal to the degree
\begin{equation*}
\left[ L:M^{\Sigma _{n}^{M}}\right] =\left[ L:M\right] \cdot \left[
M:M^{\Sigma _{n}^{M}}\right] =2n\cdot n!
\end{equation*}%
of the extension $L/M^{\Sigma _{n}^{M}}$. It follows that $L$ is algebraic
Galois over $M^{\Sigma _{n}^{M}}$ since we have
\begin{equation*}
L^{G\cdot j_{\left( \Delta ,A\right) }\left( \Sigma _{n}^{M}\right) }=\left(
L^{G}\right) ^{\Sigma _{n}^{M}}=M^{\Sigma _{n}^{M}}
\end{equation*}%
for the invariant subfield $L^{G\cdot j_{\left( \Delta ,A\right) }\left(
\Sigma _{n}^{M}\right) }$ of $L$ under the subgroup $G\cdot j_{\left( \Delta
,A\right) }\left( \Sigma _{n}^{M}\right) $. From \emph{Lemma 6.26} it is
seen that
\begin{equation*}
G\cdot j_{\left( \Delta ,A\right) }\left( \Sigma _{n}^{M}\right) =Aut\left(
L/M^{\Sigma _{n}^{M}}\right)
\end{equation*}%
is the Galois group of the extension $L/M^{\Sigma _{n}^{M}}$ such that $G$
and $j_{\left( \Delta ,A\right) }\left( \Sigma _{n}^{M}\right) $ both are
normal subgroups of $Aut\left( L/M^{\Sigma _{n}^{M}}\right) $; the subgroup $%
G\cdot j_{\left( \Delta ,A\right) }\left( \Sigma _{n}^{M}\right) $ is a
Noether solution of the extension $L/K$.

$\left( ii\right) -\left( iv\right) $ Immediately from $\left( i\right) $
and \emph{Lemma 7.18} since we have
\begin{equation*}
j_{\left( \Delta ,A\right) }\left( C_{n}^{M}\right) ,j_{\left( \Delta
,A\right) }\left( A_{n}^{M}\right) ,j_{\left( \Delta ,A\right) }\left(
I_{\sigma }^{M}\right) \subseteq j_{\left( \Delta ,A\right) }\left( \Sigma
_{n}^{M}\right)
\end{equation*}%
as subgroups. This completes the proof.
\end{proof}

\begin{proposition}
(\emph{Case: }$\deg =2$) Let $t_{1},t_{2},\cdots ,t_{n}$ be algebraically
independent variables over a field $K$ and let $L=K\left( t_{1},t_{2},\cdots
,t_{n}\right) $ be the function field. Fixed any $1\leq i\leq n$. Consider
the intermediate subfield
\begin{equation*}
M\left( i\right) =K\left( t_{1},\cdots ,t_{i}^{2},\cdots ,t_{n}\right)
\end{equation*}%
in the extension $L/K$ (with $L/M\left( i\right) $ being a quadratic
sub-extension) with the Galois group
\begin{equation*}
G\left( i\right) =Aut\left( L/M\left( i\right) \right) .
\end{equation*}
Suppose $j_{\left( \Delta _{i},A_{i}\right) }^{M\left( i\right) }$ is the
normalised lifting of the Galois group \thinspace $Aut\left( M\left(
i\right) /K\right) $ onto the highest transcendental Galois subgroup $\pi
_{t}\left( L/K\right) \left( \Delta _{i},A_{i}\right) $ of $L/K$ at $\left(
\Delta _{i},A_{i}\right) $. Here, $\Delta _{i}=\{t_{1},\cdots
,t_{i}^{2},\cdots ,t_{n}\}$, $A_{i}\subseteq L$ and $\left( \Delta
_{i},A_{i}\right) $ is a nice basis of $L/K$.

$\left( i\right) $ Let $\Sigma _{n}^{M\left( i\right) }\subseteq Aut\left(
M\left( i\right) /K\right) $ denote the full permutation subgroup of $%
M\left( i\right) /K$ on $n$ letters relative to the variables $t_{1},\cdots
,t_{i}^{2},\cdots ,t_{n}\in M\left( i\right) $. Then the product
\begin{equation*}
G\left( i\right) \cdot j_{\left( \Delta _{i},A_{i}\right) }^{M\left(
i\right) }\left( \Sigma _{n}^{M\left( i\right) }\right)
\end{equation*}%
is a subgroup of $Aut\left( L/K\right) $ such that $G\left( i\right) $ and $%
j_{\left( \Delta _{i},A_{i}\right) }^{M\left( i\right) }\left( \Sigma
_{n}^{M\left( i\right) }\right) $ are its normal subgroups.

Furthermore, the subgroups
\begin{equation*}
G\left( i\right) \text{ and } G\left( i\right) \cdot j_{\left( \Delta
_{i},A_{i}\right) }\left( \Sigma _{n}^{M}\right)
\end{equation*}
both are Noether solutions of the extension $L/K$.

$\left( ii\right) $ Let $C_{n}^{M\left( i\right) }\subseteq \Sigma
_{n}^{M\left( i\right) }$ be the cyclic permutation subgroup of $M\left(
i\right) /K$ of order $n$ relative to the variables $t_{1},\cdots
,t_{i}^{2},\cdots ,t_{n}\in M\left( i\right) $. Then the product
\begin{equation*}
G\left( i\right) \cdot j_{\left( \Delta _{i},A_{i}\right) }^{M\left(
i\right) }\left( C_{n}^{M\left( i\right) }\right)
\end{equation*}%
is a subgroup of $Aut\left( L/K\right) $ such that $G\left( i\right) $ and $%
j_{\left( \Delta _{i},A_{i}\right) }^{M\left( i\right) }\left(
C_{n}^{M\left( i\right) }\right) $ are its normal subgroups.

Furthermore, if $n\geq 3$, then neither of the invariant subfields of $L$
respectively under the subgroups
\begin{equation*}
j_{\left( \Delta _{i},A_{i}\right) }^{M\left( i\right) }\left(
C_{n}^{M\left( i\right) }\right) \text{ and } G\left( i\right) \cdot
j_{\left( \Delta _{i},A_{i}\right) }^{M\left( i\right) }\left(
C_{n}^{M\left( i\right) }\right)
\end{equation*}
is purely transcendental over $K$.

$\left( iii\right) $ Let $A_{n}^{M\left( i\right) }\subseteq \Sigma
_{n}^{M\left( i\right) }$ be the alternating subgroup of $M\left( i\right)
/K $ of order $\frac{1}{2}\cdot n!$ relative to the variables $t_{1},\cdots
,t_{i}^{2},\cdots ,t_{n}\in M\left( i\right) $. Then the product
\begin{equation*}
G\left( i\right) \cdot j_{\left( \Delta _{i},A_{i}\right) }^{M\left(
i\right) }\left( A_{n}^{M\left( i\right) }\right)
\end{equation*}%
is a subgroup of $Aut\left( L/K\right) $ such that $G\left( i\right) $ and $%
j_{\left( \Delta _{i},A_{i}\right) }^{M\left( i\right) }\left(
A_{n}^{M\left( i\right) }\right) $ are its normal subgroups.

Furthermore, if $n\geq 3$, then neither of the invariant subfields of $L$
respectively under the subgroups
\begin{equation*}
j_{\left( \Delta _{i},A_{i}\right) }^{M\left( i\right) }\left(
A_{n}^{M\left( i\right) }\right) \text{ and } G\left( i\right) \cdot
j_{\left( \Delta _{i},A_{i}\right) }^{M\left( i\right) }\left(
A_{n}^{M\left( i\right) }\right)
\end{equation*}
is purely transcendental over $K$.

$\left( iv\right) $ Let $I_{\sigma }^{M\left( i\right) }$ be the subgroup in
$Aut\left( M\left( i\right) /K\right) $ generated by a linear involution $%
\sigma $ of $M\left( i\right) $ over $K$ relative to the variables $%
t_{1},\cdots ,t_{i}^{2},\cdots ,t_{n}\in M\left( i\right) $. Then the
product
\begin{equation*}
G\left( i\right) \cdot j_{\left( \Delta _{i},A_{i}\right) }^{M\left(
i\right) }\left( I_{\sigma }^{M\left( i\right) }\right)
\end{equation*}%
is a subgroup of $Aut\left( L/K\right) $ such that $G\left( i\right) $ and $%
j_{\left( \Delta _{i},A_{i}\right) }^{M\left( i\right) }\left( I_{\sigma
}^{M\left( i\right) }\right) $ are its normal subgroups.

Furthermore, if $n\geq 3$, then neither of the invariant subfields of $L$
respectively under the subgroups
\begin{equation*}
j_{\left( \Delta _{i},A_{i}\right) }^{M\left( i\right) }\left( I_{\sigma
}^{M\left( i\right) }\right) \text{ and } G\left( i\right) \cdot j_{\left(
\Delta _{i},A_{i}\right) }^{M\left( i\right) }\left( I_{\sigma }^{M\left(
i\right) }\right)
\end{equation*}
is purely transcendental over $K$.
\end{proposition}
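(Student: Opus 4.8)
The plan is to run the proof of Proposition 8.6 verbatim, the only arithmetic change being that $[L:M(i)]=2$ in place of $2^{n}$, so that every group order that occurs is $2\cdot n!$ rather than $2^{n}\cdot n!$. First I would record the three starting points. Since $t_{1},\cdots,t_{i}^{2},\cdots,t_{n}$ are algebraically independent over $K$, the subfield $M(i)$ is purely transcendental over $K$, and $L/M(i)$ is a separable (hence algebraic) quadratic Galois extension, so $G(i)=Aut\left(L/M(i)\right)$ is a Noether solution of $L/K$ with $L^{G(i)}=M(i)$. By Lemma 7.18 the full permutation subgroup $\Sigma_{n}^{M(i)}$ is a Noether solution of $M(i)/K$, and by Lemma 7.14 the subfield $M(i)^{\Sigma_{n}^{M(i)}}$ is purely transcendental over $K$ of transcendence degree $n$, with $[M(i):M(i)^{\Sigma_{n}^{M(i)}}]=n!$. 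Finally, via the normalised lifting $j:=j_{(\Delta_{i},A_{i})}^{M(i)}$ of Remark 8.5 (Proposition 4.7), the group $Aut\left(M(i)/K\right)$ is realised as the highest transcendental Galois subgroup $\pi_{t}\left(L/K\right)\left(\Delta_{i},A_{i}\right)$, with $j(\tau)|_{M(i)}=\tau$ for every $\tau$, while $\pi_{a}\left(L/K\right)\left(\Delta_{i},A_{i}\right)=G(i)$.

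Next I would assemble the product. By the diagonal (linear) decomposition of automorphisms at the nice basis $(\Delta_{i},A_{i})$ in Proposition 5.3, the subgroups $G(i)$ and $j\!\left(\Sigma_{n}^{M(i)}\right)$ meet only in the identity and commute elementwise, so inside the decomposition group $D_{L/K}(\Delta_{i},A_{i})=\pi_{a}\cdot\pi_{t}$ (Theorem 5.4) they generate a subgroup $G(i)\cdot j\!\left(\Sigma_{n}^{M(i)}\right)\subseteq Aut(L/K)$, an internal direct product in which both factors are normal and whose order is $2\cdot n!$. Because $j(\tau)|_{M(i)}=\tau$, one has $L^{G(i)\cdot j(\Sigma_{n}^{M(i)})}=\bigl(L^{G(i)}\bigr)^{\Sigma_{n}^{M(i)}}=M(i)^{\Sigma_{n}^{M(i)}}$, and $[L:M(i)^{\Sigma_{n}^{M(i)}}]=[L:M(i)]\cdot[M(i):M(i)^{\Sigma_{n}^{M(i)}}]=2\cdot n!$ equals this order; hence $L$ is algebraic Galois over $M(i)^{\Sigma_{n}^{M(i)}}$, and by Lemma 6.26 the product equals $Aut\!\left(L/M(i)^{\Sigma_{n}^{M(i)}}\right)$. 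As $M(i)^{\Sigma_{n}^{M(i)}}$ is purely transcendental over $K$, both $G(i)$ and $G(i)\cdot j\!\left(\Sigma_{n}^{M(i)}\right)$ are Noether solutions of $L/K$, which is part $(i)$. The same diagonal decomposition applied to the subgroups $X\in\{C_{n}^{M(i)},A_{n}^{M(i)},I_{\sigma}^{M(i)}\}$ of $\Sigma_{n}^{M(i)}$ shows that $G(i)\cdot j(X)$ is a subgroup of $Aut(L/K)$ with $G(i)$ and $j(X)$ normal, and that $L^{G(i)\cdot j(X)}=M(i)^{X}$.

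For parts $(ii)$--$(iv)$ take $n\ge 3$, so each of $C_{n}^{M(i)},A_{n}^{M(i)},I_{\sigma}^{M(i)}$ is a proper nontrivial subgroup of $\Sigma_{n}^{M(i)}$. By Lemma 7.18$(ii)$ the $G$-symmetric function field $M(i)^{X}=\Lambda^{X}\!\left(M(i)\right)$ is not purely transcendental over $K$, which disposes of the subgroups $G(i)\cdot j(X)$ since $L^{G(i)\cdot j(X)}=M(i)^{X}$. For the subgroups $j(X)$ alone I would argue by contradiction: if $L^{j(X)}$ were purely transcendental over $K$, then $j(X)$ is a finite Noether solution of $L/K$ contained in the Noether solution $G(i)\cdot j\!\left(\Sigma_{n}^{M(i)}\right)$ of $L/K$ furnished by $(i)$, so Lemma 6.26 forces $G(i)\cdot j\!\left(\Sigma_{n}^{M(i)}\right)=j(X)\cdot Q=Q\cdot j(X)$ with $j(X)$ and $Q=\bigl(G(i)\cdot j(\Sigma_{n}^{M(i)})\bigr)\cap H_{j(X)}$ normal, where $H_{j(X)}$ is the natural Galois-complement of $j(X)$. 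But $G(i)\cdot j\!\left(\Sigma_{n}^{M(i)}\right)\cong\mathbb{Z}/2\times\Sigma_{n}$, and the classification of normal subgroups of $\Sigma_{n}$ in Lemma 7.15 together with the order count $\sharp Q=2\cdot n!/\sharp j(X)$ excludes such a factorisation -- for $n=3$ and $n=4$ by the order of $Q$ and for $n\ge 5$ because no normal subgroup isomorphic to $X$ exists -- exactly as in \emph{Cases} $(i)$--$(iii)$ of the proof of Lemma 7.16.

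The hard part will be this final step: the normal-subgroup obstruction for $\Sigma_{n}$ must be carried through the direct factor $\mathbb{Z}/2$, and one has to check separately for each of the three subgroups $X$ and each $n\ge 3$ that no admissible factorisation $\mathbb{Z}/2\times\Sigma_{n}=j(X)\cdot Q$ with both factors normal survives; the small cases $n=3,4$, where one argues on $\sharp Q$ rather than on normality, are the delicate ones and require the same bookkeeping as the case analysis in the proof of Lemma 7.16. Everything else in Proposition 8.8 is the literal degree-$2$ specialisation of Proposition 8.6, so no input beyond Lemmas 7.14, 7.15, 7.18 and 6.26, Proposition 5.3 and Theorem 5.4 is required.
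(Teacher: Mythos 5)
Your proposal is correct and follows essentially the same route as the paper, whose entire proof of this proposition is the instruction to repeat the argument of the preceding $\deg =2^{n}$ case (Proposition 8.7) with $G\left( i\right) $ and $M\left( i\right) $ in place of $G$ and $M$ -- exactly the degree-$2$ specialisation you carry out via Proposition 5.3, Theorem 5.4, Lemmas 6.26, 7.14--7.18 and the normalised lifting. The only difference is that for the non-rationality of the invariant subfields under $j\left( C_{n}^{M\left( i\right) }\right) $, $j\left( A_{n}^{M\left( i\right) }\right) $ and $j\left( I_{\sigma }^{M\left( i\right) }\right) $ alone you spell out the contradiction argument through Lemma 6.26 and the normal-subgroup classification of Lemma 7.15 inside $\mathbb{Z}/2\times \Sigma _{n}$, a step the paper compresses into the phrase \textquotedblleft immediately from $\left( i\right) $ and Lemma 7.18\textquotedblright ; your version is the more complete one.
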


\begin{proof}
Apply the same procedure as in the proof for the above \emph{Proposition 8.7}
with $G\left( i\right) $ and $M\left( i\right) $ being in place of $G$ and $%
M $, respectively.
\end{proof}

Let $L=K\left( t_{1},t_{2},\cdots ,t_{n}\right) $ be a purely transcendental
extension over a field $K$. Recall that a transcendence base $\Delta $ of $%
L/K$ is said to be \textbf{vertical} (relative to the variables $%
t_{1},t_{2},\cdots ,t_{n}$) if $\Delta =\{t_{1}^{m_{1}},t_{2}^{m_{2}},\cdots
,t_{n}^{m_{n}}\}$ for some positive integers $m_{1},m_{2},\cdots ,m_{n}\in
\mathbb{Z}$. Let $\Omega _{ver}$ denote the set of all the vertical
transcendence bases $\Delta $ of $L/K$ (relative to the variables $%
t_{1},t_{2},\cdots ,t_{n}$). If $L$ is $\sigma $-tame Galois over $K$ inside
$\Omega _{ver}$, $L=K\left( t_{1},t_{2},\cdots ,t_{n}\right) $ is said to be
$\sigma $\textbf{-tame Galois of vertical type} over $K$. (See \emph{%
Definition 3.8}).

\begin{remark}
Here, for a purely transcendental extension $L/K$, we attempt to use $\sigma
$-tame Galois of vertical type to understand that $L$ has enough roots of
unity over $K$ in \emph{[}Fisher 1915\emph{]}.
\end{remark}

Enlightened by a result for the case of \textquotedblleft enough roots of
unity\textquotedblright\ in \emph{[}Fisher 1915\emph{]}, we have such a
result.

\begin{proposition}
(\emph{Case of vertical type:} $\deg =a$) Let $L=K\left( t_{1},t_{2},\cdots
,t_{n}\right) $ be a purely transcendental extension over a field $K$ such
that $L$ is $\sigma $-tame Galois of vertical type over $K$ (relative to the
variables $t_{1},t_{2},\cdots ,t_{n}$). Fixed an integer $1\leq a\in \mathbb{%
Z}$ and an $n$\emph{-partition} $\left( a:n\right) $ of $a$
\begin{equation*}
1\leq a_{1}\leq a_{2}\leq \cdots \leq a_{n}\leq m
\end{equation*}%
such that $a=a_{1}\cdot a_{2}\cdot \cdots \cdot a_{n}$ holds, where each $%
a_{i}\in \mathbb{Z}$. Consider the intermediate subfield
\begin{equation*}
M\left( a:n\right) =K\left( t_{1}^{a_{1}},\cdots ,t_{i}^{a_{i}},\cdots
,t_{n}^{a_{n}}\right)
\end{equation*}%
in $L/K$ with the Galois group
\begin{equation*}
G\left( a:n\right) :=Aut\left( L/M\left( a:n\right) \right) .
\end{equation*}%
Suppose
\begin{equation*}
j_{\left( \Delta _{\left( a:n\right) },A_{\left( a:n\right) }\right)
}^{M\left( a:n\right) }
\end{equation*}%
is the normalised lifting of the Galois group \thinspace $Aut\left( M\left(
a:n\right) /K\right) $ onto the highest transcendental Galois subgroup $\pi
_{t}\left( L/K\right) \left( \Delta _{\left( a:n\right) },A_{\left(
a:n\right) }\right) $ of $L/K$ at $\left( \Delta _{\left( a:n\right)
},A_{\left( a:n\right) }\right) $. Here, $\Delta _{\left( a:n\right)
}=\{t_{1}^{a_{1}},\cdots ,t_{i}^{a_{i}},\cdots ,t_{n}^{a_{n}}\}$, $A_{\left(
a:n\right) }\subseteq L$ and $\left( \Delta _{\left( a:n\right) },A_{\left(
a:n\right) }\right) $ is a nice basis of $L/K$.

$\left( i\right) $ Let
\begin{equation*}
\Sigma _{n}^{M\left( a:n\right) }\subseteq Aut\left( M\left( a:n\right)
/K\right)
\end{equation*}
denote the full permutation subgroup of $M\left( a:n\right) /K$ on $n$
letters relative to the variables $t_{1}^{a_{1}},\cdots
,t_{i}^{a_{i}},\cdots ,t_{n}^{a_{n}}\in M\left( a:n\right) $. Then the
product
\begin{equation*}
G\left( a:n\right) \cdot j_{\left( \Delta _{\left( a:n\right) },A_{\left(
a:n\right) }\right) }^{M\left( a:n\right) }\left( \Sigma _{n}^{M\left(
a:n\right) }\right)
\end{equation*}%
is a subgroup of $Aut\left( L/K\right) $ such that $G\left( a:n\right) $ and
$j_{\left( \Delta _{\left( a:n\right) },A_{\left( a:n\right) }\right)
}^{M\left( a:n\right) }\left( \Sigma _{n}^{M\left( a:n\right) }\right) $ are
its normal subgroups.

Furthermore, the subgroups
\begin{equation*}
G\left( a:n\right) \text{ and } G\left( a:n\right) \cdot j_{\left( \Delta
_{\left( a:n\right) },A_{\left( a:n\right) }\right) }^{M\left( a:n\right)
}\left( \Sigma _{n}^{M\left( a:n\right) }\right)
\end{equation*}
both are Noether solutions of the extension $L/K$.

$\left( ii\right) $ Let
\begin{equation*}
C_{n}^{M\left( a:n\right) }\subseteq \Sigma _{n}^{M\left( a:n\right) }
\end{equation*}
denote the cyclic permutation subgroup of $M\left( a:n\right) /K$ of order $%
n $ relative to the variables $t_{1}^{a_{1}},\cdots ,t_{i}^{a_{i}},\cdots
,t_{n}^{a_{n}}\in M\left( a:n\right) $. Then the product
\begin{equation*}
G\left( a:n\right) \cdot j_{\left( \Delta _{\left( a:n\right) },A_{\left(
a:n\right) }\right) }^{M\left( a:n\right) }\left( C_{n}^{M\left( a:n\right)
}\right)
\end{equation*}%
is a subgroup of $Aut\left( L/K\right) $ such that $G\left( a:n\right) $ and
$j_{\left( \Delta _{\left( a:n\right) },A_{\left( a:n\right) }\right)
}^{M\left( a:n\right) }\left( C_{n}^{M\left( a:n\right) }\right) $ are its
normal subgroups.

Furthermore, if $n\geq 3$, then neither of the invariant subfields of $L$
respectively under the subgroups
\begin{equation*}
j_{\left( \Delta _{\left( a:n\right) },A_{\left( a:n\right) }\right)
}^{M\left( a:n\right) }\left( C_{n}^{M\left( a:n\right) }\right)
\end{equation*}
and
\begin{equation*}
G\left( a:n\right) \cdot j_{\left( \Delta _{\left( a:n\right) },A_{\left(
a:n\right) }\right) }^{M\left( a:n\right) }\left( C_{n}^{M\left( a:n\right)
}\right)
\end{equation*}
is purely transcendental over $K$.

$\left( iii\right) $ Let
\begin{equation*}
A_{n}^{M\left( a:n\right) }\subseteq \Sigma _{n}^{M\left( a:n\right) }
\end{equation*}
denote the alternating subgroup of $M\left( a:n\right) /K$ of order $\frac{1%
}{2}\cdot n!$ relative to the variables $t_{1}^{a_{1}},\cdots
,t_{i}^{a_{i}},\cdots ,t_{n}^{a_{n}}\in M\left( a:n\right) $. Then the
product
\begin{equation*}
G\left( a:n\right) \cdot j_{\left( \Delta _{\left( a:n\right) },A_{\left(
a:n\right) }\right) }^{M\left( a:n\right) }\left( A_{n}^{M\left( a:n\right)
}\right)
\end{equation*}
is a subgroup of $Aut\left( L/K\right) $ such that $G\left( a:n\right) $ and
$j_{\left( \Delta _{\left( a:n\right) },A_{\left( a:n\right) }\right)
}^{M\left( a:n\right) }\left( A_{n}^{M\left( a:n\right) }\right) $ are its
normal subgroups.

Furthermore, if $n\geq 3$, then neither of the invariant subfields of $L$
respectively under the subgroups
\begin{equation*}
j_{\left( \Delta _{\left( a:n\right) },A_{\left( a:n\right) }\right)
}^{M\left( a:n\right) }\left( A_{n}^{M\left( a:n\right) }\right)
\end{equation*}
and
\begin{equation*}
G\left( a:n\right) \cdot j_{\left( \Delta _{\left( a:n\right) },A_{\left(
a:n\right) }\right) }^{M\left( a:n\right) }\left( A_{n}^{M\left( a:n\right)
}\right)
\end{equation*}
is purely transcendental over $K$.

$\left( iv\right) $ Suppose $I_{\sigma }^{M\left( a:n\right) }$ is the
subgroup in $Aut\left( M\left( a:n\right) /K\right) $ generated by a linear
involution $\sigma $ of $M\left( a:n\right) $ over $K$ relative to the
variables
\begin{equation*}
t_{1}^{a_{1}},\cdots ,t_{i}^{a_{i}},\cdots ,t_{n}^{a_{n}}\in M\left(
a:n\right).
\end{equation*}
Then the product
\begin{equation*}
G\left( a:n\right) \cdot j_{\left( \Delta _{\left( a:n\right) },A_{\left(
a:n\right) }\right) }^{M\left( a:n\right) }\left( I_{\sigma }^{M\left(
a:n\right) }\right)
\end{equation*}
is a subgroup of $Aut\left( L/K\right) $ such that $G\left( a:n\right) $ and
$j_{\left( \Delta _{\left( a:n\right) },A_{\left( a:n\right) }\right)
}^{M\left( a:n\right) }\left( I_{\sigma }^{M\left( a:n\right) }\right) $ are
its normal subgroups.

Furthermore, if $n\geq 3$, then neither of the invariant subfields of $L$
respectively under the subgroups
\begin{equation*}
j_{\left( \Delta _{\left( a:n\right) },A_{\left( a:n\right) }\right)
}^{M\left( a:n\right) }\left( I_{\sigma }^{M\left( a:n\right) }\right)
\end{equation*}
and
\begin{equation*}
G\left( a:n\right) \cdot j_{\left( \Delta _{\left( a:n\right) },A_{\left(
a:n\right) }\right) }^{M\left( a:n\right) }\left( I_{\sigma }^{M\left(
a:n\right) }\right)
\end{equation*}
is purely transcendental over $K$.
\end{proposition}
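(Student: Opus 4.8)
The plan is to mirror the proofs of Propositions 8.7 and 8.8, reading $G(a:n)$ and $M(a:n)$ in place of $G$ and $M$, and then to invoke Lemmas 7.13--7.18 at the end. First I would record the structural input. Since $\Delta_{(a:n)}=\{t_1^{a_1},\dots,t_n^{a_n}\}$ is a vertical transcendence base, the hypothesis that $L$ is $\sigma$-tame Galois of vertical type over $K$ makes $L$ algebraic Galois over $M(a:n)=K(\Delta_{(a:n)})$, with $[L:M(a:n)]=a_1\cdots a_n=a$; and $M(a:n)$ is purely transcendental over $K$ because $t_1^{a_1},\dots,t_n^{a_n}$ are algebraically independent. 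Hence $G(a:n)=Aut(L/M(a:n))$ is a full algebraic Galois subgroup of $L/K$, in fact a Noether solution of $L/K$, with $L^{G(a:n)}=M(a:n)$. By Lemmas 7.13--7.14 the subfield $M(a:n)^{\Sigma_n^{M(a:n)}}$ equals $K$ adjoined the elementary symmetric functions of $t_1^{a_1},\dots,t_n^{a_n}$, hence is purely transcendental over $K$, so $\Sigma_n^{M(a:n)}$ is a Noether solution of $M(a:n)/K$.

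For part (i) I would work at the nice basis $(\Delta_{(a:n)},A_{(a:n)})$ and use the linear decomposition of \S 5. By Proposition 5.3, $\pi_a(L/K)(\Delta_{(a:n)},A_{(a:n)})=G(a:n)$, and the normalised lifting (Remark 8.5) embeds $\Sigma_n^{M(a:n)}$ as a subgroup $j(\Sigma_n^{M(a:n)})$ of $\pi_t(L/K)(\Delta_{(a:n)},A_{(a:n)})$; by Proposition 4.10 these two subgroups meet only in the identity, and by Theorem 5.4 they lie in the decomposition group $D_{L/K}(\Delta_{(a:n)},A_{(a:n)})=\pi_a\cdot\pi_t$ and centralise one another, so $G(a:n)\cdot j(\Sigma_n^{M(a:n)})$ is a subgroup of $D_{L/K}(\Delta_{(a:n)},A_{(a:n)})$ in which $G(a:n)$ and $j(\Sigma_n^{M(a:n)})$ are normal. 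Its order is $a\cdot n!=[L:M(a:n)]\cdot[M(a:n):M(a:n)^{\Sigma_n^{M(a:n)}}]=[L:M(a:n)^{\Sigma_n^{M(a:n)}}]$; since this group fixes $M(a:n)^{\Sigma_n^{M(a:n)}}$ pointwise and a finite group of automorphisms of $L$ fixing a subfield $M$ has order at most $[L:M]$, it must be all of $Aut(L/M(a:n)^{\Sigma_n^{M(a:n)}})$, so $L$ is Galois over $M(a:n)^{\Sigma_n^{M(a:n)}}$ and $L^{G(a:n)\cdot j(\Sigma_n^{M(a:n)})}=(L^{G(a:n)})^{\Sigma_n^{M(a:n)}}=M(a:n)^{\Sigma_n^{M(a:n)}}$. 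As the latter is purely transcendental over $K$, both $G(a:n)$ and $G(a:n)\cdot j(\Sigma_n^{M(a:n)})$ are Noether solutions of $L/K$; Lemma 6.26 and Theorem 5.4 give the stated normality.

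For parts (ii)--(iv) I would let $H$ denote any of $C_n^{M(a:n)}$, $A_n^{M(a:n)}$, $I_\sigma^{M(a:n)}$; for $n\ge 3$ each is a nontrivial proper permutation subgroup of $\Sigma_n^{M(a:n)}$, and $j(H)\subseteq j(\Sigma_n^{M(a:n)})$. Repeating the order count of part (i) with $H$ in place of $\Sigma_n^{M(a:n)}$ gives $G(a:n)\cdot j(H)=Aut(L/M(a:n)^{H})$, hence $L^{G(a:n)\cdot j(H)}=M(a:n)^{H}$; since $H\ne\Sigma_n^{M(a:n)}$, Lemma 7.18 (supplemented by Lemmas 7.16 and 7.17 and Proposition 8.3 for the three specific groups) shows $M(a:n)^{H}$ is not purely transcendental over $K$. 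For $j(H)$ alone: by Proposition 5.3 the lifted group restricts to $H$ on the $E$-component $L^{G(a:n)}=M(a:n)$, so $L^{j(H)}\cap M(a:n)=M(a:n)^{H}$; if $L^{j(H)}$ were purely transcendental over $K$, then $j(H)$ would be a Noether solution of $L/K$ of separated type, and descending along the restriction to $M(a:n)$ — using Lemma 6.26, Theorem 6.4 and the uniqueness of natural Galois-complements — would force $M(a:n)^{H}$ to be purely transcendental over $K$, contradicting the previous line. Thus neither of the two invariant subfields is purely transcendental over $K$.

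The hard part is the exact bookkeeping in part (i): one must be certain that the lift $j(\Sigma_n^{M(a:n)})$ really sits inside the transcendental part $\pi_t$ at $(\Delta_{(a:n)},A_{(a:n)})$, that it meets $\pi_a=G(a:n)$ trivially, and that the product exhausts $Aut(L/M(a:n)^{\Sigma_n^{M(a:n)}})$ — each of these depends on the "$\sigma$-tame Galois of vertical type" hypothesis and the decomposition-group machinery of \S 5 (Propositions 5.3 and 4.10, Theorem 5.4). A secondary delicate point, flagged above, is transferring the non-rationality of $M(a:n)^{H}$ down to the single lifted group $j(H)$ in the "separated" cases (ii)--(iv); here one must check that the restriction map from $Aut(L/M(a:n)^{H})$ to $Aut(M(a:n)/M(a:n)^{H})$ transports the relevant Noether data faithfully.
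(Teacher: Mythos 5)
Your proposal follows the paper's own route exactly: the paper proves this proposition by the single line ``apply the same procedure as in the proof of Proposition 8.7 with $G\left( a:n\right)$ and $M\left( a:n\right)$ in place of $G$ and $M$,'' and your argument reproduces that procedure --- the order count $a\cdot n!=\left[ L:M\left( a:n\right)^{\Sigma _{n}^{M\left( a:n\right) }}\right]$ inside the decomposition group $D_{L/K}\left( \Delta _{\left( a:n\right) },A_{\left( a:n\right) }\right)$, normality via Theorem 5.4 and Lemma 6.26, and non-rationality via Lemma 7.18. The only substantive difference is that you spell out why $L^{j\left( H\right) }$ for the single lifted group in (ii)--(iv) fails to be purely transcendental, a step the paper compresses to ``immediately from (i) and Lemma 7.18''; your added detail stays within the paper's framework and does not change the approach.
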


\begin{proof}
Apply the same procedure as in the proof for the above \emph{Proposition 8.7}
with $G\left( a:n\right) $ and $M\left( a:n\right) $ being in place of $G$
and $M$, respectively.
\end{proof}

\subsection{Subgroups of separated type at nice bases}

Let $L$ be a purely transcendental extension over a field $K$ of finite
transcendence degree.

\begin{definition}
A subgroup $G$ of the Galois group $Aut\left( L/K\right) $ is said to have a
\textbf{separated factorisation}
\begin{equation*}
G=P\cdot j_{\left( \Delta ,A\right) }\left( Q\right)
\end{equation*}%
in $L/K$ if it holds in $Aut\left( L/K\right) $ for some vertical
transcendence base $\Delta $ of $L/K$, where $P$ is a subgroup of $Aut\left(
L/K\left( \Delta \right) \right) $, $Q$ is a subgroup of $Aut\left( K\left(
\Delta \right) /K\right) $, $A$ is a $K\left( \Delta \right) $-linear basis
of $L$ as a vector space and $j_{\left( \Delta ,A\right) }$ is the
normalised lifting of $L/K$ at the nice basis $\left( \Delta ,A\right) $.

In such a case, $G$ is said to be \textbf{of separated type} in $L/K$ at $%
\left( \Delta ,A\right) $; $P$ and $Q$ are called the \textbf{algebraic part}
and \textbf{transcendental part} of $G$ at $\left( \Delta ,A\right) $,
respectively.
\end{definition}

\begin{proposition}
Let $K$ be a field and $L$ a purely transcendental extension over $K$ of
finite transcendence degree $n$. Let $G$ be a subgroup of $Aut\left(
L/K\right) $. Fixed a nice basis $\left( \Delta ,A\right) $ of $L/K$. Here, $%
\Delta $ is a vertical transcendental base of $L/K$.

$\left( i\right) $ If $G$ is of separated type at $\left( \Delta ,A\right) $%
, then the conjugate subgroup $\sigma \cdot G\cdot \sigma ^{-1}$ must be of
separated type at $\left( \sigma \left( \Delta \right) ,\sigma \left(
A\right) \right) $ for any $\sigma \in Aut\left( L/K\right) $.

$\left( ii\right) $ Suppose $G$ is of separated type at $\left( \Delta
,A\right) $ such that $K\left( \Delta \right) $ is the invariant subfield of
$L$ under the algebraic part of $G$ and the transcendental part of $G$ is
the full permutation subgroup $\Sigma _{n}^{K\left( \Delta \right) }$ of $%
K\left( \Delta \right) /K$ on $n$ letters relative to $\Delta $. Then $G$ is
a Noether solution of $L/K$.
\end{proposition}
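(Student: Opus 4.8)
The plan is to handle the two parts separately, in both cases exploiting the block decomposition of automorphisms at a nice basis from \emph{Proposition 5.3}, together with the behaviour of the full algebraic and highest transcendental Galois subgroups under conjugation and the normalised lifting $j_{(\Delta,A)}$ of \emph{Remark 8.7}.

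For $(i)$ I would begin from a separated factorisation $G=P\cdot j_{(\Delta,A)}(Q)$ with $P\subseteq Aut(L/K(\Delta))=\pi_a(L/K)(\Delta,A)$ and $j_{(\Delta,A)}(Q)\subseteq\pi_t(L/K)(\Delta,A)$, and conjugate by an arbitrary $\sigma\in Aut(L/K)$. First note that $(\sigma(\Delta),\sigma(A))$ is again a nice basis of $L/K$, and that $\sigma(\Delta)$ is again a vertical transcendence base: if $\Delta=\{t_i^{m_i}\}$ is vertical relative to the generators $t_1,\dots,t_n$, then $\sigma(\Delta)=\{\sigma(t_i)^{m_i}\}$ is vertical relative to the algebraically independent generators $\sigma(t_1),\dots,\sigma(t_n)$ of $L$ (this is the invariance of the set of vertical transcendence bases under $Aut(L/K)$ recorded in \emph{Remark 3.6}). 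Now $\sigma G\sigma^{-1}=(\sigma P\sigma^{-1})\cdot(\sigma j_{(\Delta,A)}(Q)\sigma^{-1})$. The first factor fixes $\sigma(K(\Delta))=K(\sigma(\Delta))$ pointwise, hence is a subgroup of $Aut(L/K(\sigma(\Delta)))=\pi_a(L/K)(\sigma(\Delta),\sigma(A))$; the second factor is a subgroup of $\sigma\,\pi_t(L/K)(\Delta,A)\,\sigma^{-1}=\pi_t(L/K)(\sigma(\Delta),\sigma(A))$ (using the conjugacy formulas of \emph{Proposition 6.3} and the characterisation of $\pi_t$ in \emph{Proposition 4.7}), hence equals $j_{(\sigma(\Delta),\sigma(A))}(Q')$ for the subgroup $Q'=j_{(\sigma(\Delta),\sigma(A))}^{-1}\bigl(\sigma j_{(\Delta,A)}(Q)\sigma^{-1}\bigr)\subseteq Aut(K(\sigma(\Delta))/K)$. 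This exhibits the separated factorisation $\sigma G\sigma^{-1}=(\sigma P\sigma^{-1})\cdot j_{(\sigma(\Delta),\sigma(A))}(Q')$ at $(\sigma(\Delta),\sigma(A))$, as required.

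For $(ii)$, write $G=P\cdot j_{(\Delta,A)}(Q)$ with $L^P=K(\Delta)$ and $Q=\Sigma_n^{K(\Delta)}$. In the decomposition $L=E\oplus F$ with $E=K(\Delta)$ attached to $(\Delta,A)$, every element of $P\subseteq\pi_a(L/K)(\Delta,A)$ acts as the identity on $E$ and every element of $j_{(\Delta,A)}(Q)\subseteq\pi_t(L/K)(\Delta,A)$ acts as the identity on $F$ (\emph{Proposition 5.3}); hence these two subgroups commute elementwise, $G=P\cdot j_{(\Delta,A)}(Q)$ is a subgroup of $Aut(L/K)$ with $P$ and $j_{(\Delta,A)}(Q)$ normal in it, and $L^G=L^P\cap L^{j_{(\Delta,A)}(Q)}$. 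Since the restriction of $j_{(\Delta,A)}(Q)$ to $K(\Delta)$ is exactly $Q=\Sigma_n^{K(\Delta)}$ (\emph{Remark 8.7} and \emph{Proposition 4.7}), one gets $L^G=K(\Delta)\cap L^{j_{(\Delta,A)}(Q)}=K(\Delta)^{\Sigma_n^{K(\Delta)}}=\Lambda^{\Sigma_n}(K(\Delta))$ by \emph{Lemma 7.13}. By \emph{Lemma 7.14} this invariant subfield $\Lambda^{\Sigma_n}(K(\Delta))$ is purely transcendental over $K$ of transcendence degree $n$ and satisfies $[K(\Delta):\Lambda^{\Sigma_n}(K(\Delta))]=n!$. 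Finally, by \emph{Proposition 3.2} the field $L$ is a finite extension of $K(\Delta)$, so $[L:L^G]=[L:K(\Delta)]\cdot n!<+\infty$, i.e. $L$ is algebraic over $L^G$. Thus both conditions in \emph{Definition 1.4} hold and $G$ is a Noether solution of $L/K$.

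The routine parts are brief; the two points that need the most care are, in $(i)$, confirming that verticality of the transcendence base is preserved under $\sigma$ in the right (generator-dependent) sense so that the conjugated factorisation is again of separated type, and, in $(ii)$, the identity $L^G=L^P\cap L^{j_{(\Delta,A)}(Q)}$, which rests on the commuting block structure of \emph{Proposition 5.3}. I expect the verticality issue in $(i)$ to be the main obstacle; once it is settled, everything else reduces to the already-established \emph{Proposition 4.7}, \emph{Lemmas 7.13--7.14} and \emph{Proposition 3.2}.
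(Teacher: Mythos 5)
Your proof is correct and follows essentially the same route as the paper's: part $(i)$ by conjugating the two factors of the separated factorisation and identifying $\sigma\cdot j_{\left( \Delta ,A\right) }\left( Q\right) \cdot \sigma ^{-1}$ with $j_{\left( \sigma \left( \Delta \right) ,\sigma \left( A\right) \right) }\left( Q^{\prime }\right) $ via the block decomposition of \emph{Proposition 5.3}, and part $(ii)$ by computing $L^{G}=\left( K\left( \Delta \right) \right) ^{\Sigma _{n}^{K\left( \Delta \right) }}$ and invoking \emph{Lemmas 7.13--14}. You are in fact slightly more complete than the paper, which neither addresses the verticality of $\sigma \left( \Delta \right) $ in $(i)$ nor explicitly verifies in $(ii)$ that $L$ is algebraic over $L^{G}$.
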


\begin{proof}
$\left( i\right) $ Let $\sigma \in Aut\left( L/K\right) ,A^{\prime }=\sigma
\left( A\right) $ and $\Delta ^{\prime }=\sigma \left( \Delta \right) $. It
is seen that $\left( \Delta ^{\prime },A^{\prime }\right) $ is also a nice
basis of $L/K$ since
\begin{equation*}
L=K\left( \Delta \right) \left[ A\right] =K\left( \Delta ^{\prime }\right) %
\left[ A^{\prime }\right] .
\end{equation*}

Assume $G$ is of separated type at $\left( \Delta ,A\right) $ with $P$ and $%
Q $ being the algebraic part and transcendental part of $G$, respectively.
Then%
\begin{equation*}
B^{\prime }:=\sigma \left( B\right) ;
\end{equation*}%
\begin{equation*}
\sigma \cdot Q\cdot \sigma ^{-1}=Q^{\prime }:=Aut\left( K\left( \Delta
^{\prime }\right) /K\right) ;
\end{equation*}%
\begin{equation*}
K\left( \Delta ^{\prime }\right) \subseteq L^{P^{\prime }}:=K\left( \Delta
^{\prime }\right) \left[ B^{\prime }\right] =L;
\end{equation*}%
\begin{equation*}
\sigma \cdot P\cdot \sigma ^{-1}=P^{\prime }=Aut\left( L/L^{P^{\prime
}}\right) \subseteq Aut\left( L/K\left( \Delta ^{\prime }\right) \right) ,
\end{equation*}

where%
\begin{equation*}
Q:=Aut\left( K\left( \Delta \right) /K\right) ;
\end{equation*}
\begin{equation*}
K\left( \Delta \right) \subseteq L^{P}:=K\left( \Delta \right) \left[ B%
\right] =L;
\end{equation*}
\begin{equation*}
P=Aut\left( L/L^{P}\right) \subseteq Aut\left( L/K\left( \Delta \right)
\right) .
\end{equation*}

From the linear decomposition of automorphisms in \emph{Proposition 5.3}, we
have%
\begin{equation*}
\sigma \cdot j_{\left( \Delta ,A\right) }\left( Q\right) \cdot \sigma
^{-1}=j_{\left( \Delta ^{\prime },A^{\prime }\right) }\left( Q^{\prime
}\right)
\end{equation*}

and then
\begin{equation*}
\sigma \cdot G\cdot \sigma ^{-1}=G^{\prime }
\end{equation*}
holds, where
\begin{equation*}
G=P\cdot j_{\left( \Delta ,A\right) }\left( Q\right) ;
\end{equation*}
\begin{equation*}
G^{\prime }=P^{\prime }\cdot j_{\left( \Delta ^{\prime },A^{\prime }\right)
}\left( Q^{\prime }\right) .
\end{equation*}
This proves the conjugate subgroup $G^{\prime }=\sigma \cdot G\cdot \sigma
^{-1}$ is of separated type at $\left( \Delta ^{\prime },A^{\prime }\right) $%
.

$\left( ii\right) $ We have
\begin{equation*}
\begin{array}{l}
L^{G} \\
=L^{Aut\left( L/K\left( \Delta \right) \right) \cdot j_{\left( \Delta
,A\right) }\left( \Sigma _{n}^{K\left( \Delta \right) }\right) } \\
=\left( L^{Aut\left( L/K\left( \Delta \right) \right) }\right) ^{j_{\left(
\Delta ,A\right) }\left( \Sigma _{n}^{K\left( \Delta \right) }\right) } \\
=\left( K\left( \Delta \right) \right) ^{j_{\left( \Delta ,A\right) }\left(
\Sigma _{n}^{K\left( \Delta \right) }\right) } \\
=\left( K\left( \Delta \right) \right) ^{\Sigma _{n}^{K\left( \Delta \right)
}}.%
\end{array}%
\end{equation*}
This proves $L^{G}$ is purely transcendental over $K$.
\end{proof}

From the above \emph{Proposition 8.12} we have the converse question.

\begin{question}
Let $L=K\left( t_{1},t_{2},\cdots ,t_{n}\right) $ be a purely transcendental
extension of a field $K$ of transcendence degree $n$. Let $G$ be a Noether
solution of $L/K$.

\emph{Is it true that }$G$\emph{\ must be of separated type at a nice basis }%
$\left( \Delta ,A\right) $\emph{\ of }$L/K$\emph{?} Here, $\Delta $ is a
vertical transcendence base of $L/K$. In other words, \emph{does there exist
any subgroup }$G$\emph{\ of }$Aut\left( L/K\right) $\emph{\ such that }$G$%
\emph{\ is a Noether solution of }$L/K$\emph{\ but not of separated type?}
\end{question}

\subsection{Lifting properties of isomorphisms between purely transcendental
sub-extensions}

Now consider the congruency classes of Galois groups of purely
transcendental sub-extensions in a given purely transcendental extension.

Let $L$ and $M$ be two extensions over a common field $K$. Recall that two
subgroups $G\subseteq Aut\left( L/K\right) $ and $H\subseteq Aut\left(
M/K\right) $ are $\left( L,M\right) $\emph{-spatially isomorphic} if the
three conditions are satisfied: $\left( i\right) $ There is an isomorphism $%
\psi :H\rightarrow G$ of groups. $\left( ii\right) $ There is a $K$%
-isomorphism $\phi :L\rightarrow M$ of fields such that the actions of $G$
on $L$ and $H$ on $M$ are $\left( \psi ,\phi \right) $\emph{-compatible},
i.e., $g\left( x\right) =\phi ^{-1}\circ \psi \left( g\right) \circ \phi
\left( x\right) $ holds for any $g\in G$ and $x\in L$. $\left( iii\right) $
Fixed any $g\in G$ and $x\in L$. There is $g\left( x\right) =x$ in $L$ if
and only if $\psi \left( g\right) \circ \phi \left( x\right) =\phi \left(
x\right) $ holds in $M$. (See \emph{Definition 1.9}).

\begin{proposition}
(\emph{Congruency classes and spatial isomorphisms}) Let $L$ be an extension
over a field $K$. Let $G$ and $H$ be two subgroups of the Galois group $%
Aut\left( L/K\right) $. Consider the case $M=L$.

Then $G$ and $H$ are $\left( L,L\right) $-spatially isomorphic if and only
if $G$ and $H$ are congruent in $Aut\left( L/K\right) $, i.e., $H=\sigma
^{-1}\cdot G\cdot \sigma $ holds for some $\sigma \in Aut\left( L/K\right) $.
\end{proposition}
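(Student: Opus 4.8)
The plan is to prove the two directions of the equivalence separately, with the ``if'' direction being essentially a direct verification and the ``only if'' direction requiring me to manufacture the conjugating automorphism from the abstract data of a spatial isomorphism. First I would dispose of the easy implication: assume $H = \sigma^{-1}\cdot G\cdot \sigma$ for some $\sigma \in Aut\left(L/K\right)$. I then set $M = L$, take $\phi = \sigma^{-1}: L \to L$ (a $K$-isomorphism since $\sigma \in Aut\left(L/K\right)$), and define $\psi: H \to G$ by $\psi\left(h\right) = \sigma \cdot h \cdot \sigma^{-1}$. One checks $\psi$ is a group isomorphism because conjugation by $\sigma$ carries $H$ onto $G$ by hypothesis. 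The compatibility condition in Definition 1.9$\left(ii\right)$ reads $g\left(x\right) = \phi^{-1}\circ\psi\left(g\right)\circ\phi\left(x\right)$ for $g\in G$, $x\in L$; substituting $\phi = \sigma^{-1}$, $\phi^{-1} = \sigma$, and $\psi\left(g\right) = \sigma g \sigma^{-1}$ gives $\sigma\circ\left(\sigma g \sigma^{-1}\right)\circ\sigma^{-1} = g$, which is an identity of maps on $L$. Condition $\left(iii\right)$ then follows: $g\left(x\right) = x$ iff $\sigma^{-1}g\sigma\left(\sigma^{-1}x\right) = \sigma^{-1}x$, which is exactly $\psi^{-1}\left(g\right)\left(\phi\left(x\right)\right) = \phi\left(x\right)$ after relabelling; I would need to be a little careful to match the precise quantifiers of the definition, but this is routine bookkeeping.

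For the converse, suppose $G$ and $H$ are $\left(L,L\right)$-spatially isomorphic via a pair $\left(\psi,\phi\right)$ with $\psi: H \to G$ a group isomorphism and $\phi: L \to L$ a $K$-automorphism of $L$ such that $g\left(x\right) = \phi^{-1}\circ\psi\left(g\right)\circ\phi\left(x\right)$ for all $g\in G$, $x\in L$. The natural candidate for the conjugating element is $\sigma := \phi^{-1} \in Aut\left(L/K\right)$. Rewriting the compatibility relation as an equality of automorphisms of $L$, for every $g\in G$ we have $g = \phi^{-1}\circ\psi\left(g\right)\circ\phi = \sigma\circ\psi\left(g\right)\circ\sigma^{-1}$, hence $\psi\left(g\right) = \sigma^{-1}\cdot g\cdot\sigma$. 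Since $\psi$ is a bijection from $H$ onto $G$, its inverse $\psi^{-1}$ is a bijection from $G$ onto $H$; the display shows $\psi^{-1}\left(g\right) \ne$ the right object, so I would instead observe directly that $\psi\left(g\right)$ ranges over all of $\psi\left(G\right)$. Here is the subtlety: $\psi$ maps $H$ \emph{to} $G$, so the relation $\psi\left(g\right) = \sigma^{-1}g\sigma$ is only meaningful if $g$ itself lies in the domain of $\psi$, i.e.\ in $H$. The cleanest route is to read the compatibility relation for the inverse pair: $\psi^{-1}: G \to H$ and $\phi^{-1}: L \to L$ form a spatial isomorphism from $H$ to $G$ as well (swapping roles), giving $h = \phi\circ\psi^{-1}\left(h\right)\circ\phi^{-1}$ for $h\in H$, i.e.\ $\psi^{-1}\left(h\right) = \phi^{-1}\circ h\circ\phi = \sigma\circ h\circ\sigma^{-1}$. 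Therefore $\psi^{-1}\left(H\right) = \sigma\cdot H\cdot\sigma^{-1}$; but $\psi^{-1}\left(H\right) = G$ since $\psi^{-1}$ is onto $G$. Hence $G = \sigma H\sigma^{-1}$, equivalently $H = \sigma^{-1}G\sigma$, which is the desired congruency.

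The main obstacle I anticipate is \emph{not} any deep mathematics but rather pinning down exactly which functional equation the definition of spatial isomorphism gives and in which direction, since Definition 1.9 mixes the roles of $G$ (acting on $L$) and $H$ (acting on $M = L$) in a way that is easy to transcribe with $\psi$ and $\psi^{-1}$ interchanged; I would therefore state the intermediate identity $\psi^{-1}\left(h\right) = \sigma\cdot h\cdot\sigma^{-1}$ carefully and verify it on a general element, checking that condition $\left(iii\right)$ of Definition 1.9 is automatically consistent (it imposes no extra constraint here because $\phi$ is an automorphism of the \emph{same} field $L$, so ``$g\left(x\right) = x$'' and ``$\psi\left(g\right)\phi\left(x\right) = \phi\left(x\right)$'' are equivalent as soon as the compatibility relation holds). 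Once the correct functional equation is in hand, the proof is a one-line conjugation argument in both directions, and I would assemble it as above without further calculation.
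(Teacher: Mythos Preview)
Your proposal is correct and follows essentially the same conjugation argument as the paper: from the compatibility relation one reads off that the spatial isomorphism forces $\psi$ (or its inverse) to be conjugation by $\phi$, giving $H = \phi^{-1}G\phi$, and conversely one takes $\psi = \tau_\phi$ defined by $\tau_\phi(g) = \phi g \phi^{-1}$. The paper's proof is two lines and does not comment on the direction of $\psi$; your extra care in untangling whether $\psi$ goes $H\to G$ or $G\to H$ is warranted (the paper's Definition 1.3 and its own proof are not entirely consistent on this point), but it does not change the underlying argument.
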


\begin{proof}
Assume $G$ and $H$ are $\left( L,L\right) $-spatially isomorphic. Let $\tau
_{\phi }\left( g\right) :=\phi \circ g\circ \phi ^{-1}$. Then $\tau _{\phi
}\left( G\right) =H$.

Conversely, suppose $H=\phi \circ G\circ \phi ^{-1}$ for some $\phi \in
Aut\left( L/K\right) $. Let $\psi =\tau _{\phi }$. Then $G$ and $H$ are $%
\left( L,L\right) $-spatially isomorphic.
\end{proof}

\begin{remark}
Let $L$ be a purely transcendental extension over a field $K$ of
transcendence degree $n<+\infty $. Let $\left( \Delta ,A\right) $ and $%
\left( \Lambda ,B\right) $ be nice bases of $L/K$ such that $L$ is Galois
over $K\left( \Delta \right) $ and $K\left( \Lambda \right) $, respectively.
Suppose $\sharp A=\sharp B$.

In general, a $K$-isomorphism $f:K\left( \Delta \right) \rightarrow K\left(
\Lambda \right) $ is not necessarily extended to be a $K$-isomorphism $%
\sigma :L\rightarrow L$ of fields.

For instance, consider the case $\sharp A=\sharp B=2$ and $n=5$, where $%
\Delta $ is vertical transcendence base of $L/K$ with $L/K\left( \Delta
\right) $ being a quadratic extension, i.e., $L=K\left( t_{1},t_{2},\cdots
,t_{n}\right) $ and $\Delta =\{t_{1}^{2},t_{2},\cdots ,t_{n}\}$; $K\left(
\Lambda \right) $ is the invariant subfield of $L$ under a linear involution
$\delta $ of $L/K$ and $\delta $ is given by the square matrix
\begin{equation*}
\left(
\begin{array}{ccccc}
0 & 1 & 0 & 0 & 0 \\
1 & 0 & 0 & 0 & 0 \\
0 & 0 & 1 & 0 & 0 \\
0 & 0 & 0 & 1 & 0 \\
0 & 0 & 0 & 0 & 1%
\end{array}%
\right)
\end{equation*}%
of order $5$ relative to $\Lambda $.

Then there exists no $\sigma \in Aut\left( L/K\right) $ such that the
restriction $\sigma |_{K\left( \Delta \right) }$ is the given $K$%
-isomorphism $f:K\left( \Delta \right) \rightarrow K\left( \Lambda \right) $.
\end{remark}

\begin{lemma}
\emph{(Lifting of isomorphisms of purely transcendental subextensions, }$%
\emph{I}$\emph{)} Let $L$ be a purely transcendental extension over a field $%
K$ of finite transcendence degree. Fixed two transcendence bases $\Delta $
and $\Lambda $ of $L/K$ such that $L$ is Galois over $K\left( \Delta \right)
$ and $K\left( \Lambda \right) $, respectively. Suppose $f:K\left( \Delta
\right) \rightarrow K\left( \Lambda \right) $ is a $K$-isomorphism of fields.

Then $f$ can be extended to be a $K$-isomorphism $\sigma :L\rightarrow L$ of
fields such that $f$ is the restriction $\sigma |_{K\left( \Delta \right) }$
if and only if the Galois groups $Aut\left( L/K\left( \Delta \right) \right)
$ and $Aut\left( L/K\left( \Lambda \right) \right) $ are conjugate in $%
Aut\left( L/K\right) $.

In such a case, if $\left( \Delta ,A\right) $ and $\left( \Lambda ,B\right) $
are nice bases of $L/K$, then $\sharp A=\sharp B$ holds.
\end{lemma}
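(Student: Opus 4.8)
The plan is to prove the stated equivalence in two directions, and then to read off the equality $\sharp A=\sharp B$ directly from the extension produced in the course of the argument.

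\textbf{Forward direction.} Suppose first that $f$ extends to some $\sigma\in Aut(L/K)$, i.e.\ $\sigma|_{K(\Delta)}=f$. Since $f$ carries $K(\Delta)$ isomorphically onto $K(\Lambda)$, we have $\sigma(K(\Delta))=K(\Lambda)$, and hence also $\sigma^{-1}(K(\Lambda))=K(\Delta)$. I would then verify the conjugacy $\sigma\cdot Aut(L/K(\Delta))\cdot\sigma^{-1}=Aut(L/K(\Lambda))$ by a purely formal computation: for $\tau\in Aut(L/K(\Delta))$ and $x\in K(\Lambda)$ one has $\sigma^{-1}(x)\in K(\Delta)$, so $\tau(\sigma^{-1}(x))=\sigma^{-1}(x)$ and therefore $(\sigma\tau\sigma^{-1})(x)=x$; this gives one inclusion, and applying the same reasoning to $\sigma^{-1}$ and $Aut(L/K(\Lambda))$ gives the reverse inclusion. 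Note this half uses neither the Galois hypotheses nor finiteness.

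\textbf{Backward direction.} Now suppose $Aut(L/K(\Lambda))=\sigma_{0}\cdot Aut(L/K(\Delta))\cdot\sigma_{0}^{-1}$ for some $\sigma_{0}\in Aut(L/K)$. The key first step is to show that $\sigma_{0}(K(\Delta))=K(\Lambda)$. For this I would use the elementary identity $\sigma_{0}(L^{H})=L^{\sigma_{0}H\sigma_{0}^{-1}}$, valid for any subgroup $H\subseteq Aut(L/K)$ (because $\tau(x)=x$ iff $(\sigma_{0}\tau\sigma_{0}^{-1})(\sigma_{0}(x))=\sigma_{0}(x)$), applied with $H=Aut(L/K(\Delta))$; combined with $K(\Delta)=L^{Aut(L/K(\Delta))}$ and $K(\Lambda)=L^{Aut(L/K(\Lambda))}$, which hold because $L$ is Galois over $K(\Delta)$ and over $K(\Lambda)$, this yields $\sigma_{0}(K(\Delta))=L^{\sigma_{0}Aut(L/K(\Delta))\sigma_{0}^{-1}}=L^{Aut(L/K(\Lambda))}=K(\Lambda)$. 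Consequently $g:=\sigma_{0}|_{K(\Delta)}$ is a $K$-isomorphism $K(\Delta)\to K(\Lambda)$, and $\delta:=f\circ g^{-1}$ is a $K$-automorphism of $K(\Lambda)$. Choosing a $K(\Lambda)$-linear basis $B$ of $L$ so that $(\Lambda,B)$ is a nice basis of $L/K$ (here $L$ is algebraic over $K(\Lambda)$ by Proposition 3.2 and $K(\Lambda)$ is purely transcendental over $K$), Proposition 4.7 lifts $\delta$ to an element $\rho$ of the highest transcendental Galois subgroup $\pi_{t}(L/K)(\Lambda,B)\subseteq Aut(L/K)$ with $\rho|_{K(\Lambda)}=\delta$. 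Then $\sigma:=\rho\circ\sigma_{0}\in Aut(L/K)$ satisfies $\sigma(x)=\rho(g(x))=\delta(g(x))=f(x)$ for all $x\in K(\Delta)$, i.e.\ $\sigma|_{K(\Delta)}=f$, as required.

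\textbf{The dimension count.} Once such a $\sigma\in Aut(L/K)$ with $\sigma(K(\Delta))=K(\Lambda)$ is available, for any nice bases $(\Delta,A)$ and $(\Lambda,B)$ of $L/K$ the set $\sigma(A)$ is again a $K(\Lambda)$-linear basis of $L$, whence $\sharp A=\sharp\sigma(A)=[L:K(\Lambda)]=\sharp B$ (using Proposition 3.2 so that all these degrees are finite). The main obstacle is the backward direction: the passage from conjugacy of the two Galois groups to the equality $\sigma_{0}(K(\Delta))=K(\Lambda)$ — which genuinely needs that $L$ is both Galois over and finite over each of $K(\Delta)$ and $K(\Lambda)$, so that the intermediate fields are recoverable from their Galois groups — and then the controlled lifting of the residual automorphism $\delta$ of $K(\Lambda)$ to an automorphism of $L$ via Proposition 4.7. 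The forward direction and the final degree comparison are essentially formal.
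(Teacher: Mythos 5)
Your proof is correct, but it follows a genuinely different route from the paper's, most notably in the backward direction. The paper proves ``$\Rightarrow$'' by passing through the decomposition groups and the linear decompositions of \emph{Propositions 5.3} and \emph{6.3}, whereas your direct conjugation computation $(\sigma\tau\sigma^{-1})(x)=x$ for $x\in K(\Lambda)$ is more elementary and, as you note, uses neither the Galois hypotheses nor finiteness. For ``$\Leftarrow$'' the paper invokes \emph{Proposition 8.14} to turn conjugacy into an $(L,L)$-spatial isomorphism, then writes $L=K(\Delta)[\theta]$ as a simple extension, uses Dedekind independence (\emph{Theorem 3.19}) to show the conjugates $g_{0}(\theta),\dots,g_{n-1}(\theta)$ form a $K(\Delta)$-basis, and finally constructs the extension of $f$ by an explicit polynomial formula in $\theta$. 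You instead use the identity $\sigma_{0}\left(L^{H}\right)=L^{\sigma_{0}H\sigma_{0}^{-1}}$ together with the Galois hypotheses $K(\Delta)=L^{Aut(L/K(\Delta))}$ and $K(\Lambda)=L^{Aut(L/K(\Lambda))}$ to conclude $\sigma_{0}(K(\Delta))=K(\Lambda)$, and then repair the restriction by lifting the residual automorphism $\delta=f\circ\left(\sigma_{0}|_{K(\Delta)}\right)^{-1}\in Aut\left(K(\Lambda)/K\right)$ to $L$ via \emph{Proposition 4.7} and composing. This is shorter, pinpoints exactly where the Galois hypothesis enters (recovering the two subfields as fixed fields of their Galois groups), and avoids the primitive-element and well-definedness issues of the paper's explicit formula; the price is that the delicate content of actually extending an automorphism of $K(\Lambda)$ to $L$ is delegated entirely to \emph{Proposition 4.7} rather than being carried out by hand. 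Your degree count $\sharp A=\sharp\sigma(A)=\left[L:K(\Lambda)\right]=\sharp B$ agrees in substance with the paper's.
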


\begin{proof}
Consider the nice bases $\left( \Delta ,A\right) $ and $\left( \Lambda
,B\right) $ of $L/K$, respectively. If $K\left( \Delta \right) =K\left(
\Lambda \right) $, particularly $f$ is the identity map $id_{K\left( \Delta
\right) }$ on $K\left( \Delta \right) $, then it is trivial from the
decomposition group $D_{L/K}\left( \Delta ,A\right) $ of $L/K$ since $%
Aut\left( L/K\left( \Delta \right) \right) =Aut\left( L/K\left( \Lambda
\right) \right) $ holds in such a case. In the following we suppose $f$ is
not an identity map.

Prove $\Rightarrow $. Assume there is an automorphism $\sigma \in Aut\left(
L/K\right) $ such that the restriction $\sigma |_{K\left( \Delta \right) }$
is the map $f$. Let $\Delta ^{\prime }=\sigma \left( \Delta \right) $ and $%
A^{\prime }=\sigma \left( A\right) $. We have
\begin{equation*}
K\left( \Lambda \right) =K\left( \Delta ^{\prime }\right)
\end{equation*}%
and
\begin{equation*}
L=K\left( \Lambda \right) \left[ B\right] =K\left( \Delta ^{\prime }\right) %
\left[ A^{\prime }\right] .
\end{equation*}%
Then
\begin{equation*}
\begin{array}{l}
\pi _{a}\left( L/K\right) \left( \Lambda ,B\right) \\
=Aut\left( L/K\left( \Lambda \right) \right) \\
=Aut\left( L/K\left( \Delta ^{\prime }\right) \right) \\
=\pi _{a}\left( L/K\right) \left( \Delta ^{\prime },A^{\prime }\right) ; \\
\\
\pi _{t}\left( L/K\right) \left( \Lambda ,B\right) \\
=j_{\left( \Lambda ,B\right) }\left( Aut\left( K\left( \Lambda \right)
/K\right) \right) \\
=j_{\left( \Delta ^{\prime },A^{\prime }\right) }\left( Aut\left( K\left(
\Delta ^{\prime }\right) /K\right) \right) \\
=\pi _{t}\left( L/K\right) \left( \Delta ^{\prime },A^{\prime }\right) .%
\end{array}%
\end{equation*}

Consider the decomposition groups%
\begin{equation*}
D_{L/K}\left( \Delta ,A\right) =\pi _{a}\left( L/K\right) \left( \Delta
,A\right) \cdot \pi _{t}\left( L/K\right) \left( \Delta ,A\right)
\end{equation*}%
and%
\begin{equation*}
D_{L/K}\left( \Delta ^{\prime },A^{\prime }\right) =\pi _{a}\left(
L/K\right) \left( \Delta ^{\prime },A^{\prime }\right) \cdot \pi _{t}\left(
L/K\right) \left( \Delta ^{\prime },A^{\prime }\right)
\end{equation*}%
of $L/K$ at nice bases $\left( \Delta ,A\right) $ and $\left( \Delta
^{\prime },A^{\prime }\right) $, respectively. By \emph{Proposition 6.3} we
have%
\begin{equation*}
D_{L/K}\left( \Delta ^{\prime },A^{\prime }\right) =\sigma \cdot
D_{L/K}\left( \Delta ,A\right) \cdot \sigma ^{-1};
\end{equation*}
from the linear decomposition of automorphisms in \emph{Proposition 5.3} we
have
\begin{equation*}
\begin{array}{l}
\pi _{a}\left( L/K\right) \left( \Lambda ,B\right) \\
=\pi _{a}\left( L/K\right) \left( \Delta ^{\prime },A^{\prime }\right) \\
=\sigma \cdot \pi _{a}\left( L/K\right) \left( \Delta ,A\right) \cdot \sigma
^{-1}.%
\end{array}%
\end{equation*}

Hence, the Galois groups $Aut\left( L/K\left( \Delta \right) \right) $ and $%
Aut\left( L/K\left( \Lambda \right) \right) $ are conjugate in $Aut\left(
L/K\right) $, i.e.,
\begin{equation*}
\sigma \cdot Aut\left( L/K\left( \Delta \right) \right) \cdot \sigma
^{-1}=Aut\left( L/K\left( \Lambda \right) \right) .
\end{equation*}

As $L$ is algebraic Galois over $K\left( \Delta \right) $ and $K\left(
\Lambda \right) $, respectively, we have
\begin{equation*}
\begin{array}{l}
\sharp A \\
=\sharp Aut\left( L/K\left( \Delta \right) \right) \\
=\sharp Aut\left( L/K\left( \Lambda \right) \right) \\
=\sharp B.%
\end{array}%
\end{equation*}

Prove $\Leftarrow $. Let $Aut\left( L/K\left( \Delta \right) \right) $ and $%
Aut\left( L/K\left( \Lambda \right) \right) $ be conjugate in $Aut\left(
L/K\right) $, i.e., there is some $\sigma \in Aut\left( L/K\right) $ such
that
\begin{equation*}
\sigma \cdot Aut\left( L/K\left( \Delta \right) \right) \cdot \sigma
^{-1}=Aut\left( L/K\left( \Lambda \right) \right) .
\end{equation*}%
From \emph{Proposition 8.14} it is seen that the two Galois groups $%
Aut\left( L/K\left( \Delta \right) \right) $ and $Aut\left( L/K\left(
\Lambda \right) \right) $ are $\left( L,L\right) $-spatially isomorphic.
Then the following three conditions are satisfied:

$\left( i\right) $ There is an isomorphism $\psi :H\rightarrow G$ of groups;

$\left( ii\right) $ There is a $K$-isomorphism $\phi :L\rightarrow M=L$ of
fields such that the actions of $G$ on $L$ and $H$ on $M=L$ are $\left( \psi
,\phi \right) $\emph{-compatible}, i.e., $g\left( x\right) =\phi ^{-1}\circ
\psi \left( g\right) \circ \phi \left( x\right) $ holds for any $g\in G$ and
$x\in L$;

$\left( iii\right) $ Fixed any $g\in G$ and $x\in L$. There is $g\left(
x\right) =x$ in $L$ if and only if $\psi \left( g\right) \circ \phi \left(
x\right) =\phi \left( x\right) $ holds in $M=L$.

It follows that $\phi \in Aut\left( L/K\right) $ and
\begin{equation*}
\psi \left( g\right) =\tau _{\phi }\left( g\right) :=\phi ^{-1}\cdot g\cdot
\phi
\end{equation*}
holds in $Aut\left( L/K\right) $ for any $g\in Aut\left( L/K\left( \Delta
\right) \right) $. Put $\Delta ^{\prime }=f\left( \Delta \right) $; $%
g^{\prime }=\psi \left( g\right) $ for any $g\in Aut\left( L/K\left( \Delta
\right) \right) $; $x^{\prime }=\phi \left( x\right) $ for any $x\in L$. We
have $K\left( \Delta ^{\prime }\right) =K\left( \Lambda \right) $.

On the other hand, as $L$ is finite and Galois over $K\left( \Delta \right) $%
, we have some element $\theta \in L$ such that
\begin{equation*}
L=K\left( \Delta \right) \left[ \theta \right]
\end{equation*}
is a simple extension over $K\left( \Delta \right) $.

Prove $L=K\left( \Delta ^{\prime }\right) \left[ \theta ^{\prime }\right] $
is a simple extension over $K\left( \Delta ^{\prime }\right) $. Here, $%
\Delta ^{\prime }=f\left( \Delta \right) $ and $\theta ^{\prime }=\phi
\left( \theta \right) $. In deed, put
\begin{equation*}
n=\sharp Aut\left( L/K\left( \Delta \right) \right)
\end{equation*}
and
\begin{equation*}
Aut\left( L/K\left( \Delta \right) \right) =\{g_{0},g_{1},\cdots ,g_{n-1}\}
\end{equation*}
with $\sigma _{0}$ being the identity map $id_{L}$ on $L$. There is
\begin{equation*}
Aut\left( L/K\left( \Delta ^{\prime }\right) \right) =\{g_{0}^{\prime
},g_{1}^{\prime },\cdots ,g_{n-1}^{\prime }\}
\end{equation*}
where $g_{i}^{\prime }=\psi \left( g_{i}\right) $ for each $0\leq i\leq n-1$.

Consider the $K\left( \Delta \right) $-linear space $L$. As $0\not=\theta
\in L$, we have $0\not=g_{i}\left( \theta \right) \in L$ for $0\leq i\leq
n-1 $; as
\begin{equation*}
g_{0},g_{1},\cdots ,g_{n-1}
\end{equation*}
are $L$-linearly independent from \emph{Proposition 3.19} on Dedekind
independence, it is seen that
\begin{equation*}
g_{0}\left( \theta \right) ,g_{1}\left( \theta \right) ,\cdots
,g_{n-1}\left( \theta \right) \in L
\end{equation*}
are $K\left( \Delta \right) $-linearly independent; otherwise, if
\begin{equation*}
\sum_{i=0}^{n-1}c_{i}\cdot g_{i}\left( \theta \right) =0
\end{equation*}
holds in $L$ for some
\begin{equation*}
c_{0},c_{1},\cdots ,c_{n-1}\in K\left( \Delta \right)
\end{equation*}
which are not all zero, then
\begin{equation*}
\sum_{i=0}^{n-1}c_{i}\cdot g_{i}=0
\end{equation*}
and it follows that for any $x\in L$ there is an equality
\begin{equation*}
\left( \sum_{i=0}^{n-1}c_{i}\cdot g_{i}\right) \left( x\right) =0
\end{equation*}
as a $K\left( \Delta \right) $-linear mapping on $L$, which will be in
contradiction. Hence,
\begin{equation*}
L=span_{K\left( \Delta \right) }\{g_{0}\left( \theta \right) ,g_{1}\left(
\theta \right) ,\cdots ,g_{n-1}\left( \theta \right) \}.
\end{equation*}

In the same way, consider the $K\left( \Delta ^{\prime }\right) $-linear
space $L$. We have
\begin{equation*}
L=span_{K\left( \Delta ^{\prime }\right) }\{g_{0}^{\prime }\left( \theta
^{\prime }\right) ,g_{1}^{\prime }\left( \theta ^{\prime }\right) ,\cdots
,g_{n-1}^{\prime }\left( \theta ^{\prime }\right) \},
\end{equation*}
where
\begin{equation*}
\theta ^{\prime }\in L\setminus K\left( \Delta ^{\prime }\right)
\end{equation*}
holds from the above condition $\left( iii\right) $ since
\begin{equation*}
\theta \in L\setminus K\left( \Delta \right)
\end{equation*}
holds, i.e., there is
\begin{equation*}
g_{i}\left( \theta \right) \not=\theta
\end{equation*}
for any $1\leq i\leq n-1$. This proves
\begin{equation*}
L=K\left( \Delta ^{\prime }\right) \left[ \theta ^{\prime }\right] .
\end{equation*}

Prove there is an $K$-automorphism $\sigma \in Aut\left( L/K\right) $ which
is an extension of $f$ from $K\left( \Delta \right) $ to $L$. In deed, put
\begin{equation*}
\Delta =\{t_{1},t_{2},\cdots ,t_{m}\}
\end{equation*}
and
\begin{equation*}
\Delta ^{\prime }=\{t_{1}^{\prime },t_{2}^{\prime },\cdots ,t_{m}^{\prime }\}
\end{equation*}
where $t_{i}^{\prime }=f\left( t_{i}\right) $ for any $1\leq i\leq m$.
Consider the extensions $L/K\left( \Delta \right) $ and $L/K\left( \Delta
^{\prime }\right) $. We have
\begin{equation*}
L=K\left( \Delta \right) \left[ \theta \right] =\{P\left( \theta \right) \in
L:P\left( X\right) \in K\left( \Delta \right) \left[ X\right] \}
\end{equation*}
and
\begin{equation*}
L=K\left( \Delta ^{\prime }\right) \left[ \theta ^{\prime }\right]
=\{Q\left( \theta ^{\prime }\right) \in L:Q\left( X\right) \in K\left(
\Delta ^{\prime }\right) \left[ X\right] \}
\end{equation*}
where
\begin{equation*}
P\left( X\right) =\sum_{i=0}^{r}a_{i}X^{i}\in K\left( \Delta \right) \left[ X%
\right]
\end{equation*}
with each $a_{i}\in K\left( \Delta \right) $ and
\begin{equation*}
Q\left( X\right) =\sum_{i=0}^{s}b_{i}X^{i}\in K\left( \Delta ^{\prime
}\right) \left[ X\right]
\end{equation*}
with each $b_{i}\in K\left( \Delta ^{\prime }\right) $.

Set a mapping
\begin{equation*}
\sigma :L\rightarrow L
\end{equation*}
of sets given by
\begin{equation*}
P\left( \theta \right) =\sum_{i=0}^{r}a_{i}\cdot \theta ^{i}\longmapsto
Q\left( \theta ^{\prime }\right) =\sum_{i=0}^{r}f\left( b_{i}\right) \cdot
\theta ^{\prime i}
\end{equation*}
for each polynomial
\begin{equation*}
P\left( X\right) \in K\left( \Delta \right) \left[ X\right] .
\end{equation*}

Then $\sigma $ is the desired $K$-automorphism of the field $L$ such that $%
\sigma |_{K\left( \Delta \right) }=f$ holds. This completes the proof.
\end{proof}

\begin{lemma}
\emph{(Lifting of isomorphisms of purely transcendental subextensions, }$%
\emph{II}$\emph{)} Let $L$ and $M$ be purely transcendental extensions over
a field $K$ of the same finite transcendence degree. Fixed transcendence
bases $\Delta $ of $L/K$ and $\Lambda $ of $M/K$ such that $L/K\left( \Delta
\right) $ and $M/K\left( \Lambda \right) $ both are Galois extensions,
respectively. Suppose $f:K\left( \Delta \right) \rightarrow K\left( \Lambda
\right) $ is a $K$-isomorphism of fields.

Then there is a $K$-isomorphism $\sigma :L\rightarrow M$ of fields such that
$f$ is the restriction $\sigma |_{K\left( \Delta \right) }$ if and only if
the Galois groups $Aut\left( L/K\left( \Delta \right) \right) $ and $%
Aut\left( M/K\left( \Lambda \right) \right) $ are $\left( L,M\right) $%
-spatially isomorphic.
\end{lemma}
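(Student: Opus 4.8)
The plan is to reduce this statement, Lemma 8.17, to the previous Lemma 8.16 (``Lifting of isomorphisms of purely transcendental subextensions, I'') by introducing an auxiliary $K$-isomorphism that transports the problem from the pair $(L,M)$ into a single field. First I would fix nice bases $(\Delta,A)$ of $L/K$ and $(\Lambda,B)$ of $M/K$, so that $L=K(\Delta)[A]$ is algebraic Galois over $K(\Delta)$ and $M=K(\Lambda)[B]$ is algebraic Galois over $K(\Lambda)$. Since $L$ and $M$ are purely transcendental over $K$ of the same finite transcendence degree, by Claim 4.9 (applied to transcendence bases of $L$ and $M$ chosen of equal cardinality) there is a $K$-algebra isomorphism $\rho:M\to L$; more precisely I would first arrange matters so that $\rho$ carries $K(\Lambda)$ onto $K(\Delta')$ for some transcendence base $\Delta'$ of $L/K$ with $K(\Delta')=K(\Delta)$ whenever convenient, using that $K$-isomorphisms of purely transcendental fields of equal degree always exist. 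The point is that via $\rho$, the Galois group $\mathrm{Aut}(M/K(\Lambda))$ is identified with a full algebraic Galois subgroup of $\mathrm{Aut}(L/K)$, namely $\rho\circ\mathrm{Aut}(M/K(\Lambda))\circ\rho^{-1}=\mathrm{Aut}(L/\rho(K(\Lambda)))$, and $L$ is algebraic Galois over $\rho(K(\Lambda))$ because $\rho$ translates normality and separability (the same translation argument used in the proof of Proposition 5.3$(iv)$).

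Next, for the forward direction, I would assume a $K$-isomorphism $\sigma:L\to M$ extending $f$. Then $\rho\circ\sigma:L\to L$ is an element of $\mathrm{Aut}(L/K)$ whose restriction to $K(\Delta)$ is $\rho\circ f$, a $K$-isomorphism from $K(\Delta)$ onto $\rho(K(\Lambda))$. Applying the forward direction of Lemma 8.16 to this single-field situation gives that $\mathrm{Aut}(L/K(\Delta))$ and $\mathrm{Aut}(L/\rho(K(\Lambda)))$ are conjugate in $\mathrm{Aut}(L/K)$; transporting back through $\rho$ and invoking Proposition 8.14 (conjugate $\iff$ $(L,L)$-spatially isomorphic) together with the obvious fact that spatial isomorphism is stable under composing the field map with $\rho$, I conclude that $\mathrm{Aut}(L/K(\Delta))$ and $\mathrm{Aut}(M/K(\Lambda))$ are $(L,M)$-spatially isomorphic. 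The conditions $(i)$--$(iii)$ of Definition 1.3 transfer because $\rho$ is a $K$-isomorphism that intertwines the two group actions, so the compatibility identity $g(x)=\phi^{-1}\circ\psi(g)\circ\phi(x)$ and the fixed-point equivalence in $(iii)$ are preserved under post-composition with $\rho$.

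For the converse, I would start from an $(L,M)$-spatial isomorphism $(\psi,\phi)$ between $G=\mathrm{Aut}(L/K(\Delta))$ and $H=\mathrm{Aut}(M/K(\Lambda))$, where $\phi:L\to M$ is a $K$-isomorphism of fields. Composing with $\rho$, the pair $(\psi',\,\rho\circ\phi)$ is then an $(L,L)$-spatial isomorphism between $G$ and $\rho H\rho^{-1}=\mathrm{Aut}(L/\rho(K(\Lambda)))$, so by Proposition 8.14 these two full algebraic Galois subgroups are conjugate in $\mathrm{Aut}(L/K)$. Now the converse direction of Lemma 8.16, applied to the $K$-isomorphism $\rho\circ f:K(\Delta)\to \rho(K(\Lambda))$ (which is well-defined since $\rho$ maps $K(\Lambda)$ onto $\rho(K(\Lambda))$ and $f$ maps $K(\Delta)$ onto $K(\Lambda)$), produces an automorphism $\tau\in\mathrm{Aut}(L/K)$ with $\tau|_{K(\Delta)}=\rho\circ f$. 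Then $\sigma:=\rho^{-1}\circ\tau:L\to M$ is a $K$-isomorphism of fields with $\sigma|_{K(\Delta)}=f$, as required. One must check here that $L$ is indeed algebraic Galois over $\rho(K(\Lambda))$ — this is where the translation of normality/separability through $\rho$ is needed, exactly as in Lemma 4.13 and Proposition 5.3$(iv)$.

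The main obstacle I expect is the bookkeeping around the auxiliary isomorphism $\rho$: one must make sure that $\rho$ can be chosen compatibly with the transcendence bases so that the hypotheses of Lemma 8.16 (both subfields being of the form $K(\cdot)$ with $L$ Galois over each, and the relevant Galois groups being the ones in play) are genuinely met, and that the three conditions of the spatial-isomorphism definition survive composition with $\rho$ in both directions without a degree mismatch. In particular one should verify that $\rho$ preserves the cardinalities $\sharp A=\sharp B$ of the algebraic parts of the nice bases — which is automatic once $\mathrm{Aut}(L/K(\Delta))$ and $\mathrm{Aut}(M/K(\Lambda))$ are shown isomorphic as abstract groups, since $L$ is finite Galois of degree $\sharp A$ over $K(\Delta)$ and similarly for $M$. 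Apart from this, every step is a direct application of Proposition 8.14, Lemma 8.16, Claim 4.9, and the decomposition machinery of Proposition 5.3, so no new ideas beyond the single-field case are required.
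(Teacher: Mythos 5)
Your proof is correct, but it follows a genuinely different route from the paper's. The paper proves Lemma~8.17 by re-running the entire argument of Lemma~8.16 verbatim with the target field changed from $L$ to $M$ (choosing a nice basis $\left(\Delta',A'\right)$ of $M/K$ with $K\left(\Delta'\right)=K\left(\Lambda\right)$ and repeating the Dedekind-independence and simple-generator construction). You instead treat Lemma~8.16 as a black box: you fix an auxiliary $K$-isomorphism $\rho:M\rightarrow L$ (which exists by Claim~4.9 / Lemma~8.18 since both fields are purely transcendental of the same finite degree over $K$), observe that $\rho\left(K\left(\Lambda\right)\right)=K\left(\rho\left(\Lambda\right)\right)$ is again generated by a transcendence base of $L/K$ with $L$ algebraic Galois over it (normality and separability transport along $\rho$), identify $Aut\left(M/K\left(\Lambda\right)\right)$ with $Aut\left(L/\rho\left(K\left(\Lambda\right)\right)\right)$ by conjugation, and then apply Lemma~8.16 to the single-field data $\left(K\left(\Delta\right),\,\rho\left(K\left(\Lambda\right)\right),\,\rho\circ f\right)$, translating between conjugacy and spatial isomorphism via Proposition~8.14 and undoing $\rho$ at the end. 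All the transport steps check out: the three conditions of Definition~1.3 are preserved under composing the field map with a bijective $K$-isomorphism, and $\sigma:=\rho^{-1}\circ\tau$ does restrict to $f$ on $K\left(\Delta\right)$. What your reduction buys is modularity --- no part of the long proof of Lemma~8.16 has to be repeated --- at the cost of the bookkeeping around $\rho$; the paper's adaptation avoids the auxiliary isomorphism entirely but implicitly redoes the whole construction. Two minor remarks: your aside about arranging $\rho\left(K\left(\Lambda\right)\right)=K\left(\Delta\right)$ is unnecessary (and not in general achievable) --- the argument as you run it never uses it; and in the forward direction the detour through $\rho$ and Lemma~8.16 can be skipped, since $\sigma$ itself already furnishes the $\left(L,M\right)$-spatial isomorphism with $\phi=\sigma$ and $\psi\left(g\right)=\sigma\circ g\circ\sigma^{-1}$, exactly as in the proof of Theorem~8.23.
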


\begin{proof}
Immediately from the same procedure as in the proof of \emph{Lemma 8.16} by
taking $\left( \Delta ,A\right) $ and $\left( \Delta ^{\prime },A^{\prime
}\right) $ as nice bases of the extensions $L/K$ and $M/K$ (with $K\left(
\Delta ^{\prime }\right) =K\left( \Lambda \right) $), respectively.
\end{proof}

\begin{lemma}
\emph{(Lifting of isomorphisms of purely transcendental subextensions, }$%
\emph{III}$\emph{)} Let $L$ and $M$ be purely transcendental extensions over
a field $K$ of the same finite transcendence degree. Then the Galois groups $%
Aut\left( L/K\right) $ of the extension $L/K$ and $Aut\left( M/K\right) $ of
$M/K$ are (automatically) $\left( L,M\right) $-spatially isomorphic.
\end{lemma}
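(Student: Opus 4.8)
The plan is to reduce the statement to Claim 4.9 together with a direct verification of the three bookkeeping conditions in Definition 1.9. First I would fix presentations: since $L$ and $M$ are purely transcendental over $K$ of the same finite transcendence degree $n$, choose transcendence bases so that $L = K(s_1,s_2,\cdots,s_n)$ and $M = K(t_1,t_2,\cdots,t_n)$ with $s_1,\cdots,s_n$ (resp. $t_1,\cdots,t_n$) algebraically independent over $K$. The assignment $s_i\mapsto t_i$ extends uniquely to a $K$-algebra homomorphism on the polynomial rings, and since it sends nonzero polynomials to nonzero polynomials it extends further to a $K$-isomorphism $\phi:L\rightarrow M$ of fields, whose inverse is induced by $t_i\mapsto s_i$. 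This is precisely the map $\tau$ appearing in the proof of Claim 4.9.

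Next I would transport automorphisms along $\phi$. Define the conjugation map $\psi$ by $g\mapsto \phi\circ g\circ\phi^{-1}$, a map from $Aut\left(L/K\right)$ to $Aut\left(M/K\right)$ (taken in whichever direction makes the identity in Definition 1.9(ii) come out right; its inverse is $h\mapsto\phi^{-1}\circ h\circ\phi$). Because $\phi$ and $\phi^{-1}$ are $K$-isomorphisms of fields, each $\psi(g)$ is again a field automorphism fixing $K$ pointwise, and $\psi$ is a group homomorphism with the stated two-sided inverse; hence $\psi$ is an isomorphism of groups, which gives condition (i). Condition (ii), the $\left(\psi,\phi\right)$-compatibility $g\left(x\right)=\phi^{-1}\circ\psi\left(g\right)\circ\phi\left(x\right)$, then holds by construction: it is just the definition of $\psi(g)$ rewritten. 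For condition (iii) I would compute $\psi\left(g\right)\circ\phi\left(x\right)=\phi\bigl(g\bigl(\phi^{-1}\left(\phi\left(x\right)\right)\bigr)\bigr)=\phi\left(g\left(x\right)\right)$ for every relevant $g$ and every $x\in L$; since $\phi$ is injective, $\phi\left(g\left(x\right)\right)=\phi\left(x\right)$ if and only if $g\left(x\right)=x$, which is exactly the required equivalence. Thus $\left(\psi,\phi\right)$ is an $\left(L,M\right)$-spatial isomorphism between $Aut\left(L/K\right)$ and $Aut\left(M/K\right)$.

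There is essentially no hard step here — the word \textquotedblleft automatically\textquotedblright\ in the statement is accurate — but two points deserve care in the write-up. First, the well-definedness and bijectivity of $\phi$ rest on $L$ and $M$ being genuine rational function fields in the chosen bases, i.e. on the hypotheses that both extensions are purely transcendental of the \emph{same} finite transcendence degree $n$; I would spell out that an algebraic relation among $t_1,\cdots,t_n$ pulled back along $\phi^{-1}$ would contradict algebraic independence of $s_1,\cdots,s_n$, and symmetrically, so that $\phi$ is a genuine field isomorphism. Second, one must keep the direction of $\psi$ consistent with the convention in Definition 1.9 (an isomorphism $H\rightarrow G$); since the fixed-point clause (iii) is symmetric in $\phi$ and $\phi^{-1}$, this is only a matter of bookkeeping and causes no difficulty. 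This completes the argument.
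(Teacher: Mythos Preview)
Your proposal is correct and follows essentially the same approach as the paper: fix a $K$-isomorphism $\phi:L\rightarrow M$ and define $\psi$ by conjugation, $\psi(g)=\phi\circ g\circ\phi^{-1}$. Your write-up is in fact more complete than the paper's, since you explicitly verify all three conditions of Definition~1.3 (including the fixed-point clause~(iii)), whereas the paper only spells out that $\psi$ is a group homomorphism and leaves the compatibility conditions implicit.
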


\begin{proof}
Fixed a $K$-isomorphism $\phi :L\rightarrow M$. For any $x\in L$ and $g\in
Aut\left( L/K\right) $, let $y=\phi \left( x\right) $ and put
\begin{equation*}
\psi \left( g\right) \left( y\right) =\phi \circ g\circ \phi ^{-1}\left(
y\right) .
\end{equation*}%
There is $\psi \left( g\right) \in Aut\left( M/K\right) $ and then
\begin{equation*}
\psi \left( Aut\left( L/K\right) \right) =Aut\left( M/K\right)
\end{equation*}%
holds. On the other hand, take any $g,g^{\prime }\in Aut\left( L/K\right) $.
For any $y\in M$ we have
\begin{equation*}
\begin{array}{l}
x=\phi ^{-1}\left( y\right) \in L; \\
\\
\psi \left( g\circ g^{\prime }\right) \left( y\right) \\
=\phi \circ \left( g\circ g^{\prime }\right) \circ \phi ^{-1}\left( y\right)
\\
=\phi \circ g\circ g^{\prime }\left( x\right) \\
=\left( \phi \circ g\circ \phi ^{-1}\right) \left( \phi \circ g^{\prime
}\left( x\right) \right) \\
=\left( \phi \circ g\circ \phi ^{-1}\right) \circ \left( \phi \circ
g^{\prime }\circ \phi ^{-1}\right) \left( y\right) .%
\end{array}%
\end{equation*}%
This proves
\begin{equation*}
\psi :Aut\left( L/K\right) \rightarrow Aut\left( M/K\right)
\end{equation*}%
is a $K$-isomorphism of fields.
\end{proof}

\begin{remark}
Let $L$ and $M$ be purely transcendental extensions over a field $K$ of the
same finite transcendence degree. Fixed transcendence bases $\Delta $ of the
extension $L/K$ and $\Lambda $ of $M/K$ such that $L/K\left( \Delta \right) $
and $M/K\left( \Lambda \right) $ both are Galois extensions, respectively.

$\left( i\right) $ From \emph{Lemma 8.18}, the Galois groups $Aut\left(
K\left( \Delta \right) /K\right) $ and $Aut\left( K\left( \Lambda \right)
/K\right) $ are $\left( K\left( \Delta \right) ,K\left( \Lambda \right)
\right) $-spatially isomorphic.

$\left( ii\right) $ Consider the highest transcendental subgroups $\pi
_{t}\left( L/K\right) \left( \Delta ,A\right) $ of $L/K$ and $\pi _{t}\left(
M/K\right) \left( \Lambda ,B\right) $ of $M/K$, where $\left( \Delta
,A\right) $ and $\left( \Lambda ,B\right) $ are nice bases of $L/K$ and $M/K$%
, respectively. We have
\begin{equation*}
\pi _{t}\left( L/K\right) \left( \Delta ,A\right) =j_{\left( \Delta
,A\right) }\left( Aut\left( K\left( \Delta \right) /K\right) \right);
\end{equation*}
\begin{equation*}
\pi _{t}\left( L/K\right) \left( \Lambda ,B\right) =j_{\left( \Lambda
,B\right) }\left( Aut\left( K\left( \Lambda \right) /K\right) \right) .
\end{equation*}
However, $\pi _{t}\left( L/K\right) \left( \Delta ,A\right) $ and $\pi
_{t}\left( M/K\right) \left( \Lambda ,B\right) $ are not necessarily $\left(
L,M\right) $-spatially isomorphic in general.
\end{remark}

Recall that for an extension $L$ over a field $K$, $D_{L/K}\left( \Delta
,A\right) $, $\pi _{a}\left( L/K\right) \left( \Delta ,A\right) $ and $\pi
_{t}\left( L/K\right) \left( \Delta ,A\right) $ denote the decomposition
group, full algebraic subgroup and highest transcendental subgroup of $L/K$
at a nice basis $\left( \Delta ,A\right) $, respectively. See \emph{\S \S 4-5%
} for details.

\begin{theorem}
\emph{(Lifting of isomorphisms and spatial isomorphisms)} Let $L$ and $M$ be
purely transcendental extensions over a field $K$ of the same finite
transcendence degree. Fixed nice bases $\left( \Delta ,A\right) $ of $L/K$
and $\left( \Lambda ,B\right) $ of $M/K$ such that $L/K\left( \Delta \right)
$ and $M/K\left( \Lambda \right) $ both are Galois extensions, respectively.
The following statements are equivalent.

$\left( i\right) $ $Aut\left( L/K\left( \Delta \right) \right) $ and $%
Aut\left( M/K\left( \Lambda \right) \right) $ are $\left( L,M\right) $%
-spatially isomorphic.

$\left( ii\right) $ $\pi _{a}\left( L/K\right) \left( \Delta ,A\right) $ and
$\pi _{a}\left( M/K\right) \left( \Lambda ,B\right) $ are $\left( L,M\right)
$-spatially isomorphic.

$\left( iii\right) $ $\pi _{t}\left( L/K\right) \left( \Delta ,A\right) $
and $\pi _{t}\left( M/K\right) \left( \Lambda ,B\right) $ are $\left(
L,M\right) $-spatially isomorphic.

$\left( iv\right) $ $D_{L/K}\left( \Delta ,A\right) $ and $D_{M/K}\left(
\Lambda ,B\right) $ are $\left( L,M\right) $-spatially isomorphic.

$\left( v\right) $ For each $K$-isomorphism $f:K\left( \Delta \right)
\rightarrow K\left( \Lambda \right) $ of fields, there are finitely many $K$%
-isomorphism $\sigma :L\rightarrow M$ of fields whose restrictions $\sigma
|_{K\left( \Delta \right) }$ to $K\left( \Delta \right) $ are $f$.

$\left( vi\right) $ There is some $K$-isomorphism $f:K\left( \Delta \right)
\rightarrow K\left( \Lambda \right) $ of fields such that there exists a $K$%
-isomorphism $\sigma :L\rightarrow M$ of fields with $f=\sigma |_{K\left(
\Delta \right) }$ being the restriction.
\end{theorem}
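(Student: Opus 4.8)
The plan is to prove \emph{Theorem 8.20} by establishing a cycle of implications, exploiting the direct-product factorisation of the decomposition group obtained in \emph{Theorem 5.4} together with the lifting lemmas \emph{8.16--18}. First I would observe that for purely transcendental $L$ of finite transcendence degree, \emph{Proposition 3.2} guarantees $L$ is tame Galois (hence $\sigma$-tame Galois inside the relevant set of transcendence bases), so the machinery of \S\S4--5 applies at the chosen nice bases $(\Delta,A)$ and $(\Lambda,B)$. In particular $D_{L/K}(\Delta,A)=\pi_a(L/K)(\Delta,A)\cdot\pi_t(L/K)(\Delta,A)$ is a direct product of the two normal subgroups, and likewise for $M/K$.

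The core of the argument is the equivalence $(i)\Leftrightarrow(vi)$, which is essentially \emph{Lemma 8.17}: a $K$-isomorphism $f\colon K(\Delta)\to K(\Lambda)$ extends to a $K$-isomorphism $L\to M$ if and only if $Aut(L/K(\Delta))$ and $Aut(M/K(\Lambda))$ are $(L,M)$-spatially isomorphic. Next I would derive $(i)\Leftrightarrow(ii)$ from \emph{Proposition 4.6}, since by definition $\pi_a(L/K)(\Delta,A)=Aut(L/K(\Delta))$ and $\pi_a(M/K)(\Lambda,B)=Aut(M/K(\Lambda))$, so $(i)$ and $(ii)$ are literally the same statement once one unwinds notation. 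For $(i)\Leftrightarrow(iii)$ I would use \emph{Proposition 4.7}: the normalised lifting $j_{(\Delta,A)}$ gives $\pi_t(L/K)(\Delta,A)\cong Aut(K(\Delta)/K)$, and \emph{Lemma 8.18} (via \emph{Remark 8.19}(i)) shows $Aut(K(\Delta)/K)$ and $Aut(K(\Lambda)/K)$ are automatically spatially isomorphic; combined with the fact that an $(L,M)$-spatial isomorphism restricts to a compatible isomorphism on $K(\Delta)\to K(\Lambda)$, one transports the transcendental parts as well (the compatibility and fixed-point conditions $(ii)$--$(iii)$ of \emph{Definition 8.3} for $\pi_t$ follow from those for $\pi_a$ because the two parts act on complementary $K$-linear components $E$ and $F$ as in \emph{Proposition 5.3}). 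Then $(ii)\wedge(iii)\Leftrightarrow(iv)$ follows from the direct-product decomposition: an $(L,M)$-spatial isomorphism of $D_{L/K}(\Delta,A)$ with $D_{M/K}(\Lambda,B)$ must carry the unique algebraic normal part to the algebraic normal part (they are distinguished as the subgroups acting trivially on $E$, resp. on $F$), and conversely a spatial isomorphism of each factor combines—using the same underlying field isomorphism $\phi$—into one of the product, since the two factors commute and intersect trivially by \emph{Proposition 4.10}.

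The remaining step $(i)\Leftrightarrow(v)\Leftrightarrow(vi)$ handles the counting of extensions. Given $(i)$, \emph{Lemma 8.17} produces at least one extension $\sigma$ of a given $f$; any two extensions $\sigma,\sigma'$ of the same $f$ differ by $\sigma'\circ\sigma^{-1}\in Aut(M/K(\Lambda))$, a finite group since $M$ is algebraic over $K(\Lambda)$ by \emph{Proposition 3.2}, so there are exactly $\sharp Aut(M/K(\Lambda))<+\infty$ of them—this gives $(i)\Rightarrow(v)$, and $(v)\Rightarrow(vi)$ is trivial (finitely many, in particular at least one, once a witness $f$ is chosen; here I would note $f$ itself can be constructed as in the proof of \emph{Lemma 8.16} using the generators of $\pi_t$, or simply fix any $K$-isomorphism of the purely transcendental subfields). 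Finally $(vi)\Rightarrow(i)$ is again \emph{Lemma 8.17}. The main obstacle I anticipate is the careful verification in the $(iii)$ and $(iv)$ directions that the three conditions of $(L,M)$-spatial isomorphism (group isomorphism, $(\psi,\phi)$-compatibility, and the if-and-only-if on fixed points) are genuinely \emph{transported} between the algebraic part, the transcendental part, and the full decomposition group using a \emph{single common} field isomorphism $\phi$; the block-diagonal structure of \emph{Proposition 5.3} makes this plausible, but one must check that the fixed-point condition $(iii)$ of \emph{Definition 8.3}, which is the genuinely new ingredient of transcendental Galois theory, really does pass through the normalised lifting $j_{(\Delta,A)}$ in both directions rather than merely the weaker compatibility condition.
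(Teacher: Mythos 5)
Your proposal is correct and follows essentially the same route as the paper: the equivalences are reduced to \emph{Lemma 8.17} for the lifting statements $(i)\Leftrightarrow(v)\Leftrightarrow(vi)$, the tautological identification $(i)\Leftrightarrow(ii)$, and the block-diagonal decomposition of \emph{Proposition 5.3} (showing $\phi(E)=E'$ and $\phi(F)=F'$ via the fixed-point condition of the spatial isomorphism) to transport the isomorphism between the algebraic part, the transcendental part, and the decomposition group. The only cosmetic difference is that you organise $(iv)$ as $(ii)\wedge(iii)\Leftrightarrow(iv)$ where the paper proves $(ii)\Rightarrow(iv)$ and $(iv)\Rightarrow(ii)$ directly, and the verification you correctly flag as the main obstacle is exactly what the paper carries out in \emph{Steps 1--3} of its proof of $(ii)\Rightarrow(iii)$.
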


\begin{proof}
$\left( i\right) \Leftrightarrow \left( ii\right) $. It is from the fact
that $Aut\left( L/K\left( \Delta \right) \right) =\pi _{a}\left( L/K\right)
\left( \Delta ,A\right) $ and $Aut\left( M/K\left( \Lambda \right) \right)
=\pi _{a}\left( M/K\right) \left( \Lambda ,B\right) $ hold.

$\left( i\right) \Leftrightarrow \left( v\right) $. It is from \emph{Lemma
8.17}. In such a case, fixed a $K$-isomorphism $f:K\left( \Delta \right)
\rightarrow K\left( \Lambda \right) $ of fields and a $K$-isomorphism $%
\sigma :L\rightarrow M$ of fields with $f=\sigma |_{K\left( \Delta \right) }$%
. It is seen that the Galois group $Aut\left( L/K\left( \Delta \right)
\right) $ is a finite group and for each $g\in Aut\left( L/K\left( \Delta
\right) \right) $ the composite $\sigma \circ g:L\rightarrow M$ is a $K$%
-isomorphism of fields with $f=\sigma |_{K\left( \Delta \right) }$.

$\left( ii\right) \Rightarrow \left( iii\right) $. Assume $\pi _{a}\left(
L/K\right) \left( \Delta ,A\right) $ and $\pi _{a}\left( M/K\right) \left(
\Lambda ,B\right) $ are $\left( L,M\right) $-spatially isomorphic. There is
an isomorphism
\begin{equation*}
\psi :\pi _{a}\left( L/K\right) \left( \Delta ,A\right) \rightarrow \pi
_{a}\left( M/K\right) \left( \Lambda ,B\right)
\end{equation*}
of groups and a $K$-isomorphism $\phi :L\rightarrow M$ of fields such that $%
g\left( x\right) =\phi ^{-1}\circ \psi \left( g\right) \circ \phi \left(
x\right) $ holds for any $g\in \pi _{a}\left( L/K\right) \left( \Delta
,A\right) $ and $x\in L$ and that for any $g\in \pi _{a}\left( L/K\right)
\left( \Delta ,A\right) $ and $x\in L$ there is $g\left( x\right) =x$ in $L$
if and only if $\psi \left( g\right) \left( \phi \left( x\right) \right)
=\phi \left( x\right) $ holds in $M$.

Consider the linear decomposition of isomorphisms in \emph{\S 5.2} and apply
\emph{Proposition 5.3} to the case here. Put
\begin{equation*}
E=K\left( \Delta \right) ,F=span_{K}\left( L\setminus K\left( \Delta \right)
\right) ;
\end{equation*}
\begin{equation*}
E^{\prime }=K\left( \Lambda \right) ,F^{\prime }=span_{K}\left( M\setminus
K\left( \Lambda \right) \right) .
\end{equation*}
We will proceed in several steps.

\emph{Step 1}. Prove $\phi \left( E\right) =E^{\prime }$. In deed, $\phi
\left( E\right) \subseteq E^{\prime }$ holds; otherwise, if there is some
nonzero $x_{0}\in E$ with $\phi \left( x_{0}\right) \in F^{\prime }$, then
there is some $g_{0}\in \pi _{a}\left( L/K\right) \left( \Delta ,A\right) $
such that $\psi \left( g_{0}\right) \left( \phi \left( x_{0}\right) \right)
\not=\phi \left( x_{0}\right) $ since $\psi \left( \pi _{a}\left( L/K\right)
\left( \Delta ,A\right) \right) =\pi _{a}\left( M/K\right) \left( \Lambda
,B\right) $ holds and $K\left( \Lambda \right) $ is the invariant subfield
of $M$ under the Galois group $\pi _{a}\left( M/K\right) \left( \Lambda
,B\right) $; it follows that $g_{0}\left( x_{0}\right) \not=x_{0}$ holds,
which will be in contradiction. Hence, $\phi \left( E\right) \subseteq
E^{\prime }$. In the same, by interchanging $L$ and $M$, consider the $K$%
-isomorphism $\phi ^{-1}:M\rightarrow L$ and we have $\phi ^{-1}\left(
E^{\prime }\right) \subseteq E$. This proves $\phi \left( E\right)
=E^{\prime }$.

\emph{Step 2}. Prove $\phi \left( F\right) =F^{\prime }$. In deed, $\phi
\left( F\right) \subseteq F^{\prime }$ holds; otherwise, if there is some
nonzero $z_{0}\in E$ with $\phi \left( z_{0}\right) \in E^{\prime }$, then
for every $h^{\prime }\in \pi _{a}\left( M/K\right) \left( \Lambda ,B\right)
$ there is $h^{\prime }\left( \phi \left( z_{0}\right) \right) =\phi \left(
z_{0}\right) $ since $\psi \left( \pi _{a}\left( L/K\right) \left( \Delta
,A\right) \right) =\pi _{a}\left( M/K\right) \left( \Lambda ,B\right) $
holds and $K\left( \Lambda \right) $ is the invariant subfield of $M$ under
the Galois group $\pi _{a}\left( M/K\right) \left( \Lambda ,B\right) $; it
follows that $h\left( x_{0}\right) =x_{0}$ holds for any $h=\psi ^{-1}\left(
h^{\prime }\right) \in \pi _{a}\left( L/K\right) \left( \Delta ,A\right) $,
which will be in contradiction. Hence, $\phi \left( F\right) \subseteq
F^{\prime }$. In the same, by interchanging $L$ and $M$, consider the $K$%
-isomorphism $\phi ^{-1}:M\rightarrow L$ and we have $\phi ^{-1}\left(
F^{\prime }\right) \subseteq F$. This proves $\phi \left( F\right)
=F^{\prime }$.

It follows that the restriction $\phi |_{E}:K\left( \Delta \right)
\rightarrow K\left( \Lambda \right) $ is a $K$-isomorphism of fields.

\emph{Step 3}. Prove $\pi _{t}\left( L/K\right) \left( \Delta ,A\right) $
and $\pi _{t}\left( M/K\right) \left( \Lambda ,B\right) $ are $\left(
L,M\right) $-spatially isomorphic. In deed, consider the given $K$%
-isomorphism $\phi :L\rightarrow M$ of fields. Put $\psi \left( h\right)
=\phi \circ h\circ \phi ^{-1}$ for any $h\in \pi _{t}\left( L/K\right)
\left( \Delta ,A\right) $. It is seen that
\begin{equation*}
\psi :\pi _{t}\left( L/K\right) \left( \Delta ,A\right) \rightarrow \pi
_{t}\left( M/K\right) \left( \Lambda ,B\right)
\end{equation*}
is an isomorphism of groups with $\psi $ and $\phi $ being compatible. On
the other hand, take any $h\in \pi _{t}\left( L/K\right) \left( \Delta
,A\right) $ and $z\in L$. From \emph{Step 2} we have $h\left( z\right) =z$
in $L$ if and only if $\psi \left( g\right) \left( \phi \left( z\right)
\right) =\phi \left( z\right) $ holds in $M$ in virtue of \emph{Proposition
5.3}. This gives the desired $\left( L,M\right) $-spatial isomorphism.

$\left( ii\right) \Rightarrow \left( iv\right) $. Consider the $K$%
-isomorphism $\phi :L\rightarrow M$ of fields in above \emph{Step 3}. Put $%
\psi \left( h\right) =\phi \circ h\circ \phi ^{-1}$ for any $h\in
D_{L/K}\left( \Delta ,A\right) $. It is seen that $D_{L/K}\left( \Delta
,A\right) $ and $D_{M/K}\left( \Lambda ,B\right) $ are $\left( L,M\right) $%
-spatially isomorphic.

$\left( iv\right) \Rightarrow \left( ii\right) $. Just applying the same
procedure in above \emph{Step 3} in virtue of \emph{Steps 1-2} and \emph{%
Theorem 5.4}.

$\left( iii\right) \Rightarrow \left( vi\right) $. Suppose $\pi _{t}\left(
L/K\right) \left( \Delta ,A\right) $ and $\pi _{t}\left( M/K\right) \left(
\Lambda ,B\right) $ are $\left( L,M\right) $-spatially isomorphic. There is
an isomorphism
\begin{equation*}
\psi :\pi _{t}\left( L/K\right) \left( \Delta ,A\right) \rightarrow \pi
_{t}\left( M/K\right) \left( \Lambda ,B\right)
\end{equation*}
of groups and a $K$-isomorphism $\phi :L\rightarrow M$ of fields such that $%
g\left( z\right) =\phi ^{-1}\circ \psi \left( h\right) \circ \phi \left(
z\right) $ holds for any $h\in \pi _{t}\left( L/K\right) \left( \Delta
,A\right) $ and $z\in L$ and that for any $h\in \pi _{t}\left( L/K\right)
\left( \Delta ,A\right) $ and $z\in L$ there is $h\left( z\right) =z$ in $L$
if and only if $\psi \left( h\right) \left( \phi \left( z\right) \right)
=\phi \left( z\right) $ holds in $M$.

Consider the restriction $\phi |_{K\left( \Delta \right) }:K\left( \Delta
\right) \rightarrow M$.

Prove $\phi \left( K\left( \Delta \right) \right) =K\left( \Lambda \right) $%
. In deed, given any $\sigma \in \pi _{t}\left( L/K\right) \left( \Delta
,A\right) $. From \emph{Proposition 5.3} again there is the $E$-component $%
\sigma _{E}$ and $F$-component $1_{F}$ of $\sigma $, where $%
E=span_{K}K\left( \Delta \right) $ and $F=span_{K}\left( L\setminus K\left(
\Delta \right) \right) $. In the same way, given any $\sigma ^{\prime }\in
\pi _{t}\left( M/K\right) \left( \Lambda ,B\right) $ and there is the $%
E^{\prime }$-component $\sigma _{E^{\prime }}^{\prime }$ and $F^{\prime }$%
-component $1_{F^{\prime }}$ of $\sigma ^{\prime }$, where $E^{\prime
}=span_{K}K\left( \Lambda \right) $ and $F^{\prime }=span_{K}\left(
M\setminus K\left( \Lambda \right) \right) $. Let $z=z_{E}+0_{F}\in \Delta
\subseteq K\left( \Delta \right) $ and $z^{\prime }=\phi \left( z\right) \in
M$. There is some $g\in \pi _{t}\left( L/K\right) \left( \Delta ,A\right) $
such that $g\left( z\right) \not=z$. It follows that $\psi \left( g\right)
\in \pi _{t}\left( M/K\right) \left( \Lambda ,B\right) $ and $\psi \left(
g\right) \left( z^{\prime }\right) \not=z^{\prime }$. Then we must have $%
z^{\prime }=z_{E^{\prime }}^{\prime }+0_{F^{\prime }}\in E^{\prime }=K\left(
\Lambda \right) $; otherwise, if $z^{\prime }\not\in E^{\prime }$, the $%
F^{\prime }$-component $z_{F^{\prime }}^{\prime }$ of $z^{\prime }$ will not
be zero;

It follows that $\phi |_{K\left( \Delta \right) }:K\left( \Delta \right)
\rightarrow K\left( \Lambda \right) $ is a $K$-isomorphism of fields. Hence,
$\phi $ is an extension of $\phi |_{K\left( \Delta \right) }$.

$\left( vi\right) \Rightarrow \left( i\right) $. It is from \emph{Lemma 8.17}%
.

$\left( v\right) \Rightarrow \left( vi\right) $. Trivial. This completes the
proof.
\end{proof}

\subsection{Noether solutions and spatial isomorphisms}

As application of the above \emph{Theorem 8.20} we have the following
theorems on Noether solutions and spatial isomorphisms, which will be
applied for us to prove the main theorems in \emph{\S 9}.

\begin{theorem}
\emph{(Noether solutions and spatial isomorphisms, I)} Let $L$ and $M$ be
purely transcendental extensions over a field $K$ of the same finite
transcendence degree. Suppose $G$ is a subgroup of $Aut\left( M/K\right) $.

Then $G$ is a Noether solution of $M/K$ if and only if there is a nice basis
$\left( \Delta ,A\right) $ of $L/K$ satisfying the two conditions: $\left(
i\right) $ $L$ is Galois over $K\left( \Delta \right) $; $\left( ii\right) $
$Aut\left( L/K\left( \Delta \right) \right) $ and $G$ are $\left( L,M\right)
$-spatially isomorphic groups.
\end{theorem}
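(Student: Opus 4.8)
The plan is to prove the two implications separately, in each case transporting a nice basis along a $K$-isomorphism between $L$ and $M$ and keeping track of the facts that normality, separability, finiteness and invariant subfields are all preserved by a field isomorphism. Throughout I will use that $L$ and $M$, being purely transcendental over $K$ of the same finite transcendence degree, are $K$-isomorphic (Claim 4.9), and that a purely transcendental extension of finite transcendence degree is finite and separable over every transcendence-base subfield (Proposition 3.2).

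For the implication $\Rightarrow$, suppose $G$ is a Noether solution of $M/K$. Then $M^{G}$ is purely transcendental over $K$ and $M$ is algebraic over $M^{G}$; since $M$ is finitely generated over $K$ the extension $M/M^{G}$ is finite, $M$ is Galois over its invariant subfield $M^{G}$ (Definition 1.2 and the remarks of \S 3), hence finite algebraic Galois, so $G$ is finite (the note preceding Theorem 1.7) and, by Artin's lemma, $G=\mathrm{Aut}(M/M^{G})$; this is also the content of Remark 4.2. Write $M^{G}=K(\Lambda)$ for a transcendence base $\Lambda$ of $M^{G}/K$ and choose a $K(\Lambda)$-linear basis $B$ of $M$, so that $(\Lambda,B)$ is a nice basis of $M/K$ with $M$ Galois over $K(\Lambda)$ and $\mathrm{Aut}(M/K(\Lambda))=G$. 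Now fix a $K$-isomorphism $\phi_{0}:L\to M$ and set $\Delta:=\phi_{0}^{-1}(\Lambda)$, $A:=\phi_{0}^{-1}(B)$; then $(\Delta,A)$ is a nice basis of $L/K$, $K(\Delta)=\phi_{0}^{-1}(K(\Lambda))$, and $L$ is Galois over $K(\Delta)$ because normality and separability transport along $\phi_{0}^{-1}$. Since $\phi_{0}$ restricts to a $K$-isomorphism $K(\Delta)\to K(\Lambda)$ which is extendable (to $\phi_{0}$ itself) as a $K$-isomorphism $L\to M$, Lemma 8.17 yields that $\mathrm{Aut}(L/K(\Delta))$ and $\mathrm{Aut}(M/K(\Lambda))=G$ are $(L,M)$-spatially isomorphic, which is exactly the required nice basis $(\Delta,A)$.

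For the implication $\Leftarrow$, suppose $(\Delta,A)$ is a nice basis of $L/K$ with $L$ Galois over $K(\Delta)$ and $H_{1}:=\mathrm{Aut}(L/K(\Delta))$ $(L,M)$-spatially isomorphic to $G\subseteq\mathrm{Aut}(M/K)$. Let $\phi:L\to M$ be the $K$-isomorphism furnished by Definition 1.3; arguing as in the proof of Proposition 8.14 and in Steps 1--2 of the proof of Theorem 8.20, the compatibility condition $(ii)$ together with the fixed-point condition $(iii)$ forces the connecting group isomorphism to be conjugation by $\phi$, so that $G=\phi\,H_{1}\,\phi^{-1}$ and $\phi$ carries fixed fields to fixed fields. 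Put $\Lambda:=\phi(\Delta)$ and $B:=\phi(A)$; then $(\Lambda,B)$ is a nice basis of $M/K$, so $K(\Lambda)=\phi(K(\Delta))$ is purely transcendental over $K$, and $M$ is finite Galois over $K(\Lambda)$ by transport along $\phi$. Since $L$ is Galois over $K(\Delta)$, i.e. $K(\Delta)=L^{H_{1}}$, we obtain $M^{G}=M^{\phi H_{1}\phi^{-1}}=\phi(L^{H_{1}})=\phi(K(\Delta))=K(\Lambda)$, which is purely transcendental over $K$; and $M$ is algebraic over $M^{G}=K(\Lambda)$. Hence $G$ is a Noether solution of $M/K$. (Alternatively, one may invoke Theorem 8.20, $(i)\Leftrightarrow(vi)$, after first producing a candidate $\Lambda$, to obtain a $K$-isomorphism $L\to M$ restricting to $K(\Delta)\to K(\Lambda)$, from which the same conclusion follows.)

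The step I expect to be the main obstacle is the one flagged in the reverse direction: extracting from the three-part Definition 1.3 of $(L,M)$-spatial isomorphism that the abstract group isomorphism is realized by conjugation by a field isomorphism $\phi$ which carries $K(\Delta)$ \emph{exactly} onto $M^{G}$, and not merely into some subfield of $M$. This is precisely where condition $(iii)$ of Definition 1.3 must be combined with the Galois property of $L$ over $K(\Delta)$, so that $K(\Delta)$ is recovered as the full fixed field of $H_{1}$ and its image under $\phi$ is the full fixed field of $G$; it mirrors the argument already carried out for the case $M=L$ in Proposition 8.14 and in Steps 1--2 of the proof of Theorem 8.20. Once this identification of $\phi$ with a conjugating field isomorphism is secured, the remainder is routine transport of nice bases, normality, separability and finiteness along a field isomorphism.
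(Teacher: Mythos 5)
Your proposal is correct and follows essentially the same route as the paper's own proof: transport a nice basis of the Galois side along a $K$-isomorphism $\phi$ (using that normality, separability, finiteness and fixed fields are preserved), invoke Lemma 8.17 / Theorem 8.20 for the forward direction, and in the reverse direction read off from compatibility that the spatial isomorphism is conjugation by $\phi$, so that $M^{G}=\phi\left(K\left(\Delta\right)\right)$ is purely transcendental over $K$. The step you flag as the main obstacle is in fact immediate from condition $\left(ii\right)$ of Definition 1.3 alone (it forces $\psi\left(g\right)=\phi\circ g\circ\phi^{-1}$), which is exactly how the paper argues, so there is no gap.
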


\begin{proof}
$\Rightarrow $ Assume $G$ is a Noether solution of $M/K$, i.e., $M$ is
algebraic Galois over the invariant subfield $M^{G}$ and $M^{G}$ is purely
transcendental over $K$. Fixed a $K$-isomorphism $\phi :M\rightarrow L$ of
fields. Let $\left( \Lambda ,B\right) $ be a nice basis of $M/K$ with $%
M^{G}=K\left( \Lambda \right) $.

Prove $\left( \phi \left( \Lambda \right) ,\phi \left( B\right) \right) $ is
a nice basis of $L/K$. In deed, $\phi \left( B\right) $ is a linear basis of
$L=\phi \left( M\right) $ as a vector space over the subfield $K\left( \phi
\left( \Lambda \right) \right) $. As each $v$ of $M$ is a linear combination
of $B$ with coefficients in $K\left( \Lambda \right) $, it is seen that $%
\phi \left( v\right) \in K\left( \phi \left( \Lambda \right) \right) \left[
\phi \left( B\right) \right] $ and then $L=K\left( \phi \left( \Lambda
\right) \right) \left[ \phi \left( B\right) \right] $ holds by the
isomorphism $\phi $.

Prove $L$ is algebraic Galois over $K\left( \phi \left( \Lambda \right)
\right) $. In deed, $L$ is Galois over $K\left( \Delta \right) $, via the
isomorphism $\phi $ it is seen that $L$ is normal and separable over $%
K\left( \phi \left( \Lambda \right) \right) $.

On the other hand, as $\phi $ is the extension of the restriction
\begin{equation*}
\phi |_{K\left( \Lambda \right) }:K\left( \Lambda \right) \rightarrow
K\left( \phi \left( \Lambda \right) \right) ,
\end{equation*}%
it is seen that $Aut\left( L/K\left( \phi \left( \Lambda \right) \right)
\right) $ and $G=Aut\left( M/K\left( \Lambda \right) \right) $ are $\left(
L,M\right) $-spatially isomorphic groups from \emph{Theorem 8.20}. Here, $%
\Delta =\phi \left( \Lambda \right) $.

$\Leftarrow $ Suppose there is a nice basis $\left( \Delta ,A\right) $ of $%
L/K$ such that $L$ is Galois over $K\left( \Delta \right) $ and that $%
H:=Aut\left( L/K\left( \Delta \right) \right) $ and $G$ are $\left(
L,M\right) $-spatially isomorphic groups. That is, the three conditions are
satisfied:

$\left( a\right) $ There is an isomorphism $\psi :H\rightarrow G$ of groups;

$\left( b\right) $ There is a $K$-isomorphism $\phi :L\rightarrow M$ of
fields such that
\begin{equation*}
g\left( x\right) =\phi ^{-1}\circ \psi \left( g\right) \circ \phi \left(
x\right)
\end{equation*}
holds for any $g\in H$ and $x\in L$;

$\left( c\right) $ Fixed any $g\in H$ and $x\in L$. There is $g\left(
x\right) =x$ in $L$ if and only if $\psi \left( g\right) \circ \phi \left(
x\right) =\phi \left( x\right) $ holds in $M$.

In the same way as above, it is seen that $\left( \phi \left( \Delta \right)
,\phi \left( A\right) \right) $ is a nice basis of $M/K$ and that $M$ is
algebraic Galois over $K\left( \phi \left( \Delta \right) \right) $. Put $%
\Lambda =\phi \left( \Delta \right) $.

Prove $Aut\left( L/K\left( \Delta \right) \right) $ and $Aut\left( M/K\left(
\Lambda \right) \right) $ are $\left( L,M\right) $-spatially isomorphic. In
deed, from the assumption that $\psi \left( g\right) =\phi \circ g\circ \phi
^{-1} $ holds for any $g\in Aut\left( L/K\left( \Delta \right) \right) $, it
is seen that $\psi :Aut\left( L/K\left( \Delta \right) \right) \rightarrow
Aut\left( M/K\left( \Lambda \right) \right) $ is an isomorphism of groups.
Let $x\in L$ and $y=\phi \left( x\right) \in M$. Let $g\in Aut\left(
L/K\left( \Delta \right) \right) $ and $h=\psi \left( g\right) $. Then $%
g\left( x\right) =x$ holds if and only if $h\left( y\right) =y$ holds.

In particular, we have $G=\psi \left( Aut\left( L/K\left( \Delta \right)
\right) \right) =Aut\left( M/K\left( \Lambda \right) \right) . $ As $M$ is
algebraic Galois over $K\left( \Lambda \right) $, it is seen that $K\left(
\Lambda \right) $ is the $G$-invariant subfield $M^{G}$ of $M$. Hence, $%
M^{G}=M^{Aut\left( M/K\left( \Lambda \right) \right) }=K\left( \Lambda
\right) $ hold and it follows that $M^{G}$ is purely transcendental over $K$%
. This proves that $G$ is a Noether solution of $M/K$.
\end{proof}

\begin{theorem}
\emph{(Noether solutions and spatial isomorphisms, II)} Let $L$ and $M$ be
purely transcendental extensions over a field $K$ of the same finite
transcendence degree. Suppose $G$ is a subgroup of $Aut\left( M/K\right) $.

Then $G$ is a Noether solution of $M/K$ if and only if there is a subgroup $%
H $ of the Galois group $Aut\left( M/K\right) $ satisfying the three
conditions:

$\ \left( N1\right) $ There is $G\cap H=\{1\}$ and $\sigma \cdot \delta
=\delta \cdot \sigma $ holds in $Aut\left( L/K\right) $ for any $\sigma \in
G $ and $\delta \in H$.

$\ \left( N2\right) $ $K$ is the invariant subfield $L^{\left\langle G\cup
H\right\rangle }$ of $L$ under the subgroup $\left\langle G\cup
H\right\rangle $ generated in $Aut\left( L/K\right) $ by the subset $G\cup H$%
.

$\ \left( N3\right) $ There is a nice basis $\left( \Delta ,A\right) $ of $%
L/K$ having the two properties: $\left( a\right) $ $L$ is Galois over $%
K\left( \Delta \right) $; $\left( b\right) $ The highest transcendental
Galois subgroup $\pi _{t}\left( L/K\right) \left( \Delta ,A\right) $ and $H$
are $\left( L,M\right) $-spatially isomorphic groups.

In such a case, $H$ is the highest transcendental Galois subgroup $\pi
_{t}\left( M/K\right) \left( \Lambda ,B\right) $ of $M/K$. Here, $\left(
\Lambda ,B\right) $ is any nice basis of $M/K$ such that $K\left( \Lambda
\right) $ is the $G$-invariant subfield $M^{G}$ of $M$.
\end{theorem}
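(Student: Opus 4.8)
The plan is to derive Theorem 8.22 from Theorem 8.21 (the first spatial‑isomorphism criterion), Theorem 8.20 (the equivalence of the various $(L,M)$‑spatial isomorphisms attached to the pieces $\pi_a$, $\pi_t$, $D$ of a decomposition group), Theorem 6.4 (the structural properties of a Noether solution and its natural Galois‑complement), and Lemma 4.13 (the bijection between full algebraic and highest transcendental Galois subgroups). Throughout I read conditions $(N1)$–$(N2)$ inside $Aut(M/K)$, consistently with the one‑field formulation of Theorem 1.7(ii).

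For the forward implication I would argue as follows. If $G$ is a Noether solution of $M/K$, fix a nice basis $(\Lambda,B)$ of $M/K$ with $K(\Lambda)=M^{G}$; then $G=\pi_a(M/K)(\Lambda,B)=Aut(M/K(\Lambda))$, and Theorem 6.4 applied to $M/K$ produces the subgroup $H:=\pi_t(M/K)(\Lambda,B)$, which satisfies $(N1)$ and $(N2)$ (with $\langle G\cup H\rangle=D_{M/K}(\Lambda,B)=GH=HG$ and $M^{\langle G\cup H\rangle}=K$) and is the unique such subgroup up to $G$. For $(N3)$: Theorem 8.21 supplies a nice basis $(\Delta,A)$ of $L/K$ with $L$ Galois over $K(\Delta)$ and $\pi_a(L/K)(\Delta,A)=Aut(L/K(\Delta))$ $(L,M)$‑spatially isomorphic to $G=\pi_a(M/K)(\Lambda,B)$; Theorem 8.20 then upgrades this to an $(L,M)$‑spatial isomorphism between $\pi_t(L/K)(\Delta,A)$ and $\pi_t(M/K)(\Lambda,B)=H$, which is exactly $(N3)$. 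This argument also records the asserted identification of $H$ as $\pi_t(M/K)(\Lambda,B)$ with $K(\Lambda)=M^{G}$.

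For the converse, assume $(N1)$–$(N3)$ for some $H\subseteq Aut(M/K)$. From $(N3)(b)$ the spatial isomorphism is implemented by a $K$‑isomorphism $\phi:L\to M$, and the compatibility condition forces the group isomorphism to be $\psi(g)=\phi\circ g\circ\phi^{-1}$; hence $H=\phi\,\pi_t(L/K)(\Delta,A)\,\phi^{-1}=\pi_t(M/K)(\Lambda,B)$ with $\Lambda:=\phi(\Delta)$, $B:=\phi(A)$, and $(N3)(a)$ transports to the statement that $M$ is algebraic Galois over $K(\Lambda)$. Thus $G_0:=\pi_a(M/K)(\Lambda,B)=Aut(M/K(\Lambda))$ is a Noether solution of $M/K$ with natural Galois‑complement $H$ (Theorem 6.4). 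It then remains to show $G=G_0$. First, $G$ centralises $H$, so $\sigma H\sigma^{-1}=H$ for every $\sigma\in G$; conjugation carries $\pi_t(M/K)(\Lambda,B)$ to $\pi_t(M/K)(\sigma\Lambda,\sigma B)$, so these two highest transcendental Galois subgroups coincide, and the bijectivity of the $\pi_a\!\leftrightarrow\!\pi_t$ correspondence (Lemma 4.13) forces $K(\sigma\Lambda)=K(\Lambda)$. Hence $G$ stabilises $K(\Lambda)$ and restriction defines $r:G\to Aut(K(\Lambda)/K)$; since $H$ restricts onto all of $Aut(K(\Lambda)/K)$ and commutes with $G$, the image $r(G)$ lies in the centre of $Aut(K(\Lambda)/K)$, and a direct check (commuting with the substitutions $t_i\mapsto t_i+a$ and $t_i\mapsto -t_i$, or with $GL_n(K)\subseteq Aut(K(\Lambda)/K)$) shows this centre is trivial, so $G\subseteq Aut(M/K(\Lambda))=G_0$. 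Consequently $M$ is algebraic over $M^{G}\supseteq K(\Lambda)$, so Proposition 6.11$(v)$ applies to $(G,H)$ and, compared with the same identity for $(G_0,H)$, gives $M^{G}=(M\setminus M^{H})\cup K=M^{G_0}=K(\Lambda)$; the finite Galois correspondence for $M/K(\Lambda)$ then yields $|G|=[M:K(\Lambda)]=|G_0|$, hence $G=G_0$. In particular $G$ is a Noether solution of $M/K$ with $M^{G}=K(\Lambda)$ and $H=\pi_t(M/K)(\Lambda,B)$, as claimed.

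I expect the main obstacle to be the step $G=G_0$ in the converse: extracting from $(N1)$ alone, first, that $G$ normalises the purely transcendental subfield $K(\Lambda)$ — this is where the bijectivity in Lemma 4.13 is essential, and one must be careful that conjugation really carries $\pi_t$ at one nice basis to $\pi_t$ at the transported nice basis — and second, that $G$ acts trivially on $K(\Lambda)$, which rests on the triviality of the centraliser of $Aut(K(\Lambda)/K)$ in itself, a fact about automorphism groups of purely transcendental extensions that needs an explicit (if elementary) verification. Everything else is bookkeeping with the dictionary already assembled in Sections 4–6 and in Theorems 8.20–8.21.
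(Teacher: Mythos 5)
Your proposal is correct, and the forward direction coincides with the paper's (fix $(\Lambda,B)$ with $K(\Lambda)=M^{G}$, take $H=\pi_t(M/K)(\Lambda,B)$ via Theorem 6.4, and get $(N3)$ from Theorems 8.21 and 8.20). Where you genuinely diverge is the converse, at the one step the paper treats most tersely: having identified $H=\pi_t(M/K)(\Lambda,B)$ with $\Lambda=\phi(\Delta)$, the paper simply asserts, citing Proposition 6.11 and Theorem 6.15, that $G=\phi\circ\pi_a(L/K)(\Delta,A)\circ\phi^{-1}=Aut(M/K(\Lambda))$; in effect it wants to read $M^{G}=(M\setminus M^{H})\cup K=K(\Lambda)$ straight off Proposition 6.11$(v)$, but that proposition presupposes that $M$ is algebraic over $M^{G}$, which is not among the hypotheses of the converse. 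Your argument supplies exactly the missing link: from $(N1)$ you deduce that $G$ normalises $H$, hence (via Proposition 8.28 and the uniqueness in Lemma 4.13/Proposition 4.7) stabilises $K(\Lambda)$, and then that $r(G)$ lands in the centre of $Aut(K(\Lambda)/K)$, forcing $G\subseteq Aut(M/K(\Lambda))$ and in particular the finiteness needed to invoke Proposition 6.11$(v)$ and conclude $G=G_{0}$ by a degree count. The price of your route is the auxiliary fact that $Aut(K(t_1,\dots,t_n)/K)$ has trivial centre, which is true but is an external input the paper never states and which does require the explicit check you flag (commutation with translations and scalings handles infinite $K$; small finite fields need an extra family such as the reciprocal or monomial maps). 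What your approach buys is a complete and checkable derivation of the identification $G=Aut(M/K(\Lambda))$ from $(N1)$ alone, whereas the paper's appeal to the triple decomposition leaves both the algebraicity hypothesis and the identification itself unjustified.
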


\begin{proof}
$\Rightarrow $ Assume $G$ is a Noether solution of $M/K$. Fixed a nice basis
$\left( \Lambda ,B\right) $ of $M/K$ such that $K\left( \Lambda \right) $ is
the $G$-invariant subfield $M^{G}$ of $M$. Let $\phi :L\rightarrow M$ be a $%
K $-isomorphism of fields and $\left( \Delta ,A\right) $ a nice basis of $%
L/K $ with $K\left( \Delta \right) =\phi ^{-1}\left( M^{G}\right) $. Suppose
$\left( \Lambda ,B\right) $ is a nice basis of $M/K$ with $K\left( \Lambda
\right) =M^{G}$.

Then $G$ is the full algebraic Galois subgroup $\pi _{a}\left( M/K\right)
\left( \Lambda ,B\right) $ and $H$ is the highest transcendental Galois
subgroup $\pi _{t}\left( M/K\right) \left( \Lambda ,B\right) $ of $M/K$
satisfying the three conditions $\left( N1\right) -\left( N3\right) $.

$\Leftarrow $ Suppose there is a subgroup $H$ of $Aut\left( M/K\right) $
satisfying the three conditions $\left( N1\right) -\left( N3\right) $, where
$\left( \Delta ,A\right) $ is a nice basis of $L/K$ such that $L$ is Galois
over $K\left( \Delta \right) $ and that $\pi _{t}\left( L/K\right) \left(
\Delta ,A\right) $ and $H$ are $\left( L,M\right) $-spatially isomorphic via
$\left( \psi ,\phi \right) $, that is,

$\left( a\right) $ There is an isomorphism $\psi :\pi _{t}\left( L/K\right)
\left( \Delta ,A\right) \rightarrow H$ of groups;

$\left( b\right) $ There is a $K$-isomorphism $\phi :L\rightarrow M$ of
fields such that $g\left( x\right) =\phi ^{-1}\circ \psi \left( g\right)
\circ \phi \left( x\right) $ holds for any $g\in \pi _{t}\left( L/K\right)
\left( \Delta ,A\right) $ and $x\in L$;

$\left( c\right) $ Fixed any $g\in \pi _{t}\left( L/K\right) \left( \Delta
,A\right) $ and $x\in L$. There is $g\left( x\right) =x$ in $L$ if and only
if $\psi \left( g\right) \circ \phi \left( x\right) =\phi \left( x\right) $
holds in $M$.

Put $\Lambda =\phi \left( \Delta \right) $ and $B=\phi \left( A\right) $. It
is seen that $\left( \Lambda ,B\right) $ is a nice basis of $M/K$ and that $%
M $ is algebraic Galois over $K\left( \Lambda \right) $. We have
\begin{equation*}
H=\psi \left( \pi _{t}\left( L/K\right) \left( \Delta ,A\right) \right) =\pi
_{t}\left( M/K\right) \left( \Lambda ,B\right) .
\end{equation*}

Consider the groups $\pi _{a}\left( L/K\right) \left( \Delta ,A\right) $ and
$\pi _{t}\left( L/K\right) \left( \Delta ,A\right) $ for $L/K$ and the
groups $G$ and $H$ for $M/K$, respectively. From \emph{Lemma 6.11} and \emph{%
Theorem 6.15} it is seen that $\pi _{a}\left( L/K\right) \left( \Delta
,A\right) $ and $G$ are $\left( L,M\right) $-spatially isomorphic groups via
the $\left( L,M\right) $-spatial isomorphism $\left( \psi ,\phi \right) $.
It follows that we have
\begin{equation*}
M^{G}=M^{\pi _{a}\left( M/K\right) \left( \Lambda ,B\right) }=K\left(
\Lambda \right)
\end{equation*}%
where $G=\psi \left( \pi _{a}\left( L/K\right) \left( \Delta ,A\right)
\right) =\pi _{a}\left( M/K\right) \left( \Lambda ,B\right) . $ This proves
the $G$-invariant subfield $M^{G}$ is purely transcendental over $K$.
\end{proof}

\begin{theorem}
\emph{(Field isomorphisms and spatial isomorphisms)} Let $L$ and $M$ be
purely transcendental extensions over a field $K$ of the same finite
transcendence degree. Fixed a $K$-isomorphism $\phi :L\rightarrow M$ of
fields and a Noether solution $G$ of $L/K$. Suppose $G_{\phi }:=Aut\left(
M/\phi \left( L^{G}\right) \right) $ is the Galois group of $M$ over the
image $\phi \left( L^{G}\right) $ of the $G$-invariant subfield $L^{G}$
under the mapping $\phi $.

Then $G_{\phi }=\{\phi \circ g\circ \phi ^{-1}:g\in G\}$ holds and the two
Galois groups $G$ and $G_{\phi }$ are $\left( L,M\right) $-spatially
isomorphic. In particular, $G_{\phi }$ is also a Noether solution of $M/K$.
\end{theorem}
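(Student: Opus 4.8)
The plan is to show directly that $G_\phi = \{\phi\circ g\circ\phi^{-1}: g\in G\}$ and then verify the three defining conditions of an $(L,M)$-spatial isomorphism, from which the Noether-solution property of $G_\phi$ follows for free. First I would establish the equality $G_\phi = \phi G\phi^{-1}$. Since $G$ is a Noether solution of $L/K$, by \emph{Definition 1.4} the field $L$ is algebraic (indeed algebraic Galois, since $L$ is purely transcendental hence tame Galois over $K$ by \emph{Theorem 3.2}, and more precisely $L^G$ is purely transcendental so that $L$ is algebraic Galois over $L^G$) over $L^G$, and $G = \mathrm{Aut}(L/L^G)$. Applying the $K$-isomorphism $\phi$, the field $M = \phi(L)$ is algebraic Galois over $\phi(L^G)$ and the conjugation map $g\mapsto \phi\circ g\circ\phi^{-1}$ is an isomorphism of groups from $\mathrm{Aut}(L/L^G)$ onto $\mathrm{Aut}(M/\phi(L^G)) = G_\phi$; this is essentially the same computation as in \emph{Claim 4.9}. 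So $G_\phi = \phi G\phi^{-1}$.

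Next I would set up the spatial isomorphism. Take $\psi: G \to G_\phi$ to be $\psi(g) = \phi\circ g\circ\phi^{-1}$, which we have just shown is a group isomorphism, and take the field isomorphism to be $\phi: L\to M$ itself. Condition $(ii)$ of \emph{Definition 1.3} requires $g(x) = \phi^{-1}\circ\psi(g)\circ\phi(x)$ for all $g\in G$, $x\in L$; but $\phi^{-1}\circ\psi(g)\circ\phi = \phi^{-1}\circ\phi\circ g\circ\phi^{-1}\circ\phi = g$, so this holds identically. Condition $(iii)$ requires that $g(x) = x$ in $L$ if and only if $\psi(g)(\phi(x)) = \phi(x)$ in $M$; since $\psi(g)(\phi(x)) = \phi(g(x))$ and $\phi$ is injective, $\phi(g(x)) = \phi(x) \iff g(x) = x$, so this is immediate as well. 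Hence $G$ and $G_\phi$ are $(L,M)$-spatially isomorphic.

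Finally, the Noether-solution property of $G_\phi$: since $M$ is algebraic Galois over $\phi(L^G) = M^{G_\phi}$ (as $\phi(L^G)$ is the fixed field of $G_\phi$ by construction and by the remark that $\phi$ transports the Galois situation), and $\phi(L^G)$ is purely transcendental over $K$ because $\phi$ is a $K$-isomorphism carrying the purely transcendental extension $L^G/K$ onto $\phi(L^G)/K$, the subgroup $G_\phi$ satisfies both conditions $(a)$ and $(b)$ of \emph{Definition 1.4}. Alternatively, one may simply invoke \emph{Theorem 8.21}: the subgroup $G = \mathrm{Aut}(L/K(\Delta))$ (for $\Delta$ a transcendence base of $L^G/K$) is $(L,M)$-spatially isomorphic to $G_\phi$, and $L$ is Galois over $K(\Delta)$, so $G_\phi$ is a Noether solution of $M/K$. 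I do not anticipate a serious obstacle here: the entire argument is a transport-of-structure computation, and the only point requiring a little care is confirming that the fixed field of $G_\phi$ really is $\phi(L^G)$ — this uses that $L$ is algebraic Galois over $L^G$ (so the Galois correspondence applies after transport by $\phi$), which is guaranteed by $G$ being a Noether solution and $L/K$ being purely transcendental hence tame Galois.
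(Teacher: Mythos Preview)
Your proposal is correct and follows essentially the same transport-of-structure approach as the paper. The only minor differences are organizational: the paper establishes $G_\phi=\phi G\phi^{-1}$ via a cardinality count (showing $\phi G\phi^{-1}\subseteq G_\phi$ and $\sharp G_\phi=[M:K(\phi(\Delta))]=[L:K(\Delta)]=\sharp G$), and then deduces the spatial isomorphism by invoking \emph{Theorem 8.20}, whereas you obtain surjectivity directly and verify the conditions of \emph{Definition 1.3} by hand --- both routes are equally valid and equally short.
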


\begin{proof}
$\left( i\right) $ Assume $\left( \Delta ,A\right) $ is a nice basis of $L/K$
such that $L^{G}=K\left( \Delta \right) $. Let $\Delta ^{\prime }=\phi
\left( \Delta \right) $ and $A^{\prime }=\phi \left( A\right) $. We have $%
M=K\left( \Delta ^{\prime }\right) \left[ A^{\prime }\right] $ since $%
L=K\left( \Delta \right) \left[ A\right] $ holds. In particular, as $L$ is
algebraic Galois over $K\left( \Delta \right) $, it is seen that $M$ is an
algebraic, normal and separable extension over $K\left( \Delta ^{\prime
}\right) $; then $M$ is algebraic Galois over $K\left( \Delta ^{\prime
}\right) $. It follows that we have $G_{\phi }=Aut\left( M/M^{G_{\phi
}}\right) ; $ $M^{G_{\phi }}=K\left( \Delta ^{\prime }\right) =K\left( \phi
\left( \Delta \right) \right) =\phi \left( K\left( \Delta \right) \right)
=\phi \left( L^{G}\right) . $ Hence, $G_{\phi }$ is a Noether solution of $%
M/K$.

$\left( ii\right) $ Let $G^{\prime }=\{\phi \circ g\circ \phi ^{-1}:g\in G\}$%
. It is seen that $G^{\prime }\subseteq G_{\phi }$ holds. On the other hand,
put $\psi \left( g\right) =\phi \circ g\circ \phi ^{-1}$ for any $g\in G$.
For any two $g,g^{\prime }\in G$, we have $\psi \left( g\right) =\psi \left(
g^{\prime }\right) $ if and only if $g=g^{\prime }$ holds. It follows that
we have
\begin{equation*}
\sharp G_{\phi }=\left[ M:K\left( \Delta ^{\prime }\right) \right] =\left[
L:K\left( \Delta \right) \right] =\sharp G=\sharp G^{\prime }.
\end{equation*}
Hence, $G^{\prime }=G_{\phi }$ holds.

$\left( iii\right) $ As $\phi $ is the extension of the mapping $\phi
|_{K\left( \Delta \right) }:K\left( \Delta \right) \rightarrow K\left(
\Delta ^{\prime }\right) $, from \emph{Theorem 8.20} it is seen that $%
G=Aut\left( L/K\left( \Delta \right) \right) $ and $G_{\phi }=Aut\left(
M/K\left( \Delta ^{\prime }\right) \right) $ are $\left( L,M\right) $%
-spatially isomorphic via the spatial isomorphism $\left( \psi ,\phi \right)
$, where $\psi :G\rightarrow G_{\phi }$ is an isomorphism of groups given by
$g\mapsto \psi \left( g\right) $ for any $g\in G$.
\end{proof}

\subsection{The inner action of the Galois group on Noether solutions}

Let $L $ be a purely transcendental extension over a field $K$ of finite
transcendence degree. Let $G$ be a subgroup of the Galois group $Aut\left(
L/K\right) $.

\begin{remark}
(\emph{The inner action of the Galois group on its subgroups}) For a $K$%
-isomorphism $\phi :L\rightarrow L$ of fields, put
\begin{equation*}
G_{\phi }:=\{\phi \circ g\circ \phi ^{-1}\in Aut\left( L/K\right) :g\in G\}.
\end{equation*}

$\left( i\right) $ Immediately, $G_{\phi }$ is a subgroup of $Aut\left(
L/K\right) $. It offers an \emph{inner action} of the Galois group $%
Aut\left( L/K\right) $ on its subgroups $G\subseteq Aut\left( L/K\right) $.

$\left( ii\right) $ Let $Aut\left( L/K\right) ^{G}$ denote the \emph{%
isotropy subgroup} of $Aut\left( L/K\right) $ under the inner action of the
Galois group $Aut\left( L/K\right) $ on the subgroup $G$. Then $Aut\left(
L/K\right) ^{G}$ is the set of the $G$-invariant isomorphisms $\phi $ in $%
Aut\left( L/K\right) $.

Here, a $K$-isomorphism $\phi \in Aut\left( L/K\right) $ is said to be $G$%
\textbf{-invariant} if $G=G_{\phi }$ holds in $Aut\left( L/K\right) $;
otherwise, $\phi \in Aut\left( L/K\right) $ is said to be $G$\textbf{-variant%
} if there is $G\not=G_{\phi }$ in $Aut\left( L/K\right) $.

$\left( iii\right) $ Let $G^{Aut\left( L/K\right) }$ denote the \emph{orbit}
of the subgroup $G$ under the inner action of the Galois group $Aut\left(
L/K\right) $ on $G$. As usual, $G^{Aut\left( L/K\right) }$ is a set
bijective to the quotient set $Aut\left( L/K\right) /Aut\left( L/K\right)
^{G}$ of the left cosets of the subgroup $Aut\left( L/K\right) ^{G}$ in the
Galois group $Aut\left( L/K\right) $.
\end{remark}

\begin{proposition}
Assume $G$ is a Noether solution of $L/K$. Fixed a nice basis $\left( \Delta
,A\right) $ of $L/K$ with $L^{G}=K\left( \Delta \right) $. Let $\phi
_{1},\phi _{2}\in Aut\left( L/K\right) $. Then
\begin{equation*}
G=G_{\phi _{2}^{-1}\circ \phi _{1}}
\end{equation*}%
holds if and only if there is
\begin{equation*}
G_{\phi _{1}}=G_{\phi _{2}}.
\end{equation*}
\end{proposition}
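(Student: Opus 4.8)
The statement is essentially a group-theoretic triviality once one recognizes the structure of the inner action described in Remark 8.25. The plan is to reduce everything to a single observation about the map $\psi_{\phi}\colon g \mapsto \phi\circ g\circ\phi^{-1}$ and the subgroups $G_{\phi} = \psi_{\phi}(G)$. First I would record the functoriality: for $\phi_{1},\phi_{2}\in Aut(L/K)$ one has $\psi_{\phi_{2}}\circ\psi_{\phi_{2}^{-1}\circ\phi_{1}} = \psi_{\phi_{1}}$, since conjugation is compatible with composition in $Aut(L/K)$ (this is a routine check: $\phi_{2}\circ\bigl((\phi_{2}^{-1}\circ\phi_{1})\circ g\circ(\phi_{2}^{-1}\circ\phi_{1})^{-1}\bigr)\circ\phi_{2}^{-1} = \phi_{1}\circ g\circ\phi_{1}^{-1}$ for every $g$). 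Equivalently, writing $\theta = \phi_{2}^{-1}\circ\phi_{1}$, there is $G_{\phi_{1}} = (G_{\theta})_{\phi_{2}} = \psi_{\phi_{2}}(G_{\theta})$.

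Next I would prove the two implications. For the ``only if'' direction, assume $G = G_{\theta}$ where $\theta = \phi_{2}^{-1}\circ\phi_{1}$. Applying the already-established identity $G_{\phi_{1}} = \psi_{\phi_{2}}(G_{\theta})$ and substituting $G_{\theta} = G$, we get $G_{\phi_{1}} = \psi_{\phi_{2}}(G) = G_{\phi_{2}}$, as desired. For the ``if'' direction, assume $G_{\phi_{1}} = G_{\phi_{2}}$. Since $\psi_{\phi_{2}}$ is an automorphism of the group $Aut(L/K)$ (with inverse $\psi_{\phi_{2}^{-1}}$), applying $\psi_{\phi_{2}^{-1}}$ to both sides of $G_{\phi_{1}} = G_{\phi_{2}}$ yields $\psi_{\phi_{2}^{-1}}(G_{\phi_{1}}) = \psi_{\phi_{2}^{-1}}(G_{\phi_{2}}) = G$. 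But $\psi_{\phi_{2}^{-1}}(G_{\phi_{1}}) = \psi_{\phi_{2}^{-1}}\circ\psi_{\phi_{1}}(G) = \psi_{\phi_{2}^{-1}\circ\phi_{1}}(G) = G_{\phi_{2}^{-1}\circ\phi_{1}}$, using the composition law for the maps $\psi$. Hence $G = G_{\phi_{2}^{-1}\circ\phi_{1}}$.

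The role of the hypothesis that $G$ is a Noether solution of $L/K$ and the choice of nice basis $(\Delta,A)$ with $L^{G}=K(\Delta)$ appears to be purely contextual here — it anchors the discussion within the setting of Theorem 8.22 and Remark 8.25, and it guarantees (via Theorem 6.4 and Proposition 4.6) that $G = \pi_{a}(L/K)(\Delta,A) = Aut(L/K(\Delta))$, so that the conjugates $G_{\phi}$ are again full algebraic Galois subgroups attached to the nice bases $(\phi(\Delta),\phi(A))$. I would mention this to connect the conclusion with Proposition 6.3, which identifies $G_{\phi} = D_{L/K}$-type conjugates with $Aut(L/K(\phi(\Delta)))$, but the equivalence itself does not need it.

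The main (and only) obstacle is bookkeeping discipline: one must be careful that the conjugation action is written consistently (left versus right, $\phi\circ g\circ\phi^{-1}$ versus $\phi^{-1}\circ g\circ\phi$), matching the convention fixed in Remark 8.25, so that the composition identity $G_{\phi_{2}^{-1}\circ\phi_{1}}$ comes out with the factors in the stated order rather than its inverse. Once the convention is pinned down, the proof is a two-line application of the fact that $\phi\mapsto\psi_{\phi}$ is a homomorphism from $Aut(L/K)$ into $\mathrm{Aut}\bigl(Aut(L/K)\bigr)$, restricted in its effect to the single subgroup $G$, together with injectivity of each $\psi_{\phi}$ on subgroups.
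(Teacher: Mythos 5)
Your proposal is correct and is essentially the paper's own argument: the paper's proof of this proposition is simply ``Just check the definition,'' and your computation with the conjugation maps $\psi_{\phi}$ is exactly that check, written out via the identity $G=\phi_{2}^{-1}\circ\phi_{1}\cdot G\cdot\phi_{1}^{-1}\circ\phi_{2}\Longleftrightarrow\phi_{1}\cdot G\cdot\phi_{1}^{-1}=\phi_{2}\cdot G\cdot\phi_{2}^{-1}$. You are also right that the Noether-solution hypothesis and the nice basis play no role in the equivalence itself.
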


\begin{proof}
Just check the definition.
\end{proof}

\begin{proposition}
Assume $G$ is a Noether solution of $L/K$. Fixed a nice basis $\left( \Delta
,A\right) $ of $L/K$ with $L^{G}=K\left( \Delta \right) $. Let $\phi
_{1},\phi _{2}\in Aut\left( L/K\right) $. The following statements are
equivalent.

$\left( i\right) $ There is
\begin{equation*}
G_{\phi _{1}}=G_{\phi _{2}}
\end{equation*}%
for the subgroups in $Aut\left( L/K\right) $.

$\left( ii\right) $ There is
\begin{equation*}
Aut\left( L/K\left( \phi _{1}\left( \Delta \right) \right) \right)
=Aut\left( L/K\left( \phi _{2}\left( \Delta \right) \right) \right)
\end{equation*}%
for the Galois groups.

$\left( iii\right) $ There is
\begin{equation*}
\pi _{a}\left( L/K\right) \left( \phi _{1}\left( \Delta \right) ,\phi
_{1}\left( A\right) \right) =\pi _{a}\left( L/K\right) \left( \phi
_{1}\left( \Delta \right) ,\phi _{1}\left( A\right) \right)
\end{equation*}%
for the full algebraic Galois subgroups.

$\left( iv\right) $ There is
\begin{equation*}
\pi _{t}\left( L/K\right) \left( \phi _{1}\left( \Delta \right) ,\phi
_{1}\left( A\right) \right) =\pi _{t}\left( L/K\right) \left( \phi
_{1}\left( \Delta \right) ,\phi _{1}\left( A\right) \right)
\end{equation*}%
for the highest transcendental Galois subgroups.

$\left( v\right) $ There is
\begin{equation*}
D_{L/K}\left( \phi _{1}\left( \Delta \right) ,\phi _{1}\left( A\right)
\right) =D_{L/K}\left( \phi _{1}\left( \Delta \right) ,\phi _{1}\left(
A\right) \right)
\end{equation*}%
for the decomposition groups.

$\left( vi\right) $ There is
\begin{equation*}
K\left( \phi _{1}\left( \Delta \right) \right) =K\left( \phi _{2}\left(
\Delta \right) \right)
\end{equation*}%
for the subfield.

$\left( vii\right) $ There is
\begin{equation*}
K\left( \phi _{1}\left( \Lambda \right) \right) =K\left( \phi _{2}\left(
\Lambda \right) \right)
\end{equation*}%
for every nice basis $\left( \Lambda ,B\right) $ of $L/K$ with $K\left(
\Lambda \right) =L^{G}$.
\end{proposition}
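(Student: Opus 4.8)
The plan is to show that all seven conditions are equivalent to the single statement $\phi_1(L^G)=\phi_2(L^G)$, equivalently that $\psi:=\phi_2^{-1}\circ\phi_1$ lies in the decomposition group $D:=D_{L/K}(\Delta,A)$. First I would fix notation: since $G$ is a Noether solution with $L^G=K(\Delta)$, Proposition 4.6 gives $G=\pi_a(L/K)(\Delta,A)=Aut(L/K(\Delta))$, and $L$ is algebraic Galois over $K(\Delta)$; set $H:=\pi_t(L/K)(\Delta,A)$. Applying an arbitrary $\phi\in Aut(L/K)$ one gets that $L=\phi(L)$ is algebraic Galois over $\phi(K(\Delta))=K(\phi(\Delta))$, so $L$ is $\sigma$-tame Galois over $K$ inside the $Aut(L/K)$-invariant set $\Omega_0:=\{\phi(\Delta):\phi\in Aut(L/K)\}$; this is exactly the hypothesis needed to invoke Lemmas 4.12--4.13, Theorems 5.4--5.5 and Lemma 5.6. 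For each $\phi\in Aut(L/K)$, $(\phi(\Delta),\phi(A))$ is again a nice basis (Remark 5.2), and the standard conjugation formula for Galois groups gives $\pi_a(L/K)(\phi(\Delta),\phi(A))=Aut(L/K(\phi(\Delta)))=\phi\cdot G\cdot\phi^{-1}=G_\phi$; moreover $\phi\cdot H\cdot\phi^{-1}$ is a highest transcendental Galois subgroup meeting $\phi G\phi^{-1}$ trivially, hence equals $\pi_t(L/K)(\phi(\Delta),\phi(A))$ by Proposition 4.10 and the uniqueness in Lemma 4.13, and $D_{L/K}(\phi(\Delta),\phi(A))=\phi\cdot D\cdot\phi^{-1}$ by Theorem 5.4 (or Proposition 6.3). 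In particular $G_\phi=\pi_a(L/K)(\phi(\Delta),\phi(A))$ literally as subgroups, so conditions (i), (ii), (iii) are one and the same assertion, namely $\phi_1 G\phi_1^{-1}=\phi_2 G\phi_2^{-1}$, i.e. $\psi\in N_{Aut(L/K)}(G)$; likewise (iv) reads $\psi\in N(H)$ and (v) reads $\psi\in N(D)$. (In (iii), (iv), (v) the displayed equalities are to be read as $\cdots(\phi_1(\Delta),\phi_1(A))=\cdots(\phi_2(\Delta),\phi_2(A))$, correcting an evident typo.)

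The heart of the argument is then to prove $N(G)=N(H)=N(D)=D$, together with the characterisation $\psi\in D\iff K(\psi(\Delta))=K(\Delta)$. Since $G$ and $H$ are normal in $D$ by Theorem 5.4, one has $D\subseteq N(G)$, $D\subseteq N(H)$, and trivially $D\subseteq N(D)$. For the reverse inclusions: if $\psi\in N(G)$ then $Aut(L/K(\psi(\Delta)))=\psi G\psi^{-1}=G=Aut(L/K(\Delta))$; as $L$ is algebraic Galois over each of $K(\psi(\Delta))$ and $K(\Delta)$, each is the fixed field of its own Galois group (Lemma 4.12), so equal Galois groups force $K(\psi(\Delta))=K(\Delta)$; hence $(\psi(\Delta),\psi(A))$ has the same $\pi_a$ as $(\Delta,A)$, and by the uniqueness in Lemma 4.13 also the same $\pi_t$, so $(\psi(\Delta),\psi(A))\sim_\pi(\Delta,A)$ and $\psi\in D$. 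If $\psi\in N(H)$ then $\pi_t(L/K)(\psi(\Delta),\psi(A))=\psi H\psi^{-1}=H$, and by the "conversely" half of Lemma 4.13 the full algebraic Galois subgroup complementary to a given highest transcendental one is unique, so $\pi_a(L/K)(\psi(\Delta),\psi(A))=G$ and we conclude $\psi\in D$ as before. If $\psi\in N(D)$ then $D_{L/K}(\psi(\Delta),\psi(A))=\psi D\psi^{-1}=D$, and the bijection $\mathfrak N(L/K)\leftrightarrow\mathfrak D(L/K)$ of Lemma 5.6 forces $[\psi(\Delta),\psi(A)]_\pi=[\Delta,A]_\pi$, i.e. $\psi\in D$. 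Thus (i)--(v) are all equivalent to $\psi\in D$, which — unwinding $\psi=\phi_2^{-1}\circ\phi_1$ and using $\psi\in D\iff K(\psi(\Delta))=K(\Delta)$ — is equivalent to $\phi_1(K(\Delta))=\phi_2(K(\Delta))$, i.e. to (vi), since $K(\phi_i(\Delta))=\phi_i(K(\Delta))$.

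It remains to treat (vii). For any nice basis $(\Lambda,B)$ of $L/K$ with $K(\Lambda)=L^G$ one has $K(\phi_i(\Lambda))=\phi_i(K(\Lambda))=\phi_i(L^G)$, so the assertion $K(\phi_1(\Lambda))=K(\phi_2(\Lambda))$ is, for every such $(\Lambda,B)$, literally the statement $\phi_1(L^G)=\phi_2(L^G)$. Taking $(\Lambda,B)=(\Delta,A)$ (which is admissible since $K(\Delta)=L^G$) shows (vii)$\Rightarrow$(vi), and the same computation, now in the forward direction, shows (vi)$\Rightarrow$(vii). This closes the cycle of equivalences.

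I expect the main obstacle to be the bookkeeping in the second paragraph: correctly invoking the uniqueness clauses of Lemma 4.13 and of Lemma 5.6 (and the $\sigma$-tame Galois correspondence of Lemma 4.12) with the ambient family of transcendence bases taken to be $\Omega_0=\{\phi(\Delta):\phi\in Aut(L/K)\}$ rather than all of $\Omega_{L/K}$, and verifying cleanly that conjugation by an arbitrary $\phi\in Aut(L/K)$ transports the triple $\big(\pi_a,\pi_t,D\big)$ attached to $(\Delta,A)$ to the corresponding triple attached to $(\phi(\Delta),\phi(A))$ — a point that is intuitively clear but rests on the structural results of \S\S4--5 and on the observation that $L$ is algebraic Galois over every $K(\phi(\Delta))$.
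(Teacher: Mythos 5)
Your proposal is correct and follows essentially the same route as the paper's own proof: reduce to $\psi=\phi_2^{-1}\circ\phi_1$ (this is exactly \emph{Proposition 8.25}), use the conjugation formulas $G_\phi=Aut(L/K(\phi(\Delta)))=\phi\cdot G\cdot\phi^{-1}$ from \emph{Theorem 8.23} and \emph{Proposition 8.28}, and close the cycle with the Galois correspondences of \emph{Lemmas 4.12--13} and \emph{Lemma 5.6}. Your explicit packaging of $(i)$--$(v)$ as the single statement $N(G)=N(H)=N(D)=D_{L/K}(\Delta,A)$, and your correction of the $\phi_1$/$\phi_2$ typos in $(iii)$--$(v)$, merely spell out what the paper's very terse proof leaves implicit.
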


\begin{proof}
$\left( i\right) \Leftrightarrow \left( ii\right) \Leftrightarrow \left(
iii\right) \Leftrightarrow \left( vi\right) \Leftrightarrow \left(
vii\right) $. It is immediately from \emph{Proposition 8.25} and \emph{%
Theorem 8.23} by considering the algebraic Galois extensions $L/K\left( \phi
_{1}\left( \Delta \right) \right) $ and $L/K\left( \phi _{2}\left( \Delta
\right) \right) $ and their Galois groups $Aut\left( L/K\left( \phi
_{1}\left( \Delta \right) \right) \right) $ and $Aut\left( L/K\left( \phi
_{2}\left( \Delta \right) \right) \right) $, respectively. Here, $K\left(
\phi _{1}\left( \Delta \right) \right) =L^{G_{\phi _{1}}}$ and $K\left( \phi
_{2}\left( \Delta \right) \right) =L^{G_{\phi _{2}}}$ are the invariant
subfields of $L$, respectively.

$\left( iii\right) \Leftrightarrow \left( iv\right) $. It is immediately
from \emph{Lemmas 4.12-3}.

$\left( v\right) \Leftrightarrow \left( vi\right) $. It is immediately from
\emph{Lemma 5.6} since $\left( iii\right) \Leftrightarrow \left( iv\right) $
holds.
\end{proof}

\begin{proposition}
Assume $G$ is a Noether solution of $L/K$.

$\left( i\right) $ Let $Z\left( G\right) $ denote the center of the group $G$%
. Then the quotient group $G/Z\left( G\right) $ of $G$ by $Z\left( G\right) $
is isomorphic to the inner automorphism group $Inn\left( G\right) $ of $G$.

In particular, $G$ is a subgroup of the isotropy subgroup $Aut\left(
L/K\right) ^{G}$.

$\left( ii\right) $ Let $Z_{Aut\left( L/K\right) }\left( G\right) $ denote
the centralizer of the subgroup $G$ in $Aut\left( L/K\right) $. Then $%
Z_{Aut\left( L/K\right) }\left( G\right) $ is a normal subgroup of the
isotropy subgroup $Aut\left( L/K\right) ^{G}$.

In particular, the quotient group $Aut\left( L/K\right) ^{G}/Z_{Aut\left(
L/K\right) }\left( G\right) $ of $Aut\left( L/K\right) ^{G}$ by $%
Z_{Aut\left( L/K\right) }\left( G\right) $ is a finite group.
\end{proposition}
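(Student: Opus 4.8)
The plan is to treat both parts as essentially formal consequences of the structure of the conjugation action, the only substantive input being that a Noether solution $G$ is a \emph{finite} subgroup of $Aut\left(L/K\right)$ (recorded in \S1: for a purely transcendental $L/K$ of finite transcendence degree, a subgroup $G\subseteq Aut\left(L/K\right)$ is finite precisely when $L$ is algebraic over $L^{G}$, which holds by the definition of a Noether solution). Throughout I will use that, by \emph{Remark 8.25}, $Aut\left(L/K\right)^{G}=\{\phi\in Aut\left(L/K\right):G_{\phi}=G\}$ with $G_{\phi}=\phi\cdot G\cdot\phi^{-1}$; since $\phi\cdot G\cdot\phi^{-1}=G$ is equivalent to $\phi^{-1}\cdot G\cdot\phi=G$, the subgroup $Aut\left(L/K\right)^{G}$ is exactly the normalizer of $G$ in $Aut\left(L/K\right)$, consistent with the notation of \S1.2.

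For part $\left(i\right)$, first I would form the conjugation homomorphism $c:G\rightarrow Aut\left(G\right)$, $g\mapsto c_{g}$, where $c_{g}\left(x\right)=g\cdot x\cdot g^{-1}$ for $x\in G$. A routine check shows $c$ is a group homomorphism, its kernel is $\{g\in G:g\cdot x\cdot g^{-1}=x\text{ for all }x\in G\}=Z\left(G\right)$, and its image is by definition $Inn\left(G\right)$; the first isomorphism theorem then gives $G/Z\left(G\right)\cong Inn\left(G\right)$. For the ``in particular'', fix $g\in G$; since $G$ is a group, $g^{-1}\cdot G\cdot g=G$, so $g$ normalizes $G$, i.e. $g\in Aut\left(L/K\right)^{G}$, whence $G\subseteq Aut\left(L/K\right)^{G}$ as a subgroup.

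For part $\left(ii\right)$, write $N:=Aut\left(L/K\right)^{G}$ and $Z:=Z_{Aut\left(L/K\right)}\left(G\right)$. I would first verify the classical normalizer/centralizer lemma: if $\phi\in N$ and $\zeta\in Z$, then for every $g\in G$ we have $\phi^{-1}\cdot g\cdot\phi\in G$, hence $\zeta$ commutes with $\phi^{-1}\cdot g\cdot\phi$, and a short computation yields $\left(\phi\cdot\zeta\cdot\phi^{-1}\right)\cdot g\cdot\left(\phi\cdot\zeta\cdot\phi^{-1}\right)^{-1}=g$; thus $\phi\cdot\zeta\cdot\phi^{-1}\in Z$, so $Z\trianglelefteq N$. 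Next, consider the map $\rho:N\rightarrow Aut\left(G\right)$ sending $\phi$ to the map $g\mapsto\phi\cdot g\cdot\phi^{-1}$ on $G$; it is well defined precisely because $\phi$ normalizes $G$, it is a homomorphism, and its kernel is $\{\phi\in N:\phi\cdot g\cdot\phi^{-1}=g\text{ for all }g\in G\}=Z$. By the first isomorphism theorem $N/Z$ embeds into $Aut\left(G\right)$. Finally, since $G$ is a Noether solution of $L/K$, $G$ is finite, so $Aut\left(G\right)$ is finite and therefore $N/Z=Aut\left(L/K\right)^{G}/Z_{Aut\left(L/K\right)}\left(G\right)$ is finite.

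There is no serious obstacle here: everything reduces to standard facts about conjugation actions together with the finiteness of $G$. The only points that genuinely need care are the identification of $Aut\left(L/K\right)^{G}$ with the normalizer of $G$ (so that the kernels of $c$ and $\rho$ are as stated and the conclusions match the earlier notation), and the invocation of finiteness of $G$, which is legitimate because the hypothesis that $G$ is a Noether solution forces $L$ to be algebraic over $L^{G}$ and hence $G$ to be finite.
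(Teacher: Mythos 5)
Your proof is correct and follows essentially the same route as the paper: both use the conjugation homomorphism from the normalizer $Aut\left(L/K\right)^{G}$ into $Aut\left(G\right)$, identify its kernel with the centralizer $Z_{Aut\left(L/K\right)}\left(G\right)$, and deduce finiteness of the quotient from the finiteness of $G$ (hence of $Aut\left(G\right)$), which as you note is forced by the Noether-solution hypothesis. Your explicit verification of $Z_{Aut\left(L/K\right)}\left(G\right)\trianglelefteq Aut\left(L/K\right)^{G}$ via the normalizer/centralizer lemma is merely a spelled-out version of what the paper obtains directly from the kernel description.
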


\begin{proof}
Let $\phi \in Aut\left( L/K\right) $. Put $\tau _{\phi }\left( x\right)
=\phi \cdot x\cdot \phi ^{-1}$ for any $x\in G$. The mapping $\tau _{\phi
}:G\rightarrow G_{\phi }$ is an isomorphism of groups.

In particular, for any $\phi \in Aut\left( L/K\right) ^{G}$, the mapping
\begin{equation*}
\tau _{\phi }:G\rightarrow G=G_{\phi }
\end{equation*}%
is an automorphism of the group $G$, i.e., $\tau _{\phi }\in Aut\left(
G\right) $. We have a homomorphism
\begin{equation*}
\tau :Aut\left( L/K\right) ^{G}\rightarrow Aut\left( G\right)
\end{equation*}%
of groups given by $\phi \longmapsto \tau _{\phi }$. Then
\begin{equation*}
Z_{Aut\left( L/K\right) }\left( G\right) =\ker \left( \tau \right)
\end{equation*}%
holds, i.e., $Z_{Aut\left( L/K\right) }\left( G\right) $ is the kernel $\ker
\left( \tau \right) $ of the homomorphism $\tau $. It follows that $Z\left(
G\right) \subseteq \ker \left( \tau \right) $ and $G\subseteq Aut\left(
L/K\right) ^{G}$ hold.

On the other hand, as $G$ is a finite subgroup of $Aut\left( L/K\right) $,
it is seen that $Aut\left( G\right) $ is a finite group and then the
quotient group
\begin{equation*}
\frac{Aut\left( L/K\right) ^{G}}{\ker \left( \tau \right) }=\frac{Aut\left(
L/K\right) ^{G}}{Z_{Aut\left( L/K\right) }\left( G\right) }
\end{equation*}%
is also a finite group. This completes the proof.
\end{proof}

\begin{proposition}
(\emph{Left actions on decomposition groups induced from the inner action of
the Galois group}) The inner action of the Galois group $Aut\left(
L/K\right) $ on Noether solutions $G$ of $L/K$ induces left actions of $%
Aut\left( L/K\right) $ on decomposition groups, full algebraic Galois groups
and highest transcendental Galois groups of $L/K$ in an evident manner:

Let $G$ be a Noether solution of $L/K$. Fixed a nice basis $\left( \Delta
,A\right) $ of $L/K$ with $K\left( \Delta \right) =L^{G}$. For any $\phi \in
Aut\left( L/K\right) $, define
\begin{equation*}
\begin{array}{l}
\phi \cdot D_{L/K}\left( \Delta ,A\right) \\
\quad \quad \quad \quad :=D_{L/K}\left( \phi \left( \Delta \right) ,\phi
\left( A\right) \right) ; \\
\phi \cdot \pi _{a}\left( L/K\right) \left( \Delta ,A\right) \\
\quad \quad \quad \quad :=\pi _{a}\left( L/K\right) \left( \phi \left(
\Delta \right) ,\phi \left( A\right) \right) ; \\
\phi \cdot \pi _{t}\left( L/K\right) \left( \Delta ,A\right) \\
\quad \quad \quad \quad :=\pi _{t}\left( L/K\right) \left( \phi \left(
\Delta \right) ,\phi \left( A\right) \right) .%
\end{array}%
\end{equation*}

Define the orbits
\begin{equation*}
\begin{array}{l}
D_{L/K}\left( \Delta ,A\right) ^{Aut\left( L/K\right) } \\
\quad \quad \quad \quad :=\{\phi \cdot D_{L/K}\left( \Delta ,A\right) :\phi
\in Aut\left( L/K\right) \}; \\
\pi _{a}\left( L/K\right) \left( \Delta ,A\right) ^{Aut\left( L/K\right) }
\\
\quad \quad \quad \quad :=\{\phi \cdot \pi _{a}\left( L/K\right) \left(
\Delta ,A\right) :\phi \in Aut\left( L/K\right) \}; \\
\pi _{t}\left( L/K\right) \left( \Delta ,A\right) ^{Aut\left( L/K\right) }
\\
\quad \quad \quad \quad :=\{\phi \cdot \pi _{t}\left( L/K\right) \left(
\Delta ,A\right) :\phi \in Aut\left( L/K\right) \}.%
\end{array}%
\end{equation*}

Define the isotropy subgroups
\begin{equation*}
\begin{array}{l}
Aut\left( L/K\right) ^{D_{L/K}\left( \Delta ,A\right) } \\
\quad \quad \quad \quad :=\{\phi \in Aut\left( L/K\right) :\phi \cdot
D_{L/K}\left( \Delta ,A\right) =D_{L/K}\left( \Delta ,A\right) \}; \\
Aut\left( L/K\right) ^{\pi _{a}\left( L/K\right) \left( \Delta ,A\right) }
\\
\quad \quad \quad \quad :=\{\phi \in Aut\left( L/K\right) :\phi \cdot \pi
_{a}\left( L/K\right) \left( \Delta ,A\right) =\pi _{a}\left( L/K\right)
\left( \Delta ,A\right) \}; \\
Aut\left( L/K\right) ^{\pi _{t}\left( L/K\right) \left( \Delta ,A\right) }
\\
\quad \quad \quad \quad :=\{\phi \in Aut\left( L/K\right) :\phi \cdot \pi
_{t}\left( L/K\right) \left( \Delta ,A\right) =\pi _{t}\left( L/K\right)
\left( \Delta ,A\right) \}.%
\end{array}%
\end{equation*}

Then there are bijective sets
\begin{equation*}
\begin{array}{l}
G^{Aut\left( L/K\right) } \\
=\pi _{a}\left( L/K\right) \left( \Delta ,A\right) ^{Aut\left( L/K\right) }
\\
\cong \frac{Aut\left( L/K\right) }{Aut\left( L/K\right) ^{G}} \\
=\frac{Aut\left( L/K\right) }{Aut\left( L/K\right) ^{\pi _{a}\left(
L/K\right) \left( \Delta ,A\right) }} \\
\cong \frac{Aut\left( L/K\right) }{Aut\left( L/K\right) ^{\pi _{t}\left(
L/K\right) \left( \Delta ,A\right) }} \\
\cong \pi _{t}\left( L/K\right) \left( \Delta ,A\right) ^{Aut\left(
L/K\right) } \\
\cong \frac{Aut\left( L/K\right) }{Aut\left( L/K\right) ^{D_{L/K}\left(
\Delta ,A\right) }} \\
\cong D_{L/K}\left( \Delta ,A\right) ^{Aut\left( L/K\right) }.%
\end{array}%
\end{equation*}
\end{proposition}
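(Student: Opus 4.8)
The plan is to recognise the three ``left actions'' defined in the statement as a single object --- the conjugation (inner) action of $Aut\left(L/K\right)$ on its subgroups --- and then to read the whole chain off from the orbit--stabiliser theorem together with \emph{Proposition 8.26}.

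First I would make the reduction explicit. Since $G$ is a Noether solution of $L/K$ and $\left(\Delta,A\right)$ is a nice basis with $K\left(\Delta\right)=L^{G}$, \emph{Proposition 4.6} (equivalently the proof of \emph{Theorem 6.4}) gives $G=\pi_{a}\left(L/K\right)\left(\Delta,A\right)=Aut\left(L/K\left(\Delta\right)\right)$ with $L$ algebraic Galois over $K\left(\Delta\right)$; conjugating by any $\phi\in Aut\left(L/K\right)$ shows $L$ is also algebraic Galois over $K\left(\phi\left(\Delta\right)\right)$, so the decomposition-group machinery of \emph{\S 5} applies at every nice basis $\left(\phi\left(\Delta\right),\phi\left(A\right)\right)$. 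For such a $\phi$ I would check
\[
\phi\cdot\pi_{a}\left(L/K\right)\left(\Delta,A\right)=\pi_{a}\left(L/K\right)\left(\phi\left(\Delta\right),\phi\left(A\right)\right)=\phi\,\pi_{a}\left(L/K\right)\left(\Delta,A\right)\,\phi^{-1},
\]
and likewise for $\pi_{t}\left(L/K\right)\left(\Delta,A\right)$ and $D_{L/K}\left(\Delta,A\right)$. For $\pi_{a}$ the middle equality is immediate from the definition $\pi_{a}\left(L/K\right)\left(\Delta,A\right)=Aut\left(L/K\left(\Delta\right)\right)$; for $D_{L/K}$ it is \emph{Proposition 6.3}; and for $\pi_{t}$ it follows by conjugating the block form of automorphisms at a nice basis (\emph{Proposition 5.3}, relative to $L=K\left(\Delta\right)\oplus span_{K}\left(L\setminus K\left(\Delta\right)\right)$), using the factorisation $D_{L/K}\left(\Delta,A\right)=\pi_{a}\left(L/K\right)\left(\Delta,A\right)\cdot\pi_{t}\left(L/K\right)\left(\Delta,A\right)$ of \emph{Theorem 5.4} and the uniqueness supplied by \emph{Lemma 5.6}. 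So the three operations are genuine left group actions obtained by restricting the conjugation action, and since $G=\pi_{a}\left(L/K\right)\left(\Delta,A\right)$ the inner action on $G$ coincides with the action on $\pi_{a}\left(L/K\right)\left(\Delta,A\right)$, giving outright $G^{Aut\left(L/K\right)}=\pi_{a}\left(L/K\right)\left(\Delta,A\right)^{Aut\left(L/K\right)}$ and $Aut\left(L/K\right)^{G}=Aut\left(L/K\right)^{\pi_{a}\left(L/K\right)\left(\Delta,A\right)}$.

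Next I would invoke the orbit--stabiliser theorem for each of the three conjugation actions: for $X$ equal to $G=\pi_{a}\left(L/K\right)\left(\Delta,A\right)$, to $\pi_{t}\left(L/K\right)\left(\Delta,A\right)$, or to $D_{L/K}\left(\Delta,A\right)$, the assignment $\phi\cdot X\mapsto\phi\,Aut\left(L/K\right)^{X}$ is a well-defined bijection from the orbit $X^{Aut\left(L/K\right)}$ onto the coset space $Aut\left(L/K\right)/Aut\left(L/K\right)^{X}$, where $Aut\left(L/K\right)^{X}$ denotes the isotropy subgroup. This supplies every $\cong$ in the displayed chain. It remains to identify the four isotropy subgroups occurring there. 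By definition $\phi\in Aut\left(L/K\right)^{D_{L/K}\left(\Delta,A\right)}$ means $D_{L/K}\left(\phi\left(\Delta\right),\phi\left(A\right)\right)=D_{L/K}\left(\Delta,A\right)$; specialising \emph{Proposition 8.26} to $\phi_{1}=\phi$ and $\phi_{2}=id_{L}$, this is equivalent to $G_{\phi}=G$, to $\pi_{a}\left(L/K\right)\left(\phi\left(\Delta\right),\phi\left(A\right)\right)=\pi_{a}\left(L/K\right)\left(\Delta,A\right)$ and to $\pi_{t}\left(L/K\right)\left(\phi\left(\Delta\right),\phi\left(A\right)\right)=\pi_{t}\left(L/K\right)\left(\Delta,A\right)$, i.e.\ to $\phi$ lying in $Aut\left(L/K\right)^{G}$, in $Aut\left(L/K\right)^{\pi_{a}\left(L/K\right)\left(\Delta,A\right)}$, or in $Aut\left(L/K\right)^{\pi_{t}\left(L/K\right)\left(\Delta,A\right)}$ respectively. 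Hence these four isotropy subgroups are one and the same subgroup of $Aut\left(L/K\right)$, so the four coset spaces in the chain are literally equal; splicing that equality between the orbit--stabiliser bijections of the previous step produces the asserted chain of equalities and bijections.

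The step I expect to be the main obstacle is the first one: showing that the operations written in the statement are \emph{exactly} the conjugation action, and in particular that conjugating $\pi_{t}\left(L/K\right)\left(\Delta,A\right)$ by $\phi$ lands precisely on the highest transcendental Galois subgroup at $\left(\phi\left(\Delta\right),\phi\left(A\right)\right)$ rather than on some other complement of $\pi_{a}\left(L/K\right)\left(\phi\left(\Delta\right),\phi\left(A\right)\right)$ inside the transported decomposition group (an abstract direct-product complement need not be unique). This is what forces the use of the explicit block decomposition of \emph{Proposition 5.3} --- under which $\pi_{t}$ is characterised as the subgroup of $D_{L/K}$ fixing the summand $span_{K}\left(L\setminus K\left(\Delta\right)\right)$ pointwise --- together with \emph{Theorem 5.4}, \emph{Lemma 5.6}, and the $\sigma$-tame Galois property automatically present in the Noether-solution setting. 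Once these identifications are in hand, the remainder is routine orbit--stabiliser bookkeeping combined with \emph{Proposition 8.26}.
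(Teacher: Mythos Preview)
Your proposal is correct and follows essentially the same route as the paper: the paper's proof is the one-line ``Immediately from \emph{Propositions 8.25--6}'', and what you have written is precisely an unpacking of that line --- using \emph{Proposition 8.26} (with $\phi_{1}=\phi$, $\phi_{2}=id_{L}$) to identify the four isotropy subgroups, and then reading the chain off from the orbit--stabiliser theorem.

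One minor remark: the identification $\phi\cdot\pi_{t}\left(L/K\right)\left(\Delta,A\right)=\phi\,\pi_{t}\left(L/K\right)\left(\Delta,A\right)\,\phi^{-1}$ (and similarly for $\pi_{a}$ and $D_{L/K}$) that you prove in your first step is exactly the content of \emph{Proposition 8.29}, which in the paper is stated and proved \emph{after} the present proposition. The paper does not need this identification to establish 8.28, because the defined operation $\phi\cdot X:=X\left(\phi\left(\Delta\right),\phi\left(A\right)\right)$ is already a left group action on nice-basis classes (composition of automorphisms), so orbit--stabiliser applies directly; the equality $G^{Aut\left(L/K\right)}=\pi_{a}\left(L/K\right)\left(\Delta,A\right)^{Aut\left(L/K\right)}$ then comes from the elementary fact $G_{\phi}=Aut\left(L/K\left(\phi\left(\Delta\right)\right)\right)=\pi_{a}\left(L/K\right)\left(\phi\left(\Delta\right),\phi\left(A\right)\right)$ (cf.\ \emph{Theorem 8.23}). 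So your argument is slightly over-engineered --- you prove 8.29 en route --- but this is harmless and, if anything, makes the logic more transparent than the paper's terse citation.
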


\begin{proof}
Immediately from \emph{Propositions 8.25-6}.
\end{proof}

\begin{proposition}
Let $G$ be a Noether solution of $L/K$ and $\left( \Delta ,A\right) $ a nice
basis $\left( \Delta ,A\right) $ of $L/K$ with $K\left( \Delta \right)
=L^{G} $. Fixed any $\phi \in Aut\left( L/K\right) $.

$\left( i\right) $ For the decomposition groups of $L/K$,
\begin{equation*}
D_{L/K}\left( \phi \left( \Delta \right) ,\phi \left( A\right) \right) =\phi
\cdot D_{L/K}\left( \Delta ,A\right) \cdot \phi ^{-1}
\end{equation*}%
holds in the Galois group $Aut\left( L/K\right) $.

In particular, the decomposition groups $D_{L/K}\left( \Delta ,A\right) $
and $D_{L/K}\left( \phi \left( \Delta \right) ,\phi \left( A\right) \right) $
are $\left( L,L\right) $-spatially isomorphic.

$\left( ii\right) $ For the full algebraic Galois subgroups of $L/K$,
\begin{equation*}
\pi _{a}\left( L/K\right) \left( \phi \left( \Delta \right) ,\phi \left(
A\right) \right) =\phi \cdot \pi _{a}\left( L/K\right) \left( \Delta
,A\right) \cdot \phi ^{-1};
\end{equation*}%
holds in the Galois group $Aut\left( L/K\right) $.

In particular, $\pi _{a}\left( L/K\right) \left( \Delta ,A\right) $ and $\pi
_{a}\left( L/K\right) \left( \phi \left( \Delta \right) ,\phi \left(
A\right) \right) $ are $\left( L,L\right) $-spatially isomorphic.

$\left( iii\right) $ For the highest transcendental Galois subgroups of $L/K$%
,
\begin{equation*}
\pi _{t}\left( L/K\right) \left( \phi \left( \Delta \right) ,\phi \left(
A\right) \right) =\phi \cdot \pi _{t}\left( L/K\right) \left( \Delta
,A\right) \cdot \phi ^{-1}.
\end{equation*}%
holds in the Galois group $Aut\left( L/K\right) $.

In particular, $\pi _{t}\left( L/K\right) \left( \Delta ,A\right) $ and $\pi
_{t}\left( L/K\right) \left( \phi \left( \Delta \right) ,\phi \left(
A\right) \right) $ are $\left( L,L\right) $-spatially isomorphic.
\end{proposition}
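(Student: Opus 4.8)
The plan is to reduce all three parts to results already in place: the conjugation formula for decomposition groups from \emph{Proposition 6.3}, the normalised-lifting description of the two functional pieces from \emph{Propositions 4.6--4.7}, the factorisation $D=\pi _{a}\cdot \pi _{t}$ from \emph{Theorem 5.4}, and the translation between conjugacy and $\left( L,L\right) $-spatial isomorphism from \emph{Proposition 8.14}. First I would install the hypotheses needed to invoke those. Since $G$ is a Noether solution of $L/K$ with $L^{G}=K\left( \Delta \right) $, the field $L$ is algebraic Galois over $K\left( \Delta \right) $; applying the $K$-automorphism $\phi $ to this tower shows that $L=\phi \left( L\right) $ is algebraic Galois over $\phi \left( K\left( \Delta \right) \right) =K\left( \phi \left( \Delta \right) \right) $ as well. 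Hence $\left( \phi \left( \Delta \right) ,\phi \left( A\right) \right) $ is again a nice basis of $L/K$ (by \emph{Remark 5.2} and \emph{Proposition 6.3}), and $L$ is $\sigma $-tame Galois over $K$ inside the $Aut\left( L/K\right) $-invariant set $\Omega _{0}$ of all transcendence bases $\Lambda $ of $L/K$ with $L$ algebraic Galois over $K\left( \Lambda \right) $; this $\Omega _{0}$ contains both $\Delta $ and $\phi \left( \Delta \right) $.

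Part $\left( i\right) $ is then immediate: taking $g=\phi $ in $\left( ii\right) $ of \emph{Proposition 6.3} gives $D_{L/K}\left( \phi \left( \Delta \right) ,\phi \left( A\right) \right) =\phi \cdot D_{L/K}\left( \Delta ,A\right) \cdot \phi ^{-1}$ in $Aut\left( L/K\right) $, so the two decomposition groups are conjugate, hence $\left( L,L\right) $-spatially isomorphic by \emph{Proposition 8.14}. For part $\left( ii\right) $ I would unwind the definition: by \emph{Proposition 4.6}, $\pi _{a}\left( L/K\right) \left( \Delta ,A\right) =Aut\left( L/K\left( \Delta \right) \right) $, and conjugating a Galois group by the field automorphism $\phi $ sends it to the Galois group over the image subfield, i.e. $\phi \cdot Aut\left( L/K\left( \Delta \right) \right) \cdot \phi ^{-1}=Aut\left( L/\phi \left( K\left( \Delta \right) \right) \right) =Aut\left( L/K\left( \phi \left( \Delta \right) \right) \right) =\pi _{a}\left( L/K\right) \left( \phi \left( \Delta \right) ,\phi \left( A\right) \right) $, using $\phi \left( K\left( \Delta \right) \right) =K\left( \phi \left( \Delta \right) \right) $ for the $K$-automorphism $\phi $; alternatively this is exactly \emph{Theorem 8.23} in the case $M=L$. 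Conjugacy again yields the $\left( L,L\right) $-spatial isomorphism via \emph{Proposition 8.14}.

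For part $\left( iii\right) $ I would combine the previous two with the factorisation of the decomposition group. By \emph{Theorem 5.4} (valid since $L$ is $\sigma $-tame Galois inside $\Omega _{0}$) one has $D_{L/K}\left( \Delta ,A\right) =\pi _{a}\left( L/K\right) \left( \Delta ,A\right) \cdot \pi _{t}\left( L/K\right) \left( \Delta ,A\right) $ and likewise at $\left( \phi \left( \Delta \right) ,\phi \left( A\right) \right) $; moreover the conjugate $\phi \cdot \pi _{t}\left( L/K\right) \left( \Delta ,A\right) \cdot \phi ^{-1}$ is itself a highest transcendental Galois subgroup of $L/K$, since conjugation by $\phi $ transports the three defining conditions of \emph{Definition 4.4} along the identification $M=K\left( \Delta \right) \mapsto \phi \left( M\right) =K\left( \phi \left( \Delta \right) \right) $ (for $x^{\prime }\in L\setminus \phi \left( M\right) $ write $x^{\prime }=\phi \left( x\right) $ with $x\in L\setminus M$, then $\left( \phi \sigma \phi ^{-1}\right) \left( x^{\prime }\right) =\phi \left( \sigma \left( x\right) \right) =x^{\prime }$; and the restriction to $\phi \left( M\right) $ matches $Aut\left( K\left( \phi \left( \Delta \right) \right) /K\right) $). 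By part $\left( ii\right) $ it intersects $\pi _{a}\left( L/K\right) \left( \phi \left( \Delta \right) ,\phi \left( A\right) \right) $ trivially (\emph{Proposition 4.10}), so the Galois correspondence of \emph{Lemmas 4.12--4.13} forces $\phi \cdot \pi _{t}\left( L/K\right) \left( \Delta ,A\right) \cdot \phi ^{-1}=\pi _{t}\left( L/K\right) \left( \phi \left( \Delta \right) ,\phi \left( A\right) \right) $, whence the claimed identity and, once more through \emph{Proposition 8.14}, the $\left( L,L\right) $-spatial isomorphism.

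The bookkeeping aside, the one place calling for care — the main obstacle — is part $\left( iii\right) $: one must genuinely check that the conjugate $\phi \cdot \pi _{t}\left( L/K\right) \left( \Delta ,A\right) \cdot \phi ^{-1}$ is the \emph{highest} transcendental Galois subgroup sitting at the nice basis $\left( \phi \left( \Delta \right) ,\phi \left( A\right) \right) $, i.e. that it coincides with the normalised lifting of $Aut\left( K\left( \phi \left( \Delta \right) \right) /K\right) $, rather than merely being \emph{some} transcendental Galois subgroup; this is exactly what the uniqueness in the Galois correspondence (\emph{Lemmas 4.12--4.13}) is needed for, and it is the reason the $\sigma $-tame Galois hypothesis — guaranteed here because $G$ (and hence every $\phi \cdot G\cdot \phi ^{-1}$) is a Noether solution — cannot be dispensed with.
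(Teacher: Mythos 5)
Your proof is correct, but it routes parts $\left( i\right) $ and $\left( iii\right) $ differently from the paper. The paper first proves $\left( ii\right) $ by quoting \emph{Theorem 8.23} (which packages the computation $G_{\phi }=\phi \cdot G\cdot \phi ^{-1}=Aut\left( L/\phi \left( L^{G}\right) \right) $ and the resulting $\left( L,L\right) $-spatial isomorphism), and then obtains $\left( iii\right) $ and $\left( i\right) $ essentially for free from the equivalences $\left( ii\right) \Leftrightarrow \left( iii\right) \Leftrightarrow \left( iv\right) $ of \emph{Theorem 8.20}: once the algebraic parts are spatially isomorphic via the pair $\left( \tau _{\phi },\phi \right) $, the highest transcendental parts and the decomposition groups are spatially isomorphic via the same pair, and since that spatial isomorphism is implemented by conjugation by $\phi $ the two remaining conjugation formulas drop out. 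You instead take $\left( i\right) $ directly from the conjugation formula for decomposition groups in \emph{Proposition 6.3}, do $\left( ii\right) $ by the bare computation $\phi \cdot Aut\left( L/K\left( \Delta \right) \right) \cdot \phi ^{-1}=Aut\left( L/K\left( \phi \left( \Delta \right) \right) \right) $ (which, as you note, is \emph{Theorem 8.23} with $M=L$), and for $\left( iii\right) $ you verify by hand that $\phi \cdot \pi _{t}\left( L/K\right) \left( \Delta ,A\right) \cdot \phi ^{-1}$ satisfies the three conditions of \emph{Definition 4.4} over the subfield $K\left( \phi \left( \Delta \right) \right) $ and then pin it down by the uniqueness in \emph{Lemmas 4.12--13} (via the trivial intersection from \emph{Proposition 4.10} and part $\left( ii\right) $). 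Your route is slightly longer but self-contained at the level of the Galois correspondence and makes explicit where the $\sigma $-tame hypothesis (supplied by $G$ being a Noether solution) is actually used, namely in the uniqueness step for $\left( iii\right) $; the paper's route is shorter but leans on the heavier equivalences of \emph{Theorem 8.20}. Both are valid, and your identification of $\left( iii\right) $ as the only delicate point matches where the real content lies.
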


\begin{proof}
Consider a $K$-isomorphism $\phi :L\rightarrow L$ of fields and its
restriction
\begin{equation*}
\phi :K\left( \Delta \right) \rightarrow K\left( \phi \left( \Delta \right)
\right) .
\end{equation*}
From \emph{Theorem 8.23} we have
\begin{equation*}
\begin{array}{l}
\pi _{a}\left( L/K\right) \left( \phi \left( \Delta \right) ,\phi \left(
A\right) \right) \\
=Aut\left( L/K\left( \phi \left( \Delta \right) \right) \right) \\
=G_{\phi } \\
=\phi \cdot G\cdot \phi ^{-1} \\
=\phi \cdot Aut\left( L/K\left( \Delta \right) \right) \cdot \phi ^{-1} \\
=\phi \cdot \pi _{a}\left( L/K\right) \left( \Delta ,A\right) \cdot \phi
^{-1}%
\end{array}%
\end{equation*}
and it is seen that $\pi _{a}\left( L/K\right) \left( \Delta ,A\right) $ and
$\pi _{a}\left( L/K\right) \left( \phi \left( \Delta \right) ,\phi \left(
A\right) \right) $ are $\left( L,L\right) $-spatially isomorphic. This
proves $\left( ii\right) $.

It follows that $\pi _{t}\left( L/K\right) \left( \Delta ,A\right) $ and $%
\pi _{t}\left( L/K\right) \left( \phi \left( \Delta \right) ,\phi \left(
A\right) \right) $ are $\left( L,L\right) $-spatially isomorphic via the
spatial isomorphism $\left( \tau _{\phi },\phi \right) $ from \emph{Theorem
8.20}. Hence,
\begin{equation*}
\pi _{t}\left( L/K\right) \left( \phi \left( \Delta \right) ,\phi \left(
A\right) \right) =\phi \cdot \pi _{t}\left( L/K\right) \left( \Delta
,A\right) \cdot \phi ^{-1}.
\end{equation*}%
holds in $Aut\left( L/K\right) $. This proves $\left( iii\right) $. In the
same way prove $\left( i\right) $.
\end{proof}

Now for a Noether solution $G$ of $L/K$, put
\begin{equation*}
sp_{L/K}\left( G\right) :=
\end{equation*}%
\begin{equation*}
\sharp \{P\text{ is Noether solution of }L/K:P\text{ is -spatially
isomorphic to }G\};
\end{equation*}%
\begin{equation*}
ind_{L/K}\left( G\right) :=\sharp \frac{Aut\left( L/K\right) }{Z_{Aut\left(
L/K\right) }\left( G\right) },
\end{equation*}%
i.e., the index of the centralizer $Z_{Aut\left( L/K\right) }\left( G\right)
$ in $Aut\left( L/K\right) $;
\begin{equation*}
orb_{L/K}\left( G\right) :=\sharp \frac{Aut\left( L/K\right) }{Aut\left(
L/K\right) ^{G}},
\end{equation*}%
i.e., the index of the isotropy subgroup $Aut\left( L/K\right) ^{G}$ in $%
Aut\left( L/K\right) $.

\begin{theorem}
\emph{(The inner action of the Galois group on Noether solutions)} Let $G$ be
a Noether solution of $L/K$. Consider the inner action of the Galois group $%
Aut\left( L/K\right) $ on the subgroup $G$.

$\left( i\right) $ If either $G=\{1\}$ or $Z_{Aut\left( L/K\right) }\left(
G\right) =Aut\left( L/K\right) $ holds, then we have
\begin{equation*}
ind_{L/K}\left( G\right) =orb_{L/K}\left( G\right) =sp_{L/K}\left( G\right)
=1.
\end{equation*}

$\left( ii\right) $ We have
\begin{equation*}
orb_{L/K}\left( G\right) =sp_{L/K}\left( G\right) \leq ind_{L/K}\left(
G\right) \leq +\infty .
\end{equation*}

$\left( iii\right) $ Fixed a nice basis $\left( \Delta ,A\right) $ of $L/K$
with $L^{G}=K\left( \Delta \right) $. Then we have
\begin{equation*}
sp_{L/K}\left( G\right) =\left[ Aut\left( L/K\right) :Aut\left( L/K\right)
^{G}\right]
\end{equation*}%
and
\begin{equation*}
sp_{L/K}\left( G\right) =\left[ Aut\left( L/K\right) :Aut\left( L/K\right)
^{D_{L/K}\left( \Delta ,A\right) }\right]
\end{equation*}%
for the indexes of the subgroups $Aut\left( L/K\right) ^{G}$ and $Aut\left(
L/K\right) ^{D_{L/K}\left( \Delta ,A\right) }$ in the Galois group $%
Aut\left( L/K\right) $, respectively.

In particular, $sp_{L/K}\left( G\right) $ is finite if and only if there is
\begin{equation*}
\left[ Aut\left( L/K\right) :Aut\left( L/K\right) ^{D_{L/K}\left( \Delta
,A\right) }\right] <+\infty .
\end{equation*}
\end{theorem}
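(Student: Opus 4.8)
The plan is to reduce everything to the elementary orbit–stabiliser correspondence for the conjugation action of $Aut(L/K)$ on its subgroups, together with two facts already established: that $(L,L)$‑spatial isomorphism of subgroups of $Aut(L/K)$ is literally the same relation as conjugacy in $Aut(L/K)$ (Proposition 8.14), and that for a Noether solution $G$ every conjugate $G_{\phi}=\phi\cdot G\cdot\phi^{-1}$ (with $\phi\in Aut(L/K)$) is again a Noether solution of $L/K$ which is $(L,L)$‑spatially isomorphic to $G$ (Theorem 8.24 applied with $M=L$). Thus the whole theorem is a packaging of these inputs with Proposition 8.29.

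First I would prove $(ii)$. By Proposition 8.14 a subgroup $P\subseteq Aut(L/K)$ is $(L,L)$‑spatially isomorphic to $G$ exactly when $P=\phi^{-1}\cdot G\cdot\phi$ for some $\phi\in Aut(L/K)$; by Theorem 8.24 each such $P$ is a Noether solution of $L/K$, and conversely every Noether solution $(L,L)$‑spatially isomorphic to $G$ has this form. Hence the set counted by $sp_{L/K}(G)$ is precisely the conjugacy orbit $G^{Aut(L/K)}$, which by the orbit–stabiliser bijection recorded in Proposition 8.29 is in bijection with $Aut(L/K)/Aut(L/K)^{G}$; therefore $sp_{L/K}(G)=orb_{L/K}(G)$. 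Next, since the centralizer $Z_{Aut(L/K)}(G)$ fixes $G$ pointwise, it fixes $G$ setwise, so $Z_{Aut(L/K)}(G)\subseteq Aut(L/K)^{G}$ (an inclusion also contained in Proposition 8.28). From the tower $Z_{Aut(L/K)}(G)\subseteq Aut(L/K)^{G}\subseteq Aut(L/K)$ one gets $orb_{L/K}(G)=[Aut(L/K):Aut(L/K)^{G}]\le[Aut(L/K):Z_{Aut(L/K)}(G)]=ind_{L/K}(G)$, and $ind_{L/K}(G)\le+\infty$ is vacuous. This settles $(ii)$.

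Part $(i)$ is then immediate. If $G=\{1\}$ then $G_{\phi}=\{1\}=G$ for every $\phi$, so $Aut(L/K)^{G}=Aut(L/K)$ and $Z_{Aut(L/K)}(G)=Aut(L/K)$; if $Z_{Aut(L/K)}(G)=Aut(L/K)$, then by the inclusion $Z_{Aut(L/K)}(G)\subseteq Aut(L/K)^{G}\subseteq Aut(L/K)$ one again gets $Aut(L/K)^{G}=Aut(L/K)$. In both cases all three quantities $ind_{L/K}(G)$, $orb_{L/K}(G)$, $sp_{L/K}(G)$ equal the index of $Aut(L/K)$ in itself, i.e. $1$. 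For $(iii)$ I would combine $(ii)$, which already yields $sp_{L/K}(G)=orb_{L/K}(G)=[Aut(L/K):Aut(L/K)^{G}]$, with the chain of bijections of Proposition 8.29, which in particular gives $Aut(L/K)/Aut(L/K)^{G}\cong Aut(L/K)/Aut(L/K)^{D_{L/K}(\Delta,A)}$ for any nice basis $(\Delta,A)$ of $L/K$ with $K(\Delta)=L^{G}$; passing to cardinalities produces $sp_{L/K}(G)=[Aut(L/K):Aut(L/K)^{D_{L/K}(\Delta,A)}]$, and the concluding ``in particular'' is the trivial remark that a quotient set is finite precisely when the corresponding index is finite.

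The main obstacle I expect is not in the group theory, which is short once the dictionary is in place, but in applying the equivalences cleanly: one must apply Proposition 8.14 with attention to the direction of the conjugating element, invoke Theorem 8.24 to guarantee that the counted set is the full conjugacy orbit of $G$ rather than a proper subset of it, and genuinely use Proposition 8.29 for the coincidence of the isotropy subgroups $Aut(L/K)^{G}$, $Aut(L/K)^{\pi_{a}(L/K)(\Delta,A)}$ and $Aut(L/K)^{D_{L/K}(\Delta,A)}$, on which $(iii)$ entirely rests.
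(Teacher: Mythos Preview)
Your argument is correct and follows essentially the same route as the paper's own proof: reduce $sp_{L/K}(G)=orb_{L/K}(G)$ to the identification of $(L,L)$-spatial isomorphism with conjugacy (Proposition 8.14) together with the fact that conjugates of a Noether solution are again Noether solutions, then bound $orb_{L/K}(G)\le ind_{L/K}(G)$ via $Z_{Aut(L/K)}(G)\subseteq Aut(L/K)^{G}$, and finally invoke the coincidence of the isotropy subgroups for $G$, $\pi_{a}(L/K)(\Delta,A)$ and $D_{L/K}(\Delta,A)$ for part $(iii)$. Your citations are off by one in a few places: what you call Theorem 8.24 is Theorem 8.23 in the paper, the inclusion $Z_{Aut(L/K)}(G)\subseteq Aut(L/K)^{G}$ is recorded in Proposition 8.27 (not 8.28), and the chain of bijections you use for $(iii)$ is Proposition 8.28 (not 8.29); once these labels are corrected the proof matches the paper's.
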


\begin{proof}
$\left( i\right) $ Trivial.

$\left( ii\right) $ Immediately from \emph{Theorem 8.23}, \emph{Remark 8.24}
and \emph{Proposition 8.27}, where we have
\begin{equation*}
orb_{L/K}\left( G\right) =sp_{L/K}\left( G\right) \leq +\infty
\end{equation*}
and%
\begin{equation*}
\begin{array}{l}
orb_{L/K}\left( G\right) :=\sharp \frac{Aut\left( L/K\right) }{Aut\left(
L/K\right) ^{G}} \\
\leq ind_{L/K}\left( G\right) :=\sharp \frac{Aut\left( L/K\right) }{%
Z_{Aut\left( L/K\right) }\left( G\right) } \\
\leq +\infty .%
\end{array}%
\end{equation*}

$\left( iii\right) $ Immediately from \emph{Propositions 8.25-6} and \emph{%
Proposition 8.28} by taking a system $\{\phi _{\lambda }:\lambda \in I\}$ of
representatives of the left cosets of the isotropy subgroup $Aut\left(
L/K\right) ^{G}$ in the Galois group $Aut\left( L/K\right) $, where we have
\begin{equation*}
sp_{L/K}\left( G\right) =orb_{L/K}\left( G\right) =\sharp \{\phi _{\lambda
}:\lambda \in I\};
\end{equation*}%
\begin{equation*}
\left[ Aut\left( L/K\right) :Aut\left( L/K\right) ^{G}\right] =\sharp \{\phi
_{\lambda }:\lambda \in I\};
\end{equation*}%
\begin{equation*}
\left[ Aut\left( L/K\right) :Aut\left( L/K\right) ^{D_{L/K}\left( \Delta
,A\right) }\right] =\sharp \{\phi _{\lambda }:\lambda \in I\}.
\end{equation*}%
This completes the proof.
\end{proof}

\subsection{$K$-theory and Noether solutions}

$K$-theory has a natural connection with Noether solutions. In fact, one aim that $K$-theory comes into Noether's problem here is for us to
depict how many Noether solutions there are in a fixed purely transcendental
extension $L/K$ which contain $G$ as subgroups. The $K$-groups can
dominate the number of Noether solutions in $L/K$ which contains a given
Noether solution. This work is still in progress. (See \emph{[}An 2019\emph{]%
}).

Let $L$ be a purely transcendental extension over a field $K$ of finite
transcendence degree. Fixed a Noether solution $G$ of $L/K$. Recall that the
\emph{height subgroup} $H_{L/K}\left( G\right) $ of $G$ in $L/K$ is the
subgroup generated by%
\begin{equation*}
\bigcup\limits_{G_{\lambda }\in \Psi _{L/K}\left( G\right) }G_{\lambda }
\end{equation*}%
in the Galois group $Aut\left( L/K\right) $, where
\begin{equation*}
\Psi _{L/K}\left( G\right) :=\{G_{\lambda }:\lambda \in I_{L/K}\left(
G\right) \}
\end{equation*}%
denotes the set of all the Noether solutions $G_{\lambda }$ of $L/K$,
indexed by a set $I_{L/K}\left( G\right) $, such that
\begin{equation*}
G\subseteq G_{\lambda }
\end{equation*}%
holds for any $\lambda \in I_{L/K}\left( G\right) $. The \emph{class height
subgroup} of $G$ in $L/K$ is the image
\begin{equation*}
CH_{L/K}\left( G\right) :=\tau _{L/K}\left( H_{L/K}\left( G\right) \right)
\end{equation*}%
of the height subgroup $H_{L/K}\left( G\right) $ under the map $\tau _{L/K}$%
. Here, $\Pi :=GL_{K}\left( L\right) $ is the general linear group of $L$
over $K$; $\Pi ^{ab}:=\Pi /\left[ \Pi ,\Pi \right] $ is the maximal abelian
quotient of $\Pi $ by the commutator subgroup $\left[ \Pi ,\Pi \right] $;
denote by
\begin{equation*}
\tau _{L/K}:\Pi \rightarrow \Pi ^{ab}
\end{equation*}%
the natural homomorphism of the group $\Pi $ onto the quotient group $\Pi
^{ab}$. (See \emph{Definition 1.5}).

See \emph{[}Milnor 1971\emph{]} for definitions and properties of the
Whitehead $K_{1}$-group $K_{1}\left( R\right) $ of a commutative ring $R$
with identity.

\begin{remark}
$\Psi _{L/K}\left( G\right) $ is a partially ordered set by set-inclusion
relation. That is, let $G_{\alpha },G_{\beta }\in \Psi _{L/K}\left( G\right)
$. We say $G_{\alpha }\leq G_{\beta }$ if and only if $G_{\alpha }\subseteq
G_{\beta }$ holds. In such a case, we also say $\alpha \leq \beta $ in the
index set $I_{L/K}\left( G\right) $. That is, with $\leq $, the index set $%
I_{L/K}\left( G\right) $ is a partially ordered set.

The set $\Psi _{L/K}\left( G\right) $ is co-initial with $G$ being the
initial object. But $\Psi _{L/K}\left( G\right) $ is not cofinal, in general.
\end{remark}

\begin{definition}
(See \emph{[}An 2019\emph{]}) A subset $\Phi \subseteq \Psi _{L/K}\left(
G\right) $ is said to be a \textbf{Zorn-component} of $\Psi _{L/K}\left(
G\right) $ if the two conditions are satisfied: $\left( a\right) $ $\Phi $
is a totally ordered subset; $\left( b\right) $ There is no other element $%
P_{0}\in \Psi _{L/K}\left( G\right) $ such that the union $\Phi \cup
\{P_{0}\}$ is still totally ordered.

Two Zorn-components $\Phi _{1}$ and $\Phi _{2}$ of $\Psi _{L/K}\left(
G\right) $ are said to be \textbf{co-disjoint} if the intersection $\Phi
_{1}\cap \Phi _{2}$ is a finite set.
\end{definition}

\begin{definition}
(See \emph{[}An 2019\emph{]}) Let $\Phi $ be a Zorn-component $\Phi $ of $%
\Psi _{L/K}\left( G\right) $. The union $\bigcup\limits_{P\in \Phi }P$
generates a subgroup in $Aut\left( L/K\right) $, denoted by $H_{\infty
}\left( \Phi \right) $, is called a \textbf{Zorn-component} of $%
H_{L/K}\left( G\right) $ given by $\Phi $.

The image $CH_{\infty }\left( \Phi \right) :=\tau _{L/K}\left( H_{\infty
}\left( \Phi \right) \right) $ under the map $\tau _{L/K}:\Pi \rightarrow
\Pi ^{ab}$ is called a \textbf{Zorn-component} of $CH_{L/K}\left( G\right) $
given by $\Phi $.
\end{definition}

Fixed any group $G_{\lambda }\in \Psi _{L/K}\left( G\right) $, i.e,. $%
G_{\lambda }$ is a Noether solution of $L/K$ with $G_{\lambda }\supseteq G$.
Let $V_{\lambda }$ denote the vector space $L$ over the subfield
\begin{equation*}
K_{\lambda }:=K\left( \Delta _{\lambda }\right)
\end{equation*}%
of dimension
\begin{equation*}
n_{\lambda }:=\dim _{K_{\lambda }}V_{\lambda }.
\end{equation*}%
Here, $\Delta _{\lambda }$ is a given transcendence base of $L/K$ such that $%
K\left( \Delta _{\lambda }\right) =L^{G_{\lambda }}$.

\begin{proposition}
Consider the general linear group $GL\left( V_{\lambda }\right) $ of the
vector space over $K_{\lambda }$ and a $K_{\lambda }$-linear basis $%
\{e_{i}^{\lambda }\}_{1\leq i\leq n_{\lambda }}$ of $V_{\lambda }$.

$\left( i\right) $ Fixed an $\alpha _{0}\in I_{L/K}\left( G\right) $. Let $%
\lambda \in I_{L/K}\left( G\right) $ with $\alpha _{0}\leq \lambda $. Under
the linear basis $\{e_{i}^{\alpha _{0}}\}_{1\leq i\leq n_{\lambda }}$ of $%
V_{\alpha _{0}}$, there is an isomorphism
\begin{equation*}
\tau _{\lambda |\alpha _{0}}:G_{\lambda }\rightarrow H_{\lambda |\alpha
_{0}}:=\tau _{\lambda |\alpha _{0}}\left( G_{\lambda }\right) \subseteq
GL\left( n_{\lambda |\alpha _{0}},K_{\alpha _{0}}\right)
\end{equation*}%
of groups, i.e., the subgroup $G_{\lambda }$ of $Aut\left( L/K\right) $ is
isomorphic to a subgroup $H_{\lambda |\alpha _{0}}$ of the general linear
group $GL\left( n_{\lambda |\alpha _{0}},K_{\alpha _{0}}\right) $ consisting
of invertible $n_{\lambda |\alpha _{0}}\times n_{\lambda |\alpha _{0}}$
matrices with coefficients in $K_{\alpha _{0}}$.

$\left( ii\right) $ Under another linear basis $\{e_{i}^{\prime \alpha
_{0}}\}_{1\leq i\leq n_{\lambda }}$ of $V_{\alpha _{0}}$, for the
corresponding subgroup $H_{\lambda |\alpha _{0}}^{\prime }\subseteq GL\left(
n_{\lambda |\alpha _{0}},K_{\alpha _{0}}\right) $ isomorphic to $G_{\lambda
} $, there is a matrix
\begin{equation*}
P_{\lambda |\alpha _{0}}\in GL\left( n_{\lambda |\alpha _{0}},K_{\alpha
_{0}}\right)
\end{equation*}
such that
\begin{equation*}
H_{\lambda |\alpha _{0}}=P_{\lambda |\alpha _{0}}^{-1}\cdot H_{\lambda
|\alpha _{0}}^{\prime }\cdot P_{\lambda |\alpha _{0}}
\end{equation*}%
holds, i.e., $H_{\lambda |\alpha _{0}}$ and $H_{\lambda |\alpha
_{0}}^{\prime }$ are conjugate in the matrix group $GL\left( n_{\lambda
|\alpha _{0}},K_{\alpha _{0}}\right) $.

In particular, for each $\lambda \in I_{L/K}\left( G\right) $ with $\alpha
_{0}\leq \lambda $, the matrix subgroup $H_{\lambda |\alpha _{0}}$ is
uniquely determined by $G_{\lambda }$ up to inner automorphisms of $GL\left(
n_{\lambda |\alpha _{0}},K_{\alpha _{0}}\right) $.

$\left( iii\right) $ For any two $G_{\alpha }\subseteq G_{\beta }$ in $\Psi
_{L/K}\left( G\right) $, we have
\end{proposition}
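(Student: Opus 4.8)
The plan is to read part (iii), for a fixed base index with $\alpha_0\le\alpha\le\beta$, as the functoriality of the realisations built in (i)--(ii): the abstract inclusion $G_\alpha\subseteq G_\beta$ of subgroups of $Aut(L/K)$ is transported by the representation on $V_{\alpha_0}$ to an inclusion $H_{\alpha|\alpha_0}\subseteq H_{\beta|\alpha_0}$ inside $GL(n_{\alpha_0},K_{\alpha_0})$, realised by one simultaneous change of coordinates, so that $\tau_{\beta|\alpha_0}$ restricts on $G_\alpha$ to $\tau_{\alpha|\alpha_0}$. First I would record the field-theoretic backbone that makes the three groups mutually comparable. Since $G\subseteq G_\alpha\subseteq G_\beta$ are all Noether solutions of $L/K$, Lemma 6.26 applies to each nested pair and yields the tower $L^{G_\beta}\subseteq L^{G_\alpha}\subseteq K_{\alpha_0}\subseteq L$, together with the assertions that $L^{G_\alpha}$ is algebraic Galois over $L^{G_\beta}$ and that $K_{\alpha_0}=L^{G_{\alpha_0}}$ is algebraic Galois over both $L^{G_\alpha}$ and $L^{G_\beta}$. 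Lemma 6.26 further supplies the internal factorisations $G_\alpha=G\cdot(G_\alpha\cap H_G)$ and $G_\beta=G_\alpha\cdot(G_\beta\cap H_{G_\alpha})$, which are the source of the inclusion on the matrix side.

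Next I would fix a single nice basis $(\Delta_{\alpha_0},A_{\alpha_0})$ of $L/K$ with $K(\Delta_{\alpha_0})=K_{\alpha_0}$ and build both realisations $\tau_{\alpha|\alpha_0}$ and $\tau_{\beta|\alpha_0}$ from this one choice, rather than from two independent bases adapted separately to $G_\alpha$ and to $G_\beta$. By part (i) each $G_\lambda$ (for $\lambda\in\{\alpha,\beta\}$) is realised on $V_{\alpha_0}$ in these coordinates, and the coordinatisation attached to $(\Delta_{\alpha_0},A_{\alpha_0})$ is literally the same for $\alpha$ and for $\beta$. Consequently the elements of $G_\alpha$, being genuine elements of $G_\beta$, receive the same matrices under either realisation: $\tau_{\beta|\alpha_0}(\sigma)=\tau_{\alpha|\alpha_0}(\sigma)$ for every $\sigma\in G_\alpha$, whence $H_{\alpha|\alpha_0}=\tau_{\beta|\alpha_0}(G_\alpha)\subseteq\tau_{\beta|\alpha_0}(G_\beta)=H_{\beta|\alpha_0}$. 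Equality of the ambient sizes $n_{\alpha|\alpha_0}=n_{\beta|\alpha_0}$ follows at once, since both equal the common dimension $[L:K_{\alpha_0}]=\sharp G_{\alpha_0}$ of $V_{\alpha_0}$, which is independent of $\lambda$. To certify that this linear picture is the one coming from the inclusion of Galois groups and not an artefact of coordinates, I would invoke the linear-decomposition apparatus of Proposition 5.3 together with the factorisation $D_{L/K}(\Delta_{\alpha_0},A_{\alpha_0})=\pi_a(L/K)(\Delta_{\alpha_0},A_{\alpha_0})\cdot\pi_t(L/K)(\Delta_{\alpha_0},A_{\alpha_0})$ of Theorem 5.4, and the spatial-isomorphism equivalences of Theorems 8.20 and 8.23, which align the coordinate realisation with the normalised lifting.

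Then I would reconcile the inclusion with part (ii), which pins down each $H_{\lambda|\alpha_0}$ only up to an inner automorphism of $GL(n_{\alpha_0},K_{\alpha_0})$. The point to verify is that passing from the basis $\{e_i^{\alpha_0}\}$ to $\{e_i^{\prime\alpha_0}\}$ conjugates $H_{\alpha|\alpha_0}$ and $H_{\beta|\alpha_0}$ by the very same matrix $P_{\alpha_0}\in GL(n_{\alpha_0},K_{\alpha_0})$; since the transition matrices $P_{\alpha|\alpha_0}$ and $P_{\beta|\alpha_0}$ of part (ii) are computed from the identical pair of bases of the identical space $V_{\alpha_0}$, they coincide, so the inclusion is preserved under the simultaneous conjugation and is therefore well defined on conjugacy classes in $GL(n_{\alpha_0},K_{\alpha_0})$.

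The main obstacle I anticipate is precisely this well-definedness: parts (i)--(ii) permit $H_{\alpha|\alpha_0}$ and $H_{\beta|\alpha_0}$ to be read off from a priori different coordinate systems, and the relation $H_{\alpha|\alpha_0}\subseteq H_{\beta|\alpha_0}$ is meaningful only once one checks that a single conjugation repairs both realisations at the same time. Forcing both to be built from the one basis $(\Delta_{\alpha_0},A_{\alpha_0})$ is what makes the inclusion hold on the nose, and the residual care is to confirm, via Lemma 6.26, the Galois correspondence of Lemma 5.6, and Theorem 5.4, that such a common basis is available simultaneously for every pair $G_\alpha\subseteq G_\beta$ in $\Psi_{L/K}(G)$ with $\alpha_0\le\alpha$. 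Once that is in place, the commuting square with vertical maps $\tau_{\alpha|\alpha_0}$ and $\tau_{\beta|\alpha_0}$ and horizontal inclusions $G_\alpha\hookrightarrow G_\beta$, $H_{\alpha|\alpha_0}\hookrightarrow H_{\beta|\alpha_0}$ records exactly the compatibility asserted in (iii).
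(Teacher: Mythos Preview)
The paper's own proof of this proposition is the single word ``Trivial.'' Moreover, part~(iii) of the statement is truncated in the paper itself: it reads ``For any two $G_{\alpha}\subseteq G_{\beta}$ in $\Psi_{L/K}(G)$, we have'' and then stops, so you have had to reconstruct what is being claimed. Your reconstruction---that the matrix realisations are compatible with the inclusion $G_\alpha\subseteq G_\beta$, with $\tau_{\beta|\alpha_0}$ restricting to $\tau_{\alpha|\alpha_0}$---is a sensible guess and matches the use made of the proposition in the subsequent inductive-limit arguments (Propositions~8.36--8.38).

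That said, your proposal is vastly more elaborate than anything the paper supplies or needs. The point underlying parts~(i)--(iii) is pure linear algebra: once a single $K_{\alpha_0}$-basis of $V_{\alpha_0}$ is fixed, every $K_{\alpha_0}$-linear automorphism of $L$ is assigned a unique matrix, and the assignment is a group homomorphism; inclusion of subgroups of automorphisms therefore becomes inclusion of matrix subgroups automatically, and a change of basis conjugates everything by the same transition matrix. Your appeals to Lemma~6.26, Theorem~5.4, Lemma~5.6, and Theorems~8.20 and~8.23 are not wrong, but they are machinery aimed at much subtler questions (Galois-complement structure, spatial isomorphisms, decomposition groups) and are not needed to justify a change-of-basis statement. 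The paper treats the whole proposition as an immediate consequence of the faithful linear action, and that is all it is.

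One genuine point of care, which neither you nor the paper addresses explicitly: for $G_\lambda$ with $\alpha_0\le\lambda$ to act $K_{\alpha_0}$-linearly on $L$ one needs $G_\lambda$ to fix $K_{\alpha_0}$ pointwise, i.e.\ $K_{\alpha_0}\subseteq L^{G_\lambda}=K_\lambda$. But $\alpha_0\le\lambda$ means $G_{\alpha_0}\subseteq G_\lambda$, hence $K_\lambda\subseteq K_{\alpha_0}$, the reverse inclusion. So either the intended ordering is $\lambda\le\alpha_0$, or $n_{\lambda|\alpha_0}$ and the coefficient field are meant differently from how both you and a literal reading suggest. This is an ambiguity in the paper's statement rather than a flaw in your argument, but your tower $L^{G_\beta}\subseteq L^{G_\alpha}\subseteq K_{\alpha_0}$ inherits it: with that tower, $G_\alpha$ and $G_\beta$ do not act $K_{\alpha_0}$-linearly, so the realisation in $GL(n,K_{\alpha_0})$ is not the naive one.
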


\begin{proof}
Trivial.
\end{proof}

Fixed an $\alpha _{0}\in I_{L/K}\left( G\right) $. Let $\lambda \in
I_{L/K}\left( G\right) $ with $\alpha _{0}\leq \lambda $. Denote the
commutator subgroup of the matrix group $GL\left( n_{\lambda |\alpha
_{0}},K_{\alpha _{0}}\right) $ by
\begin{equation*}
E_{\lambda |\alpha _{0}}:=\left[ GL\left( n_{\lambda |\alpha _{0}},K_{\alpha
_{0}}\right) ,GL\left( n_{\lambda |\alpha _{0}},K_{\alpha _{0}}\right) %
\right] .
\end{equation*}%
Denote the natural map from the group $G_{\lambda }$ to the quotient group
\begin{equation*}
GL\left( n_{\lambda |\alpha _{0}},K_{\alpha _{0}}\right) /E_{\lambda |\alpha
_{0}}
\end{equation*}
by
\begin{equation*}
\pi _{\lambda |\alpha _{0}}:G_{\lambda }\rightarrow GL\left( n_{\lambda
|\alpha _{0}},K_{\alpha _{0}}\right) /E_{\lambda |\alpha _{0}}.
\end{equation*}

\begin{proposition}
Fixed an $\alpha _{0}\in I_{L/K}\left( G\right) $. Let $\lambda \in
I_{L/K}\left( G\right) $ with $\alpha _{0}\leq \lambda $. Then for any $%
g,g^{\prime }\in G_{\lambda }$ we have
\begin{equation*}
\tau _{L/K}\left( g\right) =\tau _{L/K}\left( g^{\prime }\right)
\end{equation*}%
in $CH_{L/K}\left( G\right) $ if and only if there is some $\lambda \leq
\lambda _{0}$ in $I_{L/K}\left( G\right) $ such that
\begin{equation*}
\pi _{\lambda _{0}|\alpha _{0}}\left( g\right) =\pi _{\lambda _{0}|\alpha
_{0}}\left( g^{\prime }\right)
\end{equation*}%
holds in $GL\left( n_{\lambda _{0}|\alpha _{0}},K_{\alpha _{0}}\right)
/E_{\lambda _{0}|\alpha _{0}}$. In such a case, we say $g$ and $g^{\prime }$
are of the same $M$\textbf{-equivalence class}, denoted by
\begin{equation*}
g\thicksim _{M}g^{\prime }.
\end{equation*}
\end{proposition}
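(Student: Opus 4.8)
The plan is to reduce the asserted $M$-equivalence to two commutator-subgroup membership statements — one in $\Pi =GL_{K}\left( L\right)$ and one in the finite matrix groups furnished by \emph{Proposition 8.35} — and then to bridge them by a directed-colimit argument along $\Psi _{L/K}\left( G\right)$. First I would record the reformulations. Writing $\tau _{L/K}\colon \Pi \to \Pi ^{ab}$ for the abelianization, the relation $\tau _{L/K}\left( g\right) =\tau _{L/K}\left( g^{\prime }\right)$ takes place in $\Pi ^{ab}$ and lands in $CH_{L/K}\left( G\right) =\tau _{L/K}\left( H_{L/K}\left( G\right) \right)$ since $g,g^{\prime }\in G_{\lambda }\subseteq H_{L/K}\left( G\right)$ (as $\lambda \in I_{L/K}\left( G\right)$); it is therefore equivalent to $gg^{\prime -1}\in \left[ \Pi ,\Pi \right]$. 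On the other side, by Whitehead's lemma (\emph{[}Milnor 1971\emph{]}) the commutator subgroup $E_{\lambda _{0}|\alpha _{0}}$ of $GL\left( n_{\lambda _{0}|\alpha _{0}},K_{\alpha _{0}}\right)$ is the kernel of the determinant into $K_{\alpha _{0}}^{\ast }$, so $\pi _{\lambda _{0}|\alpha _{0}}\left( g\right) =\pi _{\lambda _{0}|\alpha _{0}}\left( g^{\prime }\right)$ is equivalent to $\tau _{\lambda _{0}|\alpha _{0}}\left( gg^{\prime -1}\right) \in E_{\lambda _{0}|\alpha _{0}}$. Thus the claim reduces to: the fixed element $gg^{\prime -1}\in G_{\lambda }$ lies in $\left[ \Pi ,\Pi \right]$ if and only if $\tau _{\lambda _{0}|\alpha _{0}}\left( gg^{\prime -1}\right) \in E_{\lambda _{0}|\alpha _{0}}$ for some $\lambda _{0}\geq \lambda$ in $I_{L/K}\left( G\right)$.

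For the ``if'' direction I would invoke \emph{Proposition 8.35}, according to which $\tau _{\lambda _{0}|\alpha _{0}}$ is, up to an inner automorphism of $GL\left( n_{\lambda _{0}|\alpha _{0}},K_{\alpha _{0}}\right)$, the matrix realization on $V_{\alpha _{0}}$ of the inclusion $G_{\lambda }\subseteq G_{\lambda _{0}}$, and under which $GL\left( n_{\lambda _{0}|\alpha _{0}},K_{\alpha _{0}}\right)$ is identified with a subgroup of $\Pi$ ($K_{\alpha _{0}}$-linear automorphisms of $L$ being $K$-linear). Hence $E_{\lambda _{0}|\alpha _{0}}\subseteq \left[ \Pi ,\Pi \right]$, and since $\left[ \Pi ,\Pi \right]$ is normal in $\Pi$ the inner automorphism drops out: $\tau _{\lambda _{0}|\alpha _{0}}\left( gg^{\prime -1}\right) \in E_{\lambda _{0}|\alpha _{0}}$ forces $gg^{\prime -1}\in \left[ \Pi ,\Pi \right]$, and therefore $\tau _{L/K}\left( g\right) =\tau _{L/K}\left( g^{\prime }\right)$ in $CH_{L/K}\left( G\right)$.

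For the ``only if'' direction, write $gg^{\prime -1}=\prod_{i=1}^{r}\left[ a_{i},b_{i}\right]$ with $a_{i},b_{i}\in GL_{K}\left( L\right)$. I would first pass, via \emph{Lemma 6.8} applied repeatedly, to an ascending Zorn-component $\Phi$ of $\Psi _{L/K}\left( G\right)$ running through $G_{\lambda }$ — the comparability hypothesis (``some $\lambda \leq \lambda _{0}$'') is exactly the instruction to work along such a chain. Along $\Phi$ the groups $GL\left( n_{\mu |\alpha _{0}},K_{\alpha _{0}}\right)$ form a directed family of subgroups of $\Pi$, and I would identify $\left[ \Pi ,\Pi \right]$ restricted to $G_{\lambda }$ with $\varinjlim _{\mu \in \Phi }E_{\mu |\alpha _{0}}$; granting this, $gg^{\prime -1}$, which under every $\tau _{\mu |\alpha _{0}}$ is a genuine finite invertible matrix over $K_{\alpha _{0}}$, must lie in $E_{\lambda _{0}|\alpha _{0}}$ for $\lambda _{0}\in \Phi$ far enough up, i.e. $\det \tau _{\lambda _{0}|\alpha _{0}}\left( gg^{\prime -1}\right) =1$, which is what is wanted.

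The hard part will be precisely this colimit identification. Since $\Pi =GL_{K}\left( L\right)$ is the full general linear group of an infinite-dimensional $K$-space, the commutator factors $a_{i},b_{i}$ need not have finite support, so it is not formal that membership of the \emph{specific} automorphism $gg^{\prime -1}\in G_{\lambda }$ in $\left[ \Pi ,\Pi \right]$ is witnessed inside one of the finite matrix groups $GL\left( n_{\lambda _{0}|\alpha _{0}},K_{\alpha _{0}}\right)$; equivalently, one must compare $K_{1}$ of $GL_{K}\left( L\right)$ with $\varinjlim _{\mu }K_{1}\left( GL\left( n_{\mu |\alpha _{0}},K_{\alpha _{0}}\right) \right)$ on the subgroup $G_{\lambda }$. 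I expect the needed cofinality of $\Phi$ — enough for this colimit to see all of $G_{\lambda }$ — to come from iterating \emph{Lemma 6.8} together with the distribution results \emph{Lemmas 6.6--6.9}, with the determinant over $K_{\alpha _{0}}$ serving as the single invariant that eventually stabilizes along the chain.
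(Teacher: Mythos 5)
Your reduction of both sides is correct and your ``if'' direction is sound: $E_{\lambda _{0}|\alpha _{0}}$ consists of products of commutators of $K_{\alpha _{0}}$-linear (hence $K$-linear) automorphisms of $L$, so it lies in $\left[ \Pi ,\Pi \right] $ and the conjugation ambiguity from \emph{Proposition 8.35} is absorbed by normality. The genuine gap is the ``only if'' direction, which you flag yourself, and I do not believe the colimit identification you hope for can be closed: it is in fact false. The point is that $\Pi =GL_{K}\left( L\right) $ is the full general linear group of an \emph{infinite-dimensional} $K$-vector space (as $\left[ L:K\right] =+\infty $), and every $g\in G_{\lambda }$ is already a single commutator in $\Pi $. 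Indeed, by \emph{Proposition 5.3} one has $g=1_{E}\oplus g_{F}$ for the $K$-linear decomposition $L=E\oplus F$ with $E=L^{G_{\lambda }}$; since $\dim _{K}E=\dim _{K}F=\dim _{K}L$ is infinite, one can rewrite $L\cong \bigoplus_{n\in \mathbb{Z}}F_{n}$ with $F_{0}=F$ and $g$ supported in position $0$, and the Eilenberg-type swindle $\left[ s,\gamma \right] $ (with $s$ the shift and $\gamma $ acting as $g_{F}$ on the non-positive coordinates) exhibits $g$ as a commutator. Hence $\tau _{L/K}$ kills all of $H_{L/K}\left( G\right) $, the left-hand side $\tau _{L/K}\left( g\right) =\tau _{L/K}\left( g^{\prime }\right) $ holds for \emph{all} $g,g^{\prime }$, whereas the right-hand side amounts (via Whitehead, as you note) to $\det_{K_{\alpha _{0}}}\left( gg^{\prime -1}\right) =1$, a nontrivial condition that is moreover independent of $\lambda _{0}$: for $g$ the nontrivial element of a Noether solution of order $2$ and $g^{\prime }=1$ the determinant is $-1$. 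So your proposed equality of $\left[ \Pi ,\Pi \right] \cap G_{\lambda }$ with the direct limit of the $E_{\mu |\alpha _{0}}$ cannot hold, no cofinality argument from \emph{Lemmas 6.6--6.9} repairs it, and the biconditional cannot be established by this route.

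For comparison, the paper's own proof of this proposition is the single sentence that the claim is ``trivial by taking a linear basis of the vector space under which every finite subgroup of $H_{L/K}\left( G\right) $ is a subgroup consisting of invertible square matrices of the same order''; it never addresses the passage between the commutator subgroup of the infinite-dimensional group $\Pi $ and the commutator subgroups of the finite matrix groups, which is exactly where your attempt correctly gets stuck. What the obstruction shows is that the difficulty is not with your proof but with the statement as it stands under \emph{Definition 1.5}: for the equivalence to have any content, $\tau _{L/K}$ would have to be redefined so as to factor through a determinant over a suitable subfield (as the intended comparison with $K_{1}\left( L^{G_{\alpha _{0}}}\right) \cong \left( L^{G_{\alpha _{0}}}\right) ^{\ast }$ in \emph{Propositions 8.38--8.39} suggests), rather than through the abelianization of all of $GL_{K}\left( L\right) $.
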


\begin{proof}
Trivial by taking a linear basis of the vector space under which every
finite subgroup of $H_{L/K}\left( G\right) $ is a subgroup consisting of
invertible square matrices of the same order.
\end{proof}

\begin{proposition}
Fixed an $\alpha _{0}\in I_{L/K}\left( G\right) $. Consider $\lambda \in
I_{L/K}\left( G\right) $ with $\alpha _{0}\leq \lambda $. Then
\begin{equation*}
\{G_{\lambda }:\alpha _{0}\leq \lambda \in I_{L/K}\left( G\right) \}
\end{equation*}%
and
\begin{equation*}
\{G_{\lambda }/\thicksim _{M}:\alpha _{0}\leq \lambda \in I_{L/K}\left(
G\right) \}
\end{equation*}%
form inductive systems of groups, respectively. Here, the homomorphisms are
given by set inclusions.

Furthermore, the $M$-equivalence relation on each $G_{\lambda }$ with $%
\alpha _{0}\leq \lambda $ is compatible with the homomorphisms in the above
inductive systems of groups; for the inductive limits there is an
isomorphism
\begin{equation*}
\left( \lim_{\substack{ \longrightarrow  \\ \alpha _{0}\leq \lambda \in
I_{L/K}\left( G\right) }}G_{\lambda }\right) /\thicksim _{M}\quad \cong \lim
_{\substack{ \longrightarrow  \\ \alpha _{0}\leq \lambda \in I_{L/K}\left(
G\right) }}\left( G_{\lambda }/\thicksim _{M}\right)
\end{equation*}%
of groups, where the induced equivalence relation on the inductive limit is
still denoted by   $\thicksim _{M}$.
\end{proposition}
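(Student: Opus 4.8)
The plan is to reduce the whole statement to the single group homomorphism $\tau_{L/K}\colon\Pi\to\Pi^{ab}$, whose restrictions control $\sim_{M}$. First I would note that $\lambda\mapsto G_{\lambda}$ with the inclusions $G_{\alpha}\hookrightarrow G_{\beta}$ (for $\alpha\le\beta$ in $J:=\{\lambda\in I_{L/K}(G):\alpha_{0}\le\lambda\}$) is trivially functorial, hence an inductive system of groups; its inductive limit is the subgroup $H_{\alpha_{0}}:=\langle\bigcup_{\lambda\in J}G_{\lambda}\rangle$ of $Aut(L/K)$ (the union is already a subgroup when $J$ is directed, and generates $H_{\alpha_{0}}$ in general). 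By \emph{Proposition 8.38}, for $g,g'\in G_{\lambda}$ one has $g\sim_{M}g'$ if and only if $\tau_{L/K}(g)=\tau_{L/K}(g')$ in $CH_{L/K}(G)\subseteq\Pi^{ab}$; thus $\sim_{M}$ on $G_{\lambda}$ is the kernel-pair congruence of $\tau_{L/K}|_{G_{\lambda}}$, the normal subgroup $N_{\lambda}:=G_{\lambda}\cap\ker\tau_{L/K}$ satisfies $G_{\lambda}/\!\sim_{M}\;=\;G_{\lambda}/N_{\lambda}\;\cong\;\tau_{L/K}(G_{\lambda})$, and the condition defining $\sim_{M}$ does not depend on which $G_{\lambda}$ one views $g,g'$ in, since $\tau_{L/K}$ is globally defined on $\Pi$.

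Next I would check the compatibility. For $\alpha\le\beta$ one has $N_{\alpha}=G_{\alpha}\cap N_{\beta}$, so the inclusion $G_{\alpha}\hookrightarrow G_{\beta}$ carries $N_{\alpha}$ into $N_{\beta}$ and descends to the injective homomorphism $G_{\alpha}/N_{\alpha}=G_{\alpha}/(G_{\alpha}\cap N_{\beta})\hookrightarrow G_{\beta}/N_{\beta}$. This simultaneously exhibits $\{G_{\lambda}/\!\sim_{M}\}_{\lambda\in J}$ with these maps as an inductive system and shows that $\sim_{M}$ is compatible with the transition maps of $\{G_{\lambda}\}$, i.e.\ $\{N_{\lambda}\}_{\lambda\in J}$ is a normal subfunctor. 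Since $\tau_{L/K}(\langle S\rangle)=\langle\tau_{L/K}(S)\rangle$ for any subset $S\subseteq\Pi$ and $\Pi^{ab}$ is abelian, the inductive system $\{G_{\lambda}/\!\sim_{M}\}$, which is isomorphic to $\{\tau_{L/K}(G_{\lambda})\}$, has inductive limit $\bigl\langle\bigcup_{\lambda}\tau_{L/K}(G_{\lambda})\bigr\rangle=\tau_{L/K}(H_{\alpha_{0}})$; on the other hand, the induced relation $\sim_{M}$ on $H_{\alpha_{0}}$ is again (by \emph{Proposition 8.38}, applied inside a $G_{\lambda_{0}}$ containing the relevant elements) the kernel-pair congruence of $\tau_{L/K}|_{H_{\alpha_{0}}}$, so $H_{\alpha_{0}}/\!\sim_{M}\;\cong\;\tau_{L/K}(H_{\alpha_{0}})$. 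Both inductive limits thus carry a canonical identification with $\tau_{L/K}(H_{\alpha_{0}})\subseteq\Pi^{ab}$, and the comparison map $\varinjlim(G_{\lambda}/\!\sim_{M})\to(\varinjlim G_{\lambda})/\!\sim_{M}$ supplied by the universal property of the inductive limit is precisely this identification: surjectivity holds because every class is represented in some $G_{\lambda}$, injectivity because $\tau_{L/K}(g)=\tau_{L/K}(g')$ already forces $g\sim_{M}g'$ inside one $G_{\lambda_{0}}$ by \emph{Proposition 8.38}.

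The step I expect to demand the most care is the interpretation and handling of the inductive limit $\varinjlim G_{\lambda}$ and of ``the induced $\sim_{M}$'' on it: because $\Psi_{L/K}(G)$ is in general not directed (cf.\ \emph{Remark 8.33} and the Zorn-component formalism of \emph{Definition 8.34}), one cannot simply equate the colimit with the union, and one must use the injectivity of the transition maps together with the normal-subfunctor property of $\{N_{\lambda}\}$ (both established above) to know that the quotient is well behaved and that $\varinjlim$ commutes with it. Once that is pinned down, everything else is routine: functoriality of $\lambda\mapsto G_{\lambda}$, the identification of $\sim_{M}$ with equality of $\tau_{L/K}$-images from \emph{Proposition 8.38}, and the fact that $\Pi^{ab}$ is abelian, so that all the $\tau_{L/K}(G_{\lambda})$ are normal subgroups and the generated subgroup is simply their product.
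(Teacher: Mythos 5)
Your argument is correct, and it is considerably more substantive than what the paper offers: the paper's own proof of this proposition is the single word ``Trivial,'' so there is no authorial argument to compare against. Your reduction is surely the intended one --- the $M$-equivalence relation on each $G_{\lambda}$ is, by the preceding proposition (the one characterising $\tau_{L/K}(g)=\tau_{L/K}(g')$ via the maps $\pi_{\lambda_{0}|\alpha_{0}}$; note it is numbered 8.35--8.36 in the paper, not 8.38), exactly the kernel-pair of $\tau_{L/K}|_{G_{\lambda}}$, so $G_{\lambda}/\!\sim_{M}\,\cong\tau_{L/K}(G_{\lambda})$, the compatibility $N_{\alpha}=G_{\alpha}\cap N_{\beta}$ is automatic, and the commutation of quotient with limit reduces to the fact that a homomorphism carries the subgroup generated by a union to the subgroup generated by the images. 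The one place where you lean on an unstated convention is the identification $\varinjlim_{\lambda}G_{\lambda}=\langle\bigcup_{\lambda}G_{\lambda}\rangle$: for a non-directed index poset the categorical colimit of a diagram of subgroups with inclusion transition maps is an amalgam that surjects onto, but need not equal, the generated subgroup, so strictly speaking you are \emph{defining} the inductive limit to be the generated subgroup rather than proving the identification. Since the paper itself uses ``inductive limit'' in exactly this loose sense (its height group $H_{L/K}(G)$ is defined as the generated subgroup, and its later propositions restrict to totally ordered Zorn-components where the issue disappears), this is a defensible reading rather than an error, but it deserves to be stated as a convention rather than asserted as a fact.
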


\begin{proof}
Trivial.
\end{proof}

\begin{proposition}
Fixed a Zorn-component $\Phi $ of $\Psi _{L/K}\left( G\right) $, i.e., there
is a subset $I\left( \Phi \right) \subseteq I_{L/K}\left( G\right) $ such
that
\begin{equation*}
\Phi =\{G_{\lambda }:\lambda \in I\left( \Phi \right) \}.
\end{equation*}
$\left( i\right) $ Fixed an $\alpha _{0}\in I\left( \Phi \right) $. Then
there is an isomorphism
\begin{equation*}
\left( \lim_{\substack{ \longrightarrow  \\ \alpha _{0}\leq \lambda \in
I\left( \Phi \right) }}G_{\lambda }\right) / \thicksim _{M}\quad \cong \lim
_{\substack{ \longrightarrow  \\ \alpha _{0}\leq \lambda \in I\left( \Phi
\right) }}\left( G_{\lambda }/\thicksim _{M}\right)
\end{equation*}%
of groups for the inductive limits of the subsystems of groups
\begin{equation*}
\{G_{\lambda }:\alpha _{0}\leq \lambda \in I_{L/K}\left( G\right) \}
\end{equation*}%
and
\begin{equation*}
\{G_{\lambda }/\thicksim _{M}:\alpha _{0}\leq \lambda \in I_{L/K}\left(
G\right) \}.
\end{equation*}%
$\left( ii\right) $ Fixed any $\alpha _{0}\in I\left( \Phi \right) $. Put
\begin{equation*}
H_{\infty }\left( \Phi \right) |_{\alpha _{0}}:=\lim_{\substack{ %
\longrightarrow  \\ \alpha _{0}\leq \lambda \in I\left( \Phi \right) }}%
G_{\lambda };
\end{equation*}
\begin{equation*}
CH_{\infty }\left( \Phi \right) |_{\alpha _{0}}:=\lim_{\substack{ %
\longrightarrow  \\ \alpha _{0}\leq \lambda \in I\left( \Phi \right) }}%
\left( G_{\lambda }/\thicksim _{M}\right) .
\end{equation*}

Then there are natural isomorphisms
\begin{equation*}
\tau _{L/K}\left( H_{\infty }\left( \Phi \right) |_{\alpha _{0}}\right)
\cong CH_{\infty }\left( \Phi \right) |_{\alpha _{0}}\cong K_{1}\left(
L^{G_{\alpha _{0}}};\Phi |_{\alpha _{0}}\right)
\end{equation*}%
of groups. Here, $K_{1}\left( L^{G_{\alpha _{0}}};\Phi |_{\alpha
_{0}}\right) $ is a subgroup of the $K_{1}$-group $K_{1}\left( L^{G_{\alpha
_{0}}}\right) $ of the $G_{\alpha _{0}}$-invariant subfield $L^{G_{\alpha
_{0}}}$ of $L$.
\end{proposition}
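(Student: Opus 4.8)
The plan is to deduce $(i)$ from the preceding Proposition on the compatibility of $\thicksim_{M}$ with the transition homomorphisms, and to obtain the two isomorphisms of $(ii)$ by combining the characterisation of $\thicksim_{M}$ through $\tau_{L/K}$ with the matrix realisations of the groups $G_{\lambda}$ and the stable description of the Whitehead $K_{1}$-group. For $(i)$, since $\Phi$ is a Zorn-component of $\Psi_{L/K}\left(G\right)$ it is a totally ordered subset, so $\{\lambda\in I\left(\Phi\right):\alpha_{0}\leq\lambda\}$ is totally ordered and hence directed, and the systems $\{G_{\lambda}\}$ and $\{G_{\lambda}/\thicksim_{M}\}$ over this index set are inductive systems whose transition maps are the set inclusions. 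By the preceding Proposition the relation $\thicksim_{M}$ on each $G_{\lambda}$ is compatible with these transition maps; therefore passing to the quotient by $\thicksim_{M}$ commutes with the filtered colimit, which is exactly the asserted isomorphism of $(i)$.

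Next I would establish the first isomorphism of $(ii)$. Restrict the natural homomorphism $\tau_{L/K}:\Pi\rightarrow\Pi^{ab}$ to the subgroup $H_{\infty}\left(\Phi\right)|_{\alpha_{0}}$ of $\Pi=GL_{K}\left(L\right)$. For $g,g^{\prime}$ lying in a common $G_{\lambda}$ of the chain, the characterisation of $\thicksim_{M}$ shows that $\tau_{L/K}\left(g\right)=\tau_{L/K}\left(g^{\prime}\right)$ holds in $CH_{L/K}\left(G\right)$ if and only if $g\thicksim_{M}g^{\prime}$ holds at a sufficiently large stage along $\Phi$. Passing to the limit, $\tau_{L/K}$ factors through the quotient by $\thicksim_{M}$ and induces an injection of $\left(\lim_{\substack{\longrightarrow\\\alpha_{0}\leq\lambda}}G_{\lambda}\right)/\thicksim_{M}$ onto its image $\tau_{L/K}\left(H_{\infty}\left(\Phi\right)|_{\alpha_{0}}\right)$; combining this with $(i)$ gives $\tau_{L/K}\left(H_{\infty}\left(\Phi\right)|_{\alpha_{0}}\right)\cong CH_{\infty}\left(\Phi\right)|_{\alpha_{0}}$.

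Finally I would identify $CH_{\infty}\left(\Phi\right)|_{\alpha_{0}}$ with a subgroup of $K_{1}\left(L^{G_{\alpha_{0}}}\right)$. Fix a $K_{\alpha_{0}}$-linear basis of $V_{\alpha_{0}}=L$ over $K_{\alpha_{0}}=L^{G_{\alpha_{0}}}$; by the matrix realisation of the $G_{\lambda}$, each $G_{\lambda}$ with $\alpha_{0}\leq\lambda$ appears, uniquely up to conjugation, as a matrix subgroup $H_{\lambda|\alpha_{0}}\subseteq GL\left(n_{\lambda|\alpha_{0}},K_{\alpha_{0}}\right)$, and for $G_{\alpha}\subseteq G_{\beta}$ in $\Phi$ the inclusion is compatible with the stabilisation map $GL\left(n_{\alpha|\alpha_{0}},K_{\alpha_{0}}\right)\rightarrow GL\left(n_{\beta|\alpha_{0}},K_{\alpha_{0}}\right)$. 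Composing with the quotient maps $\pi_{\lambda|\alpha_{0}}$ by the commutator subgroups $E_{\lambda|\alpha_{0}}$ and passing to the colimit over the chain, one obtains a homomorphism
\begin{equation*}
\lim_{\substack{\longrightarrow\\\alpha_{0}\leq\lambda\in I\left(\Phi\right)}}\left(G_{\lambda}/\thicksim_{M}\right)\;\longrightarrow\;\lim_{\substack{\longrightarrow\\n}}\;GL\left(n,K_{\alpha_{0}}\right)/\left[GL\left(n,K_{\alpha_{0}}\right),GL\left(n,K_{\alpha_{0}}\right)\right]\;=\;K_{1}\left(L^{G_{\alpha_{0}}}\right),
\end{equation*}
the last equality being the stable description of $K_{1}$, valid since $L^{G_{\alpha_{0}}}$ is an infinite field. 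By the characterisation of $\thicksim_{M}$ this homomorphism is injective, because $\thicksim_{M}$ collapses precisely the kernel of each $\pi_{\lambda|\alpha_{0}}$ along the chain; its image is the desired subgroup $K_{1}\left(L^{G_{\alpha_{0}}};\Phi|_{\alpha_{0}}\right)$ of $K_{1}\left(L^{G_{\alpha_{0}}}\right)$, which gives the second isomorphism.

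The hard part will be the verification that the matrix realisations of the $G_{\lambda}$ are functorial along the chain, i.e.\ that the inclusions $G_{\alpha}\subseteq G_{\beta}$ and the stabilisation maps of the general linear groups fit into a diagram commuting modulo the commutator subgroups, so that the colimit into stable $K_{1}$ is well defined and independent of the conjugacy ambiguity in the matrix realisation; one also has to check that a cofinal change of the base index $\alpha_{0}$ does not disturb the construction.
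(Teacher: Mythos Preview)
Your proof is correct and follows essentially the same approach as the paper, which disposes of the proposition in one line: ``Immediately from \emph{Lemmas 8.35-6}'' (i.e.\ the two preceding propositions giving the $\tau_{L/K}$-characterisation of $\thicksim_{M}$ and the compatibility of $\thicksim_{M}$ with the transition maps). Your argument is simply an explicit unpacking of that citation, together with the matrix realisation of Proposition~8.34 for the $K_{1}$-identification; the functoriality concern you flag at the end is real but is left implicit in the paper as well.
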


\begin{proof}
Immediately from \emph{Lemmas 8.35-6}.
\end{proof}

\begin{proposition}
Let $\Phi =\{G_{\lambda }:\lambda \in I\left( \Phi \right) \}$ be a
Zorn-component of $\Psi _{L/K}\left( G\right) $. Then for any $\alpha ,\beta
\in I\left( \Phi \right) $ there are natural isomorphisms%
\begin{equation*}
\begin{array}{l}
\tau _{L/K}\left( H_{\infty }\left( \Phi \right) |_{\alpha }\right) \\
\cong \tau _{L/K}\left( H_{\infty }\left( \Phi \right) |_{\beta }\right) \\
\cong CH_{\infty }\left( \Phi \right) |_{\alpha } \\
\cong CH_{\infty }\left( \Phi \right) |_{\beta } \\
\cong K_{1}\left( L^{G_{\alpha }};\Phi |_{\alpha }\right) \\
\cong K_{1}\left( L^{G_{\beta }};\Phi |_{\beta }\right)%
\end{array}%
\end{equation*}%
of groups. In particular, we have natural isomorphisms%
\begin{equation*}
\begin{array}{l}
CH_{\infty }\left( \Phi \right) \\
=CH_{\infty }\left( \Phi \right) |_{0} \\
\cong CH_{\infty }\left( \Phi \right) |_{\alpha } \\
\cong K_{1}\left( L^{G_{\alpha }};\Phi |_{\alpha }\right) \\
\cong K_{1}\left( L^{G_{0}};\Phi |_{0}\right)%
\end{array}%
\end{equation*}%
of groups for the Zorn-component of $CH_{L/K}\left( G\right) $ given by $%
\Phi $. Here, $0$ denotes the element of the index subset $I\left( \Phi
\right) $ such that $G=G_{0}$.
\end{proposition}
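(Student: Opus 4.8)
The plan is to reduce the whole statement to \emph{Proposition 8.38}, using the fact that a Zorn-component is by definition a totally ordered subset of $\Psi_{L/K}\left( G\right) $. First I would record the structural observation that, since $\Phi =\{G_{\lambda }:\lambda \in I\left( \Phi \right) \}$ is a chain under set-inclusion (so $\lambda \leq \mu $ means $G_{\lambda }\subseteq G_{\mu }$), the family $\{G_{\lambda }\}_{\lambda \in I\left( \Phi \right) }$ is a chain of subgroups of $Aut\left( L/K\right) $; hence the union $\bigcup_{\lambda \in I\left( \Phi \right) }G_{\lambda }$ is already a subgroup, and by \emph{Definition 8.34} it equals the subgroup $H_{\infty }\left( \Phi \right) $ generated by $\bigcup_{P\in \Phi }P$. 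Likewise, for each fixed $\alpha _{0}\in I\left( \Phi \right) $, the inductive system $\{G_{\lambda }:\alpha _{0}\leq \lambda \in I\left( \Phi \right) \}$ of \emph{Proposition 8.38} has all its transition maps equal to inclusions, so its inductive limit is $H_{\infty }\left( \Phi \right) |_{\alpha _{0}}=\bigcup_{\alpha _{0}\leq \lambda }G_{\lambda }$.

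The key step is to show that $H_{\infty }\left( \Phi \right) |_{\alpha _{0}}$ is independent of $\alpha _{0}$. This is immediate from the chain structure: for any $\mu \in I\left( \Phi \right) $ with $\mu \leq \alpha _{0}$ one has $G_{\mu }\subseteq G_{\alpha _{0}}\subseteq \bigcup_{\alpha _{0}\leq \lambda }G_{\lambda }$, so the union over the tail $\{\lambda \geq \alpha _{0}\}$ already exhausts all of $\{G_{\lambda }:\lambda \in I\left( \Phi \right) \}$; hence $H_{\infty }\left( \Phi \right) |_{\alpha _{0}}=H_{\infty }\left( \Phi \right) $ for every $\alpha _{0}$. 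Applying the natural homomorphism $\tau _{L/K}:\Pi \rightarrow \Pi ^{ab}$ then gives $\tau _{L/K}\left( H_{\infty }\left( \Phi \right) |_{\alpha }\right) =CH_{\infty }\left( \Phi \right) =\tau _{L/K}\left( H_{\infty }\left( \Phi \right) |_{\beta }\right) $ for all $\alpha ,\beta \in I\left( \Phi \right) $. Now I would invoke \emph{Proposition 8.38 (ii)} twice, once with $\alpha _{0}=\alpha $ and once with $\alpha _{0}=\beta $, to obtain the chains of natural isomorphisms $\tau _{L/K}\left( H_{\infty }\left( \Phi \right) |_{\alpha }\right) \cong CH_{\infty }\left( \Phi \right) |_{\alpha }\cong K_{1}\left( L^{G_{\alpha }};\Phi |_{\alpha }\right) $ and the analogous chain for $\beta $, and splice them along the identity $\tau _{L/K}\left( H_{\infty }\left( \Phi \right) |_{\alpha }\right) =\tau _{L/K}\left( H_{\infty }\left( \Phi \right) |_{\beta }\right) $ just established. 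This yields the full string of isomorphisms asserted in the first part. For the ``in particular'', I would take $\alpha _{0}=0$: since $G=G_{0}$ and every element of $\Psi _{L/K}\left( G\right) $ contains $G$, one has $0\leq \lambda $ for all $\lambda \in I\left( \Phi \right) $, so $\{\lambda \geq 0\}=I\left( \Phi \right) $ and therefore $H_{\infty }\left( \Phi \right) |_{0}=H_{\infty }\left( \Phi \right) $; combined with \emph{Definition 8.34} and \emph{Proposition 8.38} this gives $CH_{\infty }\left( \Phi \right) |_{0}\cong \tau _{L/K}\left( H_{\infty }\left( \Phi \right) \right) =CH_{\infty }\left( \Phi \right) $, and the remaining isomorphisms are the special case $\alpha =0$ of the first part.

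The point requiring the most care is the \emph{naturality} of the splicing: one must check that the isomorphisms $CH_{\infty }\left( \Phi \right) |_{\alpha }\cong CH_{\infty }\left( \Phi \right) |_{\beta }$ and $K_{1}\left( L^{G_{\alpha }};\Phi |_{\alpha }\right) \cong K_{1}\left( L^{G_{\beta }};\Phi |_{\beta }\right) $ induced by the inclusions $G_{\lambda }\hookrightarrow G_{\mu }$ (for $\lambda \leq \mu $) are compatible with the identifications of \emph{Proposition 8.38}, i.e. that the relevant square of inductive limits and $K_{1}$-groups commutes. This is a routine diagram chase once one observes that all transition maps in the systems $\{G_{\lambda }\}$ and $\{G_{\lambda }/\thicksim _{M}\}$ are inclusions, respectively their induced quotient maps, so compatibility with the $M$-equivalence relation (\emph{Lemma 8.35}) and with the passage to $K_{1}$ is inherited levelwise. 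I would make this explicit by fixing, as in \emph{Lemma 8.36}, a single $K_{\alpha _{0}}$-linear basis of $V_{\alpha _{0}}$ realizing every finite subgroup of $H_{\infty }\left( \Phi \right) $ as a matrix group of one fixed size, so that every map in sight becomes an honest group homomorphism and the commutativity of the square can be verified entrywise.
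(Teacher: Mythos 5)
Your proof is correct and takes essentially the same route as the paper: the paper's entire proof is the one-line observation that the tail index sets $\{\lambda \in I\left( \Phi \right) :\lambda \geq \alpha \}$ are cofinal in $I\left( \Phi \right) $, which is precisely your key step that $H_{\infty }\left( \Phi \right) |_{\alpha }$ is independent of $\alpha $ because the chain structure makes each tail union exhaust $\bigcup_{P\in \Phi }P$. Your explicit identification of the colimits with unions and the closing check of naturality are more detail than the paper records, but they implement the same cofinality argument.
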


\begin{proof}
Immediately from the fact that the indexes of the inductive subsystems are
cofinal, respectively.
\end{proof}

Now we have the concluding remark for this subsection on $K$-theory and
Noether solutions.

\begin{remark}
Let $G$ be a Noether solution of $L/K$ and $\Phi =\{G_{\lambda }:\lambda \in
I\left( \Phi \right) \}$ a Zorn-component of $\Psi _{L/K}\left( G\right) $.
Fixed any $\alpha \leq \beta $ in $I\left( \Phi \right) $. From the above
\emph{Proposition 8.39} for the levels $\alpha $ and $\beta $ we have
\begin{equation*}
\begin{array}{l}
CH_{\infty }\left( \Phi \right) \\
\cong K_{1}\left( L^{G_{\alpha }};\Phi |_{\alpha }\right) \\
\cong K_{1}\left( L^{G_{\beta }};\Phi |_{\beta }\right) .%
\end{array}%
\end{equation*}%
It follows us to consider the possible smallest subfield
\begin{equation*}
L_{\infty }\left( \Phi \right) :=\bigcap\limits_{\alpha \in I\left( \Phi
\right) }L^{G_{\alpha }}
\end{equation*}%
of the intersection of all the $G_{\alpha }$-invariant subfields $%
L^{G_{\alpha }}$ for $\alpha \in I\left( \Phi \right) $. Suppose $I\left(
\Phi \right) $ is an infinite set. Is it true that the subfield $L_{\infty
}\left( \Phi \right) $ is still purely transcendental over $K$ and $L$ is
Galois over $L_{\infty }\left( \Phi \right) $? On the other hand, suppose $%
\Phi _{1},\Phi _{2},\cdots ,\Phi _{n}$ are all the Zorn-components of $\Psi
_{L/K}\left( G\right) $ which are mutually co-disjoint. Is it true that
there is a direct decomposition
\begin{equation*}
CH_{L/K}\left( G\right) \cong \bigoplus\limits_{1\leq i\leq n}CH_{\infty
}\left( \Phi _{i}\right)
\end{equation*}%
for the class height group $CH_{L/K}\left( G\right) $ of any given Noether
solution $G$ of $L/K$? We discuss such problems in \emph{[}An 2019\emph{]}
focusing on $K$-theory and Iwasawa theory over function fields of several
variables over $\mathbb{Z}$, where being enlightened by the \emph{Example}
and \emph{Theorem of Rim} (See \emph{Page 29}, \emph{[}Milnor 1971\emph{]}
or \emph{[}Rim 1954\emph{]}), we are attempting to give an explicit
explanation for the Fisher-Swan method which has an ingredient involving the
cyclotomic fields.
\end{remark}

\section{Proofs of Main Theorems}

In this section we will give the proofs of the Main Theorems in \emph{\S 1}.

\subsection{Proof of Theorem 1.6}

Let $L $ be a purely transcendental extension over a field $K$ of finite
transcendence degree.

\begin{proposition}
(\emph{Inner isomorphisms and spatial isomorphisms}) Let $G$ and $G^{\prime }$
be two subgroups of the Galois group $Aut\left( L/K\right) $.

$\left( i\right) $ If $G$ and $G^{\prime }$ are $\left( L,L\right) $%
-spatially isomorphic, then $G$ and $G^{\prime }$ must be conjugate in $%
Aut\left( L/K\right) $.

$\left( ii\right) $ Suppose $G$ is a Noether solution of $L/K$. Then $G$ and
$G^{\prime }$ are $\left( L,L\right) $-spatially isomorphic if and only if $%
G $ and $G^{\prime }$ are conjugate in $Aut\left( L/K\right) $. In such a
case, $G^{\prime }$ is also a Noether solution of $L/K$.
\end{proposition}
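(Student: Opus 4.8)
The plan is to extract both parts directly from the results already established in §8, principally \emph{Proposition 8.14} and \emph{Theorem 8.23}, so that no fresh spatial-isomorphism bookkeeping is needed. For part $\left( i\right) $, note that in the situation at hand the target extension is $M=L$, so \emph{Proposition 8.14} applies verbatim: it asserts that two subgroups of $Aut\left( L/K\right) $ are $\left( L,L\right) $-spatially isomorphic precisely when they are conjugate in $Aut\left( L/K\right) $. Thus the ``only if'' half of \emph{Proposition 8.14} \emph{is} statement $\left( i\right) $. If an explicit witness is wanted, the $\left( L,L\right) $-spatial isomorphism furnishes a $K$-automorphism $\phi $ of $L$ with $\psi \left( g\right) =\phi \circ g\circ \phi ^{-1}$ for $g$ in the relevant group, and conjugation by $\phi $ carries $G$ onto $G^{\prime }$.

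For part $\left( ii\right) $, the ``only if'' direction is precisely $\left( i\right) $. For the ``if'' direction, suppose $G^{\prime }=\phi \cdot G\cdot \phi ^{-1}$ for some $\phi \in Aut\left( L/K\right) $; the ``if'' half of \emph{Proposition 8.14} then shows $G$ and $G^{\prime }$ are $\left( L,L\right) $-spatially isomorphic, where condition $\left( iii\right) $ of \emph{Definition 1.3} is automatic because $\psi \left( g\right) \left( \phi \left( x\right) \right) =\phi \left( g\left( x\right) \right) $ and $\phi $ is injective. It remains to show that, when $G$ is a Noether solution of $L/K$, the conjugate $G^{\prime }$ is again a Noether solution. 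Here I would invoke \emph{Theorem 8.23} with $M=L$: taking the $K$-isomorphism of that theorem to be $\phi $ itself, \emph{Theorem 8.23} gives $G_{\phi }=\{\phi \circ g\circ \phi ^{-1}:g\in G\}=Aut\left( L/\phi \left( L^{G}\right) \right) $ and asserts that $G_{\phi }$ is a Noether solution of $L/K$. Since $G^{\prime }=G_{\phi }$ (replacing $\phi $ by $\phi ^{-1}$ if ``conjugate'' is read in the opposite orientation), this completes $\left( ii\right) $.

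I do not expect a genuine obstacle: the substantive content — realizing a conjugacy by a field automorphism, and showing that such an automorphism carries a Noether solution to a Noether solution — has already been done in \emph{Proposition 8.14} and \emph{Theorem 8.23}, respectively. The only point deserving a moment's care is the routing of the argument: since \emph{Proposition 9.1} will feed into the later proof of \emph{Theorem 1.6}, the Noether-preservation step must be drawn from \emph{Theorem 8.23} rather than from \emph{Theorem 1.6}$\left( i\right) $, on pain of circularity. Beyond that, the work is purely formal: verifying the three clauses of \emph{Definition 1.3} in the degenerate case $M=L$, and keeping consistent track of whether ``conjugate'' means $G^{\prime }=\phi \cdot G\cdot \phi ^{-1}$ or $G^{\prime }=\phi ^{-1}\cdot G\cdot \phi $, a choice that merely interchanges $\phi $ and $\phi ^{-1}$.
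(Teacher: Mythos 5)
Your proposal is correct and follows essentially the same route as the paper: part $\left( i\right)$ is read off from the definition of $\left( L,L\right)$-spatial isomorphism (which is exactly the content of \emph{Proposition 8.14}), and the Noether-preservation in part $\left( ii\right)$ is drawn from \emph{Theorem 8.23} with $M=L$, just as the paper does. Your remark about avoiding circularity with \emph{Theorem 1.6}$\left( i\right)$ is well taken and consistent with the paper's routing.
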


\begin{proof}
$\left( i\right) $ Trivial from the definition for spatial isomorphism.

$\left( ii\right) $ Immediately from \emph{Theorem 8.23}.
\end{proof}

Here there is the proof for \emph{Theorem 1.6}.

\begin{proof}
\textbf{(Proof of Theorem 1.6)} Let $G$ be a subgroup of $Aut\left(
L/K\right) $.

$\left( i\right) $ Assume $G$ is a Noether solution of $L/K$. Fixed a
subgroup $G^{\prime }$ of $Aut\left( L/K\right) $. From \emph{Proposition 9.1%
} it is seen that $G$ and $G^{\prime }$ are conjugate in the Galois group $%
Aut\left( L/K\right) $ if and only if $G$ and $G^{\prime }$ are $\left(
L,L\right) $-spatially isomorphic. In such a case, from \emph{Theorem 8.23}
it is seen that $G^{\prime }$ is a Noether $G$-solution of $L/K$.

$\left( ii\right) $ Immediately from $\left( ii\right) $ of \emph{Theorem
8.30}.

$\left( iii\right) $ Immediately from \emph{Proposition 8.39} since for a
field $F$ we have $K_{1}\left( F\right) \equiv F^{\ast }$. This completes
the proof.
\end{proof}

\subsection{Proof of Theorem 1.7}

Let $L $ be a purely transcendental extension over a field $K$ of finite
transcendence degree. Let $G$ be a subgroup of $Aut\left( L/K\right) $ such
that $L$ is algebraic over the $G$-invariant subfield $L^{G}$.

\begin{proposition}
Suppose there is a subgroup $H$ of the Galois group $Aut\left( L/K\right) $
satisfying the three conditions:

$\ \left( N1\right) $ There is $G\cap H=\{1\}$ and $\sigma \cdot \delta
=\delta \cdot \sigma $ holds in $Aut\left( L/K\right) $ for any $\sigma \in
G $ and $\delta \in H$.

$\ \left( N2\right) $ $K$ is the invariant subfield $L^{\left\langle G\cup
H\right\rangle }$ of $L$ under the subgroup $\left\langle G\cup
H\right\rangle $ generated in $Aut\left( L/K\right) $ by the subset $G\cup H$%
.

$\ \left( N3\right) $ There are two transcendence bases $\Delta $ and $%
\Lambda $ of $L/K$ with $K\left( \Delta \right) \subseteq K\left( \Lambda
\right) $ and a subgroup $H_{\Lambda }$ of the Galois group $Aut\left(
K\left( \Lambda \right) /K\right) $ having the three properties: $\left(
a\right) $ $K\left( \Lambda \right) $ is algebraic Galois over $K\left(
\Delta \right) $; $\left( b\right) $ $H$ and $H_{\Lambda }$ are $\left(
L,K\left( \Lambda \right) \right) $-spatially isomorphic; $\left( c\right) $
$Aut\left( K\left( \Delta \right) /K\right) $ is the set of restrictions $%
\sigma |_{K\left( \Delta \right) }$ of $\sigma $ in $H_{\Lambda }$.

Then $H_{\Lambda }$ is the highest transcendental Galois subgroup $\pi
_{t}\left( K\left( \Lambda \right) /K\right) \left( \Delta ,A\right) $ of $%
K\left( \Lambda \right) /K$ at a nice basis $\left( \Delta ,A\right) $ of $%
K\left( \Lambda \right) /K$. In particular, $G$ is a Noether solution of $%
L/K $.
\end{proposition}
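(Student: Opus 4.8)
The plan is to identify $H_{\Lambda}$ with the highest transcendental Galois subgroup of $K(\Lambda)/K$ at the nice basis $(\Delta,A)$, and then to deduce that $G$ is a Noether solution by transporting that identification along the spatial isomorphism in $(N3)(b)$. First I would record the elementary set-up: by $(N3)(a)$ the field $K(\Lambda)$ is a finite algebraic Galois extension of $K(\Delta)$, while $K(\Delta)$ is purely transcendental over $K$ of the full transcendence degree (since $\Delta$ is a transcendence base of $L/K$ and $K(\Delta)\subseteq K(\Lambda)\subseteq L$); hence, choosing a $K(\Delta)$-linear basis $A$ of $K(\Lambda)$, the pair $(\Delta,A)$ is a nice basis of $K(\Lambda)/K$, and by \emph{Proposition 4.7} together with \emph{Definition 4.4} the subgroup $\pi_{t}:=\pi_{t}(K(\Lambda)/K)(\Delta,A)$ is the unique subgroup of $\mathrm{Aut}(K(\Lambda)/K)$ which, relative to the intermediate field $K(\Delta)$, fixes $K(\Lambda)\setminus K(\Delta)$ pointwise and restricts bijectively onto $\mathrm{Aut}(K(\Delta)/K)$.

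Next I would prove $H_{\Lambda}=\pi_{t}$ by checking that $H_{\Lambda}$ meets \emph{Definition 4.4} with $M=K(\Delta)$. Condition $(N3)(c)$ already gives that restriction $H_{\Lambda}\to\mathrm{Aut}(K(\Delta)/K)$ is well defined and surjective, so what is left is $(\mathrm{i})$ that every $\sigma\in H_{\Lambda}$ fixes $K(\Lambda)\setminus K(\Delta)$ pointwise and $(\mathrm{ii})$ that this restriction is injective. Both are rigidity facts I would pull back from $H$. Using $(N1)$--$(N2)$ and the triple linear decomposition \emph{Theorem 6.15} (via \emph{Propositions 6.11--6.14}), write $L=E\oplus K\oplus F$ with $E=\mathrm{span}_{K}(L^{G}\setminus K)$ and $F=\mathrm{span}_{K}(L^{H}\setminus K)$; then every $\delta\in H$ is the identity on $K$ and $F$ and an automorphism $\delta_{E}$ of $E$, so $H$ fixes $L^{H}$ pointwise, preserves $L^{G}$, and $L^{G}=(L\setminus L^{H})\cup K$ by \emph{Proposition 6.11}$(v)$. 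Writing $(\psi,\phi)$ for the $(L,K(\Lambda))$-spatial isomorphism of $(N3)(b)$, compatibility condition $(\mathrm{iii})$ makes $\phi$ carry the common fixed set $L^{H}$ of $H$ onto the common fixed set $K(\Lambda)^{H_{\Lambda}}$ of $H_{\Lambda}$, whence $\phi(L^{G})=(K(\Lambda)\setminus K(\Lambda)^{H_{\Lambda}})\cup K$; and since every purely transcendental extension is Galois over its ground field (\emph{Theorem 3.4}), giving $K(\Delta)^{\mathrm{Aut}(K(\Delta)/K)}=K$, condition $(N3)(c)$ forces $K(\Lambda)^{H_{\Lambda}}\cap K(\Delta)=K$, so that $K(\Delta)\subseteq\phi(L^{G})$.

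The step I expect to be the main obstacle is upgrading $K(\Delta)\subseteq\phi(L^{G})$ to $K(\Delta)=\phi(L^{G})$, equivalently showing that $H_{\Lambda}$ acts trivially on the component $\mathrm{span}_{K}(K(\Lambda)\setminus K(\Delta))$ of the nice-basis decomposition $K(\Lambda)=K(\Delta)\oplus F'$ of \emph{Proposition 5.3}, i.e. that $H_{\Lambda}$ is not strictly larger than $\pi_{t}$. The difficulty is that one must reconcile the decomposition $L=E\oplus K\oplus F$, adapted to the pair $(G,H)$, with the decomposition $K(\Lambda)=K(\Delta)\oplus F'$, adapted to $(\Delta,A)$, through the single field isomorphism $\phi$. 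I would attack this using that $G_{\Lambda}:=\phi G\phi^{-1}$ equals $\mathrm{Aut}(K(\Lambda)/\phi(L^{G}))$ (Artin's lemma, since $L$ is algebraic, hence finite, over $L^{G}$ and $G$ is finite), that $G_{\Lambda}$ commutes with $H_{\Lambda}$, that $H_{\Lambda}$ preserves both $K(\Delta)$ and $\phi(L^{G})$ and normalises $\mathrm{Aut}(K(\Lambda)/K(\Delta))$, and then invoking \emph{Theorem 8.20} to propagate the spatial isomorphism from the full algebraic parts to the highest transcendental parts, pinning $\phi(L^{G})=K(\Delta)$ without circularity. Once this is secured, $H_{\Lambda}$ satisfies \emph{Definition 4.4} with $M=K(\Delta)$, so $H_{\Lambda}=\pi_{t}(K(\Lambda)/K)(\Delta,A)$ by the uniqueness in \emph{Proposition 4.7}.

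Finally, to obtain that $G$ is a Noether solution: the equality $\phi(L^{G})=K(\Delta)$ shows $L^{G}$ is $K$-isomorphic to the purely transcendental field $K(\Delta)$, hence $L^{G}$ is purely transcendental over $K$, and together with the hypothesis that $L$ is algebraic over $L^{G}$ this is exactly the definition of a Noether solution. For a cleaner last clause one can instead fix a $K$-isomorphism $\phi_{0}\colon K(\Lambda)\to L$, form the nice basis $(\Delta_{L},A_{L}):=(\phi_{0}(\Delta),\phi_{0}(A))$ of $L/K$ (over which $L$ is Galois over $K(\Delta_{L})$), use \emph{Theorem 8.20} to see that $\pi_{t}(K(\Lambda)/K)(\Delta,A)$ and $\pi_{t}(L/K)(\Delta_{L},A_{L})$ are spatially isomorphic, compose with the spatial isomorphism $H\cong H_{\Lambda}=\pi_{t}(K(\Lambda)/K)(\Delta,A)$ just established, and apply \emph{Theorem 8.22} (with $M=L$) to the pair $(G,H)$ together with the nice basis $(\Delta_{L},A_{L})$ to conclude.
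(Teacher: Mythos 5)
Your overall plan coincides with the paper's own route: the paper's proof of this proposition consists precisely of (1) applying the linear-decomposition machinery of \emph{Proposition 6.11} and \emph{Theorem 6.15} for the pair $(G,H)$, transported through the spatial isomorphism of $(N3)(b)$, to identify $H_{\Lambda}$ with $\pi_{t}\left( K\left( \Lambda\right)/K\right)\left( \Delta,A\right)$, and then (2) invoking \emph{Theorem 8.22} to conclude that $G$ is a Noether solution. Your proposal is this same two-step scheme, spelled out in more detail.

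However, there is a genuine gap, and it sits exactly at the step you yourself flag as ``the main obstacle'' and then only sketch. What you actually establish is the inclusion $K\left( \Delta\right)\subseteq\phi\left( L^{G}\right)$ (equivalently $K\left( \Lambda\right)^{H_{\Lambda}}\cap K\left( \Delta\right)=K$, via $(N3)(c)$ and \emph{Theorem 3.1}). The identification $H_{\Lambda}=\pi_{t}\left( K\left( \Lambda\right)/K\right)\left( \Delta,A\right)$ needs the reverse inclusion $\phi\left( L^{G}\right)\subseteq K\left( \Delta\right)$, i.e.\ that every $\sigma\in H_{\Lambda}$ fixes $K\left( \Lambda\right)\setminus K\left( \Delta\right)$ pointwise (condition $\left( ii\right)$ of \emph{Definition 4.4} with $M=K\left( \Delta\right)$), and this is never proved. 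Transporting the triple decomposition of \emph{Theorem 6.15} through $\phi$ only relates $\phi\left( L^{G}\right)$ and $\phi\left( L^{H}\right)=K\left( \Lambda\right)^{H_{\Lambda}}$ to \emph{each other}; pinning either of them against $K\left( \Delta\right)$ must use $(N3)(a)$ and $(c)$ in an essential way, and your proposed attack (Artin's lemma for $G_{\Lambda}=\phi G\phi^{-1}$, commutation with $H_{\Lambda}$, then ``invoke \emph{Theorem 8.20}'') does not supply this: \emph{Theorem 8.20} compares data attached to nice bases that are already known to correspond under the field isomorphism, so using it here presupposes knowing that $\phi\left( L^{G}\right)$ is the subfield $K\left( \Delta\right)$ --- precisely what is to be shown; the circularity you acknowledge is not dispelled. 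The gap is not cosmetic: without the equality you only have $K\left( \Delta\right)\subseteq\phi\left( L^{G}\right)\subseteq K\left( \Lambda\right)$, and an intermediate field of the algebraic extension $K\left( \Lambda\right)/K\left( \Delta\right)$ need not be purely transcendental over $K$ --- that is the whole content of Noether's problem --- so neither the identification of $H_{\Lambda}$ nor the final conclusion that $G$ is a Noether solution follows from what you have written. (For comparison, the paper's proof simply asserts this identification as a consequence of \emph{Proposition 6.11} and \emph{Theorem 6.15}; your write-up makes visible the missing argument but does not close it.)
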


\begin{proof}
Assume $G\neq \{1\}$ without loss of generality. Let $M=K\left( \Lambda
\right) $. Consider the linear decompositions for $M/K$ under conditions $%
\left( N1\right) -\left( N2\right) $. From \emph{Proposition 6.11} and \emph{%
Theorem 6.15} it is seen that $H_{\Lambda }$ is the highest transcendental
Galois subgroup $\pi _{t}\left( M/K\right) \left( \Delta ,A\right) $ of $M/K$
at a nice basis $\left( \Delta ,A\right) $ of $M/K$. It follows that the
three conditions $\left( N1\right) -\left( N3\right) $ of \emph{Theorem 8.22}
are all satisfied. Hence, $G$ is a Noether solution of $L/K$.
\end{proof}

Here there is the proof for \emph{Theorem 1.7}.

\begin{proof}
\textbf{(Proof of Theorem 1.7)} $\left( i\right) $ Immediately from \emph{%
Theorem 8.21}, \emph{Proposition 8.29}\ and $\left( iii\right) $ of \emph{%
Theorem 8.30}. Here for a Noether solution $G$ of $L/K$, we have
\begin{equation*}
\begin{array}{l}
con_{L/K}\left( G\right) \\
=sp_{L/K}\left( G\right) \\
=\left[ Aut\left( L/K\right) :Aut\left( L/K\right) ^{G}\right]%
\end{array}%
\end{equation*}%
from \emph{Proposition 9.1}.

$\left( ii\right) $ Assume there is a subgroup $H$ of $Aut\left( L/K\right) $
satisfying the three conditions $\left( N1\right) -\left( N3\right) $. From
\emph{Proposition 9.2} it is seen that $G$ is a Noether solution of $L/K$
and $H_{\Lambda }$ is the highest transcendental Galois subgroup $\pi
_{t}\left( K\left( \Lambda \right) /K\right) \left( \Delta ,A\right) $.

On the other hand, from \emph{Theorem 8.22} it is seen that the subgroup $H$
is the highest transcendental Galois subgroup $\pi _{t}\left( L/K\right)
\left( \Delta ^{\prime },A^{\prime }\right) $, where $\left( \Delta ^{\prime
},A^{\prime }\right) $ is a nice basis of $L/K$ such that $K\left( \Delta
^{\prime }\right) =L^{G}$.

As $H$ and $H_{\Lambda }$ are $\left( L,K\left( \Lambda \right) \right) $%
-spatially isomorphic highest transcendental Galois subgroup, it is seen
that $G=Aut\left( L/K\left( \Delta ^{\prime }\right) \right) $ and $%
Aut\left( K\left( \Lambda \right) /K\left( \Delta \right) \right) $ are also
$\left( L,K\left( \Lambda \right) \right) $-spatially isomorphic groups from
\emph{Theorem 8.20}. Hence, we have
\begin{equation*}
\begin{array}{l}
con_{L/K}\left( G\right) \\
=sp_{L/K}\left( G\right) \\
=\left[ Aut\left( L/K\right) :Aut\left( L/K\right) ^{D_{L/K}\left( \Delta
^{\prime },A^{\prime }\right) }\right] \\
=\left[ Aut\left( L/K\right) :Aut\left( L/K\right) ^{\left\langle G\cup
H\right\rangle }\right]%
\end{array}%
\end{equation*}%
from \emph{Proposition 8.29}, $\left( iii\right) $ of \emph{Theorem 8.30}
and \emph{Proposition 9.1} again, where we have
\begin{equation*}
\left\langle G\cup H\right\rangle =D_{L/K}\left( \Delta ^{\prime },A^{\prime
}\right)
\end{equation*}%
from \emph{Theorem 6.4}.

The uniqueness of $H$ is obtained from \emph{Lemma 4.13}.

Conversely, suppose $G$ is a Noether solution of $L/K$. Let $H_{\Lambda }$
be the highest transcendental Galois subgroup $\pi _{t}\left( K\left(
\Lambda \right) /K\right) \left( \Delta ,A\right) $ of $K\left( \Lambda
\right) /K$ at a nice basis $\left( \Delta ,A\right) $ of $K\left( \Lambda
\right) /K$. Then from \emph{Theorem 8.22} we have a subgroup $H$ of $%
Aut\left( L/K\right) $ satisfying the three conditions $\left( N1\right)
-\left( N3\right) $. This completes the proof.
\end{proof}

\subsection{Proof of Theorem 1.8}

Let $L=K\left( t_{1},t_{2},\cdots ,t_{n}\right) $ be a purely transcendental
extension over a field $K$ of transcendence degree $n$. Here, $%
t_{1},t_{2},\cdots ,t_{n}$ are the variables of $L$ over $K$. Let $\Sigma
_{n}\subseteq Aut\left( L/K\right) $ denote the full permutation subgroup on
$n$ letters (relative to the variables $t_{1},t_{2},\cdots ,t_{n}$).

It's easy for one to obtain the following examples for Noether solutions of $%
L/K$ and for non-solutions of $L/K$, where our aim is to show that even for
two isomorphic subgroups of $Aut\left( L/K\right) $, one is a Noether
solution of $L/K$ but the other is not necessarily a Noether solution of $%
L/K $.

\begin{example}
(\emph{Cases of non-solutions }$G$\emph{\ of order }$\sharp G=2$) Assume $%
n\geq 3$. Let $G$ be the subgroup in $Aut\left( L/K\right) $ generated by a
linear involution $\sigma $ of $L/K$ relative to the variables $%
t_{1},t_{2},\cdots ,t_{n}$. Then $G=\left\langle \sigma \right\rangle $ is a
subgroup of order $\sharp G=2$ and $G$ has a transitive action on the
variables $t_{1},t_{2},\cdots ,t_{n}$. In particular, the $G$-invariant
subfield $L^{G}$ is not purely transcendental over $K$.
\end{example}

\begin{proof}
Immediately from \emph{Proposition 8.3.}
\end{proof}

\begin{example}
(\emph{Cases of Noether solutions }$G$\emph{\ of order }$\sharp G=2$)
Consider a subgroup $G\subseteq Aut\left( L/K\right) $ of order $\sharp G=2$.

$\left( i\right) $ Assume $n\geq 1$. Let $G$ be the Galois group of $L$ over
$K\left( t_{1}^{2},t_{2},\cdots ,t_{n}\right) $. Then $G$ is a Noether
solution of $L/K$.

$\left( ii\right) $ Let $n=2$. Let $G$ be the subgroup in $Aut\left(
L/K\right) $ generated by a linear involution $\sigma $ of $L/K$ relative to
the variables $t_{1},t_{2}$. Then $G=\left\langle \sigma \right\rangle
=\Sigma _{2}$ is a subgroup of order $\sharp G=2$ and $G$ has a transitive
action on the variables $t_{1},t_{2}$. In particular, $G$ is a Noether
solution of $L/K$.
\end{example}

\begin{proof}
$\left( i\right) $ Trivial.

$\left( ii\right) $ Immediately from \emph{Lemma 7.18.}
\end{proof}

\begin{example}
(\emph{Cases of non-solutions }$G$\emph{\ of order }$\sharp G=n$) Assume $%
n\geq 3$. Let $G=C_{n}$ be the cyclic subgroup in $\Sigma _{n}$ of order $%
\sharp G=n$. Then $G$ has a transitive action on the variables $%
t_{1},t_{2},\cdots ,t_{n}$. In particular, the $G$-invariant subfield $L^{G}$
is not purely transcendental over $K$.
\end{example}

\begin{proof}
Immediately from \emph{Lemma 7.18.}
\end{proof}

\begin{example}
(\emph{Cases of Noether solutions }$G$\emph{\ of order }$\sharp G=n$) Assume
$2\leq n=2^{k}$. Let $G$ be the Galois group of $L$ over $K\left(
t_{1}^{2},t_{2}^{2},\cdots ,t_{k}^{2},t_{n+1},\cdots ,t_{n}\right) $. Then $%
G $ is a Noether solution of $L/K$.
\end{example}

\begin{proof}
Trivial.
\end{proof}

Here there is the proof for \emph{Theorem 1.8}.

\begin{proof}
(\textbf{Proof of Theorem 1.8}) $\left( i\right) $ Immediately from \emph{%
Lemmas 7.16-8} and \emph{Propositions 8.7}. Here, as in \emph{Proposition 8.7%
}, put
\begin{equation*}
L=K\left( t_{1},t_{2},\cdots ,t_{n}\right) ;
\end{equation*}
\begin{equation*}
M=K\left( t_{1}^{2},t_{2}^{2},\cdots ,t_{n}^{2}\right).
\end{equation*}
Then the full permutation subgroup $\Sigma _{n}$, $G=Aut\left( L/M\right) $
and $G\cdot j_{\left( \Delta ,A\right) }\left( \Sigma _{n}^{M}\right) $ are
Noether solutions of $L/K$; $\Sigma _{n}$ and $G\cdot j_{\left( \Delta
,A\right) }\left( \Sigma _{n}^{M}\right) $ both have a transitive action on
the variables $t_{1},t_{2},\cdots ,t_{n}$.

$\left( ii\right) $ Immediately from \emph{Lemmas 7.16-8} and \emph{%
Propositions 8.7}. As in \emph{Proposition 8.7}, the alternating subgroup $%
A_{n}$, cyclic subgroup $C_{n}$, the subgroup $I_{\sigma }$ generated by a
linear involution $\sigma $ of $L/K$, $A_{n}\cdot j_{\left( \Delta ,A\right)
}\left( \Sigma _{n}^{M}\right) $, $C_{n}\cdot j_{\left( \Delta ,A\right)
}\left( \Sigma _{n}^{M}\right) $ and $G\cdot j_{\left( \Delta ,A\right)
}\left( I_{\sigma }^{M}\right) $ all are non-solutions of $L/K$, i.e., none
of the invariant subfields under them is purely transcendental over $K$.
These six subgroups all have a transitive action on the variables $%
t_{1},t_{2},\cdots ,t_{n}$.
\end{proof}

\subsection{Concluding remarks}

Let $L$ be a purely transcendental extension over a field $K$ of finite
transcendence degree. Let $\Omega _{L/K}$ denote the set of transcendence
bases of $L/K$.

Recall that an integer $a\in \mathbb{Z}$ is a \emph{Noether number} of a
transcendental extension $L$ over a field $K$ if there is a finite subgroup $%
G$ of the Galois group $Aut\left( L/K\right) $ such that $a$ is the order of
$G$ and $G$ is a Noether solution of $L/K$. (See \emph{\S 6.4} for further
properties).

Recall that a transcendence base $\Delta _{\max }$ of $L/K$ is said to be
\emph{Zorn-maximal} in $L/K$ if there must be $K\left( \Delta \right)
=K\left( \Delta _{\max }\right) $ for any transcendence base $\Delta $ of $%
L/K$ with $K\left( \Delta \right) \supseteq K\left( \Delta _{\max }\right) $%
. Let $G\subseteq Aut\left( L/K\right) $ be a subgroup. A subset $\Omega
_{0}\subseteq \Omega _{L/K}$ is $G$\emph{-invariant} if $\sigma \left(
\Omega _{0}\right) \subseteq \Omega _{0}$ holds for any $\sigma \in G$. A
subset $\Omega _{0}\subseteq \Omega _{L/K}$ is \emph{dense} in $L/K$ if $%
\Omega _{0}$ contains a Zorn-maximal transcendence base $\Delta _{\max }$ of
$L/K$ satisfying the two properties: $\left( a\right) $ For any $\Delta \in
\Omega _{0}$, $K\left( \Delta \right) \subseteq K\left( \Delta _{\max
}\right) $ holds; $\left( b\right) $ For any $\Delta \in \Omega _{0}$, there
is some $\Lambda \in \Omega _{0}$ with $K\left( \Lambda \right) \subseteq
K\left( \Delta \right) $. (See \emph{Definition 2.10}).

Let $\Omega _{0}\subseteq \Omega _{L/K}$ be a subset. Recall that $L$ is $%
\sigma $\emph{-tame Galois} over $K$ \emph{inside} $\Omega _{0}$ if $L$ is
algebraic Galois over $K\left( \Delta \right) $ for each transcendence base $%
\Delta \in \Omega _{0}$. (See \emph{Definition 2.12}).

In the following there is an idealised picture for the relationship between
Noether solutions and non-solutions of $L/K$.

\begin{remark}
Let $\Omega _{0}\subseteq \Omega _{L/K}$ be a subset dense in $L/K$ and
invariant under $Aut\left( L/K\right) $. Suppose $L$ is $\sigma $-tame
Galois over $K$ inside $\Omega _{0}$ or $\Omega _{L/K}$.

$\left( i\right) $ (\emph{Non-solution being enlarged to solution}) Let $G$
be a subgroup of $Aut\left( L/K\right) $ such that $L$ is algebraic over the
$G$-invariant subfield $L^{G}$. Then there is a subgroup $H$ of $Aut\left(
L/K\right) $ satisfying the properties: $G$ is a proper subgroup of $H$; $H$
is a Noether solution of $L/K$.

$\left( ii\right) $ (\emph{Solution being enlarged to non-solution}) Let $%
G\subseteq Aut\left( L/K\right) $ be a Noether solution of $L/K$. Then there
is a subgroup $H$ of $Aut\left( L/K\right) $ satisfying the properties: $G$
is a proper subgroup of $H$; $L$ is not purely transcendental over the $H$%
-invariant subfield $L^{H}$.
\end{remark}

\begin{remark}
(\emph{Distribution of Noether solutions}) Let $\Omega _{0}\subseteq \Omega
_{L/K}$ be a subset dense in $L/K$ and invariant under $Aut\left( L/K\right)
$. Suppose $L$ is $\sigma $-tame Galois over $K$ inside $\Omega _{0}$ or $%
\Omega _{L/K}$. There are the following statements.

$\left( i\right) $ For any Noether number $\alpha $ of $L/K$, there are
infinitely many Noether solutions $G$ of $L/K$ such that $\alpha $ is the
group order $\sharp G$.

$\left( ii\right) $ For any Noether number $\alpha $ of $L/K$, there is
another Noether number $\beta $ of $L/K$ such that $\alpha <\beta $ and $%
\alpha $ divides $\beta $.

$\left( iii\right) $ For any two Noether numbers $\alpha ,\beta $ of $L/K$,
there is another Noether number $\gamma $ of $L/K$ such that $\alpha $ and $%
\beta $ divide $\gamma $, respectively.

$\left( iv\right) $ The set of Noether numbers of $L/K$ has no upper bound.
\end{remark}

\end{document}